\def\@tocline#1#2#3#4#5#6#7{\relax
  \ifnum #1>\c@tocdepth 
  \else
    \par \addpenalty\@secpenalty\addvspace{#2}%
    \begingroup \hyphenpenalty\@M
    \@ifempty{#4}{%
      \@tempdima\csname r@tocindent\number#1\endcsname\relax
    }{%
      \@tempdima#4\relax
    }%
    \parindent\z@ \leftskip#3\relax \advance\leftskip\@tempdima\relax
    \rightskip\@pnumwidth plus4em \parfillskip-\@pnumwidth
    #5\leavevmode\hskip-\@tempdima
      \ifcase #1
       \or\or \hskip 1em \or \hskip 2em \else \hskip 3em \fi%
      #6\nobreak\relax
    \dotfill\hbox to\@pnumwidth{\@tocpagenum{#7}}\par
    \nobreak
    \endgroup
  \fi}
\newtheorem{theorem}{Theorem}[section]
\newtheorem{lemma}[theorem]{Lemma}
\newtheorem{corollary}[theorem]{Corollary}
\newtheorem{proposition}[theorem]{Proposition}
\theoremstyle{definition}
\newtheorem{defn}[theorem]{Definition}
\newtheorem{remark}[theorem]{Remark}
\newtheorem{example}[theorem]{Example}
\newcommand{\mc}{\mathcal}
\newcommand{\mb}{\mathbb}
\newcommand{\wh}{\widehat}
\newcommand{\wt}{\widetilde}
\newcommand{\ud}{\,\mathrm{d}}
\newcommand{\id}{\mathrm{id}}
\DeclareMathOperator{\aut}{Aut}
\DeclareMathOperator{\ab}{Z}
\DeclareMathOperator{\Bo}{B}
\DeclareMathOperator{\tran}{\Theta}
\DeclareMathOperator{\Supp}{Supp}
\DeclareMathOperator{\supp}{supp}
\DeclareMathOperator{\coup}{\mathsf{Cg}}
\DeclareMathOperator{\q}{c}
\DeclareMathOperator{\ns}{X}
\DeclareMathOperator{\nss}{Y}
\DeclareMathOperator{\co}{\circ\hspace{-0.02 cm}}
\DeclareMathOperator{\cu}{C}
\DeclareMathOperator{\cs}{s}
\DeclareMathOperator{\upmod}{\perp\!\!\!\perp}
\newcommand*{\sbr}[1]{\scalebox{0.8}{$(#1)$}}
\newcommand*{\db}[1]{\llbracket #1\rrbracket}
\begin{document}

\title[On cubic couplings]{Nilspace factors for general uniformity seminorms, cubic exchangeability and limits}

\author{Pablo Candela}
\address{Universidad Aut\'onoma de Madrid and ICMAT\\ Ciudad Universitaria de Cantoblanco\\ Madrid 28049\\ Spain}
\email{pablo.candela@uam.es}

\author{Bal\'azs Szegedy}
\address{MTA Alfr\'ed R\'enyi Institute of Mathematics\\ 
Re\'altanoda utca 13-15\\
Budapest, Hungary, H-1053}
\email{szegedyb@gmail.com}
\begin{abstract}
We study a class of measure-theoretic objects that we call \emph{cubic couplings}, on which there is a common generalization of the Gowers norms and the Host--Kra seminorms. Our main result yields a complete structural description of cubic couplings, using nilspaces. We give three applications. Firstly, we describe the characteristic factors of Host--Kra type seminorms for measure-preserving actions of countable nilpotent groups. This yields an extension of the structure theorem of Host and Kra. Secondly, we characterize sequences of random variables with a property that we call \emph{cubic exchangeability}. These are sequences indexed by the infinite discrete cube, such that for every integer $k\geq 0$ the joint distribution's marginals on affine subcubes of dimension $k$ are all equal. In particular, our result gives a description, in terms of compact nilspaces, of a related exchangeability property considered by Austin, inspired by a problem of Aldous. Finally, using nilspaces we obtain limit objects for sequences of functions on compact abelian groups (more generally on compact nilspaces) such that the densities of certain patterns in these functions converge. The paper thus proposes a measure-theoretic framework on which the area of higher-order Fourier analysis can be based, and which yields new applications of this area in a unified way in ergodic theory and arithmetic combinatorics.\vspace{-0.5cm}
\end{abstract}
\date{}
\vspace*{-0.1cm}
\maketitle
\tableofcontents
\vspace*{-1cm}
\section{Introduction}

\noindent A fruitful interaction between the areas of combinatorics and ergodic theory was initiated in the 1970s by Furstenberg's proof of Szemer\'edi's theorem on arithmetic progressions \cite{FurstSzem}. In the last two decades, this interaction has intensified, thanks especially to the emergence of analogous key tools and methods in these areas. A central example is given by the uniformity norms introduced by Gowers in arithmetic combinatorics, in his seminal work on Szemer\'edi's theorem \cite{GSz}, and by how these norms found ergodic theoretic analogues in the uniformity seminorms introduced by Host and Kra \cite{HK}. Each side of this analogy has led to a major topic of research, and these topics have been in conversation ever since. On one side there is the study and use of the basic harmonics of a function on a compact abelian group that are characteristic for each uniformity norm, a topic now known as higher-order Fourier analysis. An important result here is the inverse theorem for the Gowers norms (\cite[Theorem 1.3]{GTZ}, \cite[Theorem 2]{Szegedy:HFA}). As stated in \cite[\S 3.3]{GreenICM}, a principal objective in this topic is to find new proofs of this theorem that provide further conceptual clarification; for more information on this topic we refer to the survey \cite{GHFA}. On the other side, there is the study of characteristic factors for uniformity seminorms,  and the related structural approach to the analysis of multiple ergodic averages. This direction, propelled in particular by work of Host and Kra, has attracted numerous contributions by many authors; for further information we refer to the book \cite{HKbook} and also to the survey \cite{Fra}.

This paper proposes an approach that enables a unified analysis of uniformity seminorms in ergodic theory and arithmetic combinatorics. Such a unification has been hoped for as part of the development of the above topics, as  expressed for instance in \cite[end of Chapter 17]{HKbook}. The  approach in this paper is based on the study of measure-theoretic objects that we call \emph{cubic couplings}. 

A cubic coupling on a probability space $\varOmega=(\Omega,\mc{A},\lambda)$ consists of a sequence of probability measures, where the $n$-th measure is a coupling\footnote{The notion of a coupling is recalled below in Definition \ref{def:coup}. Here it means that for each $v\in \{0,1\}^n$ the image of the $n$-th measure under the coordinate projection $\Omega^{\{0,1\}^n}\to \Omega$, $\omega\mapsto \omega_v$ is the measure $\lambda$.} of $\lambda$ defined on the product measurable space $(\Omega^{\{0,1\}^n},\mc{A}^{\{0,1\}^n})$ (where $\mc{A}^{\{0,1\}^n}$ denotes the product $\sigma$-algebra $\bigotimes_{v\in \{0,1\}^n}\mc{A}$), and the measures satisfy three axioms. For the present introductory purposes, let us describe these axioms informally. The first one, called the \emph{consistency axiom}, states that for every $n$ and every injective discrete-cube morphism\footnote{As in \cite{CamSzeg}, we call $\phi:\{0,1\}^m\to\{0,1\}^n$ a morphism if $\phi$ extends to an affine homomorphism $\mb{Z}^m\to\mb{Z}^n$.} $\phi:\{0,1\}^m\to\{0,1\}^n$, the image of the $n$-th measure under the map $\Omega^{\{0,1\}^m}\to\Omega^{\{0,1\}^n}$ induced by $\phi$ is equal to the $m$-th measure. Next, the \emph{ergodicity axiom} states that the measure on $\Omega^{\{0,1\}}=\Omega\times \Omega$ is the product measure $\lambda\times \lambda$. Finally, the \emph{conditional independence axiom} states that for every $n$, for any faces $F_1$, $F_2$ of codimension 1 in the cube $\{0,1\}^n$ with $F_1\cap F_2\neq \emptyset$, the two sub-$\sigma$-algebras of $\mc{A}^{\{0,1\}^n}$ generated by the projections $\Omega^{\{0,1\}^n}\to\Omega^{F_i}$ are conditionally independent relative to the $\sigma$-algebra generated by the projection $\Omega^{\{0,1\}^n}\to\Omega^{F_1\cap F_2}$. 

Cubic couplings are described above as \emph{sequences} of measures, but note that by the consistency axiom we can view a cubic coupling as a \emph{single} measure\footnote{This viewpoint is useful in Section \ref{sec:exchange}; see Remark \ref{rem:infinitecc}.}; indeed we can view the measures in the sequence as marginals of a single coupling of $\lambda$ defined on $\Omega^{\db{\mb{N}}}$, where by $\db{\mb{N}}$ we denote the infinite discrete cube, that is, the set of elements of $\{0,1\}^\mb{N}$ that have only finitely many coordinates equal to $1$.
 
We leave the formal definition of a cubic coupling for the sequel (see Definition \ref{def:cc}), but let us illustrate this concept straightaway with examples that are actually key objects of study in the two topics mentioned above.

In arithmetic combinatorics, the example in question consists of the Haar measures on the groups of standard cubes of increasing dimension in a compact abelian group $\ab$ (for a basic discussion of these cubes, see \cite[\S 2.1]{Cand:Notes1}). More precisely, the $n$-th measure in this cubic coupling is the Haar probability measure on the group of $n$-cubes \vspace{-0.1cm}
\begin{equation}\label{eq:cubes}
\cu^n(\ab):=\big\{\q=\big(x+v\sbr{1}\,h_1+\cdots +v\sbr{n}\,h_n\big)_{v\in \{0,1\}^n}: x,h_1,\ldots,h_n\in \ab\big\}\leq \ab^{\{0,1\}^n}. \vspace{-0.1cm}
\end{equation}
Let us recall that for a bounded measurable function $f:\ab\to \mb{R}$, if we integrate the function $\q\mapsto \prod_{v\in \{0,1\}^n} f\big(\!\q(v)\big)$ over $\cu^n(\ab)$, and take the $2^n$-th root of the result, then we obtain the Gowers $U^n$ norm of $f$, denoted by $\|f\|_{U^n}$.

In ergodic theory, the example in question is the sequence of measures $\mu^{[n]}$ constructed by Host and Kra in \cite[\S 3]{HK} for ergodic measure-preserving systems.

The main result of this paper is a characterization of the structure of a general cubic coupling on a Borel probability space, using objects the study of which began recently in connection with the analysis of uniformity norms, namely compact nilspaces. These spaces, introduced by the second-named author in joint work with Antol\'in Camarena \cite{CamSzeg}, offer a useful common generalization of compact abelian groups and nilmanifolds (see also the treatments of nilspaces in \cite{Cand:Notes1,Cand:Notes2,GMV1,GMV2,GMV3}). A compact nilspace $\ns$ is naturally equipped with a sequence of probability measures, the $n$-th term in the sequence being the Haar measure $\mu_{\cu^n(\ns)}$ on the set of $n$-cubes $\cu^n(\ns)$ (this is detailed in \cite[\S 2.2]{Cand:Notes2}). Every compact nilspace with this sequence of measures is a cubic coupling; see Proposition \ref{prop:nilspace-cc}.

Apart from these examples generated by nilspaces, there is also the trivial example consisting of an \emph{independent} cubic coupling, formed by taking the powers $\lambda^{\{0,1\}^n}$ (in the sense of the product measure) of the measure $\lambda$. 

The main result of this paper, Theorem \ref{thm:MeasInvThmGenIntro}, tells us that, more generally, a cubic coupling is a combination of the above constructions, in some natural sense which involves the concept of relative independence (we defer the discussion of this concept to Definition \ref{def:indoverfact}). The formal statement of the result uses the following notation.

Given a map $f:X\to Y$ between two sets $X,Y$, and given another set $S$, we use the power notation $f^S$ to denote the map from the Cartesian power $X^S$ to $Y^S$ defined by $f^S\big((x_v)_{v\in S}\big) = \big(f(x_v)\big)_{v\in S}$. It is also convenient for the sequel to introduce the shorter notation $\db{n}$ to denote the discrete $n$-cube $\{0,1\}^n$ (this simplifies notations,  especially when these cubes appear as superscripts). We can now state our main result.
\begin{theorem}\label{thm:MeasInvThmGenIntro}
Let $(\mu_n)_{n\geq 0}$ be a cubic coupling on a Borel probability space $\varOmega$. Then there is a compact nilspace $\ns$ and a measure-preserving map $\gamma: \Omega\to \ns$ such that for each $n$ the map $\gamma^{\db{n}}$ is measure-preserving from $\big(\Omega^{\db{n}}, \mu_n\big)$ to  $\big(\ns^{\db{n}}, \mu_{\cu^n(\ns)}\big)$. Furthermore, for each $n$ the coupling $\mu_n$ is relatively independent over the factor generated by $\gamma^{\db{n}}$.
\end{theorem}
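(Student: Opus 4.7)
The plan is to construct a tower of measurable factors of $\varOmega$, each equipped with a finer cubic structure, and to identify its limit as a compact nilspace. The overall strategy parallels the Host-Kra construction of the pro-nilfactors of an ergodic system, but must be carried out intrinsically from the cubic coupling axioms, since no acting group is given.

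First, I would define Gowers-type seminorms on $L^\infty(\varOmega)$ by
\[
\norm{f}_{U^n}^{2^n} := \int_{\Omega^{\db{n}}} \prod_{v\in \db{n}}\mc{C}^{|v|} f(\omega_v)\,d\mu_n(\omega),
\]
where $\mc{C}$ denotes complex conjugation. The three cubic coupling axioms should yield the Cauchy-Schwarz-Gowers inequality (via conditional independence), monotonicity $\norm{\cdot}_{U^n}\le \norm{\cdot}_{U^{n+1}}$ (via consistency), and positive definiteness on $L^\infty$ at each level. In particular, each level admits a characteristic factor $\mc{A}_k$, namely the smallest sub-$\sigma$-algebra modulo which $\norm{\cdot}_{U^{k+1}}$ vanishes, and these form an increasing tower $\mc{A}_0\subseteq \mc{A}_1\subseteq \cdots$ whose join I will denote $\mc{A}_\infty$.

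Second, and this is the heart of the argument, I would endow each $\mc{A}_k$ with the structure of a $k$-step compact nilspace. The plan is inductive: given that $\mc{A}_{k-1}$ has been realised as a compact $(k-1)$-step nilspace, I would show that $\mc{A}_k$ is a degree-$k$ abelian extension of $\mc{A}_{k-1}$ by a compact abelian group $\ab_k$, constructed as the group of measurable transformations of $\mc{A}_k$ that preserve all cube measures and act trivially on $\mc{A}_{k-1}$. One then verifies the three nilspace axioms for the family of cube measures restricted to $\mc{A}_k$: the composition and ergodicity axioms should transfer rather directly from the corresponding axioms on $\varOmega$, while the completion axiom --- every corner extends to a cube with the correct distribution --- is the main obstacle, and should emerge from combining the conditional independence axiom with the abelian-extension structure just produced.

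Third, I would assemble the tower into an inverse limit $\ns:=\varprojlim \mc{A}_k$, which inherits a compact pro-nilspace structure, and take $\gamma:\varOmega\to \ns$ to be the canonical factor map. For each $n$, the fact that $\mc{A}_n$ is characteristic for $\norm{\cdot}_{U^{n+1}}$ should yield directly that $\mu_n$ is relatively independent over the sub-$\sigma$-algebra generated by $\gamma^{\db{n}}$; comparing the resulting quotient cubic coupling on $\ns$ with the Haar cube coupling given by Proposition \ref{prop:nilspace-cc} would then identify $\gamma^{\db{n}}$ as measure-preserving onto $(\ns^{\db{n}},\mu_{\cu^n(\ns)})$. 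The principal difficulties I anticipate are: (a) proving that each characteristic factor $\mc{A}_k$ is \emph{genuinely} realised by an abelian extension, a Mackey-type step that requires a measurable cocycle selection and uses the Borel hypothesis on $\varOmega$; and (b) the nilspace completion axiom at each level, which is where the global consistency of the cubic coupling across all dimensions, rather than any single measure $\mu_n$, should be decisive.
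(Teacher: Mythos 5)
Your first layer --- the $U^n$-seminorms, the increasing tower of characteristic factors $\mc{A}_k$ (these are the paper's Fourier $\sigma$-algebras $\mc{F}_k$), the inverse limit, and the derivation of relative independence from the characteristic-factor property --- coincides with what the paper does. The genuine gap is in your second step, which is where essentially all the work lies. You propose to realise $\mc{A}_k$ as a compact nilspace by exhibiting it as an abelian extension of $\mc{A}_{k-1}$ by the group $\ab_k$ of measurable transformations preserving all cube measures and acting trivially on $\mc{A}_{k-1}$. Two things are missing. First, $\mc{A}_k$ is only a measure algebra: you give no mechanism for producing a compact Polish topology on it in which the cube measures have closed supports, and without such a topology the nilspace axioms (which are statements about point maps $\db{n}\to\ns_k$, in particular the completion of corners) cannot even be formulated. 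Second, and more seriously, for your $\ab_k$ to serve as a structure group it must act transitively on the fibres of $\mc{A}_k\to\mc{A}_{k-1}$; in the Host--Kra setting that transitivity is extracted from the acting group via Mackey/Conze--Lesigne cocycle theory, but here there is no acting group, and for a bare cubic coupling there is no a priori reason that \emph{any} nontrivial such transformations exist, let alone enough to be transitive. Your flagged difficulty (a) is not a technical step to be filled in later; it is essentially equivalent to the theorem itself, and no machinery is available to carry it out before the nilspace structure is already known.

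The paper avoids both problems by never extracting a group. Points of the $k$-th factor are defined concretely as corner couplings: $\gamma_k(\omega)$ is the conditional measure of $\mu^{\db{k+1}}$ on $\Omega^{K_{k+1}}$ given $\omega$ at the root, and $\ns_k$ is the support of $\lambda\co\gamma_k^{-1}$ inside the compact Polish space $\coup(\varOmega,K_{k+1})$ (Definition \ref{def:couptopofactor}). This makes the compact topology automatic, identifies $\gamma_k^{-1}(\mc{B}_k)$ with $\mc{F}_k$ (Lemma \ref{lem:gammaFk}), and turns the corner-completion axiom into a statement about couplings: uniqueness of completion comes from the complete dependence of corner couplings (Proposition \ref{prop:meas-thy-compl}), proved via a tricube coupling, and existence comes from positivity of $U^n$-products over convolution neighbourhoods (Lemma \ref{lem:nkcorncomp}). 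The abelian bundle structure and the groups $\ab_k$ then come out of general nilspace theory as a \emph{consequence} of the verified axioms, rather than being an input. To salvage your route you would need an independent construction of a transitive $\ab_k$-action on the fibres of the characteristic factor of a general cubic coupling; I see no way to do this that does not already presuppose the conclusion.
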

\noindent Our first application of Theorem \ref{thm:MeasInvThmGenIntro} is a description of the characteristic factors for natural generalizations of the Host--Kra seminorms. We define these generalizations for any measure-preserving action of a countable nilpotent group on a Borel probability space, and our application describes the corresponding characteristic factors as compact nilspaces acted upon by their translation groups; see Theorem \ref{thm:ergthyapp}. This yields the following generalization of the celebrated structure theorem of Host and Kra \cite[Theorem 10.1]{HK}.
\begin{theorem}\label{thm:HKgenintro}
Let $G$ be a finitely generated nilpotent group acting ergodically on a Borel probability space $\varOmega$. Then, for each positive integer $k$, the $k$-th Host--Kra factor of the system $(\varOmega,G)$ is isomorphic to an inverse limit of $k$-step nilsystems.
\end{theorem}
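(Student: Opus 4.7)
My plan is to derive Theorem \ref{thm:HKgenintro} from Theorem \ref{thm:MeasInvThmGenIntro} by first attaching a cubic-coupling structure to the system $(\varOmega, G)$ that reflects its dynamics, then extracting a compact nilspace on which $G$ acts by translations, and finally identifying the $k$-th Host-Kra factor with the $k$-step quotient of this nilspace, which is an inverse limit of $k$-step nilmanifolds.

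For the first step, I would define generalized Host-Kra measures $\mu^{[n]}$ on $\Omega^{\db{n}}$ inductively, following the Host-Kra template: set $\mu^{[0]} = \lambda$, and let $\mu^{[n+1]}$ be the relatively independent self-joining of $\mu^{[n]}$ with itself over the $\sigma$-algebra of sets invariant under the diagonal $G$-action on $(\Omega^{\db{n}}, \mu^{[n]})$. The ergodicity axiom of the coupling reduces to the hypothesis that $G$ acts ergodically on $\varOmega$, while the conditional independence axiom (for the two codimension-one faces through the distinguished vertex) is built into the inductive definition via the relatively independent joining. The consistency axiom requires checking invariance under arbitrary injective cube morphisms; this reduces to a symmetry statement for $\mu^{[n]}$ under coordinate permutations of $\db{n}$, that is, the classical Host-Kra symmetry lemma adapted from $\mb{Z}$-actions to finitely generated nilpotent $G$-actions, using a central series with abelian quotients together with a nilpotent van der Corput-type argument.

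Once the cubic coupling is in place, applying Theorem \ref{thm:MeasInvThmGenIntro} yields a compact nilspace $\ns$ and a measure-preserving factor map $\gamma: \Omega \to \ns$ with $\gamma^{\db{n}}$ measure-preserving onto $(\ns^{\db{n}}, \mu_{\cu^n(\ns)})$, each $\mu^{[n]}$ being relatively independent over the $\sigma$-algebra generated by $\gamma^{\db{n}}$. Since each $g \in G$ preserves every $\mu^{[n]}$ by construction, the group acts on the cubic coupling by coupling-automorphisms; by the essential uniqueness of $\ns$, each $g$ descends to a measurable map $\wt{g}: \ns \to \ns$ respecting the cube structure, and a standard rigidity property of nilspaces identifies such a map with an element of $\tran(\ns)$. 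Thus $G$ acts on $\ns$ by translations. I would then identify the $k$-th Host-Kra factor of $(\varOmega, G)$ with the $k$-step quotient $\ns_k$ of $\ns$, by pulling back the $(k{+}1)$-cube measure on $\ns$ to $\mu^{[k+1]}$ via the relative-independence clause and invoking the standard characterisation of the Host-Kra factor as the minimal factor on which the seminorm $\||\cdot\||_{k+1}$ is a norm. Finally, invoking the structure theorem for compact $k$-step nilspaces from \cite{CamSzeg}, which exhibits every such nilspace as an inverse limit of finite-rank $k$-step nilmanifolds $\Gamma \backslash N$, and observing that the $G$-action by translations descends coherently to each level of the inverse system, I obtain at each level a $k$-step nilsystem (with $G$ finitely generated, so that the image in $\tran(\Gamma\backslash N)$ is a manageable finitely generated nilpotent subgroup); the inverse limit recovers $Z_k$.

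The principal technical obstacle is the consistency step: extending the Host-Kra symmetry proof from $\mb{Z}$-actions to arbitrary finitely generated nilpotent $G$-actions is not automatic, as the original argument relies on the cyclic structure of $\mb{Z}$, and a genuine adaptation using normal series and van der Corput-type estimates in the nilpotent setting is required. A secondary subtlety occurs in the descent of the $G$-action: one must verify that the induced action on $\ns$ lands in $\tran(\ns)$ rather than merely in a larger group of cube-preserving measurable automorphisms, which may require a separate rigidity argument for nilspace morphisms.
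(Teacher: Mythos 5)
Your overall architecture matches the paper's: construct Host--Kra-type measures, verify the cubic coupling axioms, apply the main structure theorem, descend the $G$-action to translations on the resulting nilspace, and conclude via an inverse-limit representation. The gap is in the very first step, and it is not merely the difficulty you flag but the definition itself. You take $\mu^{[n+1]}$ to be the relatively independent self-joining of $\mu^{[n]}$ over the $\sigma$-algebra of sets invariant under the \emph{diagonal} $G$-action on $(\Omega^{\db{n}},\mu^{[n]})$. That is the literal Host--Kra template for $\mb{Z}$, but for a nonabelian nilpotent $G$ it is the wrong $\sigma$-algebra. The paper (Definition \ref{def:HKcoup}) conditions instead on the invariant sets of the full cube group $H_{n,1}=\cu^n(G_\bullet^{+1})$ of the \emph{shifted} filtration; this group contains the diagonal but also the face transformations $g^F$ with $g\in G_{n-\dim F+1}\subseteq G_2 \supseteq [G,G]$, so for nonabelian $G$ its invariant $\sigma$-algebra is strictly smaller and the resulting measures genuinely differ from yours already at $n=2$. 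This choice is exactly what makes the symmetry (hence consistency) provable: in Theorem \ref{thm:nilaction} the invariance of $\mu^{\db{n}}$ under $\aut(\db{n})$ is reduced, via Lemma \ref{lem:nilplem3} applied with $K=H_{n-2,1}$ and $K_2=H_{n-2,2}\supseteq[K,K]$, to the \emph{abelian} symmetry statement for the induced action of $K/K_2$, which is supplied by \cite{HK} and \cite[Appendix A]{BTZ}. There is no ``nilpotent van der Corput'' argument to be found; the fix is to change the measures so that each doubling is relative to a ${\rm diag}(K,K_2)$-invariant $\sigma$-algebra with abelian quotient. With your diagonal-only measures the coordinate-swap symmetry admits no such reduction, and I see no reason it should hold.

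Two smaller points. First, conditional independence of \emph{adjacent} codimension-one faces is not ``built into'' the relatively independent joining --- that construction gives you idempotence along \emph{opposite} faces. The paper works with the alternative axiom system of Definition \ref{def:cc-idemp} (face consistency, ergodicity, idempotence) and derives the conditional independence axiom from idempotence via Lemma \ref{lem:idemp4} inside Lemma \ref{lem:deduc2}; this deduction is nontrivial. Second, your two remaining worries are resolved, but not quite as you suggest: the fact that each $g$ acts on $\ns_k$ by an element of $\tran(\ns_k)$ is proved directly in Theorem \ref{thm:k-level-erg} (the pushforward action on corner couplings is a cube-preserving homeomorphism, hence a translation by the definition of $\tran_i(\ns_k)$), and the final step needs more than the inverse-limit representation of compact nilspaces from \cite{CamSzeg}: one needs the $G$-\emph{equivariant} statement that an ergodic nilspace system with finitely generated acting group is an inverse limit of nilsystems, which is \cite[Theorem 5.2]{CGS} and is precisely where the finite generation of $G$ enters.
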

\noindent The notion of Host--Kra factor used here is given in Definition \ref{def:HKfactor}, and extends \cite[Definition 4.1]{HK}. Actually, these factors and their corresponding seminorms can be defined for any filtration on $G$ (in Theorem \ref{thm:HKgenintro}, the underlying filtration is the lower central series), and we describe these factors in this more general setting; see Theorem \ref{thm:HKgen}.

The only other results in the direction of Theorems  \ref{thm:ergthyapp} and \ref{thm:HKgen}, apart from \cite[Theorem 10.1]{HK}, are those of Bergelson, Tao and Ziegler in the abelian setting \cite{BTZ}, which focus on actions of infinite-dimensional vector spaces $\mb{F}_p^{\infty}$. The possibility of structural results such as our Theorem \ref{thm:ergthyapp}, concerning nilpotent measure-preserving group actions, was evoked in \cite[p.\ 1540]{BTZ}. (In the setting of topological dynamics, compact nilspaces also appear in recent structure theorems related to group actions, in \cite{GGY}.)

In the analysis of limits of multiple ergodic averages, one of the main strategies is based on using uniformity seminorms to control such averages, and on analyzing characteristic factors for these seminorms; see \cite[\S 2.6]{Fra}. (Other strategies include that of  Ziegler in \cite{Ziegler}, which isolates characteristic factors in a different way.) Theorems \ref{thm:ergthyapp}, \ref{thm:HKgen} contribute to an  extension of this strategy to nilpotent group actions, by enabling a reduction of the problem, for a family of averages including those in \cite{HK}, to the analysis of these averages on  nilspaces, or even on nilmanifolds (when Theorem \ref{thm:HKgen} is applicable). This reduction is not treated in this paper; see Remark \ref{rem:ergavs}.

Our second application concerns the theory of exchangeable random variables. Broadly speaking, this theory aims to describe the structure of the joint distribution of a sequence of random variables, assuming that the distribution has certain prescribed symmetries. The original definition of exchangeability states that a sequence of random variables $(X_v)_{v\in I}$ is exchangeable if the joint distribution is invariant under all permutations of finite subsets of the index set $I$. If $I$ is countably infinite, then a characterization of such distributions is given by the classical theorem of de Finetti, which describes such a distribution as a convex combination of distributions of i.i.d.\ random variables \cite{dF}. Weaker notions of exchangeability, involving fewer symmetries, yield various extensions or analogues of de Finetti's theorem, and the resulting subject is rich in connections with other areas, including combinatorics and ergodic theory; see \cite{Austin,D&J,Fra2}. Despite these connections, and the importance of this subject within probability theory, complete characterizations of exchangeable distributions are known only for a few variants of the exchangeability property. Beyond de Finetti's theorem, principal results of this type are the Aldous--Hoover theorem \cite{Aldous,Hoover} and its extension by Kallenberg \cite{Kal}. 

In this paper we study joint distributions of sequences of random variables that are indexed by the infinite discrete cube $\db{\mb{N}}$. For such a distribution, we assume a  property that we call \emph{cubic exchangeability}, which says that for every $k\geq 0$ the marginal distributions on $k$-dimensional affine subcubes of $\db{\mb{N}}$ are all equal; see Definition \ref{def:cubexch}. Related properties have been studied before. In \cite[\S 16]{Aldous2}, Aldous considered a weaker property, namely invariance under the group $\aut(\db{\mb{N}})$ of symmetries of $\db{\mb{N}}$ (we detail this group in Remark \ref{rem:exrels}). Aldous asked for a description of measures with this property. In \cite{Austin2}, Austin showed that such a description is difficult, in that these $\aut(\db{\mb{N}})$-exchangeable measures (or \emph{cube-exchangeable} measures, as they were called in \cite{Austin2}) form a Poulsen simplex. However, in \cite[\S 5.3]{Austin2} it was noted that a stronger version of $\aut(\db{\mb{N}})$-exchangeability, requiring invariance under the whole group $\textrm{Aff}(\mb{F}_2^\infty)$ of \emph{affine} symmetries of $\db{\mb{N}}$ (viewing $\db{\mb{N}}$ as $\mb{F}_2^\infty$), is also natural. This motivated the problem of describing measures with the latter property. As explained in Remark \ref{rem:exrels}, these properties are related as follows:\vspace{0.2cm}\\
$\textrm{Aff}(\mb{F}_2^\infty)$-exchangeability \cite{Austin2} $\;\,\Rightarrow\;$ cubic exchangeability $\;\,\Rightarrow\;$ $\aut(\db{\mb{N}})$-exchangeability \cite{Aldous2}.

\noindent Our main result in this direction is a description of cubic exchangeable measures (and thereby  of $\textrm{Aff}(\mb{F}_2^\infty)$-exchangeable measures) using nilspaces. Note that nilmanifolds appear in other exchangeability contexts, for instance in Frantzikinakis's work \cite{Fra2}. We explain our result using the following general construction of cubic exchangeable systems of random variables.\footnote{We say that a system or sequence of random variables is cubic exchangeable if its joint distribution is.} Let $\ab$ be a compact abelian group and let $\Bo$ be a standard Borel space. Let $\mc{P}(\Bo)$ denote the set of Borel probability measures on $\Bo$, equipped with its standard Borel structure \cite[p.\ 113]{Ke}. Let $m:\ab\to \mc{P}(\Bo)$ be a Borel function. Let $x,h_1,h_2,\dots$ be i.i.d.\ random elements of $\ab$ chosen according to the Haar probability. For $v\in\db{\mb{N}}$, let $X_v=m(x+v\sbr{1}h_1+v\sbr{2}h_2+\cdots)$ (this sum has only finitely many non-zero terms, by definition of $\db{\mb{N}}$). If we look at a finite-dimensional affine subcube of $\db{\mb{N}}$, then, for the corresponding subcollection of $(X_v)_{v\in\db{\mb{N}}}$, the elements $x+v\sbr{1}h_1+v\sbr{2}h_2+\cdots$ form a subcube of one of the cubes in \eqref{eq:cubes}. Using this, it is seen that $(X_v)_{v\in\db{\mb{N}}}$ is a cubic exchangeable $\mc{P}(\Bo)$-valued sequence. Now, in a second round of randomization, for each $v$ independently we choose $Y_v\in \Bo$ with probability distribution $X_v$. This yields a cubic exchangeable system $(Y_v)_{v\in\db{\mathbb{N}}}$ of $\Bo$-valued random variables. We denote the joint distribution of $(Y_v)_{v\in\db{\mathbb{N}}}$ by $\zeta_{\ab,m}$. This construction can be generalized, replacing $\ab$ by a compact nilspace. Our result describes cubic exchangeability in terms of this construction.
\begin{theorem}\label{IntMainexch}
Let $\Bo$ be a standard Borel space. Then every cubic exchangeable probability measure on $\Bo^{\db{\mb{N}}}$ is a convex combination of measures of the form $\zeta_{\ns,m}$, where $\ns$ is a compact nilspace and $m:\ns\to\mc{P}(\Bo)$ is a Borel function.
\end{theorem}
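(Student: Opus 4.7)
The plan is to deduce Theorem~\ref{IntMainexch} from the main structural result Theorem~\ref{thm:MeasInvThmGenIntro} by recognising a cubic exchangeable measure, after an ergodic decomposition, as a cubic coupling on $(\Bo,\lambda)$, where $\lambda$ is the one-coordinate marginal of $\mu$. First I would translate the exchangeability hypothesis into the cubic coupling axioms. Given a cubic exchangeable $\mu$ on $\Bo^{\db{\mb{N}}}$, let $\mu_n$ denote the common marginal of $\mu$ on any $n$-dimensional affine subcube of $\db{\mb{N}}$; identifying the subcube with $\db{n}$ yields a sequence of couplings of $\lambda$. The consistency axiom is then immediate, because an injective discrete-cube morphism $\phi\colon\db{m}\to\db{n}$ has image an $m$-dimensional affine subcube of $\db{n}$, so the push-forward of $\mu_n$ under $\phi$ equals $\mu_m$.

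Next I would handle the ergodicity axiom $\mu_1=\lambda\times\lambda$ via a Choquet-type decomposition. The set of cubic exchangeable probability measures on $\Bo^{\db{\mb{N}}}$ is a compact convex subset in the weak-$*$ topology, and I would argue that it is a simplex whose extreme points are precisely those cubic exchangeable measures for which any two distinct coordinates are independent, equivalently those for which $\mu_1=\lambda\times\lambda$. This is in the spirit of the classical fact that extremal exchangeable measures in de Finetti's theorem are i.i.d. It therefore suffices to prove the representation for extremal measures and integrate at the end.

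The main technical step is verifying the conditional independence axiom for extremal cubic exchangeable measures, since this Markov-type property is not evidently implied by cubic exchangeability alone. Given codimension-one faces $F_1,F_2\subset\db{n}$ meeting in $F_{12}=F_1\cap F_2$, one needs that the sub-$\sigma$-algebras generated by the projections to $\Bo^{F_1}$ and $\Bo^{F_2}$ are conditionally independent over the $\sigma$-algebra generated by the projection to $\Bo^{F_{12}}$. I would approach this by embedding the configuration into $\db{n+1}$ so that $F_1$ and $F_2$ become parts of a single $(n+1)$-cube arrangement in which the two faces, after a suitable cubic-exchangeable matching, pick up genuinely independent randomness from the extra coordinate; a tail-field/extremality argument should then force the desired conditional independence. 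This is the step I expect to demand the most intricate combinatorial measure-theoretic work.

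With the three cubic coupling axioms verified for extremal $\mu$, Theorem~\ref{thm:MeasInvThmGenIntro} supplies a compact nilspace $\ns$ and a measure-preserving map $\gamma\colon\Bo\to\ns$ such that each $\gamma^{\db{n}}$ is measure-preserving from $(\Bo^{\db{n}},\mu_n)$ to $(\ns^{\db{n}},\mu_{\cu^n(\ns)})$, and each $\mu_n$ is relatively independent over the factor generated by $\gamma^{\db{n}}$. I would define $m\colon\ns\to\mc{P}(\Bo)$ as the disintegration of $\lambda$ over $\gamma$; Borel measurability of $m$ is standard for Borel probability spaces. Unwinding the relative-independence statement for each $n$ and passing to the infinite product by Kolmogorov consistency shows that the $Y_v$ are conditionally independent given $\gamma(Y_v)$, with conditional law $m(\gamma(Y_v))$, while $(\gamma(Y_v))_{v\in\db{\mb{N}}}$ is distributed as the canonical cube process on $\ns$. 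This identifies $\mu$ with $\zeta_{\ns,m}$, and integrating back over the ergodic decomposition yields the required convex-combination representation.
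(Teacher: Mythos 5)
There is a genuine gap, and it is fatal to the proposed route rather than merely difficult. You plan to verify the conditional independence axiom for (extremal) cubic exchangeable measures viewed as couplings of $(\Bo,\lambda)$ and then apply Theorem~\ref{thm:MeasInvThmGenIntro} directly to the marginals $\mu_n$ on $\Bo^{\db{n}}$. But that axiom simply fails for the measures the theorem is supposed to produce. Take $\ab=\mb{Z}_4$, $f(x)=\lfloor x/2\rfloor\in\{0,1\}$, and $\mu=\zeta_{\ab,\delta_f}$, i.e.\ $Y_v=f(x_v)$ for a random Haar cube $(x_v)$. For the two $2$-faces $F_1=\{v\sbr{1}=0\}$, $F_2=\{v\sbr{3}=0\}$ of $\db{3}$ meeting in the $1$-face $\{000,010\}$, a short computation shows $P\big(Y_{001}=Y_{011},\,Y_{100}=Y_{110}\mid Y_{000}=Y_{010}=0\big)=\tfrac58$ while the product of the two individual conditional probabilities is $\tfrac{9}{16}$: conditioning on $Y_{F_1\cap F_2}$ does not recover the hidden variable $h_2$, whose residual randomness correlates the two faces. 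Since $\zeta_{\ab,\delta_f}$ is cubic exchangeable and has the independence property (it is one of the extreme points), no extremality, tail-field, or embedding-into-$\db{n+1}$ argument can rescue the axiom at the level of $\Bo$; the obstruction is that the second randomization in the construction of $\zeta_{\ns,m}$ destroys exactly the information needed for conditional independence of adjacent faces. This is why the theorem asserts that $\mu$ is a \emph{factor} of (a mixture of) cubic couplings, not a cubic coupling itself.

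The paper's proof circumvents this by enlarging the base space before invoking the structure theorem. Writing $\mb{N}=E\sqcup O$ and $V=\Bo^{\db{O}}$, one views $\mu$ as a coupling on $V^{\db{E}}$; Lemma~\ref{lem:infbot} shows that cubic exchangeability alone forces conditional independence of any two faces of $\db{\mb{N}}$ whose intersection has \emph{infinite} dimension (via a face map fixing a finite set and an $L^2$-contraction argument), and this is precisely what makes the re-bracketed measure a weak cubic coupling on $V$ (Proposition~\ref{prop:mainexch}), with $\mu$ recovered as its image under the coordinate projection $q\colon V\to\Bo$. The ergodicity axiom is then supplied not by Choquet theory but by an explicit disintegration over $\mc{A}_v^{\db{\mb{N}}}\wedge\mc{A}_w^{\db{\mb{N}}}$ for a $1$-face $\{v,w\}$ (Proposition~\ref{ergoddecomp1}), using idempotence to see that this $\sigma$-algebra is independent of the chosen face; only after this does Theorem~\ref{thm:MeasInvThmGenIntro} apply, on $V$ rather than on $\Bo$, and $m$ is obtained as $\mb{E}(\delta_{q(\cdot)}\mid\gamma)$. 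Two smaller points: your identification of the extreme points with measures satisfying $\mu_1=\lambda\times\lambda$ is weaker than the paper's independence property (independence of all finite \emph{independent faces}, Definition~\ref{def:cubexch}), and a Choquet/simplex argument is never actually needed. Your consistency check and the final disintegration step are fine, but the core of the argument has to be rebuilt around the enlargement trick.
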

\noindent It also follows from our results that the measures of the form $\zeta_{\ns,m}$ are extreme points in the convex set of cubic exchangeable measures on $\Bo^{\db{\mb{N}}}$. In particular, we show that one can detect whether a cubic exchangeable measure is such an extreme point by using a type of independence property (see Theorem \ref{thm:mainexch1}). As a consequence, we obtain that if $\Bo$ is a compact Polish space then the set of such extremal cubic exchangeable measures on $\Bo^{\db{\mb{N}}}$ is closed with respect to weak limits. This fact leads to the following third main application in this paper, which concerns arithmetic combinatorics.

Given a compact abelian group $\ab$ and a bounded Borel function $f:\ab\to\mb{C}$, and given a type of additive pattern in $\ab$ (any type of configuration determined by a system of integer linear forms), we can talk about the \emph{density} of such patterns in $f$ to refer to the integral of $f$ over the group of such patterns, using the Haar probability on this group. In particular, if $f$ is the indicator function of a Borel set $A\subset \ab$, and the patterns are $k$-term arithmetic progressions, say, then the  integral in question is indeed the density (or probability), among all $k$-term progressions in $\ab$, of those progressions that are included in $A$.  Another example of such a density is the $2^n$-th power of the Gowers norm $\|f\|_{U^n}$, where the additive patterns involved are the $n$-cubes described in \eqref{eq:cubes}.

In keeping with various central questions in arithmetic combinatorics, it is natural to study notions of convergence for sequences of such functions, based on the convergence of the densities of certain additive patterns in the functions. More precisely, if $S$ is a set of additive patterns (in other words $S$ is a collection of systems of integer linear forms) and $(f_i:\ab_i\to\mb{C})_{i\in \mb{N}}$ is a sequence of uniformly bounded measurable functions on compact abelian groups, then we say that the sequence is \emph{$S$-convergent} if for every pattern in $S$ the density of this pattern in $f_i$ converges as $i\to\infty$. It is then also natural to seek a so-called \emph{limit object} for such a convergent sequence, i.e.\ a fixed space with a function defined on it such that the limits of the densities in the sequence can be expressed exactly as certain integrals involving this function. When $S$ is the set of additive patterns given by systems of linear forms of compexity 1 (as per the definition of complexity from \cite{GW}), a complete limit theory with appropriate limit objects was worked out in \cite{Szegedy:Lim}. Other such limits were obtained for a different set of patterns in \cite{Szegedy:HFA}. Our results in this paper are  related to the ones in \cite{Szegedy:HFA}, as we use a similar set of patterns. However, here we are able to extend the results to functions on compact nilspaces. Our main theorem on this topic provides a limit object for a uniformly bounded sequence of functions $(f_i:\ns_i\to\mb{C})$ on compact nilspaces $\ns_i$, assuming the convergence of densities of certain patterns that we call \emph{cubic patterns} (see Definition \ref{def:cp}). The limit object is a measurable function on a compact nilspace, with the values of the function being probability measures on $\mb{C}$ (see Theorem \ref{thm:limob}); this is thus a natural analogue in arithmetic combinatorics of limit objects from the theory of convergent sequences of graphs and hypergraphs \cite{Lovasz}.

Let us briefly describe another application of our main result, concerning the inverse theorem for the Gowers norms. The proof of this theorem given by the second-named author in \cite{Szegedy:HFA} works with ultraproducts of finite abelian groups. Some  arguments in this paper use some of the key ideas from \cite{Szegedy:HFA}, but the tools developed here do not assume any group structure on the underlying probability space. As a consequence, the deduction of the inverse theorem in \cite{Szegedy:HFA} can be carried out similarly, but at a more general level, using the main results from this paper. This yields in particular the inverse theorem for the Gowers norms stated in \cite[Theorem 2]{Szegedy:HFA}, but it also gives an extension of this theorem in which the initial bounded Borel function $f$ can be defined not just on a compact abelian group, but more generally on a compact nilspace. The proof of this theorem can be summarized as follows: supposing for a contradiction the existence of a sequence of functions violating the conclusion of the inverse theorem, one  takes an ultraproduct of this sequence, in which one can then locate a separable factor that yields a cubic coupling, and the result then follows by applying Theorem \ref{thm:MeasInvThmGenIntro}. Thus the core of this proof of the inverse theorem is Theorem \ref{thm:MeasInvThmGenIntro}. Since this proof requires background on the separate topic of analysis on ultraproducts, we shall detail this application in separate work.

Finally, let us add a few remarks about the organization of the paper. Section \ref{sec:prelim} gathers tools from probability theory. Some of these are new (in particular in \S \ref{subsec:condlattice}), while others may be more familiar to probabilists. In any case, it is certainly viable to skim through Section \ref{sec:prelim} on a first reading, as the core of the paper consists really of Section \ref{sec:cc}, in which cubic couplings are introduced, and of Section \ref{sec:structhm}, where the main results on the structure of cubic couplings are obtained. The applications are treated in sections \ref{sec:ergapps} to \ref{sec:limits}.

\vspace{0.5cm}

\noindent \textbf{Acknowledgements.} We are very grateful to the anonymous referee and to Diego Gonz\'alez-S\'anchez for careful reading and many useful remarks that helped to improve this paper. The first-named author received funding from the Spanish Ministerio de Ciencia e Innovaci\'on project MTM2017-83496-P. The second-named author received funding from the European Research Council under the European Union's Seventh Framework Programme (FP7/2007-2013)/ERC grant agreement 617747. The research was partially supported by the MTA R\'enyi Institute Lend\"ulet \emph{Limits of Structures} Research Group.

\section{Measure-theoretic preliminaries}\label{sec:prelim}

\noindent This section gathers the concepts and results from measure theory needed for the sequel.  

\subsection{Some basic notions}\hfill \medskip\\
\noindent Among the results in this subsection, some are well-known (belonging to the folklore) or appear elsewhere in the literature. In these cases we  refer to the relevant sources or give the proofs in the appendix, in order to enable a lighter first reading of this subsection.

Let $(\Omega,\mc{A},\lambda)$ be a probability space. When the $\sigma$-algebra $\mc{A}$ and the probability measure $\lambda$ are clear from the context, we write $\varOmega$ instead of $(\Omega,\mc{A},\lambda)$. Given a family $\mc{F}$ of subsets of $\Omega$, we denote by $\sigma(\mc{F})$ the $\sigma$-algebra generated by $\mc{F}$, that is, the smallest $\sigma$-algebra (relative to inclusion) among the $\sigma$-algebras that include $\mc{F}$.
\begin{defn}[Join of $\sigma$-algebras]
Given $\sigma$-algebras $\mc{A}$, $\mc{B}$ on a set $\Omega$, the \emph{join} of $\mc{A}$ and $\mc{B}$ is the $\sigma$-algebra $\mc{A}\vee\mc{B} := \sigma(\mc{A} \cup \mc{B})$. 
\end{defn}
\noindent Given a Polish\footnote{A topological space is \emph{Polish} if it is separable and metrizable by means of a complete metric.} space $X$ and $\mc{A}$-measurable functions $f,g:\Omega\to X$ (relative to the Borel $\sigma$-algebra on $X$), we write  $f=_\lambda g$ to mean that $f,g$ are equal $\lambda$-almost everywhere, i.e.\ $\lambda(\{\omega\in\Omega: f(\omega)\neq g(\omega)\})=0$. (The assumption that $X$ is Polish ensures that the set $\{f\neq g\}$ here is measurable; see \cite[Lemma 6.4.2 and Example 6.4.3]{Boga2}.) For $p\in [1,\infty]$ and a probability space $\varOmega=(\Omega,\mc{A},\mu)$, we denote the corresponding $L^p$ space by $L^p(\varOmega)$ (see \cite[\S 4.1]{Boga1}).\footnote{Strictly speaking the elements of $L^p(\varOmega)$ are \emph{equivalence classes} of measurable functions $f$ with $\int_\Omega |f|^p\ud\lambda<\infty$, under the relation $=_\lambda$, but we shall take part in the common abuse of terminology whereby a \emph{function} $f$ is declared to be in $L^p(\varOmega)$ if $\int_\Omega |f|^p\ud\lambda<\infty$.} We also use variants of this notation when a particular component of $\varOmega$ needs to be emphasized and the other components are clear, for instance the notation $L^p(\lambda)$ or $L^p(\mc{A})$. We denote by $\mc{U}^p(\mc{A})$ the unit ball of $L^p(\mc{A})$.

We use the following approximation result many times (for a proof see Lemma \ref{lem:pisysapproxApp}). 

\begin{lemma}\label{lem:pisysapprox}
Let $1\leq p <\infty$, let $(\mc{B}_i)_{i=1}^n$ be a sequence of sub-$\sigma$-algebras of $\mc{A}$, and let $\mc{B}=\bigvee_{i=1}^n \mc{B}_i$. Let $\mc{R}$ denote the set of functions on $\Omega$ of the form $\omega\mapsto \prod_{i=1}^n f_i(\omega)$, where $f_i\in \mc{U}^\infty(\mc{B}_i)$ for all $i$. Then for every $f\in L^p(\mc{B})$, and every $\epsilon>0$, there exists a finite linear combination $g$ of functions in $\mc{R}$ such that $\|f-g\|_{L^p}\leq\epsilon$.
\end{lemma}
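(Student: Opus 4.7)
The plan is to reduce to approximating indicator functions and then invoke a standard approximation-by-algebras argument. Since simple functions are dense in $L^p(\mc{B})$ for $1\leq p<\infty$, and since finite linear combinations of elements of $\mc{R}$ form a vector space, it suffices to show: for every $A\in\mc{B}$ and every $\epsilon>0$ there exists a finite linear combination $g$ of elements of $\mc{R}$ with $\|\mathbf{1}_A - g\|_{L^p}\leq\epsilon$.

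Let $\mc{F}$ denote the algebra generated by $\bigcup_{i=1}^n\mc{B}_i$, and let $\mc{P} := \{B_1\cap\cdots\cap B_n : B_i\in\mc{B}_i\}$. I would first observe that $\mc{P}$ is a $\pi$-system and that, because each $\mc{B}_i$ is itself closed under complement and finite intersection, every element of $\mc{F}$ can be written as a finite disjoint union of elements of $\mc{P}$ (this is the main combinatorial point; it follows from a direct inclusion-exclusion after noting that complements of elements of $\mc{B}_i$ lie in $\mc{B}_i$).

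Next, I would invoke the classical measure-theoretic fact that $\mc{F}$ is dense in $\mc{B}=\sigma(\mc{F})$ for the pseudometric $(A,B)\mapsto\lambda(A\triangle B)$: for every $A\in\mc{B}$ and $\delta>0$ there exists $F\in\mc{F}$ with $\lambda(A\triangle F)\leq\delta$. This is proved by checking that the collection $\{A\in\mc{B} : \forall\,\delta>0,\ \exists F\in\mc{F},\ \lambda(A\triangle F)\leq\delta\}$ is a $\sigma$-algebra (equivalently, a $\lambda$-system) containing $\mc{F}$, hence equals $\mc{B}$. Choosing $\delta=\epsilon^p$ gives $\|\mathbf{1}_A - \mathbf{1}_F\|_{L^p}^p = \lambda(A\triangle F)\leq\epsilon^p$.

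Finally, writing $F=\bigsqcup_j(B_{1,j}\cap\cdots\cap B_{n,j})$ with $B_{i,j}\in\mc{B}_i$, we have
\[
\mathbf{1}_F \;=\; \sum_j \prod_{i=1}^n \mathbf{1}_{B_{i,j}},
\]
and each product on the right lies in $\mc{R}$ since each $\mathbf{1}_{B_{i,j}}\in\mc{U}^\infty(\mc{B}_i)$. Taking $g=\mathbf{1}_F$ closes the argument. The proof is essentially routine: the only step requiring real care is the structural description of $\mc{F}$ in terms of the $\pi$-system $\mc{P}$; the remaining ingredients (density of simple functions, $\pi$-$\lambda$ approximation) are standard.
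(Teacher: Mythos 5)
Your proof is correct and follows essentially the same route as the paper's: reduce by density of simple functions to indicators of sets in $\mc{B}$, observe that finite intersections $B_1\cap\cdots\cap B_n$ with $B_i\in\mc{B}_i$ form a semiring (equivalently, that every element of the generated algebra is a finite disjoint union of such products), and invoke the standard approximation of sets in $\sigma(\mc{F})$ by finite disjoint unions of generators. The one step you flag as needing care -- the disjoint-union decomposition -- is exactly the semiring verification the paper carries out via the explicit identity for $(X_1\cap X_2)\setminus(Y_1\cap Y_2)$, so nothing is missing.
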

\noindent  When we need to specify the $\sigma$-algebras involved in $\mc{R}$, we write $\mc{R}\big((\mc{B}_i)_{i=1}^n\big)$.

Let us recall the following definition of conditional independence \cite[p.\ 30]{Meyer}, \cite[\S 7]{Yan}.

\begin{defn}[Conditional independence of two sub-$\sigma$-algebras relative to a third one]\label{def:relcondindep}
Let $(\Omega,\mc{A},\lambda)$ be a probability space, and let $\mc{B}_0,\mc{B}_1,\mc{B}$ be sub-$\sigma$-algebras of $\mc{A}$. We say that $\mc{B}_0,\mc{B}_1$ are \emph{conditionally independent relative to} $\mc{B}$ if for every bounded $\mc{B}_0$-measurable function $f_0$ and bounded $\mc{B}_1$-measurable function $f_1$, we have
$\mb{E}(f_0 f_1| \mc{B}) =_\lambda \mb{E}( f_0 | \mc{B})\; \mb{E}( f_1| \mc{B})$. 
\end{defn}
\noindent Recall also that $\mc{B}_0$, $\mc{B}_1$ are said to be \emph{independent} if for every function $f_0\in L^\infty(\mc{B}_0)$ and $f_1\in L^\infty(\mc{B}_1)$ we have $\mb{E}(f_0 f_1) = \mb{E}( f_0 )\, \mb{E}( f_1)$; equivalently if $\lambda(A_0\cap A_1)= \lambda(A_0)\,\lambda(A_1)$ for every $A_0\in \mc{B}_0$, $A_1\in \mc{B}_1$. In some contexts we may have to clarify what is the measure with respect to which the $\sigma$-algebras are independent; in this case we say they are \emph{independent in $\lambda$}. This notion of independence is the special case of Definition \ref{def:relcondindep} with  $\mc{B}=\{\emptyset,\Omega\}$.

In Definition \ref{def:relcondindep} we are fully rigorous by using the notation  $=_\lambda$. To avoid overloading the notation, when the measure $\lambda$ is clear from the context we shall often omit it from such equalities between conditional expectations (especially from Subsection \ref{subsec:couplings} onwards).

Let us recall also the following useful result (for a proof see \cite[p.\ 30, Theorem 51]{Meyer}).
\begin{theorem}\label{thm:condindep}
Let $(\Omega,\mc{A},\lambda)$ be a probability space, and let $\mc{B}_0$, $\mc{B}_1$, $\mc{B}$ be sub-$\sigma$-algebras of $\mc{A}$. Then $\mc{B}_0$, $\mc{B}_1$ are conditionally independent relative to $\mc{B}$ if and only if the following statement holds\textup{:} for every function $f\in L^1(\mc{B}_1)$ we have $\mb{E}(f|\mc{B}_0\vee \mc{B})=_\lambda \mb{E}(f| \mc{B})$.
\end{theorem}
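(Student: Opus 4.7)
The plan is to prove both implications by standard manipulations with conditional expectations, with Lemma \ref{lem:pisysapprox} playing the role of reducing integral identities over $\mc{B}_0 \vee \mc{B}$ to products of bounded $\mc{B}_0$- and $\mc{B}$-measurable functions.

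For the forward direction, assume $\mc{B}_0, \mc{B}_1$ are conditionally independent relative to $\mc{B}$. First I would reduce to the case $f \in L^\infty(\mc{B}_1)$ by density. The function $\mb{E}(f \mid \mc{B})$ is automatically $\mc{B}_0 \vee \mc{B}$-measurable, so to verify that it equals $\mb{E}(f \mid \mc{B}_0 \vee \mc{B})$ it suffices to show
\[
\mb{E}\bigl(h\, \mb{E}(f \mid \mc{B})\bigr) = \mb{E}(h f) \quad \text{for all } h \in L^\infty(\mc{B}_0 \vee \mc{B}).
\]
By Lemma \ref{lem:pisysapprox} applied to the pair $(\mc{B}_0, \mc{B})$, it is enough to check this for $h = f_0 g$ with $f_0 \in \mc{U}^\infty(\mc{B}_0)$, $g \in \mc{U}^\infty(\mc{B})$. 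Using the tower property and the $\mc{B}$-measurability of $g$,
\[
\mb{E}(f_0 g f) = \mb{E}\bigl[g\, \mb{E}(f_0 f \mid \mc{B})\bigr] = \mb{E}\bigl[g\, \mb{E}(f_0 \mid \mc{B})\, \mb{E}(f \mid \mc{B})\bigr],
\]
where the last equality uses the conditional independence hypothesis. On the other hand,
\[
\mb{E}\bigl(f_0 g\, \mb{E}(f \mid \mc{B})\bigr) = \mb{E}\bigl[g\, \mb{E}(f \mid \mc{B})\, \mb{E}(f_0 \mid \mc{B})\bigr],
\]
again by conditioning on $\mc{B}$ and pulling out the $\mc{B}$-measurable factors. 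These match, proving the forward direction.

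For the converse, assume $\mb{E}(f \mid \mc{B}_0 \vee \mc{B}) = \mb{E}(f \mid \mc{B})$ for every $f \in L^1(\mc{B}_1)$. Let $f_0 \in L^\infty(\mc{B}_0)$, $f_1 \in L^\infty(\mc{B}_1)$. Since $f_0$ is $\mc{B}_0 \vee \mc{B}$-measurable and bounded,
\[
\mb{E}(f_0 f_1 \mid \mc{B}_0 \vee \mc{B}) = f_0\, \mb{E}(f_1 \mid \mc{B}_0 \vee \mc{B}) = f_0\, \mb{E}(f_1 \mid \mc{B}),
\]
the last step by hypothesis. Applying $\mb{E}(\,\cdot\, \mid \mc{B})$ to both sides and using the tower property together with the $\mc{B}$-measurability of $\mb{E}(f_1 \mid \mc{B})$ yields
\[
\mb{E}(f_0 f_1 \mid \mc{B}) = \mb{E}(f_1 \mid \mc{B})\, \mb{E}(f_0 \mid \mc{B}),
\]
which is exactly the conditional independence condition in Definition \ref{def:relcondindep}.

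I do not anticipate a real obstacle here; the result is essentially a bookkeeping exercise in the tower property and pull-out. The only point requiring any care is the forward direction, where one must reduce the verification of an integral identity over the join $\mc{B}_0 \vee \mc{B}$ to product test functions, and this is precisely what Lemma \ref{lem:pisysapprox} provides.
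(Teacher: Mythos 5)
Your proof is correct. Note that the paper does not actually prove Theorem \ref{thm:condindep}; it cites \cite[p.~30, Theorem 51]{Meyer} and only records in Remark \ref{rem:integtobded} the reduction from $L^1(\mc{B}_1)$ to $L^\infty(\mc{B}_1)$, which is exactly the density-plus-contraction step you invoke at the start of the forward direction. Your argument is the standard one that the cited reference contains: the converse is the two-line tower/pull-out computation, and the forward direction reduces the verification of $\mb{E}\bigl(h\,\mb{E}(f\mid\mc{B})\bigr)=\mb{E}(hf)$ over $h\in L^\infty(\mc{B}_0\vee\mc{B})$ to product test functions $f_0g$ via Lemma \ref{lem:pisysapprox}; since $f$ and $\mb{E}(f\mid\mc{B})$ are bounded after the reduction, both sides are $L^1$-continuous in $h$, so the approximation step is legitimate. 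Both computations with the tower property and the pull-out of $\mc{B}$-measurable bounded factors are carried out correctly, so there is nothing to fix.
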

\begin{remark}\label{rem:integtobded}
In Theorem \ref{thm:condindep} the equivalence still holds if we replace $L^1(\mc{B}_1)$ with $L^\infty(\mc{B}_1)$. This follows from the density of $L^\infty(\mc{B}_1)$ in $L^1(\mc{B}_1)$ \cite[Lemma 4.2.1]{Boga1} and the fact that conditional expectation is a contraction on $L^1(\mc{B}_1)$ \cite[Theorem 10.1.5 (5)]{Boga2}.
\end{remark}
\noindent We use mostly a special case of Definition \ref{def:relcondindep} where $\mc{B}$ is given by the following operation.

\begin{defn}[Meet of sub-$\sigma$-algebras]
Let $(\Omega,\mc{A},\lambda)$ be a probability space. For sub-$\sigma$-algebras  $\mc{B}_0$, $\mc{B}_1$ of $\mc{A}$, we denote by $\mc{B}_0\wedge_\lambda \mc{B}_1$ the sub-$\sigma$-algebra of $\mc{A}$ consisting of all sets $A\in \mc{A}$ such that there exist $B_0\in \mc{B}_0$ and $B_1\in \mc{B}_1$ satisfying $\lambda(A\Delta B_0)=\lambda(A\Delta B_1)=0$.
\end{defn}
\noindent When the ambient measure $\lambda$ is clear, we omit it from the notation, writing just $\mc{B}_0 \wedge \mc{B}_1$. It is readily shown that $\mc{B}_0 \wedge \mc{B}_1$ is indeed a sub-$\sigma$-algebra of $\mc{A}$ (see Lemma \ref{lem:meetsubalg}).

Given a $\sigma$-algebra $\mc{A}'$ on a set $\Omega'$ and a function $f:\Omega \to \Omega'$, we denote by $f^{-1}(\mc{B}')$ the \emph{preimage $\sigma$-algebra} (or \emph{preimage of $\mc{B}'$ under $f$}), that is $f^{-1}(\mc{B}')=\{f^{-1}(S):S\in \mc{B}'\}$.
\begin{remark}
The $\sigma$-algebra $\mc{B}_0\wedge_\lambda \mc{B}_1$ clearly includes the intersection $\sigma$-algebra $\mc{B}_0 \cap \mc{B}_1$, but this inclusion may be strict. For instance, consider $\Omega=[0,1]\times [0,1]= [0,1]^{\{0,1\}}$ with the product $\sigma$-algebra $\mc{B}\otimes \mc{B}$ where $\mc{B}$ is the Borel $\sigma$-algebra on the interval $[0,1]$. For $i=0,1$ let $\mc{B}_i=p_i^{-1}(\mc{B})$ where $p_i:[0,1]^2\to [0,1]$, $(\omega_0,\omega_1)\mapsto \omega_i$. Then $\mc{B}_0\cap \mc{B}_1=\{\emptyset, \Omega\}$. Let $D$ denote the diagonal $\{(\omega_0,\omega_1)\in \Omega:\omega_0=\omega_1\}$ and $\lambda$ the probability measure on $[0,1]^2$ defined as the image of the Lebesgue measure on $[0,1]$ under the map $t\mapsto (t,t)$ (in particular $\lambda(D)=1$). Then for every $A\in \mc{B}\otimes \mc{B}$ we have $\lambda\big(A\Delta p_i^{-1}(p_i(A\cap D))\big)=0$, for $i=0,1$. It follows that $\mc{B}_0\wedge_\lambda \mc{B}_1=\mc{B}\otimes \mc{B}$.
\end{remark}
\noindent The above example shows also that we can have $\mc{B}_0\wedge_\lambda \mc{B}_1\not\subset \mc{B}_i$ for $i=0,1$. However, we do have inclusion up to null sets in general, in the following sense. Recall that,  for sub-$\sigma$-algebras $\mc{B},\mc{B}'$ of $\mc{A}$, the relation of inclusion up to null sets, denoted by $\subset_\lambda$, is defined by declaring that $\mc{B}\subset_\lambda \mc{B}'$ if for every $A\in \mc{B}$ there exists $A'\subset \mc{B}'$ such that $\lambda(A\Delta A')=0$. We clearly have $\mc{B}_0\wedge_\lambda \mc{B}_1\subset_\lambda \mc{B}_i$ for $i=0,1$. We write $\mc{B}=_\lambda \mc{B}'$ to mean that $\mc{B}\subset_\lambda \mc{B}'$ and $\mc{B}'\subset_\lambda \mc{B}$. Let us record the following basic fact about the relation $\subset_\lambda$ (for a proof, see Lemma \ref{lem:nestexpApp}).
\begin{lemma}\label{lem:nestexp}
Let $(\Omega,\mc{A},\lambda)$ be a probability space, and let $\mc{B}$, $\mc{B}'$ be sub-$\sigma$-algebras of $\mc{A}$ with $\mc{B}\subset_\lambda\mc{B}'$. Then for every integrable function $f:\Omega\to \mb{R}$ we have $\mb{E}(\mb{E}(f|\mc{B}')|\mc{B})=_\lambda \mb{E}(f|\mc{B})$, and also $\mb{E}(f|\mc{B}')=_\lambda \mb{E}(f|\mc{B}' \vee \mc{B})$. 
\end{lemma}
\noindent We may also use the notation $\subset_\lambda$ with \emph{sets} $A,B\in \mc{A}$, writing $A \subset_\lambda B$ to mean that $\lambda(A\setminus B)=0$. We write $A=_\lambda B$ to mean that $A \subset_\lambda B$ and $B \subset_\lambda A$, i.e.\ $\lambda(A\Delta B)=0$.

The special case of Definition \ref{def:relcondindep} that we shall use is the following.

\begin{defn}[Conditional independence of two sub-$\sigma$-algebras]\label{def:condindep}
Let $\varOmega=(\Omega,\mc{A},\lambda)$ be a probability space and let $\mc{B}_0$, $\mc{B}_1$ be sub-$\sigma$-algebras of $\mc{A}$. We say that $\mc{B}_0$, $\mc{B}_1$ are \emph{conditionally independent in} $\lambda$ (or \emph{in} $\varOmega$), and we write $\mc{B}_0 \upmod_\lambda \mc{B}_1$, if $\mc{B}_0$, $\mc{B}_1$ are conditionally independent relative to $\mc{B}_0\wedge_\lambda \mc{B}_1$ as per Definition \ref{def:relcondindep}.
\end{defn}
\noindent When the ambient measure $\lambda$ is clear, we omit it from the notation and terminology, writing just $\mc{B}_0 \upmod \mc{B}_1$ and saying that $\mc{B}_0$, $\mc{B}_1$ are conditionally independent.

The following result characterizes conditional independence in terms of conditional expectation, and we use it many times in the sequel.
\begin{proposition}\label{prop:condindep}
Let $(\Omega,\mc{A},\lambda)$ be a probability space, and let $\mc{B}_0,\mc{B}_1$ be sub-$\sigma$-algebras of $\mc{A}$. Then $\mc{B}_0\upmod \mc{B}_1$ holds if and only if, for every bounded measurable function $f:\Omega\to\mb{R}$, the following equation is satisfied for $i=0$ or, equivalently, for $i=1$\textup{:} 
\begin{equation}\label{eq:condindep}
\mb{E}( \,\mb{E}(f|\mc{B}_i)\, |\mc{B}_{1-i}) =_\lambda \mb{E}( f | \mc{B}_0\wedge \mc{B}_1).
\end{equation}
\end{proposition}
\noindent This result appears in a similar form (stated for standard probability spaces) in \cite[Theorem 9]{Yan}; we include a proof in the appendix for completeness (see Proposition \ref{prop:condindepApp}). As in Theorem \ref{thm:condindep}, we may replace ``bounded" by ``integrable" in this proposition.

Note that \eqref{eq:condindep} implies that the conditional expectation operators for $\mc{B}_0$ and $\mc{B}_1$ commute. This motivates the terminology from \cite{Yan} which says that $\mc{B}_0$, $\mc{B}_1$ are \emph{stochastically commuting} if $\mc{B}_0\upmod \mc{B}_1$. We shall stick with the conditional independence terminology, motivated by the relation of this notion with Definition \ref{def:relcondindep}. There is also a useful interpretation of this notion in terms of certain subspaces of a Hilbert space being perpendicular. To detail this we use the following fact (for a proof see Lemma \ref{lem:intermeetApp}).
\begin{lemma}\label{lem:intermeet}
Let $(\Omega,\mc{A},\lambda)$ be a probability space, let $\mc{B}_0$, $\mc{B}_1$ be sub-$\sigma$-algebras of $\mc{A}$, and let $1\leq p<\infty$. Then $L^p(\mc{B}_0)\cap L^p(\mc{B}_1)= L^p(\mc{B}_0\wedge \mc{B}_1)$.
\end{lemma}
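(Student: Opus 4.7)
The plan is to prove the two inclusions separately. The inclusion $L^p(\mc{B}_0\wedge\mc{B}_1)\subset L^p(\mc{B}_0)\cap L^p(\mc{B}_1)$ is the easier direction. Since $\mc{B}_0\wedge_\lambda\mc{B}_1\subset_\lambda \mc{B}_i$ for $i=0,1$, any $f\in L^p(\mc{B}_0\wedge\mc{B}_1)$ admits a $\mc{B}_i$-measurable representative: one first checks this for indicator functions of sets $A\in\mc{B}_0\wedge\mc{B}_1$ directly from the definition of the meet (modifying $A$ on a null set to obtain a set in $\mc{B}_i$), then extends to simple functions and finally to general $L^p$ functions by the standard approximation argument. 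Since $L^p$ classes identify functions equal $\lambda$-a.e., this gives $f\in L^p(\mc{B}_i)$.

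For the nontrivial inclusion $L^p(\mc{B}_0)\cap L^p(\mc{B}_1)\subset L^p(\mc{B}_0\wedge\mc{B}_1)$, take $f\in L^p(\mc{B}_0)\cap L^p(\mc{B}_1)$ and choose a $\mc{B}_0$-measurable representative $g_0$ and a $\mc{B}_1$-measurable representative $g_1$, so that $g_0=_\lambda g_1$. Working first with real-valued functions (the complex case reduces to real and imaginary parts), I will show that $g_0$ is in fact $\mc{B}_0\wedge_\lambda\mc{B}_1$-measurable. For every $t\in\mb{R}$, the set $A_t:=\{g_0\le t\}$ lies in $\mc{B}_0$, while the set $\{g_1\le t\}$ lies in $\mc{B}_1$, and $\lambda(A_t\Delta\{g_1\le t\})=0$ because $g_0=_\lambda g_1$. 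Taking $B_0=A_t\in\mc{B}_0$ and $B_1=\{g_1\le t\}\in\mc{B}_1$ in the definition of the meet shows $A_t\in\mc{B}_0\wedge_\lambda\mc{B}_1$. Since the sublevel sets of $g_0$ generate its $\sigma$-algebra of measurability, $g_0$ is $\mc{B}_0\wedge\mc{B}_1$-measurable, so $f=_\lambda g_0$ represents an element of $L^p(\mc{B}_0\wedge\mc{B}_1)$.

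I do not anticipate a genuine obstacle; the proof is essentially a careful bookkeeping exercise with representatives of $L^p$ classes. The only mildly delicate point is the easy inclusion, where one must verify that the pointwise a.e.\ modification needed to pass from $\mc{B}_0\wedge\mc{B}_1$-measurability to $\mc{B}_i$-measurability can be carried out for a general $L^p$ function and not just for simple functions; this is handled by first approximating in $L^p$ by simple functions, modifying each one on a null set to become $\mc{B}_i$-measurable, and then extracting an a.e.\ convergent subsequence whose limit is $\mc{B}_i$-measurable (or, equivalently, by invoking the fact that the inclusion $L^p(\mc{B}_i)\hookrightarrow L^p(\mc{A})$ has closed image, so the $\mc{B}_i$-measurable simple functions are dense in the $L^p$-closure that contains $f$).
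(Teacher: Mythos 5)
Your proof is correct and follows essentially the same route as the paper's: the easy inclusion by approximating with simple functions built from sets in $\mc{B}_0\wedge\mc{B}_1$, and the nontrivial inclusion by showing that each sublevel set of a real-valued representative lies in $\mc{B}_0\wedge_\lambda\mc{B}_1$ directly from the definition of the meet. The extra bookkeeping you supply for the easy direction is a harmless elaboration of what the paper leaves implicit.
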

\noindent Recall that $L^2(\mc{B}_0)$ and $L^2(\mc{B}_1)$ are closed subspaces of the Hilbert space $L^2(\mc{A})$, and the expectation operator $f\mapsto \mb{E}(f|\mc{B}_i)$ is the orthogonal projection onto $L^2(\mc{B}_i)$ (see \cite[Chapter 5, \S 3]{Furst}). Then, by \eqref{eq:condindep} and Lemma \ref{lem:intermeet}, conditional independence of $\mc{B}_0$, $\mc{B}_1$ means that projection from one of these subspaces to the other is the same as projection to the intersection of these subspaces. This yields the intuition that $\mc{B}_0\upmod\mc{B}_1$ holds when $L^2(\mc{B}_0)$ and $L^2(\mc{B}_1)$ are in a sense perpendicular. This intuition is illustrated further by the following result, which we also use repeatedly in the sequel.

\begin{lemma}\label{lem:condindepequiv}
Let $(\Omega,\mc{A},\lambda)$ be a probability space, and let $\mc{B}_0$, $\mc{B}_1$ be sub-$\sigma$-algebras of $\mc{A}$. Then for $\mc{B}_0 \upmod \mc{B}_1$ to hold it is necessary and sufficient to have that every bounded $\mc{B}_0$-measurable function $f:\Omega\to \mb{R}$ such that $\mb{E}(f|\mc{B}_0\wedge \mc{B}_1)=_\lambda 0$ satisfies $\mb{E}(f|\mc{B}_1) =_\lambda 0$.
\end{lemma}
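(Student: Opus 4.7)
The plan is to deduce this equivalence directly from Proposition \ref{prop:condindep}, which characterizes $\mc{B}_0\upmod\mc{B}_1$ by the identity $\mb{E}(\mb{E}(f|\mc{B}_i)|\mc{B}_{1-i}) = \mb{E}(f|\mc{B}_0\wedge\mc{B}_1)$, together with the routine tower/nesting facts from Lemma \ref{lem:nestexp} applied with the inclusions $\mc{B}_0\wedge\mc{B}_1 \subset_\lambda \mc{B}_i$.

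For the forward direction, I would assume $\mc{B}_0\upmod\mc{B}_1$, take a bounded $\mc{B}_0$-measurable $f$ with $\mb{E}(f|\mc{B}_0\wedge\mc{B}_1)=0$, and apply Proposition \ref{prop:condindep} with $i=0$. Since $f$ is $\mc{B}_0$-measurable we have $\mb{E}(f|\mc{B}_0)=f$, so the left side of \eqref{eq:condindep} becomes $\mb{E}(f|\mc{B}_1)$, while the right side is $0$ by hypothesis. This gives $\mb{E}(f|\mc{B}_1)=0$ as required.

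For the converse, given an arbitrary bounded measurable $g$, I would set $h := \mb{E}(g|\mc{B}_0)$ (bounded and $\mc{B}_0$-measurable) and consider $f := h - \mb{E}(h|\mc{B}_0\wedge\mc{B}_1)$. Since $\mc{B}_0\wedge\mc{B}_1 \subset_\lambda \mc{B}_0$, Lemma \ref{lem:nestexp} tells us that $\mb{E}(h|\mc{B}_0\wedge\mc{B}_1)$ is (a version of) a $\mc{B}_0$-measurable function, so $f$ is bounded and $\mc{B}_0$-measurable, and by construction $\mb{E}(f|\mc{B}_0\wedge\mc{B}_1)=0$. The hypothesis then yields $\mb{E}(h|\mc{B}_1)=\mb{E}\bigl(\mb{E}(h|\mc{B}_0\wedge\mc{B}_1)\bigm|\mc{B}_1\bigr)$. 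Using $\mc{B}_0\wedge\mc{B}_1 \subset_\lambda \mc{B}_1$ and Lemma \ref{lem:nestexp} again, the right-hand side collapses to $\mb{E}(h|\mc{B}_0\wedge\mc{B}_1)$, which equals $\mb{E}(g|\mc{B}_0\wedge\mc{B}_1)$ by the tower property applied along $\mc{B}_0\wedge\mc{B}_1 \subset_\lambda \mc{B}_0$. Thus $\mb{E}(\mb{E}(g|\mc{B}_0)|\mc{B}_1) = \mb{E}(g|\mc{B}_0\wedge\mc{B}_1)$ for every bounded measurable $g$, and Proposition \ref{prop:condindep} delivers $\mc{B}_0\upmod\mc{B}_1$.

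I do not expect any genuine obstacle: the argument is essentially a one-line manipulation of \eqref{eq:condindep} once one reduces to $\mc{B}_0$-measurable functions of mean zero relative to $\mc{B}_0\wedge\mc{B}_1$. The only minor bookkeeping concerns handling the $\subset_\lambda$ inclusions correctly when invoking Lemma \ref{lem:nestexp}, to know for instance that a $\mc{B}_0\wedge\mc{B}_1$-measurable function is unchanged (a.e.) by conditioning on $\mc{B}_1$; this is exactly what Lemma \ref{lem:nestexp} is designed to handle.
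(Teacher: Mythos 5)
Your proof is correct and follows essentially the same route as the paper: the necessity is the same one-line specialization of \eqref{eq:condindep} using $f=_\lambda\mb{E}(f|\mc{B}_0)$, and your sufficiency argument with $f:=\mb{E}(g|\mc{B}_0)-\mb{E}(g|\mc{B}_0\wedge\mc{B}_1)$ is exactly the paper's decomposition (with the roles of the letters $f$ and $g$ swapped), including the same appeals to Lemma \ref{lem:nestexp} to justify $\mc{B}_0$-measurability and the collapse of the conditional expectations.
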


\begin{proof}
The necessity follows from \eqref{eq:condindep} and the fact that $f=_\lambda\mb{E}(f|\mc{B}_0)$. To prove the sufficiency, we let $f$ be any integrable function $\Omega\to\mb{R}$ and we show that \eqref{eq:condindep} holds. Let $g$ be a function equal to $\mb{E}(f|\mc{B}_0)-\mb{E}(f|\mc{B}_0\wedge \mc{B}_1)$ almost everywhere. Note that since $\mc{B}_0\wedge \mc{B}_1\subset_\lambda \mc{B}_0$, it follows from Lemma \ref{lem:nestexp} that $\mb{E}(f|\mc{B}_0\wedge \mc{B}_1)$ is almost-surely equal to a $\mc{B}_0$-measurable function. Hence this is also true of $g$, i.e.\ there is a $\mc{B}_0$-measurable function $h$ such that $g=_\lambda h$. By linearity of conditional expectation and the first  equality in Lemma \ref{lem:nestexp}, we have $\mb{E}(h|\mc{B}_0\wedge\mc{B}_1)=_\lambda \mb{E}(g|\mc{B}_0\wedge\mc{B}_1) =_\lambda \mb{E}(\mb{E}(f|\mc{B}_0)|\mc{B}_0\wedge\mc{B}_1)- \mb{E}(f|\mc{B}_0\wedge \mc{B}_1)=_\lambda 0$. By our assumption we therefore have $\mb{E}(h|\mc{B}_1)=_\lambda 0$. Hence $ \mb{E}(\mb{E}(f|\mc{B}_0)|\mc{B}_1) -\mb{E}(f|\mc{B}_0\wedge \mc{B}_1)=_\lambda \mb{E}(g|\mc{B}_1)=_\lambda \mb{E}(h|\mc{B}_1)=_\lambda 0$, and \eqref{eq:condindep} follows.
\end{proof}
\noindent We shall need to handle interactions between joins and meets of sub-$\sigma$-algebras. One result on this is the following (recorded here mainly for illustration; for a proof see Lemma \ref{lem:halfdistribApp}).
\begin{lemma}\label{lem:halfdistrib}
Let $(\Omega,\mc{A},\lambda)$ be a probability space and $\mc{B}_1,\mc{B}_2,\mc{B}_3$ be sub-$\sigma$-algebras of $\mc{A}$. Then $(\mc{B}_1\vee \mc{B}_2)\wedge \mc{B}_3 \supset (\mc{B}_1\wedge \mc{B}_3)\vee (\mc{B}_2\wedge \mc{B}_3)$. The opposite inclusion can fail.
\end{lemma}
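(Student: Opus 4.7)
The plan is to handle the two assertions in turn, and neither step poses a serious obstacle; the work is essentially unpacking the definition of the meet.

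For the forward inclusion, the key observation is that $\mc{B}_i \subset \mc{B}_1 \vee \mc{B}_2$ for $i=1,2$, so any representative in $\mc{B}_i$ that witnesses membership of some $A$ in $\mc{B}_i \wedge \mc{B}_3$ is automatically also a representative in $\mc{B}_1 \vee \mc{B}_2$. Combining this with the representative in $\mc{B}_3$ coming from the definition, one obtains $\mc{B}_i \wedge \mc{B}_3 \subset (\mc{B}_1 \vee \mc{B}_2) \wedge \mc{B}_3$ for each $i$. Since $(\mc{B}_1 \vee \mc{B}_2) \wedge \mc{B}_3$ is itself a sub-$\sigma$-algebra (Lemma \ref{lem:meetsubalg}), it must then also contain the $\sigma$-algebra generated by $(\mc{B}_1 \wedge \mc{B}_3) \cup (\mc{B}_2 \wedge \mc{B}_3)$, which is precisely the join $(\mc{B}_1 \wedge \mc{B}_3) \vee (\mc{B}_2 \wedge \mc{B}_3)$.

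For the counterexample to the opposite inclusion, I would take $\Omega = \{0,1\}^2$ with the uniform probability measure (so that $\emptyset$ is the only null set), and let $\mc{B}_1$, $\mc{B}_2$ be the $\sigma$-algebras generated by the two coordinate projections and $\mc{B}_3$ the $\sigma$-algebra generated by the XOR map $(\omega_1,\omega_2)\mapsto \omega_1+\omega_2 \bmod 2$. These three atomic $\sigma$-algebras are pairwise distinct, and every non-trivial set in each has measure $1/2$ with a different underlying partition of $\Omega$, so no non-trivial set in one is $\lambda$-equivalent to any set in another. Hence each meet $\mc{B}_i\wedge\mc{B}_j$ with $i\neq j$ reduces to $\{\emptyset,\Omega\}$, and the right-hand side of the claimed equality collapses to $\{\emptyset,\Omega\}$. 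On the other hand $\mc{B}_1\vee\mc{B}_2$ is the full power set of $\Omega$, from which one sees that $(\mc{B}_1\vee\mc{B}_2)\wedge\mc{B}_3\supset \mc{B}_3$, which is strictly larger than $\{\emptyset,\Omega\}$. The only thing to double-check is that meets coincide with intersections in this atom-only setting, which is immediate since the lack of non-trivial null sets forces any $\lambda$-equivalence to be an actual equality.
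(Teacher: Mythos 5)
Your proposal is correct, and the first half takes a genuinely different (and simpler) route than the paper. For the inclusion, the paper's appendix proof approximates $1_A$ in $L^p$ by finite sums of products $g_ih_i$ of bounded $(\mc{B}_1\wedge\mc{B}_3)$- and $(\mc{B}_2\wedge\mc{B}_3)$-measurable functions (via Lemma \ref{lem:pisysapprox}), replaces the factors by $\mc{B}_1$- and $\mc{B}_2$-measurable representatives modulo null sets, and concludes that $A$ is both $(\mc{B}_1\vee\mc{B}_2)$- and $\mc{B}_3$-measurable up to null sets. You instead observe that the meet is monotone in its first argument --- a witness $B\in\mc{B}_i$ for $A\in\mc{B}_i\wedge\mc{B}_3$ already lies in $\mc{B}_1\vee\mc{B}_2$ --- so $\mc{B}_i\wedge\mc{B}_3\subset(\mc{B}_1\vee\mc{B}_2)\wedge\mc{B}_3$ for $i=1,2$, and then invoke Lemma \ref{lem:meetsubalg} together with the universal property of the generated $\sigma$-algebra to absorb the join. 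This avoids any approximation argument and works directly at the level of sets; it is cleaner and buys the same conclusion. For the failure of the reverse inclusion, your example is the same idea as the paper's: three pairwise transversal two-block partitions of a finite set carrying a measure with no non-trivial null sets (so that meets reduce to intersections), any two of which generate the full power set. The paper realizes this on a three-point set with the partitions $\{\{1\},\{2,3\}\}$, $\{\{1,2\},\{3\}\}$, $\{\{1,3\},\{2\}\}$; you realize it on $\{0,1\}^2$ with the two coordinates and their XOR. Both are valid, and your closing check that meets coincide with intersections in the absence of non-trivial null sets is exactly the point that makes the computation legitimate.
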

\noindent While the inclusion in this lemma cannot be reversed in general, we can replace it with equality up to null sets in some situations, assuming conditional independence. This is the case in the following useful result, which can be seen as a special case of the \emph{modular law} from lattice theory.
\begin{lemma}\label{lem:modlaw}
Let $(\Omega,\mc{A},\lambda)$ be a probability space, let $\mc{B}$ and $\mc{C}$ be sub-$\sigma$-algebras of $\mc{A}$ satisfying $\mc{B}\upmod \mc{C}$, and let $\mc{B}_1$ be a sub-$\sigma$-algebra of $\mc{B}$. Then $(\mc{C} \vee \mc{B}_1) \wedge \mc{B}\; =_\lambda \;(\mc{C} \wedge \mc{B}) \vee \mc{B}_1$.
\end{lemma}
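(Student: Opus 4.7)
My plan is to prove the two inclusions $(\mc{C}\wedge \mc{B})\vee \mc{B}_1 \subset_\lambda (\mc{C}\vee \mc{B}_1)\wedge \mc{B}$ and its reverse. The first is easy: by Lemma~\ref{lem:halfdistrib} applied with the triple $(\mc{C},\mc{B}_1,\mc{B})$, we have $(\mc{C}\vee \mc{B}_1)\wedge \mc{B}\supset (\mc{C}\wedge \mc{B})\vee (\mc{B}_1\wedge \mc{B})$, and the trivial identity $\mc{B}_1\wedge \mc{B} = \mc{B}_1$ (which holds up to null sets, and in fact on the nose, because $\mc{B}_1\subset \mc{B}$) yields the claim. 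This direction does not use the hypothesis $\mc{B}\upmod \mc{C}$.

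For the harder direction, I would reduce to a statement about conditional expectations. Namely, it suffices to prove that for every $f\in L^\infty(\mc{B})$,
\[
\mb{E}(f\mid \mc{C}\vee \mc{B}_1)\; =_\lambda\; \mb{E}\bigl(f\mid (\mc{C}\wedge \mc{B})\vee \mc{B}_1\bigr).
\]
Indeed, if $A\in (\mc{C}\vee \mc{B}_1)\wedge \mc{B}$, then $\mathbf{1}_A$ agrees $\lambda$-a.e.\ with a $\mc{B}$-measurable function, and also with a $(\mc{C}\vee \mc{B}_1)$-measurable function, so $\mathbf{1}_A =_\lambda \mb{E}(\mathbf{1}_A\mid \mc{C}\vee \mc{B}_1)$. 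The displayed equation then gives $\mathbf{1}_A =_\lambda \mb{E}(\mathbf{1}_A\mid (\mc{C}\wedge \mc{B})\vee \mc{B}_1)$, meaning $A$ lies in $(\mc{C}\wedge \mc{B})\vee \mc{B}_1$ up to a null set. By Theorem~\ref{thm:condindep}, and using $\mc{C}\vee(\mc{C}\wedge\mc{B})\vee \mc{B}_1 =_\lambda \mc{C}\vee \mc{B}_1$, the displayed equation is in turn equivalent to conditional independence of $\mc{B}$ and $\mc{C}$ relative to $(\mc{C}\wedge \mc{B})\vee \mc{B}_1$.

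The main obstacle is thus to enlarge the conditioning $\sigma$-algebra in the hypothesis $\mc{B}\upmod \mc{C}$ (i.e., conditional independence relative to $\mc{C}\wedge \mc{B}$) by the sub-$\sigma$-algebra $\mc{B}_1\subset \mc{B}$. I would carry this out directly. For $g\in L^\infty(\mc{B})$ and $h\in L^\infty(\mc{C})$, note that $(\mc{C}\wedge \mc{B})\vee \mc{B}_1\subset_\lambda \mc{B}$ (since $\mc{C}\wedge \mc{B}\subset_\lambda \mc{B}$ and $\mc{B}_1\subset \mc{B}$), so by Lemma~\ref{lem:nestexp} and the tower property,
\[
\mb{E}\bigl(gh\mid (\mc{C}\wedge \mc{B})\vee \mc{B}_1\bigr)\;=_\lambda\; \mb{E}\bigl(\mb{E}(gh\mid \mc{B}) \,\big|\, (\mc{C}\wedge \mc{B})\vee \mc{B}_1\bigr).
\]
Pulling $g$ out and applying Proposition~\ref{prop:condindep} to the hypothesis $\mc{B}\upmod \mc{C}$, one gets $\mb{E}(gh\mid \mc{B}) = g\,\mb{E}(h\mid \mc{B}) =_\lambda g\,\mb{E}(h\mid \mc{B}\wedge \mc{C})$. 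The factor $\mb{E}(h\mid \mc{B}\wedge \mc{C})$ is $(\mc{C}\wedge \mc{B})\vee \mc{B}_1$-measurable, so it pulls out of the outer conditional expectation, giving
\[
\mb{E}\bigl(gh\mid (\mc{C}\wedge \mc{B})\vee \mc{B}_1\bigr)\;=_\lambda\; \mb{E}(h\mid \mc{B}\wedge \mc{C})\cdot \mb{E}\bigl(g\mid (\mc{C}\wedge \mc{B})\vee \mc{B}_1\bigr).
\]
A parallel computation (with $g=1$) shows that $\mb{E}(h\mid (\mc{C}\wedge \mc{B})\vee \mc{B}_1) =_\lambda \mb{E}(h\mid \mc{B}\wedge \mc{C})$, and substituting this into the above yields the product decomposition characterizing conditional independence of $\mc{B}$ and $\mc{C}$ relative to $(\mc{C}\wedge \mc{B})\vee \mc{B}_1$. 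Combining with the reduction from the previous paragraph completes the proof.
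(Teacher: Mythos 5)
Your proof is correct, and the hard inclusion is handled by a genuinely different route from the paper's. The paper's proof takes $f\in L^\infty((\mc{C}\vee\mc{B}_1)\wedge\mc{B})$, approximates it in $L^2$ by finite sums $\sum_i c_i b_i$ with $c_i\in L^\infty(\mc{C})$, $b_i\in L^\infty(\mc{B}_1)$ via Lemma~\ref{lem:pisysapprox}, applies $\mb{E}(\cdot\mid\mc{B})$ to both sides (which fixes $f$ and turns each $c_i$ into the $\mc{B}\wedge\mc{C}$-measurable function $\mb{E}(c_i\mid\mc{B})$), and lets $\epsilon\to 0$. You instead first prove the stronger intermediate statement that $\mc{B}$ and $\mc{C}$ remain conditionally independent when the conditioning algebra $\mc{B}\wedge\mc{C}$ is enlarged by $\mc{B}_1\subset\mc{B}$, and then read off the inclusion from Theorem~\ref{thm:condindep} together with Lemma~\ref{lem:nestexp}. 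Both arguments pivot on the same key consequence of $\mc{B}\upmod\mc{C}$, namely that $\mb{E}(h\mid\mc{B})=_\lambda\mb{E}(h\mid\mc{B}\wedge\mc{C})$ for $h\in L^\infty(\mc{C})$; the paper deploys it inside an approximation, you deploy it to verify Definition~\ref{def:relcondindep} directly on products $gh$ (where no approximation is needed). The paper's version is shorter and more self-contained; yours isolates a reusable fact (stability of conditional independence under enlarging the conditioning algebra within one of the two factors) at the cost of an extra reduction step, and it leans on the unproved-in-the-paper Theorem~\ref{thm:condindep} where the paper's proof does not. The only places where you elide routine verifications are the claims $(\mc{C}\wedge\mc{B})\vee\mc{B}_1\subset_\lambda\mc{B}$ and $\mc{C}\vee(\mc{C}\wedge\mc{B})\vee\mc{B}_1=_\lambda\mc{C}\vee\mc{B}_1$, both of which are standard closure properties of $\subset_\lambda$ under joins and are handled by Lemma~\ref{lem:nestexp}; no gap results.
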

\noindent For a proof see Lemma \ref{lem:modlawApp}. A similar result appears in \cite[Corollary 16]{Yan2}.

We conclude this subsection with a few remarks on more specific types of probability spaces. From the next subsection onward, most of the key results from measure theory that we shall use (and, therefore, our main results in Section \ref{sec:structhm} themselves) can be established under the assumption that the probability spaces in question are \emph{standard probability spaces} (also called Lebesgue--Rokhlin spaces \cite[\S 9.4]{Boga2}). This is the case for instance in the result concerning the topological properties of coupling spaces, Proposition \ref{prop:coupspace}. Aiming for our main results to be applicable to any standard probability space is natural, given that these results are intended in particular for applications in ergodic theory. However, for the use of certain tools it is more convenient to work instead with the following closely related probability spaces.

\begin{defn}[Borel probability spaces]
A measurable space $(\Omega,\mc{A})$ is a \emph{standard Borel space} if there is a Polish topology $\tau$ on $\Omega$ such that $\mc{A}$ is the Borel $\sigma$-algebra $\sigma(\tau)$. A probability space $(\Omega,\mc{A},\lambda)$ is a \emph{Borel probability space} if $(\Omega,\mc{A})$ is a standard Borel space and $\lambda$ is a probability measure on $\mc{A}$. 
\end{defn}
\noindent To obtain our main results for general standard probability spaces, there will be no loss in assuming in several places that the spaces we work with are Borel probability spaces, because on one hand (as we detail in Section \ref{sec:structhm}) our main results are invariant under mod 0 isomorphisms of probability spaces, and on the other hand every standard probability space is mod 0 isomorphic to a Borel probability space (indeed this can be taken as a definition of standard probability spaces; see \cite[Definition 6.8]{Eisner&al}).

Situations in which Borel probability spaces are especially convenient for us include those where we have to work with disintegrations of measures. For these spaces we have a usefully simple form of the measure disintegration theorem; see \cite[(17.35), ii)]{Ke}.\footnote{The disintegration results valid for more general standard probability spaces come with less convenient additional technicalities, such as the fact that the $\sigma$-algebras on the fibres cannot be guaranteed to be almost all equal to the original $\sigma$-algebra; see for instance \cite[Example 1.2]{Pachl}.} This usefulness is illustrated by the following result, which we shall apply several times.
\begin{lemma}\label{lem:fibmeaspres}
For $i=1,2$ let $(\Omega_i,\mc{A}_i,\lambda_i)$ be a Borel probability space, let $\mu$ be a Borel measure on $(\Omega=\Omega_1\times\Omega_2,\,\mc{A}=\mc{A}_1\otimes \mc{A}_2)$, suppose that the projection $f_i:\Omega\to \Omega_i$ is measure-preserving \textup{(}i.e.\ $\mu\co f_i^{-1}=\lambda_i$\textup{)} for $i=1,2$, and that $f_1^{-1}(\mc{A}_1)$, $f_2^{-1}(\mc{A}_2)$ are independent in $\mu$. Let $(\mu_\omega)_{\omega\in \Omega_2}$ be a disintegration of $\mu$ relative to $f_2$. Then for $\lambda_2$-almost every $\omega$ the restriction $f_1:f_2^{-1}(\omega)\to \Omega_1$ is still measure-preserving \textup{(}i.e.\ $\mu_\omega\co f_1^{-1}=\lambda_1$\textup{)}.
\end{lemma}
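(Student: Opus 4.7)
The plan is to combine the independence hypothesis with the countable generation of standard Borel $\sigma$-algebras. The independence together with the measure-preservation of $f_1,f_2$ immediately identifies $\mu$ with the product $\lambda_1\times\lambda_2$ on measurable rectangles; the substance of the lemma is transferring this from $\mu$ itself to $\lambda_2$-almost every fibre measure $\mu_\omega$. Concretely, for any $A_1\in\mc{A}_1$ and $A_2\in\mc{A}_2$, the assumption $f_1^{-1}(\mc{A}_1)\upmod f_2^{-1}(\mc{A}_2)$ combined with $\mu\co f_i^{-1}=\lambda_i$ gives
\[
\mu\bigl(f_1^{-1}(A_1)\cap f_2^{-1}(A_2)\bigr) \;=\; \lambda_1(A_1)\,\lambda_2(A_2).
\]

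On the other hand, since $\mu_\omega$ is concentrated on $f_2^{-1}(\omega)$, the defining property of the disintegration $(\mu_\omega)$ relative to $f_2$ yields
\[
\mu\bigl(f_1^{-1}(A_1)\cap f_2^{-1}(A_2)\bigr) \;=\; \int_{A_2} \mu_\omega\bigl(f_1^{-1}(A_1)\bigr)\ud\lambda_2(\omega).
\]
Equating the two expressions and letting $A_2$ range over $\mc{A}_2$, I deduce that for each fixed $A_1\in\mc{A}_1$ the identity $\mu_\omega(f_1^{-1}(A_1))=\lambda_1(A_1)$ holds for $\lambda_2$-almost every $\omega$, outside an exceptional null set $N_{A_1}$ that a priori depends on $A_1$.

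The hard part is promoting this pointwise-in-$A_1$ statement to a single full-measure set of $\omega$ that works for all $A_1$ simultaneously, since an uncountable union of $\lambda_2$-null sets need not be null. This is precisely where the standard Borel assumption is essential: any such $\sigma$-algebra is countably generated, so I can fix a countable $\pi$-system $\mc{E}\subseteq\mc{A}_1$ with $\sigma(\mc{E})=\mc{A}_1$ (for example, finite intersections of a countable base for some Polish topology witnessing that $(\Omega_1,\mc{A}_1)$ is standard Borel, together with $\Omega_1$ itself). Then $N:=\bigcup_{A_1\in\mc{E}}N_{A_1}$ is still $\lambda_2$-null, and for every $\omega\notin N$ the probability measures $\mu_\omega\co f_1^{-1}$ and $\lambda_1$ agree on the $\pi$-system $\mc{E}$. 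Dynkin's $\pi$--$\lambda$ theorem then forces them to agree on $\sigma(\mc{E})=\mc{A}_1$, completing the proof. No subtler obstruction appears; without countable generation this uniformity step would genuinely fail.
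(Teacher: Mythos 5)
Your proof is correct and follows essentially the same route as the paper's: first establish $\mu_\omega(f_1^{-1}(A_1))=\lambda_1(A_1)$ a.e.\ for each fixed $A_1$ by equating the product formula from independence with the disintegration identity, then upgrade to a single null set via a countable generating $\pi$-system (the paper uses intersections coming from the Borel isomorphism with $[0,1]$ and cites a uniqueness-of-measures lemma where you invoke Dynkin). The only cosmetic point is that you write the hypothesis as $f_1^{-1}(\mc{A}_1)\upmod f_2^{-1}(\mc{A}_2)$, whereas the lemma assumes plain independence, which is what your computation actually uses.
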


\begin{proof}
First we claim that for an arbitrary fixed set $B\in \mc{A}_1$, for $\lambda_2$-almost every $\omega\in \Omega_2$ we have $\mu_\omega\co f_1^{-1}(B)=\lambda_1(B)$. To see this, fix any $C\in\mc{A}_2$ and note that the disintegration implies that $\mu(f_1^{-1}(B) \cap f_2^{-1}(C))=\int_{\Omega_2} 1_C(\omega)\, \mu_\omega(f_1^{-1}(B))\,  \ud\lambda_2(\omega)$. Since $f_1^{-1}(\mc{A}_1)$, $f_2^{-1}(\mc{A}_2)$ are independent, we have $\mu(f_1^{-1}(B) \cap f_2^{-1}(C)) = \mu(f_1^{-1}(B)) \; \mu(f_2^{-1}(C))  = \lambda_1(B)\,\lambda_2(C)$. We have thus shown that for every such set $C$ we have $\int_C  \mu_\omega(f_1^{-1}(B))\,  \ud\lambda_2(\omega) =\lambda_1(B)\,\lambda_2(C)$. This implies that the function $f:\omega \mapsto \mu_\omega(f_1^{-1}(B))$ equals the constant $\lambda_1(B)$ for $\lambda_2$-almost every $\omega$. Indeed, otherwise $\lambda_2(\{\omega: |f(\omega)-\lambda_1(B)|>\frac{1}{n}\})>0$ for some $n\in\mb{N}$, and then there would be $C\in \mc{A}_2$ such that $|\int_C \big(f(\omega)-\lambda_1(B)\big) \ud \lambda_2(\omega)|\geq \lambda_2(C)/n>0$, a contradiction (we must be able to take $C$ to be one of the sets $\{\omega: f(\omega)>\lambda_1(B)+\frac{1}{n}\}$, $\{\omega: f(\omega)<\lambda_1(B)-\frac{1}{n}\}$). This proves our claim. 

Now we apply this claim to each term of a sequence $(B_i)_{i\in \mb{N}}$ of sets in $\mc{A}_1$ that is closed under finite intersections and generates $\mc{A}_1$. The existence of such a sequence is clear when $\Omega_1$ is countable, and when it is uncountable the standard Borel space $(\Omega_1,\mc{A}_1)$ is Borel isomorphic to the interval $[0,1]$ with the Borel $\sigma$-algebra (see \cite[Theorem (15.6)]{Ke}), so in this case we can let the $B_i$ be the sets corresponding under this isomorphism to the open intervals in $[0,1]$ with rational end points. For each $B_i$, by the previous paragraph there is $C_i\in \mc{A}_2$ with $\lambda_2(C_i)=0$ and such that $\mu_\omega\co f_1^{-1}=\lambda_1$ for all $\omega\in \Omega_2\setminus C_i$. The set $D=\cup_{i\in \mb{N}} C_i$ is then a $\lambda_2$-null set such that for every $\omega\in \Omega_2\setminus D$, for every $i\in \mb{N}$ we have $\mu_\omega\co f_1^{-1}(B_i)=\lambda_1(B_i)$, whence by \cite[Lemma 1.9.4]{Boga1} we have $\mu_\omega\co f_1^{-1}(B)= \lambda_1(B)$ for every $B\in \mc{A}_1$.
\end{proof}
To close the subsection let us recall the following standard fact (for a proof see \cite{Prat}).
\begin{lemma}[Doob property of Polish spaces]\label{lem:Doob}
Let $(\Omega,\mc{A})$, $(\Omega',\mc{A}')$ be measurable  spaces, let $p:\Omega\to \Omega'$ be measurable, and let $X$ be a Polish space. For every $p^{-1}(\mc{A}')$-measurable function $f:\Omega\to X$ there is an $\mc{A}'$-measurable function $f':\Omega'\to X$ such that $f= f'\co p$. 
\end{lemma}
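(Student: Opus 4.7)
The plan is to reduce to real-valued $f$ via a Borel embedding of $X$, prove that case by approximation with simple functions, and then lift the factorization back to $X$, handling points of $\Omega'\setminus p(\Omega)$ separately.

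First, since $X$ is Polish, there is a Borel isomorphism $\iota:X\to X_0$ onto a Borel subset $X_0\subseteq [0,1]^{\mb{N}}$. Such an $\iota$ can be obtained from any bounded compatible metric $d$ and a countable dense sequence $(x_n)$ by $\iota(x)=(d(x,x_n))_{n\in\mb{N}}$, the image $X_0$ being Borel (in fact $G_\delta$) by Alexandrov's theorem. Composing with $\iota$ then reduces the problem to factoring the bounded real-valued $p^{-1}(\mc{A}')$-measurable coordinate functions $f_i=\pi_i\circ \iota\circ f$.

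Second, I settle the real-valued case by a standard monotone-class-type argument. If $f=\mathbf{1}_A$ with $A\in p^{-1}(\mc{A}')$, write $A=p^{-1}(A')$ for some $A'\in\mc{A}'$; then $f=\mathbf{1}_{A'}\circ p$. By linearity, every simple $p^{-1}(\mc{A}')$-measurable function factors as $f'\circ p$ with $f'$ simple and $\mc{A}'$-measurable. A bounded $p^{-1}(\mc{A}')$-measurable function is a uniform limit of such simple approximants $f_n=f'_n\circ p$, so setting $f':=\limsup_n f'_n$ produces an $\mc{A}'$-measurable function satisfying $f'\circ p=f$ pointwise on $\Omega$, since on $p(\Omega)$ the limsup is an actual limit equal to $f$. (For unbounded $f$, compose with a Borel homeomorphism $\mb{R}\to(0,1)$ first.)

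Third, I lift back to $X$. Applying the previous step coordinate-wise to $\iota\circ f$ yields an $\mc{A}'$-measurable map $g:\Omega'\to[0,1]^{\mb{N}}$ with $g\circ p=\iota\circ f$. The set $B:=g^{-1}(X_0)$ lies in $\mc{A}'$ because $X_0$ is Borel, and $B\supseteq p(\Omega)$. Fixing any $x_0\in X$, define $f'(\omega')=\iota^{-1}(g(\omega'))$ for $\omega'\in B$ and $f'(\omega')=x_0$ otherwise; since $\iota^{-1}:X_0\to X$ is Borel, $f'$ is $\mc{A}'$-measurable, and $f'\circ p=\iota^{-1}\circ\iota\circ f=f$, as required. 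The main obstacle is not any single computation but the measurability of $f'$ at points of $\Omega'\setminus p(\Omega)$, where $g$ need not land in $X_0$ and $\iota$ cannot be inverted; this is resolved by the above splitting along the $\mc{A}'$-measurable set $B$, which is precisely the step where the Polish (standard Borel) structure of $X$ enters, via Alexandrov's theorem.
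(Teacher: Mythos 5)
Your argument is correct. Note that the paper does not give its own proof of this lemma; it only cites Pratelli's note \emph{Sur le lemme de mesurabilit\'e de Doob}, and the route you take --- embed the Polish space $X$ as a Borel (indeed $G_\delta$) subset of $[0,1]^{\mb{N}}$, settle the real-valued case by factoring indicators and passing to simple functions and then a $\limsup$, and finally invert the embedding on the $\mc{A}'$-measurable set $g^{-1}(X_0)$ --- is exactly the standard proof of the Doob--Dynkin lemma for Polish-valued functions, so there is nothing to contrast. The only point worth tidying is in the second step: if you take $f'_n=\sum_k c_k 1_{A'_k}$ with $p^{-1}(A'_k)=A_k$, the sets $A'_k$ need not be disjoint off $p(\Omega)$, so $f'_n$ need not be uniformly bounded there and $\limsup_n f'_n$ could be $+\infty$ at some points of $\Omega'\setminus p(\Omega)$. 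This is harmless --- either disjointify the $A'_k$ by setting $B'_k=A'_k\setminus\bigcup_{j<k}A'_j$ (which does not change the preimages under $p$), or simply redefine $f'$ to be $0$ on the $\mc{A}'$-measurable set where the $\limsup$ is infinite, which is disjoint from $p(\Omega)$ --- but it should be said, since the statement requires $f'$ to take values in $X$ (here $[0,1]$) everywhere on $\Omega'$.
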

\noindent In particular, if $\lambda$ is a probability measure on $(\Omega,\mc{A})$ and $g:\Omega\to \mb{C}$ is a bounded $\mc{A}$-measurable function, then $\mb{E}\big(g|p^{-1}(\mc{A}')\big)$ can be regarded as a function on $\Omega'$, i.e.\ there is a bounded $\mc{A}'$-measurable function $f':\Omega'\to\mb{C}$ such that $\mb{E}(g|p^{-1}(\mc{A}'))=_\lambda f'\co p$. A simple but useful consequence is that if $(\Omega_i,\mc{A}_i,\lambda_i)$, $i=1,2$ are probability spaces and $\phi:\Omega_1\to\Omega_2$ is measure-preserving (i.e.\ $\phi$ is $(\mc{A}_1,\mc{A}_2)$-measurable and $\lambda_1\co\phi^{-1}=\lambda_2$), then for every sub-$\sigma$-algebra $\mc{B}\subset\mc{A}_2$ and every function $f\in L^1(\mc{A}_2)$ we have
\begin{equation}\label{eq:exprel}
\mb{E}_{\lambda_2}(f|\mc{B})\co \phi =_{\lambda_1} \mb{E}_{\lambda_1}(f\co \phi\,|\,\phi^{-1}\mc{B}).
\end{equation}

\subsection{Couplings}\label{subsec:couplings}\hfill \medskip \\
Given sets $T\subset S$ and a Cartesian product of sets $\prod_{v\in S} X_v$, we denote by $p_T$ the projection $\prod_{v\in S} X_v \to \prod_{v\in T} X_v$, $(x_v)_{v\in S}\mapsto (x_v)_{v\in T}$. (When $T=\{w\}$ we write $p_w$ rather than $p_{\{w\}}$.) 

Given probability spaces $\varOmega_v=(\Omega_v,\mc{A}_v,\lambda_v)$, $v\in S$, we denote by $\prod_{v\in S} (\Omega_v,\mc{A}_v)$ the product measurable space, consisting of the Cartesian product $\prod_{v\in S} \Omega_v$ and the product $\sigma$-algebra $\bigotimes_{v\in S} \mc{A}_v=\bigvee_{v\in S} p_v^{-1}(\mc{A}_v)$.
\begin{defn}[Coupling]\label{def:coup}
Let $S$ be a set and for each $v\in S$ let $\varOmega_v=(\Omega_v,\mc{A}_v,\lambda_v)$ be a probability space. A \emph{coupling} of the probability spaces $(\varOmega_v)_{v\in S}$ (or of the measures $\lambda_v$) is a measure $\mu$ on $\prod_{v\in S} (\Omega_v,\mc{A}_v)$ such that for each $v\in S$ we have $\mu\co p_v^{-1}=\lambda_v$. When $\varOmega_v=\varOmega$ for every $v\in S$, we call $\mu$ a \emph{self-coupling of $\varOmega$} (or \emph{of} $\lambda$) \emph{indexed by} $S$.
\end{defn}
\noindent In this paper $S$ denotes a finite set, except in certain clearly indicated places where it can also denote a countably infinite set (for instance in Section \ref{sec:exchange}). Note that if every $\varOmega_v$ is a Borel probability space then, for every coupling $\mu$ of these spaces, the probability space $(\prod_{v\in S} \Omega_v,\bigotimes_{v\in S} \mc{A}_v,\mu)$ is Borel (since $\prod_{v\in S} (\Omega_v,\mc{A}_v)$ is standard Borel \cite[p.\ 75]{Ke}).

In our analysis of couplings, the following functions play a key role.
\begin{defn}\label{def:multilin}
Let $\mu$ be a coupling of $(\varOmega_v)_{v\in S}$. Let $F=(f_v:\Omega_v\to\mb{C})_{v\in S}$ be a system of bounded measurable functions. Then we define
\begin{equation}\label{eq:cupconst2}
\xi(\mu,F):= \int_{\prod_{v\in S}\Omega_v}\;\prod_{v\in S} f_v\co p_v\;\ud\mu.
\end{equation}
\end{defn}
\noindent Note that the $L^1$-norm of each function in $F$ controls the function $\xi(\mu,\cdot):F\mapsto \xi(\mu,F)$, more precisely, for every $w\in S$ we have $|\xi(\mu,F)|\leq \|f_w\|_{L^1(\lambda_w)}\prod_{v\in S\setminus\{w\}}\|f_v\|_{L^\infty(\lambda_v)}$. We can use the functions $\xi$ to define a topology on a set of couplings, as follows.

\begin{defn}[Coupling space]\label{def:couptop}
Let $\varOmega$ be a probability space and let $S$ be a set. We denote by $\coup(\varOmega,S)$ the topological space consisting of the set of self-couplings of $\varOmega$ indexed by $S$ and the initial topology generated by the functions $\xi(\cdot,F):\mu \mapsto \xi(\mu,F)$, for systems $F=(f_v)_{v\in S}$ of bounded measurable functions $f_v:\Omega_v\to \mb{C}$.
\end{defn}
\noindent The following result gives a property of coupling spaces that is crucial for the sequel.
\begin{proposition}\label{prop:coupspace}
Let $S$ be a finite set and let $\varOmega$ be a Borel or standard probability space. Then $\coup(\varOmega,S)$ is a non-empty convex\footnote{This convexity property involves the vector-space structure on the set of signed measures on $(\Omega,\mc{A})^S$, of which $\coup(\varOmega,S)$ is a subset. The convexity property states that for every Borel probability measure $\nu$ on $\coup(\varOmega,S)$ we have that $\int_{\coup(\varOmega,S)} \;\mu\; \ud\nu(\mu)$ is a measure in $\coup(\varOmega,S)$.} compact Polish space.
\end{proposition}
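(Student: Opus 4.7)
The plan is to realize $\coup(\varOmega,S)$ as a closed subset of the weak-topology space of Borel probability measures on a compact Polish product, and then check that the topology of Definition \ref{def:couptop} coincides with the subspace weak topology on this set.

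First, since every standard probability space is mod 0 isomorphic to a Borel probability space, and any such isomorphism induces a bijection of coupling spaces that preserves each functional $\xi(\cdot,F)$, I may assume $\varOmega=(\Omega,\mc{A},\lambda)$ is Borel. Kuratowski's theorem lets me further choose a compact Polish topology on $\Omega$ whose Borel $\sigma$-algebra is $\mc{A}$, so $\Omega^S$ is compact Polish and the space $\mc{P}(\Omega^S)$ of Borel probabilities on $\Omega^S$, with the weak topology $\tau_w$, is compact Polish. Non-emptiness of $\coup(\varOmega,S)$ is witnessed by the product measure $\lambda^S$, and convexity in the sense of the footnote reduces to defining $\int \mu\,\ud\nu(\mu)$ pointwise on measurable sets: $\sigma$-additivity comes from dominated convergence, and each marginal projects to $\int \lambda\,\ud\nu=\lambda$.

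Next I would show $\coup(\varOmega,S)$ is $\tau_w$-closed in $\mc{P}(\Omega^S)$: for each $v\in S$, the pushforward map $\mu\mapsto \mu\co p_v^{-1}$ is $\tau_w$-continuous, since for $f\in C_b(\Omega)$ one has $f\co p_v\in C_b(\Omega^S)$. Thus $\coup(\varOmega,S)$ is the intersection of finitely many preimages of the point $\{\lambda\}$ in the Hausdorff space $\mc{P}(\Omega)$, hence $(\coup(\varOmega,S),\tau_w)$ is compact Polish.

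Finally I would prove that the Definition \ref{def:couptop} topology $\tau_\xi$ agrees with $\tau_w|_{\coup(\varOmega,S)}$. For $\tau_w\subset\tau_\xi$: by Stone--Weierstrass, the subalgebra of $C(\Omega^S)$ generated by functions $\prod_{v\in S} f_v\co p_v$ with $f_v\in C(\Omega)$ is uniformly dense in $C(\Omega^S)$, and since couplings are probability measures, uniform approximation yields $\tau_\xi$-continuity of $\mu\mapsto \int g\,\ud\mu$ for every $g\in C(\Omega^S)$. For $\tau_\xi\subset\tau_w$: given $F=(f_v)_{v\in S}$ with bounded measurable $f_v$, density of $C_b(\Omega)$ in $L^1(\lambda)$ together with truncation produces continuous $f_v^n$ with $\|f_v^n\|_\infty\leq \|f_v\|_\infty$ and $\|f_v-f_v^n\|_{L^1(\lambda)}\to 0$; then a telescoping expansion together with the multilinear estimate recorded just after Definition \ref{def:multilin} gives
\[
|\xi(\mu,F)-\xi(\mu,F^n)|\;\leq\; \sum_{v\in S}\|f_v-f_v^n\|_{L^1(\lambda)}\prod_{u\in S\setminus\{v\}}\|f_u\|_\infty
\]
uniformly in $\mu\in\coup(\varOmega,S)$, exhibiting $\xi(\cdot,F)$ as a uniform limit of $\tau_w$-continuous maps. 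The main obstacle is precisely this last step, and within it the simultaneous control of $L^1$-approximation and sup norms; once it is handled, the multilinear bound closes everything and the compact Polish property for $\tau_\xi$ transfers from $(\mc{P}(\Omega^S),\tau_w)$ via the homeomorphism just established.
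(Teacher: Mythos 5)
Your proof is correct, but it takes a genuinely different route from the one in the paper (Proposition \ref{prop:coupspaceApp}). The paper does not pass through the weak topology on $\mc{P}(\Omega^S)$ at all: it embeds $\coup(\varOmega,S)$ into the cube $[0,1]^{\mc{R}(\mc{A},S)}$ indexed by measurable rectangles, identifies the image with the set $\mc{H}$ of finitely additive, marginal-consistent set functions on that semiring (Lemma \ref{lem:coupcorresp}), proves surjectivity by a Carath\'eodory extension argument whose only analytic input is a tightness/compact-class argument for countable subadditivity, gets compactness from closedness of $\mc{H}$ in the product topology, and then obtains second countability separately from a countable dense family of rectangles. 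Your version instead realizes $\Omega$ as a compact Polish space from the outset, imports compactness and metrizability wholesale from $(\mc{P}(\Omega^S),\tau_w)$ and closedness under the pushforward maps, and concentrates all the work in the two-sided comparison of topologies: Stone--Weierstrass for $\tau_w\subset\tau_\xi$, and uniform $L^1(\lambda)$-approximation with sup-norm control plus the multilinear estimate recorded after Definition \ref{def:multilin} for $\tau_\xi\subset\tau_w$ (this last step is the one place where the coupling hypothesis $\mu\co p_v^{-1}=\lambda$ is genuinely used, and your telescoping bound is exactly right). Both arguments need a topological realization of $(\Omega,\mc{A})$ at some point --- the paper inside the premeasure step, you at the start --- and both handle the standard (non-Borel) case by transferring along a mod $0$ isomorphism, which is legitimate since $\xi(\mu,F)$ depends on each $f_v$ only through its $\lambda$-class. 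What the paper's route buys is the explicit identification of a coupling with its rectangle functional, which is reused elsewhere (e.g.\ in Lemma \ref{lem:condindcoup} and after Definition \ref{def:relsquare}); what your route buys is a shorter path to compactness and metrizability. Two small points you should make explicit if you write this up: measurability of $\mu\mapsto\mu(B)$ for all $B\in\mc{A}^S$ (monotone class, starting from rectangles) is needed for the convexity integral to make sense, and the truncation producing $f_v^n$ with $\|f_v^n\|_\infty\leq\|f_v\|_\infty$ should be done on the modulus so that the complex case is covered.
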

\noindent Proofs of similar results appear in the literature, for instance in work of Kellerer \cite[Proposition 1.2]{Kell}, and of Furstenberg \cite[Lemma 5.2]{FurstSzem} (where couplings are called \emph{standard measures}). We include a proof of Proposition \ref{prop:coupspace} in the appendix (see Proposition \ref{prop:coupspaceApp}).

We now turn to several constructions of new couplings out of given ones, and other  useful properties of couplings.

\begin{defn}[Factor coupling]\label{def:factorcoup}
Let $\big(\varOmega_v=(\Omega_v,\mc{A}_v,\lambda_v)\big)_{v\in S}$ be a system of probability spaces, and let $\mu$ be a coupling of this system. A \emph{factor} of $\mu$ is a coupling obtained by restricting $\mu$ to a product $\sigma$-algebra $\bigotimes_{v\in S} \mc{B}_v$ where $\mc{B}_v$ is a sub-$\sigma$-algebra of $\mc{A}_v$ for each $v\in S$. If $\varOmega_v=\varOmega=(\Omega,\mc{A},\lambda)$ for all $v\in S$ and $\mu\in \coup(\varOmega,S)$, then given a sub-$\sigma$-algebra $\mc{B}\subset \mc{A}$ we write $_{\mc{B}|}{\mu}$ to denote the factor coupling of $\mu$ corresponding to $\mc{B}$. 
\end{defn}
\noindent Note that for a general standard probability space $(\Omega,\mc{A},\lambda)$, for a sub-$\sigma$-algebra $\mc{B}$ the probability space $(\Omega,\mc{B},\lambda|_\mc{B})$ may not be standard, for it may not be separable in the sense of Rokhlin (as defined in \cite[\S 9.4]{Boga2} for instance). However, there exists a standard probability space $(\Omega',\mc{B}',\lambda')$ and a measure-preserving map $\phi:\Omega\to\Omega'$ such that $\phi^{-1}(\mc{B}')=\mc{B}$ (see \cite[Theorem 57]{KalCut}). 
Similar facts hold for a standard Borel space $(\Omega,\mc{A})$. Indeed it follows from \cite[Corollary (15.2)]{Ke} that the only sub-$\sigma$-algebra of $\mc{A}$ that makes $\Omega$ a standard Borel space is $\mc{A}$ itself. However, if $\mc{B}$ is a countably generated sub-$\sigma$-algebra of $\mc{A}$, then by \cite[(14.16), (18.20)]{Ke} there is a standard Borel space $(\Omega',\mc{B}')$ and a Borel map $f:\Omega\to \Omega'$ such that $\mc{B}=f^{-1}(\mc{B}')$, and then $(\Omega',\mc{B}', \mu\co f^{-1})$ is a Borel probability space.

\begin{defn}[Relative independence over a factor]\label{def:indoverfact}
Let $\big(\varOmega_v=(\Omega_v,\mc{A}_v,\lambda_v)\big)_{v\in S}$ be a system of probability spaces, let $\mu$ be a coupling of this system, and let $\mu'$ be a factor of $\mu$ corresponding to $\sigma$-algebras $\mc{B}_v\subset \mc{A}_v$,  $v\in S$. We say that $\mu$ is \emph{relatively independent over} $\mu'$ if for every system $G=(g_v:\Omega_v\to \mb{C})_{v\in S}$ of bounded measurable functions, the system $G'=\big(\mb{E}(g_v|\mc{B}_v)\big)_{v\in S}$ satisfies $\xi(\mu,G)=\xi(\mu,G')$.
\end{defn}
\noindent In particular, if $\mu \in \coup(\varOmega,S)$ is relatively independent over a factor $\mu'=\prescript{}{\mc{B}|}{\mu}$, then the multilinear map $G\mapsto \xi(\mu,G)$ is uniquely determined by $\mu'$. This notion agrees with that of a \emph{conditional product measure} from \cite[see Lemma 9.1]{FurstSzem}.
\begin{remark}\label{rem:indoverfact}
We have $\mu$ relatively independent over $\mu'$ if and only if for every $w\in S$ and every system $G=(g_v)_{v\in S}$ of functions $g_v\in L^\infty(\mc{A}_v)$ with $\mb{E}(g_w|\mc{B}_w)=0$, we have $\xi(\mu,G)=0$. This equivalence follows from a basic argument using the linearity of the map $G\mapsto \xi(\mu,G)$ in each entry $g_v$, and the decomposition of any $\mc{A}$-measurable function $g$ as the sum $\mb{E}(g|\mc{B}_v) + \big(g-\mb{E}(g|\mc{B}_v)\big)$.
\end{remark}
\begin{defn}[Subcouplings along subsets]
Given $\mu\in\coup(\varOmega,S)$, and a set $T\subset S$, the \emph{subcoupling of $\mu$ along} $T$, denoted by $\mu_T$, is the image measure $\mu\co p_T^{-1}\in \coup(\varOmega,T)$.
\end{defn}
\noindent Note that the $\sigma$-algebra on which $\mu_T$ is defined is the power $\sigma$-algebra $\bigotimes_{v\in T}\mc{A}$. From now on we denote such a power $\sigma$-algebra by $\mc{A}^T$. We shall often need to handle preimages of such $\sigma$-algebras $\mc{A}^T$ under projections $p_T:\Omega^S\to \Omega^T$. We denote this sub-$\sigma$-algebra $p_T^{-1}(\mc{A}^T)$ of $\mc{A}^S$ by $\mc{A}^S_T$. Thus $\mc{A}^S_T$ is the sub-$\sigma$-algebra of $\mc{A}^S$ consisting of sets whose indicator functions depend only on coordinates indexed by $T$. Note that $\mc{A}^S_T \subset \mc{A}^S_{T'}$ whenever $T\subset T'$, that $\mc{A}^S_\emptyset$ is the trivial $\sigma$-algebra on $\Omega^S$, and that $\mc{A}^S_S$ is just $\mc{A}^S$. When $T$ is a singleton $\{v\}$ we denote the $\sigma$-algebra $\mc{A}^S_{\{v\}}$ simply by $\mc{A}^S_v$ (this notation will not clash with previous notations $\mc{A}_v$ above, because from now on we only consider self-couplings of $\varOmega$, so the ambient $\sigma$-algebra $\mc{A}$ is the same for every $v\in S$).
\begin{defn}[Subcouplings along injections]\label{def:subcoupembeds}
Let $\mu\in\coup(\varOmega,S)$ and let $\tau: R \to S$ be an injection. The \emph{subcoupling of $\mu$ along $\tau$} is the coupling $\mu_\tau \in \coup(\varOmega,R)$ obtained as follows: in the coupling $\mu_{\tau(R)}$ along  $\tau(R)\subset S$, each $v\in \tau(R)$ is renamed $w=\tau^{-1}(v)\in R$.
\end{defn}
\noindent We often consider two couplings that are equal up to renaming  the indices. Let us formalize this as follows.

\begin{defn}[Isomorphism of couplings]\label{def:coupiso}
Let $\mu \in \coup(\varOmega,S)$ and $\mu' \in \coup(\varOmega,S')$. We call a bijection $\sigma : S \to S'$ an \emph{isomorphism} of $\mu$ and $\mu'$ if $\mu'_\sigma= \mu$. If there is such an isomorphism we say that $\mu$ and $\mu'$ are \emph{isomorphic} and write $\mu \cong \mu'$, or $\mu \cong_\sigma \mu'$ if we wish to specify the isomorphism.
\end{defn}
\noindent Thus for instance the couplings $\mu_\tau$ and $\mu_{\tau(R)}$ in Definition \ref{def:subcoupembeds} are isomorphic.

In general a coupling with index set $S$ is not determined by subcouplings on two subsets $T_1$, $T_2$ with $T_1\cup T_2=S$. However, under certain additional conditions it \emph{is} determined, and the following result gives a useful example.

\begin{lemma}\label{lem:coupdeterm}
Let $\mu\in \coup(\varOmega,S)$ and let $T_1,T_2\subset S$ with $T_1\cup T_2=S$. Suppose that $\mc{A}^S_{T_2} =_\mu \mc{A}^S_{T_1\cap T_2}$. Then $\mu$ is uniquely determined by $\mu_{T_1},\mu_{T_2}$.
\end{lemma}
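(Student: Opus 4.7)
The plan is to show that every integral of the form $\xi(\mu,F)=\int_{\Omega^S}\prod_{v\in S}f_v\co p_v\,\ud\mu$, with $F=(f_v)_{v\in S}$ a system of functions in $L^\infty(\mc{A})$, can be computed from $\mu_{T_1}$ and $\mu_{T_2}$ alone. Since the measurable rectangles $\prod_{v\in S}A_v$ form a $\pi$-system generating $\mc{A}^S$ and their $\mu$-measures are the values $\xi(\mu,(1_{A_v})_{v\in S})$, this is enough to pin down $\mu$.

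Fix such an $F$. Using $T_1\cup T_2=S$, I would split the integrand as $g_1\, g_2$, where $g_1:=\prod_{v\in T_1}f_v\co p_v$ is $\mc{A}^S_{T_1}$-measurable and $g_2:=\prod_{v\in T_2\setminus T_1}f_v\co p_v$ is $\mc{A}^S_{T_2\setminus T_1}$-measurable, hence in particular $\mc{A}^S_{T_2}$-measurable. The hypothesis $\mc{A}^S_{T_2}=_\mu\mc{A}^S_{T_1\cap T_2}$ then implies that $g_2$ is $\mu$-a.e.\ equal to an $\mc{A}^S_{T_1\cap T_2}$-measurable function; the cleanest such representative is $g_2':=\mb{E}_\mu(g_2\mid\mc{A}^S_{T_1\cap T_2})$, and one has $\xi(\mu,F)=\int g_1\, g_2'\,\ud\mu$.

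Next, I would argue that $g_2'$ is in fact determined by $\mu_{T_2}$ alone. Writing $g_2=\wt g_2\co p_{T_2}$ with $\wt g_2\in L^\infty(\mc{A}^{T_2})$ (using the Doob property, Lemma~\ref{lem:Doob}), the identity \eqref{eq:exprel} applied to the measure-preserving projection $p_{T_2}:(\Omega^S,\mu)\to(\Omega^{T_2},\mu_{T_2})$ with the sub-$\sigma$-algebra $\mc{A}^{T_2}_{T_1\cap T_2}$ gives
\[
g_2'\;=_\mu\;\mb{E}_{\mu_{T_2}}\bigl(\wt g_2\,\big|\,\mc{A}^{T_2}_{T_1\cap T_2}\bigr)\co p_{T_2},
\]
so $g_2'$ depends on the coupling only through $\mu_{T_2}$. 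Since $g_2'$ is $\mc{A}^S_{T_1\cap T_2}$-measurable and $T_1\cap T_2\subset T_1$, the product $g_1\, g_2'$ is $\mc{A}^S_{T_1}$-measurable; writing $g_1\, g_2'=\wt h\co p_{T_1}$ for some $\wt h\in L^\infty(\mc{A}^{T_1})$ (Doob again), I obtain $\xi(\mu,F)=\int_{\Omega^{T_1}}\wt h\,\ud\mu_{T_1}$, whose right-hand side depends only on $\mu_{T_1}$ and $\mu_{T_2}$.

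I do not expect a real obstacle; the argument is essentially a careful tracking of sub-$\sigma$-algebras combined with the transfer identity \eqref{eq:exprel}. The only minor point worth checking is that the ambiguity in the chosen versions of $g_2'$ and $\wt h$ (each well-defined only up to $\mu_{T_1\cap T_2}$-null sets) is harmless: this holds because $\mu_{T_1\cap T_2}$ is the common marginal of $\mu_{T_1}$ and $\mu_{T_2}$, so any two such versions differ on a $\mu_{T_1}$-null set and therefore yield the same value when integrated against $\mu_{T_1}$.
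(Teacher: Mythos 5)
Your proof is correct. It follows the same overall strategy as the paper's proof --- use the hypothesis $\mc{A}^S_{T_2}=_\mu\mc{A}^S_{T_1\cap T_2}$ to convert the part of the integrand living over $T_2$ into something measurable with respect to the coordinates in $T_1\cap T_2$ and computable inside the known coupling $\mu_{T_2}$, and then observe that the resulting integral only sees coordinates in $T_1$ --- but the execution differs in a way worth noting. The paper factors the integrand as $\bigl(\prod_{v\in T_1\setminus T_2}f_v\co p_v\bigr)\cdot\prod_{v\in T_2}f_v\co p_v$ and approximates the second factor within $\epsilon$ in $L^2(\mu_{T_2})$ by finite sums of rank-$1$ functions over $T_1\cap T_2$ (via Lemma~\ref{lem:pisysapprox}), which entails some $\epsilon$-bookkeeping. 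You instead factor as $g_1g_2$ with $g_2$ the product over $T_2\setminus T_1$ only, and replace $g_2$ exactly by $\mb{E}_\mu(g_2\mid\mc{A}^S_{T_1\cap T_2})$, using \eqref{eq:exprel} to see that this conditional expectation is computed entirely within $(\Omega^{T_2},\mu_{T_2})$; this yields a closed formula $\xi(\mu,F)=\int_{\Omega^{T_1}}\wt h\ud\mu_{T_1}$ with no approximation step. The one point you pass over quickly is why $g_2=_\mu\mb{E}_\mu(g_2\mid\mc{A}^S_{T_1\cap T_2})$: the hypothesis gives $\mc{A}^S_{T_2}\subset_\mu\mc{A}^S_{T_1\cap T_2}$, hence $L^2(\mc{A}^S_{T_2},\mu)\subseteq L^2(\mc{A}^S_{T_1\cap T_2},\mu)$ as subspaces of $L^2(\mu)$ (approximate by simple functions and use closedness), so the orthogonal projection fixes $g_2$; this is routine but should be said. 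Your final remark on the irrelevance of the choice of versions, via the fact that $\mu_{T_1\cap T_2}$ is the common marginal of $\mu_{T_1}$ and $\mu_{T_2}$, is exactly the right thing to check and is handled correctly.
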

\begin{proof}
Let $F=(f_v)_{v\in S}$ be a system of bounded $\mc{A}$-measurable functions. We  show that $\xi(\mu,F)$ is uniquely determined by the subcouplings $\mu_{T_1},\mu_{T_2}$. The function $g:=\prod_{v\in T_2} f_v\co p_v$ on $\Omega^S$ satisfies clearly $g= g'\co p_{T_2}$ for an $\mc{A}^{T_2}$-measurable function $g'$ (we can just take $g'=\prod_{v\in T_2} f_v\co p_v$ where now $p_v$ is defined on $\Omega^{T_2}$). Fix any $\epsilon>0$, and note that since by assumption $L^2(\mc{A}^{T_2},\mu_{T_2})=L^2(\mc{A}^{T_2}_{T_1\cap T_2},\mu_{T_2})$, by Lemma \ref{lem:pisysapprox} we can approximate $g'$ within $\epsilon$ in $L^2(\mu_{T_2})$ by $\sum_{i\in [m]} h_i$ where $h_i=\prod_{v\in T_1\cap T_2} g_{i,v}\co p_v$ for some bounded $\mc{A}$-measurable functions $g_{i,v}$ (note that this is done entirely in the known coupling $\mu_{T_2}$). For each $i\in [m]$ define the system $G_i=(g'_{i,v})_{v\in T_1}$ by $g'_{i,v}=f_v$ for $v\in T_1\setminus T_2$ and $g'_{i,v}=g_{i,v}$ for $v\in T_1\cap T_2$. It follows that $|\xi(\mu,F)-\sum_{i\in [m]}\xi(G_i,\mu_{T_1})|\leq \epsilon$, where each $\xi(G_i,\mu_{T_1})$ involves only the known couplings $\mu_{T_1}$, $\mu_{T_2}$. Since $\epsilon>0$ was arbitrary, the result follows.
\end{proof}
\noindent Recall from Definition \ref{def:condindep} the notation $\mc{B}_0 \upmod_{\mu} \mc{B}_1$ for conditional independence. We now use this to define a related notion for subsets of the index set of a self-coupling.
\begin{defn}[Conditionally independent index sets]\label{def:condindepsets}
Let $\mu\in\coup(\varOmega,S)$ and let $T_1,T_2\subset S$. We say that $T_1,T_2$ are \emph{conditionally independent in} $\mu$, and write $T_1 ~\bot_\mu ~ T_2$, if we have $\mc{A}^S_{T_1} \upmod_{\mu} \mc{A}^S_{T_2}$ and
\begin{equation}\label{eq:condindepsets}
\mc{A}^S_{T_1}\wedge_\mu \mc{A}^S_{T_2} \, =_\mu\, \mc{A}^S_{T_1\cap T_2}.
\end{equation}
\end{defn}
\noindent As for previous notations, when the ambient coupling $\mu$ is clear  we just write $T_1~\bot~T_2$. Let us note the following useful equivalent definition of the relation $\bot$.

\begin{lemma}\label{lem:botsuff}
Let $\mu \in \coup(\varOmega,S)$ and let $T_1, T_2 \subset S$. Then $T_1~\bot~T_2$ if and only if for every bounded $\mc{A}^S_{T_1}$-measurable $f:\Omega^S\to\mb{C}$ there is a $\mc{A}^S_{T_1\cap T_2}$-measurable function $h$ such that $\mb{E}(f|\mc{A}^S_{T_2})=_\mu h$. 
\end{lemma}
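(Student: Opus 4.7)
\textbf{Proof plan for Lemma \ref{lem:botsuff}.} The plan is to treat the two directions separately, both times reducing the statement to Proposition \ref{prop:condindep} applied with $\mc{B}_0 = \mc{A}^S_{T_1}$ and $\mc{B}_1 = \mc{A}^S_{T_2}$. Throughout I will use without comment the trivial inclusions $\mc{A}^S_{T_1\cap T_2} \subset \mc{A}^S_{T_1}$ and $\mc{A}^S_{T_1\cap T_2} \subset \mc{A}^S_{T_2}$, which in particular yield $\mc{A}^S_{T_1\cap T_2} \subset \mc{A}^S_{T_1}\wedge_\mu \mc{A}^S_{T_2}$ for free.

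For the forward direction, I would assume $T_1 \bot T_2$, so that by Definition \ref{def:condindepsets} we have $\mc{A}^S_{T_1} \upmod_\mu \mc{A}^S_{T_2}$ together with $\mc{A}^S_{T_1} \wedge_\mu \mc{A}^S_{T_2} =_\mu \mc{A}^S_{T_1\cap T_2}$. For any bounded $\mc{A}^S_{T_1}$-measurable $f$, applying Proposition \ref{prop:condindep} (with $i=0$) and using $\mb{E}(f|\mc{A}^S_{T_1})=_\mu f$ gives
\[
\mb{E}(f|\mc{A}^S_{T_2}) \;=_\mu\; \mb{E}(f|\mc{A}^S_{T_1}\wedge_\mu \mc{A}^S_{T_2}) \;=_\mu\; \mb{E}(f|\mc{A}^S_{T_1\cap T_2}),
\]
where the second equality uses Lemma \ref{lem:nestexp} together with the meet identity. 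This displays $\mb{E}(f|\mc{A}^S_{T_2})$ as an $\mc{A}^S_{T_1\cap T_2}$-measurable function (after a $\mu$-null modification), giving the desired conclusion.

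For the reverse direction, assume the hypothesis. I would first upgrade it to the meet equality $\mc{A}^S_{T_1} \wedge_\mu \mc{A}^S_{T_2} =_\mu \mc{A}^S_{T_1\cap T_2}$. For $A \in \mc{A}^S_{T_1} \wedge_\mu \mc{A}^S_{T_2}$, pick $A_1\in\mc{A}^S_{T_1}$ and $A_2\in\mc{A}^S_{T_2}$ with $A =_\mu A_1$ and $A =_\mu A_2$. By hypothesis $\mb{E}(1_{A_1}|\mc{A}^S_{T_2})$ is $\mc{A}^S_{T_1\cap T_2}$-measurable, and since $1_A =_\mu 1_{A_2}$ is $\mc{A}^S_{T_2}$-measurable we obtain
\[
\mb{E}(1_{A_1}|\mc{A}^S_{T_2}) \;=_\mu\; \mb{E}(1_A|\mc{A}^S_{T_2}) \;=_\mu\; 1_A,
\]
so $1_A$ agrees $\mu$-a.e. with a $\{0,1\}$-valued $\mc{A}^S_{T_1\cap T_2}$-measurable function, which shows $A$ is $\mu$-a.e. equal to a set in $\mc{A}^S_{T_1\cap T_2}$. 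Combined with the trivial reverse inclusion, this gives the meet identity.

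It then remains to establish $\mc{A}^S_{T_1} \upmod_\mu \mc{A}^S_{T_2}$ by verifying the characterization \eqref{eq:condindep} from Proposition \ref{prop:condindep}. For arbitrary bounded $\mc{A}$-measurable $f$ on $\Omega^S$, let $g := \mb{E}(f|\mc{A}^S_{T_1})$; this is bounded and $\mc{A}^S_{T_1}$-measurable, so by hypothesis $\mb{E}(g|\mc{A}^S_{T_2})$ is $\mc{A}^S_{T_1\cap T_2}$-measurable. Since $\mc{A}^S_{T_1\cap T_2}\subset \mc{A}^S_{T_2}$, this forces $\mb{E}(g|\mc{A}^S_{T_2})=_\mu \mb{E}(g|\mc{A}^S_{T_1\cap T_2})$, and the tower property applied with $\mc{A}^S_{T_1\cap T_2}\subset \mc{A}^S_{T_1}$ then yields $\mb{E}(g|\mc{A}^S_{T_1\cap T_2}) =_\mu \mb{E}(f|\mc{A}^S_{T_1\cap T_2}) =_\mu \mb{E}(f|\mc{A}^S_{T_1}\wedge_\mu \mc{A}^S_{T_2})$, which is exactly \eqref{eq:condindep} with $i=0$. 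By Proposition \ref{prop:condindep} this gives $\mc{A}^S_{T_1}\upmod_\mu \mc{A}^S_{T_2}$, and together with the meet identity it yields $T_1 \bot T_2$. There is no real obstacle here; the only point requiring care is the bookkeeping of $\mu$-a.e. equalities when passing between the hypothesis phrased for functions and the $=_\mu$ identity in Definition \ref{def:condindepsets}.
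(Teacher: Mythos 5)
Your proposal is correct and follows essentially the same route as the paper: the forward direction is read off from Proposition \ref{prop:condindep} and the definition, and the backward direction verifies \eqref{eq:condindep} by noting that $\mb{E}(g|\mc{A}^S_{T_2})$ being $\mc{A}^S_{T_1\cap T_2}$-measurable pins it down as $\mb{E}(g|\mc{A}^S_{T_1\cap T_2})$, while the meet identity is obtained by testing indicator functions of sets in $\mc{A}^S_{T_1}\wedge_\mu\mc{A}^S_{T_2}$. Your write-up is merely a little more explicit about the $\mu$-a.e.\ bookkeeping than the paper's.
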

In particular for every $T\subset S$ and $F\subset T$, we have $F~\bot~T$.
\begin{proof}
To see the forward implication, note that from Definition \ref{def:condindep} and Proposition \ref{prop:condindep} it follows that $\mb{E}(f|\mc{A}^S_{T_2})=_\mu \mb{E}(f|\mc{A}^S_{T_1}\wedge\mc{A}^S_{T_2})$, and by Definition \ref{def:condindepsets} we have $\mc{A}^S_{T_1}\wedge\mc{A}^S_{T_2} =_\mu \mc{A}^S_{T_1\cap T_2}$. In particular $\mc{A}^S_{T_1}\wedge\mc{A}^S_{T_2} \subset_\mu \mc{A}^S_{T_1\cap T_2}$ and so by Lemma \ref{lem:nestexp} we deduce that $\mb{E}(f|\mc{A}^S_{T_1}\wedge\mc{A}^S_{T_2})$ has a $\mc{A}^S_{T_1\cap T_2}$-representative under $=_\mu$. But then $\mb{E}(f|\mc{A}^S_{T_2})$ also has such a representative $h$, as claimed.

 For the backward implication, let $\mc{B}_0=\mc{A}^S_{T_1}$, $\mc{B}_1=\mc{A}^S_{T_2}$, and let $h$ be an $\mc{A}^S_{T_1\cap T_2}$-measurable representative of $\mb{E}(f|\mc{B}_1)$. In particular $\mb{E}(\mb{E}(f|\mc{B}_1)|\mc{B}_0)=_\mu\mb{E}(h|\mc{B}_0)$. Since $\mc{A}^S_{T_1\cap T_2}\subset \mc{B}_0\cap \mc{B}_1$, we have that $h$ is both $\mc{B}_0$-measurable and $\mc{B}_0\wedge \mc{B}_1$-measurable, so $\mb{E}(h|\mc{B}_0)=_\mu h =_\mu \mb{E}(h|\mc{B}_0\wedge\mc{B}_1)$, so $\mb{E}(\mb{E}(f|\mc{B}_1)|\mc{B}_0)=_\mu \mb{E}(h|\mc{B}_0\wedge\mc{B}_1) =_\mu \mb{E}(\mb{E}(f|\mc{B}_1)|\mc{B}_0\wedge\mc{B}_1)$. But the latter is $=_\mu \mb{E}(f|\mc{B}_0\wedge\mc{B}_1)$, by Lemma \ref{lem:nestexp}, so \eqref{eq:condindep} holds, whence $\mc{B}_0 \upmod \mc{B}_1$. On the other hand, for every set $B \in \mc{A}^S_{T_1}  \wedge \mc{A}^S_{T_2}$ we have $\mb{E}(1_B|\mc{A}^S_{T_2})\,=_\mu\,1_B$, but by assumption we also have that $\mb{E}(1_B|\mc{A}^S_{T_2})$ is $\mc{A}^S_{T_1\cap T_2}$-measurable. Therefore $\mc{A}^S_{T_1}  \wedge \mc{A}^S_{T_2} \subset_{\mu} \mc{A}^S_{T_1\cap T_2}$. Since we also clearly have $\mc{A}^S_{T_1}  \wedge \mc{A}^S_{T_2} \supset \mc{A}^S_{T_1\cap T_2}$, the result follows. 
\end{proof}

\begin{remark}\label{rem:botinsubcoup}
Note that if $V_1,V_2\subset S$ satisfy $V_1\,\bot_{\mu_T}\, V_2$ for $\mu\in\coup(\varOmega,S)$ and some set $T\subset S$ with $T\supset V_1\cup V_2$, then we have $V_1\,\bot_{\mu_{T'}}\, V_2$ for every $T'\subset S$ with $T'\supset V_1\cup V_2$.
\end{remark}

\begin{example}\label{ex:U2coup}
Let us illustrate some of the previous definitions and results with a basic example of a coupling familiar in arithmetic combinatorics. Consider the probability space consisting of a compact abelian group $\ab$ equipped with the Haar probability measure $\lambda$, let $S=\{0,1\}^2$, and let $\mu\in \coup(\ab,S)$ be supported on the group of standard $2$-cubes $G=\{x=(x_{00},x_{10},x_{01},x_{11}): x_{00}-x_{10}= x_{01}-x_{11}\}\leq \ab^S$ and equal to the Haar probability measure on $G$. (This is a coupling since each projection $p_v$ is a continuous surjective homomorphism $G\to \ab$.) The sets $T_1=\{00,10\}$, $T_2=\{00,01\}$  satisfy $T_1\,\bot_\mu\, T_2$. Indeed, letting $\mc{A}$ be  the Borel $\sigma$-algebra on $\ab$, let $f$ be any bounded $\mc{A}^S$-measurable function, and consider the function $a_{T_1}$ defined on $\ab^{S}$ by $a_{T_1}(x) = \int_{\ab} f(x_{00},x_{10},x_{01}+k,x_{11}+k)\ud\lambda(k)$. Let $a_{T_1}'$ be the function defined on $\ab^{S}$ by  $a_{T_1}'(x_{00},x_{10},x_{01},x_{11})=a_{T_1}(x_{00},x_{10},x_{00},x_{10})$. Note that $a_{T_1}'$ is $\mc{A}_{T_1}^S$-measurable (it depends only on $x_{00},x_{10}$), and that $a_{T_1}'=_\mu a_{T_1}$, since $\mu$ is supported on $G$ and $a_{T_1}'(x)= a_{T_1}(x)$ for all $x\in G$. For any $g\in L^\infty(\mc{A}^S_{T_1})$, let $g'$ be the $\mc{A}^{T_1}$-measurable function such that $g=_\mu g'\co p_{T_1}$ (given by Lemma \ref{lem:Doob}). Then, using the parametrization $x=(y,y+h,y+k,y+h+k)$, $y,h,k\in \ab$ for $x\in G$, we have
\begin{eqnarray*}
\int f g\ud\mu \! & = \! & \int_{\ab^3} f(y,y+h,y+k,y+h+k)\;g(y,y+h,y+k,y+h+k) \ud\lambda^3(y,h,k) \\
\! & = \! & \int_{\ab^2} g'(y,y+h) \Big(\int_{\ab} f(y,y+h,y+k,y+h+k) \ud\lambda\Big) \ud\lambda^2(y,h)\; = \int a_{T_1}\, g\ud\mu.
\end{eqnarray*}
Hence $a_{T_1}$ is a representative of $\mb{E}(f|\mc{A}^S_{T_1})$ under $=_\mu$. Similarly, a representative of $\mb{E}(f|\mc{A}^S_{T_2})$ is the function $x\mapsto \int_{\ab} f(x_{00},x_{10}+h,x_{01},x_{11}+h)\ud\lambda(h)$, and yet another similar argument shows that the function $x\mapsto \int_{\ab^2} f(x_{00},x_{10}+h,x_{01}+k,x_{11}+h+k)\ud\lambda(h,k)$ represents $\mb{E}(f|\mc{A}^S_{T_1\cap T_2})$. From this it is seen clearly that the composition of the operators $\mb{E}(\cdot|\mc{A}^S_{T_1})$, $\mb{E}(\cdot|\mc{A}^S_{T_2})$ is $\mb{E}(\cdot|\mc{A}^S_{T_1\cap T_2})$ (in particular these operators commute), whence $T_1\,\bot_\mu\, T_2$ holds indeed, by Lemma \ref{lem:botsuff}. Now let us take instead $T_1=\{00\}$ and $T_2=S\setminus T_1$. A similar consideration of the operators $\mb{E}(\cdot|\mc{A}^S_{T_1})$, $\mb{E}(\cdot|\mc{A}^S_{T_2})$ reveals that they still commute, whence we still have $\mc{A}^S_{T_1} \upmod_{\mu} \mc{A}^S_{T_2}$ (see \cite[Theorem 9]{Yan}). However, now we do not have $T_1\,\bot_\mu\, T_2$, because $\mc{A}^S_{T_1} \wedge \mc{A}^S_{T_2}$ is not the trivial $\sigma$-algebra $\mc{A}^S_{T_1\cap T_2}=\mc{A}^S_{\emptyset}$. Indeed, note that for any character $\chi\in\wh{\ab}$, on the group $G$ we have $\chi\co p_{00}=(\chi\co p_{10})(\chi\co p_{01})(\overline{\chi}\co p_{11})$, so $\chi\co p_{00}\in L^\infty(\mc{A}^S_{T_1} \wedge \mc{A}^S_{T_2})$, which shows that $\mc{A}^S_{T_1} \wedge \mc{A}^S_{T_2}$ is indeed non-trivial.
\end{example}
\begin{defn}[Conditionally independent system of sets]\label{def:condindepsys}
Let $\mu\in \coup(\varOmega,S)$. We say that a system $(T_i)_{i\in [k]}$ of subsets of $S$ is \emph{conditionally independent} if for every $R_1,R_2\subseteq [k]$  we have $\big(\bigcup_{j\in R_1} T_i\big)~\bot~\big(\bigcup_{j\in R_2}T_i\big)$.
\end{defn}
\noindent An example of this property is given by a 3-dimensional generalization of Example \ref{ex:U2coup}, letting $\mu \in\coup(\ab,\{0,1\}^3)$ be similarly given by the Haar measure on the group of standard 3-cubes on $\ab$, and letting $(T_1,T_2,T_3)$ be the system of the 2-dimensional faces of $\{0,1\}^3$ containing the point $0^3$. It can be checked directly that this system is conditionally independent (for example by computing what the various expectation operators are, as in Example \ref{ex:U2coup}). This is also established more generally in Section \ref{sec:cc} (see Remark \ref{rem:strongaxiom}).

The following notion enables us to ``glue" together two couplings by identifying parts of their index sets, in such a way that the two index sets become conditionally independent in the new coupling. The definition uses the following notation: given two finite sets $S$, $S'$, two subsets $T\subseteq S$, $T'\subseteq S'$, and a bijection $\sigma:T\to T'$, we denote by $S\cup_\sigma S'$ the set obtained by first taking the disjoint union of $S$ and $S'$ and then identifying every $t\in T$ with $\sigma(t)$ (thus $|S\cup_\sigma S'|=|S|+|S'|-|T|$). 
\begin{defn}[Conditionally independent coupling along a bijection]\label{def:condindcoup}\hfill\\
Let $\varOmega$ be a Borel or standard probability space, and let $\mu\in\coup(\varOmega,S)$, $\mu'\in\coup(\varOmega,S')$, $T\subseteq S$, $T'\subseteq S'$. Let $\sigma: T\to T'$ be a bijection such that $\mu_T\cong_\sigma \mu'_{T'}$, and let $U=S\cup_\sigma S'$. The \emph{conditionally independent coupling of $\mu$, $\mu'$ along} $\sigma$ is the unique coupling $\mu'' \in\coup(\varOmega,S\cup_\sigma S')$ such that $S\bot_{\mu''}S'$.
\end{defn}

\noindent This definition requires the following justification.
\begin{lemma}\label{lem:condindcoup}
The coupling $\mu'' \in\coup(\varOmega,S\cup_\sigma S')$ in Definition \ref{def:condindcoup} exists and is unique.
\end{lemma}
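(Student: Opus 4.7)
The plan is to produce $\mu''$ explicitly as a gluing of $\mu$ and $\mu'$ along their common $T$-factor, using regular conditional probabilities, which are available after passing to a mod~0 Borel realisation of the standard probability space $\varOmega$. Disintegrate $\mu$ over the projection $p_T\colon \Omega^S\to\Omega^T$ into a measurable family $(\mu_\omega)_{\omega\in\Omega^T}$ of probability measures on $\Omega^{S\setminus T}$, and likewise disintegrate $\mu'$ over $p_{T'}$ into $(\mu'_{\omega'})_{\omega'\in\Omega^{T'}}$. Identifying $T$ with $T'$ via $\sigma$ and writing $\Omega^U\cong \Omega^{S\setminus T}\times \Omega^T\times \Omega^{S'\setminus T'}$, set
\[
\mu'' \;=\; \int_{\Omega^T} \mu_\omega \otimes \delta_\omega \otimes \mu'_{\sigma(\omega)}\; d\mu_T(\omega).
\]
Integrating out the $\Omega^{S'\setminus T'}$-factor reconstitutes $\mu$ from its disintegration, so $\mu''_S=\mu$; symmetrically $\mu''_{S'}=\mu'$, using the hypothesis $\mu_T\cong_\sigma \mu'_{T'}$. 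In particular every single-coordinate marginal of $\mu''$ is $\lambda$, so $\mu''\in\coup(\varOmega,U)$.

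Next I verify the \emph{moreover} conclusion $S\bot_{\mu''} S'$, which of course entails the defining condition $\mc{A}^U_S\upmod_{\mu''}\mc{A}^U_{S'}$. The product structure $\mu_\omega\otimes\mu'_{\sigma(\omega)}$ along each $\Omega^T$-fibre makes it immediate that $\mc{A}^U_{S\setminus T}$ and $\mc{A}^U_{S'\setminus T'}$ are conditionally independent relative to $\mc{A}^U_T$ in $\mu''$, and hence so are the enlarged $\sigma$-algebras $\mc{A}^U_S$ and $\mc{A}^U_{S'}$. For the meet identity $\mc{A}^U_S\wedge_{\mu''}\mc{A}^U_{S'}=_{\mu''}\mc{A}^U_T$ the inclusion $\supseteq$ is automatic; for $\subseteq$, if $B\in\mc{A}^U_S$ is $\mu''$-equivalent to some $B'\in\mc{A}^U_{S'}$ then $1_B =_{\mu''}\mb{E}_{\mu''}(1_B|\mc{A}^U_{S'})$, and the conditional independence above, via Theorem~\ref{thm:condindep}, collapses this to $\mb{E}_{\mu''}(1_B|\mc{A}^U_T)$, which is $\mc{A}^U_T$-measurable. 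Hence $S\bot_{\mu''}S'$ by Definition~\ref{def:condindepsets}.

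For uniqueness, Lemma~\ref{lem:pisysapprox} reduces the task to showing that, for any coupling $\nu\in\coup(\varOmega,U)$ with $\nu_S=\mu$, $\nu_{S'}=\mu'$ and $S\bot_\nu S'$, the integrals of the form $\int(F\co p_S)(G\co p_{S'})\,d\nu$ are determined by $\mu$ and $\mu'$, for bounded measurable $F$ on $\Omega^S$ and $G$ on $\Omega^{S'}$. Using Proposition~\ref{prop:condindep} together with the meet identity $\mc{A}^U_S\wedge_\nu\mc{A}^U_{S'}=_\nu\mc{A}^U_T$, such an integral equals $\int \mb{E}_\nu(F\co p_S|\mc{A}^U_T)\,\mb{E}_\nu(G\co p_{S'}|\mc{A}^U_T)\,d\nu$, and equation~\eqref{eq:exprel}, applied to the measure-preserving projections $p_S$ and $p_{S'}$, rewrites the two factors as $\mb{E}_\mu(F|\mc{A}^S_T)\co p_S$ and $\mb{E}_{\mu'}(G|\mc{A}^{S'}_{T'})\co p_{S'}$. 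Both are $\mc{A}^U_T$-measurable, and $\nu|_{\mc{A}^U_T}$ is the pushforward of the common $T$-marginal, so the integral is fully determined and $\nu=\mu''$.

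The main obstacle is that uniqueness genuinely requires the meet identity $\mc{A}^U_S\wedge \mc{A}^U_{S'}=_{\mu''}\mc{A}^U_T$ and not merely the bare hypothesis $\mc{A}^U_S\upmod \mc{A}^U_{S'}$: in the degenerate case $T=\emptyset$, for example, a diagonal coupling of a nontrivial $\lambda$ with itself satisfies $\upmod$ trivially by collapsing the two $\sigma$-algebras, yet is plainly not the product coupling. The disintegration construction above is tailored precisely to realise the stronger $S\bot S'$ property, which is why the \emph{moreover} clause is substantive rather than cosmetic---it is the actual source of uniqueness.
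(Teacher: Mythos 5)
Your construction and verification follow essentially the same route as the paper's proof: the fibrewise formula $\int_{\Omega^T}\mu_\omega\otimes\delta_\omega\otimes\mu'_{\sigma(\omega)}\,\ud\mu_T(\omega)$ is precisely the measure defined by \eqref{eq:cid} once one identifies $\mu_\omega(B_1)$ with a version of $\mb{E}_\mu(1_{B_1}\co p_{S\setminus T}|\mc{A}^S_T)$, and your uniqueness computation (collapse both conditional expectations onto $\mc{A}^U_T$ and observe that the result depends only on the common $T$-marginal) is the same as the computation following \eqref{eq:cid}. The one presentational difference is that the paper avoids disintegration entirely: it defines a premeasure on product sets via conditional expectations and invokes Lemma \ref{lem:coupcorresp}, which is why the construction works verbatim on a general standard probability space without first passing to a mod $0$ Borel realisation. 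The two routes are interchangeable.

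Your closing observation about the uniqueness hypothesis is the substantive point, and you are right. The paper asserts uniqueness among couplings $\nu$ with the prescribed marginals satisfying only $\mc{A}^U_S\upmod_\nu\mc{A}^U_{S'}$, and its key step, namely ``$\mb{E}(1_{B_1}\co p_{S\setminus T}|\mc{A}^U_{S'})=\mb{E}(1_{B_1}\co p_{S\setminus T}|\mc{A}^U_T)$ since $\mc{A}^U_S\upmod\mc{A}^U_{S'}$'', tacitly uses the meet identity $\mc{A}^U_S\wedge_\nu\mc{A}^U_{S'}=_\nu\mc{A}^U_T$, which $\upmod$ alone does not supply. Your $T=\emptyset$ diagonal example is correct, and one need not even take $T$ degenerate: with $S=\{a,t\}$, $S'=\{b,t'\}$, $T=\{t\}$ and $\mu=\mu'=\lambda\times\lambda$, the image of $\lambda\times\lambda$ under $(x,y)\mapsto(x,y,x)$ (exactly the limit coupling exhibited in the remark following Lemma \ref{lem:botclosed}) has the correct marginals on $S$ and $S'$ and satisfies $\upmod$ because both $\sigma$-algebras collapse onto their meet, yet it is not the product over $\mc{A}^U_T$. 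So the uniqueness should be read, as you prove it, as uniqueness among couplings satisfying the full relation $S\,\bot_\nu\,S'$; this stronger hypothesis is available in every downstream application (e.g.\ in Lemmas \ref{lem:deduc1}, \ref{lem:deduc2} and \ref{lem:idemp5} the competitor coupling always comes equipped with $\bot$), so nothing in the paper is affected, but your version of the statement is the one that is actually true and used.
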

\begin{proof}
Using the notation in the definition, let $B_1\in \mc{A}^{S\setminus T}$, $B_2\in \mc{A}^{T}$, $B_3\in \mc{A}^{S'\setminus T'}$, and let $U=S\cup_\sigma S'$.  We have that $B_1\times B_2\times B_3\subseteq \Omega^U$, and $\mc{A}^U$ is generated by such sets. Since $\mb{E}_\mu(1_{B_1}\co p_{S\setminus T}|\mc{A}^S_T)$ is $\mc{A}^S_T$-measurable, it can be regarded as an $\mc{A}^T$-measurable function on $\Omega^T$ (by Lemma \ref{lem:Doob}). Similarly $\mb{E}_{\mu'}(1_{B_3}\co p_{S'\setminus T'}|\mc{A}^{S'}_{T'})$ can be regarded as an $\mc{A}^T$-measurable function on $\Omega^T$ (since $\mu_T\cong_\sigma \mu'_{T'}$). We can therefore define (abusing the notation)
\begin{equation}\label{eq:cid}
\mu''(B_1\times B_2\times B_3) := \int_{\Omega^T} \mb{E}_\mu(1_{B_1}\co p_{S\setminus T}|\mc{A}^S_T)\;\; 1_{B_2} \;\; \mb{E}_{\mu'}(1_{B_3}\co p_{S'\setminus T'}|\mc{A}^{S'}_{T'})\, \ud\mu_T.
\end{equation}
This formula implies that $\mu''(B_1\times B_2\times B_3)$ is additive in each entry $B_1,B_2,B_3$, which implies that it satisfies property (i) in Definition \ref{def:premeas}. Property (ii) from that definition clearly holds as well, so the existence of the coupling $\mu''$ follows from Lemma \ref{lem:coupcorresp}. 

To prove that $S~\bot_{\mu''}~S'$, let us first show that, by \eqref{eq:cid}, for every function $f\in L^\infty(\mc{A}_S^U)$ and $f'\in L^\infty(\mc{A}_{S'}^U)$, we have
\begin{equation}\label{eq:cid2}
\int_{\Omega^U} f\; f' \ud \mu''= \int_{\Omega^U} \mb{E}(f|\mc{A}_T^U) \; \mb{E}(f'|\mc{A}_T^U) \ud\mu''.
\end{equation}
This can be deduced by approximating $f$ in $L^2(\mc{A}_S^U)$ by simple functions involving sets of the form $B_1\times B_2 \times \Omega^{S'\setminus T'}$, similarly for $f'$ with sets of the form $\Omega^{S\setminus T}\times B_2\times B_3$, and applying \eqref{eq:cid} to intersections of such sets. Now, if $g\in L^\infty(\mc{A}_S^U)$, then applying \eqref{eq:cid2} with $f=\overline{f'}=\mb{E}(g|\mc{A}_{S'}^U)$ we deduce that $\|\mb{E}(g|\mc{A}_{S'}^U)\|_{L^2(\mu'')}=\|\mb{E}(g|\mc{A}_T^U)\|_{L^2(\mu'')}$, which implies that $\mb{E}(g|\mc{A}_{S'}^U)$ is $\mc{A}_T^U$-measurable, and then $S~\bot_{\mu''}~S'$ follows by Lemma \ref{lem:botsuff}.

To see that $\mu''$ is unique, suppose that $\nu\in \coup(\varOmega,S\cup_\sigma S')$ satisfies $S~\bot_{\nu}~S'$. Then given any sets $B_1$, $B_2$, $B_3$ as above, we have $\int_{\Omega^U} (1_{B_1}\co p_{S\setminus T})(1_{B_2}\co p_T )(1_{B_3}\co p_{S'\setminus T'}) \ud\nu=$ $\int_{\Omega^U} \mb{E} (1_{B_1} \co p_{S\setminus T}|\mc{A}^U_{S'}) (1_{B_2}\co p_T) 1_{B_3}\co p_{S'\setminus T'}\ud\nu$, where $\mb{E}(1_{B_1} \co p_{S\setminus T}|\mc{A}^U_{S'})=\mb{E}(1_{B_1} \co p_{S\setminus T}|\mc{A}^U_T)$ since $S~\bot_{\nu}~S'$, so the last integral is $\int_{\Omega^U} \mb{E}(1_{B_1}\co p_{S\setminus T}|\mc{A}^U_{T}) 1_{B_2}\co p_T \mb{E}(1_{B_3}\co p_{S'\setminus T'} |\mc{A}^U_T)\ud\nu$, and this yields the right side of \eqref{eq:cid}. We have thus shown that $\nu( B_1\times B_2 \times B_3) = \mu''( B_1\times B_2 \times B_3)$, and so $\nu=\mu''$ by uniqueness in Carath\'eodory's extension theorem.
\end{proof}
\noindent The next definition and result will be used in Section \ref{sec:ergapps} for applications in ergodic theory.
\begin{defn}
Let $\varOmega=(\Omega,\mc{A},\lambda)$ be a probability space, and let $\mu\in \coup(\varOmega,S)$. For each $v\in S$ let $\theta_v$ be a measure-preserving transformation on $\Omega$. We denote by $\theta$ the corresponding transformation $\Omega^S\to\Omega^S$, $(\omega_v)_{v\in S}\mapsto (\theta_v(\omega_v))_{v\in S}$.
\end{defn}
\noindent The following result enables us to view a measure-preserving group action as a family of continuous maps from a coupling space to itself.
\begin{lemma}\label{lem:mpcoupact}
Let $(\theta_v)_{v\in S}$ be a system of measure-preserving transformations on $\Omega$ and let $\theta$ be the corresponding transformation $\Omega^S\to\Omega^S$. Then the map $\coup(\varOmega,S)\to \coup(\varOmega,S)$, $\mu\mapsto \mu\co \theta^{-1}$ is continuous.
\end{lemma}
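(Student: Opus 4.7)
The plan is to use directly the definition of the initial topology on $\coup(\varOmega,S)$ (Definition \ref{def:couptop}). Since this topology is generated by the maps $\xi(\cdot,F):\mu\mapsto \xi(\mu,F)$, with $F=(f_v)_{v\in S}$ ranging over all systems of bounded measurable functions $f_v:\Omega\to\mb{C}$, continuity of $\mu\mapsto \mu\co\theta^{-1}$ is equivalent to the continuity, for every such $F$, of the map $\mu\mapsto \xi(\mu\co\theta^{-1},F)$.

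First I would verify that $\mu\co\theta^{-1}$ really lies in $\coup(\varOmega,S)$. Since each $\theta_v$ preserves $\lambda$, and $p_v\co\theta=\theta_v\co p_v$, one has
\[
(\mu\co\theta^{-1})\co p_v^{-1}=\mu\co(p_v\co\theta)^{-1}=\mu\co p_v^{-1}\co\theta_v^{-1}=\lambda\co\theta_v^{-1}=\lambda,
\]
so $\mu\co\theta^{-1}$ is a self-coupling of $\varOmega$ indexed by $S$.

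The main computation is then a change of variables in the defining integral \eqref{eq:cupconst2}. For any system $F=(f_v)_{v\in S}$ of bounded measurable functions,
\[
\xi(\mu\co\theta^{-1},F)=\int_{\Omega^S}\prod_{v\in S} f_v\co p_v\,\ud(\mu\co\theta^{-1})=\int_{\Omega^S}\prod_{v\in S}(f_v\co\theta_v)\co p_v\,\ud\mu=\xi(\mu,F'),
\]
where $F':=(f_v\co\theta_v)_{v\in S}$ is again a system of bounded measurable functions (boundedness is immediate, and measurability follows from the measurability of each $\theta_v$). Hence $\mu\mapsto\xi(\mu\co\theta^{-1},F)$ coincides with one of the generating functions $\xi(\cdot,F')$ of the initial topology on $\coup(\varOmega,S)$, and is therefore continuous. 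Since this holds for every $F$, the map $\mu\mapsto \mu\co\theta^{-1}$ is continuous.

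No serious obstacle is expected here: the proof is essentially a one-line reduction via change of variables. The only point that requires a moment's care is the identity $p_v\co\theta=\theta_v\co p_v$, which is what guarantees both that $\mu\co\theta^{-1}$ stays inside $\coup(\varOmega,S)$ and that the pullback of a product function $\prod_v f_v\co p_v$ under $\theta$ is again a product function of the same form.
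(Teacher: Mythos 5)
Your proof is correct and follows essentially the same route as the paper: both reduce continuity to the generating functionals of the initial topology and use the change of variables $\xi(\mu\co\theta^{-1},F)=\xi(\mu,F')$ with $F'=(f_v\co\theta_v)_v$. If anything, your version spells out this identity more carefully than the paper's one-line argument (which states it in a slightly abbreviated form via convergent sequences), and your preliminary check that $\mu\co\theta^{-1}$ remains a coupling is a welcome addition.
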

\begin{proof}
Fix any system $F=(f_v)_{v\in S}$ of bounded measurable functions on $\Omega$, and let $F'$ denote the system $(f_v\co\theta_v)_{v\in S}$. Then for every $\mu\in \coup(\varOmega,S)$, the assumption that each $\theta_v$ is measure-preserving ensures that $\mu\co \theta^{-1}\in \coup(\varOmega,S)$, and the functions from \eqref{eq:cupconst2} generating the topology on $\coup(\varOmega,S)$ satisfy $\xi(\mu\co \theta^{-1},F)=\xi(\mu,F')$. Let $(\mu_n)_{n\geq 1}$ be a sequence in $\coup(\varOmega,S)$ with $\mu_n\to \mu_0$. Then the last equality implies that $\mu_n\co \theta^{-1}\to \mu_0\co \theta^{-1}$, and the result follows.
\end{proof}

\subsection{Closed properties in a coupling space}\hfill\\
The results in this subsection identify certain useful closed subsets of a general coupling space (for a general probability space $\varOmega$).
\begin{lemma}\label{lem:indepclosed}
Let $S$ be a finite set, and let $T_1,T_2\subset S$ be disjoint sets. Let $Q$ be the set of couplings $\mu \in \coup(\varOmega,S)$ such that $\mc{A}_{T_1}^S$ and $\mc{A}_{T_2}^S$ are independent in $\mu$. Then $Q$ is a closed set in $\coup(\varOmega,S)$.
\end{lemma}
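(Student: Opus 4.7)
The plan is to express $Q$ as an intersection of sets, each defined by the vanishing of a continuous function on $\coup(\varOmega,S)$. By Definition \ref{def:couptop}, the topology on $\coup(\varOmega,S)$ is the initial topology making every map $\xi(\cdot,F)$ continuous. In particular, for any fixed systems $F,F',F''$ of bounded measurable functions $\Omega\to\mb{C}$, the map $\mu\mapsto \xi(\mu,F)-\xi(\mu,F')\,\xi(\mu,F'')$ is continuous, so its zero set is closed in $\coup(\varOmega,S)$. Thus if $Q$ can be written as an intersection of such zero sets, it is closed.

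First I would reduce independence to a condition involving product functions. Recall that $\mc{A}_{T_i}^S = p_{T_i}^{-1}(\mc{A}^{T_i})$ is generated, as a $\sigma$-algebra, by the $\pi$-system of cylinder sets $\prod_{v\in T_i} B_v \times \Omega^{S\setminus T_i}$ with $B_v\in\mc{A}$. By the standard Dynkin $\pi$-$\lambda$ theorem (applied to the $\lambda$-system of events satisfying the independence relation), $\mc{A}_{T_1}^S$ and $\mc{A}_{T_2}^S$ are independent in $\mu$ if and only if for every choice of sets $B_v\in\mc{A}$, one for each $v\in T_1\cup T_2$, one has
\begin{equation}\label{eq:indepcondplan}
\mu\Big(\textstyle\prod_{v\in T_1\cup T_2} B_v\,\times\,\Omega^{S\setminus (T_1\cup T_2)}\Big)\;=\;\mu(E_1)\,\mu(E_2),
\end{equation}
where $E_i=\prod_{v\in T_i}B_v\times\Omega^{S\setminus T_i}$, and where we use crucially that $T_1\cap T_2=\emptyset$ so the left-hand cylinder is a well-defined product indexed by $T_1\cup T_2$.

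Next I would rewrite each term in \eqref{eq:indepcondplan} as a value of $\xi$. Fix such a family $(B_v)_{v\in T_1\cup T_2}$ and define the systems $K,F_1,F_2$ indexed by $S$ by setting $k_v=1_{B_v}$ for $v\in T_1\cup T_2$ and $k_v\equiv 1$ otherwise; $(F_i)_v=1_{B_v}$ for $v\in T_i$ and $(F_i)_v\equiv 1$ otherwise ($i=1,2$). Then the left side of \eqref{eq:indepcondplan} equals $\xi(\mu,K)$ and the two factors on the right equal $\xi(\mu,F_1)$ and $\xi(\mu,F_2)$. Hence the condition defining $Q$ becomes: for every such tuple $(B_v)_{v\in T_1\cup T_2}\in\mc{A}^{T_1\cup T_2}$,
\begin{equation*}
\xi(\mu,K)-\xi(\mu,F_1)\,\xi(\mu,F_2)=0.
\end{equation*}

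Finally, $Q$ is the intersection, over all such tuples, of the zero sets of the continuous functions $\mu\mapsto \xi(\mu,K)-\xi(\mu,F_1)\xi(\mu,F_2)$ on $\coup(\varOmega,S)$. An intersection of closed sets is closed, so $Q$ is closed. The only subtlety is the $\pi$-$\lambda$ reduction that allows us to test independence on a generating $\pi$-system of cylinders rather than on all of $\mc{A}_{T_1}^S\times\mc{A}_{T_2}^S$; this is a standard application of Dynkin's lemma and is the main (minor) technical point of the argument, everything else being a direct translation of the independence relation into the language of the generating functionals $\xi$.
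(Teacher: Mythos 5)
Your proposal is correct and follows essentially the same route as the paper: both express independence as the family of equations $\xi(\mu,F)=\xi(\mu,F_1)\,\xi(\mu,F_2)$ over generating function systems and conclude that $Q$ is an intersection of closed sets. The only cosmetic difference is that you test on indicator functions of cylinder sets and invoke Dynkin's $\pi$-$\lambda$ theorem explicitly, whereas the paper tests on arbitrary bounded measurable systems and leaves that reduction implicit.
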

\begin{proof}
We can describe this independence property in terms of equations involving the functions $\xi$ from Definition \ref{def:couptop}. More precisely, the property holds if and only if we have $\xi(\mu,F)=\xi(\mu,F_1)\,\xi(\mu,F_2)$ for every system $F_1$ of functions $f_{1,v}\in L^\infty(\mc{A})$ with $f_{1,v}=1$ for $v\not\in T_1$, every system $F_2$ of functions $f_{2,v}\in L^\infty(\mc{A})$ with $f_{2,v}=1$ for $v\not\in T_2$, and $F$ the system with $f_v=f_{1,v}$ if $v\in T_1$, with $f_v=f_{2,v}$ if $v\in T_2$, and $f_v=1$ otherwise. For every such  system $F_1$ and $F_2$, the set of couplings satisfying  $\xi(\mu,F)=\xi(\mu,F_1)\xi(\mu,F_2)$ is closed (by continuity of the functions $\xi(\cdot,F)$ in general). Since $Q$ is the intersection of all these sets, the result follows.
\end{proof}

\begin{lemma}\label{lem:pindownclosed}
Let $S$ be a finite set, let $T\subset S$, and let $\nu\in\coup(\varOmega,T)$. Let $Q$ be the set of couplings $\mu \in \coup(\varOmega,S)$ such that $\mu_T=\nu$. Then $Q$ is a closed set in $\coup(\varOmega,S)$. 
\end{lemma}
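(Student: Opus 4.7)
The plan is to express the condition $\mu_T = \nu$ as an intersection of conditions of the form $\xi(\mu, F) = c_F$ for various systems $F$ and constants $c_F$, and then invoke continuity of each $\xi(\cdot, F)$ as guaranteed by Definition \ref{def:couptop}.

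More concretely, first I would observe that for any system $G = (g_v)_{v \in T}$ of bounded $\mc{A}$-measurable functions, the change-of-variables formula for the image measure $\mu_T = \mu \co p_T^{-1}$ gives
\[
\xi(\mu_T, G) = \int_{\Omega^T} \prod_{v \in T} g_v \co p_v \,\ud\mu_T = \int_{\Omega^S} \prod_{v \in T} g_v \co p_v^S \,\ud\mu = \xi(\mu, \wt F),
\]
where $\wt F = (\wt f_v)_{v \in S}$ is the system on $S$ with $\wt f_v = g_v$ for $v \in T$ and $\wt f_v = 1$ for $v \in S \setminus T$. Here the middle equality holds because $\prod_{v \in T} g_v \co p_v$ on $\Omega^T$ pulls back under $p_T$ to the same product with the $p_v$ reinterpreted as projections from $\Omega^S$.

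Next I would note that the condition $\mu_T = \nu$ is equivalent to $\xi(\mu_T, G) = \xi(\nu, G)$ for all such systems $G$; this follows from Lemma \ref{lem:pisysapprox} applied to the product $\sigma$-algebra on $\Omega^T$ (or directly from the fact that products of bounded measurable functions generate $L^1$ and determine a finite measure). Thus
\[
Q = \bigcap_{G} \bigl\{\mu \in \coup(\varOmega, S) : \xi(\mu, \wt F) = \xi(\nu, G)\bigr\},
\]
where the intersection runs over all systems $G = (g_v)_{v \in T}$ of bounded $\mc{A}$-measurable functions, and $\wt F$ is the extension of $G$ by ones as above. For each fixed $G$, the number $\xi(\nu, G)$ is a constant and $\xi(\cdot, \wt F) : \coup(\varOmega, S) \to \mb{C}$ is continuous by definition of the topology on $\coup(\varOmega, S)$. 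Hence each set in the intersection is closed, and $Q$ is closed.

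There is no real obstacle here; the argument is essentially a direct translation of the definition of the subcoupling $\mu_T$ into statements about the generating functions $\xi(\cdot, F)$, followed by the observation that the initial topology makes all such $\xi(\cdot, F)$ continuous. The only minor point to verify carefully is the equivalence between equality of two couplings and equality of all their $\xi$-values on product systems of bounded measurable functions, which is immediate from Lemma \ref{lem:pisysapprox} (or from a standard Dynkin class argument applied to measurable rectangles).
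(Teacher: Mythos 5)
Your proof is correct and follows essentially the same route as the paper: both express $\mu_T=\nu$ as the condition $\xi(\mu,\wt F)=\xi(\nu,G)$ over all systems $G$ on $T$ extended by ones to $S$, and conclude that $Q$ is an intersection of closed sets by continuity of the maps $\xi(\cdot,\wt F)$. Your extra verification of the change-of-variables identity and of the fact that the $\xi$-values on product systems determine the coupling is exactly the content the paper leaves implicit.
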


\begin{proof}
The property $\mu_T=\nu$ holds if and only if we have $\xi(\mu,F)=\xi(\nu,F')$  for every system $F$ of functions $f_v\in L^\infty(\mc{A})$ with $f_v=1$ for $v\not\in T$, where $F'=(f_v)_{v\in T}$. For every fixed $F$, the condition $\xi(\mu,F)=\xi(\nu,F')$ defines a closed set of couplings $\mu\in\coup(\varOmega,S)$. The set $Q$ is the intersection of all such sets over all such systems $F$, so it is closed.
\end{proof}

\begin{lemma}\label{lem:botclosed}
Let $S$ be a finite set, let $T_1,T_2\subset S$ be such that $T_1\cap T_2=\{w\}$ for some $w\in S$, and let $\nu\in\coup(\varOmega,T_1)$. Let $Q$ be the set of couplings $\mu \in \coup(\varOmega,S)$ such that $\mu_{T_1}=\nu$ and $T_1\,\bot_\mu\, T_2$. Then $Q$ is a closed set in $\coup(\varOmega,S)$.
\end{lemma}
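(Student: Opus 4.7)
My plan is to express $Q$ as an intersection of closed sets. First, by Lemma \ref{lem:pindownclosed}, the set $Q_0 := \{\mu \in \coup(\varOmega, S) : \mu_{T_1} = \nu\}$ is closed. Since $T_1 \cap T_2 = \{w\}$, we have $\mc{A}^S_{T_1\cap T_2} = \mc{A}^S_w$, a sub-$\sigma$-algebra that is fixed once $\lambda$ is. Thus the task reduces to showing that within $Q_0$ the property $T_1 \bot_\mu T_2$ is a closed condition.

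By Lemma \ref{lem:botsuff}, for $\mu \in Q_0$ the relation $T_1 \bot_\mu T_2$ is equivalent to the family of identities
\[
\int f g \, \ud\mu \;=\; \int \mb{E}_\mu(f|\mc{A}^S_w) \, g \, \ud\mu, \qquad f \in L^\infty(\mc{A}^S_{T_1}),\; g \in L^\infty(\mc{A}^S_{T_2}).
\]
(Forward: $\mb{E}_\mu(f|\mc{A}^S_{T_2})$ is $\mc{A}^S_w$-measurable, hence equals $\mb{E}_\mu(f|\mc{A}^S_w)$ since $\{w\} \subset T_2$, and the identity follows on testing against $g$. Backward: the identity for all $g$ forces $\mb{E}_\mu(f|\mc{A}^S_{T_2}) = \mb{E}_\mu(f|\mc{A}^S_w)$, which is $\mc{A}^S_w$-measurable.) The crucial point is that on $Q_0$ the right-hand side depends on $\mu$ only through the fixed marginal $\nu$: writing $f = f_{T_1} \co p_{T_1}$ and using \eqref{eq:exprel}, we have $\mb{E}_\mu(f|\mc{A}^S_w) = \phi_f \co p_w$, where $\phi_f \in L^\infty(\mc{A})$ is obtained from $\mb{E}_\nu(f_{T_1}|\mc{A}^{T_1}_w)$ via Lemma \ref{lem:Doob} and depends only on $\nu$.

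I now restrict to product data: $f = \prod_{v \in T_1} f_v \co p_v$ and $g = \prod_{v \in T_2} g_v \co p_v$ with each $f_v, g_v \in \mc{U}^\infty(\mc{A})$. Then both sides of the displayed identity become $\xi(\mu, \cdot)$-functionals for explicit systems of bounded measurable one-coordinate functions (on the left the factor at coordinate $w$ is $f_w g_w$; on the right it is $\phi_f\,g_w$; at the other coordinates of $T_1 \cup T_2$ we place the $f_v$ or $g_v$, and $1$ elsewhere). Each such pair of products thus cuts out a closed subset of $\coup(\varOmega, S)$. Finally, by Lemma \ref{lem:pisysapprox}, combined with $L^2$-continuity of conditional expectation (which transfers $L^2$-approximations of $f_{T_1}$ to $\phi_f$) and the boundedness of all functions in sight (so Cauchy--Schwarz controls the integrals uniformly in $\mu \in Q_0$), validity of the identity for all product pairs propagates to validity for all $f \in L^\infty(\mc{A}^S_{T_1})$ and $g \in L^\infty(\mc{A}^S_{T_2})$. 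Hence $Q$ equals the intersection of $Q_0$ with this family of closed sets, so $Q$ is closed. The only real subtlety is this last approximation step, which is routine given Lemma \ref{lem:pisysapprox}.
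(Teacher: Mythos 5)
Your proposal is correct and follows essentially the same route as the paper's proof: reduce to the closed set $\{\mu:\mu_{T_1}=\nu\}$ via Lemma \ref{lem:pindownclosed}, observe via Lemma \ref{lem:botsuff} that $T_1\,\bot_\mu\,T_2$ is captured by identities whose conditional-expectation term at $w$ is computed in the fixed marginal $\nu$ and hence does not vary with $\mu$, and then use Lemma \ref{lem:pisysapprox} to reduce to rank-1 systems, each of which yields a closed condition expressed through the functionals $\xi(\mu,\cdot)$. The paper packages this as the equation $\xi(\mu,F)=\xi(\mu,H)$ with $h_w=\mb{E}(\prod_{v\in T_1}f_{1,v}\co p_v\,|\,\mc{A}^S_w)f_{2,w}$ fixed on $Q'$, which is exactly your $\phi_f\,g_w$.
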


\begin{proof}
We can describe the properties defining $Q$ again in terms of equations involving the functions $\xi$. For $i=1,2$ let $F_i$ be a system of functions $f_{i,v}\in L^\infty(\varOmega)$ for $v\in T_i$, and let $F$ be the function system with $f_v=f_{1,v}$ for $v\in T_1\setminus\{w\}$, with $f_v=f_{2,v}$ for $v\in T_2\setminus\{w\}$, with $f_w=f_{1,w}f_{2,w}$, and with $f_v=1$ otherwise. Let $H$ be the function system with $h_v=1$ for $v\in S\setminus T_2$, with $h_v=f_{2,v}$ for $v\in T_2\setminus\{w\}$, and $h_w=\mb{E}(\prod_{v\in T_1} f_{1,v}\co p_v | \mc{A}_w^S)\, f_{2,w}$. It follows from Lemma \ref{lem:botsuff} and approximation by rank-1 functions (Lemma \ref{lem:pisysapprox}) that $T_1\,\bot_\mu\, T_2$ holds if and only if we have $\xi(\mu,F)=\xi(\mu,H)$ for every such $F$. Let $Q'=\{\mu\in\coup(\varOmega,S): \mu_{T_1}=\nu\}$, which is a closed set by Lemma \ref{lem:pindownclosed}. Note that if $\mu\in Q'$ then the function $h_w$ (and therefore $H$) does not change as $\mu$ varies. We have that $Q$ is the set of couplings $\mu\in Q'$ such that $\xi(\mu,F)=\xi(\mu,H)$ for every system $F$. For a single $F$, the fact that $H$ does not change as $\mu$ varies implies that the last equation defines a closed subset of $Q'$. The set $Q$ is the intersection of all these closed subsets of $Q'$, so it is closed.
\end{proof}

\begin{remark}
One may wonder whether the property $T_1\, \bot_\mu\, T_2$ always defines a closed set of couplings $\mu\in \coup(\varOmega,S)$ for $T_1,T_2\subset S$. It turns out that this is not true, as shown by the following example. Let $\theta$ be a mixing invertible measure-preserving transformation on $\varOmega=(\Omega,\mc{A},\lambda)$. For each $n\in \mb{N}$, let $\mu_n\in \coup(\varOmega, [3])$ be the image of $\lambda$ under the map $x\mapsto (\theta^n x,x,\theta^n x)$. It is readily seen that $\{1,2\}\,\bot_{\mu_n}\, \{2,3\}$ for every $n$ (in fact we have $\mc{A}^{[3]}_{\{1,2\}}=_{\mu_n}\mc{A}^{[3]}_{\{2,3\}}=_{\mu_n}\mc{A}^{[3]}_{\{2\}}$). Furthermore, the mixing property implies that $\mu_n$ converges to the coupling $\mu$ defined as the image of the product measure $\lambda\times \lambda$ on $\Omega^2$ under the map $(x,y)\mapsto (x,y,x)$. However, we do not have $\{1,2\}\,\bot_\mu\, \{2,3\}$ (we still have $\mc{A}^{[3]}_{\{1,2\}}=_{\mu}\mc{A}^{[3]}_{\{2,3\}}=_{\mu} \mc{A}^{[3]}_{\{1,2\}}\wedge \mc{A}^{[3]}_{\{2,3\}}$, but the latter $\sigma$-algebra is strictly larger than $\mc{A}^{[3]}_{\{2\}}$).
\end{remark}

\begin{lemma}\label{lem:relindclosed}
Let $S$ be a finite set and for each $v\in S$ let $\mc{B}\sbr{v}$ be a sub-$\sigma$-algebra of $\mc{A}$. Let $Q$ be the set of couplings $\mu\in\coup(\varOmega,S)$ such that $\mu$ is relatively independent over its factor corresponding to $\bigotimes_{v\in S} \mc{B}\sbr{v}$. Then $Q$ is a closed set in $\coup(\varOmega,S)$.
\end{lemma}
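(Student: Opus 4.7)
The plan is to use the equivalent characterization of relative independence from Remark \ref{rem:indoverfact}: a coupling $\mu \in \coup(\varOmega,S)$ lies in $Q$ if and only if for every $w \in S$ and every system $G = (g_v)_{v\in S}$ of functions $g_v \in L^\infty(\mc{A})$ with $\mb{E}(g_w \mid \mc{B}\sbr{w}) = 0$, one has $\xi(\mu,G) = 0$. This is the right formulation because it eliminates the need to compare $\xi(\mu,G)$ with $\xi(\mu,G')$ where $G'$ itself depends on $G$ through a conditional expectation; the condition instead becomes a single linear equation in $\mu$.

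The key point is that the conditional expectation $\mb{E}(g_w \mid \mc{B}\sbr{w})$ is computed with respect to the marginal $\lambda$ on $\Omega$, which is fixed for every self-coupling $\mu \in \coup(\varOmega, S)$. Consequently, the collection of \emph{admissible} systems $G$ (those with $\mb{E}(g_w\mid\mc{B}\sbr{w})=0$ for the chosen $w$) is determined purely by $\varOmega$ and $(\mc{B}\sbr{v})_{v\in S}$, independently of $\mu$.

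For each such fixed admissible pair $(w,G)$, the map $\mu \mapsto \xi(\mu, G)$ is continuous on $\coup(\varOmega, S)$ by the very definition of the initial topology (Definition \ref{def:couptop}). Hence the zero set
\[
Q_{w,G} := \{\mu \in \coup(\varOmega,S) : \xi(\mu,G) = 0\}
\]
is closed. Then I would conclude by writing
\[
Q = \bigcap_{w \in S} \;\bigcap_{G \text{ admissible for } w} Q_{w,G},
\]
an intersection of closed sets, hence closed.

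There is no real obstacle: the argument is essentially a continuity-plus-intersection routine, and the only thing to be careful about is invoking the correct reformulation of relative independence so that the defining condition becomes a family of equations of the form $\xi(\mu,G)=0$ (rather than $\xi(\mu,G)=\xi(\mu,G')$, which would still work by continuity but is slightly less clean). Note also that we need not worry about the cardinality of the indexing family for the intersection, since closedness is preserved under arbitrary intersections.
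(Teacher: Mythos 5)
Your proof is correct and follows essentially the same route as the paper: invoke the reformulation from Remark \ref{rem:indoverfact}, observe that each condition $\xi(\mu,G)=0$ cuts out a closed set by continuity of $\xi(\cdot,G)$, and write $Q$ as the intersection of these sets. Your additional observation that the admissibility of $G$ is determined by the fixed marginal $\lambda$ and not by $\mu$ is a worthwhile point that the paper leaves implicit.
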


\begin{proof}
As mentioned in Remark \ref{rem:indoverfact}, we have that $\mu$ is relatively independent over this factor if and only if for every system $F=(f_v)_{v\in S}$ such that $\mb{E}(f_v|\mc{B}\sbr{v})=0$ for some $v$, we have $\xi(\mu,F)=0$.  For every such system, the set of couplings $\mu$ with $\xi(\mu,F)=0$ is closed. Since $Q$ is the intersection of all these sets, the result follows.
\end{proof}

\subsection{Localization}\hfill\\
\noindent We now turn to properties of couplings that involve measure disintegration. To handle disintegrations and related tools in a convenient way, in this subsection we assume that $\varOmega$ is a Borel probability space. The main type of disintegration that we use applies to a coupling $\mu\in \coup(\varOmega,S)$ relative to a projection $p_T:\Omega^S\to \Omega^T$ and the subcoupling $\mu_T$. A reference for this result is \cite[(17.35) ii)]{Ke}. 

First we want to ensure that in such a disintegration almost all the fibre measures are couplings in $\coup(\varOmega,S\setminus T)$. This will be shown to hold when $T$ is of the following kind.
\begin{defn}[Local set]
Let $\mu\in\coup(\varOmega,S)$. We say that a set $T\subset S$ is $\mu$-\emph{local} (or \emph{local in} $\mu$) if for every $v\in S\setminus T$ the $\sigma$-algebras $\mc{A}^S_v$ and $\mc{A}^S_T$ are independent in $\mu$.
\end{defn}
\noindent Note that the family of local subsets of $S$ is closed under intersection. We also have the following fact, which is a straightforward consequence of Lemma \ref{lem:indepclosed}.
\begin{lemma}
For a fixed $T\subset S$, the couplings in $\coup(\varOmega,S)$ in which $T$ is local form a closed set in $\coup(\varOmega,S)$.
\end{lemma}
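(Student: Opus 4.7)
The plan is to reduce the statement directly to Lemma \ref{lem:indepclosed}. By the definition of a $\mu$-local set, saying that $T$ is local in $\mu$ is exactly the conjunction, over all $v\in S\setminus T$, of the statements ``$\mc{A}^S_v$ and $\mc{A}^S_T$ are independent in $\mu$''. Since every such $v$ lies outside $T$, the two index sets $\{v\}$ and $T$ are disjoint, so Lemma \ref{lem:indepclosed} applies verbatim to each pair.

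Concretely, for each $v\in S\setminus T$ let
\[
Q_v \;:=\; \bigl\{\mu\in\coup(\varOmega,S)\,:\, \mc{A}^S_v \text{ and } \mc{A}^S_T \text{ are independent in } \mu\bigr\}.
\]
By Lemma \ref{lem:indepclosed}, each $Q_v$ is closed in $\coup(\varOmega,S)$. The set of couplings in which $T$ is local is then precisely $\bigcap_{v\in S\setminus T} Q_v$. As $S$ is finite, this is a finite intersection of closed sets and is therefore closed.

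There is really no obstacle here: the statement is a direct packaging of Lemma \ref{lem:indepclosed}, and the only thing to observe is that ``local'' is by definition a finite conjunction of pairwise independence assertions of the type handled by that lemma. I would write the proof as essentially one sentence invoking Lemma \ref{lem:indepclosed} and the fact that finite intersections of closed sets are closed.
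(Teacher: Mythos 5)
Your proof is correct and is exactly what the paper intends: it states this lemma without proof as ``a straightforward consequence of Lemma \ref{lem:indepclosed}'', and your decomposition of locality into the pairwise independence conditions $Q_v$ for $v\in S\setminus T$, each closed by that lemma, is the intended argument. (Minor remark: you do not even need $S$ finite here, since arbitrary intersections of closed sets are closed.)
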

\noindent The following lemma ensures the property relative to disintegrations mentioned above.

\begin{lemma}\label{lem:localize}
Let $\mu\in\coup(\varOmega,S)$ and let $T\subset S$ be local in $\mu$. Then there is a Borel measurable function $f_{\mu,T}:\Omega^T\to \coup(\varOmega,S\setminus T)$, $x\mapsto \mu_x$ such that for every function $f\in L^\infty(\mc{A}^S)$ we have $\int_{\Omega^S} f \ud\mu= \int_{\Omega^T} \int_{\Omega^{S\setminus T}} f \ud\mu_x\ud\mu_T$. Any other function $g$ with the same properties as $f_{\mu,T}$ satisfies $g=_{\mu_T} f_{\mu,T}$.
\end{lemma}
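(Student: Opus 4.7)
The plan is to obtain $f_{\mu,T}$ by first invoking the standard measure disintegration theorem for Borel probability spaces, and then to use the locality hypothesis, coordinate by coordinate, to upgrade the fiber measures to couplings in $\coup(\varOmega,S\setminus T)$.

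More precisely, I would start by applying \cite[(17.35) ii)]{Ke} to the measure-preserving projection $p_T:(\Omega^S,\mu)\to(\Omega^T,\mu_T)$. This yields a Borel map $x\mapsto \wt\mu_x$ from $\Omega^T$ to the space of Borel probability measures on $\Omega^S$ such that $\wt\mu_x$ is concentrated on the fiber $p_T^{-1}(x)$ for $\mu_T$-a.e.\ $x$, and such that $\int_{\Omega^S}f\,\mathrm{d}\mu=\int_{\Omega^T}\int f\,\mathrm{d}\wt\mu_x\,\mathrm{d}\mu_T(x)$ for every $f\in L^\infty(\mc{A}^S)$. Since $\wt\mu_x$ sits on $\{x\}\times \Omega^{S\setminus T}$ for almost every $x$, it can be pushed forward under the natural projection to a Borel probability measure $\mu_x$ on $\Omega^{S\setminus T}$, and this pushforward is still Borel in $x$.

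The key step is to verify that $\mu_x\in\coup(\varOmega,S\setminus T)$ for $\mu_T$-a.e.\ $x$, i.e.\ that for each $v\in S\setminus T$ the $v$-marginal of $\mu_x$ is $\lambda$. This is exactly where locality enters: by assumption $\mc{A}^S_v$ and $\mc{A}^S_T$ are independent in $\mu$, so we may apply Lemma \ref{lem:fibmeaspres} to the two-factor decomposition $\Omega^S\cong \Omega^T\times \Omega^{S\setminus T}$ after further projecting the second factor onto its $v$-th coordinate. This gives a $\mu_T$-null set $N_v\subset\Omega^T$ outside of which the $v$-marginal of $\mu_x$ equals $\lambda$. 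Taking the union $N=\bigcup_{v\in S\setminus T}N_v$ (finite, hence still $\mu_T$-null) and redefining $\mu_x$ on $N$ to be any fixed element of $\coup(\varOmega,S\setminus T)$ (which is nonempty by Proposition \ref{prop:coupspace}), we obtain a map with values in $\coup(\varOmega,S\setminus T)$ that still satisfies the disintegration identity.

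It remains to check Borel measurability of $x\mapsto \mu_x$ into $\coup(\varOmega,S\setminus T)$ and the a.e.\ uniqueness. For measurability, the topology on $\coup(\varOmega,S\setminus T)$ is the initial one generated by the maps $\xi(\cdot,F)$ of Definition \ref{def:couptop}; but for any system $F=(f_v)_{v\in S\setminus T}$ of bounded measurable functions, $\xi(\mu_x,F)=\int_{\Omega^S}\prod_{v\in S\setminus T}f_v\co p_v\,\mathrm{d}\wt\mu_x$ is Borel in $x$ by the disintegration theorem, so the generating functions of the topology pull back to Borel functions on $\Omega^T$, giving the required Borel measurability. Uniqueness $\mu_T$-a.e.\ is inherited from the uniqueness clause of the disintegration theorem: any two such maps agree as measures on $\Omega^S$ for almost every $x$, hence also as couplings in $\coup(\varOmega,S\setminus T)$. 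The main obstacle is the fiber-marginals step, and it is precisely what the locality assumption was designed to handle via Lemma \ref{lem:fibmeaspres}.
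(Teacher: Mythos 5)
Your proposal is correct and follows essentially the same route as the paper: disintegrate via \cite[(17.35) ii)]{Ke}, use the locality hypothesis together with Lemma \ref{lem:fibmeaspres} (applied with $\Omega_1=\Omega$, $f_1=p_v$, $\Omega_2=\Omega^T$, $f_2=p_T$) to see that almost every fibre measure has all marginals equal to $\lambda$, discard a null set, and inherit uniqueness from the disintegration theorem. Your extra remarks on Borel measurability into the coupling space via the generating functions $\xi(\cdot,F)$ make explicit a point the paper leaves implicit, but the argument is the same.
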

\begin{proof}
By \cite[(17.35) ii)]{Ke} there is a Borel function $x\mapsto \mu_x$ from $\Omega^T$ to the space of Borel measures on $\Omega^{S\setminus T}$, such that $(\mu_x)_{x\in \Omega^T}$ is a disintegration of $\mu$ relative to $p_T: \Omega^S\to \Omega^T$ and $\mu_T=\mu \co p_T^{-1}$, and such that any other such function agrees with this one $\mu_T$-almost surely. We have $\mu_x \in \coup(\varOmega,S\setminus T)$ for $\mu_T$-almost every $x$, indeed for each $v\in S\setminus T$ we see that $\mu_x\co p_v^{-1}=\lambda$,  by applying  Lemma \ref{lem:fibmeaspres} with $\Omega_1=\Omega$, $f_1=p_v$, $\Omega_2=\Omega^T$, and $f_2=p_T$. Letting $E$ denote a $\mu_T$-null set such that for every $x \in \Omega^T\setminus E$ we have $\mu_x \in \coup(\varOmega,S\setminus T)$, we can now define an appropriate function $f_{\mu,T}$ by fixing some arbitrary $\nu\in \coup(\varOmega,S\setminus T)$ and setting $f_{\mu,T}(x)=\mu_x$ for $x\in \Omega\setminus E$ and $f_{\mu,T}(x)=\nu$ otherwise. 
\end{proof}
\noindent Next we define an operation that will play a key role in the definition of certain topological spaces using couplings. 
\begin{defn}[Localization]\label{def:loc}
Let $\mu\in\coup(\varOmega,S)$ and let $T\subset S$ be a $\mu$-local set. The $T$-\emph{localization} of $\mu$ is the measurable function $f_{\mu,T}: \Omega^T\to \coup(\varOmega,S\setminus T))$, $x\mapsto \mu_x$ defined (uniquely up to a change on a $\mu_T$-null set) in Lemma \ref{lem:localize}.
\end{defn}
\noindent When the coupling $\mu$ is clear from the context, we shall write $f_T$ rather than $f_{\mu,T}$. We can use the $T$-localization to define a probability measure on the compact space $\coup(\varOmega,S\setminus T)$, namely the image measure $\mu_T\co f_{\mu,T}^{-1}$. This construction is important for the sequel, because it enables us to define certain topological spaces that will turn out to be the compact nilspaces involved in our main results in Section \ref{sec:structhm}. These spaces will be defined to be the \emph{supports} of measures of the form $\mu_T\co f_{\mu,T}^{-1}$. Let us recall here the notion of the support of a Borel measure (see also for instance \cite[Proposition 7.2.9]{Boga2}).
\begin{defn}[Support of a regular Borel measure]
Given a regular Borel measure $\mu$ on a topological space $X$, the \emph{support} of $\mu$ is the closed set $\Supp(\mu)=\{x\in X: \textrm{for every open set $U\ni x$ we have $\mu(U)>0$}\}$.
\end{defn}
\noindent We use the notation $\Supp$ to distinguish this from the purely set-theoretic notion of the support of a complex-valued function $f$ on a set $X$, that is $\supp(f)=\{x\in X:f(x)\neq 0\}$. 
\begin{remark}\label{rem:dualfns}
The localization construction, when applied in particular to cubic couplings (discussed in Section \ref{sec:cc}), can be seen to yield a common generalization of constructions that have played important roles both in arithmetic combinatorics and in ergodic theory, and that are centered on the notion of \emph{dual functions}. To see an example from arithmetic combinatorics, consider again the coupling $\mu\in\coup(\ab,\{0,1\}^2)$ from Example \ref{ex:U2coup}, with Haar measure $\lambda$, and let $T=\{00\}$. Then the $T$-localization of $\mu$ assigns to each $x\in \ab$ a coupling $\mu_x\in \coup(\ab, \{10,01,11\})$, which is determined by the constants $\xi(\mu_x,(f_{10},f_{01},f_{11}))=\int_{\ab^2} f_{10}(x+z_1) f_{01}(x+z_2) \overline{f_{11}(x+z_1+z_2)}\ud\lambda^2(z_1,z_2)$, for bounded Borel functions $f_{10},f_{01},f_{11}$ on $\ab$. The functions $x\mapsto \xi(\mu_x,(f_{10},f_{01},f_{11}))$ are the $U^2$ \emph{dual-functions} on $\ab$ as defined in \cite[(6.3)]{GTprimes}, and the same construction for the cubes $\{0,1\}^n$ with $n>2$ yields higher-order $U^n$ dual functions. Dual functions can also be defined on nilmanifolds (see \cite[Chapter 12, \S 3.2]{HKbook}), and in this setting again they can be seen as special cases of the above construction, when this is applied to the Haar measures on cubes on the nilmanifold. Another example, from ergodic theory, is given by the dual functions defined in \cite[(35) and $(\mc{B}_k)$]{HK}, which can also be seen as special cases of the above construction, when it is applied to the measures $\mu^{[n]}$ from \cite{HK}.
\end{remark}

\noindent When using disintegrations of couplings, it can be very useful to know that some given property of the coupling is inherited by almost every fibre measure in the disintegration. The next lemma ensures this for the property of conditional independence of index sets. 
\begin{lemma}\label{lem:fibrebotindep}
Let $\mu\in \coup(\varOmega,S)$, let $T_1,T_2$ be subsets of $S$ with $T_1\cap T_2=\{w\}$, let $v\in T_1\cup T_2$, and suppose that $T_1~\bot_{\mu}~T_2$. Let $(\mu_x)_{x\in \Omega}$ be a disintegration of $\mu$ relative to $p_v:\Omega^S\to \Omega$ and $\lambda$. Then for $\lambda$-almost every $x$ we have $(T_1\setminus\{v\})~\bot_{\mu_x}~(T_2\setminus\{v\})$.
\end{lemma}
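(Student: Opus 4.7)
The plan is to reduce the fibrewise statement to an auxiliary non-fibrewise conditional-independence property of $\mu$ itself, and then transfer that property to the disintegrating measures via the standard fibrewise conditional-expectation formula. Without loss of generality I assume $v\in T_1$ (the case $v\in T_2\setminus T_1$ is symmetric), and let $w$ denote the unique element of $T_1\cap T_2$. By Lemma~\ref{lem:botsuff} the conclusion is equivalent to the assertion that for every $f\in L^\infty(\mc{A}^{S\setminus\{v\}}_{T_1\setminus\{v\}})$, the conditional expectation $\mb{E}_{\mu_x}(f\,|\,\mc{A}^{S\setminus\{v\}}_{T_2\setminus\{v\}})$ is $\mc{A}^{S\setminus\{v\}}_{(T_1\cap T_2)\setminus\{v\}}$-measurable for $\lambda$-almost every $x$.

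The bridge to the ambient coupling is the identity $\mb{E}_\mu(g\,|\,\mc{C}\vee\mc{A}^S_v)(\omega)=\mb{E}_{\mu_{\omega_v}}(g\,|\,\mc{C})(\omega)$ for $\mu$-a.e.\ $\omega$, valid for any bounded $\mc{A}^S$-measurable $g$ and any sub-$\sigma$-algebra $\mc{C}\subset\mc{A}^S$; this follows by testing against products $h_1 h_2$ with $h_1\in L^\infty(\mc{C})$, $h_2\in L^\infty(\mc{A}^S_v)$ (via Lemma~\ref{lem:pisysapprox}) and Fubini against the disintegration $\mu=\int\mu_x\,\ud\lambda(x)$. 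Applied with $\mc{C}=\mc{A}^S_{T_2\setminus\{v\}}$ and $f$ extended to $\Omega^S$ so as to be independent of $\omega_v$, and noting $\mc{A}^S_{T_2\setminus\{v\}}\vee\mc{A}^S_v=\mc{A}^S_{T_2\cup\{v\}}$, this reduces the whole fibrewise claim to the non-fibrewise statement that $\mb{E}_\mu(f\,|\,\mc{A}^S_{T_2\cup\{v\}})$ is $\mc{A}^S_{(T_1\cap T_2)\cup\{v\}}$-measurable for every such $f$.

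For this reduced statement I would first invoke Lemma~\ref{lem:modlaw} with $\mc{B}=\mc{A}^S_{T_1}$, $\mc{C}=\mc{A}^S_{T_2}$, $\mc{B}_1=\mc{A}^S_v$ (the hypotheses $\mc{B}_1\subset\mc{B}$ and $\mc{B}\upmod\mc{C}$ hold by $v\in T_1$ and $T_1\,\bot_\mu\,T_2$ respectively), obtaining the meet $\mc{A}^S_{T_1}\wedge_\mu\mc{A}^S_{T_2\cup\{v\}}=_\mu\mc{A}^S_{\{w,v\}}$. For the conditional-independence half, by orthogonal projection it suffices to show $\int f\psi\,\ud\mu=0$ whenever $\psi\in L^\infty(\mc{A}^S_{T_2\cup\{v\}})$ satisfies $\mb{E}(\psi\,|\,\mc{A}^S_{\{w,v\}})=0$. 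Approximating such $\psi$ (via Lemma~\ref{lem:pisysapprox}) by combinations of products $\tilde\psi_2\psi_v$ with $\tilde\psi_2\in L^\infty(\mc{A}^S_{T_2})$, $\psi_v\in L^\infty(\mc{A}^S_v)$, and $\mb{E}(\tilde\psi_2\,|\,\mc{A}^S_w)=0$, I observe that $f\psi_v$ lies in $L^\infty(\mc{A}^S_{T_1})$; hence by $T_1\,\bot_\mu\,T_2$ and Lemma~\ref{lem:botsuff} the function $\mb{E}(f\psi_v\,|\,\mc{A}^S_{T_2})$ is $\mc{A}^S_w$-measurable, against which $\tilde\psi_2$ integrates to zero.

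Finally I would upgrade the a.e.-in-$\omega$ statement to an a.e.-in-$x$ statement valid simultaneously for all $f$: since $\varOmega$ is a Borel probability space, $\mc{A}^{S\setminus\{v\}}_{T_1\setminus\{v\}}$ is countably generated, so one fixes a countable $L^1$-dense family, intersects the countably many null sets produced above, and appeals to continuity of conditional expectation in $L^1$ to extend the statement to all $f$; a final application of Lemma~\ref{lem:botsuff} yields $(T_1\setminus\{v\})\,\bot_{\mu_x}\,(T_2\setminus\{v\})$. The main delicate point is the orthogonality computation in the reduced statement, where the use of $v\in T_1$ to place $f\psi_v$ inside $L^\infty(\mc{A}^S_{T_1})$ is what allows the hypothesis $T_1\,\bot_\mu\,T_2$ to be brought to bear; the remainder is bookkeeping of $\sigma$-algebras combined with the modular law.
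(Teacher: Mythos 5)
Your proof is correct in substance, but it is organized quite differently from the paper's. The paper works directly with the fibre measures: it splits into the cases $v\neq w$ and $v=w$, shows in the first case that $\int_{\Omega^S} f\,g\,h_0\,\ud\mu=0$ for $f\in L^\infty(\mc{A}^S_v)$, $g\in L^\infty(\mc{A}^S_{T_1\setminus\{v\}})$ and $h_0\in L^\infty(\mc{A}^S_{T_2})$ with $\mb{E}(h_0|\mc{A}^S_w)=0$, and then uses countable dense families to pass to an almost-every-$x$ statement. You instead prove a single ambient statement --- in effect that $\mb{E}_\mu(f|\mc{A}^S_{T_2\cup\{v\}})$ is $\mc{A}^S_{\{w,v\}}$-measurable, i.e.\ $T_1~\bot_\mu~(T_2\cup\{v\})$ restricted to the relevant test functions --- and transfer it to the fibres via the identity $\mb{E}_\mu(g|\mc{C}\vee\mc{A}^S_v)=\mb{E}_{\mu_{\omega_v}}(g|\mc{C})$. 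The kernel is identical in both proofs (a product of a $v$-coordinate factor with a $T_1\setminus\{v\}$-measurable factor lies in $L^\infty(\mc{A}^S_{T_1})$, so $T_1~\bot_\mu~T_2$ applies), but your packaging buys a uniform treatment of the cases $v=w$ and $v\neq w$, at the cost of the measurable-version bookkeeping hidden in the bridge identity, which the paper's direct route handles explicitly via its dense-sequence argument.

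Two steps deserve to be spelled out. First, the density claim in your orthogonality argument is true but not immediate: to see that every $\psi\in L^\infty(\mc{A}^S_{T_2\cup\{v\}})$ with $\mb{E}(\psi|\mc{A}^S_{\{w,v\}})=0$ is approximable by sums of products $\tilde\psi_2\psi_v$ with $\mb{E}(\tilde\psi_2|\mc{A}^S_w)=0$, you should note that such a product satisfies $\mb{E}(\tilde\psi_2\psi_v|\mc{A}^S_{\{w,v\}})=\psi_v\,\mb{E}(\tilde\psi_2|\mc{A}^S_{\{w,v\}})$ and that $\mb{E}(\tilde\psi_2|\mc{A}^S_{\{w,v\}})=\mb{E}(\tilde\psi_2|\mc{A}^S_w)=0$ --- the last equality being one more application of $T_1~\bot_\mu~T_2$ together with the tower property through $\mc{A}^S_{T_1}$; only then does the orthogonal decomposition of $L^2(\mc{A}^S_{T_2\cup\{v\}})$ let you discard the $\mc{A}^S_{\{w,v\}}$-measurable part of a generic product approximation. (Alternatively, replace $f$ by $f-\mb{E}(f|\mc{A}^S_{\{w,v\}})$ and test against \emph{all} products $\psi_2\psi_v$, which avoids the density issue entirely.) Second, the bridge identity requires choosing versions of $\mb{E}_{\mu_x}(\cdot|\mc{C})$ measurably in $x$; this is where the Borel hypothesis and the countable generation of the $\sigma$-algebras enter, and your final paragraph correctly identifies that this is where the remaining work lies. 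With these points made explicit the argument is complete; note also that the modular-law step is not actually needed, since the backward direction of Lemma \ref{lem:botsuff} applied to $\mu_x$ already yields the required meet identity once the conditional expectation is shown to land in $\mc{A}^{S\setminus\{v\}}_{(T_1\cap T_2)\setminus\{v\}}$.
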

\begin{proof}
We first prove the case in which $v\neq w$. In this case we can assume without loss of generality that $v\in T_1\setminus T_2$. Let $S'=S\setminus\{v\}$ and $T_1'=T_1\setminus\{v\}$. We will prove that for $\lambda$-almost every $x\in \Omega$, for every bounded $\mc{A}^{S'}_{T_1'}$-measurable function $g':\Omega^{S'}\to\mb{C}$ and bounded $\mc{A}^{S'}_{T_2}$-measurable function $h':\Omega^{S'}\to\mb{C}$, we have \vspace{-0.2cm}
\begin{equation}\label{eq:fibrebotindep}
\int_{\Omega^{S'}} g'\, h' \ud\mu_x = \int_{\Omega^{S'}} \mb{E}_{\mu_x}(g'|\mc{A}^{S'}_{w}) \, h' \ud\mu_x.\vspace{-0.2cm}
\end{equation}
Thus we will have $\mb{E}_{\mu_x}(g'|\mc{A}^{S'}_{w})=\mb{E}_{\mu_x}(g'|\mc{A}^{S'}_{T_2})$, implying by Lemma \ref{lem:botsuff} that $T_1'~\bot_{\mu_x}T_2$.

Let $f,g,h_0$ be bounded functions $\Omega^S\to \mb{C}$ and suppose that $f$ is $\mc{A}^S_v$-measurable, that $g$ is $\mc{A}^S_{T_1'}$-measurable, and that  $h_0$ is $\mc{A}^S_{T_2}$-measurable with $\mb{E}_\mu(h_0|\mc{A}^S_w)=0$. Since $f,g$ are both $\mc{A}^S_{T_1}$-measurable, we have $\int f\,g\,h_0\ud\mu = \int f\, g\,\mb{E}_\mu(h_0|\mc{A}^S_{T_1}) \ud\mu$. Since $T_1~\bot_{\mu}~T_2$, we have by \eqref{eq:condindep} that $\mb{E}_\mu(h_0|\mc{A}^S_{T_1})= \mb{E}_\mu(h_0|\mc{A}^S_{w})$. Hence $\int_{\Omega^S} f\,g\,h_0\ud\mu = 0$. By the disintegration we have $\int_{\Omega^S} f\,g\,h_0\ud\mu=\int_\Omega f'(x)\big(\int_{\Omega^{S'}} g' h_0' \ud\mu_x\big) \ud\lambda$, where $f',g', h_0'$ are the functions given by Lemma \ref{lem:Doob} such that $f=f'\co p_v$, $g=g'\co p_{S'}$ and $h_0= h_0'\co p_{S'}$. Let $t$ denote the function $x\mapsto \int_{\Omega^{S'}} g'\,h_0' \ud\mu_x$. We have thus shown that for every  function $f\in L^\infty(\mc{A}^S_v)$ we have $\int_\Omega f'(x) t(x) \ud\lambda=0$. In particular, choosing $f=\overline{t}\co p_v$, we deduce that $\int_\Omega |t(x)|^2 \ud\lambda=0$, so $t$ vanishes $\lambda$-almost surely. Applying this fact to each term of a  sequence of bounded functions $(g_i)_{i\in\mb{N}}$ that is dense in $L^2(\mc{A}_{T_1'}^S)$, we deduce that for $\lambda$-almost-every $x\in \Omega$, we have $\int_{\Omega^{S'}} g'\, h_0' \ud\mu_x =0$ for every function $g\in L^\infty(\mc{A}_{T_1'}^S)$. Now we let $(h_j)_{j\in\mb{N}}$ be a sequence of bounded functions dense in the closed subspace of $L^2(\mc{A}^S_{T_2})$ consisting of functions $h_0$ with $\mb{E}_\mu(h_0|\mc{A}^S_w)=0$, and we apply the last sentence to each term $h_j$. We thus deduce that for some set $E\subset\Omega$ with $\lambda(E)=0$, for every $x\in \Omega\setminus E$, we have $\int_{\Omega^{S'}} g'\, h_0' \ud\mu_x =0$ for every bounded $\mc{A}_{T_1'}^S$-measurable function $g:\Omega^S\to\mb{C}$ and every bounded $\mc{A}^S_{T_2}$-measurable function $h_0:\Omega^S\to\mb{C}$ with $\mb{E}_\mu(h_0|\mc{A}^S_w)=0$.

Now fix any $x\in \Omega\setminus E$, any bounded $\mc{A}^{S'}_{T_1'}$-measurable $g':\Omega^{S'}\to\mb{C}$ and any bounded $\mc{A}^{S'}_{T_2}$-measurable $h':\Omega^{S'}\to\mb{C}$. Let $r$ be a version of $\mb{E}_\mu(h'\co p_{S'}|\mc{A}_w^S)$. By Lemma \ref{lem:Doob} there is a function $r'\in L^\infty(\mc{A}_w^{S'})$ such that $r=r'\co p_{S'}$.  Applying the last sentence from the previous paragraph to the functions $g=g'\co p_{S'}$ and $h_0 = (h'- r')\co p_{S'} $, we obtain \vspace{-0.2cm}
\begin{equation}\label{eq:fibreindep}
\int_{\Omega^{S'}} g'\; h' \ud\mu_x = \int_{\Omega^{S'}} g'\,r' \ud\mu_x.\vspace{-0.2cm}
\end{equation}
Since $r'$ is already $\mc{A}^{S'}_w$-measurable, the last integral above equals $\int_{\Omega^{S'}} \mb{E}_{\mu_x}(g'|\mc{A}^{S'}_{w}) \, \, r' \ud\mu_x$. By \eqref{eq:fibreindep} applied (in the opposite direction) with $\mb{E}_{\mu_x}(g'|\mc{A}^{S'}_w)$ instead of $g'$, we obtain that the last integral equals $\int_{\Omega^{S'}} \mb{E}_{\mu_x}(g'|\mc{A}^{S'}_w)\; h' \ud\mu_x$. This proves \eqref{eq:fibrebotindep}.

Now we prove the case $v=w$. Let $f\in L^\infty(\mc{A}_v^S)$, $g\in L^\infty(\mc{A}_{T_1\setminus\{v\}}^S)$, $h\in L^\infty(\mc{A}_{T_2\setminus\{v\}}^S)$. We have $\int_{\Omega^S} f\,g\,h \ud\mu = \int_{\Omega^S} f\,\mb{E}(g|\mc{A}_{T_2}^S) h \ud\mu$. This last integral equals $\int_{\Omega^S} f\,\mb{E}(g|\mc{A}_v^S)h \ud\mu = \int_{\Omega^S} f\,\mb{E}(g|\mc{A}_v^S)\,\mb{E}(h|\mc{A}_v^S) \ud\mu$, since  $T_1\bot_\mu T_2$. By \cite[Proposition 10.4.18]{Boga2}, for $\lambda$-almost every $x$ we have $\mb{E}(g|\mc{A}_v^S)(x)=\int_{\Omega^{S'}}g\ud\mu_x$ (and similarly for $h$), so $
\int_{\Omega} f(x) \Big(\int_{\Omega^{S'}} (g\,h) \ud\mu_x\Big)\ud\lambda = \int_{\Omega^S} f\,g\,h \ud\mu = \int_{\Omega^S} f\,\mb{E}(g|\mc{A}_v^S)\,\mb{E}(h|\mc{A}_v^S) \ud\mu = \int_{\Omega} f(x) \Big(\int_{\Omega^{S'}} g \ud\mu_x  \; \int_{\Omega^{S'}} h \ud\mu_x\Big)\ud\lambda$.
Since this holds for any such $f$, we deduce that $\int_{\Omega^{S'}} (g\,h) \ud\mu_x=\int_{\Omega^{S'}} g \ud\mu_x  \; \int_{\Omega^{S'}} h \ud\mu_x$ for almost every $x$, i.e.\ the desired independence. By an argument similar to the previous case, using $L^2$-dense sequences of functions $g,h$, we conclude that for almost every $x$ the last equality holds for all $g,h$. Hence the result holds in this case as well.
\end{proof}
The following lemma is almost trivial but we shall need it for the next result.
\begin{lemma}\label{lem:localize2} Let $\mu\in \coup(\varOmega,S)$, let $T\subset S$ be local, and let $x\mapsto \mu_x$ be \textup{(}a version of\textup{)} $f_{\mu,T}$. Let $A\in \mc{A}^S_{S\setminus T}$, let $f:=\mb{E}(1_A\co p_{S\setminus T}~|~\mc{A}^S_T)$, and let $f'$ be $\mc{A}^T$-measurable such that $f=_\mu f'\co p_T$. Then $f'(x)= \mu_x(A)$ for $\mu_T$-almost every $x\in\Omega^T$.
\end{lemma}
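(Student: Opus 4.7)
The plan is to unfold the defining property of conditional expectation for $f$ against arbitrary sets in $\mc{A}^S_T$, and compare the two resulting integrals by using the disintegration formula from Lemma \ref{lem:localize}. Concretely, I would start by noting that both sides of the proposed identity $f'(x)=\mu_x(A)$ are $\mc{A}^T$-measurable functions of $x$: the left side by construction, and the right side because Lemma \ref{lem:localize} gives that $x\mapsto \mu_x$ is a Borel map into $\coup(\varOmega,S\setminus T)$, and evaluation of a regular Borel measure at a fixed measurable set is measurable in the natural Borel structure inherited from this localization. (If one is worried about the continuity of evaluation, one reduces by a monotone-class argument to sets $A$ that are finite intersections of cylinders, on which evaluation is a continuous function of the coupling.)

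Next, for an arbitrary $B'\in \mc{A}^T$ set $B=p_T^{-1}(B')\in \mc{A}^S_T$. By the definition of $f$ and the fact that $f=_\mu f'\co p_T$,
\[
\int_{\Omega^S}\bigl(1_{B'}\co p_T\bigr)\bigl(1_A\co p_{S\setminus T}\bigr)\,\ud\mu \;=\; \int_{\Omega^S}\bigl(1_{B'}\co p_T\bigr)\,f\,\ud\mu \;=\; \int_{\Omega^T} 1_{B'}(x)\,f'(x)\,\ud\mu_T(x),
\]
where in the last step I used that the integrand is $\mc{A}^S_T$-measurable and $\mu\co p_T^{-1}=\mu_T$. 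On the other hand, applying the disintegration identity from Lemma \ref{lem:localize} to the bounded measurable function $(1_{B'}\co p_T)\,(1_A\co p_{S\setminus T})$, and using that $1_{B'}\co p_T$ is constant on each fibre $p_T^{-1}(x)$, one obtains
\[
\int_{\Omega^S}\bigl(1_{B'}\co p_T\bigr)\bigl(1_A\co p_{S\setminus T}\bigr)\,\ud\mu \;=\; \int_{\Omega^T} 1_{B'}(x)\Bigl(\int_{\Omega^{S\setminus T}} 1_A\,\ud\mu_x\Bigr)\ud\mu_T(x) \;=\; \int_{\Omega^T} 1_{B'}(x)\,\mu_x(A)\,\ud\mu_T(x).
\]

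Combining the two displays, $\int_{B'} f'\,\ud\mu_T = \int_{B'}\mu_x(A)\,\ud\mu_T(x)$ for every $B'\in\mc{A}^T$, which together with the $\mc{A}^T$-measurability of both $f'$ and $x\mapsto \mu_x(A)$ forces $f'(x)=\mu_x(A)$ for $\mu_T$-almost every $x$. I do not anticipate any real obstacle here: the only subtle point is ensuring Borel measurability of $x\mapsto \mu_x(A)$, which is routine once one uses that disintegrations from Lemma \ref{lem:localize} land in the Polish space $\coup(\varOmega,S\setminus T)$ and that Borel sets $A$ can be handled by a monotone-class reduction to a $\pi$-system on which the evaluation map is manifestly measurable.
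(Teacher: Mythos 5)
Your proof is correct and follows essentially the same route as the paper's: both arguments reduce the claim, via the essential uniqueness of conditional expectation, to checking that $\int_{B'} \mu_x(A)\,\ud\mu_T(x) = \mu\big(A'\cap p_T^{-1}(B')\big)$ for every $B'\in\mc{A}^T$ (where $A'$ denotes the cylinder over $A$), which is exactly the defining property of the disintegration from Lemma \ref{lem:localize}. Your additional remarks on the Borel measurability of $x\mapsto\mu_x(A)$ are a harmless elaboration of a point the paper leaves implicit.
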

\begin{proof} Let $g$ denote the function $x\mapsto \mu_x(A)$ on $\varOmega^T$. By the essential uniqueness of conditional expectation, it suffices to prove that for every set $B\in \mc{A}^{T}$ we have $\int_B g(x) \ud \mu_T(x) = \mu(A\cap p_T^{-1}(B))$. But this holds by definition of the disintegration $(\mu_x)_{x\in \Omega^T}$ of $\mu$.
\end{proof}

\begin{defn}[Conditional coupling]\label{def:condcoup} Let $\mu\in \coup(\varOmega,S)$ and let $T\subset S$ be local. Let $M\in\mc{A}^T$ satisfy $\mu_T(M)>0$. Let $\mu'$ be the probability measure defined on $\mathcal{A}^{S\setminus T}$ by $\mu'(N):=\mu(M\times N)/\mu_T(M)$. Then, letting $f$ be the $T$-localization of $\mu$, by Lemma \ref{lem:localize2} we have $\mu'(N)=\mu_T(M)^{-1}\int_{x\in M}f(x)(N)\ud\mu_T(x)$. In particular, by convexity of $\coup(\varOmega,S\setminus T)$ we have $\mu'\in\coup(\varOmega,S\setminus T)$. We call $\mu'$ the \emph{conditional coupling} of $\mu$ relative to $M$.
\end{defn}
\noindent The gist of the following result is similar to that of Lemma \ref{lem:fibrebotindep}, but here the property that is inherited by the fibre measures is the locality of some index set.

\begin{lemma}\label{lem:embloc}
Let $\mu\in \coup(\varOmega,S)$, let $T\subset R\subset S$, and suppose that $R,T$ are local in $\mu$. Let $\nu\in \coup(\varOmega,S\setminus T)$ be a coupling in $\Supp(\mu_T\co f_{\mu,T}^{-1})$. Then $R\setminus T$ is local in $\nu$.
\end{lemma}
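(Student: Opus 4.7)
I need to show that for every $v\in S\setminus R$, the $\sigma$-algebras $\mc{A}^{S\setminus T}_v$ and $\mc{A}^{S\setminus T}_{R\setminus T}$ are independent in $\nu$. The plan is a two-step argument: first, prove the desired locality for $\mu_x$ for $\mu_T$-almost every $x$, where $(\mu_x)_{x\in\Omega^T}$ is (a version of) $f_{\mu,T}$; second, pass from ``almost every $\mu_x$'' to ``every element of the support'' via a closedness argument.

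For the first step, fix $v\in S\setminus R$ and let $f\in L^\infty(\mc{A})$, $g\in L^\infty(\mc{A}^{R\setminus T})$. For any $M\in\mc{A}^T$, I would unfold the disintegration:
\begin{equation*}
\int_M \Big(\int_{\Omega^{S\setminus T}} (f\co p_v)(g\co p_{R\setminus T})\ud\mu_x\Big)\ud\mu_T(x) \;=\; \int_{\Omega^S}(1_M\co p_T)(f\co p_v)(g\co p_{R\setminus T})\ud\mu.
\end{equation*}
Since $T\cup (R\setminus T)=R$, the function $(1_M\co p_T)(g\co p_{R\setminus T})$ is $\mc{A}^S_R$-measurable, while $f\co p_v$ is $\mc{A}^S_v$-measurable with $v\notin R$. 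The locality of $R$ in $\mu$ gives independence of $\mc{A}^S_v$ and $\mc{A}^S_R$, so the right-hand side factors as $\big(\int_\Omega f\ud\lambda\big)\big(\int_{\Omega^S}(1_M\co p_T)(g\co p_{R\setminus T})\ud\mu\big)$. For $\mu_T$-a.e.\ $x$ we have $\mu_x\in\coup(\varOmega,S\setminus T)$, so $\int_\Omega f\ud\lambda=\int(f\co p_v)\ud\mu_x$; also the second factor disintegrates back as $\int_M\big(\int(g\co p_{R\setminus T})\ud\mu_x\big)\ud\mu_T(x)$. Since $M\in\mc{A}^T$ was arbitrary, the two $\mu_T$-integrable functions of $x$ must agree $\mu_T$-a.e.

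For the second step, I would pick countable $L^2$-dense sequences in $L^\infty(\mc{A})$ and in $L^\infty(\mc{A}^{R\setminus T})$ and in $L^\infty(\mc{A})$ covering each $v\in S\setminus R$ (the set $S$ is finite). A countable union of $\mu_T$-null sets leaves a conull set of $x$ on which the product identity $\int(f\co p_v)(g\co p_{R\setminus T})\ud\mu_x=\big(\int f\co p_v\ud\mu_x\big)\big(\int g\co p_{R\setminus T}\ud\mu_x\big)$ holds simultaneously for every such $v$ and every $(f,g)$ in the dense families, hence, by approximation in $L^2(\mu_x)$, for all bounded $(f,g)$. Thus $R\setminus T$ is local in $\mu_x$ for $\mu_T$-a.e.\ $x$.

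For the final step, let $Q\subset\coup(\varOmega,S\setminus T)$ be the set of couplings in which $R\setminus T$ is local. By Lemma~\ref{lem:indepclosed}, applied to each pair $(\{v\}, R\setminus T)$ with $v\in S\setminus R$, $Q$ is closed (it is a finite intersection of closed sets). Step two shows $(\mu_T\co f_{\mu,T}^{-1})(Q)=1$, so $\Supp(\mu_T\co f_{\mu,T}^{-1})\subset Q$, and therefore every $\nu$ in this support lies in $Q$, which is the claim. I expect the only mild subtlety to be the bookkeeping of the density argument in step two; the rest is a direct unfolding of the disintegration together with the closedness result already in the paper.
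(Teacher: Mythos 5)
Your proof is correct, but it is organized differently from the paper's. You follow the ``almost-every-fibre plus closedness'' pattern: first you unfold the disintegration against an arbitrary $M\in\mc{A}^T$ and use the locality of $R$ (noting that $(1_M\co p_T)(g\co p_{R\setminus T})$ is $\mc{A}^S_R$-measurable while $f\co p_v$ is $\mc{A}^S_v$-measurable with $v\notin R$) to get the product formula for $\mu_T$-a.e.\ $x$; then you upgrade to a single conull set by countable density; finally you invoke Lemma~\ref{lem:indepclosed} to see that the set of couplings in which $R\setminus T$ is local is closed, so it contains the support. The paper instead works directly with the given $\nu$: it conditions $\mu$ on the sets $A_\epsilon=f_T^{-1}(B_\epsilon(\nu))$, which have positive $\mu_T$-measure precisely because $\nu$ lies in the support, writes $\nu(G\cap H)$ as the limit of the conditional measures as $\epsilon\to 0$, and applies the locality of $R$ to factor out $\mu(H\times\Omega^T)$, since $A_\epsilon\times\Omega^{S\setminus T}$ and $G\times\Omega^T$ are both $\mc{A}^S_R$-measurable. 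The two arguments rest on exactly the same use of the locality of $R$; the paper's conditioning route avoids the density bookkeeping and the appeal to Lemma~\ref{lem:indepclosed}, while your route is more modular and matches the template the paper itself uses elsewhere (e.g.\ Lemma~\ref{lem:fibrebotindep} followed by Lemmas~\ref{lem:botclosed} and~\ref{lem:closedall}). The one point to make explicit in your step two is that the test functions $g$ can be taken to be rank-one products $\prod_{u\in R\setminus T}g_u\co p_u$ with each $g_u$ drawn from a countable $L^2(\lambda)$-dense family, so that the approximation error is controlled uniformly in $x$ by the $L^1(\lambda)$-norms of the individual factors rather than by a norm depending on the (varying) marginal of $\mu_x$ on $\Omega^{R\setminus T}$; with that precaution the bookkeeping goes through.
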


\begin{proof} 
We prove that for every $w\in S\setminus R$, any events $G\in\mc{A}^{S\setminus T}_{R\setminus T}$, $H\in\mc{A}^{S\setminus T}_w$ satisfy the equation $\nu(G\cap H)=\nu(G)\,\nu(H)$. First note that it suffices to prove this assuming that $G$ is a measurable product-set, i.e.\ of the form $(\prod_{v\in R\setminus T}G_v)\times \Omega^{S\setminus R}$ where $G_v\in \mc{A}$ for each $v$. Indeed, this clearly implies that the equation holds also for $G$ being any pairwise disjoint union of finitely many such product sets, and then this in turn implies the equation in full generality, by approximating any $G\in \mc{A}^{S\setminus T}_{R\setminus T}$ by such a disjoint union (see Lemma \ref{lem:pisysapproxApp} for more details on such approximations). So we may assume that $G=(\prod_{v\in R\setminus T}G_v)\times \Omega^{S\setminus R}$. Let $F=(f_v)_{v\in S\setminus T}$ with $f_v=1_{G_v}$ for $v\in R\setminus T$, $f_w=1_{H'}$ for $H'\in \mc{A}$ such that $H=p_w^{-1}(H')$, and $f_v=1$ otherwise. Let $f_{\mu,T}:x\mapsto\mu_x$ be the $T$-localization of $\mu$. Let $d$ be a metric generating the topology on $\coup(\varOmega,S\setminus T)$. For every $\epsilon>0$, let $A_\epsilon=\{ x\in\Omega^T: \,d(\mu_x,\nu)\leq \epsilon\}=f_{\mu,T}^{-1}(B_\epsilon(\nu))$, where $B_\epsilon(\nu)$ is the ball of radius $\epsilon$ with center $\nu$. Since $\nu\in \Supp(\mu_T\co f_{\mu,T}^{-1})$, we have $\mu_T(A_\epsilon)>0$. 

By the disintegration of $\mu$ in Lemma \ref{lem:localize}, and the  continuity of $\xi(\cdot,F)$, we have $\nu(G\cap H)=\lim_{\epsilon\to 0}\; \mu(A_\epsilon\times\Omega^{S\setminus T})^{-1}\;\mu\Big((A_\epsilon\times\Omega^{S\setminus T})\cap (G\times\Omega^T)\cap (H\times \Omega^T)\Big)$. Similarly  $\nu(G)=\lim_{\epsilon\to 0}\; \mu(A_\epsilon\times\Omega^{S\setminus T})^{-1}\; \mu\Big((A_\epsilon\times\Omega^{S\setminus T})\cap (G\times\Omega^T)\Big)$. Since $A_\epsilon\times\Omega^{S\setminus T},\, G\times\Omega^T\in \mc{A}^S_R$, we have $
\mu\Big((A_\epsilon\times\Omega^{S\setminus T})\cap (G\times\Omega^T)\cap (H\times \Omega^T)\Big)=\mu\Big((A_\epsilon\times\Omega^{S\setminus T})\cap (G\times\Omega^T)\Big)\; \mu(H\times\Omega^T)$, by the locality of $R$ in $\mu$. Combining the last three equations, we deduce that $\nu(G\cap H)=\nu(G)\,\mu(H\times \Omega^T)$. Finally, we have $\mu(H\times \Omega^T)=\lambda(H')=\nu(H)$.
\end{proof}

\subsection{Conditional independence in set lattices}\label{subsec:condlattice} \hfill \vspace{-0.5cm}\\
\begin{defn}
Let $S$ be a set. A \emph{set lattice} in $S$ is a family of subsets of $S$ closed under intersection and union. If $\mc{F}\subset 2^S$ is a family closed under intersection, then the family $\varLambda$ of all unions of sets in $\mc{F}$ is a set lattice, and we say that $\mc{F}$ \emph{generates} $\varLambda$.
\end{defn}
\begin{defn}\label{def:CIS}
Let $S$ be a finite set, let $\mu \in \coup(\varOmega,S)$, and let $\varLambda$ be a set lattice in $S$. A set $T\in\varLambda$ has the \emph{conditional independence of subsets} (\textsc{cis}) property  in $\varLambda$ if for every $T_1,T_2\in\varLambda$ with $T_1,T_2\subset T$ we have $T_1~\bot_\mu~T_2$.
\end{defn}
\noindent If the ambient coupling $\mu$ needs to be specified, we say that $T$ has the \textsc{cis} property in $\varLambda$ and $\mu$. The main result of this subsection is the following fact concerning the \textsc{cis} property.

\begin{proposition}\label{prop:latticeind} If $T_1,T_2$ in $\varLambda$ both have the \textsc{cis} property and $T_1~\bot ~T_2$, then $T_1\cup T_2$ has the \textsc{cis} property in $\Lambda$. 
\end{proposition}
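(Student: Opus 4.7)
My plan is the following. By Definition \ref{def:CIS}, showing that $T_1 \cup T_2$ has the \textsc{cis} property in $\varLambda$ amounts to verifying $U_1 \bot_\mu U_2$ for arbitrary $U_1, U_2 \in \varLambda$ with $U_1, U_2 \subseteq T_1 \cup T_2$. The key structural observation is that setting $V_{i,j} := U_i \cap T_j$ produces elements of $\varLambda$ (by closure under intersection), so $U_i = V_{i,1} \cup V_{i,2}$ and the meet decomposes as $U_1 \cap U_2 = (V_{1,1} \cap V_{2,1}) \cup (V_{1,2} \cap V_{2,2})$. By Lemma \ref{lem:botsuff}, the goal reduces to showing that $\mathbb{E}(f \mid \mathcal{A}^S_{U_2})$ is $\mathcal{A}^S_{U_1 \cap U_2}$-measurable for every $f \in L^\infty(\mathcal{A}^S_{U_1})$; via Lemma \ref{lem:pisysapprox} and the $L^2$-continuity of conditional expectation, I may reduce further to product functions $f = f_1 f_2$ with $f_j \in L^\infty(\mathcal{A}^S_{V_{1,j}})$. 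By the tower property this in turn is equivalent to the identity $\mathbb{E}(f_1 f_2 \mid \mathcal{A}^S_{U_2}) = \mathbb{E}(f_1 f_2 \mid \mathcal{A}^S_{U_1 \cap U_2})$, which I verify in the dual form $\int f_1 f_2 g \, d\mu = \int f_1 f_2 \, \mathbb{E}(g \mid \mathcal{A}^S_{U_1 \cap U_2}) \, d\mu$ against test functions $g \in L^\infty(\mathcal{A}^S_{U_2})$ approximated by products $g_1 g_2$ with $g_j \in L^\infty(\mathcal{A}^S_{V_{2,j}})$.

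Since $f_1 g_1 \in L^\infty(\mathcal{A}^S_{T_1})$ and $f_2 g_2 \in L^\infty(\mathcal{A}^S_{T_2})$, the hypothesis $T_1 \bot T_2$ together with Definition \ref{def:relcondindep} yields
\[
\int f_1 f_2 g_1 g_2 \, d\mu = \int \mathbb{E}(f_1 g_1 \mid \mathcal{A}^S_{T_1 \cap T_2}) \, \mathbb{E}(f_2 g_2 \mid \mathcal{A}^S_{T_1 \cap T_2}) \, d\mu.
\]
Applying the \textsc{cis} hypothesis on $T_j$ to the $\varLambda$-pair $V_{1,j} \cup V_{2,j}$ and $T_1 \cap T_2$ (both contained in $T_j$), Lemma \ref{lem:botsuff} shows that the $j$-th factor is $\mathcal{A}^S_{R_j}$-measurable with $R_j = (V_{1,j} \cup V_{2,j}) \cap T_1 \cap T_2$, reducing the integral to one depending only on the marginal of $\mu$ on $T_1 \cap T_2$. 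Expanding the dual expression $\int f_1 f_2 \, \mathbb{E}(g_1 g_2 \mid \mathcal{A}^S_{U_1 \cap U_2}) \, d\mu$ by the same pattern---after first noting that $A := V_{1,1} \cap V_{2,1}$ and $B := V_{1,2} \cap V_{2,2}$ satisfy $A \bot_\mu B$ (as one deduces from $T_1 \bot T_2$ and \textsc{cis} of $T_1,T_2$ using Theorem \ref{thm:condindep}), and that $A \cup B = U_1 \cap U_2$---and matching the two reduced integrals yields the desired identity.

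The hard part is the bookkeeping in this matching step: confirming that after all the conditional-expectation simplifications the two sides really produce the same $\mathcal{A}^S_{T_1 \cap T_2}$-measurable integrands. The essential tool here is the modular law (Lemma \ref{lem:modlaw}), which, combined with $\mathcal{A}^S_{T_1} \upmod_\mu \mathcal{A}^S_{T_2}$ relative to $\mathcal{A}^S_{T_1 \cap T_2}$, allows the relevant joins and meets of sub-$\sigma$-algebras to be distributed correctly; for example it gives $(\mathcal{A}^S_{T_1} \vee \mathcal{A}^S_{V_{2,2}}) \wedge \mathcal{A}^S_{T_2} =_\mu \mathcal{A}^S_{(T_1 \cap T_2) \cup V_{2,2}}$, which is precisely the kind of rewriting needed to line up the $\varLambda$-subsets appearing from the successive applications of \textsc{cis}.
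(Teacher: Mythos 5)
Your setup is sound as far as it goes: the reduction via Lemma \ref{lem:botsuff} and Lemma \ref{lem:pisysapprox} to product test functions is legitimate, the application of $T_1\,\bot\,T_2$ to rewrite $\int f_1f_2g_1g_2\ud\mu$ as $\int \mb{E}(f_1g_1|\mc{A}^S_{T_1\cap T_2})\,\mb{E}(f_2g_2|\mc{A}^S_{T_1\cap T_2})\ud\mu$ is correct, and your claim that $A=V_{1,1}\cap V_{2,1}$ and $B=V_{1,2}\cap V_{2,2}$ satisfy $A\,\bot\,B$ is true (it can be proved by two applications of Lemma \ref{lem:botsuff}: since $A\subseteq T_1$ and $B\subseteq T_2$, one has $\mb{E}(f|\mc{A}^S_B)=\mb{E}(\mb{E}(f|\mc{A}^S_{T_2})|\mc{A}^S_B)$, which by $T_1\bot T_2$ and \textsc{cis} of $T_2$ is $\mc{A}^S_{T_1\cap B}$-measurable, hence equals $\mb{E}(f|\mc{A}^S_{T_1\cap B})$, which by \textsc{cis} of $T_1$ is $\mc{A}^S_{A\cap B}$-measurable). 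The genuine gap is the ``matching step,'' and it is not mere bookkeeping. To evaluate the dual expression you must control $\mb{E}(g_1g_2|\mc{A}^S_{U_1\cap U_2})$, i.e.\ the relation between $\mc{A}^S_{U_2}$ and $\mc{A}^S_{U_1\cap U_2}$; but $U_2$ and $U_1\cap U_2$ are themselves arbitrary elements of $\varLambda$ straddling both $T_1$ and $T_2$, so ``expanding by the same pattern'' presupposes exactly the kind of conditional-independence relation ($U_2\,\bot\,(U_1\cap U_2)$ with the correct meet, or $V_{1,1}\,\bot\,U_2$, etc.) that the proposition is meant to establish. Knowing $A\,\bot\,B$ only tells you that $L^2(\mc{A}^S_{U_1\cap U_2})$ is spanned by products of $\mc{A}^S_A$- and $\mc{A}^S_B$-measurable functions; it does not identify $\mb{E}(g_1g_2|\mc{A}^S_{U_1\cap U_2})$ in terms of $g_1,g_2$, and the modular law (Lemma \ref{lem:modlaw}) only rewrites joins and meets of $\sigma$-algebras, it does not produce the missing $\bot$ relations. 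So the argument is circular at precisely the point where the difficulty of the statement lives: the case where a set in $\varLambda$ meets both $T_1\setminus T_2$ and $T_2$.

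The paper breaks this circularity differently: it inducts on $|T_1\triangle T_2|$, adjoining to $T_2$ a \emph{minimal} $F\in\varLambda$ with $F\subseteq T_1$ and $F\setminus T_2\neq\emptyset$. Minimality forces any $G\in\varLambda$ inside $T_2\cup F$ that meets $F\setminus T_2$ to contain all of $F$, which reduces every pair $U,V\subseteq F\cup T_2$ to configurations where the hypotheses of Lemma \ref{lem:3set1} can be verified one at a time; the \textsc{cis} property is then propagated from $F\cup T_2$ and $T_1$ by the induction. If you want to salvage a direct argument, you would need to first prove something playing the role of Lemma \ref{lem:3set1} (in particular the implication giving $A\,\bot\,(B\cup C)$ from pairwise data), and you would still need an ordering of the sets involved that lets you verify its hypotheses — which is essentially the paper's induction in disguise.
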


\noindent To prove this we shall use the following result about the relation $\bot$, which will also be useful in later sections.

\begin{lemma}\label{lem:3set1}
Let $\mu\in\coup(\varOmega,S)$, and let $A,B,C$ be subsets of $S$ satisfying the following conditions: $\quad A~\bot~B, \quad (A\cap B)~\bot~C,  \quad (A\cup B)~\bot~C,  \quad A\supseteq (B\cap C)$. Then we have $B~\bot~C$, \quad $A~\bot~(B\cup C)$,\quad and \quad $(A\cup C)~\bot~B$.
\end{lemma}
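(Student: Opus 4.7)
The plan is to apply Lemma \ref{lem:botsuff} throughout, reducing each claim $X\bot Y$ to showing that $\mb{E}(f|\mc{A}^S_Y)\in L^\infty(\mc{A}^S_{X\cap Y})$ for every bounded $\mc{A}^S_X$-measurable $f$. Two simplifications from the hypothesis $B\cap C\subseteq A$ will be used repeatedly: $(A\cup B)\cap C = A\cap C$ and $(A\cup C)\cap B = A\cap B$. Also, restricting $(A\cup B)\bot C$ to the subalgebra $\mc{A}^S_A\subseteq \mc{A}^S_{A\cup B}$ yields $A\bot C$ as an immediate consequence of the hypotheses.

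For $B\bot C$ I would chain the hypotheses as follows. Given $f\in L^\infty(\mc{A}^S_B)$, the inclusion $\mc{A}^S_B\subseteq \mc{A}^S_{A\cup B}$ together with $(A\cup B)\bot C$ forces $\mb{E}(f|\mc{A}^S_C)$ to be $\mc{A}^S_{A\cap C}$-measurable, so by the tower property it equals $\mb{E}(\mb{E}(f|\mc{A}^S_A)\,|\,\mc{A}^S_{A\cap C})$. The hypothesis $A\bot B$ places $\mb{E}(f|\mc{A}^S_A)$ in $\mc{A}^S_{A\cap B}$, and finally $(A\cap B)\bot C$ combined with the tower property puts the whole expression in $\mc{A}^S_{A\cap B\cap C}\subseteq \mc{A}^S_{B\cap C}$, as required.

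For the remaining two claims I would first establish the identity
\[
\int f\,g\,h\,\ud\mu = \int f\cdot \mb{E}(g|\mc{A}^S_{A\cap B})\cdot \mb{E}(h|\mc{A}^S_{A\cap C})\,\ud\mu
\]
for bounded $f\in \mc{A}^S_A$, $g\in \mc{A}^S_B$, $h\in \mc{A}^S_C$. This follows by two applications of the self-adjointness identity $\int \phi\,\psi\,\ud\mu=\int \mb{E}(\phi|\mc{A}^S_Y)\,\psi\,\ud\mu$ valid for $\psi\in \mc{A}^S_Y$: first a swap on $h$ using $(A\cup B)\bot C$, since $fg\in \mc{A}^S_{A\cup B}$ gives $\mb{E}(fg|\mc{A}^S_C)\in\mc{A}^S_{A\cap C}$; then a swap on $g$ using $A\bot B$, since at that stage $f\cdot \mb{E}(h|\mc{A}^S_{A\cap C})\in \mc{A}^S_A$.

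With the identity in hand, for $A\bot (B\cup C)$ I use that by Lemma \ref{lem:pisysapprox} the products $gh$ with $g\in L^\infty(\mc{A}^S_B)$, $h\in L^\infty(\mc{A}^S_C)$ are total in $L^2(\mc{A}^S_{B\cup C})$; the identity shows that $\mb{E}(f|\mc{A}^S_{B\cup C})$ integrated against any such $gh$ coincides with the analogous integral when $f$ is replaced by $\mb{E}(f|\mc{A}^S_{(A\cap B)\cup(A\cap C)})$, so density forces these two conditional expectations to agree, placing $\mb{E}(f|\mc{A}^S_{B\cup C})$ in $\mc{A}^S_{A\cap(B\cup C)}$. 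For $(A\cup C)\bot B$ the same density argument reduces to testing on products $f_0 h$ with $f_0\in\mc{A}^S_A$, $h\in\mc{A}^S_C$, and the identity then identifies $\mb{E}(f_0 h|\mc{A}^S_B)$ with $\mb{E}\bigl(f_0\cdot\mb{E}(h|\mc{A}^S_{A\cap C})\,\bigm|\,\mc{A}^S_{A\cap B}\bigr)\in \mc{A}^S_{A\cap B}$. The main obstacle I foresee is not conceptual but bookkeeping: one must track $\sigma$-algebra inclusions carefully and invoke $B\cap C\subseteq A$ at precisely the right moments in order to collapse the relevant intersections.
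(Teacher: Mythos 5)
Your proof is correct and follows essentially the same route as the paper's: the chain of conditional expectations proving $B~\bot~C$ uses the same three hypotheses in the same order (merely with the roles of $B$ and $C$ in Lemma \ref{lem:botsuff} interchanged), and your trilinear identity $\int f\,g\,h\ud\mu=\int f\;\mb{E}(g|\mc{A}^S_{A\cap B})\;\mb{E}(h|\mc{A}^S_{A\cap C})\ud\mu$ is a repackaging of the paper's direct computations of $\mb{E}(fg|\mc{A}^S_A)$ and $\mb{E}(fg|\mc{A}^S_B)$, resting on the same rank-one density from Lemma \ref{lem:pisysapprox} and the same set identities $(A\cup B)\cap C=A\cap C$ and $(A\cup C)\cap B=A\cap B$ derived from $B\cap C\subseteq A$. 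The only cosmetic difference is that for the last two claims you verify the condition of Lemma \ref{lem:botsuff} in the opposite (symmetric) direction, via duality against test functions rather than by exhibiting the conditional expectation directly.
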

\begin{proof}
We first prove that $B~\bot~C$. Let $f$ be bounded $\mc{A}^S_C$-measurable, and let $f'=\mb{E}(f|\mc{A}^S_{A\cup B})$. Since $(A\cup B)~\bot~C$, we have by Lemma \ref{lem:botsuff} that $f'=\mb{E}(f|\mc{A}^S_{(A\cup B)\cap C})$. This in turn equals $\mb{E}(f|\mc{A}^S_{A\cap C})$, since $(A\cup B)\cap C=A\cap C$ (using that $ A \supseteq B\cap C$).
Thus
\begin{equation}\label{eq:3set1}
f'=\mb{E}(f|\mc{A}^S_{A\cup B})=\mb{E}(f|\mc{A}^S_{A\cap C}).
\end{equation} 
Since $\mc{A}^S_{A\cup B} \supset \mc{A}^S_B$, we have $\mb{E}(f|\mc{A}^S_B)=\mb{E}(f'|\mc{A}^S_B)$. We also have $\mb{E}(f'|\mc{A}^S_B)=\mb{E}(f'|\mc{A}^S_{A\cap B})$, since $f'$ is $\mc{A}^S_A$-measurable and $A~\bot~B$. Now using that $C~\bot~(A\cap B)$ and the fact that $f'$ is $\mc{A}^S_C$-measurable (by \eqref{eq:3set1}), we obtain by \eqref{eq:condindep} that  $\mb{E}(f'|\mc{A}^S_{A\cap B})=\mb{E}(f'|\mc{A}^S_{A\cap B\cap C})$. The last three equalities imply that $\mb{E}(f|\mc{A}^S_B)$ is $\mc{A}^S_{A\cap B\cap C}$-measurable. Since $\mc{A}^S_{A\cap B\cap C} \subset \mc{A}^S_{B\cap C}$, we deduce that $\mb{E}(f|\mc{A}^S_B)$ is $\mc{A}^S_{B\cap C}$-measurable. This proves that $B~\bot~C$ (by Lemma \ref{lem:botsuff}).

To show that $A~\bot~(B\cup C)$, we use the fact that every function in  $L^2(\mc{A}^S_{B\cup C})$ is a limit in $L^2$ of finite sums of functions of the form $f g$ where $f\in L^\infty(\mc{A}^S_C)$ and $g\in L^\infty(\mc{A}^S_B)$ (this can be seen using that by Lemma \ref{lem:pisysapprox} any function in $L^2(\mc{A}^S_{B\cup C})$ is an $L^2$-limit of finite sums of rank-1 functions $\prod_{v\in B\cup C} f_v\co p_v$ with each $f_v$ being $\mc{A}$-measurable; then each of these can be written as $fg$ with $f=\prod_{v\in B} f_v\co p_v$ and $g=\prod_{v\in C\setminus B} f_v\co p_v$). For every such $f,g$, let $f'$ be defined as in \eqref{eq:3set1}. Then $\mb{E}(f\,g\,|\,\mc{A}^S_A)=\mb{E}(f'\,g\,|\,\mc{A}^S_A) = f'\,\mb{E}(g\,|\,\mc{A}^S_A)=f'\,\mb{E}(g\,|\,\mc{A}^S_{A\cap B})$, where the second equality uses that $f'$ is measurable relative to $\mc{A}^S_{A\cap C} \subset \mc{A}^S_A$ (by \eqref{eq:3set1}), and the third equality uses that $A~\bot~B$. Hence $\mb{E}(f\,g\,|\,\mc{A}^S_A)$ is a product of a function in $L^\infty(\mc{A}^S_{A\cap B})$ (i.e.\ $\mb{E}(g\,|\,\mc{A}^S_{A\cap B})$) with a function in $L^\infty(\mc{A}^S_{A\cap C})$ (i.e.\ $f'$), so $\mb{E}(f\,g\,|\,\mc{A}^S_A)$ is $\mc{A}^S_{A\cap(B\cup C)}$-measurable. Since this holds for every such function $fg$, it holds more generally for every function in $L^2(\mc{A}^S_{B\cup C})$. By Lemma \ref{lem:botsuff} we therefore have indeed $A~\bot~(B\cup C)$.

Finally we claim that $(C\cup A)~\bot~B$. To prove this, as in the previous paragraph, it suffices to show that if $f\in L^\infty(\mc{A}_A^S)$ and $g\in L^\infty(\mc{A}_C^S)$ then $\mb{E}(fg|\mc{A}_B^S)$ is in $L^\infty(\mc{A}_{(C\cup A)\cap B}^S)$. To see this, note first that $\mb{E}(fg|\mc{A}_B^S)=\mb{E}(\mb{E}(fg|\mc{A}_{A\cup B}^S)|\mc{A}_B^S)=\mb{E}( f\mb{E}(g|\mc{A}_{A\cup B}^S)|\mc{A}_B^S)$. Using that $C\bot (A\cup B)$ we have $\mb{E}(g|\mc{A}_{A\cup B}^S)=\mb{E}(g|\mc{A}^S_{(A\cup B)\cap C})$, and since $(A\cup B)\cap C=A\cap C$, we have $\mb{E}(g|\mc{A}_{A\cup B}^S)$ is $\mc{A}_A^S$-measurable. It follows that $\mb{E}(fg|\mc{A}_{A\cup B}^S)$ is $\mc{A}_A^S$-measurable, and since $A~\bot~B$, we have that $\mb{E}(fg|\mc{A}_B^S)=\mb{E}(\mb{E}(fg|\mc{A}_{A\cup B}^S)|\mc{A}_B^S)$ is $\mc{A}_{A\cap B}^S$-measurable. Since $A\cap B = (A\cup C)\cap B$, the claim follows.
\end{proof}

\begin{proof}[Proof of Proposition \ref{prop:latticeind}] We argue by induction on $|T_1\triangle T_2|$. If $|T_1\triangle T_2|=0$ then there is nothing to prove. We can assume that $|T_1\setminus T_2|\geq 1$. Let $F\in\varLambda$ be minimal with the property that $F\subseteq T_1$ and $|F\setminus T_2|\geq 1$. We then have the following fact:
\begin{equation}\label{eq:CIS-obs1}
\forall\, G \in \varLambda\textrm{ with }G\subseteq T_2\cup F\textrm{ and }|G\cap (F\setminus T_2)|\geq 1, \textrm{ we have } F\subseteq G.
\end{equation}
Indeed, otherwise $G\cap F\in\varLambda$ would contradict the minimality of $F$.

By the \textsc{cis} property of $T_1$ we have $(T_1\cap T_2)~\bot~ F$. Since $F\subset T_1$, we also have $(T_1\cup T_2)~\bot~ F$ (see the sentence after Lemma \ref{lem:botsuff}). This together with our assumption that $T_1~\bot~T_2$ implies, by Lemma \ref{lem:3set1} (applied with $A=T_1$, $B=T_2$, $C=F$), that
\begin{equation}\label{eq:CIS-obs2}
T_2~\bot~F\textrm{ and }T_1~\bot~ (T_2\cup F).
\end{equation}
The following observation will also be useful:
\begin{equation}\label{eq:CIS-obs3}
\forall\, Q\in \varLambda\textrm{ with } Q\subseteq T_2,\textrm{  we have }F~\bot~Q.
\end{equation}
To see this, note that if $Q\subseteq T_2$ is in $\varLambda$ then, since $T_2$ has the \textsc{cis} property, we have $(T_2\cap F)~\bot~Q$, and $F~\bot~Q$ then follows by Lemma \ref{lem:3set1} with $A=T_2$, $B=F$, $C=Q$.

Now we prove the \textsc{cis} property for $F\cup T_2$. Let $U,V\in\varLambda$ be subsets of $F \cup T_2$. If both $U$ and $V$ are contained in $T_2$ then the \textsc{cis} property of $T_2$ implies that $U~\bot~V$. If $|U\cap (F\setminus T_2)|\geq 1$ and $|V\cap (F\setminus T_2)|\geq 1$, then by \eqref{eq:CIS-obs1} we have $F\subseteq U$, $F\subseteq V$, and so we have $U=F\cup U'$, $V=F\cup V'$, for the sets $U':=U\cap T_2\supset F\cap T_2$, $V':=V\cap T_2\supset F\cap T_2$. Note that $U',V'$ are both in $\varLambda$ and contained in $T_2$. By the \textsc{cis} property of $T_2$ we have $U'~\bot~V'$. By \eqref{eq:CIS-obs3} applied with $Q=U'\cap V'$ and $Q=U'\cup V'$, we have $(U'\cup V')~\bot~F$ and $(U'\cap V')~\bot~F$. Now we apply Lemma \ref{lem:3set1} with $A=U'$, $B=V'$, $C=F$ (noting that $U'\supset V'\cap F$ since $V'\cap F=T_2\cap F$), obtaining $U'~\bot~V$, and we apply it similarly with $A=V'$, $B=U'$, $C=F$, obtaining $U'~\bot~F$. Thus, now we have $V~\bot~ F$ (because $V\supset F$), we have $(V\cap F)~\bot~U'$ (because $V\cap F=F$ and $F~\bot~U'$), we have $(V\cup F)~\bot~U'$, and $V\supseteq  (F\cap U')$. Hence, by one more application of Lemma \ref{lem:3set1}, with $A=V$, $B=F$, $C=U'$, we obtain $V~\bot~U$, as required. The final case is when (without loss of generality) $|U\cap (F\setminus T_2)|\geq 1$ (which as above implies $F\subseteq U$) and $V\subseteq T_2$. Then we can again write $U=U'\cup F$ for $U':=U\cap T_2\in\varLambda$. We then have $V~\bot~U'$ by the \textsc{cis} property of $T_2$, by \eqref{eq:CIS-obs3} we have $F~\bot~(U'\cup V)$ and $F~\bot~(U'\cap V)$, and we also have $U'\supset V\cap F$, whence by Lemma \ref{lem:3set1} applied with $A=U'$, $B=V$, $C=F$, we obtain again $V~\bot~U$.

We have obtained that $(F\cup T_2)~\bot~T_1$ (by \eqref{eq:CIS-obs2}) and that both $F\cup T_2$ and $T_1$ have the \textsc{cis} property. Since $|(F\cup T_2)\triangle T_1|<|T_2\triangle T_1|$, we have by induction that $(F\cup T_2)\cup T_1$ has the \textsc{cis} property, and since $(F\cup T_2)\cup T_1=T_2\cup T_1$, the proof is complete.
\end{proof}

\subsection{Idempotent couplings} \hfill \medskip\\
In this section we introduce and study the following special class of couplings.

\begin{defn}[Idempotent coupling]\label{def:idemp}
We say that a coupling $\mu\in \coup(\varOmega,\{a,b\})$ is \emph{idempotent} if the following holds. Let $\mu'\in\coup(\varOmega,\{a',b'\})$ be such that $\sigma:\{a,b\} \to \{a',b'\}$, $a \mapsto a'$, $b \mapsto b'$ is an isomorphism of $\mu$ and $\mu'$, and let $\nu\in\coup(\varOmega,\{a,a',b\})$ be the conditionally independent coupling of $\mu$ and $\mu'$ obtained by identifying $b$ and $b'$; then the bijection $\sigma':\{a,b\} \to \{a,a'\}$, $b \mapsto a'$ is an isomorphism of $\mu$ and $\nu_{\{a,a'\}}$.
\end{defn}
\noindent This notion leads in a natural way to the following more general notion of idempotence that will be crucial in the next section.
\begin{defn}[Idempotence along an isomorphism]
Let $\mu \in \coup(\varOmega,S)$, let $a,b$ be subsets of $S$ forming a  partition $S=a \sqcup b$, and let $\beta:a\to b$ be a bijection. We then say that $\mu$ is \emph{idempotent along} $\beta$ if we have $\mu_a\cong_\beta \mu_b$ (as per Definition \ref{def:coupiso}) and, letting $\varOmega'=(\Omega^a, \mc{A}^a, \mu_a)$, we have that $\mu$ is idempotent as a coupling in $\coup(\varOmega',\{a,b\})$.
\end{defn}
\begin{example}\label{ex:U2idemp}
Consider the coupling from Example \ref{ex:U2coup}, thus $\varOmega$ consists of a compact abelian group $\ab$ with Haar measure $\lambda$, and $\mu\in \coup(\varOmega,\db{2})$ is given by the Haar measure on $\{(x_{00},x_{10},x_{01},x_{11}): x_{00}-x_{10}=x_{01}-x_{11}\}\leq \ab^{\db{2}}$. Let $a$, $b$ be the faces $\{00,10\}$, $\{01,11\}$ respectively, and let $\beta: a\to b$ be the bijection that switches the second component from 0 to 1. Then $\mu$ is idempotent along $\beta$. Indeed, the coupling $\nu$ from Definition \ref{def:idemp} here is the Haar measure on $\{(x_{00},x_{10},x_{00}',x_{10}',x_{01},x_{11}): x_{00}-x_{10}=x'_{00}-x'_{10}=x_{01}-x_{11}\}\leq \ab^6$, and projection to the first 4 components here yields a coupling isomorphic to $\mu$.
\end{example}
\begin{remark}\label{rem:idembot}
Note that the coupling $\nu \in\coup(\varOmega',\{a,a',b\})$ in Definition \ref{def:idemp} is also a coupling in $\coup(\varOmega, a\sqcup a' \sqcup b)$. The construction of $\nu$ as a conditionally independent coupling then implies that $(a\sqcup b)\bot_\nu (a'\sqcup b)$, by Definition \ref{def:condindcoup}.
\end{remark}

\noindent The main result of this subsection is a characterization of idempotent couplings, stating that every such coupling is a product of the original probability space with itself \emph{relative} to some factor of the space, in the sense of the notion of \emph{relative product} of measure spaces from \cite[Definition 5.7]{Furst}. We only use the special case of relative products where the two measure spaces are the same, which we recall as follows.

\begin{defn}[Square of a probability measure relative to a factor]\label{def:relsquare}
Let $\varOmega=(\Omega,\mc{A},\lambda)$ be a probability space and let $\mc{B}$ be a sub-$\sigma$-algebra of $\mc{A}$. The \emph{square of $\lambda$ relative to} $\mc{B}$ is the coupling $\mu\in \coup(\varOmega, \{a,b\})$ defined by the following property:
\begin{equation}\label{eq:relsquare1}
\forall\,A,B\in \mc{A},\;\;\mu(A\times B)=\int_{\Omega}\mb{E}(1_A|\mc{B})\,\mb{E}(1_B|\mc{B}) \ud\lambda.
\end{equation}
\end{defn}
\noindent Note that formula \eqref{eq:relsquare1} indeed defines uniquely the coupling $\mu$,  by Lemma \ref{lem:coupcorresp}, since the formula determines the multilinear map $F\mapsto \xi(\mu,F)$ in that lemma.

For every coupling $\mu\in\coup(\varOmega,\{a,b\})$, we define a Hermitian form $\langle \cdot,\cdot\rangle_{\mu}$ on $L^\infty(\varOmega)$ by
\[
\langle f,g\rangle_{\mu} :=\int_{\Omega^2} f\co p_a\; \overline{g\co p_b} \,\ud\mu = \int_{\Omega^2} f(x_a)\; \overline{g(x_b)} \,\ud\mu(x_a,x_b).
\]
Property \eqref{eq:relsquare1} is equivalent (via $L^2$-approximations by simple functions) to the following:
\begin{equation}\label{eq:relsquare2}
\forall\,f,g\in L^\infty(\mc{A}),\;\;\langle f,g\rangle_{\mu}=\int_{\Omega}\mb{E}(f|\mc{B})\,\overline{\mb{E}(g|\mc{B})} \ud\lambda.
\end{equation}
The notion of relative square should be carefully distinguished from Definition \ref{def:condindcoup}. In the latter definition, the index sets of the two couplings may have some parts glued together, whereas if we take the relative square of a coupling $\mu\in\coup(\varOmega,S)$ then the result can be viewed as a coupling of $\varOmega$ with index set being the \emph{disjoint} union of two copies of $S$, and here we are focusing on a sub-$\sigma$-algebra $\mc{B}$ of $\mc{A}$.

\begin{lemma}\label{lem:IdempCondProp} Let $\mu$ be as in Definition \ref{def:relsquare}. Then we have the following properties.
\begin{enumerate} 
\item If $A\in\mc{B}$ then $\mu(p_a^{-1}(A)\Delta p_b^{-1}(A))=0$. In particular $p_a^{-1}(\mc{B})=_\mu p_b^{-1}(\mc{B})$.
\item $\mc{A}^{\{a,b\}}_a$ and $\mc{A}^{\{a,b\}}_b$ are conditionally independent relative to $\mc{G}:=p_a^{-1}(\mc{B})=_\mu p_b^{-1}(\mc{B})$.
\item $\mc{G}=\mc{A}^{\{a,b\}}_a\wedge \mc{A}^{\{a,b\}}_b$. In particular we have $\mc{A}^{\{a,b\}}_a\upmod_\mu \mc{A}^{\{a,b\}}_b$.
\item $\mu$ is an idempotent coupling.
\end{enumerate}
\end{lemma}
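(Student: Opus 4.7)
My plan is to prove the four assertions in order, setting up one auxiliary conditional-expectation identity for $\mu$ early on and using \eqref{eq:relsquare1}--\eqref{eq:relsquare2} throughout. For (i), I would apply \eqref{eq:relsquare1} to the rectangles $A \times A^c$ and $A^c \times A$, whose union is $p_a^{-1}(A) \Delta p_b^{-1}(A)$. When $A \in \mc{B}$, $\mb{E}(1_A|\mc{B}) = 1_A$ and $\mb{E}(1_{A^c}|\mc{B}) = 1_{A^c}$, so each such rectangle has $\mu$-measure zero; this gives $\mu(p_a^{-1}(A)\Delta p_b^{-1}(A)) = 0$, and $p_a^{-1}(\mc{B}) =_\mu p_b^{-1}(\mc{B})$ follows.

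The auxiliary identity needed for (ii)--(iv) is that for every $F \in L^\infty(\mc{A})$,
\[
\mb{E}_\mu(F \co p_a \mid \mc{A}^{\{a,b\}}_b) =_\mu \mb{E}_\lambda(F \mid \mc{B}) \co p_b.
\]
I would verify this by integrating against any $h \co p_b$ with $h \in L^\infty(\mc{A})$: the left side yields $\int F \co p_a \cdot h \co p_b\, \ud\mu = \int \mb{E}(F|\mc{B})\, \mb{E}(h|\mc{B})\, \ud\lambda$ by \eqref{eq:relsquare2}, which equals $\int \mb{E}(F|\mc{B})\, h\, \ud\lambda$ since $\mb{E}(F|\mc{B})$ is $\mc{B}$-measurable, and this coincides with the integral of the right side against $h \co p_b$ because $p_b$ is measure-preserving on $\mc{A}^{\{a,b\}}_b$. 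Combining this identity with (i) also gives $\mb{E}_\mu(F \co p_a \mid \mc{G}) =_\mu \mb{E}_\lambda(F|\mc{B}) \co p_a$.

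For (ii), by Definition \ref{def:relcondindep} and Lemma \ref{lem:Doob} it suffices to verify the required factorization of conditional expectations on products $F \co p_a \cdot G \co p_b$ with $F, G \in L^\infty(\mc{A})$, then extend by linearity. Testing against $h \co p_a$ with $h \in L^\infty(\mc{B})$, \eqref{eq:relsquare2} yields $\mb{E}_\mu(F \co p_a \cdot G \co p_b \mid \mc{G}) =_\mu \bigl(\mb{E}_\lambda(F|\mc{B})\, \mb{E}_\lambda(G|\mc{B})\bigr) \co p_a$, which factors as the product of the individual conditional expectations supplied by the identity above (after using (i) to swap $p_b$ for $p_a$). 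For (iii), the inclusion $\mc{G} \subset_\mu \mc{A}^{\{a,b\}}_a \wedge \mc{A}^{\{a,b\}}_b$ is immediate from (i). For the reverse inclusion, a set $C$ on the right has representatives $p_a^{-1}(A') =_\mu p_b^{-1}(B')$; applying $\mb{E}(\cdot \mid \mc{A}^{\{a,b\}}_b)$ to both and using the auxiliary identity forces $1_{B'} = \mb{E}_\lambda(1_{A'}|\mc{B})$ in $L^2(\lambda)$, so $B'$ agrees $\lambda$-a.s.\ with a $\mc{B}$-measurable set, placing the $\mu$-class of $C$ in $\mc{G}$. Combining (ii) and (iii) with Definition \ref{def:condindep} then yields $\mc{A}^{\{a,b\}}_a \upmod_\mu \mc{A}^{\{a,b\}}_b$.

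For (iv), I would unwind $\nu$ using formula \eqref{eq:cid} from the proof of Lemma \ref{lem:condindcoup}, with $S = \{a,b\}$, $S' = \{a',b'\}$, $T = \{b\}$, $T' = \{b'\}$. Substituting the auxiliary identity on both the $\mu$ and the $\mu'$ sides (using $\mu' \cong_\sigma \mu$), after identifying $b = b'$ one obtains
\[
\nu(B_1 \times B_2 \times B_3) = \int_\Omega \mb{E}(1_{B_1}|\mc{B}) \cdot 1_{B_2} \cdot \mb{E}(1_{B_3}|\mc{B}) \ud\lambda,
\]
where $B_1, B_2, B_3$ lie in the $a$-, $b$-, $a'$-slot respectively. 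Setting $B_1 = \Omega$ gives $\nu_{\{a',b\}}(E\text{ in } a', F \text{ in } b) = \int 1_F\, \mb{E}(1_E|\mc{B})\, \ud\lambda = \int \mb{E}(1_E|\mc{B})\, \mb{E}(1_F|\mc{B})\, \ud\lambda$ by the tower property, which equals $\mu(E\text{ in } a, F\text{ in } b)$ by \eqref{eq:relsquare1}. This is exactly $\nu_{\{a',b\}} \cong_{\sigma'} \mu$ for $\sigma'(a) = a'$, $\sigma'(b) = b$, as required. I expect the main practical obstacle to be the bookkeeping in (iv) --- keeping straight the identification $b = b'$ and the two independent copies $(\mu, \mu')$ when substituting into \eqref{eq:cid} --- rather than any substantive measure-theoretic difficulty.
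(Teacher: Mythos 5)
Your parts (i)--(iii) are correct and essentially follow the paper's own route: part (i) is the same computation with \eqref{eq:relsquare1}; your auxiliary identity $\mb{E}_\mu(F\co p_a\mid\mc{A}^{\{a,b\}}_b)=\mb{E}_\lambda(F\mid\mc{B})\co p_b$ is exactly the chain of equalities the paper runs through (via \eqref{eq:exprel} and \eqref{eq:relsquare2}) to get (ii), the only cosmetic difference being that you verify the product factorization of Definition \ref{def:relcondindep} directly where the paper invokes Theorem \ref{thm:condindep}; and your argument for (iii) via representatives $p_a^{-1}(A')=_\mu p_b^{-1}(B')$ is a sound (slightly more explicit) version of the paper's one-line deduction.

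Part (iv) has a real problem, though one that is easy to repair. The formula you derive, $\nu(B_1\times B_2\times B_3)=\int_\Omega\mb{E}(1_{B_1}|\mc{B})\,1_{B_2}\,\mb{E}(1_{B_3}|\mc{B})\ud\lambda$, is correct and is the key identity. But the conclusion you draw from it --- that $\nu_{\{a',b\}}\cong\mu$ via $a\mapsto a'$, $b\mapsto b$ --- carries no information: the marginal of the conditionally independent coupling on $S'=\{a',b\}$ is $\mu'\cong\mu$ \emph{by construction} (set $B_1=\Omega$ directly in \eqref{eq:cid}), so this holds for \emph{every} coupling $\mu$, which cannot be the content of idempotence (otherwise Proposition \ref{prop:idemp} would say every coupling is a relative square). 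The statement of Definition \ref{def:idemp} appears to contain a typo here; the substantive condition --- the one exhibited in Example \ref{ex:U2idemp} (``projection to the first 4 components yields a coupling isomorphic to $\mu$'') and the one actually used in the proof of Lemma \ref{lem:idemp0} --- is that the marginal of $\nu$ on the two \emph{unglued} indices $\{a,a'\}$ is isomorphic to $\mu$. That is what the paper's proof of (iv) establishes, by setting $B_2=\Omega$ rather than $B_1=\Omega$: your own formula then gives $\nu(B_1\times\Omega\times B_3)=\int_\Omega\mb{E}(1_{B_1}|\mc{B})\,\mb{E}(1_{B_3}|\mc{B})\ud\lambda=\mu(B_1\times B_3)$ by \eqref{eq:relsquare1}, i.e. $\nu_{\{a,a'\}}\cong\mu$. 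So the fix is a one-line change of which slot you integrate out, but as written your argument verifies a vacuous condition rather than the idempotence property.
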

\begin{proof}
For $(i)$, note that $p_a^{-1}(A)=A\times\Omega$ and $p_b^{-1}(A)=\Omega\times A$, so we have to show that $\mu\big( (A\times\Omega)\setminus(\Omega\times A)\big)=0 = \mu\big((\Omega\times A)\setminus(A\times \Omega)\big)$. We prove the first equality (the second follows similarly). Note that $(A\times\Omega)\setminus(\Omega\times A)=A\times\overline{A}$. Then $\mu(A\times\overline{A})=\int_{\Omega}\mb{E}(1_A|\mc{B})\,\mb{E}(1_{\overline{A}}|\mc{B}) \ud\lambda=\int_{\Omega}1_A1_{\overline{A}}\ud\lambda=0$. For $(ii)$, note that by \eqref{eq:exprel} we have $\int_{\Omega^2} \mb{E}(f\co p_a|\mathcal{G})\; \overline{g\co p_b} \,\ud\mu =\int_{\Omega^2}\mb{E}(f|\mathcal{B})\co p_a\; \overline{g\co p_b} \,\ud\mu$. By \eqref{eq:relsquare2} the latter integral is $
\int_{\Omega}\mb{E}(f|\mc{B})\,\overline{\mb{E}(g|\mc{B})} \ud\lambda=\int_{\Omega^2} f\co p_a\; \overline{g\co p_b} \,\ud\mu$. Since $\mc{G}\subset_\mu\mc{A}^{\{a,b\}}_a\wedge \mc{A}^{\{a,b\}}_b$, these equalities imply that $\mb{E}(f\circ p_a|\mc{A}^{\{a,b\}}_b)=\mb{E}(f\co p_a|\mc{G})$, which implies conditional independence over $\mathcal{G}$. Now $(iii)$ follows from $(ii)$ and the fact that $\mc{G}\subset_\mu \mc{A}^{\{a,b\}}_a\wedge \mc{A}^{\{a,b\}}_b$. Finally, to see $(iv)$ consider the coupling $\nu \in\coup(\varOmega,\{a,a',b\})$ from Definition \ref{def:idemp}, and note that for every $A,A'\in \mc{A}$, we have $\nu(A\times \Omega\times A')=\int_\Omega \mb{E}(1_A\co p_a|\mc{A}^{\{a,b\}}_b)\mb{E}(1_{A'}\co p_{a'}|\mc{A}^{\{a',b\}}_{b})\ud\lambda$, by \eqref{eq:cid} applied with $\mu''=\nu$ and $B_2=\Omega$. This is $\int_\Omega \mb{E}(1_A|\mc{B})\mb{E}(1_{A'}|\mc{B})\ud\lambda=\mu(A\times A')$, by part (i) of Lemma \ref{lem:IdempCondProp}. It follows that the subcoupling of $\nu$ along $\{a,a'\}$ is $\mu$.
\end{proof}
\noindent We thus know that a square relative to a factor is always an idempotent coupling. We will see in this subsection that in fact every idempotent coupling is of this form. This enables us, in particular, to define a generalization of uniformity seminorms in the sequel.

To achieve these goals we begin by showing that for an idempotent coupling $\mu$ the form $\langle\cdot,\cdot\rangle_\mu$ has the following useful reduced expression.

\begin{lemma}\label{lem:idemp0}
Let $\mu\in\coup(\varOmega,\{a,b\})$ be an idempotent coupling. Then for every $f,g\in L^\infty(\varOmega)$ we have
\begin{equation}\label{eq:IdempotInnerprod}
\langle f,g\rangle_{\mu} = \int_{\Omega^2} \mb{E}(f\co p_a| \mc{A}^{\{a,b\}}_b) \; \overline{\mb{E}(g\co p_{a}|\mc{A}^{\{a,b\}}_b) }\; \ud\mu.
\end{equation}
\end{lemma}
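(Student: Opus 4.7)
The plan is to exploit the extra coupling $\nu\in\coup(\varOmega,\{a,a',b\})$ provided by Definition \ref{def:idemp}: namely, $\nu$ is the conditionally independent coupling obtained by identifying the $b$-coordinates of two copies of $\mu$, so that $\nu_{\{a,b\}}=\mu$ and $\nu_{\{a',b\}}\cong\mu$ (with $a\mapsto a'$), while $\mc{A}^\nu_{\{a,b\}}\upmod_\nu \mc{A}^\nu_{\{a',b\}}$ with conditional independence relative to (a sigma-algebra containing) $\mc{A}^\nu_b$. The crucial additional piece given by idempotence is that $\nu_{\{a,a'\}}\cong\mu$ as well, through the ``diagonal'' relabeling. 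The proof will consist in using this extra isomorphism to re-express $\langle f,g\rangle_\mu$ as an integral over $\nu$, then pushing a conditional expectation through.

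First I would write $\langle f,g\rangle_\mu = \int f(x_a)\overline{g(x_b)}\ud\mu = \int f(x_a)\overline{g(x_{a'})}\ud\nu$, using the idempotence isomorphism $\nu_{\{a,a'\}}\cong\mu$ (which identifies $x_b$ in $\mu$ with $x_{a'}$ in $\nu$). Next, since $\overline{g\co p_{a'}}$ is $\mc{A}^\nu_{\{a',b\}}$-measurable, I can insert a conditional expectation to obtain $\int \mb{E}_\nu(f\co p_a|\mc{A}^\nu_{\{a',b\}})\overline{g\co p_{a'}}\ud\nu$. By Theorem \ref{thm:condindep} applied to the relation $\mc{A}^\nu_{\{a,b\}}\upmod_\nu \mc{A}^\nu_{\{a',b\}}$, this conditional expectation collapses to $\mb{E}_\nu(f\co p_a|\mc{A}^\nu_b)$; by Lemma \ref{lem:Doob} it has the form $F\co p_b$ for some $F\in L^\infty(\varOmega)$, and a short check using $\nu_{\{a,b\}}=\mu$ shows that $F\co p_b = \mb{E}_\mu(f\co p_a|\mc{A}^{\{a,b\}}_b)$ $\mu$-almost surely.

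At this point the integral equals $\int F\co p_b \;\overline{g\co p_{a'}}\ud\nu = \int F(x_b)\overline{g(x_{a'})}\ud\nu_{\{a',b\}}$, and using the isomorphism $\nu_{\{a',b\}}\cong\mu$ (with $a\mapsto a',\,b\mapsto b$) this becomes $\int F\co p_b\;\overline{g\co p_a}\ud\mu=\int\mb{E}(f\co p_a|\mc{A}^{\{a,b\}}_b)\,\overline{g\co p_a}\ud\mu$. To finish, since $\mb{E}(f\co p_a|\mc{A}^{\{a,b\}}_b)$ is $\mc{A}^{\{a,b\}}_b$-measurable, one pulls it out of a conditional expectation: the integral equals $\int \mb{E}(f\co p_a|\mc{A}^{\{a,b\}}_b)\,\mb{E}(\overline{g\co p_a}|\mc{A}^{\{a,b\}}_b)\ud\mu$, which is the right-hand side of \eqref{eq:IdempotInnerprod}.

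The main obstacle is purely bookkeeping: tracking which sigma-algebras live where, and verifying that the function $F$ obtained as a version of $\mb{E}_\nu(f\co p_a|\mc{A}^\nu_b)$ really does descend to $\mb{E}_\mu(f\co p_a|\mc{A}^{\{a,b\}}_b)$ under the identifications coming from $\nu_{\{a,b\}}=\mu$ and $\nu_{\{a',b\}}\cong\mu$. This is essentially a one-line consequence of the definition of disintegration together with $\nu_b=\lambda$, so no new analytic input is needed beyond the structural facts already assembled in the paper.
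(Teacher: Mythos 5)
Your proof is correct and follows essentially the same route as the paper's: pass to the three-index coupling $\nu$ via the idempotence isomorphism $\nu_{\{a,a'\}}\cong\mu$, use $\{a,b\}\,\bot_\nu\,\{a',b\}$ to collapse $\mb{E}(f\co p_a|\mc{A}^S_{\{a',b\}})$ to $\mb{E}(f\co p_a|\mc{A}^S_b)$, and then project back down to $\mu$. The only (immaterial) difference is in the order of the final steps: the paper conditions $g\co p_{a'}$ on $\mc{A}^S_b$ inside $\nu$ and then descends to $\mu$, whereas you descend via $\nu_{\{a',b\}}\cong\mu$ first and then condition; both rest on the same isomorphisms and the same identification of $\mb{E}_\nu(f\co p_a|\mc{A}^S_b)$ with $\mb{E}_\mu(f\co p_a|\mc{A}^{\{a,b\}}_b)$.
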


\begin{proof}
Let $S=\{a,a',b\}$ and let $\nu$ be the coupling associated with $\mu$ as in Definition \ref{def:idemp}, with index set $S$. We then have the following sequence of equalities, explained below:
\begin{eqnarray*}
\langle f,g\rangle_{\mu}& = &\int_{\Omega^2} f\co p_a \; \overline{g\co p_{a'}}\; \ud\nu_{\{a,a'\}} = \int_{\Omega^3} f\co p_a \; \overline{g\co p_{a'} }\; \ud\nu\\
& = & \int_{\Omega^3} \mb{E}(f\co p_a|  \mc{A}_{\{a',b\}}^S) \; \overline{g\co p_{a'} }\; \ud\nu =
 \int_{\Omega^3} \mb{E}(f\co p_a| \mc{A}^S_b) \; \overline{g\co p_{a'} }\; \ud\nu \\
& = & \int_{\Omega^3} \mb{E}(f\co p_a| \mc{A}^S_b) \; \mb{E}(\overline{g\co p_{a'}}|\mc{A}^S_b) \; \ud\nu=\int_{\Omega^2} \mb{E}(f\co p_a| \mc{A}^{\{a,b\}}_b) \; \overline{\mb{E}(g\co p_a| \mc{A}^{\{a,b\}}_b) }\; \ud\mu.
\end{eqnarray*}
The first equality uses that $\mu$ is isomorphic to $\nu_{\{a,a'\}}$ (since $\mu$ is idempotent). The second equality uses that $\nu_{\{a,a'\}}$ is a sub-coupling of $\nu$. The third equality uses that $\overline{g\co p_{a'} }$ is $\mc{A}^S_{\{a',b\}}$-measurable. The fourth equality follows from $\{a,b\} \bot_\nu \{a',b\}$ and $f\co p_a$ being $\mc{A}^S_{\{a,b\}}$-measurable. The fifth equality is clear. The sixth equality uses firstly the fact that in the left side the term $\mb{E}(g\co p_{a'}|\mc{A}^S_b)$ can be replaced with $\mb{E}(g\co p_a| \mc{A}^S_b)$ (this follows from the definition of these conditional expectations, upon checking that for every bounded $\mc{A}_b^S$-measurable function $h$ we have $\int_{\Omega^3} h(g\co p_{a'})\ud\nu=\int_{\Omega^3} h(g\co p_a)\ud\nu$, this equality following from $\mu$, $\mu'$ being isomorphic subcouplings of $\nu$), and secondly uses that $\nu_{\{a,b\}}$ and $\mu$ are isomorphic.
\end{proof}
\begin{example}
Let us illustrate \eqref{eq:IdempotInnerprod} with the coupling from Example \ref{ex:U2idemp}, i.e.\ the Haar measure $\mu$  on $G=\{x=(x_{00},x_{10},x_{01},x_{11}): x_{00}-x_{10}= x_{01}-x_{11}\}\leq \ab^{\db{2}}$. Note that $\mu$ can be viewed as an idempotent coupling by viewing $\db{2}$ as $\{F_0,F_1\}$ with $F_0=\{00,10\}$, $F_1=\{01,11\}$, thus $\mu\in \coup(\varOmega,S)$ where $S=\{F_0,F_1\}$ and $\varOmega=(\ab^2,\lambda\times\lambda)$. Then, for all bounded Borel functions $f,g:\ab^2\to \mb{C}$, we have $\langle f,g \rangle_{\mu}=\int_G f(x_{00},x_{10})\overline{ g(x_{01},x_{11})} \ud\mu$. Using \eqref{eq:IdempotInnerprod} we obtain $\langle f,g \rangle_\mu=\int_{\Omega^2} \mb{E}(f\co p_{F_0}|\mc{A}_{F_1}^S)\;\overline{\mb{E}(g\co p_{F_0}|\mc{A}_{F_1}^S)} \ud\mu(x_{F_0},x_{F_1})$ as an alternative formula. Reasoning as in Example \ref{ex:U2coup}, one can check that $\mb{E}(f\co p_{F_0}|\mc{A}_{F_1}^S)$ is represented by the function $x\mapsto \int_{\ab} f\co p_{F_0} (x_{00}+h,x_{10}+h,x_{01},x_{11})\ud\lambda(h)$, that is the integral of $f$ over the set $\{(u,v)\in \ab^2:u-v=x_{01} - x_{11}\}$. 
\end{example}
\noindent Part of the usefulness of the alternative formula \eqref{eq:IdempotInnerprod} for $\langle f,g \rangle_\mu$ is that it reveals clearly that $\langle f,f \rangle_{\mu}$ is always non-negative, which is unclear in the first formula for $\langle f,g \rangle_\mu$ above. 
\begin{corollary}\label{cor:idempos}
Let $\mu\in\coup(\varOmega,\{a,b\})$ be idempotent. Then for every $f\in L^\infty(\varOmega)$ we have $\langle f,f\rangle_{\mu} \geq \big|\int_\Omega f \ud\lambda \big|^2$.
\end{corollary}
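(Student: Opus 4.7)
The plan is to apply Lemma~\ref{lem:idemp0} directly and then use the contraction of $L^2$ to $L^1$ combined with the fact that conditional expectation preserves integrals. Concretely, I would first set $h := \mb{E}(f\co p_a \mid \mc{A}^{\{a,b\}}_b)$, so that Lemma~\ref{lem:idemp0} (taking $g=f$) immediately gives
\[
\langle f,f\rangle_{\mu} \;=\; \int_{\Omega^2} h\,\overline{h}\;\ud\mu \;=\; \|h\|_{L^2(\mu)}^2.
\]

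Next, I would lower-bound $\|h\|_{L^2(\mu)}^2$ by $\bigl|\!\int_{\Omega^2} h\,\ud\mu\bigr|^2$. This is just the Cauchy--Schwarz inequality $|\int h\cdot 1\,\ud\mu|^2 \le \|h\|_{L^2(\mu)}^2\|1\|_{L^2(\mu)}^2$, using that $\mu$ is a probability measure so $\|1\|_{L^2(\mu)}=1$ (equivalently, Jensen applied to the convex function $t\mapsto|t|^2$).

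The final step is to identify $\int_{\Omega^2} h \,\ud\mu$ with $\int_\Omega f\,\ud\lambda$. Since conditional expectations preserve total integrals, $\int_{\Omega^2} h\,\ud\mu = \int_{\Omega^2} f\co p_a\,\ud\mu$, and because $\mu$ is a coupling of $\lambda$ with itself (so $p_a$ is measure-preserving from $(\Omega^2,\mu)$ to $(\Omega,\lambda)$), this equals $\int_\Omega f\,\ud\lambda$. Chaining the three observations yields $\langle f,f\rangle_{\mu} \ge \bigl|\!\int_\Omega f\,\ud\lambda\bigr|^2$.

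No serious obstacle arises here: the work is entirely done by Lemma~\ref{lem:idemp0}, and the remaining inequality is a one-line application of Cauchy--Schwarz to the probability measure $\mu$.
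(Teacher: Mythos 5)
Your proof is correct and is essentially identical to the paper's: both apply Lemma \ref{lem:idemp0} with $g=f$ to write $\langle f,f\rangle_\mu$ as $\int_{\Omega^2}|\mb{E}(f\co p_a|\mc{A}^{\{a,b\}}_b)|^2\ud\mu$, then use Cauchy--Schwarz against the constant function $1$ and the fact that conditional expectation and the coupling projection preserve integrals. No differences worth noting.
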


\begin{proof} By Lemma \ref{lem:idemp0} and the Cauchy-Schwarz inequality, we have 
\[
\langle f,f\rangle_{\mu}
=\int_{\Omega^2}  \big|\mb{E}\big(f\co p_a|\mc{A}^{\{a,b\}}_b\big)\big|^2 \,\ud\mu \geq \Big|\int_{\Omega^2} \mb{E}\big(f\co p_a|\mc{A}^{\{a,b\}}_b\big)\,  \ud\mu \Big|^2 = \Big|\int_\Omega f \ud \lambda\Big|^2. \qedhere
\]
\end{proof}
We can now prove the main result of this subsection.
\begin{proposition}\label{prop:idemp} Let $\varOmega=(\Omega,\mc{A},\lambda)$ be a probability space. A coupling $\mu\in\coup(\varOmega,\{a,b\})$ is idempotent if and only if there is a $\sigma$-algebra $\mc{B}\subseteq\mc{A}$ such that $\mu$ is the square of $\lambda$ relative to $\mc{B}$.
\end{proposition}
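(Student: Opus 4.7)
The \emph{if} direction is Lemma \ref{lem:IdempCondProp}(iv), so only the converse requires argument. Suppose $\mu\in\coup(\varOmega,\{a,b\})$ is idempotent. The plan is to realise the sesquilinear form $\langle\cdot,\cdot\rangle_\mu$ of Lemma \ref{lem:idemp0} through a single operator $T$ on $L^2(\lambda)$, show that $T$ must be a positivity-preserving orthogonal projection fixing the constant function $1$, and conclude by the classical fact that such operators are conditional expectations.

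By the Doob property (Lemma \ref{lem:Doob}) applied to $p_b$, for each $f\in L^\infty(\lambda)$ the $\mc{A}^{\{a,b\}}_b$-measurable function $\mb{E}_\mu(f\co p_a\mid\mc{A}^{\{a,b\}}_b)$ descends to a $\lambda$-a.e.\ unique $Tf\in L^\infty(\lambda)$ with $(Tf)\co p_b=_\mu\mb{E}_\mu(f\co p_a\mid\mc{A}^{\{a,b\}}_b)$. Since conditional expectation is an $L^2(\mu)$-contraction and $p_a,p_b$ are measure-preserving, $T$ is an $L^2$-contraction and so extends by density from $L^\infty(\lambda)$ to $L^2(\lambda)$. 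Directly from the definition of $T$ and the $\mc{A}^{\{a,b\}}_b$-measurability of $\overline{g\co p_b}$,
\begin{equation}\label{eq:plan-dag}
\langle f,g\rangle_\mu=\int_\Omega Tf\,\overline{g}\;\ud\lambda,
\end{equation}
while Lemma \ref{lem:idemp0}, re-expressed via $T$, gives
\begin{equation}\label{eq:plan-ddag}
\langle f,g\rangle_\mu=\int_\Omega Tf\,\overline{Tg}\;\ud\lambda.
\end{equation}

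The right-hand side of \eqref{eq:plan-ddag} is manifestly Hermitian in $(f,g)$, so $\langle f,g\rangle_\mu=\overline{\langle g,f\rangle_\mu}$; combining this with \eqref{eq:plan-dag} yields $\int Tf\,\overline{g}\,\ud\lambda=\int f\,\overline{Tg}\,\ud\lambda$, i.e.\ $T$ is self-adjoint on $L^2(\lambda)$. Equating \eqref{eq:plan-dag} and \eqref{eq:plan-ddag} and using self-adjointness then gives $\int(Tf-T^2f)\overline{g}\,\ud\lambda=0$ for every $g\in L^2(\lambda)$, whence $T^2=T$. Thus $T$ is an orthogonal projection; moreover $T1=1$ (as $\mb{E}(1|\cdot)=1$), and $T$ sends non-negative functions to non-negative functions (positivity of conditional expectation).

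Finally, a standard characterisation asserts that an orthogonal projection on $L^2(\lambda)$ fixing $1$ and preserving positivity is of the form $\mb{E}(\cdot\mid\mc{B})$ for a sub-$\sigma$-algebra $\mc{B}\subseteq\mc{A}$: concretely, take $\mc{B}:=\{A\in\mc{A}:T1_A=_\lambda 1_A\}$ and verify it is a $\sigma$-algebra. Closure under complements is immediate from $T1=1$; closure under disjoint countable unions follows by $L^2$-continuity; closure under intersections uses $0\leq T1_{A\cap B}\leq T1_A\wedge T1_B=1_{A\cap B}$ combined with $\int T1_{A\cap B}\,\ud\lambda=\lambda(A\cap B)$ (from $T$ self-adjoint and $T1=1$), forcing $T1_{A\cap B}=_\lambda 1_{A\cap B}$. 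Substituting $T=\mb{E}(\cdot\mid\mc{B})$ into \eqref{eq:plan-ddag} and specialising to $f=1_A$, $g=1_B$ reproduces \eqref{eq:relsquare1} exactly, so $\mu$ is the square of $\lambda$ relative to $\mc{B}$. The main obstacle is this final step: the properties of $T$ are essentially formal consequences of Lemma \ref{lem:idemp0}, but extracting from them the generating $\sigma$-algebra $\mc{B}$ and matching $T=\mb{E}(\cdot\mid\mc{B})$ pointwise is the place where positivity must be used in an essential, lattice-theoretic way.
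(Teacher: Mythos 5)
Your proof is correct in substance but follows a genuinely different route from the paper's. The paper works directly with the auxiliary coupling $\nu$ on $\{a,a',b\}$: using the coordinate-swap symmetry and the relation $\{a',b\}\,\bot_\nu\,\{a,b\}$ it shows that $\mb{E}(f\co p_a|\mc{A}_b^{\{a,b\}})$ is $\mc{A}_a^{\{a,b\}}$-measurable, deduces $\mc{A}_a^{\{a,b\}}\upmod_\mu\mc{A}_b^{\{a,b\}}$, and then takes $\mc{B}$ to be the image on $\Omega$ of the meet $\mc{A}_a^{\{a,b\}}\wedge_\mu\mc{A}_b^{\{a,b\}}$, so that the identity $\mb{E}(f\co p_a|\mc{A}_b^{\{a,b\}})=\mb{E}(f|\mc{B})\co p_b$ comes straight out of Proposition \ref{prop:condindep}. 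You instead package everything into the transfer operator $T$ and reduce the statement to the classical fact that a positive orthogonal projection on $L^2(\lambda)$ fixing $1$ is a conditional expectation. Your derivation of the two integral formulas for $\langle f,g\rangle_\mu$, and hence of self-adjointness, idempotence, positivity and $T1=1$, is correct and uses only Lemma \ref{lem:idemp0}. What your route buys is a clean conceptual dictionary (idempotent couplings correspond exactly to positive unital orthogonal projections); what the paper's route buys is an explicit identification of $\mc{B}$ with (the image of) $\mc{A}_a^{\{a,b\}}\wedge_\mu\mc{A}_b^{\{a,b\}}$, which is reused later (e.g.\ in the proof of Lemma \ref{lem:idemp4}).

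One step is asserted rather than proved: the identification $T=\mb{E}(\cdot|\mc{B})$. Verifying that $\mc{B}=\{A\in\mc{A}:T1_A=_\lambda 1_A\}$ is a $\sigma$-algebra shows only that $T$ fixes $L^2(\mc{B})$, i.e.\ that the range of $T$ \emph{contains} $L^2(\mc{B})$; you still need the reverse inclusion, namely that every fixed point of $T$ is $\mc{B}$-measurable. Without it, $T$ could a priori project onto a strictly larger subspace, and substituting into your second formula would not yield \eqref{eq:relsquare1}. The fix uses exactly the trick you already deploy for intersections: if $f$ is real, bounded and $Tf=f$, then $f^+\geq f$ and $f^+\geq 0$ give $Tf^+\geq \max(Tf,0)=f^+$ by positivity, while $\int_\Omega Tf^+\ud\lambda=\int_\Omega f^+\,T1\ud\lambda=\int_\Omega f^+\ud\lambda$ by self-adjointness, forcing $Tf^+=_\lambda f^+$. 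Hence the bounded fixed points of $T$ form a lattice containing the constants, so $1_{\{f>c\}}=\lim_{n\to\infty}\big(n(f-c)\big)^+\wedge 1$ is fixed by $L^2$-continuity and $\{f>c\}\in\mc{B}$. This closes the gap, and with it your argument is complete.
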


\begin{proof}
The backward implication follows from $(iv)$ in Lemma \ref{lem:IdempCondProp}. For the forward implication, let $S=\{a,a',b\}$ and let $\nu\in \coup(\varOmega,S)$ be the coupling in Definition \ref{def:idemp}.

We first prove that
\begin{equation}\label{eq:idempstep1}
\mc{A}_a^{\{a,b\}} \upmod_{\mu} \mc{A}_b^{\{a,b\}}.
\end{equation}
Let $\pi:\Omega^S\to  \Omega^S$ be the map that interchanges the $a$ and $a'$ components. Then $\pi$ preserves the measure $\nu$. Indeed, since $\nu_{\{a,b\}}\cong \nu_{\{a',b\}}$, it is checked from the definition of conditional expectation that $\mb{E}(g\co p_{a'}|\mc{A}^S_b)=\mb{E}(g\co p_a|\mc{A}^S_b)$ for every $g\in L^\infty(\mc{A})$. Using this together with $\{a,b\}\bot_\nu \{a',b\}$ (Cf. Remark \ref{rem:idembot}), for every product set in $\mc{A}^S$ we have $\nu(A\times A'\times B)$ $=\int \mb{E}(1_A\co p_{a}|\mc{A}^S_b)\mb{E}(1_{A'}\co p_{a'}|\mc{A}^S_b) 1_B\co p_{b}\ud\nu =\int \mb{E}(1_A\co p_{a'}|\mc{A}^S_b)\mb{E}(1_{A'}\co p_{a}|\mc{A}^S_b) 1_B\co p_b\ud\nu$ $= \nu(A'\times A\times B)$, so $\pi$ is $\nu$-preserving on product sets and hence on all $\mc{A}^S$ by standard results. Now, to prove \eqref{eq:idempstep1}, by Proposition \ref{prop:condindep} (and Lemma \ref{lem:Doob}) it suffices to show that for every function $f\in L^\infty(\mc{A})$ we have that $\mb{E}_\mu(f\co p_a| \mc{A}_b^{\{a,b\}})$ is $\mc{A}_a^{\{a,b\}}$-measurable. We prove this by showing that $\|\mb{E}(\mb{E}(f\co p_a| \mc{A}_b^{\{a,b\}})|\mc{A}_a^{\{a,b\}})\|_{L^2(\mu)}= \|\mb{E}(f\co p_a| \mc{A}_b^{\{a,b\}})\|_{L^2(\mu)}$. To this end, note first that $\|\mb{E}(f\co p_a| \mc{A}_b^{\{a,b\}})\|_{L^2(\mu)}^2=\int_{\Omega^S} |\mb{E}(f\co p_a| \mc{A}_{a'}^S)|^2\ud\nu$ (since $\mu\cong \nu_{\{a,a'\}}$ by idempotence). Since $\mc{A}_{a'}^S\subset_\nu\mc{A}_{a',b}^S$, this is $\int_{\Omega^S} |\mb{E}(\mb{E}(f\co p_a| \mc{A}_{a',b}^S)|\mc{A}_{a'}^S)|^2\ud\nu$, which equals $\int_{\Omega^S} |\mb{E}(\mb{E}(f\co p_a| \mc{A}_b^S)|\mc{A}_{a'}^S)|^2\ud\nu$ (since $\{a,b\}\bot_\nu \{a',b\}$). Since $\pi$ preserves $\nu$, this is $\int_{\Omega^S} |\mb{E}(\mb{E}(f\co p_{a'}| \mc{A}_b^S)|\mc{A}_a^S)|^2\ud\nu$. By the remark involving $g$ above, this in turn equals $\int_{\Omega^S} |\mb{E}(\mb{E}(f\co p_a| \mc{A}_b^S)|\mc{A}_a^S)|^2\ud\nu =\|\mb{E}(\mb{E}(f\co p_a| \mc{A}_b^{\{a,b\}})|\mc{A}_a^{\{a,b\}}\|_{L^2(\mu)}^2$. This proves \eqref{eq:idempstep1}.

Let $\mc{D}=\mc{A}_a^{\{a,b\}} \wedge_\mu \mc{A}_b^{\{a,b\}}$ and let $\mc{B}\subseteq \mc{A}$ be the image of $\mc{D}$ on $\varOmega$, i.e.\ the $\sigma$-algebra of sets $B\subset\Omega$ such that $p_a^{-1} B$ (equivalently $p_b^{-1} B$) is in $\mc{D}$. It then follows from the definitions that $p_a^{-1}\mc{B}=_\mu\mc{D}=_\mu p_b^{-1}\mc{B}$. This together with \eqref{eq:idempstep1}, \eqref{eq:condindep} and \eqref{eq:exprel}, implies that
\begin{equation}\label{eq:pfclaim}
\forall\,f\in L^\infty(\mc{A}), \quad \mb{E}(f\co p_a| \mc{A}_b^{\{a,b\}})=\mb{E}(f\co p_a|\mc{D})=\mb{E}(f|\mc{B})\co p_a=\mb{E}(f|\mc{B})\co p_b.
\end{equation}
We can now deduce that \eqref{eq:relsquare2} holds, by combining  \eqref{eq:IdempotInnerprod} with \eqref{eq:pfclaim}.
\end{proof}
We now deduce several consequences of Proposition \ref{prop:idemp}.
\begin{lemma}\label{lem:idemp3}
Let $\mu\in \coup(\varOmega,\{a,b\})$ be an idempotent coupling, and let $\nu$ be the associated coupling from Definition \ref{def:idemp}. Then $\nu$ is symmetric under every permutation of $\{a,a',b\}$.
\end{lemma}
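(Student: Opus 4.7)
The plan is to reduce the symmetry of $\nu$ to a concrete symmetric integral formula on generating rectangles, using the characterization of idempotent couplings given by Proposition \ref{prop:idemp}.

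By Proposition \ref{prop:idemp}, there is a sub-$\sigma$-algebra $\mc{B} \subseteq \mc{A}$ such that $\mu$ is the square of $\lambda$ relative to $\mc{B}$, and in particular \eqref{eq:pfclaim} gives $\mb{E}_\mu(f\co p_a \mid \mc{A}^{\{a,b\}}_b) = \mb{E}(f\mid\mc{B})\co p_b$ for every $f\in L^\infty(\mc{A})$, with the analogous identity for $\mu'$. First I would plug this into the explicit formula \eqref{eq:cid} for the conditionally independent coupling supplied by Lemma \ref{lem:condindcoup}, applied with $T = T' = \{b\}$ and $\mu_T = \lambda$. For arbitrary $B_a, B_b, B_{a'} \in \mc{A}$, this yields
\[
\nu(B_a \times B_{a'} \times B_b) = \int_\Omega \mb{E}(1_{B_a}\mid\mc{B})\,\cdot\,1_{B_b}\,\cdot\,\mb{E}(1_{B_{a'}}\mid\mc{B})\,\ud\lambda.
\]

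Next, I would use the standard fact that if $h$ is $\mc{B}$-measurable and bounded, then $\int h\,g\,\ud\lambda = \int h\,\mb{E}(g\mid\mc{B})\,\ud\lambda$. Taking $h = \mb{E}(1_{B_a}\mid\mc{B})\,\mb{E}(1_{B_{a'}}\mid\mc{B})$ (which is $\mc{B}$-measurable) and $g = 1_{B_b}$, the formula becomes
\[
\nu(B_a \times B_{a'} \times B_b) = \int_\Omega \mb{E}(1_{B_a}\mid\mc{B})\,\mb{E}(1_{B_{a'}}\mid\mc{B})\,\mb{E}(1_{B_b}\mid\mc{B})\,\ud\lambda,
\]
which is manifestly symmetric under every permutation of the three sets $B_a, B_{a'}, B_b$.

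Finally, I would conclude as follows: for any permutation $\pi$ of $\{a,a',b\}$, let $\theta_\pi : \Omega^{\{a,a',b\}}\to\Omega^{\{a,a',b\}}$ be the induced coordinate permutation, and consider the pushforward $\nu\co\theta_\pi^{-1}$. The symmetric formula above shows that $\nu$ and $\nu\co\theta_\pi^{-1}$ agree on all measurable rectangles $B_a\times B_{a'}\times B_b$. Since such rectangles form a $\pi$-system generating $\mc{A}^{\{a,a',b\}}$, uniqueness in Carathéodory's extension theorem gives $\nu = \nu\co\theta_\pi^{-1}$, i.e.\ $\nu$ is invariant under $\theta_\pi$. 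There is no real obstacle here: once Proposition \ref{prop:idemp} identifies the conditional expectations with projections onto $\mc{B}$, the role of $b$ as the ``glued'' coordinate becomes interchangeable with that of $a$ and $a'$, and the apparent asymmetry in \eqref{eq:cid} dissolves.
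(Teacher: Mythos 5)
Your proof is correct and follows essentially the same route as the paper: both invoke Proposition \ref{prop:idemp} to realize $\mu$ as the square of $\lambda$ relative to some $\mc{B}$ and then reduce to the manifestly symmetric formula $\nu(A\times B\times C)=\int_\Omega \mb{E}(1_A|\mc{B})\,\mb{E}(1_B|\mc{B})\,\mb{E}(1_C|\mc{B})\,\ud\lambda$, the only cosmetic difference being that you obtain this by substituting \eqref{eq:pfclaim} directly into the construction formula \eqref{eq:cid}, whereas the paper reaches it by conditioning inside the integral over $\nu$ and using $\{a,b\}\,\bot_\nu\,\{a',b\}$ together with \eqref{eq:relsquare2}. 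Your explicit $\pi$-system/Carath\'eodory uniqueness step at the end is a welcome elaboration of the paper's ``the symmetry follows readily''.
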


\begin{proof}
By Proposition \ref{prop:idemp} there is a $\sigma$-algebra $\mc{B}\subseteq\mc{A}$ such that $\mu$ is the square of $\varOmega$ relative to $\mc{B}$. Letting $S=\{a,a',b\}$ as before, for arbitrary sets $A,A',B\in\mc{A}$ we have $\nu(A\times A'\times B) = \int_{\Omega^S} (1_A\co p_a)\, (1_{A'}\co p_{a'}) \, (1_B\co p_b) \ud\nu$, which equals 
\[
\int_{\Omega^S} \mb{E}\big( (1_A\co p_a)(1_{A'}\co p_{a'}) (1_B\co p_b) |\mc{A}^S_{\{a',b\}}\big)\ud\nu 
 = \int_{\Omega^S} \mb{E}(1_A\co p_a\,|\, \mc{A}^S_{\{a',b\}})\;\; 1_{A'}\co p_{a'} \;\; 1_B\co p_b\,\ud \nu.
\]
Since $\{a,b\}~\bot_\nu~\{a',b\}$, by Lemma \ref{lem:botsuff} we have $\mb{E}(1_A\co p_a\,|\, \mc{A}^S_{\{a',b\}}) =  \mb{E}(1_A\co p_a\,|\, \mc{A}^S_b)$. By \eqref{eq:exprel} and \eqref{eq:pfclaim} we have $\mb{E}(1_A\co p_a\,|\, \mc{A}^S_b)= \mb{E}(1_A\co p_a\,|\, \mc{A}^{\{a,b\}}_b)\co p_{\{a,b\}}=\mb{E}(1_A|\mc{B})\co p_b$. Therefore $\nu(A\times A'\times B)  =  \int_{\Omega^S} \mb{E}(1_A|\mc{B})\co p_b\; 1_{A'}\co p_{a'} \; 1_B\co p_b \ud\nu =\int_{\Omega^{\{a',b\}}} 1_{A'}\co p_{a'} \; \big(\mb{E}(1_A|\mc{B}) \, 1_B) \co p_b  \ud\mu$, where the last equality uses that $\nu_{\{a',b\}}\cong\mu$. Finally, using formula \eqref{eq:relsquare2} for $\mu$ we obtain
\begin{equation}
\nu(A\times A'\times B) = \int_\Omega \mb{E}(1_A|\mc{B})\;\mb{E}(1_{A'}|\mc{B})\;\mb{E}(1_B|\mc{B})\ud\lambda.
\end{equation}
The symmetry follows readily from this formula.
\end{proof}

\begin{lemma}\label{lem:idemp4}
Let $T\subseteq S$, and suppose that $\mu\in\coup(\varOmega, S\times\{0,1\})$  is idempotent when viewed as a coupling in $\coup(\varOmega^S, \{0 ,1\})$ \textup{(}identifying $(\Omega^S)^{\{0,1\}}$ with $\Omega^{S\times\{0,1\}}$\textup{)}, and similarly that $\mu_{T\times\{0,1\}}$ is idempotent as a coupling in $\coup(\varOmega^T, \{0, 1\})$. Then $(T\times\{0,1\}) ~\bot_\mu~ (S\times\{0\})$.
\end{lemma}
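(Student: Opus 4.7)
The plan is to combine the structural characterization of idempotent couplings from Proposition \ref{prop:idemp} with the sufficient condition for $\bot$ given by Lemma \ref{lem:botsuff}. Let $q:\Omega^S\to\Omega^T$ be the coordinate projection, and let $\lambda_S,\lambda_T$ denote the marginals of $\mu$ on $\Omega^S$ and $\Omega^T$ respectively. Applying Proposition \ref{prop:idemp} to the two idempotence hypotheses, we obtain sub-$\sigma$-algebras $\mc{B}\subseteq\mc{A}^S$ and $\mc{B}'\subseteq\mc{A}^T$ such that $\mu$ is the square of $\lambda_S$ relative to $\mc{B}$, and $\mu_{T\times\{0,1\}}$ is the square of $\lambda_T$ relative to $\mc{B}'$. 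The overall strategy is, for a generating family of functions $f=(g_0\co q\co p_0)(g_1\co q\co p_1)$ with $g_0,g_1\in L^\infty(\mc{A}^T)$ in $\mc{A}^{S\times\{0,1\}}_{T\times\{0,1\}}$ (cf.\ Lemma \ref{lem:pisysapprox}), to pull out the $\mc{A}^{S\times\{0,1\}}_{S\times\{0\}}$-measurable factor $g_0\co q\co p_0$ and apply \eqref{eq:pfclaim} to obtain $\mb{E}_\mu(g_1\co q\co p_1\,|\,\mc{A}^{S\times\{0,1\}}_{S\times\{0\}}) = \mb{E}_{\lambda_S}(g_1\co q\,|\,\mc{B})\co p_0$; once we show this conditional expectation is a function of $x|_T$ only, Lemma \ref{lem:botsuff} gives $(T\times\{0,1\})\,\bot_\mu\,(S\times\{0\})$.

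The crux of the proof is therefore the identity
\[
\mb{E}_{\lambda_S}(g\co q\,|\,\mc{B}) \;=\; \mb{E}_{\lambda_T}(g\,|\,\mc{B}')\co q\quad \text{$\lambda_S$-a.e., for every }g\in L^\infty(\mc{A}^T).
\]
First I observe that $q^{-1}\mc{B}'\subseteq_{\lambda_S}\mc{B}$. Indeed, from the proof of Proposition \ref{prop:idemp} combined with Lemma \ref{lem:IdempCondProp}(i), $\mc{B}$ agrees modulo $\lambda_S$-null sets with $\{A\in\mc{A}^S : p_0^{-1}A=_\mu p_1^{-1}A\}$, and similarly for $\mc{B}'$; applying $(q\times q)^{-1}$ to the equality $p_0^{-1}B'=_{\mu_{T\times\{0,1\}}}p_1^{-1}B'$ for $B'\in\mc{B}'$ yields $p_0^{-1}(q^{-1}B')=_\mu p_1^{-1}(q^{-1}B')$, so $q^{-1}B'\in\mc{B}$. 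In particular $h_0:=\mb{E}_{\lambda_T}(g\,|\,\mc{B}')\co q$ is $\mc{B}$-measurable.

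To prove the identity I will compare $\mb{E}_{\lambda_S}(g\co q\,|\,\mc{B})$ with $h_0$ by computing their $L^2$ norms and inner product. Using \eqref{eq:relsquare2} for $\mu$ together with $g\co q\co p_j=g\co p_j\co(q\times q)$ (for $j=0,1$), one finds
\[
\|\mb{E}_{\lambda_S}(g\co q\,|\,\mc{B})\|_{L^2(\lambda_S)}^2 = \int (g\co q\co p_0)(g\co q\co p_1)\,\ud\mu = \int (g\co p_0)(g\co p_1)\,\ud\mu_{T\times\{0,1\}},
\]
and \eqref{eq:relsquare2} for $\mu_{T\times\{0,1\}}$ rewrites this as $\|\mb{E}_{\lambda_T}(g\,|\,\mc{B}')\|_{L^2(\lambda_T)}^2=\|h_0\|_{L^2(\lambda_S)}^2$. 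On the other hand, since $h_0$ is $\mc{B}$-measurable,
\[
\langle \mb{E}_{\lambda_S}(g\co q\,|\,\mc{B}),\,h_0\rangle_{L^2(\lambda_S)}=\langle g\co q,\,h_0\rangle_{L^2(\lambda_S)}=\int g\,\mb{E}_{\lambda_T}(g\,|\,\mc{B}')\,\ud\lambda_T = \|h_0\|_{L^2}^2.
\]
These two relations give $\|\mb{E}(g\co q\,|\,\mc{B})-h_0\|_{L^2(\lambda_S)}^2=0$, proving the identity.

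The main obstacle is precisely this identity, since a priori the two conditional expectations live over different $\sigma$-algebras on different ambient spaces, and both idempotence hypotheses must be used in tandem to relate them; the $L^2$ argument avoids needing a two-sided comparison of $\mc{B}$ and $q^{-1}\mc{B}'$. Once the identity is established, substituting back yields $\mb{E}_\mu(f\,|\,\mc{A}^{S\times\{0,1\}}_{S\times\{0\}})=(g_0\co q)\cdot(\mb{E}_{\lambda_T}(g_1\,|\,\mc{B}')\co q)\co p_0$, which is visibly $\mc{A}^{S\times\{0,1\}}_{T\times\{0\}}$-measurable. Linearity and an $L^2$-approximation via Lemma \ref{lem:pisysapprox} extend this to all $f\in L^\infty(\mc{A}^{S\times\{0,1\}}_{T\times\{0,1\}})$, and Lemma \ref{lem:botsuff} then gives the conditional independence $(T\times\{0,1\})\,\bot_\mu\,(S\times\{0\})$.
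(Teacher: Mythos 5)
Your proof is correct and follows essentially the same route as the paper's: both reduce to the key identity $\mb{E}_{\lambda_S}(g\co q\,|\,\mc{B})=\mb{E}_{\lambda_T}(g\,|\,\mc{B}')\co q$ (the paper's $\mb{E}(f\co p_T|\mc{B}_1)=\mb{E}(f\co p_T|\mc{B}_2')$), established via the same $\|\cdot\|_{L^2}$ comparison through \eqref{eq:relsquare2}, and then conclude via \eqref{eq:pfclaim} and Lemma \ref{lem:botsuff}. Your explicit verification that $q^{-1}\mc{B}'\subseteq_{\lambda_S}\mc{B}$ fills in a containment the paper merely asserts, but the argument is otherwise the same.
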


\begin{proof}
Let $S'=S\times \{0,1\}$, $T'=T\times \{0,1\}$. It suffices to show that for every pair of functions $f,g\in L^\infty(\mc{A}^T)$ we have that $\mb{E}(f\co p_{T\times\{1\}}\;g\co p_{T\times\{0\}}\,|\,\mc{A}_{S\times\{0\}}^{S'})$ is $\mc{A}_{T\times\{0\}}^{S'}$-measurable. Indeed, if this holds then using Lemma \ref{lem:pisysapprox} we can deduce that every function $h\in L^1(\mc{A}^{T'})$ has $\mb{E}(h\,|\,\mc{A}_{S\times\{0\}}^{S'}) \in L^1(\mc{A}_{T\times\{0\}}^{S'})$, and the result then follows from Lemma \ref{lem:botsuff}. Note moreover that for any $f,g$ as above, we have $\mb{E}(f\co p_{T\times\{1\}}\;g\co p_{T\times\{0\}}\,|\,\mc{A}_{S\times\{0\}}^{S'})=g\co p_{T\times\{0\}}\;\mb{E}(f\co p_{T\times\{1\}}\,|\,\mc{A}_{S\times\{0\}}^{S'})$. Hence it suffices to show that for every $f\in L^\infty(\mc{A}^T)$ we have that $\mb{E}(f\co p_{T\times\{1\}}\,|\,\mc{A}_{S\times\{0\}}^{S'})$ is $\mc{A}^{S'}_{T\times\{0\}}$-measurable.

Let $\mathcal{B}_1\subset \mathcal{A}^S$ and $\mathcal{B}_2\subset \mathcal{A}^T$ be the sub-$\sigma$-algebras given by Proposition \ref{prop:idemp}, thus $p_{S\times \{1\}}^{-1}(\mc{B}_1)=_\mu \mc{A}^{S'}_{S\times\{0\}} \wedge_{\mu} \mc{A}^{S'}_{S\times\{1\}}=_\mu p_{S\times \{0\}}^{-1}(\mc{B}_1)$, and $p_{T\times\{1\}}^{-1}(\mc{B}_2)=_{\mu_{T'}} \mc{A}^{T'}_{T\times\{0\}}\wedge_{\mu} \mc{A}^{T'}_{T\times\{1\}}=_{\mu_{T'}} p_{T\times\{0\}}^{-1}(\mc{B}_2)$. Let $\mu_S$ denote the subcoupling $\mu_{S\times\{0\}}$ of $\mu_{S'}$ viewed as a measure on $\Omega^S$ and similarly $\mu_T$ denote the subcoupling $\mu_{T\times\{0\}}$ of $\mu_{T'}$ viewed as a measure on $\Omega^T$. By  \eqref{eq:relsquare2} applied to $\mc{B}_1$ we have $\int_{\Omega^{S'}} f\co p_{T\times\{1\}}\overline{f\co p_{T\times\{0\}}} \ud\mu = 
\int_{\Omega^S} \big|\mb{E}(f\co p_T|\mc{B}_1)\big|^2\ud\mu_S$. By \eqref{eq:relsquare2} applied to $\mc{B}_2$ we have $\int_{\Omega^{T'}} f\co p_{T\times\{1\}}\;\overline{f\co p_{T\times\{0\}}} \ud\mu_{T'} =\int_{\Omega^T} \big|\mb{E}(f|\mc{B}_2)\big|^2\ud\mu_T$. 
Since $\mu_{T'}$ is the image of $\mu$ under $p_{T'}$, the left sides of the last two equalities are equal. Moreover, since $\mu_T$ is the image of $\mu_S$ under $p_T$, and $\mb{E}(f|\mc{B}_2)\co p_T=\mb{E}(f\co p_T|\mc{B}_2')$ for $\mc{B}_2'=p_T^{-1}\mc{B}_2\subset \mc{A}^S$, we have $\int_{\Omega^T} \big|\mb{E}(f|\mc{B}_2)\big|^2\ud\mu_T=\int_{\Omega^S} \big|\mb{E}(f\co p_T|\mc{B}_2')\big|^2\ud\mu_S$. We deduce that $\|\mb{E}(f\co p_T|\mc{B}_1)\|_2=\|\mb{E}(f\co p_T|\mc{B}_2')\|_2$. Since $\mc{B}_2'\subseteq\mc{B}_1$, the last two expectations are equal, whence $\mb{E}(f\co p_{T\times\{1\}}|p_{S\times \{1\}}^{-1}(\mc{B}_1))=\mb{E}(f\co p_{T\times\{1\}}|p_{T\times \{1\}}^{-1}(\mc{B}_2))$. Lemma \ref{lem:IdempCondProp} $(iii)$ and Proposition \ref{prop:condindep} imply that $\mb{E}(f\co p_{T\times\{1\}}|\mc{A}_{S\times\{0\}}^{S'})=\mb{E}(f\co p_{T\times\{1\}}|p_{S\times \{1\}}^{-1}(\mc{B}_1))$. The last two equalities imply that $\mb{E}(f\co p_{T\times\{1\}}|\mc{A}_{S\times\{0\}}^{S'})$ is $\mc{A}^{S'}_{T\times\{0\}}$-measurable, as required. 
\end{proof}

\begin{lemma}\label{lem:idemp5} Let $\mu\in\coup(\varOmega,S\times \{0,1\})$. Let $S_1,S_2,S_3\subset S$ be such that $S_1\cap S_2=S_3$ and $S_1\cup S_2=S$. Suppose that for $i=1,2,3$ we have that $\mu_{S_i\times\{0,1\}}$ is idempotent as a coupling in $\coup(\Omega^{S_i},\{0,1\})$. Suppose that $S_1\times\{0,1\}~\bot_\mu~S_2\times\{0,1\}$.  Then $\mu$ is idempotent as a coupling in $\coup(\Omega^S,\{0,1\})$.
\end{lemma}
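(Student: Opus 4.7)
}
By Proposition \ref{prop:idemp} it suffices to exhibit a sub-$\sigma$-algebra $\mathcal{B}\subseteq\mathcal{A}^S$ such that $\mu$ is the square of $\lambda':=\mu_{S\times\{0\}}$ relative to $\mathcal{B}$, since the backward direction of that proposition (Lemma \ref{lem:IdempCondProp}(iv)) then yields the idempotence of $\mu$ as a coupling in $\coup(\Omega^S,\{0,1\})$. Applying Proposition \ref{prop:idemp} (forward) to each idempotent $\mu_{S_i\times\{0,1\}}$, $i=1,2$, I obtain sub-$\sigma$-algebras $\mathcal{B}_i\subseteq\mathcal{A}^{S_i}$ realising $\mu_{S_i\times\{0,1\}}$ as a relative square. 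The natural candidate is
\[
\mathcal{B}\;:=\;\pi_1^{-1}(\mathcal{B}_1)\;\vee\;\pi_2^{-1}(\mathcal{B}_2)\;\subseteq\;\mathcal{A}^S,
\]
where $\pi_i:\Omega^S\to\Omega^{S_i}$ is the canonical projection (well defined because $S_1\cup S_2=S$).

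The next step is to harvest auxiliary conditional independences. Since each $\mu_{S_i\times\{0,1\}}$ is idempotent and its subcoupling $\mu_{S_3\times\{0,1\}}$ is idempotent as well, Lemma \ref{lem:idemp4} (with $S=S_i$, $T=S_3$) yields $(S_3\times\{0,1\})~\bot_{\mu_{S_i\times\{0,1\}}}~(S_i\times\{0\})$, and since the relative square is invariant under the swap of the two copies, also $(S_3\times\{0,1\})~\bot_{\mu_{S_i\times\{0,1\}}}~(S_i\times\{1\})$. By Remark \ref{rem:botinsubcoup} all four of these relations pass to $\mu$. Combined with the hypothesis $(S_1\times\{0,1\})~\bot_\mu~(S_2\times\{0,1\})$ and \eqref{eq:exprel} for the measure-preserving projection $p_{S\times\{0\}}:(\Omega^{S\times\{0,1\}},\mu)\to(\Omega^S,\lambda')$, one deduces via Lemma \ref{lem:botsuff} and the tower property that $(S_1)~\bot_{\lambda'}~(S_2)$ with meet $\mathcal{A}^S_{S_3}$.

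To verify \eqref{eq:relsquare2}, by Lemma \ref{lem:pisysapprox} it suffices to treat functions $f=(f_1\co\pi_1)(f_2\co\pi_2)$ and $g=(g_1\co\pi_1)(g_2\co\pi_2)$ with $f_i,g_i\in L^\infty(\mathcal{A}^{S_i})$. On the left-hand side, setting $F_i=(f_i\co p_{S_i\times\{0\}})\overline{(g_i\co p_{S_i\times\{1\}})}$ (which is $\mathcal{A}^{S\times\{0,1\}}_{S_i\times\{0,1\}}$-measurable), the hypothesis together with Proposition \ref{prop:condindep} gives
\[
\int (f\co p_{S\times\{0\}})\overline{(g\co p_{S\times\{1\}})}\,d\mu \;=\; \int F_1 F_2\,d\mu \;=\; \int \mathbb{E}(F_1|\mathcal{A}^{S\times\{0,1\}}_{S_3\times\{0,1\}})\,\mathbb{E}(F_2|\mathcal{A}^{S\times\{0,1\}}_{S_3\times\{0,1\}})\,d\mu_{S_3\times\{0,1\}}.
\]
Each inner conditional expectation is computed inside the subcoupling $\mu_{S_i\times\{0,1\}}$ via \eqref{eq:exprel}, and the idempotence of $\mu_{S_i\times\{0,1\}}$ (Lemma \ref{lem:IdempCondProp}(ii)--(iii)) together with the relations $(S_3\times\{0,1\})~\bot_{\mu_{S_i\times\{0,1\}}}~(S_i\times\{j\})$ from the previous paragraph allow one to express it as a function on $\Omega^{S_3\times\{0,1\}}$ built from $\mathbb{E}_{\lambda'_i}(f_i|\mathcal{B}_i)$ and $\mathbb{E}_{\lambda'_i}(g_i|\mathcal{B}_i)$. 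On the right-hand side, the independence $(S_1)~\bot_{\lambda'}~(S_2)$ with meet $\mathcal{A}^S_{S_3}$, together with the modular law (Lemma \ref{lem:modlaw}), lets one factor $\mathbb{E}_{\lambda'}(f|\mathcal{B})$ across the $S_1,S_2$ decomposition and again reduce to an integral over $\Omega^{S_3}$ against $\mu_{S_3\times\{0,1\}}$, using the idempotence of $\mu_{S_3\times\{0,1\}}$ to match the two sides.

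The main obstacle lies squarely in this last matching step: one needs to identify the two functions on $\Omega^{S_3\times\{0,1\}}$ arising from the LHS and RHS reductions. The difficulty is that the meet $\mathcal{G}_i=q_{i,0}^{-1}\mathcal{B}_i$ governing the idempotence of $\mu_{S_i\times\{0,1\}}$ is not in general contained in $\mathcal{A}^{S_i\times\{0,1\}}_{S_3\times\{0,1\}}$, so interchanging these conditionings requires a careful combination of Lemma \ref{lem:IdempCondProp}(i)--(iii), the modular law, and the relations obtained in the second paragraph, applied successively to break the expectations along the filtration \smash{$\mathcal{A}^{S_i\times\{0,1\}}_{S_3\times\{0,1\}}\subseteq \mathcal{A}^{S_i\times\{0,1\}}_{S_3\times\{0,1\}}\vee\mathcal{G}_i$}. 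Once the match is established, Lemma \ref{lem:IdempCondProp}(iv) delivers the idempotence of $\mu$.
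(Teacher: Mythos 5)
There is a genuine gap here: what you have written is a plan whose decisive step is explicitly left open, and that step is precisely where the content of the lemma lies. To verify \eqref{eq:relsquare2} for your candidate $\mc{B}=\pi_1^{-1}(\mc{B}_1)\vee\pi_2^{-1}(\mc{B}_2)$ you must compute $\mb{E}_{\mu_{S_i\times\{0,1\}}}\big(F_i\,\big|\,\mc{A}_{S_3\times\{0,1\}}\big)$ for $F_i=(f_i\co p_{S_i\times\{0\}})\overline{(g_i\co p_{S_i\times\{1\}})}$, a product of an $\mc{A}_{S_i\times\{0\}}$-measurable and an $\mc{A}_{S_i\times\{1\}}$-measurable factor. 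These two $\sigma$-algebras are only known to be conditionally independent relative to $\mc{G}_i=p_{S_i\times\{0\}}^{-1}(\mc{B}_i)$, which (as you note) is not contained in $\mc{A}_{S_3\times\{0,1\}}$, so the expectation does not factor and no formula in terms of $\mb{E}(f_i|\mc{B}_i)$, $\mb{E}(g_i|\mc{B}_i)$ is available without a substantive new argument; "a careful combination of Lemma \ref{lem:IdempCondProp}, the modular law, and the relations from the second paragraph" is not a proof of it. A second, independent gap: your assertion that $S_1~\bot_{\lambda'}~S_2$ holds in $\mu_{S\times\{0\}}$ with meet $\mc{A}^S_{S_3}$ does not follow from the cited tools. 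The hypothesis concerns the index sets $S_1\times\{0,1\}$ and $S_2\times\{0,1\}$, whose union is all of $S\times\{0,1\}$; conditional independence of index sets does not pass to proper sub-index-sets (the meet $\sigma$-algebra changes), and Remark \ref{rem:botinsubcoup} only permits enlarging the ambient index set, not shrinking the two sets being compared. An attempt to derive $(S_1\times\{0\})~\bot_\mu~(S_2\times\{0\})$ from Lemma \ref{lem:3set1} runs into the containment hypothesis $A\supseteq B\cap C$ failing for the natural choices.

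The paper avoids identifying any meet $\sigma$-algebra for $\mu$ altogether. It forms the triple coupling $\mu'\in\coup(\varOmega,S\times\{0,1,2\})$ of Definition \ref{def:idemp}, shows that every set in the lattice generated by $\{S_i\times T_j\}$ (with $T_1=\{0,1\}$, $T_2=\{1,2\}$, $T_3=\{1\}$) has the \textsc{cis} property -- the only nontrivial pairs being handled by Lemma \ref{lem:idemp4} -- and then applies Proposition \ref{prop:latticeind} to propagate conditional independence up to $S\times\{0,1,2\}$. Lemma \ref{lem:idemp3} gives $S_3$-symmetry of each $\mu'_{S_i\times\{0,1,2\}}$, and the uniqueness of the conditionally independent coupling (formula \eqref{eq:cid}) transfers that symmetry to $\mu'$ itself, yielding $\mu'_{S\times\{0,2\}}\cong\mu'_{S\times\{0,1\}}=\mu$, which is the idempotence. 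If you want to salvage your route, you would essentially have to reprove these lattice-theoretic facts inside the "matching step", so I would recommend adopting the symmetry argument instead.
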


\begin{proof} The result is clear if one of $S_1,S_2$ is $S$,  so we suppose they are both proper subsets, and so none of them includes the other, since $S_1 \cup S_2 =S$. Let $\mu'\in\coup(\varOmega,S\times\{0,1,2\})$ denote the coupling constructed in Definition \ref{def:idemp}. 
Let $T_1=\{0,1\}$, $T_2=\{1,2\}$, $T_3=\{1\}$. Let $\mc{F}=\{S_i\times T_j : 1\leq i,j\leq 3\}$. It is clear that $\mc{F}$ is closed under intersections. Let $\varLambda$ be the set lattice in $S\times\{0,1,2\}$ generated by $\mc{F}$. We claim that every set in $\mc{F}$ has the \textsc{cis} property in $\varLambda$. To see this, note first that for all sets in $\mc{F}$ other than the four sets $S_i\times T_j$, $1\leq i,j\leq 2$, the \textsc{cis} property holds just by inclusion. Furthermore, for each of those four sets, the relation $\bot_{\mu'}$ is equivalent to $\bot_\mu$ (since for such a set $\mu'$ restricts to $\mu$ by construction). For all pairs of such sets except one pair, the relation $\bot_\mu$ holds because of inclusion, and for the remaining pair the relation holds by Lemma \ref{lem:idemp4} (e.g.\ for $S_1\times \{0,1\}$ the proper subsets in $\varLambda$ are $S_1\times \{1\}$, $S_3\times \{0,1\}$, $S_3\times \{1\}$, and $(S_1\times \{1\}) \cup (S_3\times \{0,1\})$, and the pair in question is $S_1\times \{1\}$, $S_3\times \{0,1\}$); this proves our claim. By our assumptions and the symmetry of $\mu'$, we have $S_1\times T_1~\bot_{\mu'}~S_2\times T_1$ and $S_1\times T_2~\bot_{\mu'}~S_2\times T_2$. Proposition \ref{prop:latticeind} gives that $S\times T_1$ and $S\times T_2$ both have the \textsc{cis} property. By construction of $\mu'$ we have $S\times T_1~\bot_{\mu'}~S\times T_2$. Hence $S\times\{0,1,2\}$ has the \textsc{cis} property, by Proposition \ref{prop:latticeind}. It follows that $S_1\times T_1~\bot_{\mu'}~S_1\times T_2$, $S_2\times T_1~\bot_{\mu'}~S_2\times T_2$ and $S\times T_1~\bot_{\mu'}~S\times T_2$. The idempotence of $\mu'_{S_i\times T_1}$ implies by Lemma \ref{lem:idemp3} that $\mu'_{S_i\times\{0,1,2\}}$ is invariant under the permutations of $\{0,1,2\}$ for $i=1,2$. Since $S_1\times\{0,1,2\} ~\bot ~S_2\times\{0,1,2\}$ and $S=S_1\cup S_2$, we have that $\mu'$ is the (unique) conditionally independent coupling of $\mu'_{S_1\times\{0,1,2\}}$ and $\mu'_{S_2\times\{0,1,2\}}$. By \eqref{eq:cid} $\mu'$ is also invariant under these permutations. Therefore $\mu'_{S\times\{0,2\}}\cong\mu'_{S\times\{0,1\}}$, which completes the proof that $\mu'$ is idempotent. 
\end{proof}
\noindent The following result will be used in the next section to establish the nilspace composition axiom for non-injective morphisms on a cubic coupling; see the proof of Lemma \ref{lem:axioms1-2}.
\begin{lemma}\label{lem:idemp2} Let $\varOmega=(\Omega,\mc{A},\lambda)$ be a probability space such that $\Omega$ is a compact space and $\lambda$ is a Borel measure with $\Supp(\lambda)=\Omega$. Let $\mu\in\coup(\varOmega,\{a,b\})$ be an idempotent coupling. Then the support of $\mu$ on $\Omega\times \Omega$ includes the diagonal $\{(x,x) :x\in \Omega\}$.
\end{lemma}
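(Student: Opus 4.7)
The plan is to reduce the claim to a quadratic positivity inequality. Pick any point $x\in\Omega$. To show $(x,x)\in\Supp(\mu)$, it suffices to verify that $\mu(W)>0$ for every basic open neighborhood $W=U\times V$ of $(x,x)$ in $\Omega\times\Omega$. Setting $O=U\cap V$, which is an open neighborhood of $x$, we have $(x,x)\in O\times O\subseteq U\times V$, so it is enough to prove $\mu(O\times O)>0$ for every open neighborhood $O$ of $x$.

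For this the key input is the positivity bound on the bilinear form supplied by idempotence, namely Corollary \ref{cor:idempos}. Applied with $f=1_O$, it gives
\[
\mu(O\times O) \;=\; \int_{\Omega^2} 1_O\co p_a\;\overline{1_O\co p_b}\,\ud\mu \;=\; \langle 1_O,1_O\rangle_\mu \;\geq\; \Big|\int_\Omega 1_O\,\ud\lambda\Big|^2 \;=\; \lambda(O)^2.
\]
Since $O$ is a nonempty open set and $\Supp(\lambda)=\Omega$, we have $\lambda(O)>0$, whence $\mu(O\times O)>0$, as required.

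Two remarks: first, one does not actually need the full strength of Corollary \ref{cor:idempos}; the same inequality follows directly from Proposition \ref{prop:idemp} via Cauchy--Schwarz, since $\mu(O\times O)=\int_\Omega \mb{E}(1_O|\mc{B})^2\,\ud\lambda \geq \big(\int_\Omega \mb{E}(1_O|\mc{B})\,\ud\lambda\big)^2=\lambda(O)^2$. Second, there is essentially no obstacle here: the two hypotheses (idempotence and full topological support of $\lambda$) combine immediately through this positivity estimate, and the only minor care needed is in reducing to a diagonal neighborhood $O\times O$ rather than a general basic open $U\times V$ containing $(x,x)$.
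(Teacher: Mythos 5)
Your proof is correct and follows essentially the same route as the paper's: both reduce an arbitrary neighbourhood of $(x,x)$ to one of the form $O\times O$ and then apply Corollary \ref{cor:idempos} to $f=1_O$ to get $\mu(O\times O)=\langle 1_O,1_O\rangle_\mu\geq\lambda(O)^2>0$. Your remark that the inequality can be obtained directly from Proposition \ref{prop:idemp} via Cauchy--Schwarz is also accurate, but it is the same underlying computation.
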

\begin{proof} Let $A\subseteq \Omega$ be an arbitrary non-empty open set. Then $\mu(A\times A)=\langle 1_A,1_A\rangle_{\mu}$. By Corollary \ref{cor:idempos}, this is at least $\lambda(A)^2$. Since $\lambda$ is supported on $\Omega$, we have $\lambda(A)>0$, so $\mu(A\times A)>0$. Let $(x,x)\in \Omega^2$ and let $U$ be an arbitrary open set containing $(x,x)$. By definition of the product topology on $\Omega^2$,  there is an open set $A$ such that $(x,x)\in A\times A\subseteq U$. From the above argument it then follows that $\mu(U)>0$. Since this holds for $U$ an arbitrary such open set, we have that $(x,x)$ is in the support of $\mu$.
\end{proof}

\section{Cubic couplings}\label{sec:cc}
\noindent In this section we begin to study the main objects in this paper, namely cubic couplings.
\begin{defn}\label{def:cc}
A \emph{cubic coupling} on a probability space $\varOmega=(\Omega,\mc{A},\lambda)$ is a  sequence $\big(\mu^{\db{n}}\in \coup(\varOmega,\db{n})\big)_{n\geq 0}$ satisfying the following axioms for all $m,n\geq 0$:
\begin{enumerate}[leftmargin=0.7cm]
\item[1.] (Consistency)\; If $\phi:\db{m}\to\db{n}$ is an injective cube morphism then $\mu^{\db{n}}_\phi=\mu^{\db{m}}$.
\item[2.] (Ergodicity)\; The measure $\mu^{\db{1}}$ is the independent coupling $\lambda \times \lambda$.
\item[3.] (Conditional independence)\; We have $(\{0\}\times \db{n-1})~ \bot ~ (\db{n-1}\times \{0\})$ in $\mu^{\db{n}}$.
\end{enumerate}
\end{defn}
\noindent The notation $\mu^{\db{n}}$ is inspired by the notation $\mu^{[n]}$ used by Host and Kra in \cite[\S 3]{HK}.\footnote{Host and Kra use the notation $[n]$ instead of $\db{n}$, but we already use the former for the set $\{1,2,\ldots,n\}$.} In particular, just as in \cite{HK}, the superscripts $\db{n}$ in our notation are used only to label these measures, and do not have the meaning of the power notation for maps that was defined in the introduction (just before Theorem \ref{thm:MeasInvThmGenIntro}).
\begin{remark}\label{rem:altax3}
Applying axiom 1 with automorphisms\footnote{These are the bijective morphisms from $\db{n}$ to itself; see \cite[Definition 1.1.1]{Cand:Notes1}.} $\phi\in \aut(\db{n})$, and combining this with axiom 3, we deduce that axiom 3 can be stated equivalently as follows: for every pair of $(n-1)$-faces $F_0$, $F_1$ in $\db{n}$ that are \emph{adjacent} (i.e.\ with $F_0\cap F_1\neq \emptyset$), we have $F_0\,\bot_{\mu^{\db{n}}}\,F_1$. In particular, if axiom 1 holds for every $\phi\in \aut(\db{n})$, then to verify axiom 3 it suffices to check that $F_0~\bot_{\mu^{\db{n}}}~F_1$ holds for \emph{some} such pair of faces in $\db{n}$, for each $n$.
\end{remark}
\noindent Note that we must have $\mu^{\db{0}}=\lambda$ (indeed, axiom 1 implies that $\mu^{\db{0}}=\mu^{\db{n}}_{p_{0^n}}$, and since $\mu^{\db{n}}$ is a coupling of $\lambda$ we have $\mu^{\db{n}}_{p_{0^n}}=\lambda$). We can also define a cubic coupling \emph{on a measurable space} $(\Omega,\mc{A})$, as a sequence of measures as above, but without prescribing $\lambda$ as $\mu^{\db{0}}$.

Given a face $F_0=\{v\in \db{n}: v\sbr{i}=0 \}$ of codimension 1 in $\db{n}$, and letting $F_1$ denote the opposite face $F_1=\{v:v\sbr{i}=1\}$, from now on we denote by $\beta$ the bijection that maps an element $v\in F_0$ to $F_1$ by switching $v\sbr{i}$ to 1. Recall that a discrete cube morphism $\phi:\db{m}\to \db{n}$ is a \emph{face map} if its image is an $m$-face in $\db{n}$ (\cite[Definition 1.1.4]{Cand:Notes1}).

We shall establish that Definition \ref{def:cc} is equivalent to the following one, in which the consistency axiom is weakened and conditional independence is replaced by an axiom involving idempotent couplings.
\begin{defn}\label{def:cc-idemp}
A \emph{cubic coupling} on a probability space $\varOmega=(\Omega,\mc{A},\lambda)$ is a sequence $\big(\mu^{\db{n}}\in \coup(\varOmega,\db{n})\big)_{n\geq 0}$ satisfying the following axioms for all $m,n\geq 0$:
\begin{enumerate}[leftmargin=0.7cm]
\item[1.] (Face consistency)\; If $\phi:\db{m}\to\db{n}$ is a face map then $\mu^{\db{n}}_\phi=\mu^{\db{m}}$.
\item[2.] (Ergodicity)\; The measure $\mu^{\db{1}}$ is the independent coupling $\lambda \times \lambda$.
\item[3.] (Idempotence)\; For every pair of opposite $(n-1)$-faces $F_0$, $F_1$ in $\db{n}$, the coupling $\mu^{\db{n}}\in \coup(\varOmega^{\db{n-1}}, \{F_0,F_1\})$ is idempotent along $\beta$.
\end{enumerate}
\end{defn}
\noindent Note that in the idempotence axiom, the possibility to view $\mu^{\db{n}}$ as a self-coupling of $(\Omega^{\db{n-1}},\mu^{\db{n-1}})$ with index set $\{F_0,F_1\}$ follows from the face consistency axiom, since the latter axiom implies that the images of $\mu^{\db{n}}$ under the projections $p_{F_0}$, $p_{F_1}$ are both isomorphic (as couplings) to $\mu^{\db{n-1}}$. This alternative definition of cubic couplings is useful for applications in ergodic theory. In particular, this alternative axiom system is simpler to verify for the measures constructed by Host and Kra in \cite{HK}; this observation leads to our applications in Section \ref{sec:ergapps}.

To prove the equivalence of Definitions \ref{def:cc} and \ref{def:cc-idemp}, we begin with the following result.
\begin{lemma}\label{lem:deduc1}
Suppose that $\big(\varOmega,(\mu^{\db{n}})_{n\geq 0}\big)$ satisfies the three axioms in Definition \ref{def:cc}. Then it satisfies the three axioms in Definition \ref{def:cc-idemp}.
\end{lemma}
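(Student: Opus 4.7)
Axioms 1 and 2 of Definition \ref{def:cc-idemp} follow immediately from axioms 1 and 2 of Definition \ref{def:cc}: every face map is an injective cube morphism, and the ergodicity axiom is literally identical in both formulations. The substantive task is to deduce the idempotence axiom (axiom 3 of Definition \ref{def:cc-idemp}) from the consistency, ergodicity, and conditional-independence axioms of Definition \ref{def:cc}.

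Fix opposite $(n-1)$-faces $F_0,F_1$ of $\db{n}$ with bijection $\beta\colon F_0\to F_1$. Using axiom 1 applied to cube automorphisms I may assume $F_0=\db{n-1}\times\{0\}$ and $F_1=\db{n-1}\times\{1\}$, with $\beta$ flipping the last coordinate. My strategy is to realize the conditionally independent coupling $\nu$ appearing in Definition \ref{def:idemp} as a restriction of $\mu^{\db{n+1}}$. Writing $\db{n+1}=\db{n-1}\times\db{2}$, introduce the four $(n-1)$-faces $A=\db{n-1}\times\{(0,0)\}$, $A'=\db{n-1}\times\{(1,0)\}$, $B=\db{n-1}\times\{(0,1)\}$, $C=\db{n-1}\times\{(1,1)\}$. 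The injective morphisms $\iota_0(v)=(v,0)$ and $\iota_1(v)=(v_1,\ldots,v_{n-1},1-v_n,v_n)$ map $\db{n}$ onto $A\cup A'$ and $B\cup A'$ respectively, each restricting to a bijection $F_1\to A'$. By axiom 1 of Definition \ref{def:cc}, $\mu^{\db{n+1}}$ restricted to any of $A\cup A'$, $A\cup B$, or $A'\cup B$ is isomorphic to $\mu^{\db{n}}$ (the first two via face inclusions, the third via $\iota_1$).

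Set $\nu:=\mu^{\db{n+1}}|_{A\cup A'\cup B}$. The aim is to identify $\nu$ with the conditionally independent coupling along $A'$ of $\mu^{\db{n+1}}|_{A\cup A'}$ and $\mu^{\db{n+1}}|_{A'\cup B}$. Once that identification is in hand, the third marginal $\nu_{A\cup B}=\mu^{\db{n+1}}|_{A\cup B}\cong\mu^{\db{n}}$ (again by axiom 1) is exactly the isomorphism $\nu_{\{F_0',F_1\}}\cong_{\sigma'}\mu^{\db{n}}$ demanded by Definition \ref{def:idemp}, once transported along the relabelings $F_0\leftrightarrow A$, $F_0'\leftrightarrow B$, $F_1\leftrightarrow A'$. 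The marginals of $\nu$ on $A\cup A'$ and on $A'\cup B$ match by construction, so uniqueness in Lemma \ref{lem:condindcoup} reduces the identification to proving the single conditional independence
\[
\mc{A}^{\db{n+1}}_{A\cup A'}\ \upmod_{\mu^{\db{n+1}}}\ \mc{A}^{\db{n+1}}_{A'\cup B}.
\]

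The hard part is this last conditional independence, since $A'\cup B$ is not an $n$-face of $\db{n+1}$ and so axiom 3 does not apply to the pair $(A\cup A',A'\cup B)$ directly. What axiom 3 (with Remark \ref{rem:altax3}) does provide are the adjacency relations $(A\cup A')\bot(A\cup B)$ (meeting at $A$) and $(A\cup A')\bot(A'\cup C)$ (meeting at $A'$). After a rank-one approximation via Lemma \ref{lem:pisysapprox}, the required $\upmod$ relation reduces to showing that in $\mu^{\db{n+1}}$, conditional on the $A'$ data the $\sigma$-algebras $\mc{A}_A$ and $\mc{A}_B$ are independent. I plan to extract this from the two axiom-3 instances using Lemma \ref{lem:3set1}, leveraging the coordinate-swap automorphism $\psi$ of $\db{n+1}$ (swapping coordinates $n$ and $n+1$), which is an injective morphism and so leaves $\mu^{\db{n+1}}$ invariant by axiom 1 while interchanging $A'\leftrightarrow B$ and fixing $A$ and $C$; if necessary I would pass to the disintegration of $\mu^{\db{n+1}}$ over the $A'$ fibre and invoke Lemma \ref{lem:fibrebotindep} to transfer conditional independences to fibres. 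This propagation of $\bot$ from the two face pairs supplied by axiom 3 to the non-face pair $(A\cup A',A'\cup B)$ is where I expect the bulk of the technical work to lie.
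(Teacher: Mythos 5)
Your high-level plan — realize the coupling $\nu$ of Definition \ref{def:idemp} as a marginal of $\mu^{\db{n+1}}$ and invoke the uniqueness in Lemma \ref{lem:condindcoup} — is the same as the paper's, but you have made the wrong choice of which $(n-1)$-face plays the role of the glued index $b=F_1$, and this choice is what creates the difficulty you flag at the end. You glue along $A'$, so that the two copies $F_0,F_0'$ sit at $A$ and $B$; the price is that you must prove $\mc{A}^{\db{n+1}}_{A\cup A'}\upmod \mc{A}^{\db{n+1}}_{A'\cup B}$, where $A'\cup B$ is the antidiagonal $\{v\sbr{n}+v\sbr{n+1}=1\}$ and hence not a face. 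This step is a genuine gap: it is not an instance of axiom 3 (no automorphism of $\db{n+1}$ can carry the antidiagonal to a face), and the tools you propose cannot reach it. Every way of feeding your two axiom-3 instances $(A\cup A')\bot(A\cup B)$ and $(A\cup A')\bot(A'\cup C)$ into Lemma \ref{lem:3set1} so as to output $(A\cup A')\bot(A'\cup B)$ forces a hypothesis of the form ``two \emph{disjoint} $(n-1)$-faces are $\bot$'' (e.g.\ $A'\bot B$ or $A'\bot A$), and that is false in general: already for the Haar coupling on $\cu^3(\ab)$ the difference along the free coordinate of $A'$ equals the difference along the free coordinate of $B$, so $\mc{A}_{A'}\wedge\mc{A}_B$ is strictly larger than $\mc{A}_{A'\cap B}=\mc{A}_\emptyset$. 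In fact your target conditional independence is essentially equivalent to the idempotence you are trying to prove — it is what one gets \emph{after} establishing idempotence by applying the $S_3$-symmetry of Lemma \ref{lem:idemp3} to the associated three-index coupling — so your reduction relocates the content of the lemma rather than resolving it.

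The fix is to glue at the corner instead: take two \emph{adjacent} $n$-faces $V_0,V_1$ of $\db{n+1}$ and identify the glued index $F_1$ with $V_0\cap V_1$, and $F_0,F_0'$ with $V_0\setminus V_1$ and $V_1\setminus V_0$. Then the conditional independence needed for Lemma \ref{lem:condindcoup} is exactly $V_0\bot V_1$, which is verbatim axiom 3 (with Remark \ref{rem:altax3}), while the marginal identifications of $\mu^{\db{n+1}}_{V_i}$ with $\mu^{\db{n}}$ come from face consistency. The antidiagonal then appears only at the very last step, where one needs $\nu_{\{F_0,F_0'\}}=\mu^{\db{n+1}}_{(V_0\setminus V_1)\cup(V_1\setminus V_0)}\cong\mu^{\db{n}}$, and this is supplied by the consistency axiom applied to the injective (non-face) morphism $v\mapsto(v\sbr{1},\ldots,v\sbr{n-1},v\sbr{n},1-v\sbr{n})$ — i.e.\ precisely the extra strength of axiom 1 of Definition \ref{def:cc} over face consistency, which your arrangement spends on the trivial marginal $\mu^{\db{n+1}}_{A\cup B}$ instead of where it is needed. (Separately, a small slip: your $\iota_1$ as written sends $F_0$, not $F_1$, to $A'$; and the subcoupling relevant to idempotence under your labelling is $\nu_{\{F_0,F_0'\}}=\nu_{A\cup B}$, which does not match the $\nu_{\{F_0',F_1\}}$ you quote.)
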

\noindent As we shall see, the implication stated in this lemma has a somewhat simpler proof than the converse. In this sense, the axiom system in Definition \ref{def:cc-idemp} may be viewed as more basic than the one in Definition \ref{def:cc}. From the viewpoint of the notions involved, however, the conditional independence axiom can be deemed simpler than the idempotence axiom.
\begin{proof}
Clearly $\big(\varOmega,(\mu^{\db{n}})_{n\geq 0}\big)$ satisfies the face consistency and ergodicity axioms. Concerning the idempotence axiom, note that the coupling $\nu$ on $\{F_0,F_1,F_0'\}$ from Definition \ref{def:idemp} can be realized as a subcoupling of $\mu^{\db{n+1}}$, by  taking two adjacent $n$-faces $V_0,V_1$ in $\db{n+1}$ and identifying $V_0\cap V_1$ with $F_1$, identifying $V_0\setminus F_1$ with $F_0$, and $V_1\setminus F_1$ with $F_0'$. The subcoupling $\mu^{\db{n+1}}_{F_0 \cup F_1 \cup F_0'}$ can then indeed be viewed as the coupling $\nu$ from the previous sentence, by the consistency axiom and the fact that $V_0\,\bot\, V_1$ (by the conditional independence axiom). The idempotence now follows,  since the subcoupling of $\nu$ on $\{F_0, F_0'\}$ equals $\mu^{\db{n}}$, by the consistency axiom applied with a bijective morphism $\phi:\db{n}\to F_0\cup F_0'$.
\end{proof}
\noindent We now prove the converse of Lemma \ref{lem:deduc1}.
\begin{lemma}\label{lem:deduc2}
Suppose that $\big(\varOmega,(\mu^{\db{n}})_{n\geq 0}\big)$ satisfies the three axioms in Definition \ref{def:cc-idemp}. Then it satisfies the three axioms in Definition \ref{def:cc}.
\end{lemma}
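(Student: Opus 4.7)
The plan is to verify axioms 1 and 3 of Definition \ref{def:cc} (axiom 2, ergodicity, is identical in both definitions and holds immediately).

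For axiom 3 (conditional independence of adjacent $(n-1)$-faces) I apply Lemma \ref{lem:idemp4} directly. Identify $\db{n}=S\times\{0,1\}$ with $S=\db{n-1}$ by splitting off the last coordinate, and take $T=\{0\}\times\db{n-2}\subset S$ to be the $(n-2)$-face of $S$ where the first coordinate vanishes; then $T\times\{0,1\}$ is the face $\{0\}\times\db{n-1}$ of $\db{n}$, while $S\times\{0\}=\db{n-1}\times\{0\}$. Axiom 3' of Definition \ref{def:cc-idemp} gives that $\mu^{\db{n}}$ is idempotent as a coupling in $\coup(\varOmega^S,\{0,1\})$; face consistency (axiom 1') identifies $\mu^{\db{n}}_{T\times\{0,1\}}$ with $\mu^{\db{n-1}}$ via the face map $(u_1,\ldots,u_{n-1})\mapsto(0,u_1,\ldots,u_{n-1})$, and axiom 3' applied at level $n-1$ then supplies the second idempotence required by Lemma \ref{lem:idemp4}. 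The conclusion is exactly $(\{0\}\times\db{n-1})\,\bot_{\mu^{\db{n}}}\,(\db{n-1}\times\{0\})$.

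For axiom 1 (consistency for all injective morphisms) I proceed by induction on $n$. Every injective morphism $\phi:\db{m}\to\db{n}$ has image either an $m$-face of $\db{n}$ (the non-degenerate case) or a proper $m$-dimensional subcube---such as the diagonal embedding $\db{1}\to\db{2}$, $v\mapsto(v,v)$---in which several output coordinates collapse onto the same input coordinate (the degenerate case). In the non-degenerate case, factor $\phi=\iota\co\alpha$ with $\iota:\db{m}\to\db{n}$ a standard face map (handled by axiom 1') and $\alpha\in\aut(\db{m})$; the required $\aut(\db{m})$-invariance of $\mu^{\db{m}}$ is obtained by combining invariance under coordinate flips (which follows from idempotence via Proposition \ref{prop:idemp}, since a relative square is manifestly symmetric under swapping its two copies) with invariance under coordinate permutations, obtained by applying axiom 1' to different face maps $\db{m}\to\db{m+1}$ having the same image $m$-face but differing by the given permutation. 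For the degenerate case, I use the now-established axiom 3 together with Proposition \ref{prop:latticeind} to upgrade pairwise adjacent-face independence to the full \textsc{cis} property on the lattice of faces of $\db{n}$, and exploit this, together with face consistency and $L^2$-approximation by rank-one systems (Lemma \ref{lem:pisysapprox}), to reduce each integral $\xi(\mu^{\db{n}}_\phi,F)$ to integrals determined by lower-dimensional cubic couplings, where the inductive hypothesis closes the argument.

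The main obstacle will be the degenerate case, because there the pullback $\mu^{\db{n}}_\phi$ is not a subcoupling along any face and must be extracted via repeated conditioning onto face sub-$\sigma$-algebras, using the strong independence structure provided by the \textsc{cis} framework of \S\ref{subsec:condlattice}.
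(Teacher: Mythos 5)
Your handling of ergodicity and of the conditional independence axiom is correct and coincides with the paper's: the paper also derives axiom 3 from a single application of Lemma \ref{lem:idemp4} (it takes $S=\db{n}$, $T=\db{n-1}\times\{0\}$ inside $\mu^{\db{n+1}}$, whereas you take $S=\db{n-1}$, $T=\{0\}\times\db{n-2}$ inside $\mu^{\db{n}}$; both are valid instances, and both need axiom 3' at two consecutive levels exactly as you say). In the non-degenerate part of the consistency axiom you do more work than necessary: an automorphism of $\db{m}$ is a bijective morphism whose image is the unique $m$-face of $\db{m}$, hence is itself a face map, so axiom 1' already yields $\aut(\db{m})$-invariance with no appeal to idempotence or to embeddings into $\db{m+1}$. (Your detour is not wrong, just redundant.)

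The genuine gap is in the degenerate case, which is precisely where the idempotence axiom must be used and where your sketch never invokes it. Two concrete problems. First, the ``full \textsc{cis} property on the lattice of faces of $\db{n}$'' is false as stated: for two disjoint parallel faces $F_0,F_1$ the meet $\mc{A}^{\db{n}}_{F_0}\wedge\mc{A}^{\db{n}}_{F_1}$ is the (generally non-trivial) $\sigma$-algebra of the relative square, so $F_0\,\bot\,F_1$ fails even though $F_0\cap F_1=\emptyset$ (compare the second half of Example \ref{ex:U2coup}); the \textsc{cis} property holds only on the lattice of \emph{simplicial} sets, and establishing even that is the content of Theorem \ref{thm:sip}, which requires Lemmas \ref{lem:siplem} and \ref{lem:idemp5} and is not a one-line application of Proposition \ref{prop:latticeind}. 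Second, and more fundamentally, for the coordinate-duplicating morphism $\phi:(v\sbr{1},\dots,v\sbr{n})\mapsto(v\sbr{1},\dots,v\sbr{n},v\sbr{n})$ the conditioning-and-rank-one-approximation scheme you describe reduces $\xi(\mu^{\db{n+1}}_\phi,F)$ to an expression of the form $\int_{\Omega^{\db{n}}}\mb{E}\big(g_0\co p_a|\mc{A}^{\{a,b\}}_b\big)\,\overline{\mb{E}\big(g_1\co p_a|\mc{A}^{\{a,b\}}_b\big)}\,\ud\mu^{\db{n}}$ for opposite $(n-1)$-faces $a,b$ of $\db{n}$, and identifying this with $\xi(\mu^{\db{n}},G)$ is exactly the idempotence of $\mu^{\db{n}}$ along those faces (Lemma \ref{lem:idemp0}, equivalently Proposition \ref{prop:idemp}); it is \emph{not} supplied by the inductive hypothesis on lower-dimensional couplings. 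The paper's proof makes this the pivot of the whole argument: using $F_1\,\bot\,F_2$ it identifies the restriction of $\mu^{\db{n+1}}$ to $a\sqcup a'\sqcup b$ with the conditionally independent coupling of Definition \ref{def:idemp}, and axiom 3' then says verbatim that the resulting subcoupling on the two unglued faces is isomorphic to $\mu^{\db{n}}$. You need to insert this step (and then compose duplications with face maps and automorphisms, as the paper does); without it the degenerate case does not close.
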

\begin{proof}
We begin by proving that the conditional independence axiom holds. Let $F_1$ be the $n$-face $\db{n}\times \{0\}$ in $\db{n+1}$, and let $F_2$ be the $n$-face $\db{n-1}\times\{0\}\times \{0,1\}$ in $\db{n+1}$. By Lemma \ref{lem:idemp4} applied with $S=\db{n}$, $T=\db{n-1}\times \{0\}\subset S$, we have $F_1\bot_{\mu^{\db{n+1}}} F_2$. (To apply this lemma we use the idempotence axiom for $\mu^{\db{n+1}}$ and also for $\mu^{\db{n+1}}_{F_2}\cong\mu^{\db{n}}$.) The conditional independence axiom follows (using Remark \ref{rem:altax3}). 
 
To prove the consistency axiom, let us first prove the following special case. Let $\phi:\db{n}\to\db{n+1}$ be the morphism $(v\sbr{1},v\sbr{2},\ldots,v\sbr{n})\mapsto (v\sbr{1},v\sbr{2},\ldots,v\sbr{n},v\sbr{n})$. Let $F_1$, $F_2$  be the $n$-faces defined in the previous paragraph, so in particular  $F_1\bot_{\mu^{\db{n+1}}} F_2$. Let $a=\db{n-1}\times \{(1,0)\}$, $b=\db{n-1}\times \{(0,0)\}$, $a'=\db{n-1}\times \{(0,1)\}$. By the face consistency axiom, the couplings with index sets $a\sqcup b$ and $a'\sqcup b$ are isomorphic, and since $F_1\bot_{\mu^{\db{n+1}}} F_2$, we have that $\mu^{\db{n+1}}$ restricted to $a \sqcup a' \sqcup b$ is the conditionally independent coupling of $\mu^{\db{n+1}}_{F_1}$ and $\mu^{\db{n+1}}_{F_2}$ (as per Definition \ref{def:condindcoup}) along the bijection $F_1\to F_2$ that permutes the coordinates $v\sbr{n},v\sbr{n+1}$. But then the idempotence axiom implies that $\mu^{\db{n+1}}_{\phi}$ is isomorphic to $\mu^{\db{n}}$, as required. Combining this special case with the face consistency axiom applied with automorphisms, we deduce that the consistency axiom holds for every morphism $\phi:\db{n}\to\db{n+1}$ that duplicates a coordinate, i.e.\ of the form $\phi(v)= (v\sbr{1},\ldots,v\sbr{i-1},v\sbr{i}, v\sbr{i},v\sbr{i+1},\ldots,v\sbr{n})$ for some $i$. Then, composing these maps we deduce consistency for every morphism $\phi':\db{m}\to\db{n}$ that replicates $k_i$-times each $v_i$, with $k_1+\cdots+k_m=n$. (For instance $\phi:(v_1,v_2)\mapsto (v_1,v_1,v_2,v_2,v_2)$ is the composition $\phi_3\co\phi_2\co\phi_1$ where $\phi_1:(v_1,v_2)\mapsto (v_1,v_1,v_2)$, $\phi_2:(v_1,v_2,v_3)\mapsto (v_1,v_2,v_3,v_3)$, $\phi_3:(v_1,v_2,v_3,v_4)\mapsto (v_1,v_2,v_3,v_4,v_4)$, whence $\mu^{[5]}_\phi= (\mu^{[5]}_{\phi_3})_{\phi_2\co\phi_1}= (\mu^{[4]}_{\phi_2})_{\phi_1} = \mu^{[3]}_{\phi_1}=\mu^{[2]}$.) Every injective morphism is a map of the form $\theta\co\varphi\co \phi'$, where $\phi'$ is as above, where $\varphi$ is a face map that simply adds some coordinates equal to 0 or 1, and where $\theta$ is an automorphism. The result follows.
\end{proof}
\noindent Before we continue the study of cubic couplings in general, let us pause to look at examples of such objects. The following result establishes that compact nilspaces with the Haar measures on cube sets are examples of cubic couplings. This provides a large supply of examples, including compact abelian groups and filtered nilmanifolds. In fact, the examples provided by compact nilspaces are in some sense exhaustive. Indeed, this is the content of our main result in Section \ref{sec:structhm}, namely Theorem \ref{thm:MeasInvThmGen}.

We say that a cubic coupling $\big(\varOmega,(\mu^{\db{n}})_{n\geq 0}\big)$ is a \emph{Borel cubic coupling} if $\varOmega$ is a Borel probability space and each $\mu^{\db{n}}$ is a Borel measure on the standard Borel space $(\Omega^{\db{n}},\mc{A}^{\db{n}})$.

\begin{proposition}\label{prop:nilspace-cc}
Let $\ns$ be a $k$-step compact nilspace, and for each $n\geq 0$ let $\mu^{\db{n}}$ denote the Haar measure on the cube set $\cu^n(\ns)$. Then $\big(\ns,(\mu^{\db{n}}\big)_{n\geq 0}\big)$ is a Borel cubic coupling.
\end{proposition}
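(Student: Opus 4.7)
The plan is to verify the three axioms of Definition \ref{def:cc}. That the setup is Borel is immediate: since $\ns$ is compact Polish, $(\ns^{\db{n}},\mc{A}^{\db{n}})$ is a standard Borel space for each $n$, and $\mu^{\db{n}}=\mu_{\cu^n(\ns)}$ is a Borel probability on it, supported on the compact cube set $\cu^n(\ns)$.

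For the \emph{consistency} axiom, an injective morphism $\phi:\db{m}\to\db{n}$ induces the measurable map $p_\phi:\cu^n(\ns)\to\cu^m(\ns)$, $c\mapsto c\co\phi$, well-defined by the composition axiom of nilspaces. The measure-preservation $\mu^{\db{n}}\co p_\phi^{-1}=\mu^{\db{m}}$ is one of the characterizing properties of the Haar-measure sequence on a compact nilspace, as constructed and recorded in \cite[\S 2.2]{Cand:Notes2}. The \emph{ergodicity} axiom is immediate from the nilspace ergodicity axiom, $\cu^1(\ns)=\ns\times\ns$, so $\mu^{\db{1}}$ is the Haar measure on $\ns\times\ns$, which is $\lambda\times\lambda$.

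The crux is the \emph{conditional independence} $(\{0\}\times\db{n-1})\,\bot\,(\db{n-1}\times\{0\})$ in $\mu^{\db{n}}$. Writing $F=\{0\}\times\db{n-1}$ and $F'=\db{n-1}\times\{0\}$, the approach is to exhibit $\mu^{\db{n}}$ as the fibered product of two copies of $\mu^{\db{n-1}}$ over $\mu^{\db{n-2}}$, glued along the shared sub-face $F\cap F'$. This is exactly the Fubini-type description of $\mu_{\cu^n(\ns)}$ from \cite[\S 2.2]{Cand:Notes2}, which states that, given a cube on $F\cap F'$, the extensions to $F$ and to $F'$ are independent and each Haar-distributed on a coset of the appropriate structure group of $\ns$. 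That description yields $\mc{A}^{\db{n}}_F\upmod_{\mu^{\db{n}}}\mc{A}^{\db{n}}_{F'}$ by Fubini, while the identification $\mc{A}^{\db{n}}_F\wedge_{\mu^{\db{n}}}\mc{A}^{\db{n}}_{F'}=\mc{A}^{\db{n}}_{F\cap F'}$ follows from the fact that each extension fiber is a full coset of the structure group, so any set measurable with respect to both $\mc{A}^{\db{n}}_F$ and $\mc{A}^{\db{n}}_{F'}$ must depend only on the restriction to $F\cap F'$.

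The main obstacle is precisely this last axiom, as it rests on the detailed inductive construction of the Haar measures $\mu_{\cu^n(\ns)}$ via the structure-group fibrations of the compact $k$-step nilspace $\ns$. Once that construction is invoked, all three axioms follow as essentially formal consequences of Fubini's theorem together with the composition axiom of nilspaces.
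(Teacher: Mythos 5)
Your overall skeleton is right, and you correctly identify the conditional independence axiom as the crux, but the way you dispose of that crux contains a genuine gap. You assert that $\mu_{\cu^n(\ns)}$ admits a ``Fubini-type description'' as a fibered product of two copies of $\mu^{\db{n-1}}$ over the shared sub-face $F\cap F'$, with the two extensions independent and Haar-distributed, and you attribute this to \cite[\S 2.2]{Cand:Notes2}. No such statement is available there: what that reference provides is the disintegration of the Haar measure over the \emph{step filtration} $\ns\to\ns_{k-1}$ (fibres being cosets of $\cu^n(\mc{D}_k(\ab_k))$), not a disintegration over the restriction map to a sub-face of the cube. For $k=1$ your description is a direct computation with the parameters $x,h_1,\dots,h_n$, but for $k>1$ the set of extensions of an $(n-1)$-cube on $F\cap F'$ to a cube on $F$ is itself a $k$-fold abelian bundle, and the claim that the two extension variables are conditionally independent given the restriction to $F\cap F'$ is essentially the axiom you are trying to verify. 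The paper proves it by induction on $k$: for a $\mc{B}^{\db{n}}_{F_n}$-measurable $f$ it computes $\mb{E}(f|\mc{B}^{\db{n}}_{F_1})$ by first averaging over the top structure group $\hom_{F_1\to 0}(\db{n},\mc{D}_k(\ab_k))$, reduces to the corresponding statement on $\ns_{k-1}$, and only then concludes via Lemma \ref{lem:botsuff}. That induction (or some substitute for it) is the real content, and it is missing from your argument.

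Two smaller points of the same flavour. For consistency, measure preservation under $\q\mapsto\q\co\phi$ for an arbitrary injective morphism $\phi$ is not simply ``recorded'' in the reference; one must first verify that $\phi(\db{m})$ has the extension property in $\db{n}$ (the paper constructs an explicit retraction $\psi:\db{n}\to\db{m}$ with $\psi\co\phi=\mathrm{id}$) before \cite[Lemma 2.2.14]{Cand:Notes2} applies. For ergodicity, the identity $\mu_{\cu^1(\ns)}=\lambda\times\lambda$ is not immediate from $\cu^1(\ns)=\ns\times\ns$ as sets: the Haar measure on the cube set is built through the bundle fibration, and the paper needs a separate induction on $k$ (disintegrating over $\ns_{k-1}\times\ns_{k-1}$ with fibres that are principal homogeneous spaces of $\ab_k\oplus\ab_k$) to identify it with the product measure. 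Both gaps are repairable, but as written they, together with the main one above, mean the proof does not yet stand on its own.
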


\begin{proof}
By basic nilspace theory we have that each space $\cu^n(\ns)$ is a compact Polish space and that the Haar measure $\mu^{\db{n}}$ is a Borel probability measure on $\cu^n(\ns)$ (see \cite{CamSzeg, Cand:Notes2}, in particular \cite[Proposition 2.2.5]{Cand:Notes2}) so each space $(\cu^n(\ns),\mu^{\db{n}})$ is a Borel probability space. We now check that the three axioms from Definition \ref{def:cc} are satisfied.

To see that the consistency axiom holds, let $\phi:\db{m}\to\db{n}$ be an injective morphism (in particular $m\leq n$) and consider the set $\phi(\db{m})\subset \db{n}$ equipped with the cubespace structure induced from that on $\db{n}$. If this cubespace $\phi(\db{m})$ has the extension property in $\db{n}$ in the sense of \cite[\S 2.2.3]{Cand:Notes2}, then applying \cite[Lemma 2.2.14]{Cand:Notes2} with $P_1=\emptyset$ and $P_2=\phi(\db{m})$ we have that the restriction from $\db{n}$ to $P_2$ preserves the Haar measures, and the axiom follows. To check the extension property, let $g:\phi(\db{m})\to\ns$ be any nilspace morphism to a non-empty nilspace $\ns$. Then from the definitions we deduce that $\q:= g\co\phi$ is an $m$-cube on $\ns$, and our aim is to show that $g$ can be extended to an $n$-cube on $\ns$. Let $\psi:\db{n}\to\db{m}$ be a morphism such that for every $w\in \db{m}$ we have $\psi\co\phi(w)=w$. We can construct $\psi$ as follows: let $J\subset [n]$ be a set of cardinality $m$ with the property that for each $i\in [m]$ there is a unique $j\in J$ such that for all $w\in \db{m}$ we have $\phi(w)\sbr{j}=w\sbr{i}$ or $1-w\sbr{i}$ ($J$ exists by the injectivity of $\phi$; see \cite[(1.2)]{Cand:Notes1}); the map $w\mapsto \phi(w)|_J$ has an inverse $\phi':\{0,1\}^J\to \db{m}$, and then we can see that $\psi(v):=\phi'(v|_J)$ is a morphism of the desired form. Now note that since $\phi\co\psi$ is a morphism $\db{n}\to \phi(\db{m})$, we have $\q':=g\co \phi\co\psi\in\cu^n(\ns)$, and by construction $\q'$ agrees with $g$ on $\phi(\db{m})$.

To check the ergodicity axiom we can argue by induction on $k$. Note first that the axiom clearly holds for every 1-step compact nilspace $\ns$, since this is a (principal homogeneous space of a) compact abelian group $\ab$ (\cite[Lemma 2.1.4]{Cand:Notes2}) with Haar probability measure $\lambda$, and the Haar measure on $\ns\times\ns$ is then $\lambda\times\lambda$ as required. For $k>1$, we have by \cite[Lemma 2.1.10]{Cand:Notes2} that $\cu^1(\ns)=\ns\times \ns$ is a compact abelian bundle with base $\cu^1(\ns_{k-1})=\ns_{k-1}\times \ns_{k-1}$ and structure group $\ab_k\oplus\ab_k$. By induction the Haar measure on $\cu^1(\ns_{k-1})$ is the product measure $\mu_{k-1}\times \mu_{k-1}$ where $\mu_{k-1}$ is the Haar measure on the $(k-1)$-step nilspace factor $\ns_{k-1}$ of $\ns$. Then it follows from \cite[Lemma 2.2.4]{Cand:Notes2} that for any Borel sets $E_1, E_2\subset \ns$ we have $\mu^{\db{1}}(E_1\times E_2) = \int_{\ns_{k-1}^2}\mu_s\big((E_1\times E_2)\cap \pi^{-1}(s)\big)\ud(\mu_{k-1}\times\mu_{k-1})$, where $\mu_s$ is the Haar measure on the fibre $\pi^{-1}(s)$. By nilspace theory this fibre is $\cu^1(\mc{D}_k(\ab_k))$, which is a principal homogeneous space of the group $\ab_k\oplus\ab_k$ (the definition of $\mc{D}_k(\ab_k)$ may be recalled from \cite[(2.9)]{Cand:Notes1}). Hence $\mu_s$ is the image of the Haar measure on this group, which is $\mu_{\ab_k}\times \mu_{\ab_k}$ for $\mu_{\ab_k}$ the Haar measure on $\ab_k$. Hence $\mu^{\db{1}}(E_1\times E_2) = \prod_{i=1,2} \int_{\ns_{k-1}}\mu_{s_i}\big(E_i\cap \pi_{k-1}^{-1}(s_i)\big)\ud\mu_{k-1}(s_i)=\lambda(E_1)\lambda(E_2)$, as required.

Finally, we check the conditional independence axiom, arguing again by induction on $k$. For $k=1$ this can be checked directly as follows. We may assume as above that $\ns$ is a compact abelian group $\ab$ with Haar probability $\lambda$ and then $\mu^{\db{n}}$ is the Haar measure on the group $\cu^n(\ab)=\big\{\big(x+v\sbr{1}h_1+v\sbr{2}h_2+\cdots+v\sbr{n}h_n)\big)_{v\in \db{n}}: x,h_i\in\ab\}$. For every $i\in [n]$ and $j\in \{0,1\}$, let $V_{i,j}$ denote the face $\{v\in \db{n}:v\sbr{i}=j\}\subset \db{n}$. For any measurable function $f$ on $\ab^{\db{n}}$, we have for every $\q\in\cu^n(\ab)$ that $\mb{E}(f|\mc{B}^{\db{n}}_{V_{i,0}})(\q)=\int_{\ab} f(\q+h_i^{V_{i,1}}) \ud\lambda(h_i)$, where for $V\subset \db{n}$ and $h\in \ab$ we define $h^V(v)$ to equal $h$ if $v\in V$ and $0_{\ab}$ otherwise (see \cite[Definition 2.2.2]{Cand:Notes1}), and $\mc{B}$ denotes the Borel $\sigma$-algebra on $\ns$. In particular we have
\[
\mb{E}(\mb{E}(f|\mc{B}^{\db{n}}_{V_{1,0}})|\mc{B}^{\db{n}}_{V_{n,0}})(\q)=\int_{\ab^2} f(\q+h_1^{V_{1,1}}+h_n^{V_{n,1}}) \ud\lambda^2(h_1,h_n)=\mb{E}(f|\mc{B}^{\db{n}}_{V_{1,0}\cap V_{n,0}}),
\]
so by Lemma \ref{lem:botsuff} we have $V_{1,0}\,\bot\, V_{n,0}$, as claimed in the axiom. For $k>1$, let $V_i$ denote the face $\{v\sbr{i}=0\}$ in $\db{n}$ for each $i\in [n]$, and suppose that $f:\cu^n(\ns)\to\mb{C}$ is $\mc{B}^{\db{n}}_{V_n}$-measurable. Let $H\sbr{V_1}$ denote the abelian group $\hom_{V_1\to 0}(\db{n},\mc{D}_k(\ab_k))$, that is the group of degree-$k$ cubes on $\ab_k$ that send each vertex in $V_1$ to $0_{\ab_k}$ (recall \cite[\S 2.2.4]{Cand:Notes1} for the notion of degree-$k$ cube on an abelian group). Let $g:\cu^n(\ns_{k-1})\to\mb{C}$, $\q_0 \mapsto \int_{H\sbr{V_1}} f(\q_0'+\q)\ud\nu_H(\q)$, where $\q_0'\in \cu^n(\ns)$ is any cube with $\pi_{k-1}\co\q_0'=\q_0$. For almost every $\q\in \cu^n(\ns)$, we have that $\mb{E}(f|\mc{B}^{\db{n}}_{V_1})(\q)$ is the integral of $f$ over the fibre $p_{V_1}^{-1}(p_{V_1}( \q))$. By \cite[Lemma 2.1.10]{Cand:Notes2}, this fibre is a compact abelian bundle with structure group $H\sbr{V_1}$ and base the set of morphisms $M(\q)=\hom_{(\pi_{k-1}\co\q)|_{V_1}}(\db{n},\ns_{k-1})\subset \cu^n(\ns_{k-1})$. Letting $\nu$ be the Haar measure on $M(\q)$ and $\mc{B}_{k-1}$ be the Borel $\sigma$-algebra on the nilspace $\ns_{k-1}$, we then have
\[
\mb{E}(f|\mc{B}^{\db{n}}_{V_1})(\q) = \int_{M(\q)} g(\q_0) \ud\nu_0(\q_0)
 = \mb{E}(g|(\mc{B}_{k-1})^{\db{n}}_{V_1})(\pi_{k-1}\co \q).
\]
Now observe that the $\mc{B}^{\db{n}}_{V_n}$-measurability of $f$ implies that   $g$ is $(\mc{B}_{k-1})^{\db{n}}_{V_n}$-measurable. Since by induction we have $V_1\,\bot\, V_n$ in $\mu_{k-1}^{\db{n}}$, it follows that $\mb{E}(g|(\mc{B}_{k-1})^{\db{n}}_{V_1})=\mb{E}(g|(\mc{B}_{k-1})^{\db{n}}_{V_1\cap V_n})$. Moreover, the $\mc{B}^{\db{n}}_{V_n}$-measurability of $f$ also implies that  $g(\q_0)=\int_{H\sbr{V_1\cap V_n}} f(\q_0'+z)\ud\nu(z)$, where $H\sbr{V_1\cap V_n}=\hom_{V_1\cap V_n\to 0}(\db{n},\mc{D}_k(\ab_k))$. We thus deduce, using again \cite[Lemma 2.1.10]{Cand:Notes2}, that $\mb{E}(g|(\mc{B}_{k-1})^{\db{n}}_{V_1\cap V_n})(\pi_{k-1}\co \q)$ is an integral of $f$ over the fibre $p_{V_1\cap V_n}^{-1}(p_{V_1\cap V_n}(\q))$, and is therefore $\mb{E}(f|\mc{B}^{\db{n}}_{V_1\cap V_n})(\q)$. Lemma \ref{lem:botsuff} now implies that $V_1\,\bot_{\mu^{\db{n}}} V_n$, as required.
\end{proof}

\subsection{Conditional independence of simplicial sets}\hfill \medskip \\
In this subsection we establish a property of cubic couplings that will play a crucial role in the sequel, namely Theorem \ref{thm:sip} below, which tells us that the conditional independence in Definition \ref{def:cc} holds in a more general sense.

For $v\in \db{n}$ we denote by $|v|$ the cardinality of $\supp(v)$, i.e.\ the number of elements $i\in [n]$ with $v\sbr{i}=1$. Given another element $u\in\db{n}$, we  write $u\leq v$ (resp.\ $u< v$) if $\supp(u)\subset \supp(v)$ (resp.\ $\supp(u)\subsetneq \supp(v)$). 
\begin{defn}
A set $H\subseteq\db{n}$ is \emph{simplicial}\footnote{The term \emph{downset} is also used, especially in combinatorics.} at if for every $v\in H$ and $w\leq v$ we have $w\in H$. We denote by $\mc{S}_n$ the set of all simplicial sets in $\db{n}$.
\end{defn}
\noindent Thus every simplicial subset of $\db{n}$ encodes a family of subsets of $[n]$ that is closed under the operation of taking a subset. Note that $\mc{S}_n$ is closed under intersections and unions, and is therefore a set lattice in $\db{n}$. Our goal in this subsection is to prove that this lattice has the \textsc{cis} property defined in the previous section (recall Definition \ref{def:CIS}).

\begin{theorem}\label{thm:sip}
Let $\big(\varOmega, (\mu^{\db{n}})_{n\geq 0}\big)$ satisfy the three axioms in Definition \ref{def:cc-idemp}, and let $m$ be a positive integer. Then for every $H_1,H_2\in \mc{S}_m$ we have $H_1~\bot~ H_2$ in $\mu^{\db{m}}$.
\end{theorem}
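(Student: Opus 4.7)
My plan is to proceed by induction on $m$. The base case $m=0$ is trivial, since $\mc{S}_0=\{\emptyset,\{0^0\}\}$ and the only non-trivial relation $\{0^0\}\bot\{0^0\}$ is automatic. For the inductive step, fix $m\geq 1$ and assume the theorem for all smaller cube dimensions; equivalently (in the sense of Definition \ref{def:CIS}) that $\db{m'}$ has the CIS property in $\mc{S}_{m'}$ for every $m'<m$. The conclusion $H_1\bot H_2$ is immediate when $H_1=\db{m}$ or $H_2=\db{m}$, so one may assume $H_1,H_2\subsetneq\db{m}$; then both are simplicial subsets of $\db{m}\setminus\{1^m\}=\bigcup_{i=1}^{m}F_i$, where $F_i=\{v\in\db{m}:v\sbr{i}=0\}$. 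It therefore suffices to show that $\bigcup_{i=1}^{m}F_i$ itself has the CIS property in $\mc{S}_m$, since then $H_1\bot H_2$ is built in.

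The plan is to build up $\bigcup_{i}F_i$ by iterating Proposition \ref{prop:latticeind} over the simplicial atoms $[v]=\{w\in\db{m}:w\leq v\}$ with $v\neq 1^m$. Each such $[v]$ is a face isomorphic as a cubespace to $\db{|v|}$ with $|v|<m$, so by face consistency, the inductive hypothesis, and Remark \ref{rem:botinsubcoup} (which lifts conditional independence from a subcoupling to the larger coupling), every such $[v]$ has the CIS property in $\mc{S}_m$. The pairwise $\bot$ relations that feed the iteration will all come from carefully chosen applications of Lemma \ref{lem:3set1}, whose only external inputs are the conditional independence axiom for adjacent codim-$1$ faces (Remark \ref{rem:altax3}) and the inductive hypothesis applied inside a single face $F_k\cong\db{m-1}$. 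The three intermediate facts I will need, each proved by a single application of Lemma \ref{lem:3set1}, are: \textbf{(a)} for every simplicial $T\subseteq F_k$ and every $j\in[m]$, $F_j\bot T$ in $\mu^{\db{m}}$ (take $A=F_k$, $B=F_j$, $C=T$, and verify the four conditions using the axiom, the inductive hypothesis inside $F_k$, and the trivial $(A\cup B)\bot C$ coming from $T\subseteq F_k$); \textbf{(b)} for every simplicial $T_1\subseteq F_j$ and $T_2\subseteq F_\ell$ (possibly $j\neq\ell$), $T_1\bot T_2$ (a second application using (a)); and \textbf{(c)} for every $v_1,v_2\neq 1^m$, $[v_1]\bot[v_2]$, obtained either directly from the inductive hypothesis inside the proper face $[v_1\vee v_2]$ (if $v_1\vee v_2\neq 1^m$), or by Lemma \ref{lem:3set1} applied with $A=F_{j_0}$ for any $j_0\notin\supp(v_2)$, $B=[v_1]$, $C=[v_2]$, whose four hypotheses follow from (a), (b), triviality $([v_2]\subseteq F_{j_0})$, and the containment $B\cap C\subseteq[v_2]\subseteq F_{j_0}$.

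To run the iteration via Proposition \ref{prop:latticeind}, at each stage I combine an already-CIS simplicial set $T\subsetneq\db{m}$ with a new atom $[w]$ ($w\neq 1^m$), which requires $T\bot[w]$. This is derived by Lemma \ref{lem:3set1} of the same shape as in (c), with $A=F_{j_0}$ for $j_0\notin\supp(w)$, $B=T$, $C=[w]$; conditions (ii)--(iv) follow from (b), triviality, and the containment $B\cap C\subseteq[w]\subseteq F_{j_0}$, reducing matters to the premise $F_{j_0}\bot T$ for arbitrary simplicial $T\subsetneq\db{m}$. This last relation I prove by a nested induction on the number of maximal elements of $T$: peel off a suitable maximal element $v^*$ and apply Lemma \ref{lem:3set1} once more with $A=T\setminus\{v^*\}$, $B=F_{j_0}$, $C=[v^*]$, where the key set-theoretic hypothesis $A\supseteq B\cap C$ follows from the simpliciality of $T$ together with the identity $F_{j_0}\cap[v^*]=[v^*\wedge(1-e_{j_0})]$; the remaining premises come from the sub-inductive hypothesis, from (b), and (where $j_0\in\supp(v^*)$ makes the naive choice fail) from re-selecting either $v^*$ or $j_0\in[m]\setminus\supp(w)$ so that some maximal element lies inside $F_{j_0}$. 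Once $F_{j_0}\bot T$ is secured, Proposition \ref{prop:latticeind} propagates CIS from the atoms $[v]$ all the way up to $\bigcup_{i=1}^{m}F_i$, completing the inductive step. The central difficulty throughout is the careful bookkeeping of the set-theoretic hypothesis $A\supseteq B\cap C$ across the nested applications of Lemma \ref{lem:3set1}; once this combinatorial orchestration is in place, the axioms and the inductive hypothesis handle the conditional independence premises almost mechanically.
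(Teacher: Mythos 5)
Your reduction to showing that $\db{m}\setminus\{1^m\}=\bigcup_i F_i$ has the \textsc{cis} property, and your preliminary facts (a)--(c), are sound: the applications of Lemma \ref{lem:3set1} you describe there do check out. The gap is in the step that feeds Proposition \ref{prop:latticeind}, namely producing $T\bot[w]$ for the running union $T$ against the next atom, which you reduce to $F_{j_0}\bot T$ for an arbitrary simplicial $T\subsetneq\db{m}$ and then attack by peeling a maximal $v^*\in T\setminus F_{j_0}$ via Lemma \ref{lem:3set1} with $A=T\setminus\{v^*\}$, $B=F_{j_0}$, $C=[v^*]$. You account for $A\bot B$ (sub-induction), for $(A\cap B)\bot C$ (your (b)), and for $A\supseteq B\cap C$, but not for the remaining hypothesis $(A\cup B)\bot C$, i.e. $\big((T\setminus\{v^*\})\cup F_{j_0}\big)\bot[v^*]$. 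This is not trivial (since $v^*$ lies in neither $T\setminus\{v^*\}$ nor $F_{j_0}$, we have $C\not\subseteq A\cup B$), it does not follow from (b) (the set $(T\setminus\{v^*\})\cup F_{j_0}$ is not contained in a single codimension-one face, and $[v^*]\not\subseteq F_{j_0}$), and it is itself an instance of the statement being proved: a union of atoms against an atom. Further splitting via Lemma \ref{lem:3set1} founders on the containment hypothesis. Concretely, for $m=3$, $T=[110]\cup[101]$, $j_0=1$, $v^*=110$, you need $([101]\cup[011])\bot[110]$, and neither choice of $[101]$ or $[011]$ as the set $A$ satisfies $A\supseteq B\cap C$; so you identified the wrong bottleneck --- it is this union hypothesis, not the bookkeeping of $A\supseteq B\cap C$, that blocks the argument.

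The underlying problem is that your declared inputs --- adjacent-face conditional independence in $\db{m}$ plus the inductive hypothesis inside each face $F_k\cong\db{m-1}$ --- are genuinely weaker than what is needed. The relation just displayed is $(\{v\in\db{2}:|v|\le 1\}\times\{0,1\})\,\bot\,(\db{2}\times\{0\})$, a ``prism versus bottom face'' independence; in the paper this is exactly what Lemma \ref{lem:idemp4} extracts from the idempotence axiom (applied to $\mu^{\db{n}}$ and, via Lemma \ref{lem:siplem}, to the subcoupling on the prism), and it is the engine of the whole proof. The paper's induction is on $|H|$ rather than on $m$: a non-face $H$ is written as $(H_0\times\{0\})\cup(H_1\times\{0,1\})$, the prism $H_1\times\{0,1\}$ is shown idempotent by Lemma \ref{lem:siplem} (using \textsc{cis} from the induction), Lemma \ref{lem:idemp4} then yields $H_1\times\{0,1\}\bot\db{n-1}\times\{0\}$, and only afterwards do Lemma \ref{lem:3set1} and Proposition \ref{prop:latticeind} take over. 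Your argument never invokes idempotence (and Remark \ref{rem:altax3}, which you cite, is phrased for Definition \ref{def:cc} rather than for the hypotheses of the theorem, which are those of Definition \ref{def:cc-idemp}); the missing $(A\cup B)\bot C$ premises cannot be recovered by lattice manipulation alone, so you need to bring Lemmas \ref{lem:siplem} and \ref{lem:idemp4} (or an equivalent use of the idempotence axiom) into your induction.
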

\begin{remark}\label{rem:strongaxiom}
Theorem \ref{thm:sip} implies that the faces in $\db{n}$ containing $0^n$ form a conditionally independent system of sets (as per Definition \ref{def:condindepsys}). This implication follows from the fact that $H\subseteq\db{n}$ is simplicial if and only if it is the union of some family of faces containing $0^n$, indeed $H=\bigcup_{i\in [m]} F_i$ where for each $i$ we have $F_i=\{v\in\db{n}:v \leq w_i\}$ for some maximal element $w_i \in H$ in the partial order $\leq$.
\end{remark}
\noindent As mentioned above, the idempotence axiom from Defnition \ref{def:cc-idemp} is useful for applications in ergodic theory. On the other hand, the properties of cubic couplings given in Theorem \ref{thm:sip} and Remark \ref{rem:strongaxiom} are in a sense closer to the corner-completion axiom from nilspace theory (see \cite[Definition 1.1.1]{Cand:Notes1}), and are useful for the proofs of the main results in Section \ref{sec:structhm}, which characterize cubic couplings in terms of compact nilspaces.

Before we turn to the proof of Theorem \ref{thm:sip}, let us record the following consequence.
\begin{corollary}\label{cor:facelocality}
Let $\big(\varOmega, (\mu^{\db{n}})_{n\geq 0}\big)$ be a cubic coupling, and let $m\in\mb{N}$. Then every face in $\db{m}$ is local in $\mu^{\db{m}}$.
\end{corollary}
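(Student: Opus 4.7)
The plan is to proceed by induction on $m$. The base case $m=1$ is immediate from the ergodicity axiom: $\mu^{\db{1}}=\lambda\times\lambda$, so the two vertices are independent and every face is local. For the inductive step at $m\geq 2$, I would take a face $F\subseteq\db{m}$ and a vertex $v\in\db{m}\setminus F$ and aim to show that $\mc{A}^{\db{m}}_{\{v\}}$ and $\mc{A}^{\db{m}}_F$ are independent under $\mu^{\db{m}}$. First I would apply the translation automorphism $w\mapsto w\oplus v$ (permissible by the consistency axiom) to reduce to $v=0^m$, so that $F=\{w:w|_I=a\}$ for some nonempty $I\subseteq[m]$ and some $a\in\{0,1\}^I\setminus\{0^I\}$.

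If some $j\in I$ satisfies $a\sbr{j}=0$, then $F\cup\{v\}$ lies in the facet $G=\{w:w\sbr{j}=0\}$; face consistency gives $\mu^{\db{m}}_G\cong\mu^{\db{m-1}}$, and the inductive hypothesis closes this case. Otherwise $a=1^I$, and I would apply a further coordinate-flip automorphism on the coordinates in $I$: this replaces the pair $(v,F)$ by the pair $(u,F')$, where $u$ is the vertex supported exactly on $I$ and $F'=\{w:w|_I=0\}$ is a simplicial face. The simplicial set $H_u=\{w:w\leq u\}=\{w:w|_{[m]\setminus I}=0\}$ is also a face, and $H_u\cap F'=\{0^m\}$, so Theorem \ref{thm:sip} delivers $H_u~\bot~F'$ in $\mu^{\db{m}}$. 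Applied to the single-coordinate function $f(x_u)$, this gives $\mb{E}(f(x_u)\mid\mc{A}^{\db{m}}_{F'})=g(x_{0^m})$ for some $g$, and the tower property identifies $g(x_{0^m})$ with $\mb{E}(f(x_u)\mid x_{0^m})$. Hence the task reduces to showing that $x_u$ and $x_{0^m}$ are independent in $\mu^{\db{m}}$; via face consistency on $H_u$, this joint matches $(x_{1^{|I|}},x_{0^{|I|}})$ under $\mu^{\db{|I|}}$, which is immediate from the inductive hypothesis whenever $|I|<m$.

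The hard part will be the remaining case $|I|=m$, i.e.\ the opposite-vertex independence of $\{0^m\}$ and $\{1^m\}$ in $\mu^{\db{m}}$ itself for $m\geq 2$. This does not come out of Theorem \ref{thm:sip} directly, since the simplicial closure of $\{1^m\}$ is all of $\db{m}$. To bootstrap it, I would invoke Lemma \ref{lem:idemp4} with $S=\db{m-1}$ and $T=\{1^{m-1}\}$: the idempotence of $\mu^{\db{m}}$ in $\coup(\varOmega^S,\{0,1\})$ is axiom~3 of Definition \ref{def:cc-idemp} applied to the opposite facets $S\times\{0\},S\times\{1\}$, and $\mu^{\db{m}}_{T\times\{0,1\}}\cong\mu^{\db{1}}=\lambda\times\lambda$ is idempotent as the relative square of $\lambda$ over the trivial $\sigma$-algebra. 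The lemma then yields $\{(1^{m-1},0),1^m\}~\bot~G$ in $\mu^{\db{m}}$, where $G=\{w:w\sbr{m}=0\}$. Combined with Lemma \ref{lem:botsuff}, this gives $\mb{E}(f(x_{1^m})\mid\mc{A}^{\db{m}}_G)=h(x_{(1^{m-1},0)})$ for some $h$, and the tower property reduces $\mb{E}(f(x_{1^m})\mid x_{0^m})$ to $\mb{E}(h(x_{(1^{m-1},0)})\mid x_{0^m})$. Finally, inside $G\cong\db{m-1}$ the pair $(x_{0^m},x_{(1^{m-1},0)})$ corresponds to $(x_{0^{m-1}},x_{1^{m-1}})$ in $\mu^{\db{m-1}}$, and the inductive hypothesis applied to the face $\{0^{m-1}\}$ of $\db{m-1}$ makes these independent, collapsing the last conditional expectation to a constant and closing the induction.
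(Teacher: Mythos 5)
Your proof is correct, but it takes a genuinely different and longer route than the paper's. The paper's argument is non-inductive: given $w\notin F$ with $F=\{v:v|_J=v_0\}$, it takes the complementary face $B=\{v\in\db{m}:v|_{[m]\setminus J}=w|_{[m]\setminus J}\}\ni w$, which meets $F$ in a single point that may be normalized to $0^m$; Theorem \ref{thm:sip} then collapses $\mb{E}(g|\mc{A}_B^{\db{m}})$ to $\mb{E}(g|\mc{A}_{0^m}^{\db{m}})$ for $\mc{A}_F^{\db{m}}$-measurable $g$, and the whole statement reduces to the independence of the pair $(x_{0^m},x_w)$. The observation you missed is that this last step is immediate from the \emph{full} consistency axiom of Definition \ref{def:cc}: for any vertex $w\neq 0^m$ the map $\db{1}\to\db{m}$, $t\mapsto tw$ is an injective cube morphism (though not a face map when $|w|>1$), so the two-point marginal on $\{0^m,w\}$ is $\mu^{\db{1}}=\lambda\times\lambda$ by axiom 1 and ergodicity — no induction is needed, and in particular the antipodal pair $\{0^m,1^m\}$ causes no special difficulty. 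Because you allow yourself only face consistency, you are forced into the induction on $m$ and the detour through Lemma \ref{lem:idemp4} to recover the antipodal two-point independence; this is valid (it essentially replays the step by which Lemma \ref{lem:deduc2} derives the full consistency axiom from Definition \ref{def:cc-idemp}) and has the minor virtue of working directly from the axioms of Definition \ref{def:cc-idemp}, but since the paper has already established the equivalence of the two axiom systems, the diagonal-morphism observation renders your case analysis unnecessary.
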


\begin{proof}
Let $F$ be an arbitrary face in $\db{m}$. We need to show that for every  $w\not\in F$ the $\sigma$-algebras $\mc{A}_F^{\db{m}}$ and $\mc{A}_w^{\db{m}}$ are independent. First we claim that there is a face $B\ni w$ such that $|F\cap B|=1$. To see this, recall that by definition of faces there is some index set $J\subset [m]$ and some $v_0\in \{0,1\}^J$ such that $F=\{v\in \db{m}: v|_J=v_0\}$. Let $v_1=w|_{[m]\setminus J}$. Then the face $B=\{v\in \db{m}: v|_{[m]\setminus J}=v_1\}$ contains $w$ and intersects $F$ only at the point $v$ with $v|_{[m]\setminus J}=v_1$ and $v|_J=v_0$, which proves our claim. Without loss of generality,  we can assume that $F\cap B=\{0^m\}$, using the consistency axiom if necessary (for some $\phi\in\aut(\db{m})$). Let $f$ be bounded $\mc{A}_w^{\db{m}}$-measurable and $g$ be bounded $\mc{A}_F^{\db{m}}$-measurable. Then $\int_{\Omega^{\db{m}}} f\, g \ud\mu^{\db{m}}=\int_{\Omega^{\db{m}}} f\,\mb{E}( g|\mc{A}_B^{\db{m}}) \ud\mu^{\db{m}}$, and since by Theorem \ref{thm:sip} we have $F~\bot~ B$ (using Remark \ref{rem:strongaxiom}), it follows that $\mb{E}( g|\mc{A}_B^{\db{m}})=\mb{E}( g|\mc{A}_{0^m}^{\db{m}})$. Since the latter expectation is $\mc{A}_{0^m}^{\db{m}}$-measurable, there exists a bounded measurable $h:\Omega\to \mb{C}$ such that $\mb{E}( g|\mc{A}_{0^m}^{\db{m}}) = h\co p_{0^m}$, and similarly $f = f'\co p_w$.  We thus obtain that $\int_{\Omega^{\db{m}}} f\, g \ud\mu^{\db{m}}=\int_{\Omega^{\db{m}}} (f'\co p_w) (h\co p_{0^m})\ud\mu^{\db{m}}$. By the consistency axiom this last integral equals $\int_{\Omega^{\{0,1\}}} (f'\co p_1) (h\co p_0) \ud\mu^{\db{1}}$. By ergodicity, this is $(\int_\Omega f'\ud\lambda)(\int_\Omega h\ud\lambda)$, which equals $(\int_{\Omega^{\db{m}}} f \ud\mu^{\db{m}})(\int_{\Omega^{\db{m}}} g \ud\mu^{\db{m}})$ as required.
\end{proof}

\noindent Observe that if $H\in\mc{S}_{n-1}$ then we have $H\times\{0,1\}\in\mc{S}_n$. To prove Theorem \ref{thm:sip} we shall use the following result. 

\begin{lemma}\label{lem:siplem} Let $H\in\mc{S}_{n-1}$ such that $H\times\{0,1\}$ has the \textsc{cis} property in $\mc{S}_n$ and $\mu^{\db{n}}$. Then the coupling $\mu^{\db{n}}_{H\times\{0,1\}}$ is idempotent along $H\times\{0\} \to H\times\{1\}$, $(h,0)\mapsto (h,1)$.
\end{lemma}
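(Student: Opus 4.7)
The plan is to induct on $|H|$, using Lemma~\ref{lem:idemp5} to combine idempotence on smaller simplicial pieces into idempotence on $H$. The \textsc{cis} hypothesis will supply the conditional-independence input required by that lemma.

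For the base of the induction, suppose $H$ is a face of $\db{n-1}$, say $H=\{v\leq w_0\}$ with maximum $w_0$ of $\supp$-size $k$. Then $H\times\{0,1\}$ is a $(k{+}1)$-face of $\db{n}$, so there is an injective cube morphism $\phi:\db{k+1}\to\db{n}$ with image $H\times\{0,1\}$ mapping the two opposite $k$-faces of $\db{k+1}$ onto $H\times\{0\}$ and $H\times\{1\}$. The consistency axiom gives $\mu^{\db{n}}_\phi=\mu^{\db{k+1}}$, and axiom~3 of Definition~\ref{def:cc-idemp} provides exactly idempotence of $\mu^{\db{k+1}}$ along those opposite faces. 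This includes the minimal case $H=\{0^{n-1}\}$, where $\mu^{\db{n}}_{H\times\{0,1\}}=\mu^{\db{1}}=\lambda\times\lambda$ by ergodicity, which is trivially idempotent.

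For the inductive step, assume $H$ is simplicial but not a face. Pick a maximal element $w\in H$; since $H\neq\{v\leq w\}$, there is some $w'\in H$ with $w'\not\leq w$. Set $H_1=H\setminus\{w\}$, $H_2=\{v\in\db{n-1}:v\leq w\}$, and $H_3=H_1\cap H_2=H_2\setminus\{w\}$. A short check, using maximality of $w$ to verify that $H_1$ is closed under $\leq$, shows all three are simplicial; moreover each has strictly smaller cardinality than $H$ (for $H_2$ one uses the existence of $w'\not\leq w$). Each $H_i\times\{0,1\}\in\mc{S}_n$ is contained in $H\times\{0,1\}$, so it inherits the \textsc{cis} property directly from the \textsc{cis} hypothesis on $H\times\{0,1\}$, and by induction $\mu^{\db{n}}_{H_i\times\{0,1\}}$ is idempotent for $i=1,2,3$.

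To finish, the \textsc{cis} hypothesis for $H\times\{0,1\}$ in $\mc{S}_n$ gives $(H_1\times\{0,1\})\bot_{\mu^{\db{n}}}(H_2\times\{0,1\})$, and by Remark~\ref{rem:botinsubcoup} this conditional independence persists inside the subcoupling $\mu^{\db{n}}_{H\times\{0,1\}}$. Taking $S=H$ and $S_i=H_i$, every hypothesis of Lemma~\ref{lem:idemp5} is then in place, and the lemma yields idempotence of $\mu^{\db{n}}_{H\times\{0,1\}}$ as a coupling in $\coup(\Omega^H,\{0,1\})$, which is precisely idempotence along $(h,0)\mapsto(h,1)$. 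The only real subtlety I anticipate is arranging the decomposition so that \emph{both} pieces shrink strictly, which is exactly what forces the separate treatment of faces as the base case; the rest is bookkeeping and unpacking of definitions.
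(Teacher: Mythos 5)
Your proof is correct and follows essentially the same route as the paper's: induction on $|H|$, with faces handled via the face-consistency and idempotence axioms, and non-faces decomposed as a union of two strictly smaller simplicial sets whose conditional independence (from the \textsc{cis} hypothesis) feeds into Lemma \ref{lem:idemp5}. Your explicit choice $H_1=H\setminus\{w\}$, $H_2=\{v\leq w\}$ for a maximal $w$ merely makes concrete a decomposition the paper leaves unspecified.
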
 

\begin{proof} We argue by induction on $|H|$. If $|H|=1$ then $H=\{0^{n-1}\}$, so by the face-consistency axiom we have $\mu^{\db{n}}_{H\times\{0,1\}}=\mu^{\db{1}}$, and this is clearly idempotent by the ergodicity axiom. If $|H|>1$ then we have two cases, according to whether $H$ is a face or not. If $H$ is a face then $H\times\{0,1\}$ is also a face, so the result follows from the idempotence axiom. If $H$ is not a face then there exist $H_1,H_2\in\mc{S}_{n-1}$ with $|H_1|,|H_2|<|H|$ such that $H=H_1\cup H_2$. The \textsc{cis} property of $H\times\{0,1\}$ implies that $H_1\times\{0,1\},H_2\times\{0,1\},(H_1\cap H_2)\times\{0,1\}$ have the \textsc{cis} property, so by induction these sets satisfy the conclusion of Lemma \ref{lem:siplem}. From the \textsc{cis} property of $H\times\{0,1\}$ it also follows that $H_1\times\{0,1\}~\bot~H_2\times\{0,1\}$. The result now follows from Lemma \ref{lem:idemp5}.
\end{proof}

We can now establish the main result.

\begin{proof}[Proof of Theorem \ref{thm:sip}]
The result is equivalent to the statement that $\db{n}$ satisfies the \textsc{cis} property in $\mc{S}_n$ and $\mu^{\db{n}}$. We prove by induction on $|H|$ that if $H\subset \mc{S}_n$ then $H$ satisfies the \textsc{cis} property. We distinguish two cases. 

Case 1: $H$ is a face. Let $T_1,T_2\subseteq H$ be such that $T_1,T_2\in\mc{S}_n$. If $T_1\cup T_2$ is strictly smaller than $H$ then we can use our induction hypothesis for $T_1\cup T_2\in\mc{S}_n$ to conclude that $T_1~\bot~T_2$. If $T_1\cup T_2=H$ then either $T_1=H$ or $T_2=H$ (indeed since $H$ is a face containing $0^n$ there is $v\in H$ such that $H=\{w\in \db{n}: w\leq v\}$, and if $v\in T_1$, say, then $H=T_1$). If for example $T_1=H$, then $T_2\subseteq T_1$, whence $T_1~\bot~T_2$ (see Lemma \ref{lem:botsuff} and the sentence thereafter).

Case 2: $H$ is not a face. Since $H$ is simplicial, we must have for each $i\in [n]$ that $|H\cap\{v:v\sbr{i}=0\}|\geq |H\cap\{v:v\sbr{i}=1\}|$, and we claim that this inequality is strict for some $i$. Indeed, let $v\in H$ have maximal $|v|$, note that some coordinate $v\sbr{i}$ must be 0 (otherwise $H=\db{n}$), and that the point $v'$ obtained by switching $v\sbr{i}$ to 1 is not in $H$ (by maximality of $|v|$). On the other hand, for every $w'\in H\cap \{w:w\sbr{i}=1\}$, the element obtained by switching $w'\sbr{i}$ to 0 is in $H$. Hence the above inequality is indeed strict for this $i$. By transposing the coordinates $i$ and $n$ of all elements of $\db{n}$, we can assume that $|H\cap(\db{n-1}\times\{0\})|>|H\cap(\db{n-1}\times\{1\})|$. Let $H_0,H_1\subset \db{n-1}$ be such that $H\cap(\db{n-1}\times\{i\})=H_i\times\{i\}$ for $i=0,1$. It is clear that $H_1\subset H_0$ and $H_0,H_1\in\mc{S}_{n-1}$. We also have that $H=(H_0\times\{0\})\cup(H_1\times\{0,1\})$. Since $H_1\times\{0,1\}$ is a proper subset of $H$, by induction it has the \textsc{cis} property. It follows from Lemma \ref{lem:siplem} that $\mu^{\db{n}}_{H_1\times\{0,1\}}$ is idempotent. By the idempotence axiom we have that $\mu^{\db{n}}$ is also idempotent along the bijection $\beta:\db{n-1}\times\{0\}\to \db{n-1}\times\{1\}$. It then follows by Lemma \ref{lem:idemp4} that $H_1\times\{0,1\}~\bot~\db{n-1}\times\{0\}$. Now Lemma \ref{lem:3set1} applied with  $A=\db{n-1}\times\{0\}$, $B=H_1\times\{0,1\}$, and  $C=H_0\times\{0\}$, shows that $H_1\times\{0,1\}~\bot~H_0\times\{0\}$. By our induction hypothesis, both $H_1\times\{0,1\}$ and $H_0\times\{0\}$ have the \textsc{cis} property. It follows from Proposition  \ref{prop:latticeind} that $H$ also has the \textsc{cis} property.
\end{proof}

\subsection{Tricubes}\label{subsec:tricubes} \hfill \medskip\\
We recall from \cite[Definition 3.1.13]{Cand:Notes1} that the \emph{tricube} of dimension $n$ can be defined as the set $T_n=\{-1,0,1\}^n$ equipped with a certain cubespace structure (that we shall not recall here). Another useful way to view this cubespace is as a subset of $\db{2n}$, obtained as the image of $\{-1,0,1\}^n$ under the injection $q_n$ defined as follows. First we define $q_1: \{-1,0,1\}\to  \{0,1\}^2$, $-1\mapsto \binom{0}{1}$, $0\mapsto \binom{0}{0}$, $1\mapsto \binom{1}{0}$. Then we define
\[
q_n: \{-1,0,1\}^n \to \db{2n}, \; t	\mapsto \begin{psmallmatrix} v\sbr{1} & v\sbr{2} & \cdots & v\sbr{n}\\[0.1em] v\sbr{n+1} & v\sbr{n+2} & \cdots & v\sbr{2n} \end{psmallmatrix}, \textrm{ where } \begin{psmallmatrix} v\sbr{i} \\[0.1em] v\sbr{n+i} \end{psmallmatrix}=q_1(t_i), \, \forall\, i.
\]
We shall often denote by $\wt{T}_n$ this alternative version of the tricube, that is $\wt{T}_n=q_n(T_n)$. Note that $\wt{T}_n=\{v\in \db{2n}: \forall\, i\in [n],\, v_i v_{i+n}=0\}$, which makes it clear that $\wt{T}_n$ is simplicial in $\db{2n}$. Note also that $q_n^{-1}:\wt{T}_n \to T_n$ is defined by $q_n^{-1}(v)=t$ with $t_i=v_i-v_{i+n}$.
 
The direct power $S_3^n$ of the symmetric group $S_3$ acts in a clear way on $T_n$ by permuting the coordinate of $t$. Via the map $q_n$, this group $S_3^n$ acts on $\wt{T}_n$. Given a cubic coupling on $(\varOmega,(\mu^{\db{n}})_n)$, we can use this action of $S_3^n$ on $\wt{T}_n$ to define a coordinatewise action of $S_3^n$ on $\Omega^{\wt{T}_n}$. The main purpose of this subsection is to record the following very useful fact concerning this action.

\begin{lemma}\label{lem:trisym} Let $(\varOmega,(\mu^{\db{n}})_{n\geq 0})$ be a cubic coupling. Then for each $n$ the subcoupling $\mu^{\db{2n}}_{\wt{T}_n}$ of $\mu^{\db{2n}}$ is preserved by the coordinatewise action of $S_3^n$ on $\Omega^{\wt{T}_n}$.
\end{lemma}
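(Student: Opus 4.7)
The plan is to decompose the problem according to the generators of $S_3^n$. The group factors as $n$ copies of $S_3$, one per tricube coordinate, and the cube automorphisms of $\db{2n}$ that permute the pairs $(v_i, v_{i+n})$ preserve $\mu^{\db{2n}}$ (Axiom 1) and $\wt{T}_n$, conjugating one $S_3$-factor into another; so it suffices to prove invariance under the $S_3$ acting on a single coordinate, say the $n$-th. The generator swapping $-1\leftrightarrow 1$ corresponds to the coordinate swap $v_n\leftrightarrow v_{2n}$, which is itself a cube automorphism preserving $\wt{T}_n$, so this case is immediate. The only substantial task is the transposition $\sigma$ swapping the values $0$ and $1$ in the $n$-th tricube coordinate.

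To handle $\sigma$, I will work with the three pairwise-disjoint $(2n{-}2)$-faces $A=\{v_n=v_{2n}=0\}$, $B=\{v_n=1,\,v_{2n}=0\}$ and $C=\{v_n=0,\,v_{2n}=1\}$ of $\db{2n}$, which contain respectively the three layers $R_0,R_1,R_2$ of $\wt{T}_n$ corresponding to tricube values $0,1,-1$. Observe that $A\cup B$ and $A\cup C$ are $(2n{-}1)$-faces, while $A\cup B\cup C=\{v: v_nv_{2n}=0\}$ is a simplicial subset of $\db{2n}$. I will prove the stronger claim that $\mu^{\db{2n}}_{A\cup B\cup C}$ is invariant under the natural $S_3$-action on $\{A,B,C\}$ (permuting the three faces via their canonical identifications with $\db{2n-2}$). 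Restricting this action to $\wt{T}_n$ gives precisely the tricube action of $\sigma$ on the $n$-th coordinate, so the lemma will follow.

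By face consistency and the idempotence axiom, each of $\mu^{\db{2n}}_{A\cup B}$ and $\mu^{\db{2n}}_{A\cup C}$ is an idempotent coupling of $\mu^{\db{2n-2}}$ with itself (along the canonical bijections $A\leftrightarrow B$ and $A\leftrightarrow C$). By Proposition \ref{prop:idemp}, each is the square of $\mu^{\db{2n-2}}$ relative to some sub-$\sigma$-algebra $\mc{B}_i\subseteq\mc{A}^{\db{2n-2}}$. The cube automorphism of $\db{2n}$ swapping $v_n\leftrightarrow v_{2n}$ preserves $\mu^{\db{2n}}$, fixes $A$, and swaps the two $(2n-1)$-faces $A\cup B$ and $A\cup C$; this identifies the two square couplings, so (by the characterization of the $\sigma$-algebra via the meet $\mc{A}_a^{\{a,b\}}\wedge\mc{A}_b^{\{a,b\}}$) forces $\mc{B}_1=\mc{B}_2=:\mc{B}$. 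This equality of $\sigma$-algebras is the step where I expect the most care to be needed, since these $\sigma$-algebras are only defined up to $\mu^{\db{2n-2}}$-null sets.

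With $\mc{B}$ in hand, Theorem \ref{thm:sip} applied to the simplicial sets $A\cup B$ and $A\cup C$ inside $A\cup B\cup C$ gives $(A\cup B)\,\bot\,(A\cup C)$ in $\mu^{\db{2n}}$ with meet $A$, so $\mu^{\db{2n}}_{A\cup B\cup C}$ is the conditionally independent coupling of the two squares over $A$. Using the characterization \eqref{eq:pfclaim} of the square and the tower property, for bounded measurable $f,g,h$ on $\Omega^{\db{2n-2}}$ one computes
\[
\int f(x_A)\,g(x_B)\,h(x_C)\,d\mu^{\db{2n}}_{A\cup B\cup C}\;=\;\int f\cdot\mb{E}(g|\mc{B})\cdot\mb{E}(h|\mc{B})\,d\mu^{\db{2n-2}}\;=\;\int \mb{E}(f|\mc{B})\,\mb{E}(g|\mc{B})\,\mb{E}(h|\mc{B})\,d\mu^{\db{2n-2}},
\]
an expression manifestly symmetric in $f,g,h$. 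Extending to arbitrary bounded $\mc{A}^{A\cup B\cup C}$-measurable test functions via Lemma \ref{lem:pisysapprox} completes the proof of $S_3$-invariance of $\mu^{\db{2n}}_{A\cup B\cup C}$, and hence of $\mu^{\db{2n}}_{\wt{T}_n}$.
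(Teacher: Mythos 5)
Your proof is correct, and it reaches the same endpoint as the paper's — recognizing the three layers of the last tricube coordinate as the triple coupling $\nu$ of Definition \ref{def:idemp} attached to an idempotent coupling, whose full $S_3$-symmetry is exactly the computation $\int \mb{E}(f|\mc{B})\,\mb{E}(g|\mc{B})\,\mb{E}(h|\mc{B})\ud\mu^{\db{2n-2}}$ (this is Lemma \ref{lem:idemp3}, which you essentially re-derive rather than cite) — but it gets there by a noticeably lighter route. The paper stays inside the tricube: it writes $\wt{T}_n=\wt{A}\cup\wt{B}$ with $\wt{A}=\{v\in\wt{T}_n:v\sbr{n}=0\}$, $\wt{B}=\{v\in\wt{T}_n:v\sbr{2n}=0\}$, and must invoke Lemma \ref{lem:siplem} (hence the \textsc{cis} machinery of Theorem \ref{thm:sip} applied to $\wt{T}_{n-1}\times\{0,1\}$) to see that each two-layer piece is an idempotent coupling of two copies of $\mu^{\db{2(n-1)}}_{\wt{T}_{n-1}}$. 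You instead pass to the ambient $(2n-1)$-faces $A\cup B=\{v_{2n}=0\}$ and $A\cup C=\{v_n=0\}$, where idempotence is literally axiom 3 of Definition \ref{def:cc-idemp} and the relation $(A\cup B)\,\bot\,(A\cup C)$ is just the conditional independence axiom for two adjacent codimension-one faces (you cite Theorem \ref{thm:sip}, but the instance you need is already an axiom); the tricube statement then follows by restricting the marginal at the end, since the face permutations preserve $\wt{T}_n$ and induce the layer permutations. What your approach buys is independence from Lemma \ref{lem:siplem} and the simplicial induction behind it; what the paper's buys is a statement formulated intrinsically on $\wt{T}_n$, with the two-layer idempotent couplings $\mu^{\db{2n}}_{\wt{T}_{n-1}\times\{0,1\}}$ identified explicitly, which is the form reused elsewhere (e.g.\ in the proof of Lemma \ref{lem:keybot}). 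The two delicate points you flagged — that $\mc{B}_1=_{\mu^{\db{2n-2}}}\mc{B}_2$ via the measure-preserving swap $v_n\leftrightarrow v_{2n}$ fixing $A$ pointwise, and that these $\sigma$-algebras are only determined up to null sets through $p_A^{-1}(\mc{B}_i)=_\mu\mc{A}_A\wedge\mc{A}_{B\text{ or }C}$ — are handled correctly.
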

\begin{proof}
Fix $n$ and consider the following subsets of $T_n$: $A=\{-1,0,1\}^{n-1}\times\{-1,0\}$, $B=\{-1,0,1\}^{n-1}\times\{1,0\}$, \, $C=\{-1,0,1\}^{n-1}\times\{0\}$. Let $\wt{A}=q_n(A)$, $\wt{B}=q_n(B)$, $\wt{C}=q_n(C)$, and note that $\wt{A}= \{v\in \wt{T}_n: v\sbr{n}=0\}$, that $\wt{B}=\{v\in \wt{T}_n: v\sbr{2n}=0\}$, and that $\wt{C}= \wt{A}\cap \wt{B}$, so these sets are all simplicial in $\db{2n}$. By Theorem \ref{thm:sip} we have $\wt{A}~\bot_{\mu^{\db{2n}}}~\wt{B}$. By Lemma \ref{lem:siplem} applied with $H=\wt{T}_{n-1}$, we have that $\mu^{\db{2n}}_{\wt{A}}$ is an idempotent coupling of two copies of $\mu_{\wt{T}_{n-1}}^{\db{2(n-1)}}$ indexed by $v\sbr{2n}$. Moreover $\mu^{\db{2n}}_{\wt{A}}$ and $\mu^{\db{2n}}_{\wt{B}}$ are isomorphic couplings, and $\wt{T}_n=\wt{A}\cup \wt{B}$, and thus $\mu^{\db{2n}}_{\wt{T}_n}$ is the coupling $\nu$ obtained by applying Definition \ref{def:idemp} to $\mu^{\db{2n}}_{\wt{A}}$. Hence Lemma \ref{lem:idemp3} implies that $\mu^{\db{2n}}_{\wt{T}_n}$ is symmetric under the action of $S_3$ on $\{-1,0,1\}$ applied to the last coordinate of $q_n^{-1}(v)$. Using that $\mu^{\db{2n}}_{\wt{T}_n}$ is also symmetric with respect to the permutation of the coordinates in $q_n^{-1}(v)$, the result follows. 
\end{proof}
\noindent Note that the above lemma together with the $S_n$ invariance implies that $\mu^{[2n]}_{\wt{T}_n}$ is also invariant under the action of the wreath product of $S_n$ and $S_3$. However we are only going to use the $S_3^n$ symmetries.

Recall from \cite{Cand:Notes1} that we denoted by $\omega_n$ the \emph{outer point map} of $T_n$, that is the map $\db{n} \to T_n$, $v\mapsto (2v\sbr{1}-1,\ldots,2v\sbr{n}-1)$.
\begin{corollary}[Outer point coupling]\label{cor:opcoup}
The subcoupling of $\mu^{\db{2n}}_{\wt{T}_n}$ along the map $q_n \co\omega_n$ is equal to $\mu^{\db{n}}$. 
\end{corollary}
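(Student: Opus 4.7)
The plan is to reduce the claim to a direct application of the consistency axiom (axiom 1 of Definition \ref{def:cc}). First I would verify that the composition $q_n\co\omega_n:\db{n}\to\db{2n}$ is an injective discrete-cube morphism. By unwinding the definitions of $\omega_n$ and $q_n$, this composition sends $v=(v\sbr{1},\dots,v\sbr{n})\in\db{n}$ to the element of $\db{2n}$ whose $i$-th coordinate is $v\sbr{i}$ and whose $(n+i)$-th coordinate is $1-v\sbr{i}$, for each $i\in[n]$. This map extends to the affine homomorphism $\mb{Z}^n\to\mb{Z}^{2n}$, $v\mapsto (v,\mathbf{1}-v)$, so it is a cube morphism in the sense recalled in the introduction, and it is clearly injective.

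Next I would observe that since $\omega_n(\db{n})\subseteq T_n$, the image $(q_n\co\omega_n)(\db{n})$ is contained in $\wt{T}_n=q_n(T_n)$. Therefore, taking the subcoupling of $\mu^{\db{2n}}_{\wt{T}_n}$ along $q_n\co\omega_n$ yields the same coupling as taking the subcoupling of $\mu^{\db{2n}}$ directly along $q_n\co\omega_n$; this is simply the transitivity of subcouplings along subsets, which follows from $p_{T'}=p_{T'}\co p_T$ whenever $T'\subset T\subset S$, applied with $T=\wt{T}_n$ and $T'=(q_n\co\omega_n)(\db{n})$.

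Finally I would invoke the consistency axiom with the injective cube morphism $\phi=q_n\co\omega_n:\db{m}\to\db{n'}$ (taking $m=n$ and $n'=2n$), which yields $\mu^{\db{2n}}_{q_n\co\omega_n}=\mu^{\db{n}}$. Combined with the previous paragraph, this gives the desired equality.

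There is essentially no obstacle here; the entire content is the verification that $q_n\co\omega_n$ is an affine injective map between discrete cubes, after which the consistency axiom does all the work. The only mild subtlety is keeping the two notions of subcoupling (along a subset versus along an injection, cf.\ Definitions \ref{def:subcoupembeds}) straight, but these agree in the natural way on renaming indices and so cause no difficulty.
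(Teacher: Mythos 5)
Your proof is correct, but it takes a genuinely different and more economical route than the paper. You observe that $q_n\co\omega_n$ sends $v$ to the point of $\db{2n}$ with $i$-th coordinate $v\sbr{i}$ and $(n+i)$-th coordinate $1-v\sbr{i}$, which extends to the affine homomorphism $v\mapsto(v,\mathbf{1}-v)$ and is injective, so the full consistency axiom of Definition \ref{def:cc} applies directly (together with the easy transitivity of marginals, since $(q_n\co\omega_n)(\db{n})\subset\wt{T}_n$). The paper instead deduces the corollary from Lemma \ref{lem:trisym}: it applies an element of $S_3^n$ (the one swapping $-1$ and $0$ in each coordinate of $T_n$) to carry the outer points $q_n(\{-1,1\}^n)$ onto the face $\db{n}\times\{0^n\}$ of $\db{2n}$, where face consistency finishes the argument. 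Your approach avoids the tricube symmetry machinery entirely, at the cost of invoking the full consistency axiom rather than only face consistency; since Lemma \ref{lem:deduc2} has already established that face consistency plus the other axioms imply full consistency, this is legitimate, and your argument is arguably the shorter and more transparent of the two. The paper's route has the mild virtue of staying within the thematic toolkit of the tricube subsection (and of only using the weaker axioms of Definition \ref{def:cc-idemp} plus the already-proved symmetry), but logically nothing is lost in your version.
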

\noindent This fact is an analogue for cubic couplings of the tricube composition lemma for nilspaces  (see \cite[Lemma 3.1.16]{Cand:Notes1}), and it is used in particular in Subsection \ref{subsec:Fk} below to prove a key property of higher-order Fourier $\sigma$-algebras (see Lemma \ref{lem:keybot}).
\begin{proof}
By the $S_3^n$ invariance, we can see that the subcoupling of $\mu^{\db{2n}}_{\wt{T}_n}$ along the index set $q_n \co \omega_n(\db{n})$ is isomorphic to the subcoupling of $\mu^{\db{2n}}$ on an $n$-dimensional face of $\db{2n}$ (using an element of $S_3^n$ that maps $\{-1,1\}^n$ to $\db{n}$).
\end{proof}

\subsection{$U^d$-convolutions and $U^d$-seminorms associated with a cubic coupling}
\hfill \medskip\\
We begin with the definition of a generalization of convolution that can be defined on a cubic coupling using the measures $\mu^{\db{n}}$. To that end, let us denote by $\mc{C}$ the conjugation operator on $L^1(\Omega)$, defined by $\mc{C}f(y)=\overline{f(y)}$. We denote by $K_d$ the set $\db{d}\setminus\{0^d\}$.
\begin{defn}[$U^d$-convolution]\label{def:Udconv}
Let $\big(\varOmega, (\mu^{\db{n}})_{n\geq 0}\big)$ be a cubic coupling. For each $d\geq 1$, and any system $F=(f_v)_{v\in K_d}$ of functions $f_v\in L^\infty(\varOmega)$, we define the \emph{$U^d$-convolution} of $F$, denoted by $[F]_{U^d}$, to be a function in $L^\infty(\varOmega)$ such that we have $\mu^{\db{d}}$-almost everywhere
\begin{equation}\label{eq:Udconvdef}
\mb{E}\big(\prod_{v\in K_d} \mc{C}^{|v|+1} f_v\co p_v ~|~ \mc{A}_{0^d}^{\db{d}}\big) = [F]_{U^d}\co p_{0^d}.
\end{equation}
\end{defn}
\noindent This defines the function $[F]_{U^d}$ up to a $\lambda$-null set (using Lemma \ref{lem:Doob}). When $\varOmega$ is a Borel probability space, an alternative  equivalent definition of $[F]_{U^d}$ can be given as follows. Letting $(\mu^{\db{d}}_x)_{x\in \Omega}$ be the disintegration of $\mu^{\db{d}}$ given by Lemma \ref{lem:localize} (thus $\mu^{\db{d}}_x\in \coup(\varOmega,K_d)$ for all $x$),  we can define $[F]_{U^d}(x)$ as the integral $\int_{\Omega^{K_d}} \prod_{v\in K_d} \mc{C}^{|v|+1} f_v\co p_v\, \ud\mu^{\db{d}}_x$.

\begin{defn}[$U^d$-product]
Let $\big(\varOmega, (\mu^{\db{n}})_{n\geq 0}\big)$ be a cubic coupling. For every $d\geq 1$, and every system $F=(f_v)_{v\in \db{d}}$ of functions $f_v\in L^\infty(\varOmega)$, we define the \emph{$U^d$-product} of these functions by the formula
\begin{equation}
\langle F\rangle_{U^d}= \langle (f_v)_{v\in \db{d}}\rangle_{U^d} = \int_{\Omega^{\db{d}}} \prod_{v\in \db{d}} \mc{C}^{|v|}\, f_v\co p_v \; \ud\mu^{\db{d}}.
\end{equation}
If all $f_v$ are equal to the same function $f$, we denote $\langle (f_v)_{v\in \db{d}}\rangle_{U^d}^{1/2^d}$ by $\|f\|_{U^d}$.
\end{defn}
\noindent Note that for every $f\in L^\infty(\varOmega)$, by the idempotence of the coupling $\mu^{\db{d}}\in \coup(\varOmega',\{F_0,F_1\})$ and Corollary \ref{cor:idempos}, we have when $f_v=f$ for all $v\in \db{d}$ that $\langle (f_v)_{v\in \db{d}}\rangle_{U^d}$ is a non-negative real number; hence $\|f\|_{U^d}$, as the $2^d$-th root of this number, is a well-defined non-negative real number.

We denote by $[F]_{U^d}^\times$ the rank-1 function $\prod_{v\in K_d} \mc{C}^{|v|+1} f_v\co p_v: \Omega^{\db{d}}\to\mb{C}$. 

The above definitions are related by the following observation: given a system $F=(f_v)_{v\in \db{d}}$, letting $F'=(f_v)_{v\in K_d}$ we have
\begin{equation}\label{eq:coincon}
\langle F\rangle_{U^d} = \int_\Omega f_{0^d}\; \overline{[F']}_{U^d}\,\ud\lambda = \int_{\Omega^{\db{d}}} f_{0^d}\co p_{0^d}\; \overline{[F']^\times_{U^d}}\, \ud\mu^{\db{d}}.
\end{equation}
\noindent We now prove a generalization, for this $U^d$-product, of the Gowers-Cauchy-Schwarz inequality, using the idempotence axiom for the couplings $\mu^{\db{n}}$.

\begin{lemma}\label{lem:GCSineq}
Let $\big(\varOmega, (\mu^{\db{n}})_{n\geq 0}\big)$ be a cubic coupling and let $d\in \mb{N}$. Then for every system $(f_v)_{v\in \db{d}}$ of bounded measurable functions on $\Omega$, we have 
\begin{equation}\label{eq:GCS}
|\langle\, (f_v)_{v\in \db{d}}\, \rangle_{U^d} | \leq \prod_{v\in \db{d}} \|f_v\|_{U^d}.
\end{equation}
\end{lemma}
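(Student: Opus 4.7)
The strategy is to adapt the classical Cauchy--Schwarz--Gowers argument to this setting, using the idempotence axiom in place of group translations. The point is that the $U^d$-product $\langle F\rangle_{U^d}$ can be reinterpreted, for each coordinate direction $i\in[d]$, as a bilinear pairing $\langle\cdot,\cdot\rangle_{\mu^{\db{d}}}$ in which $\mu^{\db{d}}$ is viewed as an idempotent coupling of $(\Omega^{\db{d-1}},\mu^{\db{d-1}})$ with itself, along the pair of opposite $(d-1)$-faces $F_0,F_1$ of $\db{d}$ in direction $i$.

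Concretely, fix the direction $i=d$, set $F_0=\db{d-1}\times\{0\}$ and $F_1=\db{d-1}\times\{1\}$, and split the product over $\db{d}$ along these faces. For $v=(u,1)\in F_1$ we have $|v|=|u|+1$, so the extra conjugation can be absorbed into an overall complex conjugate on the $F_1$-factor, yielding $\langle F\rangle_{U^d}=\langle g_0,g_1\rangle_{\mu^{\db{d}}}$ where
\[
g_0(x_{F_0})=\prod_{v\in F_0}\mc{C}^{|v|}f_v(x_v),\qquad g_1(x_{F_1})=\prod_{u\in\db{d-1}}\mc{C}^{|u|}f_{(u,1)}(x_{(u,1)}).
\]
Here $g_1$ is viewed as a function on $\Omega^{F_0}$ via the identification $\beta:F_0\to F_1$ supplied by the idempotence axiom. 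By Lemma \ref{lem:idemp0} the pairing $\langle\cdot,\cdot\rangle_{\mu^{\db{d}}}$ equals $\int\mb{E}(\,\cdot\mid\mc{A}^{\db{d}}_{F_1})\,\overline{\mb{E}(\,\cdot\mid\mc{A}^{\db{d}}_{F_1})}\ud\mu^{\db{d}}$, which is a positive semi-definite Hermitian form; hence the usual Cauchy--Schwarz inequality applies and gives
\[
|\langle F\rangle_{U^d}|^2\le\langle g_0,g_0\rangle_{\mu^{\db{d}}}\cdot\langle g_1,g_1\rangle_{\mu^{\db{d}}}.
\]

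A direct expansion shows that $\langle g_0,g_0\rangle_{\mu^{\db{d}}}=\langle F^{(0)}\rangle_{U^d}$ and $\langle g_1,g_1\rangle_{\mu^{\db{d}}}=\langle F^{(1)}\rangle_{U^d}$, where $F^{(\epsilon)}$ is the system obtained from $F$ by replacing, for each $w\in\db{d}$, the function $f_w$ by $f_{(w_1,\ldots,w_{d-1},\epsilon)}$; in other words, the entries of $F^{(\epsilon)}$ depend only on the first $d-1$ coordinates. Iterating this Cauchy--Schwarz step along each of the $d$ coordinate directions (each step squares the exponent and halves the number of distinct functions that still appear) we arrive after $d$ steps at
\[
|\langle F\rangle_{U^d}|^{2^d}\le\prod_{\epsilon\in\db{d}}\langle F^{(\epsilon)}\rangle_{U^d},
\]
where now $F^{(\epsilon)}$ is the \emph{constant} system with entry $f_\epsilon$ at every vertex of $\db{d}$. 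By definition $\langle F^{(\epsilon)}\rangle_{U^d}=\|f_\epsilon\|_{U^d}^{2^d}$, and this quantity is non-negative (apply the same idempotent-coupling identity \eqref{eq:IdempotInnerprod} once to the constant system $f_\epsilon$ to see it equals $\langle h,h\rangle_{\mu^{\db{d}}}\ge 0$). Taking $2^d$-th roots gives \eqref{eq:GCS}.

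The main point to verify carefully is the bookkeeping of complex conjugations in the first step, so that $\langle F\rangle_{U^d}$ really does match the form $\langle g_0,g_1\rangle_{\mu^{\db{d}}}$ with the correct Hermitian convention; everything else is then a direct consequence of the positive semi-definiteness of the idempotent pairing. No other deep property of the cubic coupling is used beyond the idempotence axiom of Definition \ref{def:cc-idemp}, which is why this Gowers--Cauchy--Schwarz inequality holds in this abstract measure-theoretic framework.
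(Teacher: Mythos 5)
Your proposal is correct and follows essentially the same route as the paper: split the $U^d$-product along the two opposite faces in one coordinate direction, use the idempotence axiom to recognize the pairing as a positive semi-definite Hermitian form, apply Cauchy--Schwarz, and iterate $d$ times. The only cosmetic difference is that you invoke the formula of Lemma \ref{lem:idemp0} directly, whereas the paper routes through Proposition \ref{prop:idemp} and \eqref{eq:relsquare2} to write the pairing as $\int \mb{E}(g_0|\mc{B})\,\overline{\mb{E}(g_1|\mc{B})}\ud\mu^{\db{d-1}}$ --- these are interchangeable consequences of idempotence.
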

\noindent The idea of the proof is that the idempotence axiom makes it possible to apply a standard argument, originating in \cite{GSz}, that uses the Cauchy-Schwarz inequality repeatedly.
\begin{proof}
For $i=0,1$ let $F_i=\{v\in \db{d}: v\sbr{d}=i\}$, thus $F_0,F_1$ are two opposite faces of codimension 1 in $\db{d}$. Letting $g_i:\Omega^{\db{d-1}}\to \mb{C}$, $y\mapsto \prod_{v\in \db{d-1}} \mc{C}^{|v|}\, f_{(v,i)}\co p_v(y)$, we have 
\[
\langle (f_v)_{v\in \db{d}}\rangle_{U^d} = \int_{\Omega^{\db{d}}} g_0\co p_{F_0} \;\overline{g_1\co p_{F_1}} \; \ud\mu^{\db{d}}.
\]
The idempotence axiom tells us that the coupling $\mu^{\db{d}}\in \coup(\varOmega',\{F_0,F_1\})$ is idempotent along the bijection $F_0\to F_1$ that switches the coordinate $v\sbr{d}$ in $v\in F_0$ from 0 to 1, where $\varOmega'=\varOmega^{\db{d-1}}$. Letting $\mc{B}$ denote the sub-$\sigma$-algebra of $\mc{A}^{\db{d-1}}$ given by Proposition \ref{prop:idemp}, we then have by \eqref{eq:relsquare2} that $\langle (f_v)_{v\in \db{d}}\rangle_{U^d}  = \int_{\Omega^{\db{d-1}}} \mb{E}\big(g_0|\mc{B}\big)\,\overline{\mb{E}\big(g_1|\mc{B}\big)}\; \ud\mu^{\db{d-1}}$. By the Cauchy-Schwarz inequality, this integral is at most a product of two factors, namely $\big(\int_{\Omega^{\db{d-1}}} \mb{E}\big(g_i|\mc{B}\big)\,\overline{\mb{E}\big(g_i|\mc{B}\big)}\; \ud\mu^{\db{d-1}}\big)^{1/2}$ for $i=0,1$. By \eqref{eq:relsquare2} again, these factors equal $\langle g_i,g_i\rangle_{\mu^{[d]}}^{1/2}$, $i=0,1$, and these in turn can be seen to equal respectively $\langle (f_v')_{v\in \db{d}}\rangle_{U^d}^{1/2}$ and $\langle (f_v'')_{v\in \db{d}}\rangle_{U^d}^{1/2}$, where $f'_v=f_{(v_1,\ldots,v_{d-1},0)}$ and $f''_v=f_{(v_1,\ldots,v_{d-1},1)}$ for all $v\in \db{d}$. Thus we have obtained
$|\langle (f_v)_{v\in \db{d}}\rangle_{U^d}| \leq \langle (f_v')_{v\in \db{d}}\rangle_{U^d}^{1/2}\; \langle (f_v'')_{v\in \db{d}}\rangle_{U^d}^{1/2}$. Repeating this argument for each of these two factors, and so on inductively, we obtain \eqref{eq:GCS} after $d$ steps.
\end{proof}

\begin{corollary}\label{cor:udsn}
Let $\big(\varOmega, (\mu^{\db{n}})_{n\geq 0}\big)$ be a cubic coupling and let $d\in \mb{N}$. Then $\|\cdot\|_{U^d}$ is a seminorm on $L^\infty(\varOmega)$. We call $\|\cdot\|_{U^d}$ the \emph{uniformity seminorm of order} $d$, or \emph{$U^d$-seminorm}, on this cubic coupling. We also have
\begin{equation}\label{eq:nestedUd}
\|f\|_{U^d}\leq \|f\|_{U^{d+1}}, \textrm{ for every }d\geq 1\textrm{ and every } f\in L^\infty(\varOmega).
\end{equation}
\end{corollary}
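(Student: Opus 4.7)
The seminorm claim splits into three tasks: well-definedness (i.e.\ $\langle (f)_v\rangle_{U^d}$ is a non-negative real), homogeneity, and the triangle inequality. Homogeneity is immediate: replacing $f$ by $cf$ multiplies the integrand by $c^{2^{d-1}}\overline{c}^{2^{d-1}}=|c|^{2^d}$ (since exactly half the $v\in\db{d}$ have $|v|$ odd), and taking the $2^d$-th root gives $\|cf\|_{U^d}=|c|\,\|f\|_{U^d}$. For the triangle inequality I would expand $\|f+g\|_{U^d}^{2^d}=\langle (f+g)_v\rangle_{U^d}$ into the $2^{2^d}$ terms $\langle (h_v^S)_v\rangle_{U^d}$ with $h_v^S=f$ for $v\in S$ and $h_v^S=g$ otherwise, apply Lemma~\ref{lem:GCSineq} to each term (bounded by $\|f\|_{U^d}^{|S|}\|g\|_{U^d}^{2^d-|S|}$), and recognise the resulting sum as $(\|f\|_{U^d}+\|g\|_{U^d})^{2^d}$ by the binomial theorem.

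The heart of the matter is the non-negativity of $\langle (f)_v\rangle_{U^d}$; this is where I expect to spend the most care. Let $F_0=\db{d-1}\times\{0\}$, $F_1=\db{d-1}\times\{1\}$, and define $g:\Omega^{\db{d-1}}\to\mb{C}$ by $g(y)=\prod_{v'\in\db{d-1}}\mc{C}^{|v'|}f\co p_{v'}(y)$. Splitting the product over $\db{d}=F_0\sqcup F_1$ and using $|(v',1)|=|v'|+1$, one sees that
\begin{equation*}
\langle (f)_v\rangle_{U^d}\;=\;\int_{\Omega^{\db{d}}} (g\co p_{F_0})\,\overline{(g\co p_{F_1})}\,\ud\mu^{\db{d}}\;=\;\langle g,g\rangle_{\mu^{\db{d}}},
\end{equation*}
where we regard $\mu^{\db{d}}$ as a coupling in $\coup(\varOmega',\{F_0,F_1\})$ with $\varOmega'=(\Omega^{\db{d-1}},\mu^{\db{d-1}})$. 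By the idempotence axiom this coupling is idempotent, so by Proposition~\ref{prop:idemp} it is a relative square over some $\sigma$-algebra $\mc{B}\subseteq\mc{A}^{\db{d-1}}$, and \eqref{eq:relsquare2} gives $\langle g,g\rangle_{\mu^{\db{d}}}=\int|\mb{E}(g|\mc{B})|^2\ud\mu^{\db{d-1}}\geq 0$. This also confirms $\|f\|_{U^d}$ is real.

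Finally, for the nesting \eqref{eq:nestedUd}, I would use exactly the same decomposition one dimension higher: applying the argument above with $d+1$ in place of $d$ yields $\|f\|_{U^{d+1}}^{2^{d+1}}=\langle g,g\rangle_{\mu^{\db{d+1}}}$ where now $g:\Omega^{\db{d}}\to\mb{C}$ is the $U^d$-integrand. Since $\mu^{\db{d+1}}\in\coup(\varOmega'',\{F_0,F_1\})$ with $\varOmega''=(\Omega^{\db{d}},\mu^{\db{d}})$ is idempotent, Corollary~\ref{cor:idempos} gives
\begin{equation*}
\langle g,g\rangle_{\mu^{\db{d+1}}}\;\geq\;\Bigl|\int_{\Omega^{\db{d}}} g\,\ud\mu^{\db{d}}\Bigr|^2\;=\;|\langle (f)_v\rangle_{U^d}|^2\;=\;\|f\|_{U^d}^{2^{d+1}},
\end{equation*}
and taking the $2^{d+1}$-th root finishes. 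The only real obstacle anywhere in the argument is bookkeeping the parities $|v|$ correctly when splitting along $F_0,F_1$, so that the product over $F_1$ turns out to be exactly $\overline{g}\co p_{F_1}$; everything else reduces to prior results (the idempotence axiom, Proposition~\ref{prop:idemp}, Corollary~\ref{cor:idempos}, Lemma~\ref{lem:GCSineq}).
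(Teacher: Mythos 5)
Your proof is correct, and for the non-negativity and the triangle inequality it coincides with the paper's argument: non-negativity comes from the idempotence axiom (your route through Proposition \ref{prop:idemp} and \eqref{eq:relsquare2} is the same computation that proves Corollary \ref{cor:idempos}, which is what the paper cites), and the triangle inequality is the standard binomial expansion from the Gowers--Cauchy--Schwarz inequality \eqref{eq:GCS}, which the paper handles by citing Gowers. Where you genuinely diverge is the nesting inequality \eqref{eq:nestedUd}. The paper writes $\|f\|_{U^d}^{2^d}$ as the $U^{d+1}$-product $\langle F\rangle_{U^{d+1}}$ of the system with $f_v=f$ on the face $\{v\sbr{d+1}=0\}$ and $f_v=1$ elsewhere (using the consistency axiom), and then applies \eqref{eq:GCS} together with $\|1\|_{U^{d+1}}=1$. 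You instead view $\|f\|_{U^{d+1}}^{2^{d+1}}$ as $\langle g,g\rangle_{\mu^{\db{d+1}}}$ for the $U^d$-integrand $g$ on $\varOmega''=(\Omega^{\db{d}},\mu^{\db{d}})$ and invoke Corollary \ref{cor:idempos} to get the lower bound $|\int g\,\ud\mu^{\db{d}}|^2=\|f\|_{U^d}^{2^{d+1}}$. Both arguments rest on the same idempotence structure (indeed \eqref{eq:GCS} is itself proved from it), but yours is marginally more economical, reusing the single Cauchy--Schwarz estimate of Corollary \ref{cor:idempos} rather than the full multilinear inequality; the paper's version has the mild advantage of illustrating the general principle that setting some $f_v=1$ in a $U^{d+1}$-product recovers lower-order products, which is used repeatedly later (e.g.\ in \eqref{eq:Fd1stobs}).
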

\begin{proof}
Given \eqref{eq:GCS}, the triangle inequality for $\|\cdot\|_{U^d}$ follows by the same argument that proves it for the Gowers norms (see \cite[Lemma 3.9]{GSz}). To see \eqref{eq:nestedUd}, note that the consistency axiom implies $\|f\|_{U^d}^{2^d}=\langle F\rangle_{U^{d+1}}$, for $F=(f_v)_{v\in \db{d+1}}$ the system with $f_v=f$ for $v\sbr{d+1}=0$ and $f_v=1$ otherwise. Then $\langle F\rangle_{U^{d+1}} \leq \|f\|_{U^{d+1}}^{2^d}$, by \eqref{eq:GCS}, and \eqref{eq:nestedUd} follows.
\end{proof}

\subsection{Fourier $\sigma$-algebras}\label{subsec:Fk}\hfill \medskip\\
In this section we study the following special sub-$\sigma$-algebras of the ambient $\sigma$-algebra $\mc{A}$ in a cubic coupling, which play a crucial role in Section \ref{sec:structhm}.

\begin{defn}\label{def:FourierAlg}
Let $\big(\varOmega=(\Omega,\mc{A},\lambda),\; (\mu^{\db{n}})_{n\geq 0}\big)$ be a cubic coupling. For each $d\in \mb{N}$, the $d$-th \emph{Fourier $\sigma$-algebra} on $\Omega$, denoted by $\mc{F}_d$, is the sub-$\sigma$-algebra of $\mc{A}$ generated by all $U^{d+1}$-convolutions of bounded $\mc{A}$-measurable functions.
\end{defn}
\noindent A first observation about these $\sigma$-algebras is that
\begin{equation}\label{eq:Fd1stobs}
\mc{F}_0 \textrm{ is the trivial $\sigma$-algebra, and $\mc{F}_{d-1}\subset \mc{F}_d$ for every $d\in \mb{N}$}.
\end{equation} 
Indeed the inclusion $\mc{F}_{d-1}\subset \mc{F}_d$ follows from the fact that every convolution of order $d$ can be viewed as a convolution of order $d+1$. More precisely, given any system $F=(f_v)_{v\in K_d}$, note that if we extend this to a system $F'=(f_v')_{v\in K_{d+1}}$ by embedding $K_d$ in some $d$-face $S \subset \db{d+1}$ containing $0^{d+1}$ and letting $f'_v$ be the constant 1 function for every $v\not\in S$, then $[F]_{U^d}=[F']_{U^{d+1}}$.

Most of the properties of the $\sigma$-algebras $\mc{F}_d$ given in this subsection are consequences of the following key fact about cubic couplings (which was illustrated in Example \ref{ex:U2coup}).
\begin{lemma}\label{lem:keybot}
Let $\big(\varOmega, (\mu^{\db{n}})_{n\geq 0}\big)$ be a cubic coupling and let $d\in \mb{N}$. Then  $\mc{A}_{0^d}^{\db{d}}\, \upmod_{\mu^{\db{d}}}\, \mc{A}^{\db{d}}_{K_d}$.
\end{lemma}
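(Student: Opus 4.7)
I would proceed by induction on $d$. For the base case $d=1$ the ergodicity axiom gives $\mu^{\db{1}}=\lambda\times\lambda$, so $\mc{A}^{\db{1}}_{0}$ and $\mc{A}^{\db{1}}_{1}$ are outright independent, and in particular $\upmod$ with trivial meet.

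For the inductive step, fix $d\geq 2$ and decompose $\db{d}=F_0\sqcup F_1$ along the last coordinate, with $F_i=\db{d-1}\times\{i\}$ and $0^d\in F_0$, so that $K_d=(K_{d-1}\times\{0\})\cup F_1$. Two ingredients are available. First, by the idempotence axiom together with Lemma \ref{lem:IdempCondProp}, $\mc{A}^{\db{d}}_{F_0}\upmod_{\mu^{\db{d}}}\mc{A}^{\db{d}}_{F_1}$ and their meet is of the form $\mc{G}=p_{F_0}^{-1}(\mc{B})=_\mu p_{F_1}^{-1}(\mc{B})$ for some sub-$\sigma$-algebra $\mc{B}\subseteq\mc{A}^{\db{d-1}}$. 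Second, by face consistency $\mu^{\db{d}}_{F_0}\cong \mu^{\db{d-1}}$, so the inductive hypothesis transported through this isomorphism gives $\mc{A}^{\db{d}}_{0^d}\upmod \mc{A}^{\db{d}}_{K_{d-1}\times\{0\}}$ inside the subcoupling, hence also in $\mu^{\db{d}}$ since both $\sigma$-algebras lie in $\mc{A}^{\db{d}}_{F_0}$.

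To put these together, I would test $\upmod$ via Proposition \ref{prop:condindep}. For any bounded $\mc{A}^{\db{d}}_{0^d}$-measurable $f$, I would approximate arbitrary $\mc{A}^{\db{d}}_{K_d}$-measurable functions in $L^2$ by linear combinations of products $g\,h$ with $g\in L^\infty(\mc{A}^{\db{d}}_{K_{d-1}\times\{0\}})$ and $h\in L^\infty(\mc{A}^{\db{d}}_{F_1})$ (Lemma \ref{lem:pisysapprox}). Using $\mc{A}^{\db{d}}_{F_0}\upmod\mc{A}^{\db{d}}_{F_1}$ via \eqref{eq:condindep} one computes
\[
\int f\,g\,h\,\ud\mu^{\db{d}}=\int g\,f\,\mb{E}(h\mid\mc{A}^{\db{d}}_{F_0})\,\ud\mu^{\db{d}}=\int g\,f\,\mb{E}(h\mid\mc{G})\,\ud\mu^{\db{d}},
\]
from which density implies $\mb{E}(f\mid\mc{A}^{\db{d}}_{K_d})=\mb{E}(f\mid\mc{A}^{\db{d}}_{K_{d-1}\times\{0\}}\vee\mc{G})$. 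Lemma \ref{lem:modlaw} (with $\mc{B}=\mc{A}^{\db{d}}_{F_0}$, $\mc{C}=\mc{A}^{\db{d}}_{F_1}$, $\mc{B}_1=\mc{A}^{\db{d}}_{K_{d-1}\times\{0\}}$) identifies $\mc{A}^{\db{d}}_{K_d}\wedge\mc{A}^{\db{d}}_{F_0}$ with $\mc{A}^{\db{d}}_{K_{d-1}\times\{0\}}\vee\mc{G}$, and since $\mc{A}^{\db{d}}_{0^d}\subseteq\mc{A}^{\db{d}}_{F_0}$ this also gives $\mc{A}^{\db{d}}_{0^d}\wedge\mc{A}^{\db{d}}_{K_d}=\mc{A}^{\db{d}}_{0^d}\wedge(\mc{A}^{\db{d}}_{K_{d-1}\times\{0\}}\vee\mc{G})$.

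The hard part will be the final step: showing that $\mb{E}(f\mid\mc{A}^{\db{d}}_{K_{d-1}\times\{0\}}\vee\mc{G})$ is itself $\mc{A}^{\db{d}}_{0^d}\wedge(\mc{A}^{\db{d}}_{K_{d-1}\times\{0\}}\vee\mc{G})$-measurable, which by Proposition \ref{prop:condindep} is equivalent to the conditional independence $\mc{A}^{\db{d}}_{0^d}\upmod(\mc{A}^{\db{d}}_{K_{d-1}\times\{0\}}\vee\mc{G})$ inside $\mu^{\db{d}}_{F_0}\cong\mu^{\db{d-1}}$. The bare inductive hypothesis only controls $\mc{A}^{\db{d}}_{K_{d-1}\times\{0\}}$, not its enlargement by $\mc{G}$. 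I would close this gap in one of two ways: either strengthen the induction to allow augmenting $\mc{A}^{\db{d-1}}_{K_{d-1}}$ by an arbitrary sub-$\sigma$-algebra already inherited from the next cube up, or use the tricube symmetries (Corollary \ref{cor:opcoup} and Lemma \ref{lem:trisym}) together with Lemma \ref{lem:fibrebotindep} on a suitable disintegration to propagate the inductive $\upmod$ through the $\mc{G}$-factor. Either route reduces the problem to a conditional independence statement already visible in the lower-dimensional cubic coupling, closing the induction.
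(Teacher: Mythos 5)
Your setup is sound up to the point you yourself flag, but the gap there is genuine and is not closable by either of the routes you sketch without essentially redoing the whole argument. The obstruction is that $\mc{G}=\mc{A}^{\db{d}}_{F_0}\wedge_{\mu^{\db{d}}}\mc{A}^{\db{d}}_{F_1}$ is precisely the object whose structure this lemma is designed to pin down: it is only \emph{after} Lemma \ref{lem:keybot} is available that Theorem \ref{thm:Fkprops} identifies $\mc{A}^{\db{d}}_{0^d}\wedge\mc{A}^{\db{d}}_{K_d}$ with $(\mc{F}_{d-1})^{\db{d}}_{0^d}$ and shows (statement (iii)) that such meets are generated by corner $\sigma$-algebras. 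So your first proposed fix --- strengthening the induction to tolerate ``augmenting $\mc{A}^{\db{d-1}}_{K_{d-1}}$ by an arbitrary sub-$\sigma$-algebra inherited from the next cube up'' --- is circular in spirit: the inductive hypothesis at level $d-1$ says nothing about how $\mc{A}^{\db{d-1}}_{0^{d-1}}$ interacts with the particular $\sigma$-algebra $\mc{B}\subseteq\mc{A}^{\db{d-1}}$ coming from Proposition \ref{prop:idemp}, and there is no a priori reason for $\mc{G}$ to be absorbed into $\mc{A}^{\db{d}}_{K_{d-1}\times\{0\}}$ or to be conditionally independent of $\mc{A}^{\db{d}}_{0^d}$ relative to anything you control at level $d-1$. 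An ``arbitrary'' augmentation is in fact false in general (take $\mc{G}$ to be all of $\mc{A}^{\db{d}}_{F_0}$), so the induction hypothesis would have to encode exactly which $\sigma$-algebras are admissible, and that is the content of the lemma itself.

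Your second suggestion is closer to what actually works, but note that the tricube is not a patch on the induction --- it replaces it. The paper proves the lemma in one shot inside the tricube coupling $\mu^{\db{2d}}_{\wt{T}_d}$: a bounded $\mc{A}^{\db{d}}_{K_d}$-measurable $g$ is lifted via the outer-point identification (Corollary \ref{cor:opcoup}) to a function $\wt{g}$ measurable with respect to the outer vertices $V_1=\{-1,1\}^d\setminus\{-1^d\}$, and one shows $\mb{E}(\wt{g}\,|\,\mc{A}_{\{-1^d\}})$ is measurable with respect to the face $V_3=\{-1,0\}^d$ minus its root. The two inputs are Theorem \ref{thm:sip} (conditional independence of the simplicial sets $V_2=T_d\setminus\{-1^d\}$ and $V_3$, which is available at dimension $2d$ with no induction on the present lemma) and the $S_3^d$ symmetry of Lemma \ref{lem:trisym}, which converts the non-simplicial set $V_5=\{-1,1\}^d$ into a face so that $V_3\,\bot\,V_5$ can also be extracted from Theorem \ref{thm:sip}. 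The relation $V_3\,\bot\,V_5$ collapses $\mb{E}(\wt{g}\,|\,\mc{A}_{V_4})$ to $\mb{E}(\wt{g}\,|\,\mc{A}_{V_3})$, and $V_2\,\bot\,V_3$ then forces the latter to be $\mc{A}_{V_2\cap V_3}$-measurable. If you want to salvage your face decomposition, you would have to prove the analogue of this conditional-independence input directly, at which point the induction on $d$ is doing no work; I would recommend abandoning the inductive frame and arguing in $\db{2d}$ from the start.
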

\begin{proof}
We use the tricube coupling. Consider the following subsets of $T_d=\{-1,0,1\}^d$:
\begin{eqnarray*}
&& V_1=\{-1,1 \}^d\setminus \{-1^d\}, \quad V_2=T_d\setminus \{-1^d\},\\
&& V_3=\{ v\in T_d: \forall\,i,\;v\sbr{i}\in \{-1,0\}\}, \quad V_4=\{-1^d\}, \quad V_5= \{-1,1 \}^d.
\end{eqnarray*}
For each $i$ let $\mc{V}_i=\mc{A}^{\wt{T}_d}_{\wt{V}_i}$, where $\wt{V}_i,\wt{T}_d$ are the corresponding subsets of $\db{2d}$ under the bijection $q_d$ from Subsection \ref{subsec:tricubes}. Let $g$ be any bounded $\mc{A}^{\db{d}}_{K_d}$-measurable function on $\Omega^{\db{d}}$. It suffices to prove that $\mb{E}(g| \mc{A}_{0^d}^{\db{d}})$ is still $\mc{A}^{\db{d}}_{K_d}$-measurable. Let $\wt{g}:\Omega^{\wt{T}_d}\to\mb{C}$, $x\mapsto g(\pi(x))$ where $\pi:\Omega^{\wt{T}_d}\to \Omega^{\db{d}}$ is the projection to the outer-point coordinate-set $q_d(\{-1,1\}^d)$ (composed with the bijection $\Omega^{q_d(\{-1,1\}^d)}\to \Omega^{\db{d}}$ induced by $\omega_d^{-1}\co q_d^{-1}$). By Corollary \ref{cor:opcoup}, we have that $\wt{g}$ is $\mc{V}_1$-measurable. By the face consistency axiom, the subcoupling of $\mu^{\db{2d}}_{\wt{T}_d}$ along $\wt{V}_3$ is isomorphic to $\mu^{\db{d}}$ (since $\wt{V}_3$ is a face in $\db{2d}$). We therefore have (using \eqref{eq:exprel} to relate conditional expectations of $g$ and $\wt{g}$) that it suffices to show that $\mb{E}(\wt{g}|\mc{V}_4)$ is $\mc{A}_{\wt{V}_3\setminus \wt{V}_4}^{\wt{T}_d}$-measurable. We first claim that $\wt{V}_2~\bot~\wt{V}_3$ in $\mu^{\db{2d}}_{\wt{T}_d}$. To see this note first that $\wt{V}_2~\bot_{\mu^{\db{2d}}}~\wt{V}_3$, by Theorem \ref{thm:sip} applied to $\mu^{\db{2d}}$, using the fact that $\wt{V}_2$ is a union of faces sharing the ``central point" of $\wt{T}_d$ (i.e.\ the point $0^{2d}$, which corresponds to the central point $0^d$ in $T_d$) and that $\wt{V}_3$ is also such a face. The claim then follows by Remark \ref{rem:botinsubcoup}. Given this claim and the fact that $V_2\cap V_3=V_3\setminus V_4$, it now suffices to show that $\mb{E}(\wt{g}|\mc{V}_4)$ is $\mc{V}_2\wedge \mc{V}_3$-measurable. To this end, note first that $\mb{E}(\wt{g}|\mc{V}_4)=\mb{E}(\wt{g}|\mc{V}_3)$, since $\wt{V}_4=\wt{V}_3\cap \wt{V}_5$ and since we also have the fact that $\wt{V}_3~\bot~\wt{V}_5$ in $\mu^{\db{2d}}_{\wt{T}_d}$, a fact that can be seen using Lemma \ref{lem:trisym} and Theorem \ref{thm:sip} combined with Remark \ref{rem:botinsubcoup} again. More precisely, note that $\wt{V}_3$ and $\wt{V}_5$ are not both faces in $\db{2d}$ (so we cannot conclude the fact directly from Theorem \ref{thm:sip} as before) but, by Lemma \ref{lem:trisym} applied within $\mu^{\db{2d}}_{\wt{T}_d}$, if we  apply the transformation corresponding to the element of $S_3^n$ that transposes $-1$ and $0$ in each coordinate in $T_d$, then $\wt{V}_3$ remains globally invariant while $\wt{V}_5$ becomes now a face containing $0^{2d}$, so we can then conclude the conditional independence of these sets by Theorem \ref{thm:sip}, and then revert the transformation to conclude that indeed $\wt{V}_3~\bot~\wt{V}_5$. Having thus shown that $\mb{E}(\wt{g}|\mc{V}_4)=\mb{E}(\wt{g}|\mc{V}_3)$, it now suffices to show that $\mb{E}(\wt{g}|\mc{V}_3)$ is $\mc{V}_2\wedge \mc{V}_3$-measurable. But this follows from $\wt{g}$ being $\mc{A}_{\wt{V}_2}^{\wt{T}_d}$-measurable and the above fact that $\wt{V}_2~\bot~\wt{V}_3$.
\end{proof}
Let us record a useful immediate consequence of Lemma \ref{lem:keybot}.
\begin{corollary}\label{cor:Udconvmeas}
For every $U^d$-convolution $[F]_{U^d}$ we have that $[F]_{U^d}\co p_{0^d}$ is $\mc{A}_{0^d}^{\db{d}}\wedge \mc{A}_{K_d}^{\db{d}}$-measurable on $\Omega^{\db{d}}$.
\end{corollary}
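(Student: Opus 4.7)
The plan is to deduce this corollary as a very short consequence of Lemma \ref{lem:keybot} combined with the characterization of conditional independence given in Proposition \ref{prop:condindep}. The key observation is that $[F]_{U^d} \co p_{0^d}$ is defined to be a version of the conditional expectation of $g := \prod_{v\in K_d}\mc{C}^{|v|+1}f_v\co p_v$ with respect to $\mc{A}_{0^d}^{\db{d}}$, and the function $g$ is visibly $\mc{A}_{K_d}^{\db{d}}$-measurable. So the question is really about where such a conditional expectation lives when the function being conditioned is measurable with respect to the ``other'' $\sigma$-algebra.

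First I would invoke Lemma \ref{lem:keybot} to record that $\mc{A}_{0^d}^{\db{d}}\,\upmod_{\mu^{\db{d}}}\,\mc{A}_{K_d}^{\db{d}}$. Then, applying Proposition \ref{prop:condindep} with $\mc{B}_0=\mc{A}_{0^d}^{\db{d}}$, $\mc{B}_1=\mc{A}_{K_d}^{\db{d}}$, and the function $g$ above, we obtain the identity
\[
\mb{E}\bigl(\mb{E}(g\,|\,\mc{A}_{K_d}^{\db{d}})\,\bigm|\,\mc{A}_{0^d}^{\db{d}}\bigr)\;=_{\mu^{\db{d}}}\; \mb{E}\bigl(g\,\bigm|\,\mc{A}_{0^d}^{\db{d}}\wedge \mc{A}_{K_d}^{\db{d}}\bigr).
\]
Since $g$ is itself $\mc{A}_{K_d}^{\db{d}}$-measurable, the inner conditional expectation on the left collapses to $g$, so the left side equals $\mb{E}(g\,|\,\mc{A}_{0^d}^{\db{d}})$, which by Definition \ref{def:Udconv} equals $[F]_{U^d}\co p_{0^d}$ up to a $\mu^{\db{d}}$-null set.

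Hence $[F]_{U^d}\co p_{0^d}$ agrees $\mu^{\db{d}}$-almost everywhere with a version of $\mb{E}(g\,|\,\mc{A}_{0^d}^{\db{d}}\wedge \mc{A}_{K_d}^{\db{d}})$, which is by construction $\mc{A}_{0^d}^{\db{d}}\wedge \mc{A}_{K_d}^{\db{d}}$-measurable. There is no real obstacle here; the whole content is packaged in the previous lemma, and the corollary is essentially a one-line rewriting using Proposition \ref{prop:condindep}. The only minor point to be careful about is that measurability statements about $[F]_{U^d}\co p_{0^d}$ are understood modulo $\mu^{\db{d}}$-null sets (consistently with the essential-uniqueness clause in Definition \ref{def:Udconv}), so the conclusion should be read in that sense.
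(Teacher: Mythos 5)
Your proof is correct and follows essentially the same route as the paper: the paper's own argument also notes that $\prod_{v\in K_d}\mc{C}^{|v|+1}f_v\co p_v$ is $\mc{A}^{\db{d}}_{K_d}$-measurable and cites Lemma \ref{lem:keybot}, with the collapse via \eqref{eq:condindep} left implicit, which you simply spell out. (The only pedantic caveat is that Proposition \ref{prop:condindep} is stated for real-valued functions, so one formally applies it to real and imaginary parts.)
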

\begin{proof}
We have $F=(f_v)_{v\in K_d}$ for some functions $f_v\in L^\infty(\varOmega)$. By definition we have $\mu^{\db{d}}$-almost everywhere $[F]_{U^d}\co p_{0^d}=\mb{E}(\prod_{v\in K_d} f_v| \mc{A}^{\db{d}}_{0^d})$, and since $\prod_{v\in K_d} f_v$ is $\mc{A}^{\db{d}}_{K_d}$-measurable, the result follows by Lemma \ref{lem:keybot}.
\end{proof}
The following theorem is the main result of this subsection.

\begin{theorem}[Properties of $\mc{F}_d$]\label{thm:Fkprops}
Let $\big(\varOmega=(\Omega,\mc{A},\lambda), (\mu^{\db{n}})_{n\geq 0}\big)$ be a cubic coupling. For every positive integer $d$, the following statements hold:
\begin{enumerate}
\item In $\mu^{\db{d}}$ we have $(\mc{F}_{d-1})_{0^d}^{\db{d}} = \mc{A}^{\db{d}}_{0^d} \wedge \mc{A}_{K_d}^{\db{d}}$.

\item For $f\in L^\infty(\varOmega)$ we have $\| f\|_{U^d}=0$ if and only if $\mb{E}(f| \mc{F}_{d-1})=0$.

\item In $\mu^{\db{d}}$ we have $(\mc{F}_{d-1})_{0^d}^{\db{d}} = \mc{A}^{\db{d}}_{0^d} \wedge (\mc{F}_{d-1})_{K_d}^{\db{d}}$.

\item $\|\cdot\|_{U^d}$ is a norm on $L^\infty(\mc{F}_{d-1})$.
\end{enumerate}
\end{theorem}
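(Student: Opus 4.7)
For (i), the inclusion $(\mc{F}_{d-1})_{0^d}^{\db{d}} \subset_{\mu^{\db{d}}} \mc{A}^{\db{d}}_{0^d} \wedge \mc{A}_{K_d}^{\db{d}}$ is immediate from Corollary \ref{cor:Udconvmeas}, since $\mc{F}_{d-1}$ is by definition generated by $U^d$-convolutions. For the reverse inclusion, I would take $h \in L^\infty(\mc{A}_{0^d}^{\db{d}} \wedge \mc{A}_{K_d}^{\db{d}})$, write $h = h_0 \co p_{0^d}$ via Lemma \ref{lem:Doob}, and approximate $h$ in $L^2(\mu^{\db{d}})$ by linear combinations of rank-1 products $\prod_{v \in K_d} f_v \co p_v$ using Lemma \ref{lem:pisysapprox}, exploiting the $\mc{A}_{K_d}^{\db{d}}$-measurability of $h$. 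Applying $\mb{E}(\cdot|\mc{A}_{0^d}^{\db{d}})$ preserves $h$ and turns each such product into a $U^d$-convolution pulled back via $p_{0^d}$, once the conjugations $\mc{C}^{|v|+1}$ from Definition \ref{def:Udconv} are absorbed into the $f_v$'s, so $h_0$ is $\mc{F}_{d-1}$-measurable. A by-product of this argument, which I would record for later use, is the density fact that the $L^2(\lambda)$-closed linear span of $U^d$-convolutions equals $L^2(\mc{F}_{d-1})$.

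For (ii), I would combine Gowers-Cauchy-Schwarz (Lemma \ref{lem:GCSineq}) with \eqref{eq:coincon}. By \eqref{eq:GCS}, $\|f\|_{U^d}=0$ is equivalent to the vanishing of $\langle F\rangle_{U^d}$ for every system $F=(f_v)_{v\in\db{d}}$ with $f_{0^d}=f$, which by \eqref{eq:coincon} amounts to $f$ being orthogonal in $L^2(\lambda)$ to every $U^d$-convolution; by the density fact above, this is equivalent to $\mb{E}(f|\mc{F}_{d-1})=0$. Part (iv) then follows at once: for $f \in L^\infty(\mc{F}_{d-1})$ one has $\mb{E}(f|\mc{F}_{d-1})=f$, so $\|f\|_{U^d}=0$ forces $f=0$ $\lambda$-almost everywhere, and $\|\cdot\|_{U^d}$ is a genuine norm on $L^\infty(\mc{F}_{d-1})$.

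The real work lies in (iii), where the $\supset$ direction is immediate from (i) and the inclusion $(\mc{F}_{d-1})_{K_d}^{\db{d}}\subset\mc{A}_{K_d}^{\db{d}}$, so I will focus on $\subset$. My first step would be to upgrade (ii), via Lemma \ref{lem:GCSineq} and Remark \ref{rem:indoverfact}, to the relative independence of $\mu^{\db{d}}$ over the factor $\bigotimes_{v\in \db{d}} \mc{F}_{d-1}$: whenever an entry $g_w$ of a system $G$ satisfies $\mb{E}(g_w|\mc{F}_{d-1})=0$, (ii) gives $\|g_w\|_{U^d}=0$ and \eqref{eq:GCS} forces $\xi(\mu^{\db{d}},G)=0$. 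Applied at $f_{0^d}=h_0\in L^\infty(\mc{F}_{d-1})$, this shows that $h_0\co p_{0^d}$ is $L^2(\mu^{\db{d}})$-orthogonal to every rank-1 product $\prod_{v \in K_d} g_v\co p_v$ having at least one factor in the kernel of $\mb{E}(\cdot|\mc{F}_{d-1})$. Writing $g_v=\mb{E}(g_v|\mc{F}_{d-1})+g_v^\perp$ and expanding, together with applying Lemma \ref{lem:pisysapprox} to $\mc{A}$ and separately to $\mc{F}_{d-1}$, I would then extract an orthogonal decomposition $L^2(\mc{A}_{K_d}^{\db{d}}) = L^2((\mc{F}_{d-1})_{K_d}^{\db{d}}) \oplus H$, where $H$ is the $L^2$-closed span of rank-1 products with at least one orthogonal factor. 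The main obstacle will be to verify that the two summands are genuinely orthogonal, which requires a second invocation of relative independence. Once this decomposition is in place, the fact that (i) puts $h_0\co p_{0^d}$ into $L^2(\mc{A}_{K_d}^{\db{d}})$ while the orthogonality derived above gives $h_0\co p_{0^d}\perp H$ places $h_0\co p_{0^d}$ in $L^2((\mc{F}_{d-1})_{K_d}^{\db{d}})$, completing (iii).
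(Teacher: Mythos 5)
Your proposal is correct and follows essentially the same route as the paper: Corollary \ref{cor:Udconvmeas} together with Lemmas \ref{lem:Doob} and \ref{lem:pisysapprox} for (i), the Gowers--Cauchy--Schwarz inequality and \eqref{eq:coincon} for (ii) and (iv), and relative independence of $\mu^{\db{d}}$ over the factor $\bigotimes_{v\in\db{d}}\mc{F}_{d-1}$ for (iii). Your explicit orthogonal decomposition $L^2(\mc{A}_{K_d}^{\db{d}}) = L^2((\mc{F}_{d-1})_{K_d}^{\db{d}}) \oplus H$ is a repackaging of the paper's projection identity \eqref{eq:factorFdproj} applied to rank-one approximants, and the orthogonality you flag as the main obstacle does go through, via the same computation $\mb{E}(g_w \overline{h_w}\,|\,\mc{F}_{d-1}) = \overline{h_w}\,\mb{E}(g_w|\mc{F}_{d-1}) = 0$ that the paper records as the module property \eqref{eq:moduleprop}.
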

\begin{proof}
To see statement $(i)$, note first that the inclusion $(\mc{F}_{d-1})_{0^d}^{\db{d}} \subset \mc{A}^{\db{d}}_{0^d} \wedge \mc{A}_{K_d}^{\db{d}}$ follows immediately from Corollary \ref{cor:Udconvmeas}, since this tells us that, for every set $B$ that is the preimage of a Borel set by a $U^d$-convolution, we have $p_{0^d}^{-1}(B)\in \mc{A}^{\db{d}}_{0^d} \wedge \mc{A}_{K_d}^{\db{d}}$. To see the opposite inclusion, let $f$ be a bounded $\mc{A}^{\db{d}}_{0^d} \wedge \mc{A}_{K_d}^{\db{d}}$-measurable function and, since this is $\mc{A}^{\db{d}}_{0^d}$-measurable, let $f'$ be bounded $\mc{A}$-measurable such that $f=f'\co p_{0^d}$ almost everywhere (using Lemma \ref{lem:Doob}). We have by Lemma \ref{lem:pisysapprox} that for any fixed $\epsilon>0$ there is a finite sum $h=\sum_{i\in [m]} \prod_{v\in K_d} g_{i,v}\co p_v$ such that $\|f-h\|_{L^2}\leq \epsilon$. Now note that by linearity and \eqref{eq:Udconvdef} we have $\mb{E}(h|\mc{A}_{0^d}^{\db{d}})=(\sum_{i\in [m]} [F_i]_{U^d})\co p_{0^d}$, where $F_i=(g_{i,v})_{v\in \db{d}}$, and then $\|f'-\sum_{i\in [m]} [F_i]_{U^d}\|_{L^2(\lambda)}=\|f-\mb{E}(h|\mc{A}_{0^d}^{\db{d}})\|_{L^2(\mu^{\db{d}})}=\|\mb{E}(f-h|\mc{A}_{0^d}^{\db{d}})\|_{L^2(\mu^{\db{d}})}\leq \epsilon$, so $f'$ is an $L^2$-limit of $\mc{F}_{d-1}$-measurable functions and is therefore $\mc{F}_{d-1}$-measurable.

To see $(ii)$, let $R$ denote the set of rank-1 bounded $\mc{A}^{\db{d}}_{K_d}$-measurable functions on $\Omega^{\db{d}}$. We claim that $\mb{E}(f|\mc{F}_{d-1})=0$ holds if and only if $f\co p_{0^d}$ is orthogonal to every function in $R$. To see the forward implication, fix any $g\in R$ and let $G$ be the system of functions $g_v$, $v\in K_d$ such that $g=[G]_{U^d}^\times$. We have $[G]_{U^d}\in L^\infty(\mc{F}_{d-1})$, so 
$\mb{E}_{\mu^{\db{d}}}( f\co p_{0^d}\, \overline{[G]_{U^d}^\times}) = \mb{E}_\lambda ( f\, \overline{[G]_{U^d}}) =\mb{E}( \mb{E}(f\,\overline{[G]_{U^d}}|\mc{F}_{d-1}) )=\mb{E}( \overline{[G]}_{U^d}\,\mb{E}(f|\mc{F}_{d-1}))=0$.

To see the backward implication, note that if $f\co p_{0^d}$ is orthogonal to $R$ then by Lemma \ref{lem:pisysapprox} we have that $f\co p_{0^d}$ is orthogonal to every function in $L^2(\mc{A}^{\db{d}}_{K_d})$, hence, since $\mb{E}(f\co p_{0^d}| \mc{A}^{\db{d}}_{K_d})$ is the orthogonal projection of $f\co p_{0^d}$ to the subspace $L^2(\mc{A}^{\db{d}}_{K_d})$, it is 0. By statement $(i)$ we have $\mb{E}(f\co p_{0^d}| \mc{A}^{\db{d}}_{K_d})=\mb{E}(f\co p_{0^d}|(\mc{F}_{d-1})^{\db{d}}_{0^d})$ and since this is $\mb{E}(f|\mc{F}_{d-1})\co p_{0^d}$, we deduce indeed that $\mb{E}(f|\mc{F}_{d-1})=0$. Having proved our claim, note now that by inequality \eqref{eq:GCS} we have that $\|f\|_{U^d}=0$ if and only if $f\co p_{0^d}$ is orthogonal to $R$, and statement $(ii)$ follows.

To see $(iii)$, note that by statement $(i)$ we clearly have $(\mc{F}_{d-1})_{0^d}^{\db{d}}\supset \mc{A}_{0^d}^{\db{d}}\wedge (\mc{F}_{d-1})_{K_d}^{\db{d}}$, so we just need to prove the opposite inclusion. To do this, first note the following fact:
\begin{equation}\label{eq:moduleprop}
\textrm{for all $f\in L^\infty(\mc{A})$ with $\|f\|_{U^d}=0$ and  $g\in L^\infty(\mc{F}_{d-1})$, we have $\|f g \|_{U^d}=0$.}
\end{equation}
Indeed, by $(ii)$ applied to $f$ we have $\mb{E}(f g|\mc{F}_{d-1})= g\, \mb{E}(f|\mc{F}_{d-1})=0$, and then $\|f g \|_{U^d}=0$ follows by $(ii)$ applied now to $fg$. We claim that it follows from \eqref{eq:moduleprop} that for every system $F=(f_v)_{v\in \db{d}}$ of bounded $\mc{A}$-measurable functions, we have $\mu^{\db{d}}$-almost-everywhere
\begin{equation}\label{eq:factorFdproj}
\mb{E}\Big( \prod_v f_v\co p_v\,\big |\, (\mc{F}_{d-1})^{\db{d}}\Big)= \prod_v \mb{E}(f_v| \mc{F}_{d-1})\co p_v.
\end{equation}
To prove this, fix any such system $F=(f_v)_{v\in \db{d}}$, and note that it suffices to show that for every system $(g_v)_{v\in \db{d}}$ of  functions in $L^\infty(\mc{F}_{d-1})$ we have $\int_{\Omega^{\db{d}}} \prod_{v\in \db{d}} (f_v \, g_v)\co p_v \ud\mu^{\db{d}} =  \int_{\Omega^{\db{d}}} \prod_{v\in \db{d}} (\mb{E}(f_v|\mc{F}_{d-1}) g_v)\co p_v \ud\mu^{\db{d}}$. Using multilinearity, the difference between these integrals is seen to be a sum of finitely many integrals of the form $\int_{\Omega^{\db{d}}} \prod_{v\in \db{d}} (h_v g_v)\co p_v \ud\mu^{\db{d}}$ where for some $v$ we have $h_v= f_v-\mb{E}(f_v|\mc{F}_{d-1})$. By statement $(ii)$ this function $h_v$ has $U^d$-seminorm 0. Hence, since by \eqref{eq:moduleprop} we have $\|h_v g_v\|_{U^d}=0$, by Lemma \ref{lem:GCSineq} we conclude that each such integral is 0, which proves the above equality of integrals, and our claim follows. To finish proving $(iii)$, fix any system $F=(f_v)_{v\in \db{d}}$ as above, and note that for every $\epsilon>0$ we have 
$\| [F]_{U^d}\co p_{0^d}- \sum_{i=1}^{m_\epsilon} \prod_{v\in K_d} g_{i,v}\co p_v\|_{L^2}\leq \epsilon$ for some bounded $\mc{A}$-measurable function $g_{i,v}$ and some $m_\epsilon\in \mb{N}$. Since trivially $[F]_{U^d}\co p_{0^d} = \mb{E}([F]_{U^d}\co p_{0^d} |(\mc{F}_{d-1})^{\db{d}})$, we conclude that $
\Big\| [F]_{U^d}\co p_{0^d} -  \sum_{i=1}^{m_\epsilon} \mb{E}( \prod_{v\in K_d} g_{i,v}\co p_v |(\mc{F}_{d-1})^{\db{d}}) \Big\|_{L^2} \leq \epsilon$, and by \eqref{eq:factorFdproj} the sum here is $\sum_{i=1}^{m_\epsilon} \prod_{v\in K_d} \mb{E}( g_{i,v}|(\mc{F}_{d-1}) \co p_v$, which is $(\mc{F}_{d-1})^{\db{d}}_{K_d}$-measurable. This shows that $[F]_{U^d}\co p_{0^d}$ is an $L^2$-limit of $(\mc{F}_{d-1})^{\db{d}}_{K_d}$-measurable functions, and $(iii)$ follows. 

To see $(iv)$ note that the seminorm $\|\cdot\|_{U^d}$ is indeed non-degenerate on $L^\infty(\mc{F}_{d-1})$, for if $f\in L^\infty(\mc{F}_{d-1})$ then $f=\mb{E}(f|\mc{F}_{d-1})$, so if this is 0 then so is $\|f\|_{U^d}$ by statement $(ii)$.
\end{proof}
\noindent The following consequence of statement $(ii)$ above is useful and can be viewed as an alternative definition of Fourier $\sigma$-algebras (but note that it would be less clear from such a definition of $\mc{F}_d$ that this is indeed a $\sigma$-algebra).
\begin{corollary}\label{cor:noisebotalg}
Let $f\in L^\infty(\mc{A})$. We have that $f\in L^\infty(\mc{F}_{d-1})$ if and only if for every $g\in L^\infty(\mc{A})$ with $\|g\|_{U^d}=0$ we have $\mb{E}(f\,\overline{g})=0$.
\end{corollary}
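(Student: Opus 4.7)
The plan is to use Theorem \ref{thm:Fkprops}$(ii)$ as the main tool, since it translates the vanishing of $\|\cdot\|_{U^d}$ into the vanishing of the conditional expectation onto $\mc{F}_{d-1}$. This exactly matches the kind of orthogonality condition appearing in the corollary, so the argument should be a short orthogonality computation in $L^2$ rather than anything requiring the deeper structure of the cubic coupling.

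For the forward implication I would proceed as follows. Suppose $f \in L^\infty(\mc{F}_{d-1})$ and let $g \in L^\infty(\mc{A})$ with $\|g\|_{U^d}=0$. By Theorem \ref{thm:Fkprops}$(ii)$ applied to $g$, we have $\mb{E}(g|\mc{F}_{d-1}) =_\lambda 0$, and taking complex conjugates also $\mb{E}(\overline{g}|\mc{F}_{d-1})=_\lambda 0$. Since $f$ is $\mc{F}_{d-1}$-measurable and bounded, we may pull it out of the conditional expectation:
\[
\mb{E}(f\overline{g}) = \mb{E}\bigl(\mb{E}(f\overline{g}\mid\mc{F}_{d-1})\bigr) = \mb{E}\bigl(f\,\mb{E}(\overline{g}\mid\mc{F}_{d-1})\bigr) = 0.
\]

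For the reverse implication, assume the orthogonality hypothesis on $f$, and set $h := f - \mb{E}(f|\mc{F}_{d-1})$, which is bounded and $\mc{A}$-measurable. Then $\mb{E}(h|\mc{F}_{d-1}) =_\lambda 0$ by linearity and the idempotence of conditional expectation, so Theorem \ref{thm:Fkprops}$(ii)$ gives $\|h\|_{U^d} = 0$. Now the hypothesis applied with this $g := h$ yields $\mb{E}(f\overline{h}) = 0$, while the forward direction (just proved) applied to the $\mc{F}_{d-1}$-measurable function $\mb{E}(f|\mc{F}_{d-1})$ gives $\mb{E}(\mb{E}(f|\mc{F}_{d-1})\,\overline{h}) = 0$. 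Subtracting,
\[
\|h\|_{L^2(\lambda)}^2 = \mb{E}(h\overline{h}) = \mb{E}(f\overline{h}) - \mb{E}\bigl(\mb{E}(f|\mc{F}_{d-1})\,\overline{h}\bigr) = 0,
\]
so $h =_\lambda 0$, i.e.\ $f =_\lambda \mb{E}(f|\mc{F}_{d-1}) \in L^\infty(\mc{F}_{d-1})$.

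There is no real obstacle here; the content has been packaged into Theorem \ref{thm:Fkprops}$(ii)$, and the corollary is essentially the statement that $L^\infty(\mc{F}_{d-1})$ is the $L^2$-orthogonal complement (restricted to $L^\infty$) of the $\|\cdot\|_{U^d}$-null subspace. The only minor subtlety is keeping track of complex conjugates when pulling $f$ through the conditional expectation, and ensuring that $h$ is bounded (which it is, since both $f$ and $\mb{E}(f|\mc{F}_{d-1})$ are bounded) so that $h$ is a legitimate test function for the hypothesis.
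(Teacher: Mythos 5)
Your proof is correct and follows essentially the same route as the paper: both directions hinge on Theorem \ref{thm:Fkprops}$(ii)$, and your backward implication with $h=f-\mb{E}(f|\mc{F}_{d-1})$ is just a slight reorganization of the paper's computation (the paper equates $\mb{E}(|f|^2)$ with $\mb{E}(|\mb{E}(f|\mc{F}_{d-1})|^2)$ rather than showing $\|h\|_{L^2}^2=0$ directly, but these are the same orthogonality argument).
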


\begin{proof}
For the forward implication note that if $f\in L^\infty(\mc{F}_{d-1})$ then for every such $g$ we have by statement $(ii)$ above that $\mb{E}(g|\mc{F}_{d-1})=0$ and so $\mb{E}(f\,\overline{g})=\mb{E}(f\,\overline{\mb{E}(g|\mc{F}_{d-1})})=0$. For the backward implication, note that if $f$ is orthogonal to every such $g$ then in particular for $g=f-\mb{E}(f|\mc{F}_{d-1})$, since by statement $(ii)$ above we have $\|g\|_{U^d}=0$, it follows that $\mb{E}(f\overline{g})=0$, and this implies that $ \mb{E}(|f|^2)=\mb{E}(|\mb{E}(f|\mc{F}_{d-1})|^2)$, so $f$ must be in $L^\infty(\mc{F}_{d-1})$.
\end{proof}

\subsection{Properties of $U^d$-convolutions}\hfill \medskip \\
Let us introduce the following notation:
\begin{equation}
\db{n}_{\leq d} = \{v\in \db{n}: |v|\leq d\}, \qquad K_{n,\leq d}= \{v\in K_n: |v|\leq d\}.
\end{equation}
Recall also that the \emph{height} of a simplicial set $S\subset \db{n}$ is $\max_{v\in S} |v|$.
\begin{lemma}\label{lem:prefdconvfd}
Let $(\varOmega,(\mu^{\db{n}})_{n\geq 0})$ be a cubic coupling, let $n\in\mb{N}$, and let $S$ be a simplicial subset of $\db{n}$ of height $d\geq 1$. Let $F=(f_v)_{v\in K_n}$ be a system of bounded measurable functions on $\Omega$ where $f_v=1$ for $v\in K_n\setminus S$. Then $[F]_{U^n}$ is $\mc{F}_{d-1}$-measurable.
\end{lemma}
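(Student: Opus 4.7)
The plan is to use Corollary~\ref{cor:noisebotalg} to reduce the statement $[F]_{U^n}\in L^\infty(\mc{F}_{d-1})$ to showing that
\[
\int_{\Omega^{\db{n}}} P\cdot \overline{g}\co p_{0^n}\,\ud\mu^{\db{n}} = 0
\]
for every $g\in L^\infty(\mc{A})$ with $\|g\|_{U^d}=0$, where $P=\prod_{v\in K_n\cap S}\mc{C}^{|v|+1}f_v\co p_v$ is $\mc{A}^{\db{n}}_{K_n\cap S}$-measurable. I would first dispatch the \emph{single-face case}, where $S=F_J$ for some $J\subset [n]$ with $|J|=d$. Here the consistency axiom applied to the injective morphism $\phi:\db{d}\to\db{n}$ with image $F_J$ gives $\mu^{\db{n}}_\phi=\mu^{\db{d}}$. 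Since both $P$ and $\overline{g}\co p_{0^n}$ are $\mc{A}^{\db{n}}_{F_J}$-measurable, the integral reduces to the analogue over $\mu^{\db{d}}$. Lemma~\ref{lem:keybot} combined with Theorem~\ref{thm:Fkprops}(i) asserts $\mc{A}^{\db{d}}_{0^d}\upmod\mc{A}^{\db{d}}_{K_d}$ with meet $(\mc{F}_{d-1})^{\db{d}}_{0^d}$, so Proposition~\ref{prop:condindep} yields $\mb{E}(\overline{g}\co p_{0^d}|\mc{A}^{\db{d}}_{K_d})=\overline{\mb{E}(g|\mc{F}_{d-1})}\co p_{0^d}=0$, and the reduced integral vanishes since the pulled-back $P$ is $\mc{A}^{\db{d}}_{K_d}$-measurable. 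Equivalently, in this case $[F]_{U^n}$ is identified with the $U^d$-convolution $[F']_{U^d}$ of the system $F'=(f_{\phi(w)})_{w\in K_d}$, which lies in $L^\infty(\mc{F}_{d-1})$ by Definition~\ref{def:FourierAlg}.

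For the general case, I would argue by induction on the number $m(S)$ of maximal elements of $S$ of weight exactly $d$. The case $m(S)=0$ reduces to an outer induction on $d$ (with base $d=1$, in which $[F]_{U^n}$ is a constant by iterated use of Remark~\ref{rem:strongaxiom} and ergodicity), using $\mc{F}_{d'-1}\subset\mc{F}_{d-1}$ for $d'<d$. For $m(S)\geq 1$, pick a maximal $v_*\in S$ with $|v_*|=d$ and let $F_J=\{w\le v_*\}\subset S$ be its downset, a $d$-face inside $S$. Split $P=A\cdot B$, with $A=\prod_{v\in K_n\cap F_J}\mc{C}^{|v|+1}f_v\co p_v$ and $B=\prod_{v\in K_n\cap(S\setminus F_J)}\mc{C}^{|v|+1}f_v\co p_v$, and decompose
\[
\int P\,\overline{g}\co p_{0^n}\,\ud\mu^{\db{n}} = \int A\,\mb{E}(B\,|\,\mc{A}^{\db{n}}_{K_n\cap F_J})\,\overline{g}\co p_{0^n}\,\ud\mu^{\db{n}} + \int A\,\big(B-\mb{E}(B\,|\,\mc{A}^{\db{n}}_{K_n\cap F_J})\big)\,\overline{g}\co p_{0^n}\,\ud\mu^{\db{n}}.
\]
The first summand, after absorbing $\mb{E}(B|\cdot)$ into one of the factors of $A$, is handled by the single-face case. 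The second summand should be controlled via Theorem~\ref{thm:sip} applied to the simplicial sets $F_J$ and the simplicial closure of $K_n\cap(S\setminus F_J)$, combined with the inductive hypothesis applied to $S\setminus\{v_*\}$ (which has $m(S)-1$ maximal $d$-weight elements).

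The hard part will be the vanishing of this ``remainder" integral. Carrying it out appears to require an auxiliary strengthening of Lemma~\ref{lem:keybot}, namely the claim that for every simplicial $S\subset\db{n}$ of height $d$ one has $\mc{A}^{\db{n}}_{0^n}\upmod_{\mu^{\db{n}}}\mc{A}^{\db{n}}_{K_n\cap S}$ with meet $(\mc{F}_{d-1})^{\db{n}}_{0^n}$; once this is established (using Proposition~\ref{prop:latticeind} and Theorem~\ref{thm:sip} on the lattice of simplicial subsets of $S$, together with consistency on an embedded $d$-face), the desired conclusion follows from Proposition~\ref{prop:condindep} applied to $P$, which gives $\mb{E}(P|\mc{A}^{\db{n}}_{0^n})=\mb{E}(P|(\mc{F}_{d-1})^{\db{n}}_{0^n})$ directly. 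The delicate bookkeeping lies in verifying that the meet does not grow beyond $(\mc{F}_{d-1})^{\db{n}}_{0^n}$ — i.e., that the extra ``vertical" dimensions beyond the chosen $d$-face $F_J$ do not introduce additional invariants — which is where the simpliciality of $S$ together with the tricube-based argument underlying Lemma~\ref{lem:keybot} must be leveraged.
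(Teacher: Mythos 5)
Your reduction via Corollary \ref{cor:noisebotalg} is legitimate, and your single-face case is handled correctly (consistency to pass to $\mu^{\db{d}}$, then Lemma \ref{lem:keybot}, Theorem \ref{thm:Fkprops}(i)--(ii) and Proposition \ref{prop:condindep} to kill $\mb{E}(\overline{g}\co p_{0^d}|\mc{A}^{\db{d}}_{K_d})$). But the general case has a genuine gap: everything hinges on the ``auxiliary strengthening'' of Lemma \ref{lem:keybot} that you state but do not prove, namely that $\mc{A}^{\db{n}}_{0^n}\upmod_{\mu^{\db{n}}}\mc{A}^{\db{n}}_{K_n\cap S}$ with meet exactly $(\mc{F}_{d-1})^{\db{n}}_{0^n}$ for every simplicial $S$ of height $d$. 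That claim is equivalent (by the standard $f=\mb{E}(f|\mc{F}_{d-1})+g$ decomposition and Theorem \ref{thm:Fkprops}(ii)) to the statement that $\langle F'\rangle_{U^n}=0$ whenever the function at $0^n$ has vanishing $U^d$-seminorm and the system is supported on $S$ --- i.e.\ to Lemma \ref{lem:simpzero}/Corollary \ref{cor:simpzero}, which the paper only proves \emph{after} the present lemma, by an induction of the same difficulty. So your proposal reduces the lemma to a statement at least as hard, and leaves it unproved. The route you sketch for it cannot work as stated: Theorem \ref{thm:sip} and Proposition \ref{prop:latticeind} only give conditional independence between pairs of \emph{simplicial} sets, and $K_n\cap S=S\setminus\{0^n\}$ is not simplicial; more importantly, those results say nothing about why the meet does not exceed $(\mc{F}_{d-1})^{\db{n}}_{0^n}$ when one passes from a single face to a union of faces (conditional independence with prescribed meets is not preserved under joins in general). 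The same problem afflicts your remainder term directly: $A\cdot\overline{g}\co p_{0^n}$ is only $\mc{A}^{\db{n}}_{F_J}$-measurable, not $\mc{A}^{\db{n}}_{K_n\cap F_J}$-measurable, so conditioning $B$ on $\mc{A}^{\db{n}}_{K_n\cap F_J}$ does not make the second summand vanish, and no mechanism for its vanishing is supplied.

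For comparison, the paper avoids this entirely by inducting on $|S|$ and never touching the vertex $0^n$: it takes a maximal $w\in S$ with $|w|=d$, conditions $f_w\co p_w$ on $\mc{A}^{\db{n}}_{S\setminus\{w\}}$, uses Theorem \ref{thm:sip} (note $S\setminus\{w\}$ \emph{is} simplicial, and $(S\setminus\{w\})\cap V(w)=K(w)$) to localize this expectation to the corner $K(w)$, then Lemma \ref{lem:keybot} together with Theorem \ref{thm:Fkprops}(i) and (iii) to replace $f_w\co p_w$ by a sum of $(\mc{F}_{d-1})^{\db{n}}_{K(w)}$-measurable rank-one functions, absorbs these into the remaining system, and applies the induction hypothesis to the strictly smaller simplicial set $S\setminus\{w\}$. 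If you want to salvage your approach, you would first have to prove Lemma \ref{lem:simpzero} independently --- at which point you will find yourself running essentially that same induction.
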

\begin{proof}
We prove this for each fixed $n$ by induction on $|S|$. Since $d\geq 1$, we must have $|S|\geq 2$. If we have equality, then we must have $S=\{0^n,v\}$ with $|v|=1$, so $d=1$. Then by \eqref{eq:Udconvdef} and the consistency axiom we have $[F]_{U^n}\co p_{0^n}=\mb{E}_{\mu^{\db{1}}}\big(f_v\co p_1|\mc{A}_0^{\db{1}}\big)$, and by the ergodicity axiom $\mu^{\db{1}}=\lambda\times\lambda$, so this expectation equals $\lambda$-almost everywhere the constant $\int_\Omega f_v\ud\lambda$, and is therefore in $L^\infty(\mc{F}_0)$ as required.

For $|S|>2$, by \eqref{eq:Udconvdef} it suffices to show that $\mb{E}\big(\prod_{v\in K_n\cap S} f_v\co p_v|\mc{A}_{0^n}^{\db{n}}\big)$ is $(\mc{F}_{d-1})_{0^n}^{\db{n}}$-measurable. Fix any maximal element $w\in S$, i.e.\ an element with $|w|=d$. Let $\phi:\db{d}\to \db{n}$ be a face map satisfying $\phi(0^d)=w$ and $\phi(v)< w$ otherwise (in particular $\phi(1^d)=0^n$), let $V(w)$ denote the image of $\phi$ (i.e.\ the $d$-face with maximal element $w$) and let $K(w)$ denote the corner $\phi(K_d)$, which is included in $S$. Let $\mc{B}=\mc{A}_{S\setminus\{w\}}^{\db{n}}$. Since $\mc{B}\supset \mc{A}_{0^n}^{\db{n}}$ and every $f_v\co p_v$ with $v\not= w$ in the product $\prod_{v\in K_n\cap S} f_v\co p_v$ is $\mc{B}$-measurable, we have
\begin{equation}\label{eq:B-reduc}
\mb{E}\Big(\prod_{v\in K_n\cap S} f_v\co p_v|\mc{A}_{0^n}^{\db{n}}\Big) = \mb{E}\Big(\mb{E}(f_w\co p_w| \mc{B})\prod_{v\in K_n \cap S\setminus\{w\}} f_v\co p_v ~ |~\mc{A}_{0^n}^{\db{n}}\Big).
\end{equation}
Now note that $S\setminus\{w\}$ is still simplicial, so by Theorem \ref{thm:sip} we have $S\setminus\{w\}~ \bot~ V(w)$ in $\mu^{\db{n}}$. Since $f_w\co p_w$ is $\mc{A}_{V(w)}^{\db{n}}$-measurable, and since we have $(S\setminus\{w\})\cap V(w)=K(w)$, it follows that $\mb{E}(f_w\co p_w| \mc{B})=\mb{E}(f_w\co p_w| \mc{A}_{K(w)}^{\db{n}})$. Now Lemma \ref{lem:keybot} implies that $\mc{A}_{K(w)}^{\db{n}}\upmod \mc{A}_w^{\db{n}}$, so the last expectation is $\mb{E}(f_w\co p_w| \mc{A}_w^{\db{n}}\wedge \mc{A}_{K(w)}^{\db{n}})$. Statements $(i)$ and $(iii)$ in Theorem \ref{thm:Fkprops} imply that this expectation is in fact $\mb{E}(f_w\co p_w| \mc{A}_w^{\db{n}}\wedge (\mc{F}_{d-1})_{K(w)}^{\db{n}})$. This expectation can then be approximated in $L^2(\mu^{\db{n}})$ arbitrarily closely by $(\mc{F}_{d-1})_{K(w)}^{\db{n}}$-measurable rank-1 functions (by Lemma \ref{lem:pisysapprox}). Thus, fixing any $\epsilon>0$, substituting such an approximation of $\mb{E}(f_w\co p_w| \mc{B})$ into \eqref{eq:B-reduc} we obtain that
\[
\Big\| \mb{E}\big(\prod_{v\in K_n\cap S} f_v\co p_v|\mc{A}_{0^n}^{\db{n}}\big) ~ - ~ 
\sum_{i\in [m]} g_{i,0^n} \co p_{0^n}\; \mb{E}\big(\prod_{v\in K_n\cap S \setminus\{w\}} f'_{i,v}\co p_v ~|\mc{A}_{0^n}^{\db{n}}\big)\Big\|_{L^2(\mu^{\db{n}})} \leq \epsilon,
\]
where $f'_{i,v}=f_{i,v}$ for $v\not\in K(w)$ and $f'_{i,v}=f_v g_{i,v}$ for $v\in K(w) \setminus\{0^n\}$, and each $g_{i,v}$ is $\mc{F}_{d-1}$-measurable. The sum on the right side above is $\mc{F}_{d-1}$-measurable by induction. Since $\epsilon$ was arbitrary, we deduce that $\mb{E}(\prod_{v\in K_n\cap S} f_v\co p_v ~|\mc{A}^{\db{n}}_{0^n})$ is an $L^2$-limit of $\mc{F}_{d-1}$-measurable functions, and the result follows.
\end{proof}
We now use Lemma \ref{lem:prefdconvfd} to deduce the following result.

\begin{lemma}\label{lem:fdconvfd} Let $(\varOmega,(\mu^{\db{n}})_{n\geq 0})$ be a cubic coupling and let $n,d\in\mb{N}$. Let $F=(f_v)_{v\in K_n}$ be a system of functions in $L^\infty(\mc{F}_d)$. Then $[F]_{U^n}$ is $\mc{F}_d$-measurable.
\end{lemma}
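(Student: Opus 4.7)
The strategy is to reduce the lemma to Lemma \ref{lem:prefdconvfd} via an approximation argument combined with an embedding into a larger cubic coupling.

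Since $\mc{F}_d$ is by definition generated by $U^{d+1}$-convolutions of bounded $\mc{A}$-measurable functions, the argument in the proof of Theorem \ref{thm:Fkprops}$(i)$ together with Lemma \ref{lem:pisysapprox} shows that every function in $L^\infty(\mc{F}_d)$ is an $L^2(\lambda)$-limit of finite linear combinations of $U^{d+1}$-convolutions. I would apply this approximation to each $f_v$ and then use that the map $F\mapsto [F]_{U^n}$ is multilinear in each entry and $L^2$-continuous in each entry---the latter following from the definition of $[F]_{U^n}$ via conditional expectation together with the contraction property, which yields the bound $\|[F]_{U^n}\|_{L^2(\lambda)}\leq \|f_{v_0}\|_{L^2}\prod_{v\neq v_0}\|f_v\|_\infty$. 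This reduces the lemma to the case where each $f_v$ is a single $U^{d+1}$-convolution $[G_v]_{U^{d+1}}$ for a bounded system $G_v=(g_{v,w})_{w\in K_{d+1}}$.

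The main step would then be to realize $[F]_{U^n}$ as $[F^*]_{U^N}$ for some system $F^*$ supported on a simplicial subset of $\db{N}$ of height at most $d+1$, in a suitable extended cubic coupling on $\db{N}$. Using Theorem \ref{thm:Fkprops}$(iii)$ (applied with $d+1$ in place of $d$), each $f_v\co p_{0^{d+1}}$ on the attached $(d+1)$-cube can be replaced $\mu^{\db{d+1}}$-almost-surely by a $(\mc{F}_d)^{K_{d+1}}$-measurable function of the $K_{d+1}$-coordinates, and this identification lifts via the consistency axiom to the larger coupling $\mu^{\db{N}}$. Further approximation of these functions by finite sums of rank-$1$ products of $\mc{F}_d$-measurable functions (using Lemma \ref{lem:pisysapprox} again) reduces the integrand defining $[F]_{U^n}$ to a finite sum of integrands of $U^N$-convolutions in $\db{N}$. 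Once the embedding is correctly arranged, Lemma \ref{lem:prefdconvfd} applied to each of these convolutions with a simplicial set of height $d+1$ will give the desired $\mc{F}_d$-measurability.

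The main obstacle is arranging the embedding into $\db{N}$ so that the simplicial closure of the support of $F^*$ genuinely has height at most $d+1$: a naive construction that attaches a fresh $(d+1)$-cube at each vertex $v\in K_n$ produces a simplicial closure of height up to $n+d+1$, which only recovers the weaker conclusion $[F]_{U^n}\in L^\infty(\mc{F}_{n+d-1})$ already implied by Lemma \ref{lem:prefdconvfd} applied directly in $\db{n}$. Overcoming this requires a careful use of the conditional independence of simplicial sets (Theorem \ref{thm:sip}), together with the idempotence axiom via Lemma \ref{lem:IdempCondProp} and the tricube symmetries from Lemma \ref{lem:trisym}, in order to consolidate the various attached cubes into a single simplicial support of height exactly $d+1$.
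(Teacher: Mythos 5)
There is a genuine gap at the central step of your argument. You correctly identify that the whole difficulty is to bring the simplicial closure of the support of the integrand down to height $d+1$, and you correctly observe that naively attaching a fresh $(d+1)$-cube at each vertex of $K_n$ fails; but you then only assert that the required ``consolidation'' can be achieved ``by a careful use'' of Theorem \ref{thm:sip}, Lemma \ref{lem:IdempCondProp} and Lemma \ref{lem:trisym}, without giving any argument. That assertion is precisely the content of the lemma, so as written the proof is missing its key step. In addition, your opening reduction (approximating each $f_v$ by linear combinations of $U^{d+1}$-convolutions) is harmless but unnecessary: the only property of $f_v$ ever used is its $\mc{F}_d$-measurability.

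The resolution in the paper requires no external embedding at all: the $(d+1)$-corners needed to regenerate the copy of $\mc{F}_d$ sitting at a high vertex are already present \emph{inside} $\db{n}$, namely below that vertex. Concretely, the product $\prod_{v\in K_n}f_v\co p_v$ is $(\mc{F}_d)^{\db{n}}_{K_n}$-measurable, and one shows $(\mc{F}_d)_{K_n}^{\db{n}}\subset_{\mu^{\db{n}}} (\mc{F}_d)_{K_{n,\leq d+1}}^{\db{n}}$ by working down in height: for $|w|=d+2$, choose a face map $\phi:\db{d+1}\to\db{n}$ with $\phi(0^{d+1})=w$ and $\phi(K_{d+1})\subset\{v:v<w\}$; the consistency axiom gives $\mu^{\db{n}}_\phi=\mu^{\db{d+1}}$, and part (iii) of Theorem \ref{thm:Fkprops} (applied with $d+1$ in place of $d$) then yields $(\mc{F}_d)^{\db{n}}_w\subset_{\mu^{\db{n}}}(\mc{F}_d)^{\db{n}}_{\{v:v<w\}}$, whose vertices all have height at most $d+1$. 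Iterating over all $w$ of height $d+2$, then $d+3$, and so on up to $n$, gives the inclusion. After that, Lemma \ref{lem:pisysapprox} writes the integrand as an $L^2$-limit of finite sums of rank-one functions supported on the simplicial set $\db{n}_{\leq d+1}$ of height $d+1$, and Lemma \ref{lem:prefdconvfd} applied to each of these gives the $\mc{F}_d$-measurability of $[F]_{U^n}$ directly. So the ingredient you name, Theorem \ref{thm:Fkprops}(iii), is the right one, but it must be applied to the internal faces $\{v:v\le w\}$ of $\db{n}$ rather than to externally attached cubes; once this is done, Theorem \ref{thm:sip}, the idempotence axiom and the tricube symmetries play no role in this proof.
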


\begin{proof}
If $n\leq d+1$ then by \eqref{eq:Fd1stobs} the result is clear. Assuming then that $n>d+1$, by \eqref{eq:Udconvdef} it suffices to show that $\mb{E}\big(\prod_{v\in K_n} f_v\co p_v|\mc{A}_{0^n}^{\db{n}}\big)$ is $(\mc{F}_d)_{0^n}^{\db{n}}$-measurable. Now $\prod_{v\in K_n} f_v\co p_v$ is $(\mc{F}_d)_{K_n}^{\db{n}}$-measurable, and we claim that it suffices to show that $(\mc{F}_d)_{K_n}^{\db{n}}\subset_{\mu^{\db{n}}} (\mc{F}_d)_{K_{n,\leq d+1}}^{\db{n}}$. Indeed, if this holds then $\prod_{v\in K_n} f_v\co p_v$ is in fact $(\mc{F}_d)_{K_{n,\leq d+1}}^{\db{n}}$-measurable, and then it is an $L^2$-limit of finite sums of rank-1 bounded $(\mc{F}_d)_{K_{n,\leq d+1}}^{\db{n}}$-measurable functions $h_i$; but for each such $h_i$ we have that $\mb{E}(h_i|\mc{A}_{0^n}^{\db{n}})$ is $\mc{F}_d$-measurable, by Lemma \ref{lem:prefdconvfd}, so $\mb{E}(\prod_{v\in K_n} f_v\co p_v|\mc{A}_{0^n}^{\db{n}})$ is an $L^2$-limit of $\mc{F}_d$-measurable functions and our claim follows.

Now, to show that $(\mc{F}_d)_{K_n}^{\db{n}}\subset_{\mu^{\db{n}}} (\mc{F}_d)_{K_{n,\leq d+1}}^{\db{n}}$, we can proceed as follows: fix any $w$ with $|w|=d+2$ and note that $(\mc{F}_d)_{K_{n,\leq d+1}\cup \{w\}}^{\db{n}}\subset_{\mu^{\db{n}}} (\mc{F}_d)_{K_{n,\leq d+1}}^{\db{n}}$ because for any rank-1 function $\prod_{v\in K_{n,\leq d+1}\cup \{w\}} f_v\co p_v$ measurable relative to the former $\sigma$-algebra, the function $f_w\co p_w$ is an $L^2$-limit of sums of rank-1 bounded $\mc{A}_{\{v:v<w\}}^{\db{n}}$-measurable functions. Applying this recursively for each $w$ with $|w|=d+2$, we deduce that $(\mc{F}_d)_{K_{n,\leq d+2}}^{\db{n}}\subset_{\mu^{\db{n}}} (\mc{F}_d)_{K_{n,\leq d+1}}^{\db{n}}$. Arguing similarly for the next height level $d+3$, we deduce that $(\mc{F}_d)_{K_{n,\leq d+3}}^{\db{n}}\subset_{\mu^{\db{n}}} (\mc{F}_d)_{K_{n,\leq d+2}}^{\db{n}}$. Continuing thus up to level $n$, the result follows.
\end{proof}
\noindent For the next result we introduce the following notation: given a simplicial set $S\subset \db{n}$, we define the \emph{degree} of an element $v\in S$, denoted by $d(v)$, to be the maximal value of $|w|$ over all $w\in S$ with $w\geq v$. In particular we always have $d(v)\geq |v|$, with equality if and only if $v$ is maximal in $S$.
\begin{lemma}\label{lem:simpzero}
Let $S$ be a simplicial subset of $\db{n}$, let $u\in S$ and let $d=d(u)\geq 1$. Let $F=(f_v)_{v\in \db{n}}$ be a system of functions in $L^\infty(\mc{A})$ with $\|f_u\|_{U^d}=0$ and $f_v=1$ for $v\not\in S$. Then $\langle F\rangle_{U^n}=0$.
\end{lemma}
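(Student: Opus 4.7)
The plan is to pull the factor $\mc{C}^{|u|} f_u\co p_u$ out of the integral defining $\langle F\rangle_{U^n}$ and show that what remains is $\mc{F}_{d-1}$-measurable, so that $\|f_u\|_{U^d}=0$ kills the integral. Writing
\[
\langle F\rangle_{U^n} \;=\; \int_{\Omega^{\db n}} \mc{C}^{|u|} f_u\co p_u \cdot H \ud\mu^{\db n},\qquad H:=\prod_{v\in S\setminus\{u\}}\mc{C}^{|v|}f_v\co p_v,
\]
I will use Lemma~\ref{lem:Doob} to represent $\mb{E}(H\,|\,\mc{A}_u^{\db n})= \tilde g\co p_u$ for some bounded measurable $\tilde g:\Omega\to\mb{C}$, and then prove that $\tilde g\in L^\infty(\mc{F}_{d-1})$. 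Granting this, Theorem~\ref{thm:Fkprops}(ii) says that $\|f_u\|_{U^d}=0$ is equivalent to $\mb{E}(f_u|\mc{F}_{d-1})=0$, so $f_u$ is $L^2(\lambda)$-orthogonal to every bounded $\mc{F}_{d-1}$-measurable function; in particular $\int \mc{C}^{|u|}f_u\cdot\tilde g\ud\lambda=0$, whence $\langle F\rangle_{U^n}=0$.

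The core of the argument is therefore the claim that $\tilde g\in L^\infty(\mc{F}_{d-1})$, which I will establish by induction on $|S|$, closely paralleling the proof of Lemma~\ref{lem:prefdconvfd}. The base case $|S|=2$ is settled by the ergodicity axiom: since $\mu^{\db 1}=\lambda\times\lambda$, the relevant conditional expectation is a constant and hence $\mc{F}_0$-measurable. For the inductive step, when $u$ is the unique maximal element of $S$ we have $S=V(u):=\{v:v\leq u\}$ and $d=|u|$; applying Lemma~\ref{lem:keybot} inside the subcoupling $\mu^{\db n}_{V(u)}\cong\mu^{\db{|u|}}$ (after a flip in $\aut(\db{|u|})$ moving $u$ to $0^{|u|}$) together with Theorem~\ref{thm:Fkprops}(i) shows that $\mb{E}(H|\mc{A}_u^{V(u)})$ is $(\mc{F}_{|u|-1})_u^{V(u)}$-measurable, so $\tilde g\in L^\infty(\mc{F}_{d-1})$.

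Otherwise a maximal $w\in S\setminus\{u\}$ can be chosen and $S':=S\setminus\{w\}$ is simplicial with $u\in S'$ and $d_{S'}(u)\leq d$. Using the tower property I replace $f_w\co p_w$ in $H$ by $\mb{E}(f_w\co p_w\,|\,\mc{A}_{S'}^{\db n})$; Theorem~\ref{thm:sip} applied to the simplicial pair $(S',V(w))$ collapses this to $\mb{E}(f_w\co p_w\,|\,\mc{A}_{K(w)}^{\db n})$, and Lemma~\ref{lem:keybot} combined with Theorem~\ref{thm:Fkprops}(iii) identifies it as an $(\mc{F}_{|w|-1})_{K(w)}^{V(w)}$-measurable function, which Lemma~\ref{lem:pisysapprox} approximates in $L^2$ by sums $\sum_i\prod_{v\in K(w)} g_{i,v}\co p_v$ with $g_{i,v}\in L^\infty(\mc{F}_{|w|-1})$. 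Merging $g_{i,v}$ into $f_v$ for $v\in K(w)\setminus\{u\}$ turns each summand into a conditional expectation on the smaller simplicial set $S'$, to which the induction hypothesis applies.

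The delicate point is the subcase $u<w$ (i.e.\ $u\in K(w)$): here the extra factor $g_{i,u}\co p_u$ is $\mc{A}_u^{\db n}$-measurable and so escapes the outer $\mb{E}(\cdot|\mc{A}_u^{\db n})$, multiplying the inductive output. The product nevertheless lies in $L^\infty(\mc{F}_{d-1})$, because $w\geq u$ forces $|w|\leq d(u)=d$, so that $g_{i,u}\in L^\infty(\mc{F}_{|w|-1})\subset L^\infty(\mc{F}_{d-1})$, while $d_{S'}(u)\leq d$ places the inductive factor in $L^\infty(\mc{F}_{d_{S'}(u)-1})\subset L^\infty(\mc{F}_{d-1})$ by the monotonicity \eqref{eq:Fd1stobs}; passing to the $L^2$-limit preserves this measurability. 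Verifying this bookkeeping, that a maximal $w\in S\setminus\{u\}$ always exists when $S\neq V(u)$, and that $d_{S'}(u)\geq 1$ so the induction hypothesis applies, are the remaining routine points.
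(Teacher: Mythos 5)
Your argument is correct, and it reorganizes the proof in a genuinely different way from the paper. The paper keeps $f_u$ inside the product throughout and inducts on the quantity $|S|+d(u)-|u|$: when some $w>u$ exists it first uses the induction hypothesis \emph{at the vertex $w$} to replace $f_w$ by $\mb{E}(f_w|\mc{F}_{d-1})$ (via the decomposition $f_w=g_w+h_w$ with $\|h_w\|_{U^d}=0$), then approximates $f_w\co p_w$ itself by rank-one $(\mc{F}_{d-1})_{K(w)}$-measurable functions and invokes the module property \eqref{eq:moduleprop} to keep $\|f_u g_{i,u}\|_{U^d}=0$ after merging; when $u$ is maximal it conditions $f_u\co p_u$ on $\mc{A}_{S\setminus\{u\}}^{\db{n}}$ and collapses via Theorem \ref{thm:sip}, Lemma \ref{lem:keybot} and Theorem \ref{thm:Fkprops}$(i)$. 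You instead extract $f_u$ once and for all and prove, by induction on $|S|$, the auxiliary claim that $\mb{E}\big(\prod_{v\in S\setminus\{u\}}\mc{C}^{|v|}f_v\co p_v\,|\,\mc{A}_u^{\db{n}}\big)$ is $(\mc{F}_{d(u)-1})_u^{\db{n}}$-measurable — a ``rooted at $u$'' refinement of Lemma \ref{lem:prefdconvfd} — and then conclude by orthogonality via Theorem \ref{thm:Fkprops}$(ii)$. The same toolkit (Theorem \ref{thm:sip}, Lemma \ref{lem:keybot}, Theorem \ref{thm:Fkprops}$(i)$,$(iii)$, Lemma \ref{lem:pisysapprox}) drives both arguments, but your route buys two simplifications: you never need \eqref{eq:moduleprop}, and you need no preliminary reduction of $f_w$, since only $\mb{E}(f_w\co p_w|\mc{A}_{K(w)}^{\db{n}})$ enters your computation and that is automatically $(\mc{F}_{|w|-1})_{K(w)}^{\db{n}}$-measurable for arbitrary bounded $f_w$. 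The price is the extra bookkeeping you already flag in the subcase $u<w$, where the escaped factor $g_{i,u}\co p_u$ must be absorbed using $|w|\le d(u)$; your treatment of that point, of the existence of a maximal $w\in S\setminus\{u\}$ when $S\neq V(u)$ (needed so that $S\setminus\{w\}$ stays simplicial), and of $d_{S'}(u)\ge 1$, is sound. Your auxiliary claim also yields Corollary \ref{cor:simpzero} directly, which is a small bonus of this packaging.
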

\begin{proof}
We argue by induction on $|S|+d(u)-|u|$, starting with the case $|S|+d(u)-|u|=2$. In this case note that we must have $|S|=2$ (otherwise $S=\{0^n\}$ and $d=0$), so $S=\{0^n,v\}$ for some $v$ of height 1, and the result then follows from the ergodicity axiom and the assumption that $\|f_u\|_{U^1}=\big|\int_\Omega f_u\ud\lambda\,\big| = 0$.

For $|S|+ d(u)-|u|> 2$, we distinguish two cases: either there is some $w\in S$ with $w>u$, or $u$ is maximal in $S$.

In the first case, take $w>u$ to have $|w|=\max\{|v|:v\in S,\;v > u\}$. Firstly, we can reduce $f_w$ to a function that is $\mc{F}_{d-1}$-measurable. Indeed $f_w=g_w+h_w$ where $g_w:=\mb{E}(f_w|\mc{F}_{d-1})$ is $\mc{F}_{d-1}$-measurable, and $h_w=f_w-g_w$ has zero $U^d$-seminorm by statement $(ii)$ in Theorem \ref{thm:Fkprops}; then the $U^n$-convolution with $h_w$ is 0 by induction because $|S|+d(w)-|w|<|S|+d(u)-|u|$  (since $|w|>|u|$ and $d(w)=d(u)$). Hence we can assume that $f_w$ is $\mc{F}_{d-1}$-measurable. Using Theorem \ref{thm:Fkprops} and Lemma \ref{lem:pisysapprox} as in the proof of Lemma \ref{lem:prefdconvfd} (since $|w|=d$), for any fixed $\epsilon>0$, letting $M=\max_{v\in S}\|f_v\|_{L^\infty}$ (which we can suppose to be positive, to avoid trivialities), we can approximate $f_w\co p_w$ within $\epsilon/M^{|S|}$ in $L^2(\mu^{\db{n}})$ by a sum of rank-1 functions $\prod_{v\in K(w)} g_{i,v}\co p_v$, $g_{i,v}\in L^\infty(\mc{F}_{d-1})$, $i\in [m_\epsilon]$ (where $K(w)=\{v:v< w\}$). Then we have $\big |\langle (f_v)_{v\in \db{n}}\rangle_{U^n} ~ - ~ \sum_{i\in [m_\epsilon]} \langle (f'_{i,v})_{v\in \db{n}}\rangle_{U^n}\big | \leq \epsilon$, 
where $f'_{i,v}=f_v$ for $v\in S\setminus K(w)$, $f'_{i,v}=f_v g_{i,v}$ for $v\in K(w)$, and $f_w'=1$. For each $i \in [m_\epsilon]$, note that in $(f'_{i,v})_{v\in \db{n}}$ the functions are non-trivial only for $v$ in the simplicial set $S\setminus\{w\}$, and note also that by \eqref{eq:moduleprop} we still have $\|f'_{i,u}\|_{U^d}=0$. Hence, by induction, we have $\langle (f'_{i,v})_{v\in \db{n}}\rangle_{U^n}=0$. Since this holds for each $i$ we deduce that $|\langle (f_v)_{v\in \db{n}}\rangle_{U^n}|\leq \epsilon$, and since $\epsilon$ was arbitrary, we are done in this case.

In the second case we have $|u|=d$. Let $V(u)$ denote the $d$-dimensional face $\{v\in \db{n}: v\leq u\}$, and let $S'$ denote the simplicial set $S\setminus \{u\}$. Since $f_v\co p_v$ is $\mc{A}_{S'}^{\db{n}}$-measurable for every $v\in S'$, we have $\langle F\rangle_{U^n}= \mb{E}\big(\mb{E}(f_u\co p_u| \mc{A}_{S'}^{\db{n}})\, \prod_{v\in S'} f_v\co p_v \big)$. By Theorem \ref{thm:sip} we have $S'~\bot_{\mu^{\db{n}}} V(u)$. Since $S' \cap V(u)=V(u)\setminus\{u\}$, we have $\mb{E}(f_u\co p_u| \mc{A}_{S'}^{\db{n}})=\mb{E}(f_u\co p_u| \mc{A}_{V(u)\setminus\{u\}}^{\db{n}})$. By Lemma \ref{lem:keybot}, this expectation is $\mb{E}(f_u\co p_u| \mc{A}_{V(u)\setminus\{u\}}^{\db{n}}\wedge\mc{A}_{u}^{\db{n}})$, and by statement $(i)$ in Theorem \ref{thm:Fkprops} this in turn is $\mb{E}(f_u\co p_u| (\mc{F}_{d-1})_u^{\db{n}})=\mb{E}(f_u| \mc{F}_{d-1})\co p_u$. By statement $(ii)$ in Theorem \ref{thm:Fkprops} the last expectation is 0, since $\|f_u\|_{U^d}=0$. Thus we obtain that $\langle F\rangle_{U^n}=0$ in this case as well.
\end{proof}
\noindent For the following result we consider the notion of a $U^n$-convolution of a system $F=(f_v)_{v\in \db{n}}$ taken at a vertex $r\neq 0^n$. By this we mean a function $g\in L^\infty(\mc{A})$ satisfying $\mu^{\db{n}}$-almost everywhere $g\co p_r = \mb{E}(\prod_{v\in \db{n}\setminus\{r\}}f_v \co p_v| \mc{A}_r^{\db{n}})$. We denote such a convolution by $[F]_{U^n,r}$, to distinguish it from the original one $[F]_{U^n}$ (which is taken at $0^n$).
\begin{corollary}\label{cor:simpzero}
Let $S$ be a simplicial subset of $\db{n}$ and let $u,r\in S$, $u\neq r$. Let $K=\db{n}\setminus\{r\}$ and let $d = d(u)$. Let $F=(f_v)_{v\in K}$ be a system of functions in $L^\infty(\mc{A})$ such that $\|f_u\|_{U^d}=0$ and $f_v=1$ for $v\in K\setminus S$. Then $[F]_{U^n,r}$ is zero $\lambda$-almost everywhere. 
\end{corollary}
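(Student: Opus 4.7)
The plan is to reduce to Lemma \ref{lem:simpzero} by testing $[F]_{U^n,r}$ against an arbitrary bounded $\mc{A}^{\db{n}}_r$-measurable function. More precisely, since $p_r:\Omega^{\db{n}}\to\Omega$ is measure-preserving from $\mu^{\db{n}}$ to $\lambda$, showing $[F]_{U^n,r}=0$ $\lambda$-a.e.\ is equivalent to showing $[F]_{U^n,r}\co p_r=0$ in $L^2(\mu^{\db{n}})$, which in turn reduces (by Lemma \ref{lem:Doob} and a standard density argument) to proving that for every $f_r\in L^\infty(\mc{A})$ we have
\[
\int_{\Omega^{\db{n}}} \Big(\prod_{v\in K} f_v\co p_v\Big)\, (f_r\co p_r)\,\ud\mu^{\db{n}}\;=\;0.
\]

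The next step is to rewrite this integral as a $U^n$-product. Define $\wt{F}=(\wt f_v)_{v\in\db{n}}$ by $\wt f_v:=\mc{C}^{|v|}f_v$ for every $v\in\db{n}$ (where for $v=r$ we use the arbitrarily chosen $f_r$ from the previous paragraph). Then $\mc{C}^{|v|}\wt f_v = f_v$, so the integral above equals $\langle \wt F\rangle_{U^n}$. I would then verify that $\wt F$ satisfies the hypotheses of Lemma \ref{lem:simpzero}: firstly, $\wt f_v=1$ for $v\notin S$, because $r\in S$ and the only change with respect to $F$ on $K\setminus S$ is the (trivial) conjugation of constants; secondly, $\|\wt f_u\|_{U^d}=\|\mc{C}^{|u|}f_u\|_{U^d}=\|f_u\|_{U^d}=0$ (the invariance of $\|\cdot\|_{U^d}$ under $\mc{C}$ follows from the fact that $\langle(\overline{f})_v\rangle_{U^d}=\overline{\langle(f)_v\rangle_{U^d}}$, both being the real non-negative quantity $\|f\|_{U^d}^{2^d}$ by Corollary \ref{cor:udsn}). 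Note also that $d=d(u)\geq 1$, because if $d(u)=0$ then $u$ would be a maximal element of $S$ with $|u|=0$, forcing $u=0^n$ and $S=\{0^n\}$ by simplicialness, contradicting the existence of a distinct $r\in S$.

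Applying Lemma \ref{lem:simpzero} to $\wt F$ yields $\langle \wt F\rangle_{U^n}=0$, which by the reduction above gives $[F]_{U^n,r}=0$ $\lambda$-a.e. There is no real obstacle here since the whole argument is an unfolding of definitions plus an automatic reduction to Lemma \ref{lem:simpzero}; the only mild subtlety is the bookkeeping of conjugations needed to convert the ``uncurried'' expectation defining $[F]_{U^n,r}$ into the standard $U^n$-product to which Lemma \ref{lem:simpzero} applies.
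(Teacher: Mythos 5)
Your proof is correct and follows essentially the same route as the paper: both reduce the statement to Lemma \ref{lem:simpzero} by testing $\prod_{v\in K} f_v\co p_v$ against an arbitrary bounded function placed at the vertex $r$, and then specializing that test function to $[F]_{U^n,r}$ itself to conclude that its $L^2$-norm vanishes. Your explicit bookkeeping of the conjugation operators $\mc{C}^{|v|}$ (and the check that $d(u)\geq 1$) is a harmless refinement of points the paper's proof passes over silently.
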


\begin{proof}
We claim that the result is equivalent to Lemma \ref{lem:simpzero}. To see this, let $F$ be the system supposed in the corollary, let $f$ be any function in $L^\infty(\mc{A})$, and let $F'$ denote the system $(f_v')_{v\in \db{n}}$ with $f_v'=f_v$ for $v\neq r$ and $f_r'=f$. Applying Lemma \ref{lem:simpzero} to $F'$ we obtain $0=\langle F'\rangle_{U^n}=\mb{E}_\lambda(f \,\overline{[F']_{U^n,r}})$. In particular letting $f=[F']_{U^n,r}$ we deduce that $\|[F']_{U^n,r}\|_{L^2}=0$, which implies the conclusion in the corollary. The opposite implication is also clear using that $\langle F'\rangle_{U^n}=\mb{E}_\lambda (f\, \overline{[F']_{U^n,r}})$.
\end{proof}
\noindent Another consequence of Lemma \ref{lem:simpzero} is the following useful fact about $U^d$-products.
\begin{corollary}\label{cor:simpzerocor}
Let $S$ be a simplicial subset of $\db{n}$ of height at most $d$, and let $(f_v)_{v\in\db{n}}$ be a system of functions in $L^\infty(\mc{A})$ such that $f_v=1$ for $v\in\db{n}\setminus S$. Let $G=(g_v)_{v\in\db{n}}$ with $g_v=\mb{E}(f_v|\mc{F}_{d-1})$ for each $v$. Then $\langle F\rangle_{U^n}=\langle G\rangle_{U^n}$. 
\end{corollary}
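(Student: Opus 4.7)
\textbf{Proof plan for Corollary \ref{cor:simpzerocor}.}

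The strategy is a standard telescoping argument: since $\langle \cdot \rangle_{U^n}$ is multilinear in each entry, swapping $f_v$ for $g_v$ one entry at a time expresses the difference $\langle F \rangle_{U^n} - \langle G\rangle_{U^n}$ as a sum of $U^n$-products, each of which has a ``noise'' entry $h_v := f_v - g_v$ at some position $v \in S$. Each such summand will then be shown to vanish by Lemma~\ref{lem:simpzero}. Note that outside $S$ we have $f_v = 1$ and hence $g_v = \mb{E}(1|\mc{F}_{d-1}) = 1$, so $F$ and $G$ already agree on $\db{n}\setminus S$ and the telescoping only needs to be carried out over $v\in S$.

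Concretely, I would enumerate $S = \{u_1,\dots,u_k\}$ and for each $i\in[k]$ define the system $F^{(i)} = (f^{(i)}_v)_{v\in\db{n}}$ by
\[
f^{(i)}_v = \begin{cases} g_v & v \in \{u_1,\dots,u_{i-1}\},\\ f_{u_i} - g_{u_i} & v = u_i,\\ f_v & v\in\{u_{i+1},\dots,u_k\},\\ 1 & v\notin S. \end{cases}
\]
By multilinearity of $\langle \cdot\rangle_{U^n}$ in each entry (applied $k$ times), we obtain
\[
\langle F\rangle_{U^n} - \langle G\rangle_{U^n} = \sum_{i=1}^k \langle F^{(i)}\rangle_{U^n}.
\]

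It remains to show that each $\langle F^{(i)}\rangle_{U^n}$ is zero. In $F^{(i)}$ all non-trivial entries are supported on $S$, and at vertex $u_i$ the entry is $h := f_{u_i} - g_{u_i}$. By construction $\mb{E}(h | \mc{F}_{d-1}) = 0$, hence by Theorem~\ref{thm:Fkprops}(ii) we have $\|h\|_{U^d}=0$. Since $S$ has height at most $d$, we have $d(u_i) \leq d$, and so $\mc{F}_{d(u_i)-1} \subseteq \mc{F}_{d-1}$, which by the tower property gives $\mb{E}(h|\mc{F}_{d(u_i)-1}) = 0$ and hence $\|h\|_{U^{d(u_i)}} = 0$ by another application of Theorem~\ref{thm:Fkprops}(ii). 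Thus Lemma~\ref{lem:simpzero}, applied with $u = u_i$ (at degree $d(u_i)$), yields $\langle F^{(i)}\rangle_{U^n} = 0$.

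There is no significant obstacle here; the one point worth double-checking is the degree calibration in the last step, namely that the hypothesis $\|f_u\|_{U^{d(u)}}=0$ required by Lemma~\ref{lem:simpzero} follows from the stronger vanishing $\|h\|_{U^d}=0$ that the conditional expectation gives us. This is handled by the observation above that zero $U^d$-seminorm is inherited downward by $U^{d'}$-seminorms for $d'\leq d$, via Theorem~\ref{thm:Fkprops}(ii) and the nesting of Fourier $\sigma$-algebras. The edge case $d(u_i)=0$ only arises if $S=\{0^n\}$, in which case the corollary is immediate from the ergodicity axiom.
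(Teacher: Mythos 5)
Your proof is correct and follows essentially the same route as the paper's: decompose each $f_v$ ($v\in S$) as $g_v+(f_v-g_v)$, expand $\langle F\rangle_{U^n}$ by multilinearity, and kill every cross term via Lemma~\ref{lem:simpzero} using $\|f_v-g_v\|_{U^d}=0$ from Theorem~\ref{thm:Fkprops}(ii). The only difference is that you make explicit the degree calibration $\|h\|_{U^{d(u_i)}}=0$ (which the paper leaves implicit, and which also follows directly from the monotonicity \eqref{eq:nestedUd}); this is a welcome clarification, not a divergence.
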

\begin{proof}
We decompose each $f_v$ for $v\in S$ as $g_v+(f_v-g_v)$ where $\|f_v-g_v\|_{U^d}=0$. By the multilinearity of $\langle F\rangle_{U^n}$, this $U^n$-product expands as a sum of $\langle G\rangle_{U^n}$ plus finitely many $U^n$-products, each involving a function with $U^d$-seminorm 0 at some $v\in S$. By Lemma \ref{lem:simpzero}, every such $U^n$-product is 0, and the result follows. 
\end{proof}
\noindent We say that two vertices $w_1,w_2\in \db{n}$ are \emph{neighbours} if they are neighbours in the graph of 1-faces on $\db{n}$. The following third consequence of Lemma \ref{lem:simpzero} gives a sufficient condition for a $U^n$-product to vanish. The condition can be more useful than asking for some function in the product to have zero $U^n$-seminorm.

\begin{lemma}\label{lem:neighverts}
Let $F=(f_v)_{v\in \db{d+1}}$ be a function system in $L^\infty(\mc{A})$ such that for some neighbours $w_1,w_2\in \db{d+1}$ we have $f_{w_1}\in L^\infty(\mc{F}_{d-1})$ and $\|f_{w_2}\|_{U^d}=0$. Then $\langle F\rangle_{U^{d+1}}=0$.
\end{lemma}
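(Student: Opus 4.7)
My plan is to pull the factor $f_{w_2}$ out of the $U^{d+1}$-product and reduce the problem to a measurability statement about a single function on $\Omega$. Concretely, writing
\[
\langle F\rangle_{U^{d+1}} \;=\; \int_{\Omega^{\db{d+1}}} \mc{C}^{|w_2|} f_{w_2}\co p_{w_2}\,\cdot \prod_{v\neq w_2}\mc{C}^{|v|} f_v\co p_v\,\ud\mu^{\db{d+1}},
\]
and defining $H:\Omega\to\mb{C}$ via Lemma~\ref{lem:Doob} by
\[
H\co p_{w_2} \;=\; \mb{E}\Bigl(\prod_{v\neq w_2}\mc{C}^{|v|} f_v\co p_v \,\Big|\, \mc{A}_{w_2}^{\db{d+1}}\Bigr),
\]
we get $\langle F\rangle_{U^{d+1}} = \mc{C}^{|w_2|}\int_\Omega f_{w_2}\,H\,\ud\lambda$. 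The hypothesis $\|f_{w_2}\|_{U^d}=0$ gives, by Theorem~\ref{thm:Fkprops}(ii), $\mb{E}(f_{w_2}|\mc{F}_{d-1})=0$, so by Corollary~\ref{cor:noisebotalg} it suffices to show that $H\in L^\infty(\mc{F}_{d-1})$.

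To establish $H\in L^\infty(\mc{F}_{d-1})$, I would split $\db{d+1}=H_0\sqcup H_1$ along the coordinate where $w_1$ and $w_2$ differ, placing $w_2\in H_0$ and $w_1\in H_1$. By the idempotence axiom together with Proposition~\ref{prop:idemp}, $\mu^{\db{d+1}}$ is the relative square of $\mu^{\db{d}}$ over some sub-$\sigma$-algebra $\mc{B}\subset\mc{A}^{\db{d}}$, and in particular $\mc{A}_{H_0}^{\db{d+1}}\upmod\mc{A}_{H_1}^{\db{d+1}}$. Applying the tower property across $\mc{A}_{w_2}^{\db{d+1}}\subset\mc{A}_{H_0}^{\db{d+1}}$, together with \eqref{eq:pfclaim} for the relative square, one rewrites $H\co p_{w_2}$ as a $\mu^{\db{d}}$-conditional expectation of $P_0\cdot\mb{E}_{\mu^{\db{d}}}(P_1\,|\,\mc{B})$ onto $\mc{A}_{0^d}^{\db{d}}$, under the identification $H_0\cong\db{d}$ sending $w_2\mapsto 0^d$. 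Here $P_0\in L^\infty(\mc{A}_{K_d}^{\db{d}})$ is the product over $H_0\setminus\{w_2\}$ and $P_1$ is the product over $H_1\cong\db{d}$, which contains the distinguished factor $\mc{C} f_{w_1}\co p_{0^d}$ coming from the vertex $w_1\in H_1$ corresponding to $0^d$.

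The crucial use of the neighbour hypothesis now enters: since $f_{w_1}\in L^\infty(\mc{F}_{d-1})$, Theorem~\ref{thm:Fkprops}(i) together with (iii) and Lemma~\ref{lem:keybot} tells us that $f_{w_1}\co p_{0^d}$ agrees $\mu^{\db{d}}$-almost everywhere with an $(\mc{F}_{d-1})_{K_d}^{\db{d}}$-measurable function; that is, the $\mc{F}_{d-1}$-factor at $0^d$ can be transferred, up to null sets, to the complementary corner $K_d$. I would then use this to rewrite the whole integrand $P_0\cdot\mb{E}(P_1|\mc{B})$, via a $\mb{E}(\cdot|\mc{B})$-manipulation, as $\mu^{\db{d}}$-equivalent to an $\mc{A}_{K_d}^{\db{d}}$-measurable function. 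Applying Lemma~\ref{lem:keybot} (conditional independence $\mc{A}_{0^d}^{\db{d}}\upmod\mc{A}_{K_d}^{\db{d}}$) combined with Theorem~\ref{thm:Fkprops}(i) then gives that $\mb{E}(\,\cdot\,|\mc{A}_{0^d}^{\db{d}})$ of any such function is $(\mc{F}_{d-1})_{0^d}^{\db{d}}$-measurable, so $H\in L^\infty(\mc{F}_{d-1})$ as required.

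I expect the main obstacle to be the final measurability-transfer step, namely verifying rigorously that the factor $f_{w_1}\co p_{0^d}$ can be moved through the $\mb{E}(\cdot|\mc{B})$-operator and combined with $P_0$ to yield a product that is $\mc{A}_{K_d}^{\db{d}}$-measurable modulo null sets. This step is where the adjacency of $w_1$ and $w_2$ is essential: it is precisely what lets us use the split $H_0\sqcup H_1$ in such a way that $w_2$ and $w_1$ sit at the corresponding $0^d$-vertices of the two opposite faces, so that the idempotent structure of Proposition~\ref{prop:idemp} and the Fourier-$\sigma$-algebra description of Theorem~\ref{thm:Fkprops} can be invoked symmetrically on both sides of the edge.
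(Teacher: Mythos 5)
Your opening reduction is sound: writing $\langle F\rangle_{U^{d+1}}=\mc{C}^{|w_2|}\int_\Omega f_{w_2}H\ud\lambda$ with $H$ the convolution at $w_2$, and invoking Theorem \ref{thm:Fkprops}(ii) together with Corollary \ref{cor:noisebotalg}, correctly reduces the lemma to the claim that $H\in L^\infty(\mc{F}_{d-1})$. Be aware, though, that by Corollary \ref{cor:noisebotalg} this claim (quantified over all admissible systems) is \emph{equivalent} to the lemma, so the reduction transfers all of the difficulty into that single measurability statement rather than simplifying it. This is a genuinely different route from the paper's, which never conditions on $\mc{A}_{w_2}^{\db{d+1}}$: there one normalizes $w_1=1^{d+1}$, uses Theorem \ref{thm:Fkprops}(iii) to replace $f_{w_1}\co p_{1^{d+1}}$ by $L^2$-limits of sums of rank-one $(\mc{F}_{d-1})$-measurable functions indexed by a corner strictly below $1^{d+1}$, and then annihilates each resulting $U^{d+1}$-product by Lemma \ref{lem:simpzero} applied to the simplicial set $S=\db{d+1}\setminus\{1^{d+1}\}$ with $u=w_2$, the factor at $w_2$ retaining zero $U^d$-seminorm by \eqref{eq:moduleprop}.

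The gap is at the step you yourself flag, and it is not a routine verification but the crux of the argument. After replacing $f_{w_1}\co p_{0^d}$ by an $(\mc{F}_{d-1})_{K_d}^{\db{d}}$-measurable function, $P_1$ is indeed $\mu^{\db{d}}$-a.e.\ equal to an $\mc{A}_{K_d}^{\db{d}}$-measurable function; but you then need $\mb{E}_{\mu^{\db{d}}}(P_1|\mc{B})$ to remain $\mc{A}_{K_d}^{\db{d}}$-measurable, and the operator $\mb{E}(\cdot|\mc{B})$ does not preserve this class for free. One has $\mc{B}\not\subset_{\mu^{\db{d}}}\mc{A}_{K_d}^{\db{d}}$ in general: already for $d=1$ with $\Omega$ a compact abelian group, $\mc{B}$ is generated by $x_0-x_1$ while $\mc{A}_{K_1}^{\db{1}}$ is generated by $x_1$ alone, and these are not nested under $\mu^{\db{1}}=\lambda\times\lambda$. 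Nor does the paper establish any conditional independence of the form $\mc{B}\upmod_{\mu^{\db{d}}}\mc{A}_{K_d}^{\db{d}}$ (with an identification of $\mc{B}\wedge\mc{A}_{K_d}^{\db{d}}$) that would let you push measurability through the projection via Proposition \ref{prop:condindep}. In the test cases where the conclusion can be checked by hand it holds for accidental-looking reasons (e.g.\ the projection happens to be constant), not because of a structural containment, so the assertion that $P_0\cdot\mb{E}(P_1|\mc{B})$ is $\mc{A}_{K_d}^{\db{d}}$-measurable is unsupported as it stands. To repair the argument you would either have to prove such a conditional-independence statement for the relative-square $\sigma$-algebra $\mc{B}$, or abandon the conditioning on $\mc{A}_{w_2}^{\db{d+1}}$ and argue as the paper does, expanding $f_{w_1}$ at its own vertex and concluding with Lemma \ref{lem:simpzero}.
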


\begin{proof}
By the consistency axiom we can assume that $w_1=1^{d+1}$. We have that $\langle F\rangle_{U^{d+1}}= \mb{E}_{\mu^{\db{d+1}}}\big(\prod_v f_v\co p_v\big)$. Using part $(iii)$ of Theorem \ref{thm:Fkprops} as in previous proofs, we have that this expectation is the limit of similar expectations but with $f_{1^{d+1}}\co p_{1^{d+1}}$ replaced by a finite sum of bounded $(\mc{F}_{d-1})^{\db{d+1}}_K$-measurable rank-1 functions, where $K$ is a $d$-corner of the form $K=\{v: v_0\leq v < 1^{d+1}\}$ for some (any) $v_0$ of height 1. It therefore suffices to show that the expectation for each such rank-1 function is 0. For each such function $\prod_{v\in K} g_v\co p_v$, the corresponding expectation is of the form $\mb{E}_{\mu^{\db{d+1}}}\big(\prod_{v\in \db{d+1}\setminus \{1^{d+1}\}} f_v'\co p_v\big)$, where $f_v'=f_vg_v$ for $v\in K$ and $f'_v=f_v$ otherwise, and where   $\|f_{w_2}'\|_{U^d}=0$ by \eqref{eq:moduleprop}. Applying Lemma \ref{lem:simpzero} with $u=w_2$ and $S=\db{d+1}\setminus \{1^{d+1}\}$, we obtain that the last expectation is 0, and the result follows.
\end{proof}

\begin{lemma}\label{lem:vetites1}
Let $F=(f_v)_{v\in K_{d+1}}$ be a system of functions in $L^\infty(\mc{A})$, for each $v$ let $g_v=\mb{E}(f_v|\mc{F}_{d-1})$, and let $G=(g_v)_{v\in K_{d+1}}$. Then $[G]_{U^{d+1}}\,=_\lambda\, \mb{E}([F]_{U^{d+1}}|\mc{F}_{d-1})$.
\end{lemma}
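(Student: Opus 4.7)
The plan is to verify the identity by testing both sides against arbitrary functions $h\in L^\infty(\mc{F}_{d-1})$, once we know that $[G]_{U^{d+1}}$ is already $\mc{F}_{d-1}$-measurable. The latter follows immediately from Lemma \ref{lem:fdconvfd} applied with $n=d+1$ and the role of its ``$d$'' played by $d-1$, since each $g_v=\mb{E}(f_v|\mc{F}_{d-1})$ lies in $L^\infty(\mc{F}_{d-1})$. It therefore suffices to show that $\int_\Omega h\,\overline{[F]_{U^{d+1}}}\,\ud\lambda=\int_\Omega h\,\overline{[G]_{U^{d+1}}}\,\ud\lambda$ for every such $h$.

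By formula \eqref{eq:coincon}, each side here equals a $U^{d+1}$-product on $\db{d+1}$: the left side is $\langle H\rangle_{U^{d+1}}$ where $H$ is the system with $h_{0^{d+1}}=h$ and $h_v=f_v$ for $v\in K_{d+1}$, while the right side is $\langle H'\rangle_{U^{d+1}}$ where instead $h'_v=g_v$ for $v\in K_{d+1}$. Setting $r_v:=f_v-g_v$, we have $\mb{E}(r_v|\mc{F}_{d-1})=0$ and hence $\|r_v\|_{U^d}=0$ by Theorem \ref{thm:Fkprops}(ii). Expanding $f_v=g_v+r_v$ multilinearly yields
\[
\langle H\rangle_{U^{d+1}}=\langle H'\rangle_{U^{d+1}}+\sum_{\emptyset\neq A\subseteq K_{d+1}}\langle H_A\rangle_{U^{d+1}},
\]
where $H_A$ has $h$ at $0^{d+1}$, $r_v$ at $v\in A$, and $g_v$ at $v\in K_{d+1}\setminus A$.

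To finish I would show that each term $\langle H_A\rangle_{U^{d+1}}$ with $A\neq\emptyset$ vanishes by invoking Lemma \ref{lem:neighverts}. Since $A$ is a nonempty proper subset of $\db{d+1}$ (noting $0^{d+1}\notin A$) and the $1$-skeleton of $\db{d+1}$ is connected, there exist neighbouring vertices $v_0\in A$ and $v_1\in\db{d+1}\setminus A$. At $v_0$ the system $H_A$ carries $r_{v_0}$ with $\|r_{v_0}\|_{U^d}=0$, while at $v_1$ it carries either $h$ (if $v_1=0^{d+1}$) or $g_{v_1}$, both of which lie in $L^\infty(\mc{F}_{d-1})$; Lemma \ref{lem:neighverts} then gives $\langle H_A\rangle_{U^{d+1}}=0$, and the result follows.

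The only slightly delicate point is identifying the correct annihilation lemma: Lemma \ref{lem:simpzero} alone does not suffice because the simplicial set $\db{d+1}$ has height $d+1$ whereas the $r_v$ are only annihilated at the level $U^d$; Lemma \ref{lem:neighverts} bridges this mismatch by exploiting the $\mc{F}_{d-1}$-measurable factor sitting at an adjacent vertex.
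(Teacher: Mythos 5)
Your proof is correct and follows essentially the same route as the paper's: reduce to testing against $h\in L^\infty(\mc{F}_{d-1})$ using Lemma \ref{lem:fdconvfd} and \eqref{eq:coincon}, decompose $f_v=g_v+r_v$, expand multilinearly, and kill the cross terms with Lemma \ref{lem:neighverts} via a neighbouring pair of vertices. Your connectivity argument for locating that pair is a clean formulation of the paper's case analysis, and your statement $\|r_v\|_{U^d}=0$ is the correct one to feed into Lemma \ref{lem:neighverts}.
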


\begin{proof}
By Lemma \ref{lem:fdconvfd}, the function $[G]_{U^{d+1}}$ is $\mc{F}_{d-1}$-measurable. Therefore it suffices to prove that for every $h\in L^\infty(\mc{F}_{d-1})$ we have $\mb{E}_\lambda([G]_{U^{d+1}} h)=\mb{E}_\lambda([F]_{U^{d+1}} h)$. Let us decompose $f_v$ into $f_v=g_v+r_v$, for each $v\in K_{d+1}$, where $\|r_v\|_{U^{d-1}}=0$. Let $F'=(f'_v)_{v\in\db{d+1}}$ be the function system with $f'_v=f_v$ if $v\neq 0^{d+1}$ and $f'_{0^{d+1}}=\overline{h}$. By \eqref{eq:coincon} we have $\mb{E}([F]_{U^{d+1}} h)=\langle F'\rangle_{U^{d+1}}$. By multilinearity of the $U^{d+1}$-product, we can expand $\langle F'\rangle_{U^{d+1}}$ into a sum of $2^{2^{d+1}-1}$ different $U^{d+1}$-products, each of which involves one of $g_v$, $r_v$ for each $v\neq 0^{d+1}$ and $h$ at $v= 0^{d+1}$. By Lemma \ref{lem:neighverts} we have that of all these $U^{d+1}$-products the only one that can be non-zero is the one involving $g_v$ for every $v\neq 0^{d+1}$. Indeed if there are both $r_v$ and $g_v$ factors, or if all factors are $r_v$, then we can find two neighbour vertices $w_1,w_2$, such that the function corresponding to $w_1$ is $g_{w_1}$ or $h$, and the function corresponding to $w_2$ is $r_{w_2}$, whence Lemma \ref{lem:neighverts} implies that this $U^{d+1}$-product is 0. The only remaining term is equal to $\mb{E}([G]_{U^{d+1}} h)$, and the result follows.
\end{proof}
\noindent We close this subsection with a result that is natural and is also useful in what follows.
\begin{proposition}\label{prop:cubicfactor}
Let $\big((\Omega,\mc{A},\lambda),(\mu^{\db{n}})_{n\geq 0}\big)$ be a cubic coupling. Then for each $d$ we have that $\big((\Omega,\mc{F}_d,\lambda),(\mu^{\db{n}})_{n\geq 0}\big)$ is also a cubic coupling.
\end{proposition}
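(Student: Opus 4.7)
The plan is to verify the three axioms of Definition \ref{def:cc} for the sequence $(\mu^{\db{n}})_{n\geq 0}$ regarded as couplings on the probability space $(\Omega,\mc{F}_d,\lambda)$, that is, as the measures $\mu^{\db{n}}$ restricted to the product $\sigma$-algebras $(\mc{F}_d)^{\db{n}}$. Face consistency (axiom 1) and ergodicity (axiom 2) come for free: each is an equality of measures (either $\mu^{\db{n}}_\phi=\mu^{\db{m}}$ or $\mu^{\db{1}}=\lambda\times\lambda$) that remains valid after restricting the domain of definition from $\mc{A}$ to $\mc{F}_d$, since no new constraints on the measures are imposed.

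The substance of the argument is axiom 3. With $T_1=\{0\}\times\db{n-1}$ and $T_2=\db{n-1}\times\{0\}$, by Lemma \ref{lem:botsuff} it suffices to show that for every $f\in L^\infty((\mc{F}_d)^{\db{n}}_{T_1})$ the conditional expectation $\mb{E}_{\mu^{\db{n}}}(f|(\mc{F}_d)^{\db{n}}_{T_2})$ is $(\mc{F}_d)^{\db{n}}_{T_1\cap T_2}$-measurable. Approximating via Lemma \ref{lem:pisysapprox}, I reduce to $f=\prod_{v\in T_1}h_v\co p_v$ with $h_v\in L^\infty(\mc{F}_d)$. The tower property, combined with $T_1~\bot~T_2$ from the full coupling (Theorem \ref{thm:sip}), gives
\[
\mb{E}(f\,|\,\mc{A}^{\db{n}}_{T_2})=\mb{E}(f\,|\,\mc{A}^{\db{n}}_{T_1\cap T_2})=\prod_{v\in T_1\cap T_2}h_v\co p_v\;\cdot\; g,\quad g:=\mb{E}\Big(\prod_{v\in T_1\setminus T_2}h_v\co p_v\,\Big|\,\mc{A}^{\db{n}}_{T_1\cap T_2}\Big).
\]
The first factor is manifestly $(\mc{F}_d)^{\db{n}}_{T_1\cap T_2}$-measurable; once the same is established for $g$, the full product is $(\mc{F}_d)^{\db{n}}_{T_1\cap T_2}\subset(\mc{F}_d)^{\db{n}}_{T_2}$-measurable, whence the outer projection onto $(\mc{F}_d)^{\db{n}}_{T_2}$ is the identity and the desired conclusion follows.

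The main obstacle is the $\mc{F}_d$-measurability of $g$. I handle it by passing to the subcoupling $\mu^{\db{n}}_{T_1}\cong\mu^{\db{n-1}}$, in which $T_1\cap T_2$ and $T_1\setminus T_2$ correspond to opposite codimension-1 faces. By the idempotence axiom of Definition \ref{def:cc-idemp} (equivalent to Definition \ref{def:cc} via Lemmas \ref{lem:deduc1}--\ref{lem:deduc2}) and Proposition \ref{prop:idemp}, together with identity \eqref{eq:pfclaim}, $g$ rewrites as $\mb{E}(\psi|\mc{B})\co p_{T_1\cap T_2}$ for a rank-one function $\psi\in L^\infty((\mc{F}_d)^{\db{n-2}})$ and a sub-$\sigma$-algebra $\mc{B}\subset\mc{A}^{\db{n-2}}$ encoding the ``square-root'' of this idempotent structure. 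The remaining step is to show that $\mb{E}(\cdot|\mc{B})$ preserves $(\mc{F}_d)^{\db{n-2}}$-measurability on rank-one products; this rests on Lemma \ref{lem:fdconvfd} (stability of $U^n$-convolutions of $\mc{F}_d$-measurable functions under the Fourier $\sigma$-algebra) together with Theorem \ref{thm:Fkprops}(i) applied at the appropriate level of the hierarchy, and is carried out by induction on $n$, the base case $n=2$ being immediate since $\mu^{\db{1}}=\lambda\times\lambda$ makes the relevant $\mc{B}$ trivial and reduces the projection to integration against $\lambda$.
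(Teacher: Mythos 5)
The reduction you make for axiom 3 is sound in its outer logic: writing $\mb{E}(f|(\mc{F}_d)^{\db{n}}_{T_2})=\mb{E}\big(\mb{E}(f|\mc{A}^{\db{n}}_{T_2})\,\big|\,(\mc{F}_d)^{\db{n}}_{T_2}\big)$ and showing the inner expectation already lands in $L^\infty\big((\mc{F}_d)^{\db{n}}_{T_1\cap T_2}\big)$ would indeed finish the proof. But note that this is a \emph{strictly stronger} statement than the one the paper proves (the paper only controls the projection onto the restricted $\sigma$-algebra $(\mc{F}_d)^{\db{n}}_{T_2}$, never the projection onto the full $\mc{A}^{\db{n}}_{T_2}$), and everything then hinges on the measurability of $g=\mb{E}\big(\prod_{v\in T_1\setminus T_2}h_v\co p_v\,\big|\,\mc{A}^{\db{n}}_{T_1\cap T_2}\big)$. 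This is exactly where your argument has a genuine gap. Via Proposition \ref{prop:idemp} you correctly identify $g$ with $\mb{E}(\psi|\mc{B})\co p_{F_0}$ where $\mc{B}$ is (the push-down of) the \emph{face--face} meet $\mc{A}^{\db{n-1}}_{F_0}\wedge\mc{A}^{\db{n-1}}_{F_1}$. But the tools you then invoke do not speak about this $\sigma$-algebra at all: Lemma \ref{lem:fdconvfd} concerns conditional expectations onto a single vertex $\mc{A}^{\db{n}}_{0^n}$, and Theorem \ref{thm:Fkprops}(i) characterizes the \emph{vertex--corner} meet $\mc{A}^{\db{d}}_{0^d}\wedge\mc{A}^{\db{d}}_{K_d}$. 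The face--face meet $\mc{B}$ is a structurally different and in general much larger object (in the Host--Kra setting it is an invariant $\sigma$-algebra of side transformations), and nothing in the paper's toolkit characterizes it in terms of Fourier $\sigma$-algebras. The proposed induction on $n$ is not described in a way that bridges this, and the base case $n=2$ carries no information about the inductive step. So the crux of the proposition — why the conditional expectation of an $\mc{F}_d$-rank-one function ends up $\mc{F}_d$-measurable — is asserted rather than proved.

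For contrast, the paper's proof avoids $\mc{B}$ entirely. It first uses Theorem \ref{thm:Fkprops}(iii) iteratively to replace each face $A$ by its low-height part $A\cap Q$ with $Q=\db{n+1}_{\leq d}$, i.e.\ $(\mc{F}_d)^{\db{n+1}}_A=_{\mu}(\mc{F}_d)^{\db{n+1}}_{A\cap Q}$; this makes the relevant index sets simplicial of height at most $d$. It then approximates $\mb{E}(f|\mc{A}^{\db{n+1}}_{F\cap Q})$ by rank-one functions, splits each factor as $\mb{E}(g_v|\mc{F}_d)+r_v$ with $\|r_v\|_{U^{d+1}}=0$, and kills every term containing an $r_v$ by Lemma \ref{lem:simpzero} — which applies precisely because after the reduction to $Q$ every vertex has degree at most $d$ in the simplicial set, so $\|r_v\|_{U^{d(v)}}\leq\|r_v\|_{U^{d+1}}=0$. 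Without the reduction to $Q$, vertices of degree larger than $d+1$ would defeat this orthogonality argument. Some version of these two steps (or a genuinely new characterization of $\mc{B}$) is needed to close your proof; as written, it does not go through.
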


\begin{proof}
The first two axioms in Definition \ref{def:cc} clearly hold for the restriction of the measures $\mu^{\db{n}}$ to $\mc{F}_d$, so it suffices to prove the conditional independence axiom, that is, to show that for the faces $A=\db{n}\times \{0\}$, $B=\{0\} \times\db{n}$ in $\db{n+1}$ we have $A~\bot~ B$ in $_{\mc{F}_d|}\mu^{\db{n+1}}$. Letting $F=A\cap B$, by Lemma \ref{lem:botsuff} it suffices to prove that for every function $f\in L^\infty\big((\mc{F}_d)^{\db{n+1}}_A\big)$ we have $\mb{E}(f| (\mc{F}_d)^{\db{n+1}}_B)=\mb{E}(f| (\mc{F}_d)^{\db{n+1}}_F)$. 
Let $Q=\db{n+1}_{\leq d}$. By an iterated application of statement (iii) in Theorem \ref{thm:Fkprops}, we have $(\mc{F}_d)^{\db{n+1}}_A=_{\mu^{\db{n+1}}}(\mc{F}_d)^{\db{n+1}}_{A\cap Q}$ (indeed we can use the statement to generate each copy of $\mc{F}_d$ at a vertex $v\in A\setminus Q$ by copies at vertices $w\leq v$ forming a copy of $K_d$, and thus we can eliminate the copy of $\mc{F}_d$ at $v$). Similarly we have $(\mc{F}_d)^{\db{n+1}}_B =_{\mu^{\db{n+1}}} (\mc{F}_d)^{\db{n+1}}_{B\cap Q}$ and $(\mc{F}_d)^{\db{n+1}}_F=_{\mu^{\db{n+1}}}(\mc{F}_d)^{\db{n+1}}_{F\cap Q}$. Hence $f$ is $ (\mc{F}_d)^{\db{n+1}}_{A\cap Q}$-measurable and it suffices to show that $\mb{E}(f| (\mc{F}_d)^{\db{n+1}}_{B\cap Q})=\mb{E}(f| (\mc{F}_d)^{\db{n+1}}_{F\cap Q})$. We have $\mb{E}\big(f| (\mc{F}_d)^{\db{n+1}}_{B\cap Q}\big) = \mb{E}\big(\mb{E}(f|\mc{A}^{\db{n+1}}_{B\cap Q}) \,| (\mc{F}_d)^{\db{n+1}}_{B\cap Q}\big) = \mb{E}\big(\mb{E}(f|\mc{A}^{\db{n+1}}_{F\cap Q})\, | (\mc{F}_d)^{\db{n+1}}_{B\cap Q}\big)$, where the last equality uses that $(A\cap Q)~\bot~(B\cap Q)$ in the original coupling $\mu^{\db{n}}$ (by Theorem \ref{thm:sip}, noting that $A\cap Q$, $B\cap Q$ are simplicial sets). Now $\mb{E}(f|\mc{A}^{\db{n+1}}_{F\cap Q})$ is an $L^2$ limit of finite sums $\sum_{i\in [M]} g_i$ of bounded $\mc{A}^{\db{n+1}}_{F\cap Q}$-measurable rank-1 functions $g_i=\prod_{v\in F\cap Q} g_{i,v}\co p_v$. Each factor $g_{i,v}$ is the sum of the $\mc{F}_d$-measurable function $\mb{E}(g_{i,v}|\mc{F}_d)$ and the function with zero $U^{d+1}$-seminorm $g_{i,v}-\mb{E}(g_{i,v}|\mc{F}_d)$. Expanding the product, we write $g_i$ as a sum of $\mc{A}_{F\cap Q}^{\db{n+1}}$-measurable rank-1 functions $h_{i,0}, h_{i,1}, \ldots, h_{i,m}$, where $h_{i,0}$ has all factors $\mc{F}_d$-measurable, and $h_{i,j}$ has at least one factor of zero $U^{d+1}$-seminorm, for every $j>0$. We claim that for every $i$ and $j>0$ we have $\mb{E}(h_{i,j}| (\mc{F}_d)^{\db{n+1}}_{B\cap Q})=0$. This will follow if we show that each $h_{i,j}$ is orthogonal to every bounded $(\mc{F}_d)^{\db{n+1}}_{B\cap Q}$-measurable rank-1 function, since the latter functions are dense in $L^2((\mc{F}_d)^{\db{n+1}}_{B\cap Q})$ (Lemma \ref{lem:pisysapprox}). To see the orthogonality, note that the inner product of $h_{i,j}$ with any of these rank-1 functions is a $U^{n+1}$-product in which, for some $v$ in the simplicial set $S=B\cap Q$ of height at most $d$, we have $\|f\|_{U^{d+1}}=0$ (by \eqref{eq:moduleprop}), so this $U^{n+1}$-product is zero by Lemma \ref{lem:simpzero}.

We deduce that $\mb{E}(g_i| (\mc{F}_d)^{\db{n+1}}_{B\cap Q})=\mb{E}(h_{i,0}| (\mc{F}_d)^{\db{n+1}}_{B\cap Q})=h_{i,0}$, so this is indeed $(\mc{F}_d)^{\db{n+1}}_{F\cap Q}$-measurable, as required.
\end{proof}

\subsection{Topologization of cubic couplings}\label{ChTopMeas}\hfill \smallskip \\
Given a cubic coupling $(\varOmega,(\mu^{\db{n}})_{n\geq 0})$, the goal of this subsection is to define a compact topological space $\ns$ associated with $\varOmega$ that is fine enough to capture all the information about the couplings $\mu^{\db{n}}$ that is relevant for us. More precisely, the space $\ns$ will be such that, letting $\mc{B}$ denote the Borel $\sigma$-algebra on $\ns$, there is a measurable map $\gamma:\Omega\to\ns$, defined $\lambda$-almost everywhere, such that each coupling $\mu^{\db{n}}$, $n\geq 0$ is relatively independent over the factor corresponding to the $\sigma$-algebra $\mc{F}=\gamma^{-1}\mc{B}$ (recall Definitions \ref{def:factorcoup} and  \ref{def:indoverfact}). 

We will obtain $\ns$ by first introducing a sequence of measure space homomorphisms $\gamma_i:\Omega\to\ns_i$ for increasingly finer topological spaces $\ns_i$, $i\in\mb{N}$, and then letting $\ns$ be the inverse limit of the spaces $\ns_i$. By ``increasingly finer", we mean that for every pair of natural numbers $i\geq j$ there is a surjective continuous maps $\pi_{i,j}:\ns_i\rightarrow\ns_j$, such that for every $i\geq j\geq k$ we have $\pi_{j,k}\co\pi_{i,j}=\pi_{i,k}$ everywhere on $\ns_i$, and we have $\pi_{i,j}\co\gamma_i=\gamma_j$ $\lambda$-almost-surely on $\Omega$.

Recall from Definition \ref{def:loc} the notion of localization of a coupling. To use the tools related to that notion, in this subsection we assume that $\varOmega$ is a Borel probability space.

\begin{defn}[Topological factors of a Borel cubic coupling]\label{def:couptopofactor}
Let $\varOmega=(\Omega,\mc{A},\lambda)$ be a Borel probability space, and let $\big(\varOmega,(\mu^{\db{n}})_{n\geq 0}\big)$ be a cubic coupling. For each $k\geq 0$, let $\gamma_k:\Omega\to \coup(\varOmega,K_{k+1})$ be (a version of) the $\{0^{k+1}\}$-localization of $\mu^{\db{k+1}}$. We define the topological space $\ns_k=\Supp(\lambda\co\gamma_k^{-1})\subseteq \coup(\varOmega,K_{k+1})$. For each $n\geq 0$ we define the \emph{set of $n$-cubes} on $\ns_k$, denoted by $\cu^n(\ns_k)$, to be the set $\Supp\big(\mu^{\db{n}}\co(\gamma_k^{\db{n}})^{-1}\big)$.
\end{defn}
\noindent Thus $\ns_k$ is a closed subspace of $\coup(\varOmega,K_{k+1})$, so it follows from Proposition \ref{prop:coupspace} that $\ns_k$ is a compact Polish space. Let us add the following justification of the above definition, motivated by the fact that $\gamma_k$ is defined only up to a change on a $\lambda$-null set.
\begin{lemma}\label{lem:Xkdesc}
In Definition \ref{def:couptopofactor} the space $\ns_k$ is well-defined and we can assume that $\gamma_k(\Omega)\subset \ns_k$. Thus we also have $\cu^n(\ns_k)\subset \ns_k^{\db{n}}$.
\end{lemma}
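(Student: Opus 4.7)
The lemma has three claims, and I would address them in order; the central observation throughout is that any two versions of the localization map $\gamma_k$ agree $\lambda$-almost everywhere, so they induce the same pushforward measure on the Polish space $\coup(\varOmega,K_{k+1})$. More precisely, $\{0^{k+1}\}$ is a face of $\db{k+1}$, hence local in $\mu^{\db{k+1}}$ by Corollary \ref{cor:facelocality}, so Lemma \ref{lem:localize} applies and produces a localization function unique up to equality $\mu^{\db{k+1}}_{\{0^{k+1}\}}$-a.e.; since $\mu^{\db{k+1}}$ is a coupling of $\lambda$, this marginal equals $\lambda$, so any two versions $\gamma_k,\gamma_k'$ agree $\lambda$-a.e. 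This forces the Borel probability measures $\lambda\co\gamma_k^{-1}$ and $\lambda\co(\gamma_k')^{-1}$ to coincide (the space $\coup(\varOmega,K_{k+1})$ being Polish by Proposition \ref{prop:coupspace}), so their common support $\ns_k$ is intrinsically defined.

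Next I would arrange $\gamma_k(\Omega)\subset\ns_k$ by modifying $\gamma_k$ on a null set. Since the support of a Borel probability measure on a Polish space always has full measure, $\lambda(\gamma_k^{-1}(\ns_k))=(\lambda\co\gamma_k^{-1})(\ns_k)=1$, and in particular $\ns_k$ is non-empty. Fixing any $\nu_0\in\ns_k$, I would redefine $\gamma_k$ to take the constant value $\nu_0$ on the $\lambda$-null set $N:=\Omega\setminus\gamma_k^{-1}(\ns_k)$, keeping it unchanged elsewhere. The modified map still agrees with the original $\lambda$-a.e., hence is still a valid version of the localization by the uniqueness clause of Lemma \ref{lem:localize}, and by construction satisfies $\gamma_k(\Omega)\subset\ns_k$.

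For the containment $\cu^n(\ns_k)\subset\ns_k^{\db{n}}$, with $\gamma_k$ adjusted as above, the product map $\gamma_k^{\db{n}}:\Omega^{\db{n}}\to\coup(\varOmega,K_{k+1})^{\db{n}}$ takes values in the closed subset $\ns_k^{\db{n}}$. Hence the pushforward $\mu^{\db{n}}\co(\gamma_k^{\db{n}})^{-1}$ assigns full mass to $\ns_k^{\db{n}}$, and its support $\cu^n(\ns_k)$ lies inside this closed set.

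No real obstacle arises; each step is a standard manipulation of supports of Borel probability measures on Polish spaces combined with the a.e.-uniqueness clause in Lemma \ref{lem:localize}. The one conceptual point worth flagging is that the entire construction is well-posed precisely because the support of a Borel measure depends only on the measure itself, and not on the choice of a representative of an a.e.-equivalence class of measurable maps.
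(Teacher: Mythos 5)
Your proof is correct and follows essentially the same route as the paper: both arguments observe that any two versions of $\gamma_k$ agree $\lambda$-a.e.\ and hence induce the same pushforward measure (so the support is intrinsic), then redefine $\gamma_k$ to a fixed point of $\ns_k$ on the $\lambda$-null complement of $\gamma_k^{-1}(\ns_k)$, and finally note that $\cu^n(\ns_k)$, being the support of a measure concentrated on the closed set $\ns_k^{\db{n}}$, is contained in it. Your added justifications (locality of $\{0^{k+1}\}$ via Corollary \ref{cor:facelocality}, and non-emptiness of $\ns_k$ from full measure of the support) are correct and only make explicit what the paper leaves implicit.
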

\noindent Thus $\gamma_k(\Omega)$ is a subset of $\ns_k$ of $(\lambda\co\gamma_k^{-1})$-measure 1. Note that $\ns_k$ is the closure of $\gamma_k(\Omega)$.
\begin{proof}
To see that $\ns_k$ is well-defined note that, since $\gamma_k$ is Borel measurable, we can use the fact that if a Borel function $g:\Omega\to \coup(\varOmega,K_{k+1})$ satisfies $g =_\lambda \gamma_k$ then $\Supp(\lambda\co\gamma_k^{-1})=\Supp(\lambda\co g^{-1})$ (this fact follows from the definitions). Hence it suffices to show that $\gamma_k$ can be redefined on some $\lambda$-null set so that $\gamma_k(\Omega)\subset \ns_k$. By definition of the support, the complement of $\ns_k$ is a $\lambda\co\gamma_k^{-1}$-null set, which means that the complement of $\gamma_k^{-1}(\ns_k)$ is a $\lambda$-null set, so we can redefine $\gamma_k$ as desired simply by re-assigning the same single value in $\ns_k$ to every $\omega\in \Omega$ that was mapped outside $\ns_k$ by $\gamma_k$.
\end{proof}
\begin{remark}
As explained in Remark \ref{rem:dualfns}, dual functions from previous works in this area are generalized via the notion of localization of a cubic coupling. Dual functions have been used before to define topologies in related settings; see for instance the definition of intrinsic topologies on systems of order $k$ in \cite[Chapter 13, \S 3.1]{HK}. The latter definition is a posteriori, once structure theorems have been proved for these systems. In contrast to this, here the topologization  occurs at the start of the argument, and in Section \ref{sec:structhm} we then work with the topological space $\ns_k$ to prove that it yields a compact nilspace.
\end{remark}
\noindent A useful fact about the spaces $\ns_k$ is that certain properties holding almost-surely on $\Omega$ translate into properties holding everywhere on $\ns_k$. This works with the following lemma.

\begin{lemma}\label{lem:closedall}
Let $\mc{P}$ be a closed subset of $\coup(\varOmega, K_{n+1})$, and suppose that $\mc{P}\cap \ns_n$ has probability 1 in $\ns_n$ \textup{(}relative to the regular Borel measure $\lambda\co \gamma_n^{-1}$ on $\ns_n$\textup{)}. Then $\ns_n\subset \mc{P}$.
\end{lemma}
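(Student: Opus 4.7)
The plan is to exploit the defining property of the support of a Borel measure: the support is the smallest closed set of full measure, or equivalently, every point of the support has the property that each of its open neighbourhoods has positive measure. Since $\mc{P}$ is closed in the Polish space $\coup(\varOmega,K_{n+1})$, its complement $\mc{P}^c$ is open, and the hypothesis translates into $\lambda\co\gamma_n^{-1}(\mc{P}^c\cap\ns_n)=0$. Since $\lambda\co\gamma_n^{-1}$ is concentrated on $\ns_n$ (indeed $\ns_n=\Supp(\lambda\co\gamma_n^{-1})$ has full $\lambda\co\gamma_n^{-1}$-measure), this gives $\lambda\co\gamma_n^{-1}(\mc{P}^c)=0$.

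Then I argue by contradiction: suppose there exists some $x\in \ns_n$ with $x\notin \mc{P}$. Then $x\in \mc{P}^c$, and $\mc{P}^c$ is an open neighbourhood of $x$ in $\coup(\varOmega,K_{n+1})$. By the definition of $\ns_n$ as $\Supp(\lambda\co\gamma_n^{-1})$, every open set containing $x$ must have strictly positive $\lambda\co\gamma_n^{-1}$-measure. This contradicts $\lambda\co\gamma_n^{-1}(\mc{P}^c)=0$, completing the proof.

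There is essentially no obstacle: the lemma is a direct unpacking of the definition of the support of a regular Borel measure, and the only mild subtlety is keeping straight that $\ns_n$ sits inside $\coup(\varOmega,K_{n+1})$ and that the measure $\lambda\co\gamma_n^{-1}$ (whose support is $\ns_n$) is already a measure on the ambient space, so measure-zero of an open set in the ambient space passes to the support automatically.
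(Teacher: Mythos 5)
Your proof is correct and is essentially the same argument as the paper's: the paper phrases it by saying that $\ns_n\setminus\mc{P}$ is a relatively open null set and that $\lambda\co\gamma_n^{-1}$ is strictly positive on $\ns_n$, while you unpack the same fact via the definition of the support (every open neighbourhood of a point of $\Supp(\lambda\co\gamma_n^{-1})$ has positive measure). No gap.
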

\begin{proof}
The set $\ns_n\setminus \mc{P}$ is open in the relative topology on $\ns_n$, and by assumption we have $\lambda\co \gamma_n^{-1}(\ns_n\setminus \mc{P})=0$. But $\lambda\co \gamma_n^{-1}$ is strictly positive, so $\ns_n\setminus \mc{P}$ must be empty.
\end{proof}
\begin{remark}\label{rem:cc-sym}
As a first use of Lemma \ref{lem:closedall}, let us show that every coupling $\nu\in \ns_k$ has the following symmetries: for every  automorphism $\theta\in\aut(\db{k+1})$ that fixes $0^{k+1}$, we have $\nu_\theta=\nu$. Indeed, for any such $\theta$ the set $\mc{P}=\{\nu\in \coup(\varOmega, K_{k+1}):\nu_\theta=\nu\}$ is closed (using that $\nu\mapsto \nu_\theta$ is continuous, and the closed graph theorem).  Moreover, the consistency axiom implies that $\mu^{\db{k+1}}_\theta=\mu^{\db{k+1}}$, and since the disintegration $(\mu^{\db{k+1}}_x)_x$ yielding the $0^{k+1}$-localization is unique up to a $\lambda$-null set, we have $(\mu^{\db{k+1}}_x)_\theta=\mu^{\db{k+1}}_x$ for $\lambda$-almost every $x$, so $\lambda\co\gamma_k^{-1}(\ns_k\cap\mc{P})=1$, so the symmetry follows by Lemma \ref{lem:closedall}.
\end{remark}
\noindent Let us now define the projection maps $\ns_i\to\ns_j$ for $i\geq j$ (this will use Definition \ref{def:subcoupembeds}).

\begin{defn}[Projections between topological factors]\label{def:factorprojs}
We define the \emph{projection} $\pi_{i,j}:\ns_i\to\ns_j$ as follows. For every coupling $\mu\in \ns_i$ we set $\pi_{i,j}(\mu)= \mu_\tau$, where $\tau: K_{j+1}\to K_{i+1}$, $v\mapsto w$ with $w\sbr{n}=v\sbr{n}$ for $n\in [j+1]$ and $w\sbr{n}=0$ otherwise.
\end{defn}
\begin{remark}\label{rem:corner-sym}
By the symmetries of $\mu$ pointed out in Remark \ref{rem:cc-sym}, the subcoupling $\mu_\tau$ in Definition \ref{def:factorprojs} is equal to $\mu_{\theta\co\tau}$ for every other $\theta\in\aut(\db{i+1})$ fixing $0^{i+1}$. In particular, for every other injective morphism $\tau':K_{j+1}\to K_{i+1}$ with image corner rooted at $0^{i+1}$, we have $\mu_{\tau'}=\mu_\tau$. 
\end{remark}
\begin{remark}
Since $\pi_{i,j}\co\gamma_i$ is a $\{0^{j+1}\}$-localization of $\mu^{j+1}$, arguing as in Remark \ref{rem:cc-sym} using the uniqueness of disintegration, we obtain that $\pi_{i,j}\co\gamma_i=\gamma_j$ holds almost everywhere on $\Omega$. Since there are countably many such equations, the set $E=\{\omega\in\Omega: \pi_{i,j}\co\gamma_i(\omega)\neq \gamma_j(\omega)\textrm{ for some }i,j\}$ is $\lambda$-null. Fixing some sequence $(x_i\in \ns_i)_{i\in \mb{N}}$ with $\pi_{i,j}(x_i)=x_j$ for all $i,j$, and changing for each $\omega\in E$, $i\in \mb{N}$ the value $\gamma_i(\omega)$ to $x_i$, we conclude that we can actually ensure also the following convenient property for the system of maps $\gamma_i,\pi_{i,j}$:
\begin{equation}\label{eq:gammapi}
\textrm{for all $i > j$ and every $\omega\in \Omega$, we have } \pi_{i,j}\co\gamma_i(\omega)=\gamma_j(\omega).
\end{equation}
\end{remark}

\begin{lemma}\label{lem:pijctsmorph}
Each map $\pi_{i,j}:\ns_i\to \ns_j$ is continuous, surjective, and preserves cubes.
\end{lemma}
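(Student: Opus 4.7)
The plan is to prove continuity and cube-preservation of $\pi_{i,j}$ separately, using in each case the definition as a subcoupling along an injection plus the compatibility \eqref{eq:gammapi} that has already been arranged.

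For continuity, observe that the topology on $\ns_j\subset\coup(\varOmega,K_{j+1})$ is the subspace topology from $\coup(\varOmega,K_{j+1})$, which is generated by the functions $\xi(\cdot,F):\nu\mapsto \xi(\nu,F)$, where $F=(f_v)_{v\in K_{j+1}}$ ranges over systems of bounded measurable functions on $\Omega$ (Definition \ref{def:couptop}). It therefore suffices to show that for every such $F$ the composition $\xi(\cdot,F)\co\pi_{i,j}$ is continuous on $\ns_i$. Unwinding the definition of the subcoupling along $\tau$, we have for every $\mu\in\ns_i$ that
\[
\xi(\pi_{i,j}(\mu),F)=\xi(\mu_\tau,F)=\int_{\Omega^{K_{i+1}}}\prod_{v\in K_{j+1}} f_v\co p_{\tau(v)}\;\ud\mu \;=\; \xi(\mu,\widetilde{F}),
\]
where $\widetilde{F}=(\widetilde{f}_w)_{w\in K_{i+1}}$ is the system with $\widetilde{f}_{\tau(v)}=f_v$ for $v\in K_{j+1}$ and $\widetilde{f}_w=1$ for all other $w\in K_{i+1}$. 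The right side is continuous in $\mu$ by the very definition of the topology on $\coup(\varOmega,K_{i+1})$, which gives continuity of $\pi_{i,j}$.

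For cube-preservation, recall first the standard fact that if $X,Y$ are topological spaces, $f:X\to Y$ is continuous, and $\rho$ is a regular Borel probability measure on $X$, then $f(\Supp(\rho))\subseteq \Supp(f_*\rho)$ (a direct verification using that the $f$-preimage of an open neighbourhood of $f(x)$ is an open neighbourhood of $x$). Applying this to the coordinatewise map $\pi_{i,j}^{\db{n}}:\ns_i^{\db{n}}\to\ns_j^{\db{n}}$, which is continuous since $\pi_{i,j}$ is, and to the measure $\mu^{\db{n}}\co(\gamma_i^{\db{n}})^{-1}$, we obtain
\[
\pi_{i,j}^{\db{n}}\bigl(\cu^n(\ns_i)\bigr)=\pi_{i,j}^{\db{n}}\Bigl(\Supp\bigl(\mu^{\db{n}}\co(\gamma_i^{\db{n}})^{-1}\bigr)\Bigr)\subseteq \Supp\Bigl(\mu^{\db{n}}\co(\gamma_i^{\db{n}})^{-1}\co(\pi_{i,j}^{\db{n}})^{-1}\Bigr).
\]
By the arranged identity \eqref{eq:gammapi}, we have $\pi_{i,j}\co\gamma_i=\gamma_j$ everywhere on $\Omega$, hence $\pi_{i,j}^{\db{n}}\co\gamma_i^{\db{n}}=\gamma_j^{\db{n}}$ everywhere on $\Omega^{\db{n}}$, so the measure on the right is $\mu^{\db{n}}\co(\gamma_j^{\db{n}})^{-1}$, whose support is $\cu^n(\ns_j)$ by Definition \ref{def:couptopofactor}. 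Thus $\pi_{i,j}^{\db{n}}(\cu^n(\ns_i))\subseteq\cu^n(\ns_j)$, which is exactly cube-preservation.

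There is no substantial obstacle here: once the definitions are unwound, continuity is immediate from the definition of the coupling topology via the functions $\xi(\cdot,F)$, and cube-preservation follows from the general fact about supports under continuous pushforward together with the compatibility $\pi_{i,j}\co\gamma_i=\gamma_j$ already established in \eqref{eq:gammapi}.
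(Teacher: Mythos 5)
Your proof is correct and follows essentially the same route as the paper: continuity is obtained by extending a function system on $K_{j+1}$ to one on $K_{i+1}$ so that $\xi(\cdot,F)\co\pi_{i,j}=\xi(\cdot,\widetilde{F})$, and cube-preservation follows from the compatibility \eqref{eq:gammapi} together with the observation that a continuous pushforward maps the support of a measure into the support of the image measure (the paper verifies this last fact by hand with open neighbourhoods rather than quoting it as a general lemma, but the argument is identical).
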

\begin{proof}
To see that $\pi_{i,j}$ is continuous, let $(\mu_n)_{n\in \mb{N}}$ be a sequence converging to $\mu$ in $\ns_i$. Then for every system $F=(f_v)_{v\in K_{j+1}}$ of bounded measurable functions we can extend $F$ to a system $F'=(f_w')_{w\in K_{i+1}}$ (letting for instance $f_w'=f_v$ for $w=\tau(v)\in \tau(K_{j+1})$ and $f_v'=1$ otherwise, where $\tau$ is the map in Definition \ref{def:factorprojs}), and since $\xi(F',\mu_n)\to \xi(F',\mu)$, we have $\xi\big(F,\pi_{i,j}(\mu_n)\big)\to \xi\big(F,\pi_{i,j}(\mu)\big)$, whence continuity follows. 

To see that $\pi_{i,j}$ is surjective, note that the image of $\pi_{i,j}$ is closed (by continuity, and compactness of $\ns_i$), that this image includes $\gamma_j(\Omega)$ (by \eqref{eq:gammapi}), and that $\gamma_j(\Omega)$ is dense in $\ns_j$ (as noted after Lemma \ref{lem:Xkdesc}), so the image of $\pi_{i,j}$ is $\ns_j$.

To see that $\pi_{i,j}$ preserves cubes, we have to show that for every $\q\in\cu^n(\ns_i)$ we have $\pi_{i,j}\co \q\in \cu^n(\ns_j)$. Fix any  open set $U\ni \pi_{i,j}\co \q$. By continuity the preimage $(\pi_{i,j}^{\db{n}})^{-1}(U)$ is open, and it contains $\q$. Then by \eqref{eq:gammapi} we have $\mu^{\db{n}}\co (\gamma_i^{\db{n}})^{-1}\co(\pi_{i,j}^{\db{n}})^{-1}(U)=\mu^{\db{n}}\co (\gamma_j^{\db{n}})^{-1}(U)$, and the left side here is positive since $\q\in\cu^n(\ns_j)$. We have thus shown that every open neighbourhood of $\pi_{i,j}\co\q$ has positive measure $\mu^{\db{n}}\co (\gamma_j^{\db{n}})^{-1}$, so $\pi_{i,j}\co\q\in \cu^n(\ns_j)$ as required.
\end{proof}
\noindent In several cases it would suffice to have that property \eqref{eq:gammapi} holds just almost-surely, rather than pointwise. The following pointwise property, however, is crucial.
\begin{corollary}\label{cor:chainofprojs}
For all $i\geq j \geq k$ we have $\pi_{j,k}\co\pi_{i,j}(x)=\pi_{i,k}(x)$ for every $x\in\ns_i$.
\end{corollary}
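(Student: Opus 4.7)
The plan is to observe that this identity is essentially a functoriality statement for subcouplings, combined with a trivial compatibility of the injections used to define the projections. For $i\geq j$ let $\tau_{i,j}:K_{j+1}\to K_{i+1}$ denote the injection from Definition \ref{def:factorprojs}, namely $\tau_{i,j}(v\sbr{1},\ldots,v\sbr{j+1})=(v\sbr{1},\ldots,v\sbr{j+1},0,\ldots,0)$. The first step is the immediate verification that $\tau_{i,j}\co\tau_{j,k}=\tau_{i,k}$ whenever $i\geq j\geq k$, since both sides simply pad the input with $i-k$ zeros.

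The second step is to invoke the functoriality of subcouplings along injections (Definition \ref{def:subcoupembeds}): for any self-coupling $\nu$ of $\varOmega$ indexed by a finite set $S$ and composable injections $\sigma:R\to S$, $\rho:Q\to R$, a direct check from the definition gives $(\nu_\sigma)_\rho=\nu_{\sigma\co\rho}$, since composing the two coordinate-renaming projections $\Omega^S\to\Omega^R\to\Omega^Q$ is just the projection induced by $\sigma\co\rho$. Combining the two steps, for any $\mu\in\ns_i$ we obtain
\[
\pi_{j,k}\co\pi_{i,j}(\mu)=(\mu_{\tau_{i,j}})_{\tau_{j,k}}=\mu_{\tau_{i,j}\co\tau_{j,k}}=\mu_{\tau_{i,k}}=\pi_{i,k}(\mu),
\]
and this is the desired pointwise identity on all of $\ns_i$.

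For the middle equality to make sense one needs to know that $\pi_{i,j}(\mu)\in\ns_j$ so that $\pi_{j,k}$ can actually be applied to it; this is already recorded in Lemma \ref{lem:pijctsmorph}. Since each $\pi_{i,j}$ is defined as a concrete pointwise operation on couplings (not merely an almost-sure statement), there are no measure-theoretic subtleties to worry about. I do not anticipate any real obstacle here: the corollary is essentially a bookkeeping consequence of how the maps $\pi_{i,j}$ are defined.
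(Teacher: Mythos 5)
Your proof is correct, but it takes a genuinely different route from the paper's. The paper first uses the almost-sure uniqueness of disintegrations to get $\pi_{j,k}\co\pi_{i,j}=\pi_{i,k}$ only $\lambda\co\gamma_i^{-1}$-almost everywhere on $\ns_i$, then observes that the equality set is closed by continuity of the projections, and finally upgrades to a pointwise identity via Lemma \ref{lem:closedall} (strict positivity of the pushforward measure on its support). You instead exploit the fact that Definition \ref{def:factorprojs} gives $\pi_{i,j}$ as the concrete operation $\mu\mapsto\mu_{\tau_{i,j}}$, so the whole statement reduces to the functoriality $(\mu_\sigma)_\rho=\mu_{\sigma\co\rho}$ of subcouplings along injections together with the evident identity $\tau_{i,j}\co\tau_{j,k}=\tau_{i,k}$; both are immediate from Definition \ref{def:subcoupembeds}, since $\mu_\sigma$ is just the pushforward of $\mu$ under the coordinate map $(\omega_s)_{s\in S}\mapsto(\omega_{\sigma(r)})_{r\in R}$ and these coordinate maps compose contravariantly. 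Your argument is more elementary and delivers the pointwise statement directly, with the only external input being that $\pi_{i,j}(\ns_i)\subset\ns_j$ so the composition is defined (which you correctly flag). What the paper's template buys is robustness and uniformity: the ``almost-sure, then closed, then Lemma \ref{lem:closedall}'' pattern is the same one used for Remark \ref{rem:cc-sym} and Lemma \ref{lem:subfacecoup}, and it would still work if the projections were only characterized up to null sets rather than by an explicit formula. For this particular corollary, though, your shortcut is perfectly valid.
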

\begin{proof}
The uniqueness almost-surely of disintegrations implies that $\pi_{j,k}\co\pi_{i,j}=\pi_{i,k}$ holds  $\lambda \co \gamma_i^{-1}$-almost-surely on $\ns_i$. The continuity of the projection maps implies that this equality holds on a closed set in $\ns_i$. The result then follows by Lemma \ref{lem:closedall}.
\end{proof}
\noindent We can now define the space $\ns$ announced above.
\begin{defn}\label{def:maintopspace}
We define $\ns$ to be the inverse limit of the inverse system of compact polish spaces $(\pi_{i,j}:\ns_i\to\ns_j)_{i\geq j}$. Thus, letting $\wt{\ns}$ denote the product space $\prod_{i=1}^\infty \ns_i$, we have $\ns=\{ (x_i)_{i\in \mb{N}}\in \tilde\ns: \pi_{i,j}(x_i)=x_j\textrm{ for all }i\geq j\}$. The $i$-th coordinate map $\ns\to \ns_i$ defines a surjective map, which we denote by $\pi_i$. Let $\gamma:\Omega\to\wt{\ns}$ denote the measurable map $\omega \mapsto (\gamma_i(\omega))_{i\in \mb{N}}$. By \eqref{eq:gammapi} we have $\gamma(\Omega)\subseteq \ns$. We equip $\ns$ with the Borel probability measure $\lambda\co\gamma^{-1}$. We define the \emph{set of $n$-cubes} on $\ns$, denoted by $\cu^n(\ns)$, by declaring that an element $\q\in\ns^{\db{n}}$ is in $\cu^n(\ns)$ if and only if for every $k\in\mb{N}$ we have $\pi_k\co \q \in \cu^n(\ns_k)$.
\end{defn} 
\noindent The space $\wt{\ns}$ is compact by Tychonoff's theorem, and $\ns$ is a closed subset of $\wt{\ns}$. It follows that $\ns$ is a compact Polish space. Note that for each $x\in \ns$ the element $\pi_i(x)\in \ns_i$ is a coupling in $\coup(\varOmega,K_{i+1})$. We usually refer to such couplings as \emph{corner couplings}.

We describe the $\sigma$-algebra generated by $\gamma_k$ in terms of the Fourier $\sigma$-algebra $\mc{F}_k$.
\begin{lemma}\label{lem:gammaFk}
Let $k\in\mb{N}$, and let $\mc{B}_k$ be the Borel $\sigma$-algebra on $\ns_k$. Then $\gamma_k^{-1}(\mc{B}_k)=_\lambda \mc{F}_k$.
\end{lemma}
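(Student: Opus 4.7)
The plan is to identify the coordinate functions generating the Borel $\sigma$-algebra on $\ns_k$ with the $U^{k+1}$-convolutions generating $\mc{F}_k$, via the disintegration relation that defines $\gamma_k$. The key computation is the following. For any system $F=(f_v)_{v\in K_{k+1}}$ of bounded measurable functions, the evaluation function $\xi(\cdot,F):\ns_k\to\mb{C}$ composed with $\gamma_k$ is, by construction of $\gamma_k$ as the $\{0^{k+1}\}$-localization of $\mu^{\db{k+1}}$,
\[
\xi(\gamma_k(\omega),F)=\int_{\Omega^{K_{k+1}}}\prod_{v\in K_{k+1}}f_v\co p_v \,\ud\mu_\omega^{\db{k+1}}.
\]
By the argument giving Lemma \ref{lem:localize2}, extended from indicators to bounded measurable functions by a standard approximation, the right side equals (for $\lambda$-a.e.~$\omega$) the value at $\omega$ of the unique $\mc{A}$-measurable representative $h$ satisfying $h\co p_{0^{k+1}}=_{\mu^{\db{k+1}}}\mb{E}\big(\prod_{v\in K_{k+1}}f_v\co p_v\,\big|\,\mc{A}^{\db{k+1}}_{0^{k+1}}\big)$. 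Comparing this with Definition \ref{def:Udconv}, we obtain the identification
\begin{equation}\label{eq:xi-is-Udconv}
\xi(\cdot,F)\co\gamma_k\,=_\lambda\,[G]_{U^{k+1}},\qquad\text{where }G=(\mc{C}^{|v|+1}f_v)_{v\in K_{k+1}},
\end{equation}
and conversely every $U^{k+1}$-convolution arises this way by setting $f_v=\mc{C}^{|v|+1}g_v$.

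With \eqref{eq:xi-is-Udconv} in hand, the two inclusions are short. For $\gamma_k^{-1}(\mc{B}_k)\subset_\lambda \mc{F}_k$: since $\ns_k$ is a compact Polish space whose topology is the restriction of the initial topology generated by the functions $\xi(\cdot,F)$, there is a countable family $(F_j)_{j\in\mb{N}}$ of systems such that $\mc{B}_k$ is generated, as a $\sigma$-algebra, by $\{\xi(\cdot,F_j)\}_{j\in\mb{N}}$. Pulling back through $\gamma_k$ and using \eqref{eq:xi-is-Udconv} for each $F_j$, we see that $\gamma_k^{-1}(\mc{B}_k)$ is generated (up to $\lambda$-null sets) by countably many $U^{k+1}$-convolutions, each of which is $\mc{F}_k$-measurable by Definition \ref{def:FourierAlg}.

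For the reverse inclusion $\mc{F}_k\subset_\lambda\gamma_k^{-1}(\mc{B}_k)$: the $\sigma$-algebra $\mc{F}_k$ is generated by \emph{all} $U^{k+1}$-convolutions $[G]_{U^{k+1}}$, and by \eqref{eq:xi-is-Udconv} each such convolution equals (up to $\lambda$-null sets) $\xi(\cdot,F)\co\gamma_k$ for the corresponding system $F$. Since $\xi(\cdot,F):\ns_k\to\mb{C}$ is continuous by the very definition of the topology on coupling spaces (Definition \ref{def:couptop}), it is Borel on $\ns_k$, so $\xi(\cdot,F)\co\gamma_k$ is $\gamma_k^{-1}(\mc{B}_k)$-measurable. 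Hence the generators of $\mc{F}_k$ all lie in $\gamma_k^{-1}(\mc{B}_k)$ (modulo null sets), which gives the inclusion.

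The only point requiring some care is the reduction of $\mc{B}_k$ to a countably generated $\sigma$-algebra in the first inclusion, but this is automatic from Proposition \ref{prop:coupspace} asserting that $\coup(\varOmega,K_{k+1})$ is Polish; no further analytical input is needed. No step seems delicate beyond bookkeeping of the conjugations $\mc{C}^{|v|+1}$ and of the $\lambda$-a.e.~identifications arising from uniqueness of disintegration.
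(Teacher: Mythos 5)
Your proof is correct and takes essentially the same route as the paper's: your central identity $\xi(\cdot,F)\co\gamma_k=_\lambda[G]_{U^{k+1}}$ is exactly the paper's equation \eqref{eq:Udconvfact}, and both inclusions are then derived, as in the paper, from the continuity of the functions $\xi(\cdot,F)$ together with the second-countability (Lindel\"of property) of $\ns_k$. Your only departure is cosmetic: you track the conjugation operators $\mc{C}^{|v|+1}$ explicitly, where the paper absorbs them by an implicit relabelling of the function system.
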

\begin{proof}
To prove that $\gamma_k^{-1}(\mc{B}_k)\supset_\lambda \mc{F}_k$ it suffices to show that every $U^{k+1}$-convolution is in $L^\infty(\gamma_k^{-1}(\mc{B}_k))$. Let $F=(f_v)_{v\in K_{k+1}}$ be a system of functions in $L^\infty(\Omega)$. Recall from the paragraph after Definition \ref{def:Udconv} that, since $\varOmega$ is a Borel probability space, for $\lambda$-almost every $x$, letting $\nu$ denote the coupling $\gamma_k(x)$, we have $[F]_{U^{k+1}}(x)=\int_{\Omega^{K_{k+1}}}\prod_{v\in K_{k+1}} f_v\co p_v \ud\nu$. Thus, recalling the function $\xi(\cdot,F)$ on $\coup(\varOmega,K_{k+1})$ from Definition \ref{def:couptop}, we have
\begin{equation}\label{eq:Udconvfact}
[F]_{U^{k+1}}=_\lambda \xi(\cdot,F)\co \gamma_k.
\end{equation}
Since $\xi(\cdot,F)$ is continuous (by definition of $\coup(\varOmega,K_{k+1})$), we have $[F]_{U^{k+1}}\in L^\infty(\gamma_k^{-1}(\mc{B}_k))$ as required. To see the inclusion $\gamma_k^{-1}(\mc{B}_k)\subset_\lambda \mc{F}_k$, note that since every open set $V\subset \ns_k$ can be written as a countable union of finite intersections of sets of the form $\xi(\cdot,F)^{-1}(U)$ (the union can be countable since $\ns_k$ is a strongly Lindel\"of space), for $U$ open in $\mb{C}$, it follows by \eqref{eq:Udconvfact} that $\gamma_k^{-1}(V)$ is in $\mc{F}_k$ up to a $\lambda$-null set, and the inclusion follows.
\end{proof}
\noindent The following lemma explains why it suffices to study the factor $\gamma:\Omega\to\ns$ in order to describe the structure of the cubic coupling $\varOmega$.
\begin{lemma}
For every $n\in\mb{N}$, and every $m\geq n-1$, the coupling $\mu^{\db{n}}$ is relatively independent over the factor $\gamma_m^{-1}(\mc{B}_m)$. In particular $\mu^{\db{n}}$ is relatively independent over the factor generated by $\gamma:\Omega\to\ns$.
\end{lemma}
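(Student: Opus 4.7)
The plan is to reduce to a statement about the vanishing of $\xi(\mu^{\db{n}},G)$ via Remark \ref{rem:indoverfact}, and then to exploit the chain Lemma \ref{lem:gammaFk} $\Rightarrow$ Theorem \ref{thm:Fkprops}(ii) $\Rightarrow$ Lemma \ref{lem:GCSineq}. Concretely, I would first fix $n$ and $m\geq n-1$, and use Remark \ref{rem:indoverfact} to reduce the conclusion to the following claim: for every system $G=(g_v)_{v\in\db{n}}$ of functions in $L^\infty(\mc{A})$ such that $\mb{E}(g_w\,|\,\gamma_m^{-1}(\mc{B}_m))=0$ for some $w\in\db{n}$, we have $\xi(\mu^{\db{n}},G)=0$. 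By Lemma \ref{lem:gammaFk} the hypothesis is equivalent to $\mb{E}(g_w\,|\,\mc{F}_m)=0$, and since $m\geq n-1$ gives $\mc{F}_{n-1}\subseteq \mc{F}_m$ by \eqref{eq:Fd1stobs}, the tower property yields $\mb{E}(g_w\,|\,\mc{F}_{n-1})=0$. By statement (ii) of Theorem \ref{thm:Fkprops} this is in turn equivalent to $\|g_w\|_{U^n}=0$.

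Next, I would rewrite $\xi(\mu^{\db{n}},G)$ as a $U^n$-product, setting $f_v:=\mc{C}^{|v|}g_v$ so that $\mc{C}^{|v|}f_v=g_v$ and hence
\[
\xi(\mu^{\db{n}},G)=\int_{\Omega^{\db{n}}}\prod_{v\in\db{n}}g_v\co p_v\,\ud\mu^{\db{n}}=\langle (f_v)_{v\in\db{n}}\rangle_{U^n}.
\]
The Gowers-Cauchy-Schwarz inequality (Lemma \ref{lem:GCSineq}) then gives
\[
|\xi(\mu^{\db{n}},G)|\leq\prod_{v\in\db{n}}\|\mc{C}^{|v|}g_v\|_{U^n}.
\]
The vanishing $\|g_w\|_{U^n}=0$ transfers to the factor $\|\mc{C}^{|w|}g_w\|_{U^n}$, because by Theorem \ref{thm:Fkprops}(ii) the vanishing of the $U^n$-seminorm is equivalent to the vanishing of the conditional expectation on $\mc{F}_{n-1}$, and conditional expectation commutes with complex conjugation. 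Consequently the product above vanishes, yielding $\xi(\mu^{\db{n}},G)=0$ as desired.

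Finally, for the ``in particular'' claim, I would observe that, by Definition \ref{def:maintopspace} and the identity $\pi_m\co\gamma=\gamma_m$, the Borel $\sigma$-algebra pulled back by $\gamma$ satisfies $\gamma^{-1}(\mc{B})\supseteq \gamma_m^{-1}(\mc{B}_m)$ for every $m$. Hence if $\mb{E}(g_w\,|\,\gamma^{-1}(\mc{B}))=0$, the tower property gives $\mb{E}(g_w\,|\,\gamma_m^{-1}(\mc{B}_m))=0$ for every $m$, and applying the first part with any chosen $m\geq n-1$ concludes.

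I do not anticipate any substantial obstacle: the proof is essentially an assembly of Remark \ref{rem:indoverfact} (the functional reformulation of relative independence), Lemma \ref{lem:gammaFk} (the identification of $\gamma_m^{-1}(\mc{B}_m)$ with $\mc{F}_m$), the Gowers-Cauchy-Schwarz bound of Lemma \ref{lem:GCSineq}, and the characterization of the null space of $\|\cdot\|_{U^n}$ in Theorem \ref{thm:Fkprops}(ii). The only minor check is that conjugation preserves the vanishing of the $U^n$-seminorm, which is immediate from the latter characterization.
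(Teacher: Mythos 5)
Your proof is correct and follows essentially the same route as the paper: both reduce, via the multilinear reformulation of relative independence, to showing that a $U^n$-product vanishes whenever one entry has zero $U^n$-seminorm, using Lemma \ref{lem:gammaFk}, Theorem \ref{thm:Fkprops}(ii) and the Gowers--Cauchy--Schwarz inequality. The only cosmetic difference is that you pass from $\mb{E}(g_w|\mc{F}_m)=0$ to $\|g_w\|_{U^n}=0$ via the tower property and $\mc{F}_{n-1}\subset\mc{F}_m$, whereas the paper uses $\|g_w\|_{U^{m+1}}=0$ together with the seminorm nesting \eqref{eq:nestedUd}; both are valid and equivalent in substance.
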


\noindent This lemma can be viewed as a measure-theoretic analogue of a fact concerning nilspaces, namely that for a nilspace $\ns$ and any factor $\ns_m$ with $m\geq n-1$, if $\q$ is an $n$-cube on $\ns_{n-1}$ then any lift of $\q$ to a map $\db{n}\to \ns_m$ is also a cube on $\ns_m$ (see \cite[Remark 3.2.12]{Cand:Notes1}).

\begin{proof}
By Definition \ref{def:indoverfact}, Lemma \ref{lem:gammaFk}, and the fact that the functions $F\mapsto \xi(\mu^{\db{n}},F)$ are $U^n$-products, it suffices to show that for every system $F=(f_v)_{v\in \db{n}}$ of functions in $L^\infty(\mc{A})$ we have $\langle F\rangle_{U^n}=\big\langle \big(\mb{E}(f_v|\mc{F}_m)\big)_{v\in \db{n}}\big\rangle_{U^n}$. This follows by first decomposing each $f_v$ as $f_v+ \big(\mb{E}(f_v|\mc{F}_m)-f_v\big)$ (note that $\|f_v-\mb{E}(f_v|\mc{F}_m)\|_{U^{m+1}}=0$), then expanding $\langle F\rangle_{U^n}$ into $\big\langle \big(\mb{E}(f_v|\mc{F}_m)\big)_{v\in \db{n}}\big\rangle_{U^n}$ plus other $U^n$-products in each of which some function has zero $U^{m+1}$-seminorm, and then using \eqref{eq:GCS}, \eqref{eq:nestedUd} to see that each such product vanishes. This proves the first sentence in the lemma; the second sentence follows by definition of $\gamma$.
\end{proof}

\subsection{Continuous $U^n$-convolutions}\hfill\medskip\\
Recall from Definition \ref{def:Udconv} that a $U^n$-convolution $[F]_{U^n}$ is a function in $L^\infty(\varOmega)$ that is defined up to a change on a  null set. We now introduce a ``perfected version" of $[F]_{U^n}$ which is a continuous function on $\ns$. 

\begin{defn}\label{def:Unstarconv}
Let $F=(f_v)_{v\in K_n}$ be a system of functions in $L^\infty(\varOmega)$. We denote by $[F]_{U^n}^*$ the function $\ns\to \mb{C}$, $x\mapsto \int_{\Omega^{K_n}} \prod_{v\in K_n} f_v\co p_v\, \ud\nu $, for the coupling $\nu=\pi_{n-1}(x)$.
\end{defn}
\noindent Recalling the function $\xi(\cdot,F)$ from Definition \ref{def:couptop}, we deduce the following result immediately from the definitions.
\begin{lemma}\label{lem:Unstarconv}
Let $F=(f_v)_{v\in K_n}$ be a system of functions in $L^\infty(\varOmega)$. Then we have $[F]_{U^n}^*= \xi(\cdot,F)\co \pi_{n-1}$. In particular $[F]_{U^n}^*$ is continuous.
\end{lemma}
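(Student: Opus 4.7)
The plan is to observe that the identity $[F]_{U^n}^{*}=\xi(\cdot,F)\co \pi_{n-1}$ is essentially a matter of unfolding definitions, and then to deduce continuity by composing two already-known continuous maps.

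First I would recall that, by Definition \ref{def:maintopspace}, for every $x\in \ns$ the element $\pi_{n-1}(x)$ lies in $\ns_{n-1}\subseteq \coup(\varOmega,K_n)$, so it is in particular a coupling of $\lambda$ indexed by $K_n$. Now compare the two sides evaluated at such an $x$: on one hand, by Definition \ref{def:Unstarconv}, we have
\[
[F]_{U^n}^{*}(x)=\int_{\Omega^{K_n}} \prod_{v\in K_n} f_v\co p_v\,\ud\nu,\qquad \nu=\pi_{n-1}(x),
\]
and on the other hand, by Definition \ref{def:multilin} applied to the coupling $\nu\in \coup(\varOmega,K_n)$,
\[
\xi(\nu,F)=\int_{\Omega^{K_n}} \prod_{v\in K_n} f_v\co p_v\,\ud\nu.
\]
These two integrals coincide, giving the first claim $[F]_{U^n}^{*}=\xi(\cdot,F)\co\pi_{n-1}$.

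For the second statement, I would argue that both factors of this composition are continuous. The map $\xi(\cdot,F)\colon \coup(\varOmega,K_n)\to \mb{C}$ is continuous by the very definition of the topology on a coupling space (Definition \ref{def:couptop}), which is the initial topology generated by these functions. The map $\pi_{n-1}\colon \ns\to\ns_{n-1}$ is continuous because, by Definition \ref{def:maintopspace}, $\ns$ carries the subspace topology inherited from the product $\wt{\ns}=\prod_{i}\ns_{i}$, and $\pi_{n-1}$ is the restriction to $\ns$ of the $(n-1)$-th coordinate projection of that product. Composing the two yields continuity of $[F]_{U^n}^{*}$ on $\ns$.

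There is no real obstacle here: the result is a bookkeeping statement that records how the earlier definitions fit together. The only minor point to verify is the containment $\ns_{n-1}\subseteq \coup(\varOmega,K_n)$, so that $\xi(\cdot,F)$ can be legitimately applied to $\pi_{n-1}(x)$; this is immediate from Definition \ref{def:couptopofactor}, where $\ns_{n-1}$ was introduced as the support of a Borel measure on $\coup(\varOmega,K_{(n-1)+1})=\coup(\varOmega,K_n)$.
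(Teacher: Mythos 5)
Your proposal is correct and follows exactly the route the paper takes: the paper simply records that the identity is immediate from Definitions \ref{def:multilin}, \ref{def:couptop}, \ref{def:maintopspace} and \ref{def:Unstarconv}, and your write-up just makes those definitional unfoldings (and the continuity of the two composed maps) explicit. Nothing further is needed.
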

The following lemma is also straightforward.
\begin{lemma}\label{lem:topcor1}
Let $F=(f_v)_{v\in K_n}$ with $f_v\in L^\infty(\varOmega)$ for all $v$. Then $[F]_{U^n}\,=_{\lambda}\, [F]^*_{U^n}\co\gamma$. 
\end{lemma}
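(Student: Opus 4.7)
The plan is to unpack both sides against the same disintegration of $\mu^{\db{n}}$ along the single-vertex projection $p_{0^n}\colon \Omega^{\db{n}}\to \Omega$, and then invoke property \eqref{eq:gammapi} to identify the fibre measure with $\pi_{n-1}(\gamma(\omega))$. The whole argument is a bookkeeping exercise tying together the three relevant definitions.

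First I would observe that the singleton $\{0^n\}$ is local in $\mu^{\db{n}}$ (Corollary \ref{cor:facelocality}), and that $\mu^{\db{n}}_{\{0^n\}}=\lambda$ (by the consistency axiom applied to the inclusion $\{0^n\}\hookrightarrow\db{n}$). Hence Lemma \ref{lem:localize} applies, and the $\{0^n\}$-localization $\gamma_{n-1}\colon\Omega\to\coup(\varOmega,K_n)$ from Definition \ref{def:couptopofactor} is in particular a disintegration of $\mu^{\db{n}}$ relative to $p_{0^n}$ and $\lambda$.

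Next, applying Lemma \ref{lem:localize2} to the bounded $\mc{A}_{K_n}^{\db{n}}$-measurable integrand appearing in Definition \ref{def:Udconv}, I would deduce that, viewed via Lemma \ref{lem:Doob} as an $\mc{A}$-measurable function on $\Omega$, the conditional expectation $\mb{E}\bigl(\prod_{v\in K_n}\mc{C}^{|v|+1}f_v\co p_v\,\big|\,\mc{A}_{0^n}^{\db{n}}\bigr)$ agrees $\lambda$-almost everywhere with the map
\[
\omega \;\mapsto\; \int_{\Omega^{K_n}} \prod_{v\in K_n} f_v\co p_v\,\ud\gamma_{n-1}(\omega).
\]
By Definition \ref{def:Udconv}, this is therefore a valid version of $[F]_{U^n}$.

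Finally, I would invoke \eqref{eq:gammapi}, which gives the pointwise identity $\pi_{n-1}\co\gamma(\omega)=\gamma_{n-1}(\omega)$ for every $\omega\in\Omega$. Substituting into Definition \ref{def:Unstarconv} yields the same integral, so $[F]_{U^n}^*\co\gamma$ coincides $\lambda$-almost everywhere with the version of $[F]_{U^n}$ constructed above, giving $[F]_{U^n}=_\lambda [F]_{U^n}^*\co\gamma$. There is no real obstacle here; the proof is a one-paragraph unwinding of the definitions, and the only subtlety worth flagging is that one must use \emph{pointwise} equality in \eqref{eq:gammapi} (rather than merely $\lambda$-a.e.~equality) in order to line up $\pi_{n-1}\co\gamma$ with the chosen version of $\gamma_{n-1}$ without introducing an extra null set.
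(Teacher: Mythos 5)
Your proof is correct and is essentially the paper's own argument: both identify $[F]_{U^n}$ $\lambda$-a.e.\ with $\xi(\cdot,F)\co\gamma_{n-1}$ via the $\{0^n\}$-localization and then substitute $\gamma_{n-1}=\pi_{n-1}\co\gamma$. The only cosmetic difference is that the factorization $\pi_{n-1}\co\gamma=\gamma_{n-1}$ is immediate from Definition \ref{def:maintopspace} (with \eqref{eq:gammapi} serving to guarantee $\gamma(\Omega)\subseteq\ns$), and you spell out via Lemma \ref{lem:localize2} the identification that the paper takes from the paragraph following Definition \ref{def:Udconv}.
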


\begin{proof}
We have $[F]_{U^n}\,=_{\lambda}\,\xi(\cdot,F)\co \gamma_{n-1}$, by definition of $\gamma_{n-1}$ and the paragraph after Definition \ref{def:Udconv}. By Definition \ref{def:maintopspace} we have $\gamma_{n-1}=\pi_{n-1}\co\gamma$. Hence we have $\lambda$-almost surely $[F]_{U^n}=_\lambda \xi(\cdot,F)\co \gamma_{n-1}=\xi(\cdot,F)\co \pi_{n-1}\co\gamma =[F]^*_{U^n}\co\gamma$.
\end{proof}

\begin{lemma}\label{lem:topcor2}
Let $F=(f_v)_{v\in K_n}$ be a system of functions in $L^\infty(\varOmega)$. If $[F]_{U^n}\,=_{\lambda}\, 0$, then $[F]^*_{U^n}(x)=0$ holds for every $x\in\ns$.
\end{lemma}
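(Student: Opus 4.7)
The plan is to combine the two preceding lemmas with the fundamental property of $\ns_{n-1}$ as a topological support, in order to promote the almost-everywhere vanishing on $\Omega$ to pointwise vanishing on $\ns$. Concretely, Lemma \ref{lem:Unstarconv} tells us that $[F]^*_{U^n}=\xi(\cdot,F)\co\pi_{n-1}$, so this function factors through the continuous projection $\pi_{n-1}:\ns\to\ns_{n-1}$ via the continuous map $g:=\xi(\cdot,F)$ defined on $\coup(\varOmega,K_n)\supset \ns_{n-1}$. Hence it is enough to show that $g$ vanishes identically on $\ns_{n-1}$.

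First I would use Lemma \ref{lem:topcor1}, which gives $[F]_{U^n}=_\lambda [F]^*_{U^n}\co\gamma=g\co\pi_{n-1}\co\gamma=g\co\gamma_{n-1}$, the last equality being \eqref{eq:gammapi}. The hypothesis $[F]_{U^n}=_\lambda 0$ therefore translates into $g\co\gamma_{n-1}=0$ $\lambda$-a.s., which is the statement that $g$ vanishes outside a $(\lambda\co\gamma_{n-1}^{-1})$-null subset of $\ns_{n-1}$.

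Next I would set $\mc{P}:=g^{-1}(\{0\})\subseteq \coup(\varOmega,K_n)$. Since $g$ is continuous (by definition of the initial topology on the coupling space), $\mc{P}$ is closed. The previous paragraph shows that $\mc{P}\cap\ns_{n-1}$ has probability $1$ in $\ns_{n-1}$ with respect to $\lambda\co\gamma_{n-1}^{-1}$. Applying Lemma \ref{lem:closedall} (with the parameter $n$ of that lemma replaced by $n-1$, so that $\ns_{n-1}\subset\coup(\varOmega,K_n)$) yields $\ns_{n-1}\subseteq \mc{P}$, that is, $g\equiv 0$ on $\ns_{n-1}$. Consequently $[F]^*_{U^n}(x)=g(\pi_{n-1}(x))=0$ for every $x\in\ns$, as desired.

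There is no real obstacle here: the argument is essentially a one-step promotion from ``null set" to ``empty set" using the defining property of the support $\ns_{n-1}=\Supp(\lambda\co\gamma_{n-1}^{-1})$, and the only thing to be a little careful about is the bookkeeping of the indices and the factorization $[F]^*_{U^n}=\xi(\cdot,F)\co\pi_{n-1}$ so that Lemma \ref{lem:closedall} can be invoked with a genuinely closed subset of the ambient coupling space.
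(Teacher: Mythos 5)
Your proof is correct and follows essentially the same route as the paper: both arguments write $[F]^*_{U^n}=\xi(\cdot,F)\co\pi_{n-1}$, use Lemma \ref{lem:topcor1} to translate the hypothesis into the statement that $\xi(\cdot,F)$ vanishes $\lambda\co\gamma_{n-1}^{-1}$-almost everywhere on $\ns_{n-1}$, and then invoke Lemma \ref{lem:closedall} (equivalently, the strict positivity of the measure on its support) to upgrade this to vanishing everywhere on $\ns_{n-1}$. The only cosmetic difference is that the paper phrases the last step via the open set $\ns_{n-1}\cap\supp(\xi(\cdot,F))$ being null and hence empty, while you work with its complementary closed zero set.
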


\begin{proof}
We use again that $[F]_{U^n}\,=_{\lambda}\,\xi(\cdot,F)\co \gamma_{n-1}$. The assumption then implies that $\ns_{n-1}\cap \supp\big(\xi(\cdot,F)\big)$ is a $\lambda\co \gamma_{n-1}^{-1}$-null open subset of $\ns_{n-1}$. By Lemma \ref{lem:closedall}, this set is empty. Hence, for every $x\in \ns$ we have $[F]_{U^n}^*(x)=\xi(\pi_{n-1}(x),F)=0$.
\end{proof}

\subsection{Topological nilspace factors of \texorpdfstring{$\ns$}{X}}\hfill\medskip\\
In this subsection $\ns$ denotes the compact Polish space associated with a given Borel cubic coupling, and $\pi_k:\ns\to \ns_k$, $k\in \mb{N}$,  denote the associated projections (see Definition \ref{def:maintopspace}). 

Recall from \cite{CamSzeg,Cand:Notes2} that every compact $k$-step nilspace $\nss_k$ can be viewed as a $k$-fold compact abelian bundle with structure groups $\ab_1,\ldots,\ab_k$ being compact abelian groups \cite[Definition 2.1.8 and Proposition 2.1.9]{Cand:Notes2}, with factors $\nss_1,\nss_2,\ldots,\nss_{k-1}$, and with continuous \emph{nilspace factor maps} $\nss_k\to\nss_i$ (these are the canonical projections the definition of which may be recalled from \cite[Lemma 3.2.10]{Cand:Notes1}). We can equip $\nss_k$ with a unique Borel probability measure satisfying certain natural invariance properties, which we call the Haar measure on $\nss_k$ (see \cite[Proposiiton 2.2.5]{Cand:Notes2}), and similarly every cube set $\cu^n(\nss_k)$ and every \emph{rooted} cube set $\cu^n_x(\nss_k)$ can be  equipped with a Haar measure, since these sets are also equipped with compact-abelian-bundle structures (see \cite[Lemma 2.2.17]{Cand:Notes2}).

Our main goal in this subsection is to prove the following result.
\begin{theorem}\label{thm:rmp}
Let $x\in\ns$, and let $n,k\in\mb{N}$. Suppose that the cubespace $\ns_k$ from Definition \ref{def:couptopofactor} is a $k$-step compact nilspace, and that for every $j\in [k-1]$ the map $\pi_{k,j}:\ns_k\to \ns_j$ is equal to the nilspace factor map $\ns_k\to \ns_j$. Then the image of the coupling $\pi_{n-1}(x)\in\coup(\varOmega, K_n)$ under $\gamma_k^{K_n}$ is the Haar measure on $\cu_{\pi_k(x)}^n(\ns_k)$.
\end{theorem}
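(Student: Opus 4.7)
My plan is to proceed by induction on $k$. The base case $k=0$ is trivial since $\ns_0$ is a single point, making both measures the unique probability measure on the singleton $\{\pi_0(x)\}^{K_n}$. For the inductive step $k \geq 1$, set $\nu := \pi_{n-1}(x)\co(\gamma_k^{K_n})^{-1}$. I would first establish the support claim $\Supp(\nu)\subseteq \cu^n_{\pi_k(x)}(\ns_k)$: by Definition~\ref{def:couptopofactor} the global pushforward $\mu^{\db{n}}\co(\gamma_k^{\db{n}})^{-1}$ is supported on $\cu^n(\ns_k)$, and disintegrating $\mu^{\db{n}}$ over the $0^n$-vertex---whose fibre measures are precisely $\gamma_{n-1}(y) = \pi_{n-1}\co\gamma(y)$ by definition of $\gamma_{n-1}$---yields that for $\lambda$-a.e.\ $y$ the measure $\pi_{n-1}(\gamma(y))\co(\gamma_k^{K_n})^{-1}$ is supported on $\cu^n_{\pi_k\co\gamma(y)}(\ns_k)$. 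Since this support condition is closed in $x$, Lemma~\ref{lem:closedall} promotes it to all $x\in\ns$.

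Next I would compute the pushforward of $\nu$ under $(\pi_{k,k-1})^{K_n}$. Using that $\pi_{k,k-1}$ equals the nilspace factor map together with the pointwise identity $\pi_{k,k-1}\co\gamma_k=\gamma_{k-1}$ from \eqref{eq:gammapi}, this pushforward equals $\pi_{n-1}(x)\co(\gamma_{k-1}^{K_n})^{-1}$. By the inductive hypothesis (applicable because $\ns_{k-1}$ inherits the structure of a $(k-1)$-step compact nilspace with nilspace factor maps $\pi_{k-1,j}$, via Corollary~\ref{cor:chainofprojs} and the nilspace-theoretic fact that a factor of a $k$-step nilspace by its top step is $(k-1)$-step), this equals the Haar measure on $\cu^n_{\pi_{k-1}(x)}(\ns_{k-1})$.

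The remaining and hardest step is to identify the fibre measures of $\nu$ over this projection. By the compact-abelian-bundle structure of $\cu^n_{\pi_k(x)}(\ns_k)\to\cu^n_{\pi_{k-1}(x)}(\ns_{k-1})$ with structure group $\hom_{0^n\to 0}(\db{n},\mc{D}_k(\ab_k))$, the target Haar measure is characterized by having this base marginal and fibrewise Haar measure on each principal homogeneous space. I would verify the fibrewise translation-invariance of $\nu$ by testing against continuous rank-$1$ functions $\prod_{v\in K_n}f_v\co p_v$; after decomposing each $f_v$ along fibres of $\ns_k\to\ns_{k-1}$ into an $\ns_{k-1}$-lift plus $\ab_k$-character fluctuations, Lemmas~\ref{lem:topcor1} and \ref{lem:Unstarconv} rewrite the resulting integrals as $U^n$-product expressions in the original cubic coupling, and Theorem~\ref{thm:sip} combined with Lemma~\ref{lem:keybot} should force these to vanish whenever the character choices do not form a degree-$k$ coboundary on $\db{n}$---which is exactly the translation-invariance of Haar measure on $\hom_{0^n\to 0}(\db{n},\mc{D}_k(\ab_k))$. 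The main obstacle I anticipate is making this fibrewise Fourier decomposition precise, i.e.\ matching the algebraic characters of $\ab_k$ with the measure-theoretic decomposition of $L^2(\mc{F}_k)$ over $L^2(\mc{F}_{k-1})$ provided by the nilspace structure of $\ns_k$, and checking that the continuity-plus-closure argument used for the support claim can be repeated (via Lemma~\ref{lem:closedall}) to upgrade any $\lambda$-a.e.\ statement in $y$ to a pointwise statement in $x\in\ns$.
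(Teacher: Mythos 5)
Your overall architecture matches the paper's: induction on $k$, reduction to rank-one test functions, decomposition of each factor into an $\ab_k$-character part times a function factoring through $\ns_{k-1}$, vanishing of the nontrivial-character contributions, and the induction hypothesis for the remaining part. The support claim and the identification of the base marginal are fine (the paper handles both more tersely, working directly on the compact space $\cu^n_{\pi_k(x)}(\ns_k)$ via the Riesz representation theorem and Lemma \ref{lem:etainhom}). But the step you yourself flag as the ``main obstacle'' is a genuine gap, and it is not closed by ``Theorem \ref{thm:sip} combined with Lemma \ref{lem:keybot}''.

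Two things are missing. First, to run a Stone--Weierstrass argument with the modules $W(\chi,\ns_k)$ you need a supply of \emph{continuous} modulus-one functions in each $W(\chi,\ns_k)$ separating the points of the rooted cube set; the paper obtains this only after reducing to the case where $\ns_k$ has finite rank (via the inverse-limit representation of compact nilspaces), because the separation of points uses local triviality of the $\ab_k$-bundle $\ns_k\to\ns_{k-1}$. Second, and more seriously: once you have a rank-one function $\prod_{v\in K_n}\mc{C}^{|v|}(\alpha_v\phi_v)\co p_v$ with some $\eta_v\neq 1$, the tool that kills its integral against $\pi_{n-1}(x)$ is Corollary \ref{cor:simpzero} together with Lemma \ref{lem:topcor2}, and these require the offending vertex $u$ to have degree $d(u)\leq k$ in the simplicial set carrying the nontrivial factors --- otherwise you would need $\|\alpha_u\phi_u\|_{U^{d(u)}}=0$ for $d(u)$ as large as $n$, which you do not have. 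The paper therefore first pushes all characters at vertices of height greater than $k$ down to lower vertices by applying Lemma \ref{lem:char-rep} recursively, and then uses Lemma \ref{lem:Fk-1-pd} to replace the $\mc{F}_{k-1}$-measurable parts by functions supported on $K_{n,\leq k}$; only after this height reduction does $\mb{E}(\alpha_v\phi_v|\mc{F}_{k-1})=0$, hence $\|\alpha_v\phi_v\|_{U^k}=0$ by Theorem \ref{thm:Fkprops}~$(ii)$, suffice to conclude. Your sketch contains no analogue of this reduction, and without it the claimed vanishing does not follow from the lemmas you cite.
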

\noindent To motivate this result, let us record straightaway the following important consequence, which tells us that the continuous $U^n$-convolutions on $\ns$ are functions that factor not just through $\ns_{n-1}$ (this being  given immediately by Definition \ref{def:Unstarconv} and Lemma \ref{lem:Unstarconv}), but also through spaces $\ns_k$ with $k<n-1$. This will play a key role in the proof of the structure theorem in the next section (see for instance Lemma  \ref{lem:convfactor}).
\begin{corollary}\label{cor:factoring}
Suppose that $\ns_k$ together with the cube sets from Definition \ref{def:couptopofactor} is a compact $k$-step nilspace, and let  $F=(f_v)_{v\in K_n}$ be a system of functions in $L^\infty(\Omega,\mc{F}_k,\lambda)$. Then there is a continuous function $f:\ns_k\to \mb{C}$ such that $[F]_{U^n}^*=f\co \pi_k$.
\end{corollary}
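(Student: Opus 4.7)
The strategy is to combine Theorem~\ref{thm:rmp} with Lemma~\ref{lem:gammaFk} in order to rewrite $[F]_{U^n}^*(x)$ as an integral over the rooted cube set $\cu^n_{\pi_k(x)}(\ns_k)$, which manifestly depends only on $\pi_k(x)$; continuity then comes for free from the quotient-map property of $\pi_k$.

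Since each $f_v$ is $\mc{F}_k$-measurable and $\mc{F}_k=_\lambda \gamma_k^{-1}(\mc{B}_k)$ by Lemma~\ref{lem:gammaFk}, for each $v\in K_n$ there exists a bounded Borel function $\tilde f_v:\ns_k\to\mb{C}$ such that $f_v=_\lambda \tilde f_v\co\gamma_k$. Fix $x\in\ns$ and set $\nu=\pi_{n-1}(x)\in\coup(\varOmega,K_n)$. Because $\nu$ is a coupling of $\lambda$, each projection $p_v:\Omega^{K_n}\to\Omega$ pushes $\nu$ forward to $\lambda$, so altering $f_v$ on a $\lambda$-null set does not affect the integral defining $[F]_{U^n}^*(x)$ in Definition~\ref{def:Unstarconv}. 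Hence
\[
[F]_{U^n}^*(x) = \int_{\Omega^{K_n}} \prod_{v\in K_n} \tilde f_v\co p_v\co \gamma_k^{K_n}\ud\nu = \int_{\ns_k^{K_n}} \prod_{v\in K_n} \tilde f_v\co p_v\;\ud\bigl(\nu\co (\gamma_k^{K_n})^{-1}\bigr).
\]
By the hypotheses of the corollary, Theorem~\ref{thm:rmp} applies and identifies the pushforward $\nu\co(\gamma_k^{K_n})^{-1}$ with the Haar measure $\mu_{\cu^n_{\pi_k(x)}(\ns_k)}$ on the rooted cube set $\cu^n_{\pi_k(x)}(\ns_k)$. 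Thus $[F]_{U^n}^*(x)$ depends on $x$ only through $\pi_k(x)$, and we may set
\[
f(y) := \int_{\cu^n_y(\ns_k)} \prod_{v\in K_n} \tilde f_v\co p_v\,\ud\mu_{\cu^n_y(\ns_k)}\qquad (y\in\ns_k),
\]
so that $[F]_{U^n}^*=f\co\pi_k$.

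For the continuity of $f$: the map $\pi_k:\ns\to\ns_k$ is continuous by Lemma~\ref{lem:pijctsmorph} and the definition of the inverse-limit topology, and it is surjective by a standard inverse-limit argument, using that each $\pi_{i,j}$ is surjective (the image $\pi_{i,j}(\ns_i)$ is compact, contains $\pi_{i,j}(\gamma_i(\Omega))=\gamma_j(\Omega)$ by \eqref{eq:gammapi}, and hence contains its closure $\ns_j$). Since $\ns$ is compact and $\ns_k$ is Hausdorff, $\pi_k$ is a closed continuous surjection, hence a topological quotient map, and the continuity of $[F]_{U^n}^*$ from Lemma~\ref{lem:Unstarconv} descends to continuity of $f$.

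The only mild subtlety is that we substitute $f_v$ by $\tilde f_v\co\gamma_k$ inside an integral against $\nu$ rather than against $\lambda$; this is legitimate precisely because $\nu$ is a coupling of $\lambda$, so all necessary almost-everywhere identities are preserved coordinate by coordinate. All serious work lies in Theorem~\ref{thm:rmp}, which is the real engine behind the factoring property.
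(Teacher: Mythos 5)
Your argument for the factoring step is essentially identical to the paper's: both proofs use Lemma \ref{lem:gammaFk} (with Lemma \ref{lem:Doob}) to write $f_v =_\lambda g_v\co\gamma_k$, push the corner coupling $\pi_{n-1}(x)$ forward under $\gamma_k^{K_n}$, and invoke Theorem \ref{thm:rmp} to identify the pushforward with the Haar measure on $\cu^n_{\pi_k(x)}(\ns_k)$, so that $[F]_{U^n}^*$ manifestly depends only on $\pi_k(x)$. Your observation that replacing $f_v$ by $\tilde f_v\co\gamma_k$ is harmless because each marginal of the coupling is $\lambda$ is exactly the point that makes this substitution legitimate, and it is correct.

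Where you genuinely diverge from the paper is the continuity of $f$. The paper proves it directly on $\ns_k$: it applies Lusin's theorem to each Borel function $g_v$ to approximate $f$ uniformly by integrals $y\mapsto \int_{\cu^n_y(\ns_k)}\prod_v g_v'\co p_v\ud\nu_y$ with $g_v'$ continuous, and then cites \cite[Lemma 2.2.17]{Cand:Notes2} for the continuity of each such approximant. You instead exploit the topology of the inverse limit: $\pi_k$ is a continuous surjection from the compact space $\ns$ onto the Hausdorff space $\ns_k$ (your surjectivity argument via compactness of $\pi_{i,j}(\ns_i)$ and density of $\gamma_j(\Omega)$ is correct), hence a closed map and therefore a quotient map, so the continuity of $[F]_{U^n}^*=f\co\pi_k$ established in Lemma \ref{lem:Unstarconv} descends to $f$. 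This is a clean and valid alternative: it trades the measure-theoretic approximation machinery (Lusin plus the nilspace continuity lemma) for a purely topological descent argument, at the modest cost of having to verify surjectivity of the inverse-limit projections. Both routes are sound; yours is arguably more economical given that the continuity of $[F]_{U^n}^*$ on $\ns$ is already available, while the paper's has the advantage of producing continuity of $f$ intrinsically on $\ns_k$ without reference to $\ns$.
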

\begin{proof}
For each $x\in \ns$ we have by definition $[F]_{U^n}^*(x)=\int_{\Omega^{K_n}}\prod_v f_v\co p_v\ud\nu$ where $\nu$ is the coupling $\pi_{n-1}(x)\in\coup(\varOmega,K_n)$. Since each $f_v$ is $\mc{F}_k$-measurable, by Lemmas  \ref{lem:Doob} and \ref{lem:gammaFk} it follows that there is a Borel measurable function $g_v:\ns_k\to \mb{C}$ such that $f_v\,=_{\lambda}\, g_v\co \gamma_k$. Let $f:\ns_k\to\mb{C}$, $y\mapsto \int_{\cu^n_y(\ns_k)}\prod_v g_v\co p_v\ud\nu_y$ where $\nu_y$ is the Haar measure on the rooted cube set $\cu^n_y(\ns_k)$. By Theorem \ref{thm:rmp} we then have $[F]_{U^n}^*(x)=f(\pi_k(x))$. To see the continuity of $f$, note that by combining Lusin's theorem applied to each $g_v$ with the multilinearity of $(g_v)_{v\in K_n}\mapsto \int_{\cu^n_y(\ns_k)}\prod_{v\in K_n} g_v\co p_v\ud\nu_y$, we obtain that $f$ can be approximated arbitrarily closely in the supremum norm by functions of the form $f':y\mapsto \int_{\cu^n_y(\ns_k)}\prod_v g_v'\co p_v\ud\nu_y$ where each $g_v'$ is continuous. Each such function $f'$ is continuous by \cite[Lemma 2.2.17]{Cand:Notes2}, and the continuity of $f$ follows.
\end{proof}
\noindent For the proof of Theorem \ref{thm:rmp} we use the following concepts from nilspace theory. 
\begin{defn}
Let $\nss$ be a $k$-step compact nilspace, let $\ab_k$ be the $k$-th structure group of $\nss$, and let $\chi$ be a character in $\wh{\ab_k}$. We denote by $W(\chi,\nss)$ the Hilbert space of functions $f\in L^2(\nss)$ that satisfy $f(x+z)=f(x)\chi(z)$ for every $x\in \nss$ and $z\in \ab_k$.
\end{defn}
\noindent These Hilbert spaces were already used in \cite[Definition 2.8]{Szegedy:HFA}. Similar concepts are also used concerning nilmanifolds (e.g.\ the concept of a  \emph{nilcharacter} from \cite[Definition 6.1]{GTZ}).
\begin{lemma}\label{lem:nilspchar}
Let $\nss$ be a $k$-step compact nilspace and let $\chi\in\wh{\ab_k}$. Then there exists $\phi\in W(\chi,\nss)$ such that $|\phi(x)|=1$ for all $x\in \nss$. Furthermore, for every $f\in W(\chi,\nss)$ we have $f(x)=\phi(x)\, h\co \pi_{k-1}(x)$ where $h$ is the function in $L^2(\nss_{k-1})$ such that $h\co \pi_{k-1}= f\overline{\phi}$.
\end{lemma}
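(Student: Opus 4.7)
The plan is to reduce the first assertion to the existence of a Borel section of the top-degree bundle $\pi_{k-1}:\nss\to\nss_{k-1}$, and then obtain $\phi$ as the character $\chi$ applied to the fibre coordinate relative to that section. The second assertion will then be a direct consequence of the fact that $\ab_k$-invariant functions on $\nss$ descend to $\nss_{k-1}$.

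First I would invoke the compact abelian bundle structure of $\nss$ over $\nss_{k-1}$ with structure group $\ab_k$ (from the nilspace background in \cite{Cand:Notes2}). The action of $\ab_k$ on $\nss$ is continuous, free, and transitive on each fibre of $\pi_{k-1}$, and the bundle is locally trivial with respect to a countable Borel cover. Patching such local trivializations (or applying a Borel selection theorem to the free compact Polish group action) yields a Borel section $s:\nss_{k-1}\to\nss$ of $\pi_{k-1}$. Each $x\in\nss$ then admits a unique decomposition $x=s(\pi_{k-1}(x))+z_x$ with $z_x\in\ab_k$, and the map $x\mapsto z_x$ is Borel by continuity of the bundle operations.

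Next, define $\phi:\nss\to\mb{T}$ by $\phi(x):=\chi(z_x)$. Then $|\phi(x)|=1$ everywhere, and for every $w\in\ab_k$ the relation $\pi_{k-1}(x+w)=\pi_{k-1}(x)$ forces $z_{x+w}=z_x+w$, hence $\phi(x+w)=\chi(z_x+w)=\phi(x)\chi(w)$. This gives $\phi\in W(\chi,\nss)$. For the second part, given $f\in W(\chi,\nss)$, set $g:=f\overline{\phi}$; a direct computation shows $g(x+w)=g(x)$ for every $w\in\ab_k$, so $g$ is $\ab_k$-invariant. Since the Haar measure on $\nss$ disintegrates over $\nss_{k-1}$ with Haar fibre measure on $\ab_k$, the $\ab_k$-invariant elements of $L^2(\nss)$ are precisely the pullbacks $h\co\pi_{k-1}$ with $h\in L^2(\nss_{k-1})$; thus $f\overline{\phi}=h\co\pi_{k-1}$ for a unique such $h$, and $|\phi|=1$ gives $\|h\|_{L^2(\nss_{k-1})}=\|f\|_{L^2(\nss)}$.

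The only delicate point is the existence of the Borel section $s$. This is standard once the compact abelian bundle structure is in hand (free continuous action of a compact Polish group on a Polish space), but one must cite the appropriate version (either Kuratowski--Ryll-Nardzewski applied to the fibres, or the local triviality of the bundle together with a countable Borel partition subordinate to a trivializing cover). Everything else is routine bookkeeping with the cocycle identity and the disintegration of Haar measure over $\pi_{k-1}$.
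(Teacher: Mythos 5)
Your proof is correct and follows essentially the same route as the paper: both obtain a Borel section $s$ of $\pi_{k-1}$ (the paper simply cites \cite[Lemma 2.4.5]{Cand:Notes2} for this, where you sketch its derivation from local triviality or a selection theorem), define $\phi(x)=\chi(x-s(\pi_{k-1}(x)))$, and deduce the second assertion from the $\ab_k$-invariance of $f\overline{\phi}$.
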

\begin{proof}
By \cite[Lemma 2.4.5]{Cand:Notes2} there exists a Borel measurable map $\cs:\nss_{k-1}\to \nss$ satisfying $\pi_{k-1}\co \cs(y)=y$ for every $y\in \nss_{k-1}$. The function $\phi(x)=\chi(x-\cs\co \pi_{k-1}(x))$ has modulus 1 everywhere and is in $W(\chi,\nss)$. Furthermore, for each $f\in W(\chi,\nss)$ we have $f \overline{\phi} (x+z)=f \overline{\phi} (x)$ for every $x\in \nss$, $z\in \ab_k$, so we have indeed $f \overline{\phi} = h\co \pi_{k-1}$ for some $h\in L^2(\nss_{k-1})$.
\end{proof}
\noindent Recall from \cite[(2.9)]{Cand:Notes1} that the degree-$k$ nilspace  structure on an abelian group $\ab$ is denoted by $\mc{D}_k(\ab)$ and defined by declaring its cube sets to be as follows:
\[
\cu^n(\mc{D}_k(\ab))= \{\q:\db{n}\to \ab ~|~ \textrm{for every face map }\phi:\db{k+1}\to \db{n},\; \sigma_{k+1}(\q\co\phi)=0\},
\]
where $\sigma_{k+1}(\q\co\phi)=\sum_{v\in\db{k+1}}(-1)^{|v|} \q\co\phi(v)$. For the purpose of the following result, it is convenient to define $\mc{D}_k(\ab)$ for $k<0$ to be $\{0_{\ab}\}$ with $\cu^n(\mc{D}_k(\ab))=\{0_{\ab}\}^{\db{n}}$.
\begin{lemma}\label{lem:dualker1}
Let $n$ be a non-negative integer, let $k\in \mb{Z}$, let $\ab$ be a compact abelian group and let $\eta:\db{n}\to \wh{\ab}$, $v\mapsto \eta_v$. Then the character $\prod_{v\in\db{n}} \mc{C}^{|v|} \eta_v\co p_v$ on $\ab^{\db{n}}$ annihilates the subgroup $\cu^n(\mc{D}_k(\ab))$ if and only if $\eta \in \cu^n(\mc{D}_{n-k-1}(\wh{\ab}))$.
\end{lemma}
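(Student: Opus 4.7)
The plan is to reduce the statement to the multilinear (Reed--Muller type) expansion of maps $\db{n}\to G$ over an arbitrary abelian group $G$. Every map $\q:\db{n}\to\ab$ admits a unique representation $\q(v)=\sum_{S\subseteq[n]}a_S\prod_{i\in S}v_i$ with coefficients $a_S\in\ab$, obtained by M\"obius inversion on the Boolean lattice. I would first prove the auxiliary characterization that $\q\in\cu^n(\mc{D}_k(\ab))$ if and only if $a_S=0$ for all $|S|>k$. For this, fix a face map $\phi:\db{k+1}\to\db{n}$, parametrised by a set $J\subseteq[n]$ of size $k+1$ and values $\epsilon\in\{0,1\}^{[n]\setminus J}$ assigned to the fixed coordinates, and compute directly
\[
\sigma_{k+1}(\q\co\phi)=(-1)^{k+1}\sum_{S'\subseteq\{i\notin J:\,\epsilon_i=1\}}a_{J\cup S'}.
\]
Setting $\epsilon=0$ forces $a_J=0$ for every $|J|=k+1$; then varying $\epsilon$ with progressively larger support and arguing inductively on $|S|$ yields $a_S=0$ for every $|S|>k$. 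The converse direction is immediate from the same identity.

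Given this polynomial description, I substitute it into the character $\chi_\eta$ to obtain a clean factorization. Working additively in $\mb{T}$, and using that $\prod_{i\in S}v_i=1$ precisely when $v\geq \mathbf{1}_S$, the substitution $\q(v)=\sum_S a_S\prod_{i\in S}v_i$ gives
\[
\chi_\eta(\q)=\sum_{v\in\db{n}}(-1)^{|v|}\eta_v(\q(v))=\sum_{S\subseteq[n]}\beta_S(a_S),\qquad \beta_S:=\sum_{v\geq \mathbf{1}_S}(-1)^{|v|}\eta_v\in\wh{\ab}.
\]
Since the coefficients $a_S$ with $|S|\leq k$ range freely and independently over $\ab$ on $\cu^n(\mc{D}_k(\ab))$ while $a_S=0$ for $|S|>k$, the character $\chi_\eta$ annihilates $\cu^n(\mc{D}_k(\ab))$ if and only if $\beta_S=0$ in $\wh{\ab}$ for every $|S|\leq k$.

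To convert this into the desired condition on $\eta$, I apply the same multilinear expansion to $\eta$, writing $\eta_v=\sum_T b_T\prod_{i\in T}v_i$ with $b_T\in\wh{\ab}$. Since $\sum_{v\geq \mathbf{1}_W}(-1)^{|v|}$ vanishes unless $W=[n]$, a direct computation gives
\[
\beta_S=\sum_T b_T\sum_{v\geq \mathbf{1}_{S\cup T}}(-1)^{|v|}=(-1)^n\sum_{T\supseteq [n]\setminus S}b_T.
\]
Setting $R=[n]\setminus S$, the vanishing of all $\beta_S$ with $|S|\leq k$ becomes the condition $\sum_{T\supseteq R}b_T=0$ for every $R\subseteq[n]$ with $|R|\geq n-k$. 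A short downward M\"obius induction on $|R|$ (starting from $R=[n]$, which forces $b_{[n]}=0$, then peeling off one element at a time) shows this is equivalent to $b_T=0$ for all $|T|\geq n-k$, i.e.\ $|T|>n-k-1$. By the first paragraph applied to $\eta$ with $\wh{\ab}$ and $n-k-1$ in place of $\ab$ and $k$, this is precisely the condition $\eta\in\cu^n(\mc{D}_{n-k-1}(\wh{\ab}))$.

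The main obstacle is mild bookkeeping: one must be careful with the sign conventions introduced by $\mc{C}^{|v|}$ (read additively in $\mb{T}$ as $(-1)^{|v|}$), and with the edge cases $k<0$ and $k\geq n$, which are handled by the paper's convention that $\mc{D}_k(\ab)=\{0\}$ for $k<0$. Once these are in place the argument is a double application of the polynomial characterization, glued together by the elementary inversion on the Boolean lattice.
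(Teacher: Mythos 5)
Your argument is correct, and it takes a genuinely different route from the paper's. The paper proves the backward implication by induction on $n$, splitting the product along the last coordinate into a telescoped factor $v\mapsto \eta_v\overline{\eta_{v+w}}$ (handled at indices $(n-1,k)$) and a difference factor $v\mapsto \q(v)-\q(v+w)$ (handled at indices $(n-1,k-1)$); it proves the forward implication by exhibiting, for any $(n-k)$-face $F$ on which $\prod_{v\in F}\mc{C}^{|v|}\eta_v$ is nontrivial, an explicit cube in $\cu^n(\mc{D}_k(\ab))$ supported on $F$ that witnesses non-annihilation. You instead pass through the multilinear expansion $\q(v)=\sum_S a_S\prod_{i\in S}v_i$, identify $\cu^n(\mc{D}_k(\ab))$ with the subgroup $\{a_S=0 \text{ for } |S|>k\}$ on which the low-degree coefficients range freely, and reduce the whole statement to the linear-algebraic condition $\beta_S=0$ for $|S|\le k$, which a second application of the same expansion (now to $\eta$) converts into $b_T=0$ for $|T|>n-k-1$. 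I checked the key computations: the face-map identity $\sigma_{k+1}(\q\co\phi)=(-1)^{k+1}\sum_{S'}a_{J\cup S'}$, the evaluation $\sum_{v\ge \mathbf{1}_W}(-1)^{|v|}=(-1)^n\cdot 1_{W=[n]}$ underlying $\beta_S=(-1)^n\sum_{T\supseteq[n]\setminus S}b_T$, and the two downward inductions; all are right, and the degenerate cases $k<0$ and $k\ge n$ come out consistently with the paper's convention $\mc{D}_k(\ab)=\{0_\ab\}$ for $k<0$. What your approach buys is symmetry and transparency: both implications are proved at once, and the duality $k\leftrightarrow n-k-1$ is visible as complementation $S\mapsto[n]\setminus S$ on the Boolean lattice. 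What it costs is the auxiliary polynomial characterization of $\cu^n(\mc{D}_k(\ab))$, which you do prove but which the paper's argument never needs; the paper's forward direction is also shorter, being a one-line construction. Either proof would serve.
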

\begin{proof}
First note that for each fixed $n$ the equivalence holds clearly if $k\geq n$ (for then $\cu^n(\mc{D}_k(\ab))=\ab^{\db{n}}$ so $\eta$ must indeed be the $1$-map), and it also holds trivially if $k<0$. Hence we can suppose that $k$ and $n-k-1$ are both non-negative.
 
We first prove the backward implication, arguing by induction on $n$. For $n=0$ the statement is trivial. For $n>0$, we suppose that $\eta \in \cu^n(\mc{D}_{n-k-1}(\wh{\ab}))$, and we have to show that for every $\q\in \cu^n(\mc{D}_k(\ab))$ we have $\prod_{v\in\db{n}} \mc{C}^{|v|} \eta_v(\q(v))=1$. Let $F$ denote the face $\{(v,0):v\in\db{n-1}\}$ and let $w=(0,\dots,0,1)\in\db{n}$. Then the last product equals $\prod_{v\in F}\mc{C}^{|v|}\big(\eta_v\overline{\eta_{v+w}}(\q(v))\big)\; \prod_{v\in F}\mc{C}^{|v|}\big(\eta_{v+w}(\q(v)-\q(v+w))\big)$. The function $v\mapsto \eta_v\overline{\eta_{v+w}}$ is in $\cu^{n-1}(\mc{D}_{n-k-2}(\widehat{\ab}))$, so by induction the product on the left above equals 1 (the assumption in the lemma is satisfied for this product with indices $n-1,k$). The function $v\mapsto \eta_{v+w}$ is in $\cu^{n-1}(\mc{D}_{n-k-1}(\wh{\ab}))$, and the function $v\mapsto \q(v)-\q(v+w)$ is in $\cu^{n-1}(\mc{D}_{k-1}(\ab))$, so the product on the right above equals 1 as well (the conditions hold with indices $n-1,k-1$).

To see the forward implication, suppose that $\eta \not\in \cu^n(\mc{D}_{n-k-1}(\wh{\ab}))$, so there is some $(n-k)$-face $F$ such that $\prod_{v\in F} \mc{C}^{|v|} \eta_v$ is not the principal character in $\wh{\ab}$, and so there exists $z\in \ab$ such that $\prod_{v\in F} \mc{C}^{|v|} \eta_v(z)\neq 1$. Let $\q$ be the map $\db{n}\to \ab$ defined by $\q(v)=z$ for $v\in F$ and $\q(v)=0$ otherwise. Every $(k+1)$-face in $\db{n}$ has intersection with $F$ of dimension at least 1, whence $\q\in \cu^n(\mc{D}_k(\ab))$. By construction we have $\prod_{v\in F} \mc{C}^{|v|} \eta_v(\q(v))\neq 1$, so the character $\prod_{v\in\db{n}} \mc{C}^{|v|} \eta_v\co p_v$ does not annihilate $\cu^n(\mc{D}_k(\ab))$, as required.
\end{proof}
\noindent We record a consequence of Lemma \ref{lem:dualker1} concerning annihilators of \emph{rooted} cube sets. We denote by $\cu^n_0(\mc{D}_k(\ab))$ the cube set $\{\q\in \cu^n(\mc{D}_k(\ab)): \q(0^n)=0_{\ab}\}$.
\begin{corollary}\label{cor:dualker2}
Let $n$ be a non-negative integer, let $k\in \mb{Z}$, let $\ab$ be a compact abelian group and let $\eta:K_n\to \wh{\ab}$, $v\mapsto \eta_v$. Then the character $\prod_{v\in K_n} \mc{C}^{|v|} \eta_v\co p_v$ on $\ab^{\db{n}}$ annihilates $\cu^n_0(\mc{D}_k(\ab))$ if and only if $\eta \in \hom(K_n,\mc{D}_{n-k-1}(\wh{\ab}))$.
\end{corollary}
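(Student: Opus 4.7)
The plan is to reduce Corollary \ref{cor:dualker2} to Lemma \ref{lem:dualker1} by extending $\eta$ from $K_n$ to $\db{n}$ in a specific way and tracking how this extension interacts with the two cube sets $\cu^n_0(\mc{D}_k(\ab))$ and $\cu^n(\mc{D}_k(\ab))$. Given $\eta:K_n\to\wh{\ab}$, I would define $\eta':\db{n}\to\wh{\ab}$ by $\eta'_v=\eta_v$ on $K_n$ and $\eta'_{0^n}:=-\sum_{v\in K_n}(-1)^{|v|}\eta_v$, a choice that forces $\sigma_n(\eta')=0$ in $\wh{\ab}$.

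The next step is to compare the annihilation conditions on the two cube sets. Every $\q\in\cu^n(\mc{D}_k(\ab))$ splits uniquely as $\q=\q_0+c\cdot\mathbf{1}$ with $\q_0\in\cu^n_0(\mc{D}_k(\ab))$ and $c=\q(0^n)$, where both summands lie in $\cu^n(\mc{D}_k(\ab))$. Since $\eta'_{0^n}(0)=1$, the full character $\chi':=\prod_{v\in\db{n}}\mc{C}^{|v|}\eta'_v\co p_v$ agrees on the rooted part $\q_0$ with the restricted character $\chi:=\prod_{v\in K_n}\mc{C}^{|v|}\eta_v\co p_v$, while on the constant cube $c\cdot\mathbf{1}$ it equals $\sigma_n(\eta')(c)=1$ by our construction. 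Hence $\chi'$ annihilates $\cu^n(\mc{D}_k(\ab))$ if and only if $\chi$ annihilates $\cu^n_0(\mc{D}_k(\ab))$, and by Lemma \ref{lem:dualker1} this is in turn equivalent to $\eta'\in\cu^n(\mc{D}_{n-k-1}(\wh{\ab}))$.

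It then remains to show that $\eta'\in\cu^n(\mc{D}_{n-k-1}(\wh{\ab}))$ is equivalent to $\eta\in\hom(K_n,\mc{D}_{n-k-1}(\wh{\ab}))$. The forward direction is immediate by restricting the cube condition on $\eta'$ to $(n-k)$-face maps whose image lies in $K_n$. For the converse, each $(n-k)$-face $F\ni 0^n$ has the form $F=\{v\in\db{n}:v|_J=0\}$ for a unique $J\subset[n]$ with $|J|=k$, and decomposing $\db{n}$ along the coordinates in $J$ partitions it into $2^k$ disjoint $(n-k)$-faces $(F_w)_{w\in\db{k}}$ with $F_{0^k}=F$ and all other $F_w\subset K_n$. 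The identity
\[
\sigma_n(\eta')=\sum_{w\in\db{k}}(-1)^{|w|}\,\sigma_{n-k}(\eta'\co\phi_w),
\]
where $\phi_w:\db{n-k}\to F_w$ is the canonical face map, together with $\sigma_n(\eta')=0$ (by construction) and $\sigma_{n-k}(\eta'\co\phi_w)=0$ for each $w\neq 0^k$ (by the hypothesis applied to $F_w\subset K_n$), forces $\sigma_{n-k}(\eta'\co\phi_{0^k})=0$, which is the required face condition at $F$.

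The main obstacle I anticipate is simply the sign bookkeeping in this last step, together with the preliminary check that the cube condition for $\mc{D}_{n-k-1}(\wh{\ab})$ on $\eta'$ genuinely reduces to $\sigma_{n-k}$-vanishing on $(n-k)$-faces and that the same reduction applies to the $\hom$-condition on $K_n$ (both being standard facts for degree-$k$ nilspaces over abelian groups). Once these combinatorial verifications are in place, the reduction through Lemma \ref{lem:dualker1} finishes the argument essentially automatically.
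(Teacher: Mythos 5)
Your proof is correct and follows essentially the same route as the paper: both extend $\eta$ to $0^n$ by the unique value making the total alternating sum $\sigma_n$ vanish, reduce to Lemma \ref{lem:dualker1}, and verify the extended map lies in $\cu^n(\mc{D}_{n-k-1}(\wh{\ab}))$ by partitioning $\db{n}$ into the $2^k$ parallel $(n-k)$-faces determined by a coordinate set $J$. Your organization as two clean equivalences (and your sign convention for $\eta'_{0^n}$, which is the one that actually makes $\sigma_n(\eta')=0$) is fine; no gaps.
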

\noindent Here $\hom(K_n,\mc{D}_{n-k-1}(\wh{\ab}))$ denotes the space of cubespace morphisms $K_n\to \mc{D}_{n-k-1}(\wh{\ab})$ (where $K_n$ is equipped with the cubespace structure induced from $\db{n}\supset K_n$; see \cite[Definition 3.1.1 and \S 3.3.2]{Cand:Notes1} for a discussion of cubespaces and morphisms).
\begin{proof} We extend $\eta$ to a map $\tilde{\eta}$ on $\db{n}$ by setting $\tilde{\eta}_{0^n}=\prod_{v\in K_n}\mc{C}^{|v|}\eta_v\in \wh{\ab}$.

For the backward implication, by Lemma \ref{lem:dualker1} it suffices to prove that the assumption $\eta \in \hom(K_n,\mc{D}_{n-k-1}(\wh{\ab}))$ implies that $\tilde{\eta}\in\cu^n(\mc{D}_{n-k-1}(\wh{\ab}))$. If $k<0$ or $k\geq n$ then this is clear. Suppose then that $0\leq k\leq n-1$ and let $F$ be an $(n-k)$-face in $\db{n}$. If $F\subset K_n$ then we have by our assumption that $\prod_{v\in F} \mc{C}^{|v|}\eta_v$ is the principal character $1\in \wh{\ab}$. If $0^n\in F$ then note that $\prod_{v\in \db{n}\setminus F}\mc{C}^{|v|}\eta_v=1$ since $\db{n}\setminus F$ is a disjoint union of $(n-k)$-faces not containing $0^n$, and then we have $\prod_{v\in F} \mc{C}^{|v|}\tilde \eta_v=\prod_{v\in \db{n}}\mc{C}^{|v|}\tilde\eta_v \prod_{v\in \db{n}\setminus F}\mc{C}^{|v|+1}\eta_v=1$. This proves that $\tilde{\eta}\in\cu^n(\mc{D}_{n-k-1}(\wh{\ab}))$ as required.

To see the forward implication, note that if $\eta \not\in \hom(K_n,\mc{D}_{n-k-1}(\wh{\ab}))$ then $\tilde{\eta}\not\in\cu^n(\mc{D}_{n-k-1}(\wh{\ab}))$ so by Lemma \ref{lem:dualker1} there is $\q\in \cu^n(\mc{D}_k(\ab))$ such that $\prod_{v\in \db{n}}\mc{C}^{|v|}\tilde\eta_v(\q(v))\neq 1$. Let $z=\q(0^n)$, let $\q'$ be the cube in $\cu^n(\mc{D}_k(\ab))$ with constant value $z$, and note that $\prod_{v\in \db{n}}\mc{C}^{|v|}\tilde\eta_v(\q'(v))=1$. Then $\q'':=\q-\q'\in \cu^n_0(\mc{D}_k(\ab))$ and $\prod_{v\in K_n}\mc{C}^{|v|}\eta_v(\q''(v))=\prod_{v\in \db{n}}\mc{C}^{|v|}\tilde\eta_v(\q(v))\neq 1$, so $\prod_{v\in K_n} \mc{C}^{|v|} \eta_v\co p_v$ does not annihilate $\cu^n_0(\mc{D}_k(\ab))$. 
\end{proof}
\noindent From this corollary we deduce the following two facts about functions in modules $W(\chi,\ns_k)$, which we shall use in the proof of Theorem \ref{thm:rmp}.
\begin{lemma}\label{lem:etainhom}
Let $\nss$ be a $k$-step compact nilspace. Let $\eta:K_n\to \wh{\ab_k}$, $v\mapsto \eta_v$ and for each $v\in K_n$ let $\phi_v$ be a function of modulus 1 in $W(\eta_v,\nss)$. If $\eta\in \hom\big(K_n,\mc{D}_{n-k-1}(\wh{\ab_k})\big)$, then on every rooted cube set $\cu^n_{x}(\nss)$ the function $\prod_{v\in K_n} \mc{C}^{|v|} \phi_v\co p_v$ factors through $\pi_{k-1}^{\db{n}}$ .
\end{lemma}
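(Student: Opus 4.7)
The goal is to show that whenever $\q,\q'\in \cu^n_x(\nss)$ satisfy $\pi_{k-1}\co\q=\pi_{k-1}\co\q'$, the quantity $\prod_{v\in K_n}\mc{C}^{|v|}\phi_v(\q(v))$ equals the corresponding quantity for $\q'$. I plan to establish this by isolating the ``difference" of the two cubes in the top structure group $\ab_k$, using the transformation law of the functions $\phi_v$, and then invoking Corollary \ref{cor:dualker2}.

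First, I will recall from nilspace theory that, since $\nss$ is a $k$-step compact nilspace viewed as a $k$-fold compact abelian bundle, two $n$-cubes $\q,\q'\in\cu^n(\nss)$ with $\pi_{k-1}\co\q=\pi_{k-1}\co\q'$ differ by a map $z:\db{n}\to\ab_k$ lying in $\cu^n(\mc{D}_k(\ab_k))$ (the fibres of $\pi_{k-1}^{\db{n}}:\cu^n(\nss)\to\cu^n(\nss_{k-1})$ are principal homogeneous spaces of $\cu^n(\mc{D}_k(\ab_k))$; this is essentially \cite[Lemma 2.1.10]{Cand:Notes2} applied at each level). Since moreover $\q,\q'$ are both rooted at $x$, we have $z(0^n)=0_{\ab_k}$, so in fact $z\in \cu^n_0(\mc{D}_k(\ab_k))$.

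Next, for each $v\in K_n$, the defining property of $W(\eta_v,\nss)$ gives
\[
\phi_v(\q(v))=\phi_v(\q'(v)+z(v))=\phi_v(\q'(v))\,\eta_v(z(v)).
\]
Taking the product over $v\in K_n$ with the appropriate conjugations yields
\[
\prod_{v\in K_n}\mc{C}^{|v|}\phi_v(\q(v)) \;=\; \Bigl(\prod_{v\in K_n}\mc{C}^{|v|}\phi_v(\q'(v))\Bigr)\cdot\Bigl(\prod_{v\in K_n}\mc{C}^{|v|}\eta_v(z(v))\Bigr).
\]

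Finally, I will apply Corollary \ref{cor:dualker2}: since $\eta\in\hom(K_n,\mc{D}_{n-k-1}(\wh{\ab_k}))$, the character $\prod_{v\in K_n}\mc{C}^{|v|}\eta_v\co p_v$ annihilates $\cu^n_0(\mc{D}_k(\ab_k))$, and in particular evaluates to $1$ on $z$. Hence the second factor above equals $1$, showing that $\prod_{v\in K_n}\mc{C}^{|v|}\phi_v(\q(v))$ depends only on $\pi_{k-1}\co\q$, which is exactly the factoring claim. The main conceptual step is the identification of the fibre-difference $z$ as a rooted cube of $\mc{D}_k(\ab_k)$; the rest is bookkeeping plus the algebraic fact already captured in Corollary \ref{cor:dualker2}. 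I do not expect any serious obstacle.
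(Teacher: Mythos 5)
Your proof is correct and follows essentially the same route as the paper: the paper likewise invokes the fact that $\cu^n_x(\nss)$ fibres over $\cu^n_{\pi_{k-1}(x)}(\nss_{k-1})$ with fibre a principal homogeneous space of $\cu^n_0(\mc{D}_k(\ab_k))$ and then cites Corollary \ref{cor:dualker2}. You have merely written out the intermediate bookkeeping (the transformation law of the $\phi_v$ and the factorization of the product) that the paper leaves implicit.
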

\noindent Here $\pi_{k-1}$ denotes the nilspace factor map $\nss\to\nss_{k-1}$ \cite[Lemma 3.2.10]{Cand:Notes1}.
\begin{proof}
By nilspace theory $\cu^n_{x}(\nss)$ is a compact abelian bundle over $\cu^n_{\pi_{k-1}(x)}(\nss_{k-1})$ with fiber a principal homogeneous space of the compact abelian group $\cu_0^n(\mc{D}_k(\ab_k))$ (see \cite[Lemma 2.1.10]{Cand:Notes2}). The result then follows clearly from Corollary \ref{cor:dualker2}.
\end{proof}
\begin{lemma}\label{lem:char-rep}
Let $\nss$ be a $k$-step nilspace, let $\chi\in \wh{\ab_k}$, and let $\phi$ be an element of modulus 1 in $W(\chi,\nss)$. Then the function $\mc{C}^{k+1}(\phi \co p_{1^{k+1}}): \cu^{k+1}(\nss)\to \mb{C}$ is equal to a function of the form $g\cdot \prod_{v\in \db{k+1}\setminus\{1^{k+1}\}} \mc{C}^{|v|+1}(\phi\co p_v)$, where $g$ factors through $\pi_{k-1}^{\db{k+1}}$.
\end{lemma}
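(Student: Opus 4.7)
The plan is to first reformulate the desired identity symmetrically. Using the assumption $|\phi|=1$, we have $\overline{\phi}=\phi^{-1}$, hence $\mc{C}^{|v|+1}(\phi\co p_v)=\bigl(\mc{C}^{|v|}(\phi\co p_v)\bigr)^{-1}$ for every $v\in\db{k+1}\setminus\{1^{k+1}\}$. Since moreover $\mc{C}^{k+1}=\mc{C}^{|1^{k+1}|}$, the existence of $g$ with the claimed property is equivalent to showing that
\[
g \;:=\; \prod_{v\in \db{k+1}} \mc{C}^{|v|}(\phi\co p_v) \colon \cu^{k+1}(\nss)\to \mb{C}
\]
factors through $\pi_{k-1}^{\db{k+1}}:\cu^{k+1}(\nss)\to \cu^{k+1}(\nss_{k-1})$. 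Thus my task is only to verify this factorization.

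Next I would invoke the nilspace-bundle structure (as used already in the proof of Proposition \ref{prop:nilspace-cc}): by \cite[Lemma 2.1.10]{Cand:Notes2}, $\cu^{k+1}(\nss)$ is a compact abelian bundle over $\cu^{k+1}(\nss_{k-1})$ with structure group $\cu^{k+1}(\mc{D}_k(\ab_k))$, acting by pointwise addition in $\ab_k$. Consequently, $g$ factors through $\pi_{k-1}^{\db{k+1}}$ if and only if $g(\q+\delta)=g(\q)$ for every $\q\in \cu^{k+1}(\nss)$ and every $\delta\in\cu^{k+1}(\mc{D}_k(\ab_k))$. Using the defining property $\phi(x+z)=\phi(x)\chi(z)$ for $z\in\ab_k$, we compute
\[
g(\q+\delta) \;=\; g(\q)\cdot \prod_{v\in\db{k+1}} \mc{C}^{|v|}\chi\bigl(\delta(v)\bigr),
\]
so the required invariance reduces to showing that the character $\prod_{v\in\db{k+1}} \mc{C}^{|v|}\chi \co p_v$ on $\ab_k^{\db{k+1}}$ annihilates the subgroup $\cu^{k+1}(\mc{D}_k(\ab_k))$.

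To conclude I would apply Lemma \ref{lem:dualker1} with $n=k+1$, with ``$k$'' in that lemma equal to our $k$, and with the constant map $\eta_v\equiv \chi$ for all $v\in\db{k+1}$. The required hypothesis is that $\eta\in\cu^{k+1}(\mc{D}_{n-k-1}(\wh{\ab_k}))=\cu^{k+1}(\mc{D}_0(\wh{\ab_k}))$; since cubes on a $0$-step nilspace are exactly the constant maps, this is automatic. The backward direction of Lemma \ref{lem:dualker1} then delivers the required annihilation, so $g(\q+\delta)=g(\q)$ and $g$ descends along $\pi_{k-1}^{\db{k+1}}$, completing the argument. The whole proof is really a direct reduction to Lemma \ref{lem:dualker1}; the only point requiring a little care is the correct tracking of the conjugation exponents and the identification of the bundle's structure group, but no substantive obstacle is anticipated.
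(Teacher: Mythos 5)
Your proof is correct and follows essentially the same route as the paper: both define $g=\prod_{v\in\db{k+1}}\mc{C}^{|v|}(\phi\co p_v)$, use the compact abelian bundle structure of $\cu^{k+1}(\nss)$ over $\cu^{k+1}(\nss_{k-1})$ with structure group $\cu^{k+1}(\mc{D}_k(\ab_k))$, and conclude via the backward direction of Lemma \ref{lem:dualker1} applied to the constant map $\eta_v\equiv\chi$. Your write-up merely makes explicit the computation of $g(\q+\delta)$ and the observation that constants lie in $\cu^{k+1}(\mc{D}_0(\wh{\ab_k}))$, both of which the paper leaves implicit.
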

\begin{proof}
By nilspace theory $\cu^{k+1}(\nss)$ is a compact abelian bundle over $\cu^{k+1}(\nss_{k-1})$ with fiber a principal homogeneous space of $\cu^{k+1}(\mc{D}_k(\ab_k))$. It then follows by Lemma \ref{lem:dualker1} that the function $g=\prod_{v\in \db{k+1}} \mc{C}^{|v|} \phi\co p_v$ on $\cu^{k+1}(\ns_k)$ factors through $\pi_{k-1}^{\db{k+1}}$. Since $|\phi|$ is equal to 1 everywhere we clearly have $\mc{C}^{k+1}(\phi \co p_{1^{k+1}})= g\cdot \prod_{v\neq 1^{k+1}} \mc{C}^{|v|+1}(\phi\co p_v)$.
\end{proof} 
The following lemma is purely about the $(k-1)$-step setting.
\begin{lemma}\label{lem:Fk-1-pd}
Let $n>k$, suppose that the space $\big(\ns_{k-1},(\cu^n(\ns_{k-1}))_{n\geq 0}\big)$ from Definition \ref{def:couptopofactor} is a $(k-1)$-step compact nilspace and that for every $x\in \ns$ the image of the measure $\pi_{n-1}(x)$ under $\gamma_{k-1}^{K_n}$ is the Haar measure on $\cu^n_{\pi_{k-1}(x)}(\ns_{k-1})$. Then for every $x\in \ns$,
\begin{equation}\label{eq:Fk-1-pd}
(\mc{F}_{k-1})^{K_n}\; =_{\pi_{n-1}(x)} \; (\mc{F}_{k-1})^{K_n}_{K_{n,\leq k}}.
\end{equation}
\end{lemma}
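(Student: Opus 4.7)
The overall strategy is to transport the problem, via the factor maps $\gamma_{k-1}$, into a purely topological question on $\ns_{k-1}^{K_n}$, and then exploit the $(k-1)$-step property of the nilspace $\ns_{k-1}$ to see that an $n$-cube with $n>k$ is determined by its values on the low-height vertices $K_{n,\leq k}$.

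First, I would translate the statement. By Lemma \ref{lem:gammaFk} we have $\gamma_{k-1}^{-1}(\mc{B}_{k-1})=_\lambda\mc{F}_{k-1}$ on $(\Omega,\lambda)$, where $\mc{B}_{k-1}$ is the Borel $\sigma$-algebra on $\ns_{k-1}$. Since $\pi_{n-1}(x)\in\coup(\varOmega,K_n)$, each coordinate projection $p_v:(\Omega^{K_n},\pi_{n-1}(x))\to(\Omega,\lambda)$ is measure-preserving, so pulling back this equality through each $p_v$ and taking the (finite) join over $v$ yields
\[
(\mc{F}_{k-1})^{K_n}=_{\pi_{n-1}(x)}(\gamma_{k-1}^{K_n})^{-1}(\mc{B}_{k-1}^{K_n}),\qquad (\mc{F}_{k-1})^{K_n}_{K_{n,\leq k}}=_{\pi_{n-1}(x)}(\gamma_{k-1}^{K_n})^{-1}\bigl(p_{K_{n,\leq k}}^{-1}\mc{B}_{k-1}^{K_{n,\leq k}}\bigr).
\]
By hypothesis the image measure $\mu:=\pi_{n-1}(x)\co(\gamma_{k-1}^{K_n})^{-1}$ is the Haar measure on the rooted cube set $\cu^n_y(\ns_{k-1})$, where $y:=\pi_{k-1}(x)$. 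The lemma therefore reduces to showing that on $(\ns_{k-1}^{K_n},\mu)$ every Borel set agrees modulo $\mu$-null sets with a set in the sub-$\sigma$-algebra $p_{K_{n,\leq k}}^{-1}\mc{B}_{k-1}^{K_{n,\leq k}}$.

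Second, I would establish the key geometric fact: the restriction map
\[
\rho:\cu^n_y(\ns_{k-1})\longrightarrow \ns_{k-1}^{K_{n,\leq k}},\qquad \q\mapsto \q|_{K_{n,\leq k}},
\]
is injective. Since $\ns_{k-1}$ is $(k-1)$-step, every corner of dimension at least $k$ has a unique completion to a cube. Given $\q\in\cu^n_y(\ns_{k-1})$ with $n>k$, I claim by induction on $m$ ranging over $k+1,k+2,\ldots,n$ that the values $\q(v)$ for $|v|=m$ are determined by $\q|_{\db{n}_{\leq m-1}}$. Indeed, for each such $v$ the face $F_v=\{w\in\db{n}:w\leq v\}$ is an $m$-cube, and its restriction to $F_v\setminus\{v\}$ is an $m$-corner whose unique completion determines $\q(v)$. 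Starting from $\q(0^n)=y$ and $\q|_{K_{n,\leq k}}$, this induction determines $\q$ completely, proving injectivity of $\rho$.

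Finally, I would finish with a standard Borel isomorphism argument. Both $\cu^n_y(\ns_{k-1})$ and $\ns_{k-1}^{K_{n,\leq k}}$ are standard Borel spaces (compact Polish), and $\rho$ is a continuous injection, so by the Lusin--Souslin theorem \cite[(15.1)]{Ke} its image $\rho\bigl(\cu^n_y(\ns_{k-1})\bigr)$ is Borel and $\rho$ is a Borel isomorphism onto its image. Hence for any Borel set $A\subseteq\ns_{k-1}^{K_n}$, the set $A':=A\cap\cu^n_y(\ns_{k-1})$ is Borel in $\cu^n_y(\ns_{k-1})$, $\rho(A')$ is Borel in $\ns_{k-1}^{K_{n,\leq k}}$, and $A'':=p_{K_{n,\leq k}}^{-1}\bigl(\rho(A')\bigr)$ satisfies $A''\cap\cu^n_y(\ns_{k-1})=A'$, so $\mu(A\Delta A'')=0$ and $A''\in p_{K_{n,\leq k}}^{-1}\mc{B}_{k-1}^{K_{n,\leq k}}$. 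Pulling back through $\gamma_{k-1}^{K_n}$ and combining with the first paragraph yields the desired equality \eqref{eq:Fk-1-pd}.

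I expect the main subtlety to be the inductive proof of injectivity of $\rho$: one must check carefully that the corners used at each step lie entirely inside the already-determined portion of the cube, which is why the induction is on the height $|v|$ rather than on the cube dimension. Once $\rho$ is seen to be injective, the Lusin--Souslin step is immediate and the transfer between measure-theoretic and topological $\sigma$-algebras is straightforward from Lemma \ref{lem:gammaFk} and the coupling property of $\pi_{n-1}(x)$.
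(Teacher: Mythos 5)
Your proposal is correct, and it rests on the same two pillars as the paper's proof: the reduction to a statement about the Haar measure on the rooted cube set $\cu^n_{\pi_{k-1}(x)}(\ns_{k-1})$ (legitimate because, by hypothesis and Lemma \ref{lem:gammaFk}, everything factors through $\gamma_{k-1}$), and the unique completion of corners of dimension $\geq k$ on the $(k-1)$-step nilspace $\ns_{k-1}$. Where you diverge is in how that completion property is converted into the $\sigma$-algebra inclusion. The paper works one vertex at a time: for a $k$-face $F\subset K_n$ with top vertex $v$, the corner-completion function is continuous, its graph contains the support of the Haar measure, and a short graph argument shows that $(\mc{B}_{k-1})^{K_n}_v$ is generated mod null sets by the $\sigma$-algebras at the $w\in F$ with $w<v$; iterating over vertices of height $>k$ finishes the proof. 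You instead prove a single global statement, namely that the restriction map $\rho:\cu^n_y(\ns_{k-1})\to\ns_{k-1}^{K_{n,\leq k}}$ is injective, and then transfer Borel sets through $\rho$. Both are valid; your injectivity induction is sound (each $\q(v)$ with $|v|=m>k$ is the unique completion of the $m$-corner $\q|_{F_v\setminus\{v\}}$, whose vertices all have height $\leq m-1$). Your route isolates the combinatorics into one clean injectivity claim and handles the measure theory in one stroke; note that you do not even need Lusin--Souslin here, since $\rho$ is a continuous injection on a compact set and is therefore a homeomorphism onto its compact image. The paper's iterative version avoids any global injectivity statement and uses only the continuity of the completion function, which keeps it closer to the local structure of the cube.
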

\begin{proof}
It suffices to check the statement for nilspaces, since the assumptions imply that everything factors through $\gamma_{k-1}$. Thus we just have to prove that if $\nu$ is the Haar measure on the rooted cube set $\cu^n_{\pi_{k-1}(x)}(\ns_{k-1})$ then $(\mc{B}_{k-1})^{K_n} \subset_{\nu} (\mc{B}_{k-1})^{K_n}_{K_{n,\leq k}}$ (the opposite inclusion is clear). 
On the $(k-1)$-step nilspace $\ns_{k-1}$ we have unique completion for $k$-corners, and this implies the result. To see this implication, recall from nilspace theory that the unique completion function is continuous, hence Borel measurable. This implies the fact that for every $k$-face $F\subset K_n$, letting $v$ be the highest vertex in $F$, we have that $(\mc{B}_{k-1})^{K_n}_v$ is generated by $(\mc{B}_{k-1})^{K_n}_{w\in F, w<v}$. Indeed, letting $f$ denote the corner-completion function, this fact is a consequence of the following general result. Let $S\subset A\times B$ be the graph of a continuous function $f:A\to B$ containing the support of $\mu$. Let $Y\subset B$ be a Borel set. We claim that $\big((A\times Y) \Delta (f^{-1}(Y) \times B)\big)\cap S =\emptyset$. Indeed, if a point $(a,b)$ is in $S$ and in $A\times Y$, then it must be of the form $(a,f(a))$, so $a\in f^{-1}(Y)$, so $(a,b)\in f^{-1}(Y)\times B$. Similarly if $(a,b)$ is in $S$ and in $f^{-1}(Y)\times B$, then $b=f(a)\in Y$, so $(a,b)\in A\times Y$. Having proved our claim, it follows that $\mu \big(\big((A\times Y) \Delta (f^{-1}(Y) \times B)\big)\cap S \big)=0$. But this proves that $A\times Y$ is (up to a $\mu$-null set) in the $\sigma$-algebra generated by Borel sets of the form $f^{-1}(Y) \times B$. To obtain the desired fact we apply this general result with $A$ the space of $k$-corners on $\ns_{k-1}$ (with vertices identified with the $w\in F$ with $w < v$), and $B=\ns_{k-1}$. 

Now, applying the above fact iteratively for a sequence of vertices $v$ of decreasing height, we complete the proof of the desired inclusion $(\mc{B}_{k-1})^{K_n} \subset_{\nu} (\mc{B}_{k-1})^{K_n}_{K_{n,\leq k}}$.
\end{proof}

We can now prove the main result of this section.
\begin{proof}[Proof of Theorem \ref{thm:rmp}]
By induction we assume the result for $k-1$ (it holds trivially for  $k=0$). Let $Q_k=\cu^n_{\pi_k(x)}(\ns_k)$, let $\nu_k=\nu_{k,x}$ be the Haar measure on $Q_k$, and let $\mu=\pi_{n-1}(x)$. Thus $\mu\co (\gamma_k^{K_n})^{-1}$ and $\nu_k$ are Borel measures on the compact Polish space $Q_k$, and $\nu_k(Q_k)=1$. We claim that we can assume also $\mu\co (\gamma_k^{K_n})^{-1}(Q_k)=1$. Indeed, consider $\mc{P}=\{\mu=\pi_{n-1}(x)\in \ns_{n-1}: \mu\co (\gamma_k^{K_n})^{-1}=\nu_{k,x}\}$, i.e.\ the subset of $\ns_{n-1}$ where the conclusion of the theorem holds. It is not hard to show that $\mc{P}$ is closed (given a sequence $(\mu_\ell=\pi_{n-1}(x_\ell))_{\ell\in\mb{N}}$ in $\mc{P}$ with $\mu_\ell \to \mu=\pi_{n-1}(x)$, then also $\pi_k(x_\ell)\to \pi_k(x)$, and then $\mu\co (\gamma_k^{K_n})^{-1}=\nu_{k,x}$ is deduced using in particular that Haar measures on sets $\cu^n_y(\ns_k)$ form a continuous system of measures \cite[Lemma 2.2.17]{Cand:Notes2}). Note also that the assumption $\mu\co (\gamma_k^{K_n})^{-1}(Q_k)=1$ holds $(\lambda\co\gamma_{n-1}^{-1})$-almost surely, by $\mu^{\db{n}}\co(\gamma_k^{\db{n}})^{-1}(\cu^n(\ns_k))=1$ and the disintegration of $\mu^{\db{n}}$ given by $\gamma_{n-1}$. Thus, if we prove the conclusion of the theorem under this assumption, then the full theorem follows by Lemma \ref{lem:closedall}. This proves our claim. 

Now to check that $\mu\co (\gamma_k^{K_n})^{-1}=\nu_k$, by the Riesz representation theorem (see \cite[Corollary 7.10.5]{Boga2}) it suffices to prove that for every continuous function $g:Q_k\to \mb{C}$ we have $\int_{Q_k}  g \ud\nu_k = \int_{\Omega^{K_n}} g \co \gamma_k^{K_n} \ud\mu$. Note that $\ns_k$ can be assumed to have \emph{finite rank}. Indeed, by \cite[Theorem 2.7.3]{Cand:Notes2} the original nilspace $\ns_k$ is an inverse limit of a system $\{\varphi_{ij}:\ns_{k,i}\to \ns_{k,j}\}_{i\geq j}$ of compact finite-rank nilspaces $\ns_{k,i}$, where the maps $\varphi_{ij}$ are continuous fibre-surjective nilspace morphisms (in particular they preserve the Haar measures \cite[Corollary 2.2.7]{Cand:Notes2}). Letting $\varphi_i:\ns_k\to \ns_{k,i}$, $i\in \mb{N}$ be the projections, it follows by the Stone--Weierstrass theorem that any given function $g$ as above can be approximated arbitrarily closely in the supremum norm by functions of the form $g'\co \varphi_i^{K_n}:Q_k\to \mb{C}$ for some $i\in \mb{N}$ and some continuous $g':\cu^n_{\varphi_i\co\pi_k(x)}(\ns_{k,i})\to\mb{C}$. Hence it suffices to prove that $\int_{Q_{k,i}}  g' \ud\nu_{k,i} = \int_{\Omega^{K_n}} g'\co(\varphi_i\co \gamma_k)^{K_n} \ud\mu$ for any such function $g'$, where $Q_{k,i}=\cu^n_{\varphi_i\co\pi_k(x)}(\ns_{k,i})$ and $\nu_{k,i}$ is the Haar measure on $Q_{k,i}$. Note that for the Borel $\sigma$-algebra $\mc{B}_{k,i}$ on $\ns_{k,i}$ we have $(\varphi_i\co \gamma_k)^{-1}(\mc{B}_{k,i})\subset_\lambda \mc{F}_k$ by Lemma \ref{lem:gammaFk}. Thus, we assume that $\ns_k$ has finite rank.

Let $C=\{f:\ns_k\to \mb{C}\textrm{ continuous}~|~\exists\,\chi \in \wh{\ab_k},\,f\in W(\chi,\ns_k)\}$. Let $R$ denote the set of rank-1 functions on $Q_k$ of the form $\prod_{v\in K_n} \mc{C}^{|v|} f_v\co p_v$ where $f_v\in C$ for each $v\in K_n$. Note that $R$ is closed under pointwise multiplication, vanishes nowhere, and separates the points of $Q_k$. The separation property can be seen as follows: if $x,y\in \ns_k$ are distinct, then either $\pi_{k-1}(x)\neq \pi_{k-1}(y)$, in which case the separation property can be deduced by induction on $k$ from that on $\ns_{k-1}$; or $\pi_{k-1}(x) = \pi_{k-1}(y)$, in which case there is a character $\chi\in \wh{\ab}_k$ such that $\chi(x-y)\neq 1$, and we can then obtain a continuous function $f\in W(\chi,\ns_k)$ separating $x,y$, using that there is a neighbourhood of $x$ homeomorphic to $U\times \ab_k$ for some open neighbourhood $U$ of $\pi_{k-1}(x)$ (as $\ns_k$ has finite rank, it is a locally trivial $\ab_k$-bundle over $\ns_{k-1}$ \cite[Lemma 2.5.3]{Cand:Notes2}). Given the above properties of $R$, by the Stone--Weierstrass theorem every function $g$ as above is a uniform limit of a sequence of finite sums of functions in $R$. Hence, it suffices to prove the  desired equality of integrals for functions $g\in R$. Applying Lemma \ref{lem:nilspchar} to each factor of such $g$, we deduce that it suffices to prove the following fact: for every map $\eta:K_n\to \wh{\ab_k}$, $v\mapsto \eta_v$ and every choice of functions $\phi_v$ of modulus 1 in $W(\eta_v,\ns_k)$ and $\alpha_v\in L^\infty(\pi_{k,k-1}^{-1}\mc{B}_{k-1})$, $v\in K_n$, we have \vspace{-0.2cm}\\
\begin{equation}\label{eq:rk1charint}
\int_{Q_k} \prod_{v\in K_n} \mc{C}^{|v|} (\alpha_v \phi_v) \co p_v\; \ud\nu_k = \int_{\Omega^{K_n}} \Big(\prod_{v\in K_n}\mc{C}^{|v|}  (\alpha_v \phi_v) \co p_v\Big)\co \gamma_k^{K_n} \; \ud\mu.
\end{equation}
We distinguish two cases, according to whether $\eta$ is in $\hom(K_n,\mc{D}_{n-k-1}(\wh{\ab}_k))$ or not.

If $\eta\not\in\hom(K_n,\mc{D}_{n-k-1}(\wh{\ab}_k))$, then we have by nilspace theory that the left side of \eqref{eq:rk1charint} is 0. Indeed, it follows from the construction of the Haar measure $\nu_k$ that this integral can be evaluated by first integrating over the $k$-th fibre of the bundle $Q_k$ (see \cite[(2.3)]{Cand:Notes2}), which amounts to integrating the character $\prod_{v\in K_n} \mc{C}^{|v|} \eta_v\co p_v$ over $\cu_0^n(\mc{D}_k(\ab_k))$. By Corollary \ref{cor:dualker2}, this character does not annihilate this group, so the integral vanishes. Therefore, we just have to show that the right side of \eqref{eq:rk1charint} vanishes as well. To this end we first note that, by applying Lemma \ref{lem:char-rep} recursively, we may assume that in $\prod_{v\in K_n} \mc{C}^{|v|} (\alpha_v \phi_v)\co p_v$ we have $\phi_v=1$ for $|v|>k$. Indeed, if there is $v$ with $|v|>k$ and $\phi_v\neq 1$, then we take $|v|$ maximal with this property, and then by Lemma \ref{lem:char-rep} (using $\mu\co (\gamma_k^{K_n})^{-1}(Q_k)=1$) we replace $\mc{C}^{k+1}\phi_v\co p_v$ in the product by an appropriate alternating product of maps $\phi_v\co p_w$ over vertices $w$ forming a $(k+1)$-corner under $v$. Note that if $|v|=k+1$ then this produces factors corresponding to the vertex $0^n$, but these cause no problem for the argument since they come out of the integral on the right side of \eqref{eq:rk1charint} as constant factors (products of constants of the form $\phi_v(\pi_k(x))$). Note also that every such step of the process may modify the function $\eta$, but it conserves the property that $\eta\not\in\hom(K_n,\mc{D}_{n-k-1}(\wh{\ab}_k))$. Now the other part of the product, namely $\Big(\prod_{v\in K_n}\mc{C}^{|v|}  \alpha_v \co p_v\Big)\co \gamma_k^{K_n}$, is $(\mc{F}_{k-1})^{K_n}$-measurable, so by Lemma \ref{lem:Fk-1-pd} it is in fact $(\mc{F}_{k-1})^{K_n}_{K_{n,\leq k}}$-measurable. This product is therefore a limit in $L^1(\mu)$ of finite sums of rank-1 functions of the form $ \Big(\prod_{v\in K_{n,\leq k}}\mc{C}^{|v|}\alpha_v' \co p_v\Big)\co \gamma_k^{K_n}$ where $\alpha_v'=1$ for $|v|>k$. Consequently, it suffices to prove that the right side of \eqref{eq:rk1charint} is 0 under the additional assumption that $\alpha_v \phi_v=1$ for $|v|> k$. Since  $\eta\not\in \hom(K_n,\mc{D}_{n-k-1}(\wh{\ab}_k))$, there is $v$ with $|v|\leq k$ such that $\eta_v\neq 1$. Since $\alpha_v\co\gamma_k\in L^{\infty}(\mc{F}_{k-1})$,  we have $\mb{E}\big( (\alpha_v \phi_v)\co\gamma_k|\mc{F}_{k-1})=(\alpha_v\co\gamma_k) \mb{E}(\phi_v\co\gamma_k|\mc{F}_{k-1})=(\alpha_v\co\gamma_k) \mb{E}(\phi_v|\mc{B}_{k-1})\co\gamma_k$ by \eqref{eq:exprel}, and by nilspace theory this is 0 (indeed it follows from \cite[(2.3)]{Cand:Notes2} that $\mb{E}(\phi_v|\mc{B}_{k-1})$ can be evaluated at $x\in \ns_k$ as the integral of $\phi_v$ over the $\ab_k$-orbit containing $x$, and this integral is 0 since it amounts to integrating $\eta_v$ over $\ab_k$). Hence by statement $(ii)$ of Theorem \ref{thm:Fkprops} we have $\|\alpha_v \phi_v\|_{U^k}=0$. By Corollary \ref{cor:simpzero} and Lemma \ref{lem:topcor2}, we conclude that the right side of \eqref{eq:rk1charint} is indeed 0.

If $\eta\in\hom(K_n,\mc{D}_{n-k-1}(\wh{\ab_k}))$, then by Lemma \ref{lem:etainhom} the  function $\prod_{v\in K_n} \mc{C}^{|v|} (\alpha_v \phi_v) \co p_v $ factors through $\pi_{k,k-1}^{K_n}$, so there is $g:\ns_{k-1}^{K_n}\to \mb{C}$ such that 
$\prod_{v\in K_n} \mc{C}^{|v|} (\alpha_v \phi_v)\co p_v$ $=g\co\pi_{k,k-1}^{K_n}$. The desired equality \eqref{eq:rk1charint} becomes $\int_{Q_k}  g\co \pi_{k,k-1}^{K_n} \ud\nu_k = \int_{\Omega^{K_n}} g\co \pi_{k,k-1}^{K_n} \co \gamma_k^{K_n} \ud\mu$, which is equivalent to $\int_{Q_{k-1}}  g \ud\nu_{k-1} = \int_{\Omega^{K_n}} g\co \gamma_{k-1}^{K_n} \ud\mu$, where $\nu_{k-1}$ is the Haar measure on $Q_{k-1}=\pi_{k,k-1}^{K_n}(Q_k)$. The latter equality of integrals holds by induction on $k$.
\end{proof}
\noindent Another consequence of Theorem \ref{thm:rmp} worth recording is the following, which tells us that the map $\gamma_k$ carries each measure $\mu^{\db{n}}$ to the Haar measure on $\cu^n(\ns_k)$. This measure-preserving property is important for the structure theorem in Section \ref{sec:structhm}.
\begin{corollary}\label{cor:unrooted}
Suppose that the space $\big(\ns_k,(\cu^n(\ns_k))_{n\geq 0}\big)$ from Definition \ref{def:couptopofactor} is a compact $k$-step nilspace, and that for every $j\in [k-1]$ the map $\pi_{k,j}:\ns_k\to \ns_j$ is equal to the nilspace factor map $\ns_k\to \ns_j$. Then for each $n\in \mb{N}$ the measure $\mu^{\db{n}} \co (\gamma_k^{\db{n}})^{-1}$ is the Haar measure on $\cu^n(\ns_k)$.
\end{corollary}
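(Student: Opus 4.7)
The plan is to disintegrate $\mu^{\db{n}}$ over the projection $p_{0^n}:\Omega^{\db{n}}\to\Omega$, push forward by $\gamma_k^{\db{n}}$, identify the resulting fibre measures using Theorem \ref{thm:rmp}, and finally compare with the analogous disintegration of $\mu_{\cu^n(\ns_k)}$ over $p_{0^n}:\cu^n(\ns_k)\to\ns_k$. Since $\{0^n\}$ is a face in $\db{n}$, it is local in $\mu^{\db{n}}$ by Corollary \ref{cor:facelocality}, and Lemma \ref{lem:localize} together with Definition \ref{def:couptopofactor} gives
\[
\int_{\Omega^{\db{n}}} F \ud\mu^{\db{n}} = \int_\Omega \int_{\Omega^{K_n}} F(\omega,y) \ud\gamma_{n-1}(\omega)(y) \ud\lambda(\omega)
\]
for every bounded measurable $F:\Omega^{\db{n}}\to\mb{C}$, where $\gamma_{n-1}$ is the $\{0^n\}$-localization of $\mu^{\db{n}}$.

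Applying this with $F=g\co\gamma_k^{\db{n}}$ for an arbitrary continuous $g:\ns_k^{\db{n}}\to\mb{C}$ yields
\[
\int g\ud\big(\mu^{\db{n}}\co(\gamma_k^{\db{n}})^{-1}\big) = \int_\Omega \int_{\ns_k^{K_n}} g(\gamma_k(\omega),z) \ud\big(\gamma_{n-1}(\omega)\co(\gamma_k^{K_n})^{-1}\big)(z) \ud\lambda(\omega).
\]
By \eqref{eq:gammapi} we have $\pi_{n-1}\co\gamma=\gamma_{n-1}$ and $\pi_k\co\gamma=\gamma_k$ pointwise on $\Omega$, so Theorem \ref{thm:rmp} applied at $x=\gamma(\omega)$ identifies the inner pushforward with the Haar measure $\nu_{\gamma_k(\omega)}$ on the rooted cube set $\cu^n_{\gamma_k(\omega)}(\ns_k)$, viewed inside $\ns_k^{K_n}$ by restriction to $K_n$. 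A change of variables $y=\gamma_k(\omega)$ then rewrites the integral as
\[
\int_{\ns_k} \int_{\ns_k^{K_n}} g(y,z) \ud\nu_y(z) \ud(\lambda\co\gamma_k^{-1})(y).
\]

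Two ingredients then complete the proof. First, to identify the base measure, I would apply Theorem \ref{thm:rmp} in the case $n=1$: here $K_1=\{1\}$, the ergodicity axiom forces $\ns_0$ to be a singleton with $\pi_0(x)=\lambda$ for every $x\in\ns$, and $\cu^1_{\pi_k(x)}(\ns_k)=\{\pi_k(x)\}\times\ns_k$ has Haar measure equal to $\mu_{\ns_k}$ under the identification with $\ns_k^{\{1\}}$; the theorem thus yields $\lambda\co\gamma_k^{-1}=\mu_{\ns_k}$. Second, by standard nilspace theory one has that the Haar measure $\mu_{\cu^n(\ns_k)}$ disintegrates over $p_{0^n}$ as $\int_{\ns_k}\nu_y\ud\mu_{\ns_k}(y)$; this is immediate from the construction of the Haar measures on cube sets via the fibered structure of $\cu^n(\ns_k)\to \ns_k$ (compare \cite[Proposition 2.2.5]{Cand:Notes2} and \cite[Lemma 2.2.17]{Cand:Notes2}). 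Combining these two facts, the last displayed integral equals $\int g\ud\mu_{\cu^n(\ns_k)}$, and the Riesz representation theorem on the compact Polish space $\ns_k^{\db{n}}$ yields the equality of Borel measures $\mu^{\db{n}}\co(\gamma_k^{\db{n}})^{-1}=\mu_{\cu^n(\ns_k)}$.

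The substantive content is Theorem \ref{thm:rmp} together with the $\{0^n\}$-localization of $\mu^{\db{n}}$; the remaining obstacle is purely bookkeeping, namely verifying the disintegration of $\mu_{\cu^n(\ns_k)}$ over $p_{0^n}$ in a form that matches the identification of $\cu^n_y(\ns_k)$ with a subset of $\ns_k^{K_n}$ modulo the root coordinate. This is routine from the construction of Haar measures in \cite{Cand:Notes2}, and is the only point requiring some care beyond direct invocation of Theorem \ref{thm:rmp}.
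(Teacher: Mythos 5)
Your argument is correct, but it takes a genuinely different route from the paper's. You stay in dimension $n$: disintegrate $\mu^{\db{n}}$ over the root $0^n$ (legitimate, since $\{0^n\}$ is local by Corollary \ref{cor:facelocality}), identify each fibre pushforward with the rooted Haar measure $\nu_{\gamma_k(\omega)}$ via Theorem \ref{thm:rmp}, pin down the base measure $\lambda\co\gamma_k^{-1}=\mu_{\ns_k}$ by the $n=1$ case of the same theorem, and then reassemble using the disintegration $\mu_{\cu^n(\ns_k)}=\int_{\ns_k}\nu_y\ud\mu_{\ns_k}(y)$ of the cube-set Haar measure over the root. The paper instead goes \emph{up} one dimension: it takes an arbitrary corner coupling $\mu=\pi_n(x)\in\ns_n$ on $K_{n+1}$, uses Lemma \ref{lem:subfacecoup} to identify its restriction along an $n$-face $F\subset K_{n+1}$ with $\mu^{\db{n}}$, applies Theorem \ref{thm:rmp} once to see that $\mu\co(\gamma_k^{K_{n+1}})^{-1}$ is the rooted Haar measure on $\cu^{n+1}_{\pi_k(x)}(\ns_k)$, and then invokes the good-pair machinery of \cite[Lemma 2.2.14]{Cand:Notes2} to conclude that restricting that rooted Haar measure to $F$ gives the unrooted Haar measure on $\cu^n(\ns_k)$. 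The two approaches trade one piece of nilspace bookkeeping for another: the paper needs the good-pair restriction lemma (plus a small cube-completion verification), while you need the root-disintegration of $\mu_{\cu^n(\ns_k)}$ together with the explicit identification of the base measure; both external inputs are standard consequences of the construction of Haar measures in \cite{Cand:Notes2}, and both proofs hinge on Theorem \ref{thm:rmp} as the substantive ingredient. Your version has the mild advantage of avoiding the passage to $\db{n+1}$ and making the role of $\lambda\co\gamma_k^{-1}$ explicit; the paper's has the advantage of never needing to identify the base measure at all.
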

\noindent To prove this we use the following lemma, which is also used several times in the next section. Recall from Definition \ref{def:subcoupembeds} the notation $\nu_\phi$ for subcouplings along injective maps.
\begin{lemma}\label{lem:subfacecoup}
For every corner coupling $\pi_n(x)\in \coup(\varOmega, K_{n+1})$ and every $m$-face map $\phi:\db{m}\to \db{n+1}$ with image included in $K_{n+1}$, the subcoupling $\pi_n(x)_\phi$ is $\mu^{\db{m}}$.
\end{lemma}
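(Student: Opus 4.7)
The strategy is to first establish the identity for $x$ in a set of full $\lambda\co\gamma_n^{-1}$-measure in $\ns_n$ (namely, for $x$ of the form $\gamma(\omega)$ with $\omega$ in a full-measure subset of $\Omega$), and then to upgrade it to all $x\in\ns$ by a closedness argument using Lemma \ref{lem:closedall}. The key ingredients are the $\{0^{n+1}\}$-localization underlying the definition of $\gamma_n$, the locality of faces from Corollary \ref{cor:facelocality}, and the consistency axiom.

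Let $F=\phi(\db{m})\subset K_{n+1}$. This is a face in $\db{n+1}$ not containing $0^{n+1}$, so by Corollary \ref{cor:facelocality} $F$ is local in $\mu^{\db{n+1}}$, which implies in particular that $\mc{A}^{\db{n+1}}_{F}$ and $\mc{A}^{\db{n+1}}_{0^{n+1}}$ are independent in $\mu^{\db{n+1}}$. Consider the subcoupling $\mu^{\db{n+1}}_{F\cup\{0^{n+1}\}}$, viewed as a Borel probability measure on the standard Borel space $\Omega^F\times\Omega$; both coordinate projections are measure-preserving (the first onto $(\Omega^F, \mu^{\db{n+1}}_F)$, and by the consistency axiom $\mu^{\db{n+1}}_F\cong_\phi\mu^{\db{m}}$), and their preimage $\sigma$-algebras are independent by the above. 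I would then apply Lemma \ref{lem:fibmeaspres} to deduce that for $\lambda$-almost every $\omega\in\Omega$, the disintegration fibre measure of $\mu^{\db{n+1}}_{F\cup\{0^{n+1}\}}$ over $\omega$, restricted to $\Omega^F$, is the full measure $\mu^{\db{n+1}}_F$. By uniqueness almost-everywhere of disintegrations, this fibre measure equals $(\gamma_n(\omega))_F$ for $\lambda$-almost every $\omega$ (projecting the disintegration of $\mu^{\db{n+1}}$ defining $\gamma_n$ to the coordinates in $F\cup\{0^{n+1}\}$ gives a disintegration of $\mu^{\db{n+1}}_{F\cup\{0^{n+1}\}}$). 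Thus $\gamma_n(\omega)_\phi=\mu^{\db{m}}$ for $\lambda$-almost every $\omega$, which by \eqref{eq:gammapi} reads $\pi_n(\gamma(\omega))_\phi=\mu^{\db{m}}$ for $\lambda$-almost every $\omega$.

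To conclude for all $x\in\ns$, I consider the set
\[
\mc{P}=\{\nu\in\coup(\varOmega,K_{n+1}):\nu_\phi=\mu^{\db{m}}\},
\]
which is closed in $\coup(\varOmega,K_{n+1})$ by Lemma \ref{lem:pindownclosed} (applied with $S=K_{n+1}$, $T=F$, and the target coupling identified with $\mu^{\db{m}}$ via $\phi$). The previous paragraph shows that $\gamma_n(\omega)\in\mc{P}$ for $\lambda$-almost every $\omega$, so $\mc{P}\cap\ns_n$ has full $\lambda\co\gamma_n^{-1}$-measure in $\ns_n$. Lemma \ref{lem:closedall} then yields $\ns_n\subset\mc{P}$, and since $\pi_n(x)\in\ns_n$ for every $x\in\ns$, this is exactly the statement to be proved.

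The only mildly delicate point is the identification, valid $\lambda$-almost everywhere, of the fibre measure produced by Lemma \ref{lem:fibmeaspres} with the marginal of $\gamma_n(\omega)$ on $F$; this is handled via the essential uniqueness of disintegrations (Lemma \ref{lem:localize}), by observing that both $(\gamma_n(\omega))_F$ and the fibre measure in question serve as fibre measures in the disintegration of $\mu^{\db{n+1}}_{F\cup\{0^{n+1}\}}$ over the $0^{n+1}$-coordinate.
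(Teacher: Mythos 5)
Your proof is correct and follows essentially the same route as the paper: locality of the face $F$ (Corollary \ref{cor:facelocality}) combined with Lemma \ref{lem:fibmeaspres} and the face consistency axiom gives the identity for $\lambda$-almost every fibre of the disintegration defining $\gamma_n$, and then the closedness of the relevant set of couplings together with Lemma \ref{lem:closedall} upgrades it to every $x\in\ns$. The only cosmetic difference is that you invoke Lemma \ref{lem:pindownclosed} for the closedness step where the paper cites Lemma \ref{lem:botclosed}; your citation is if anything the more direct one.
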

\begin{proof}
Let $\nu=\pi_n(x)$. First we claim that the conclusion holds for almost every coupling in the disintegration of $\mu^{\db{n+1}}$ relative to $p_{0^n}$. To see this, recall from Corollary \ref{cor:facelocality} that the face $F=\phi(\db{m})$ is a local set in $\mu^{\db{n}}$, which implies (recall Definition \ref{def:loc}) that the $\sigma$-algebras $\mc{A}^{\db{n+1}}_F$ and $\mc{A}^{\db{n+1}}_{0^n}$ are independent in $\mu^{\db{n}}$. Our claim now follows from Lemma \ref{lem:fibmeaspres}, since this gives us that for $\lambda$-almost every $\omega\in \Omega$, the subcoupling of $\mu^{\db{n+1}}_\omega$ along $\phi$ is equal to $\mu^{\db{n+1}}_\phi$, and this in turn is $\mu^{\db{m}}$ by the face consistency axiom in Definition \ref{def:cc-idemp}.

Finally, let us deduce from our claim that the result holds for every $\pi_n(x)$. By Lemma \ref{lem:botclosed} the set of couplings in $\coup(\varOmega, K_{n+1})$ satisfying the conclusion of the lemma is closed, and then by Lemma \ref{lem:closedall} it follows that all couplings $\pi_n(x)$ satisfy this conclusion.
\end{proof}

\begin{proof}[Proof of Corollary \ref{cor:unrooted}]
Let $F$ be an $n$-face of $\db{n+1}$ with $F\subset K_{n+1}$, let $\mu\in \coup(\Omega,K_{n+1})$ be an element of $\ns_n$, consider the image measure $\nu=\mu\co (\gamma_k^{K_{n+1}})^{-1}$ on $\ns_k^{K_{n+1}}$, and let $\phi_F:\db{n}\to\db{n+1}$ be a morphism with image $F$. 

Note that the measure $\mu^{\db{n}} \co (\gamma_k^{\db{n}})^{-1}$ in the lemma is equal to the subcoupling $\nu_{\phi_F}$. Indeed $\nu_ {\phi_F}$ is equal to $\mu_{\phi_F}\co(\gamma_k^{\db{n}})^{-1}$, and by Lemma \ref{lem:subfacecoup} we have $\mu_{\phi_F}=\mu^{\db{n}}$. By Theorem \ref{thm:rmp}, the measure $\nu$ is the Haar measure on $\cu^{n+1}_{\pi_k(x)}(\ns_k)$. Now one can use  nilspace theory to show that $\nu_{\phi_F}$ is the Haar measure on $\cu^n(\ns_k)$ as required. Indeed, this follows from \cite[Lemma 2.2.14]{Cand:Notes2} applied with $P=\db{n+1}$, $P_1=\{0^{n+1}\}$, $P_2=F$, $f:0^{n+1}\mapsto \mu$, using the fact that $P_1,P_2$ form a \emph{good pair} (as per the terminology from \cite{CamSzeg} or \cite[Definition 2.2.13]{Cand:Notes2}). To see this fact, note that if $\q:F\to \mc{D}_k(\ab)$ is a cube (identifying $F$ with $\db{n-1}$) and $w$ is the unique vertex in $F$ with $|w|=1$, then the map $\q'$ defined by $\q'(v)=\q(v)$ for $v\in F$ and $\q'(v)=\q(v+w)-\q(w)$ otherwise, satisfies $\q'(0^{n+1})=0$, and we also have $\q'\in \cu^{n+1}(\mc{D}_k(\ab))$ by \cite[Lemma 3.3.37]{Cand:Notes1}.
\end{proof}

\section{The structure theorem for cubic couplings}\label{sec:structhm}

\noindent In this section we establish the main result of this paper, Theorem \ref{thm:MeasInvThmGenIntro}, which we restate here in slightly more precise form.

\begin{theorem}\label{thm:MeasInvThmGen}
Let $\big(\varOmega,(\mu^{\db{n}})_{n\geq 0}\big)$ be a cubic coupling. Then there is a compact nilspace $\ns$ and a measure-preserving map $\gamma: \Omega\to \ns$ such that for each $n\in \mb{N}$ the map $\gamma^{\db{n}}$ is measure-preserving $\big(\Omega^{\db{n}}, \mu^{\db{n}}\big) \to \big(\ns^{\db{n}}, \mu_{\cu^n(\ns)}\big)$. Furthermore, for each $n$ the coupling $\mu^{\db{n}}$ is relatively independent over the factor generated by $\gamma^{\db{n}}$.
\end{theorem}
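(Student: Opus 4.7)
The plan is to construct the desired compact nilspace $\ns$ as the inverse limit of the topological factors $\ns_k$ from Definition \ref{def:couptopofactor}, so that $\gamma:\Omega\to\ns$ is the map already built in Definition \ref{def:maintopspace}. After a routine reduction to the case where $\varOmega$ is a Borel probability space (the conclusion being invariant under mod-0 isomorphisms), the crux of the proof is an induction on $k$ establishing that each space $(\ns_k,(\cu^n(\ns_k))_{n\geq 0})$ is a compact $k$-step nilspace and that each continuous projection $\pi_{k,j}$ coincides with the canonical nilspace factor map for $j<k$. The base case $k=0$ is trivial: by the ergodicity axiom, $\mu^{\db{1}}=\lambda\times\lambda$, so the $\{0\}$-localization of $\mu^{\db{1}}$ is essentially constant and $\ns_0$ reduces to a single point.

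For the inductive step, three properties must be verified for $\ns_k$: the composition axiom for the cube sets, the ergodicity axiom, and unique $(k+1)$-corner completion (together with the $k$-step compact abelian bundle structure over $\ns_{k-1}$). Composition transfers from the consistency axiom of the cubic coupling via pushforward through $\gamma_k^{\db{n}}$ and taking supports, using the equivalence between Definitions \ref{def:cc} and \ref{def:cc-idemp} to handle non-injective morphisms. Ergodicity follows immediately from axiom 2 of Definition \ref{def:cc} and the definition of $\ns_k$.

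The main obstacle is unique $(k+1)$-corner completion. The essential input is statement $(i)$ of Theorem \ref{thm:Fkprops}, which gives $(\mc{F}_k)^{\db{k+1}}_{0^{k+1}} =_{\mu^{\db{k+1}}} \mc{A}^{\db{k+1}}_{0^{k+1}} \wedge \mc{A}^{\db{k+1}}_{K_{k+1}}$. Combined with Lemma \ref{lem:gammaFk}, this says precisely that, modulo a $\mu^{\db{k+1}}$-null set, the $\ns_k$-coordinate at $0^{k+1}$ is determined by the $K_{k+1}$-corner of the sample; to promote this almost-sure determination to a genuine pointwise completion function on corners of $\ns_k$, I would apply Lemma \ref{lem:closedall}, as the set of couplings exhibiting uniqueness is closed in $\coup(\varOmega,K_{k+1})$ and of full $\lambda\co\gamma_k^{-1}$-measure. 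Existence of completion comes in parallel from the corner coupling itself (any corner in $\ns_k$ is the image of some $\pi_k(x)$, which supplies a completing cube via $\mu^{\db{k+1}}$). The compact abelian bundle structure of $\ns_k$ over $\ns_{k-1}$ with structure group $\ab_k$ is extracted from the idempotence axiom of Definition \ref{def:cc-idemp} together with Proposition \ref{prop:idemp}, which identifies idempotent couplings with relative squares over sub-$\sigma$-algebras and supplies the ``difference'' structure needed to define the $\ab_k$-action on fibres; compactness of $\ab_k$ and freeness of its action are verified using Theorem \ref{thm:sip} and the structural results of Subsection 3.6 on $U^k$-convolutions.

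Once the inductive claim is secured, the inverse limit $\ns=\varprojlim\ns_k$ is a compact nilspace by standard nilspace theory (cf.\ \cite[Theorem 2.7.3]{Cand:Notes2}), and the cube sets $\cu^n(\ns)$ of Definition \ref{def:maintopspace} match the usual nilspace cube sets. The fact that $\gamma^{\db{n}}$ is measure-preserving from $(\Omega^{\db{n}},\mu^{\db{n}})$ to $(\ns^{\db{n}},\mu_{\cu^n(\ns)})$ follows from Corollary \ref{cor:unrooted} applied at each finite level $k$, combined with compatibility of the Haar measures along the inverse system $(\pi_{k,j})$. Finally, the relative independence of $\mu^{\db{n}}$ over the factor generated by $\gamma^{\db{n}}$ is precisely the last displayed lemma of Subsection \ref{ChTopMeas} (applied with $m$ tending to infinity through values $\geq n-1$), so no further work is needed. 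The hardest step, by a clear margin, is the pointwise unique-completion step: it is the place where the measure-theoretic conditional-independence data must be converted into a rigid combinatorial-topological completion operation, and it is also where the $k$-step nature of the bundle is first forced on the construction.
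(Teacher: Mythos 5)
Your global architecture matches the paper's: reduce to the Borel case, induct on $k$ to show each $(\ns_k,(\cu^n(\ns_k))_n)$ from Definition \ref{def:couptopofactor} is a compact $k$-step nilspace with $\pi_{k,j}$ the canonical factor maps, then pass to the inverse limit; and you correctly locate the composition/ergodicity axioms in Lemma \ref{lem:axioms1-2}, the measure-preservation in Corollary \ref{cor:unrooted}, and the relative independence in the last lemma of Subsection \ref{ChTopMeas}. However, there is a genuine gap at exactly the step you identify as the hardest one. You propose to obtain pointwise unique $(k+1)$-corner completion from Theorem \ref{thm:Fkprops}$(i)$ (via Lemma \ref{lem:gammaFk}) plus Lemma \ref{lem:closedall}. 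This does not work. Theorem \ref{thm:Fkprops}$(i)$ is an almost-everywhere statement about $\sigma$-algebras in the single coupling $\mu^{\db{k+1}}$; what the corner-completion axiom demands is a statement about \emph{every} corner $\varrho:K_{k+1}\to\ns_k$, i.e.\ every point of the closed set $\Supp\big(\mu^{\db{k+1}}\co(\gamma_k^{K_{k+1}})^{-1}\big)$ (and, for existence, every abstract corner of the cubespace, which need not a priori lie in that support). Lemma \ref{lem:closedall} upgrades a full-measure property to an everywhere property only when the property in question defines a \emph{closed} subset of $\coup(\varOmega,K_{k+1})$; ``this coupling admits at most one completion'' is a property of corners of $\ns_k$, not of individual couplings, and even reformulated suitably there is no reason for the relevant set to be closed (uniqueness assertions typically are not closed under the weak topology generated by the functionals $\xi(\cdot,F)$ — compare the remark after Lemma \ref{lem:botclosed} showing that even $T_1\,\bot_\mu\,T_2$ fails to be closed).

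What the paper actually needs here is the content of Subsections 4.2 and 4.3, none of which appears in your outline: first, Proposition \ref{prop:meas-thy-compl}, asserting complete dependence of the factor coupling $_{\mc{F}_k|}\pi_k(x)$ for \emph{every} $x\in\ns$ (proved by the tricube $\sigma$-algebra argument with the sets $V_1,\dots,V_4$ and Lemma \ref{lem:claim1}, using the inductive hypothesis at level $k-1$); and second, Proposition \ref{prop:uclos}, proved by constructing the auxiliary coupling $\Upsilon$ as a weak limit of conditioned tricube couplings, establishing its properties (Lemma \ref{lem:Ups2props}) and then its uniqueness (Lemma \ref{lem:Upsunique}) via the complete-dependence input. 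Only then does one get Corollary \ref{cor:uniquecomp} (uniqueness) and, through Proposition \ref{prop:opcontconv} (convolution neighbourhoods form a basis) and Lemma \ref{lem:nkcorncomp}, the \emph{existence} of completions for $n$-corners with $n$ arbitrary — your one-line existence argument only addresses $n=k+1$ and only for corners already visible in the measure. A smaller point: the abelian-bundle structure of $\ns_k$ over $\ns_{k-1}$ need not be built by hand from Proposition \ref{prop:idemp}; once the three nilspace axioms (with unique $(k+1)$-corner completion) are verified, the $k$-step compact nilspace structure follows from general nilspace theory.
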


\noindent Here the factor generated by $\gamma^{\db{n}}$ is just $\mu^{\db{n}}\co (\gamma^{\db{n}})^{-1}$ on $\ns^{\db{n}}$. As we shall see, one can take the space $\ns$ and the map $\gamma$ to be those given in Definition \ref{def:maintopspace}. More precisely, what we shall prove in this section is the following result.

\begin{theorem}\label{thm:MeasInvThm} Let $\big(\varOmega,(\mu^{\db{n}})_{n\geq 0}\big)$ be a cubic coupling. For each $k\in \mb{N}$, let $\gamma_k$, $\ns_k$ and $\cu^n(\ns_k)$ be as given in Definition \ref{def:couptopofactor}. Then $\ns_k$ is a $k$-step compact nilspace for every $k$, and for each $n\geq 0$ the image of $\mu^{\db{n}}$ under $\gamma_k^{\db{n}}$ is the Haar measure on $\cu^n(\ns_k)$. Moreover $\mu^{\db{k+1}}$ is relatively independent over its factor induced by $\gamma_k^{\db{k+1}}$.
\end{theorem}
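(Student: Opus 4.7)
The plan is to prove Theorem \ref{thm:MeasInvThm} by induction on $k$, the base case $k=0$ being immediate since $\mc{F}_0$ is trivial, so $\ns_0$ is a single point, $\cu^n(\ns_0) = \{*\}^{\db{n}}$, and $\gamma_0$ is the constant map. For the induction step, I will assume that for every $j \leq k-1$ the space $\ns_j$ (equipped with the cube sets from Definition \ref{def:couptopofactor}) is a $j$-step compact nilspace, that each $\pi_{i,j}$ for $i \leq k-1$ agrees with the nilspace factor map, that $\mu^{\db{n}} \co (\gamma_j^{\db{n}})^{-1}$ is the Haar measure on $\cu^n(\ns_j)$ for every $n$, and that $\mu^{\db{j+1}}$ is relatively independent over the factor induced by $\gamma_j^{\db{j+1}}$.

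First I would establish the structure group at level $k$. By Theorem \ref{thm:Fkprops} (iii) we know that $(\mc{F}_{k-1})^{\db{k+1}}_{0^{k+1}} =_{\mu^{\db{k+1}}} \mc{A}^{\db{k+1}}_{0^{k+1}} \wedge (\mc{F}_{k-1})^{\db{k+1}}_{K_{k+1}}$; combining this with Lemma \ref{lem:gammaFk} and the induction hypothesis, the coupling $\mu^{\db{k+1}}$ viewed through $\gamma_{k-1}$ is determined by the Haar measure on $\cu^{k+1}(\ns_{k-1})$. The idempotence axiom (Definition \ref{def:cc-idemp}), applied to opposite $k$-faces of $\db{k+1}$, realises $\mu^{\db{k+1}}$ as an idempotent self-coupling of $\mu^{\db{k}}$, and by Proposition \ref{prop:idemp} this corresponds to a sub-$\sigma$-algebra of $\mc{A}^{\db{k}}$; using Theorem \ref{thm:sip} and Lemma \ref{lem:fdconvfd} this sub-$\sigma$-algebra can be shown to be (up to null sets) the one generated by $\mc{F}_{k-1}$ on one face. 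From these ingredients, and using the convolution calculus in Subsection 3.4 (particularly the properties of continuous $U^{k+1}$-convolutions $[F]_{U^{k+1}}^*$ on $\ns_k$ given by Lemma \ref{lem:Unstarconv}), I extract a compact abelian group $\ab_k$ acting freely and transitively on the fibres of the continuous projection $\pi_{k,k-1}: \ns_k \to \ns_{k-1}$. Concretely, $\wh{\ab_k}$ is the Pontryagin dual obtained from the ``degree $k$'' characters among the $U^{k+1}$-convolutions: for each $\chi$, the corresponding element of $L^\infty(\mc{F}_k)$ lifts to a continuous function on $\ns_k$ satisfying the module relation $\phi(x+z) = \phi(x)\chi(z)$, as in Lemma \ref{lem:nilspchar}.

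Once this bundle structure is in place, I verify the nilspace axioms on $\ns_k$: consistency under morphisms (automatic from Definition \ref{def:couptopofactor} together with the consistency axiom for the cubic coupling); ergodicity (from the ergodicity axiom via Lemma \ref{lem:closedall} transferring the almost-everywhere statement to a statement holding everywhere on $\ns_k$); and corner completion, which is the main step. For corner completion, given a $k$-corner on $\ns_k$ lying over a completable cube on $\ns_{k-1}$ (completable by induction), I would show completability using the idempotent-coupling description of the $\ab_k$-action together with Theorem \ref{thm:sip} to identify the corner-completion as a measurable section; the fact that the completion is unique up to a translate by a cube in $\cu^{k+1}(\mc{D}_k(\ab_k))$ then follows from Corollary \ref{cor:dualker2} and the Stone--Weierstrass argument sketched inside the proof of Theorem \ref{thm:rmp}. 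Verifying that $\pi_{k,j}$ coincides with the nilspace factor map for $j < k$ is then routine from Corollary \ref{cor:chainofprojs}. Once $\ns_k$ is a $k$-step compact nilspace, Theorem \ref{thm:rmp} and Corollary \ref{cor:unrooted} directly yield the claim that $\mu^{\db{n}} \co (\gamma_k^{\db{n}})^{-1}$ is the Haar measure on $\cu^n(\ns_k)$.

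Finally, relative independence of $\mu^{\db{k+1}}$ over the factor induced by $\gamma_k^{\db{k+1}}$ follows from Lemma \ref{lem:gammaFk}, since $\gamma_k^{-1}(\mc{B}_k) =_\lambda \mc{F}_k$, combined with a decomposition argument as in Lemma \ref{lem:vetites1}: writing each $f_v = \mb{E}(f_v|\mc{F}_k) + (f_v - \mb{E}(f_v|\mc{F}_k))$ and applying Lemma \ref{lem:neighverts} to kill every cross-term, one sees that $\langle F \rangle_{U^{k+1}}$ only depends on $(\mb{E}(f_v|\mc{F}_k))_v$, which is exactly relative independence. The principal obstacle is the construction of $\ab_k$ and the verification of corner completion: the rest of the argument is essentially bookkeeping once the bundle structure of $\ns_k$ over $\ns_{k-1}$ has been extracted from the idempotence axiom and the factorisation properties of continuous $U^{k+1}$-convolutions.
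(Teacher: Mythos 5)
There is a genuine gap, and it sits exactly at the point you yourself identify as "the principal obstacle." Your plan is to first extract a compact abelian group $\ab_k$ acting freely and transitively on the fibres of $\pi_{k,k-1}$, and then use that bundle structure to verify corner completion. But the existence of such a bundle structure on $\ns_k$ over $\ns_{k-1}$ is a \emph{consequence} of $\ns_k$ being a $k$-step compact nilspace (via the general structure theory of nilspaces), not an input available beforehand. Proposition \ref{prop:idemp} only hands you a sub-$\sigma$-algebra of $\mc{A}^{\db{k}}$ realizing $\mu^{\db{k+1}}$ as a relative square; passing from that to a topological group acting on the compact space $\ns_k\subset\coup(\varOmega,K_{k+1})$ with continuous free transitive action on fibres is precisely the kind of construction the nilspace axiomatization is designed to avoid, and you do not carry it out — the module relation $\phi(x+z)=\phi(x)\chi(z)$ already presupposes the action $+z$ you are trying to build. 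The paper instead verifies the combinatorial axioms (composition, ergodicity, corner completion with \emph{unique} completion of $(k+1)$-corners) directly on $\ns_k$ and lets nilspace theory produce $\ab_k$ afterwards.

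Concretely, the two hard steps are not addressed by your sketch. (1) \emph{Uniqueness} of corner completion at level $k$ cannot be obtained from Corollary \ref{cor:dualker2} or the Stone--Weierstrass argument inside Theorem \ref{thm:rmp}, because both presuppose that $\ns_k$ is already a $k$-step nilspace; in the induction those results are only available at level $k-1$. The paper's actual uniqueness argument is Proposition \ref{prop:uclos} and Corollary \ref{cor:uniquecomp}, which go through the complete dependence of corner couplings (Proposition \ref{prop:meas-thy-compl}) and the auxiliary tricube coupling $\Upsilon$ — a purely measure-theoretic argument with no counterpart in your proposal. (2) \emph{Existence} of completions is a positivity statement: one must show that certain $U^n$-products of functions supported on small neighbourhoods of the prescribed corner values are strictly positive (Lemma \ref{lem:nkcorncomp}), which requires first knowing that convolutional sets form a basis for the topology of $\ns_k$ (Proposition \ref{prop:opcontconv}) — itself a consequence of the uniqueness result in (1) — together with the factorization of convolutions through $\ns_{k-1}$ (Lemma \ref{lem:convfactor}, Corollary \ref{cor:simpzerocor}). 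Your phrase "identify the corner-completion as a measurable section" does not engage with this positivity issue. The remaining parts of your proposal (measure preservation via Theorem \ref{thm:rmp} and Corollary \ref{cor:unrooted}, and relative independence via the decomposition $f_v=\mb{E}(f_v|\mc{F}_k)+(f_v-\mb{E}(f_v|\mc{F}_k))$ and the Gowers--Cauchy--Schwarz inequality) do match the paper, though for the last point plain use of \eqref{eq:GCS} and \eqref{eq:nestedUd} suffices and Lemma \ref{lem:neighverts} is not the relevant tool.
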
 
\noindent The last sentence here can be rephrased as saying that $\ns_k$ is the characteristic factor for $\|\cdot\|_{U^{k+1}}$ (in the usual sense, namely that the factor satisfies \cite[Lemma 4.3]{HK}). This rephrasing follows by Lemma \ref{lem:gammaFk} and Theorem \ref{thm:Fkprops} $(ii)$. Thus, Theorem \ref{thm:MeasInvThm} tells us that the characteristic factor for the $U^{k+1}$-seminorm on $\varOmega$ is a compact $k$-step nilspace.
\begin{remark}
Given Theorem \ref{thm:MeasInvThm}, we can deduce that Theorem \ref{thm:MeasInvThmGen} holds with the space $\ns$ and map $\gamma:\Omega\to\ns$ from Definition \ref{def:maintopspace}.
\end{remark}
\noindent In Section \ref{sec:ergapps} we use Theorem \ref{thm:MeasInvThm} to study measure-preserving actions of nilpotent filtered groups. To explain this, let us recall the definition of a filtered group.
\begin{defn}\label{def:fg}
A \emph{filtration} on a group $G$ is a sequence $G_\bullet= (G_i)_{i=0}^\infty$ of subgroups $G = G_0 = G_1 \geq G_2 \geq \cdots$ such that\footnote{By $[G_i,G_j]$ we mean the subgroup of $G$ generated by the commutators $[g,h]\!=\!g^{-1}h^{-1} g h$, $g \in G_i, h\in G_j$.} $[G_i,G_j] \subset G_{i+j}$ for all $i, j \geq 0$. We then call $(G,G_\bullet)$ a \emph{filtered group}. If $G_i=\{\id_G\}$ for some $i$, then the \emph{degree} of the filtration, denoted by $\deg(G_\bullet)$, is the least integer $k$ such that $G_{k+1}=\{\id_G\}$. We then say that $(G,G_\bullet)$ is a filtered group \emph{of degree} $k$. When the condition $G = G_0 = G_1 \geq G_2 \geq \cdots$ is weakened to $G \geq G_0 \geq G_1  \geq \cdots$, we say that $G_\bullet$ is a \emph{prefiltration} (see \cite[\S 6, Remarks]{GTorb}).
\end{defn}
\noindent Recall that for each $n\geq 0$ a filtered group $(G,G_\bullet)$ can be equipped with the group of $n$-cubes, or Host--Kra cubes of dimension $n$, which we denote by $\cu^n(G_\bullet)$ (we recall this definition in more detail in Section \ref{sec:ergapps}; see also \cite[\S 2.2.1]{Cand:Notes1}).

The above-mentioned use of Theorem \ref{thm:MeasInvThm} in Section \ref{sec:ergapps} goes via the following result. This result describes the structure of a cubic coupling when it is equipped with a measure-preserving action by a filtered group. Recall from \cite[\S 2.9]{Cand:Notes2} the notion of the group of (continuous) \emph{translations} on $\ns_k$, group denoted by $\tran(\ns_k)$, which is naturally equipped with a filtration of subgroups denoted by $\tran_i(\ns_k)$, $i\geq 0$. 
\begin{theorem}\label{thm:k-level-erg}
Let $\big(\varOmega,(\mu^{\db{n}})_{n\geq 0}\big)$ be a cubic coupling. Let $(G,G_\bullet)$ be a filtered group such that $\cu^n(G_\bullet)$ acts on $(\Omega^{\db{n}},\mu^{\db{n}})$ by measure-preserving transformations for each $n\geq 0$. Let $\ns_k$, $\gamma_k$ be the nilspace and map from Theorem \ref{thm:MeasInvThm}. Then $\gamma_k$ induces a filtered-group homomorphism $\wh{\gamma_k}:G\to \tran(\ns_k)$ such that for every $g\in G$ we have $\gamma_k\co g\,=_\lambda\,\wh{\gamma_k}(g)\co \gamma_k$.
\end{theorem}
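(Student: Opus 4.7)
The plan is to define $\wh{\gamma_k}(g)$ as the restriction to $\ns_k$ of the continuous self-map of $\coup(\varOmega,K_{k+1})$ induced by pushforward under the diagonal action of $g$ on $\Omega^{K_{k+1}}$, then verify the translation-degree property using the nilspace cube sets furnished by Theorem \ref{thm:MeasInvThm} and Corollary \ref{cor:unrooted}. First, for each $g\in G_0=G$, the constant $\db{n}$-cube with value $g$ lies in $\cu^n(G_\bullet)$, so by hypothesis $g$ acts by a measure-preserving transformation $\theta_g^{(n)}$ on $(\Omega^{\db{n}},\mu^{\db{n}})$ which restricts to the diagonal action on coordinate subsets. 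In particular $g$ preserves $\lambda$ on $\Omega$, so by Lemma \ref{lem:mpcoupact} the map $T_g:\nu\mapsto\nu\co(\theta_g^{(K_{k+1})})^{-1}$ is a continuous self-map of $\coup(\varOmega,K_{k+1})$.

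The next step is to establish the equivariance $T_g\co\gamma_k\,=_\lambda\,\gamma_k\co g$. Since $\theta_g^{(\db{k+1})}$ preserves $\mu^{\db{k+1}}$ and commutes with $p_{0^{k+1}}$ up to the base shift $\omega\mapsto g\omega$, one checks that both $\omega\mapsto T_g(\mu^{\db{k+1}}_\omega)$ and $\omega\mapsto\mu^{\db{k+1}}_{g\omega}$ give disintegrations of $\mu^{\db{k+1}}$ along $p_{0^{k+1}}$, and the $\lambda$-a.e.\ equality follows from the uniqueness in Lemma \ref{lem:localize}. Consequently $T_g$ preserves the measure $\lambda\co\gamma_k^{-1}$ on $\coup(\varOmega,K_{k+1})$, and being continuous and invertible (with inverse $T_{g^{-1}}$) it restricts to a homeomorphism of $\ns_k=\Supp(\lambda\co\gamma_k^{-1})$. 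Define $\wh{\gamma_k}(g)$ to be this restriction; functoriality of pushforward then makes $g\mapsto\wh{\gamma_k}(g)$ a group homomorphism $G\to\mathrm{Homeo}(\ns_k)$.

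To verify $\wh{\gamma_k}(g)\in\tran_i(\ns_k)$ when $g\in G_i$, I will use the cube-preservation characterization of translations of degree $i$: for each $n$, each $(n-i)$-face $F\subseteq\db{n}$ and each $\q\in\cu^n(\ns_k)$, I must check that the map $\wh{\gamma_k}(g)^F\co\q$ (applying $\wh{\gamma_k}(g)$ on $F$, identity off $F$) still lies in $\cu^n(\ns_k)$. By hypothesis the face cube $g^F\in\cu^n(G_\bullet)$ acts on $(\Omega^{\db{n}},\mu^{\db{n}})$ by a measure-preserving transformation, and the disintegration argument from the previous paragraph, applied coordinatewise, yields $\gamma_k^{\db{n}}\co g^F\,=_{\mu^{\db{n}}}\,\wh{\gamma_k}(g)^F\co\gamma_k^{\db{n}}$. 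Hence the continuous self-map $\wh{\gamma_k}(g)^F$ of $\ns_k^{\db{n}}$ preserves the pushforward measure $\mu^{\db{n}}\co(\gamma_k^{\db{n}})^{-1}$, which by Corollary \ref{cor:unrooted} is the Haar measure on $\cu^n(\ns_k)$, with support exactly $\cu^n(\ns_k)$. A continuous measure-preserving map preserves the support of the measure, so $\wh{\gamma_k}(g)^F$ maps $\cu^n(\ns_k)$ into itself, as required.

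The main obstacle is Step 3, whose cleanness is entirely dependent on the prior theorem: one needs to know both that $\ns_k$ is a genuine compact $k$-step nilspace and that its cube sets $\cu^n(\ns_k)$ coincide with the supports of $\mu^{\db{n}}\co(\gamma_k^{\db{n}})^{-1}$, so that the measure-theoretic preservation under $\wh{\gamma_k}(g)^F$ translates into the nilspace-theoretic cube-preservation axiom defining $\tran_i$. Without Theorem \ref{thm:MeasInvThm} and Corollary \ref{cor:unrooted} feeding in this identification, the argument would establish only that $\wh{\gamma_k}(g)^F$ is a measure-preserving homeomorphism, which is insufficient. A secondary technical annoyance is handling the $\lambda$-null exceptional sets in the equivariance identity, but since the theorem only asserts an almost-everywhere compatibility $\gamma_k\co g=_\lambda\wh{\gamma_k}(g)\co\gamma_k$, this causes no real difficulty.
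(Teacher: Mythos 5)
Your proposal is correct and follows essentially the same route as the paper: define $\wh{\gamma_k}(g)$ as pushforward of corner couplings under the diagonal action, derive the equivariance $\gamma_k\co g =_\lambda \wh{\gamma_k}(g)\co\gamma_k$ from uniqueness of disintegrations, and obtain $\wh{\gamma_k}(g)\in\tran_i(\ns_k)$ by noting that the continuous map $\wh{\gamma_k}(g)^F$ preserves $\mu^{\db{n}}\co(\gamma_k^{\db{n}})^{-1}$ and hence its support $\cu^n(\ns_k)$. The only cosmetic differences are that the paper shows $\wh{\gamma_k}(g)$ maps $\ns_k$ into itself via the closure of $\gamma_k(\Omega)$ rather than via invariance of $\lambda\co\gamma_k^{-1}$, and that your appeal to Corollary \ref{cor:unrooted} is not needed since $\cu^n(\ns_k)$ is by definition the support of that pushforward measure.
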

\noindent This result tells us that, given such a group action on the cubic coupling, the map $\gamma_k$ from Theorem \ref{thm:MeasInvThm} is also a factor map in the sense of ergodic theory (see for example \cite[Definition 1.7]{BTZ}).
\begin{proof}
First we define how $\wh{\gamma_k}(g)$ acts on $\coup(\varOmega, K_{k+1})$ for each $g\in G$: for every $x\in \coup(\varOmega, K_{k+1})$, we define $\wh{\gamma_k}(g)(x)\in \coup(\varOmega, K_{k+1})$ as the image of the measure $x$ under $g^{K_{k+1}}$, that is $\wh{\gamma_k}(g): x\mapsto g^{K_{k+1}}_*(x)$. It is clear from the measure-preserving property of $g$ that $\wh{\gamma_k}(g)(x)\in \coup(\varOmega, K_{k+1})$. Moreover, the map $\wh{\gamma_k}(g)$ is continuous, by Lemma \ref{lem:mpcoupact}.

Now we show that the commutativity claimed in the theorem holds, namely that for $\lambda$-almost every $\omega\in \Omega$ we have $g^{K_{k+1}}_*\big(\gamma_k(\omega)\big)=\gamma_k(g\cdot \omega)$.  Recall that by definition $\gamma_k$ is a disintegration of $\mu^{\db{k+1}}$ relative to $p_{0^{k+1}}$. The map $\omega \mapsto g^{K_{k+1}}_*(\gamma_k(g^{-1}\omega))$ is also a disintegration of  $\mu^{\db{k+1}}$ relative to $p_{0^{k+1}}$, indeed it is clearly a disintegration of $g^{\db{k+1}}_*\mu^{\db{k+1}}$, and this measure is equal to $\mu^{\db{k+1}}$. Then, by uniqueness of disintegrations, we have $g^{K_{k+1}}_*\big(\gamma_k(g^{-1}\cdot \omega)\big)=\gamma_k(\omega)$ for $\lambda$-almost every $\omega$, and the commutativity follows.

We now show that if $x\in \ns_k$ then $\wh{\gamma_k}(g)(x)\in\ns_k$. By Lemma \ref{lem:Xkdesc} we know that $\ns_k$ is the closure of $\gamma_k(\Omega)$. It follows from this, and the almost-sure commutativity above, that if $x\in \ns_k$ then there is a sequence $(\omega_i)_{i\in \mb{N}}$ in $\Omega$ such that $\gamma_k(\omega_i)\to x$ in $\coup(\varOmega,K_{k+1})$ as $i\to\infty$ and for each $\omega_i$ we have $\wh{\gamma_k}(g)\big(\gamma_k(\omega_i)\big)=\gamma_k(g\cdot \omega_i)\in \ns_k$. Now, by continuity of $\wh{\gamma_k}(g)$ and the closure of $\ns_k$, we have $\wh{\gamma_k}(g)(x)=\lim_{i\to \infty} \wh{\gamma_k}(g)\big(\gamma_k(\omega_i)\big)\in \ns_k$.

Finally, we show that $\wh{\gamma_k}$ is a filtered-group homomorphism $(G,G_\bullet)\to \tran(\ns_k)$, i.e.\ that $\wh{\gamma_k}$ is a homomorphism $G_i\to \tran_i(\ns_k)$ for each $i$. From the definition and continuity of each map $\wh{\gamma_k}(g)$, we see that $\wh{\gamma_k}$ is a homomorphism from $G$ into the group of homeomorphisms of $\ns_k$. We prove that $\wh{\gamma_k}(g)\in \tran_i(\ns_k)$ for every $g\in G_i$. By definition of $\tran_i(\ns_k)$ (recall \cite[Definition 3.2.27]{Cand:Notes2}), this means proving that for every $i$-codimensional face $F\subset \db{n}$, the map $\wh{\gamma_k}(g)^F$ preserves $\cu^n(\ns_k)$. By assumption $g^F$ preserves the measure $\mu^{\db{n}}$, so by the above commutativity $\wh{\gamma_k}(g)^F$ preserves $\mu^{\db{n}}\co (\gamma_k^{\db{n}})^{-1}$. Thus $\wh{\gamma_k}(g)^F$ is a homeomorphism preserving $\mu^{\db{n}}\co (\gamma_k^{\db{n}})^{-1}$, so it maps the set $\Supp(\mu^{\db{n}}\co (\gamma_k^{\db{n}})^{-1})=\cu^n(\ns_k)$ to itself.
\end{proof}

To prove that $\ns$ with its cubic structure is a nilspace we need the following result.

\begin{lemma}\label{lem:internsproj}
If $i\geq j$ are natural numbers then $\ns_j$ is the $j$-step factor of $\ns_i$ and $\pi_{i,j}:\ns_i\to \ns_j$ is the corresponding nilspace factor map. In particular $\pi_{i,j}$ is a cubespace morphism.
\end{lemma}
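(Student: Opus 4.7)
\medskip
\noindent\textbf{Proof plan.} First I would reduce to the case $j=i-1$. By Corollary \ref{cor:chainofprojs} the projections factor consistently as $\pi_{i,j}=\pi_{j+1,j}\co\pi_{j+2,j+1}\co\cdots\co\pi_{i,i-1}$, and a composition of canonical step-reducing nilspace factor maps is again a canonical factor map. Thus it suffices to show, for each $k\ge 1$, that $\pi_{k,k-1}$ identifies $\ns_{k-1}$ with the canonical $(k-1)$-step factor of the compact $k$-step nilspace $\ns_k$, whose nilspace structure is assumed known at this stage of the inductive proof of Theorem \ref{thm:MeasInvThm}.

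Fix $j=i-1$. Lemma \ref{lem:pijctsmorph} already gives that $\pi_{i,i-1}$ is a continuous cube-preserving map, and it is surjective because $\pi_{i,i-1}(\ns_i)$ is closed (by compactness of $\ns_i$) and contains the dense subset $\pi_{i,i-1}(\gamma_i(\Omega))=\gamma_{i-1}(\Omega)$. By the universal property of the canonical $(i-1)$-step factor map $\pi^{\mathrm{can}}\colon\ns_i\to(\ns_i)_{i-1}$ of the $i$-step compact nilspace $\ns_i$, together with the fact that $\ns_{i-1}$ is by induction an $(i-1)$-step compact nilspace, the cubespace morphism $\pi_{i,i-1}$ factors uniquely as $\pi_{i,i-1}=\sigma\co\pi^{\mathrm{can}}$ for some continuous cubespace morphism $\sigma\colon(\ns_i)_{i-1}\to\ns_{i-1}$. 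A continuous bijective cubespace morphism between compact nilspaces of the same step is an isomorphism of nilspaces, so it suffices to prove that $\sigma$ is bijective. Surjectivity is inherited from $\pi_{i,i-1}$, so the essential point is injectivity of $\sigma$.

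To prove injectivity I would show that every continuous complex-valued function $f$ on $(\ns_i)_{i-1}$ descends along $\sigma$ to a continuous function on $\ns_{i-1}$; by Urysohn separation this forces $\sigma$ to be injective. Given such an $f$, its lift $\tilde{f}:=f\co\pi^{\mathrm{can}}$ is a continuous function on $\ns_i$ constant along the $\pi^{\mathrm{can}}$-fibres. Applying the cubic-coupling framework to $\ns_i$ itself (which by Proposition \ref{prop:nilspace-cc} is a cubic coupling when equipped with the Haar measures on its cube sets), one identifies the level-$(i-1)$ Fourier $\sigma$-algebra of $\ns_i$ with $(\pi^{\mathrm{can}})^{-1}(\mc{B}_{(\ns_i)_{i-1}})$ modulo Haar-null sets, so $\tilde{f}$ is measurable with respect to this algebra. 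Approximating $\tilde{f}$ uniformly by sums of continuous $U^i$-convolutions on $\ns_i$ (using Lemma \ref{lem:nilspchar} and Lemma \ref{lem:char-rep} to get continuous representatives in each character module $W(\chi,\ns_i)$) and then invoking Corollary \ref{cor:factoring}, every such approximand factors through $\pi_{i,i-1}$ as a continuous function on $\ns_{i-1}$; passing to the uniform limit yields $\tilde{f}=g\co\pi_{i,i-1}$ for some continuous $g$, and hence $f=g\co\sigma$, as required.

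The main obstacle will be carrying out the uniform approximation of $\tilde{f}$ by continuous $U^i$-convolutions in a way that is compatible with $\sigma$-algebra/character-module reasoning. Corollary \ref{cor:factoring} is initially stated in terms of $L^\infty$ input functions and gives continuity of the convolution (through a Lusin-type approximation), but to keep the factorization through $\pi_{i,i-1}$ stable under uniform convergence one needs the approximation on $(\ns_i)_{i-1}$ to proceed by Stone--Weierstrass through the algebra generated by continuous functions in the modules $W(\chi,\ns_i)$ with $\chi$ trivial on $\ab_i$ (these are exactly the continuous functions pulled back from $(\ns_i)_{i-1}$). A secondary subtlety is to verify that Corollary \ref{cor:factoring}, applied inside the cubic coupling $\ns_i$, produces a factorization through the specific topological factor $\ns_{i-1}$ appearing in the lemma rather than some abstract isomorph; this is where Theorem \ref{thm:rmp} and Corollary \ref{cor:unrooted} are essential, as they identify the disintegration of the Haar measure on $\cu^{i}(\ns_i)$ at $0^i$ with the corner coupling defining $\ns_{i-1}$.
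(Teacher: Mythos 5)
Your reduction to the case $j=i-1$, the surjectivity of $\pi_{i,i-1}$ (compactness plus density of $\gamma_{i-1}(\Omega)$ in $\ns_{i-1}$), and the factorization $\pi_{i,i-1}=\sigma\co\pi^{\mathrm{can}}$ through the canonical $(i-1)$-step factor of $\ns_i$ are all sound and form the natural skeleton here (the paper gives no self-contained proof of this lemma; it is absorbed into the induction of Section \ref{sec:structhm}). One repair on the easy side: a continuous bijective cubespace morphism between compact nilspaces of the same step need \emph{not} be an isomorphism --- the identity map from $\mc{D}_1(\mb{Z}_2)\times\mc{D}_2(\mb{Z}_2)$ to $\mc{D}_2(\mb{Z}_2\oplus\mb{Z}_2)$ is a continuous bijective morphism of $2$-step nilspaces that is not invertible as a morphism --- so you must also record that $\pi_{i,i-1}^{\db{n}}\big(\cu^n(\ns_i)\big)=\cu^n(\ns_{i-1})$; this does hold, since the cube sets are defined as supports of pushforward measures and $\gamma_{i-1}^{\db{n}}=\pi_{i,i-1}^{\db{n}}\co\gamma_i^{\db{n}}$ by \eqref{eq:gammapi}.

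The genuine gap is in your injectivity argument for $\sigma$. You propose to treat $\ns_i$ with its Haar measures as a fresh cubic coupling, identify its level-$(i-1)$ Fourier algebra with $(\pi^{\mathrm{can}})^{-1}(\mc{B}_{(\ns_i)_{i-1}})$, and then use Corollary \ref{cor:factoring} together with Theorem \ref{thm:rmp} and Corollary \ref{cor:unrooted} to force the approximands to factor through the specific map $\pi_{i,i-1}$. But Theorem \ref{thm:rmp} and Corollary \ref{cor:unrooted} at level $i$ (and Corollary \ref{cor:factoring}, whose proof invokes Theorem \ref{thm:rmp}) explicitly assume that $\pi_{i,j}$ \emph{is} the nilspace factor map for every $j\in[i-1]$ --- that is, they take the conclusion of Lemma \ref{lem:internsproj} at level $i$ as a hypothesis, and inside the paper's induction only the level-$(i-1)$ instances are available. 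Likewise, Corollary \ref{cor:factoring} run inside the auxiliary coupling on $\ns_i$ produces factorizations through the topological factors \emph{of that coupling}, and identifying those with $(\ns_i)_{i-1}$ or with $\ns_{i-1}$ is again an instance of the very statement being proved. So the step you label a ``secondary subtlety'' is in fact the crux, and the resolution you propose for it is circular. To close the gap one has to separate the fibres of $\pi^{\mathrm{can}}$ using only data available before Theorem \ref{thm:rmp} is unlocked at level $i$ --- for instance by arguing directly with the corner couplings, using the complete dependence of Proposition \ref{prop:meas-thy-compl} and the uniqueness statement of Proposition \ref{prop:uclos} to show that a point of $\ns_i$ is determined by its image in $\ns_{i-1}$ together with its class modulo the $i$-th structure group --- and then upgrade any almost-everywhere conclusion to a pointwise one via strict positivity of the relevant measures as in Lemma \ref{lem:closedall}.
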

\noindent This lemma will be obtained as a consequence of the inductive argument proving Theorem \ref{thm:MeasInvThm}. From now on in this section we assume that both Theorem \ref{thm:MeasInvThm} and Lemma \ref{lem:internsproj} are true for the factors $\ns_i$ for every $0\leq i\leq k-1$. Our goal is to show that Theorem \ref{thm:MeasInvThm} holds with $\gamma_k$ the map from Definition \ref{def:couptopofactor}, and that Lemma \ref{lem:internsproj} holds for $i=k$.

\subsection{Verifying the ergodicity and composition axioms} \hfill \medskip \\
We now check the first two nilspace axioms for $\ns_k$.

\begin{lemma}\label{lem:axioms1-2}
The space $\ns_k$ together with the cube sets from Definition \ref{def:couptopofactor} satisfy the composition and ergodicity axioms.
\end{lemma}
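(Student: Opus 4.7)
My approach is to verify the ergodicity and composition axioms separately, handling composition by a reduction to two base cases.

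\emph{Ergodicity.} By the ergodicity axiom of Definition~\ref{def:cc}, $\mu^{\db{1}}=\lambda\times\lambda$. Pushing forward under $\gamma_k^{\db{1}}$ gives $\mu^{\db{1}}\co(\gamma_k^{\db{1}})^{-1}=(\lambda\co\gamma_k^{-1})\otimes(\lambda\co\gamma_k^{-1})$, a product measure on the Hausdorff space $\ns_k\times\ns_k$ whose support is $\ns_k\times\ns_k$. Hence $\cu^1(\ns_k)=\ns_k\times\ns_k$.

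\emph{Composition.} Given a cube morphism $\phi:\db{m}\to\db{n}$, let $\Phi^*:\ns_k^{\db{n}}\to\ns_k^{\db{m}}$, $\q\mapsto\q\co\phi$, be the associated continuous precomposition map; I want to show $\Phi^*(\cu^n(\ns_k))\subseteq\cu^m(\ns_k)$. My strategy is to factor $\phi$. Up to pre- and post-composition with automorphisms of the discrete cubes (which are themselves injective morphisms), every cube morphism $\phi$ factors through the face $\db{m'}\subseteq\db{m}$ spanned by the input coordinates on which $\phi$ actually depends, so $\phi=\iota\co\pi$ where $\pi:\db{m}\to\db{m'}$ is a composition of ``forget-last-coordinate'' morphisms $\phi_0:\db{r+1}\to\db{r}$, $(v_1,\ldots,v_{r+1})\mapsto(v_1,\ldots,v_r)$, and $\iota:\db{m'}\to\db{n}$ is injective. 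It therefore suffices to establish the desired inclusion in the two base cases (a) $\phi$ injective, and (b) $\phi=\phi_0$.

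For the injective case, the consistency axiom gives $\mu^{\db{n}}\co(\phi^\Omega)^{-1}=\mu^{\db{m}}$, where $\phi^\Omega:\Omega^{\db{n}}\to\Omega^{\db{m}}$, $\omega\mapsto\omega\co\phi$. Combined with the identity $\Phi^*\co\gamma_k^{\db{n}}=\gamma_k^{\db{m}}\co\phi^\Omega$, pushing forward yields $(\Phi^*)_*\bigl(\mu^{\db{n}}\co(\gamma_k^{\db{n}})^{-1}\bigr)=\mu^{\db{m}}\co(\gamma_k^{\db{m}})^{-1}$. Since the support of a Radon measure's continuous pushforward is the closure of the image of its support, taking supports yields $\cu^m(\ns_k)=\overline{\Phi^*(\cu^n(\ns_k))}$, so in particular $\Phi^*(\cu^n(\ns_k))\subseteq\cu^m(\ns_k)$.

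The key step, which I expect to be the main obstacle, is the forget-coordinate case: for $\q\in\cu^r(\ns_k)$, show that $\tilde\q:=\q\co\phi_0\in\cu^{r+1}(\ns_k)$. Identifying $\ns_k^{\db{r+1}}$ with $\ns_k^{F_0}\times\ns_k^{F_1}$ via the two opposite $r$-faces $F_0,F_1$ along the last coordinate (each identified with $\ns_k^{\db{r}}$), $\tilde\q$ corresponds to the diagonal point $(\q,\q)$. Any open neighborhood of $(\q,\q)$ contains a basic neighborhood $U\times U$ with $U\ni\q$ open in $\ns_k^{\db{r}}$, so it suffices to show $\bigl(\mu^{\db{r+1}}\co(\gamma_k^{\db{r+1}})^{-1}\bigr)(U\times U)>0$. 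Setting $U_\Omega:=(\gamma_k^{\db{r}})^{-1}(U)\subseteq\Omega^{\db{r}}$, this quantity equals $\langle 1_{U_\Omega},1_{U_\Omega}\rangle_{\mu^{\db{r+1}}}$, where the form is that associated with $\mu^{\db{r+1}}$ viewed as a self-coupling of $(\Omega^{\db{r}},\mu^{\db{r}})$ indexed by $\{F_0,F_1\}$. This coupling is idempotent by the idempotence axiom in Definition~\ref{def:cc-idemp} (available via Lemma~\ref{lem:deduc1}), so Corollary~\ref{cor:idempos} gives the lower bound $\langle 1_{U_\Omega},1_{U_\Omega}\rangle_{\mu^{\db{r+1}}}\geq \mu^{\db{r}}(U_\Omega)^2>0$, with strict positivity because $U$ is an open neighborhood of $\q\in\Supp\bigl(\mu^{\db{r}}\co(\gamma_k^{\db{r}})^{-1}\bigr)=\cu^r(\ns_k)$. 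Hence every open neighborhood of $\tilde\q$ has positive measure, so $\tilde\q\in\cu^{r+1}(\ns_k)$. The essential point is that the abstract idempotence at the $\Omega$-level translates directly into the topological conclusion for the diagonal in $\ns_k$ via the positivity of the bilinear form, bypassing the need to first transport idempotence itself to a coupling on $\ns_k$.
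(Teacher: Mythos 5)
Your proof is correct, and its overall architecture matches the paper's: ergodicity via the support of a product measure, and composition via reduction to (a) injective morphisms, handled by the consistency axiom plus continuity of the precomposition map, and (b) coordinate-forgetting morphisms, handled via idempotence. The one place where you genuinely diverge is in step (b). The paper first establishes that the \emph{pushed-forward} coupling $\mu^{\db{r+1}}\co(\gamma_k^{\db{r+1}})^{-1}$ is itself an idempotent self-coupling of $\cu^r(\ns_k)$ — which requires invoking Proposition \ref{prop:cubicfactor} (that restricting the cubic coupling to $\mc{F}_k$ again yields a cubic coupling) together with Lemma \ref{lem:gammaFk} — and then applies Lemma \ref{lem:idemp2}, the statement that the support of an idempotent coupling over a fully supported compact base contains the diagonal. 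You instead pull the basic open set $U\times U$ back to $U_\Omega\times U_\Omega$ in $\Omega^{F_0}\times\Omega^{F_1}$ and apply Corollary \ref{cor:idempos} directly to the original coupling $\mu^{\db{r+1}}\in\coup\big((\Omega^{\db{r}},\mu^{\db{r}}),\{F_0,F_1\}\big)$, whose idempotence is already guaranteed by the axiom in Definition \ref{def:cc-idemp} via Lemma \ref{lem:deduc1}. Since $\mu^{\db{r}}(U_\Omega)=\big(\mu^{\db{r}}\co(\gamma_k^{\db{r}})^{-1}\big)(U)>0$ by the definition of the support, the bound $\langle 1_{U_\Omega},1_{U_\Omega}\rangle_{\mu^{\db{r+1}}}\geq\mu^{\db{r}}(U_\Omega)^2$ closes the argument. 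This is a real economy: the underlying computation is the same one that proves Lemma \ref{lem:idemp2}, but your route removes the dependence of this lemma on Proposition \ref{prop:cubicfactor}, at no cost in generality.
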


\begin{proof}
To check the composition axiom, let $\phi:\db{m}\to\db{n}$ be a morphism. We have to show that for every $\q\in \Supp\big(\mu^{\db{n}}\co (\gamma_k^{\db{n}})^{-1}\big)$ we have $\q\co\phi \in \Supp\big(\mu^{\db{m}}\co (\gamma_k^{\db{m}})^{-1}\big)$. 

Suppose first that $\phi$ is injective. Let $V=\phi(\db{m})$, let $\psi:\Omega^{\db{m}}\to \Omega^V$ be the bijection that relabels each coordinate $\omega_v$ to $\omega_{\phi(v)}$, and let $\xi:\ns_k^{\db{m}}\to \ns_k^V$ be the similar bijection. By the consistency axiom in Definition \ref{def:cc}, we have $\mu^{\db{m}}=\mu^{\db{n}}_\phi:=\mu^{\db{n}}_V\co\psi$, and $\psi\co(\gamma_k^{\db{m}})^{-1}=(\gamma_k^V)^{-1}\co \xi$, whence $\Supp\big(\mu^{\db{m}}\co (\gamma_k^{\db{m}})^{-1}\big)=\xi^{-1}\big( \Supp\big(\mu^{\db{n}}_V\co(\gamma_k^V)^{-1}\big)\big)$. Hence it suffices to show that $p_V(\q) \in \Supp\big(\mu^{\db{n}}_V\co(\gamma_k^V)^{-1}\big)$. Fix any open set $U\ni p_V(\q)$. Then, since $\mu^{\db{n}}_V\co(\gamma_k^V)^{-1}$ is the image of $\mu^{\db{n}}\co (\gamma_k^{\db{n}})^{-1}$ under $p_V$, and the latter map is continuous (so that $p_V^{-1}(U)$ is an open set containing $\q$), we have $\mu^{\db{n}}_V\co(\gamma_k^V)^{-1}(U)>0$, so $p_V(\q)$ is indeed in $\Supp\big(\mu^{\db{n}}_V\co(\gamma_k^V)^{-1}\big)$.

For the case where $\phi$ is not injective, we first claim that if $\q\in \cu^{n-1}(\ns_k)$ then the map obtained by copying $\q$ on two opposite faces of $\db{n}$ is also a cube. More precisely, letting $\phi:\db{n}\mapsto \db{n-1}$ be the morphism $v\mapsto v|_{[n-1]}$, we have $\q\co\phi \in \ns_k^{\db{n}}$. This claim follows from Lemma \ref{lem:idemp2} applied with $\varOmega=\cu^{n-1}(\ns_k)$, provided the fact that the measure $\mu^{\db{n}}\co(\gamma_k^{\db{n}})^{-1}$ on $\cu^n(\ns_k)$ is an idempotent coupling of two copies of the measure $\mu^{\db{n-1}}\co(\gamma_k^{\db{n-1}})^{-1}$ on $\cu^{n-1}(\ns_k)$. To see this fact, note that by Proposition \ref{prop:cubicfactor} the coupling $\mu^{\db{n}}$ restricted to $\mc{F}_k$ is idempotent, and then Lemma \ref{lem:gammaFk} implies that $\mu^{\db{n}}\co(\gamma_k^{\db{n}})^{-1}$ inherits this idempotence (this is checked in a straightforward way from Definition \ref{def:idemp}, using \eqref{eq:exprel} to relate the case of this definition for $_{\mc{F}_k|}\mu^{\db{n}}$ to the case for $\mu^{\db{n}}\co(\gamma_k^{\db{n}})^{-1}$). This proves our claim. This claim combined with the consistency axiom for automorphisms implies that the composition axiom holds whenever $\phi:\db{n}\to\db{n-1}$ is a projection along a single coordinate. The composition axiom for general morphisms now follows by noting that any such map is a composition of coordinate projections with an injective morphism.

The ergodicity axiom follows readily from the fact that the support of the product measure $(\lambda\co\gamma_k^{-1})\times (\lambda\co\gamma_k^{-1})$ is the Cartesian square of $\Supp(\lambda\co\gamma_k^{-1})=\ns_k$.
\end{proof}	

\subsection{Complete dependence of corner couplings}
\begin{defn}\label{def:DepCoups}
Let $\varOmega=(\Omega,\mc{A},\lambda)$ be a probability space. A coupling $\mu\in \coup\big(\varOmega,S\big)$ is said to be \emph{completely dependent} if for every $v\in S$ we have $\mc{A}^S_v\subset_\mu \mc{A}^S_{S\setminus \{v\}}$.
\end{defn}
\noindent In this subsection we prove the following result towards Theorem \ref{thm:MeasInvThmGen}.  (Recall Definition \ref{def:factorcoup} for the notion of a factor coupling.)
\begin{proposition}\label{prop:meas-thy-compl}
Let $x\in \ns$ and $k\in \mb{N}$. The factor coupling of $\pi_k(x)\in \coup(\varOmega,K_{k+1})$ corresponding to $\mc{F}_k$ is completely dependent.
\end{proposition}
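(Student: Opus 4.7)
The plan is to derive complete dependence of the factor coupling $_{\mc{F}_k|}\pi_k(x)$ by first extracting a measurability statement in $\mu^{\db{k+1}}$ from Theorem~\ref{thm:Fkprops}, then disintegrating over $p_{0^{k+1}}$ to transfer it to $\gamma_k(\omega)$ for $\lambda$-almost every $\omega$, and finally extending to every $x\in\ns$ via Lemma~\ref{lem:closedall}.

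First I would fix a vertex $v\in K_{k+1}$ and a function $f\in L^\infty(\mc{F}_k)$. By the consistency axiom, $\mu^{\db{k+1}}$ is invariant under every automorphism of $\db{k+1}$, so Theorem~\ref{thm:Fkprops}(iii), originally formulated at $0^{k+1}$, can be transported by an automorphism sending $0^{k+1}$ to $v$ to yield
\[
(\mc{F}_k)^{\db{k+1}}_v \;=_{\mu^{\db{k+1}}}\; \mc{A}^{\db{k+1}}_v \wedge (\mc{F}_k)^{\db{k+1}}_{\db{k+1}\setminus\{v\}}.
\]
Consequently $f\co p_v$ equals $\mu^{\db{k+1}}$-almost everywhere some function $H_f\in L^\infty\big((\mc{F}_k)^{\db{k+1}}_{\db{k+1}\setminus\{v\}}\big)$, which via Lemmas~\ref{lem:Doob} and~\ref{lem:gammaFk} may be taken of the form $\tilde H_f\co \gamma_k^{\db{k+1}\setminus\{v\}}$ for some Borel function $\tilde H_f$ on $\ns_k^{\db{k+1}\setminus\{v\}}$.

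Second, since $\gamma_k$ is by construction the $\{0^{k+1}\}$-localization of $\mu^{\db{k+1}}$, in the disintegration $\mu^{\db{k+1}}=\int \delta_\omega\otimes\gamma_k(\omega)\,d\lambda(\omega)$ the equality $f\co p_v=H_f$ descends for $\lambda$-almost every $\omega$ to an equality modulo $\gamma_k(\omega)$. Writing $\db{k+1}\setminus\{v\}=\{0^{k+1}\}\sqcup (K_{k+1}\setminus\{v\})$ and noting that on the fibre over $\omega$ the coordinate at $0^{k+1}$ is frozen, the restriction of $H_f$ becomes a $(\mc{F}_k)^{K_{k+1}}_{K_{k+1}\setminus\{v\}}$-measurable function witnessing the inclusion $(\mc{F}_k)^{K_{k+1}}_v\subset_{\gamma_k(\omega)}(\mc{F}_k)^{K_{k+1}}_{K_{k+1}\setminus\{v\}}$. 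Since $\varOmega$ is Borel, $L^2(\mc{F}_k)$ is separable, so doing this uniformly over a countable dense family of $f$'s and then approximating in $L^2$ shows that the set
\[
\mc{P}_v=\big\{\nu\in\coup(\varOmega,K_{k+1}):(\mc{F}_k)^{K_{k+1}}_v\subset_\nu (\mc{F}_k)^{K_{k+1}}_{K_{k+1}\setminus\{v\}}\big\}
\]
satisfies $(\lambda\co\gamma_k^{-1})(\mc{P}_v\cap\ns_k)=1$.

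To conclude $\pi_k(x)\in\bigcap_v\mc{P}_v$ for every $x\in\ns$, it suffices by Lemma~\ref{lem:closedall} to show that each $\mc{P}_v$ is closed in $\coup(\varOmega,K_{k+1})$. The hard part is precisely this closedness: a direct reformulation of the inclusion defining $\mc{P}_v$ via $L^2(\nu)$-norms places it only at the $G_\delta$ level, since it takes the shape of an infimum of $\nu$-continuous functions being zero. The plan to overcome this is to exploit the fact that $\mc{F}_k$ is generated modulo $\lambda$ by $U^{k+1}$-convolutions whose starred versions $[G]^*_{U^{k+1}}=\xi(\cdot,G)\co\pi_k$ are continuous on $\ns$ by Lemma~\ref{lem:Unstarconv}. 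Restricting to these continuous test functions $f$, one aims to choose the witness $\tilde H_f$ on the closed set $\cu^{k+1}(\ns_k)\subseteq\ns_k^{\db{k+1}}$ so that the exceptional set $\{\tilde f\co p_v\neq \tilde H_f\}$ becomes open; then $\nu\mapsto\nu\co(\gamma_k^{K_{k+1}})^{-1}$ of this open set is lower semi-continuous, and the vanishing condition defines a closed subset of $\coup(\varOmega,K_{k+1})$, yielding the required closedness of $\mc{P}_v$.
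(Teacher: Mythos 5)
Your first two steps are fine: transporting Theorem \ref{thm:Fkprops}(iii) to the vertex $v$ by an automorphism and disintegrating over $p_{0^{k+1}}$ does give the inclusion $(\mc{F}_k)^{K_{k+1}}_v\subset_{\gamma_k(\omega)}(\mc{F}_k)^{K_{k+1}}_{K_{k+1}\setminus\{v\}}$ for $\lambda$-almost every $\omega$. The gap is exactly where you locate it, and your proposed repair does not close it. The set $\mc{P}_v$ is not closed in $\coup(\varOmega,K_{k+1})$ by any soft argument --- compare the remark after Lemma \ref{lem:botclosed}, which exhibits a sequence of couplings each satisfying a $\sigma$-algebra inclusion of this type whose limit fails it --- so Lemma \ref{lem:closedall} cannot be invoked without new input. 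Your fix asks for a \emph{continuous} witness $\tilde H_f$ on $\cu^{k+1}(\ns_k)$ so that $\{\tilde f\co p_v\neq\tilde H_f\}$ is open. While $\tilde f=\xi(\cdot,G)$ is indeed continuous on $\ns_k$, the candidate witness is a conditional expectation onto $(\mc{F}_k)^{K_{k+1}}_{K_{k+1}\setminus\{v\}}$, defined only up to null sets, and the only natural continuous version of it would be ``evaluate $\tilde f$ at the unique completion of the corner''. But the existence, uniqueness and continuity of corner completion on $\ns_k$ is precisely what is being established at this stage of the induction: in the paper, Corollary \ref{cor:uniquecomp} and the completion axiom are \emph{deduced from} Proposition \ref{prop:meas-thy-compl} (via Proposition \ref{prop:uclos}). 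At the point where this proposition must be proved one only knows that $\ns_{k-1}$ is a nilspace; no topological regularity of $\ns_k$ or of $\cu^{k+1}(\ns_k)$ beyond compactness is available, so your closedness argument is circular.

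The paper avoids the closedness issue entirely by proving the inclusion pointwise in $x$: it embeds $K_{k+1}$ into the tricube $\wt{T}_{k+1}\subset\db{2k+2}$ and works inside the single coupling $\nu=\pi_{2k+1}(x)$, where the only facts transferred from ``almost every $x$'' to ``every $x$'' are genuinely closed properties --- the identification of face subcouplings of $\nu$ with $\mu^{\db{m}}$ (Lemma \ref{lem:subfacecoup}, resting on Lemma \ref{lem:pindownclosed}) and the plain independence of $\mc{G}_1$ and $\mc{G}_2$ (Lemma \ref{lem:indepclosed}). The non-closed inclusion is then derived algebraically inside $\nu$ from Theorem \ref{thm:Fkprops}(iii) applied to these face subcouplings, the elimination Lemma \ref{lem:claim1} (which itself requires the inductive hypothesis that complete dependence holds at all degrees $d\leq k-1$ --- an induction your sketch omits entirely), and the modular law (Lemma \ref{lem:modlaw}). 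To salvage your route you would need an independent proof that the relevant conditional expectations admit continuous versions on the support, which amounts to re-proving unique corner completion first.
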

\noindent This proposition can be viewed as a measure-theoretic analogue of  the following property of $k$-step nilspaces, which is a consequence of the uniqueness of completion of $(k+1)$-corners: let $\nss$ be a $k$-step nilspace and for any fixed $y\in\nss$ consider the rooted cube set $\cu^{k+1}_y(\nss)$; then for every cube $\q$ in this set, for every $u\in K_{k+1}$ the value $\q(u)$ is determined by the other values $\q(v)$, $v\in K_{k+1}\setminus\{u\}$ (since the remaining value $\q(0^{k+1})=y$ is fixed). In fact, using this property, one can see that the Haar measure on $\cu^{k+1}_y(\nss)$ is a completely dependent coupling in $\coup(\nss,K_{k+1})$; this is precisely what Proposition \ref{prop:meas-thy-compl} says in this case.

Recall that the height of a simplicial set $S\subset \db{n}$ is $\max_{u\in S} |u|$, and that the degree $d(u)$ of an element $u\in S$ is the greatest height of an element $v\in S$ with $v\geq u$. An element $u\in S$ is \emph{maximal} in $S$ if $|u| = d(u)$.

To prove Proposition \ref{prop:meas-thy-compl}, from now on in this subsection we assume that the complete dependence in question holds for the factor $_{\mc{F}_d|}\pi_d(x)$ for each $d\leq k-1$, and we establish the case $d=k$ by induction. To this end we use the following result.
\begin{lemma}\label{lem:claim1}
Let $x\in\ns$, let $n\in \mb{N}$, and let $\mu$ be the coupling $\pi_{n-1}(x)\in \coup(\varOmega,K_n)$. Let $S_1,S_2\subset \db{n}$ be simplicial sets such that $S_1$ has height at most $k$, let $w\in S_2\setminus S_1$, and let us define the following $\sigma$-algebras on $\Omega^{K_n}$: \; 
$\mc{D}_1=(\mc{F}_k)^{K_n}_{(S_1\cup S_2)\setminus \{0^n,w\}}$, \; 
$\mc{D}_2= (\mc{F}_k)^{K_n}_{S_2\setminus \{0^n,w\}}$. If $(\mc{F}_k)^{K_n}_w\subset_\mu \mc{D}_1$, then $(\mc{F}_k)^{K_n}_w\subset_\mu \mc{D}_2$.
\end{lemma}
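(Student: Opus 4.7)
The plan is to argue by induction on $|S_1\setminus S_2|$. The base case $S_1\subseteq S_2$ forces $\mc{D}_1=\mc{D}_2$, so there is nothing to prove. For the inductive step, the idea is to chip away at $S_1\setminus S_2$ one maximal vertex at a time: I would pick $v\in S_1\setminus S_2$ which is maximal in $S_1$ under the partial order on $\db{n}$. Simpliciality of $S_2$ forbids elements of $S_2$ strictly above $v$, so $v$ is in fact maximal in $S_1\cup S_2$; also $|v|\leq k$ and $v\notin\{0^n,w\}$ (since $0^n,w\in S_2$ and $v\notin S_2$). Setting $S_1':=S_1\setminus\{v\}$ preserves simpliciality and the height bound, and $(S_1'\cup S_2)\setminus\{0^n,w\}=A\setminus\{v\}$ where $A:=(S_1\cup S_2)\setminus\{0^n,w\}$. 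Thus it suffices to prove the single-vertex reduction: $(\mc{F}_k)^{K_n}_w\subset_\mu(\mc{F}_k)^{K_n}_{A\setminus\{v\}}$.

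The reduction rests on three ingredients. First, since both $V(v):=\{u:u\leq v\}$ and $(S_1\cup S_2)\setminus\{v\}$ are simplicial subsets of $\db{n}$ with intersection $K(v):=V(v)\setminus\{v\}$ (using $V(v)\subset S_1$), Theorem~\ref{thm:sip} yields $V(v)\perp\big((S_1\cup S_2)\setminus\{v\}\big)$ in $\mu^{\db{n}}$. Second, Lemma~\ref{lem:keybot} gives $\mc{A}^{\db{|v|}}_{0^{|v|}}\upmod\mc{A}^{\db{|v|}}_{K_{|v|}}$ in the $V(v)$-face subcoupling of $\mu^{\db{n}}$, which by face consistency is $\mu^{\db{|v|}}$. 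Third, Theorem~\ref{thm:Fkprops}(i) in dimension $|v|$ and its symmetric counterpart via Remark~\ref{rem:cc-sym} give the chain of identities $\mc{A}^{V(v)}_v\wedge\mc{A}^{V(v)}_{K(v)}=(\mc{F}_{|v|-1})^{V(v)}_v=(\mc{F}_{|v|-1})^{V(v)}_{K(v)}$ (mod $\mu^{\db{|v|}}$-null sets). Each of these ambient statements has to be transferred to the specific fibre coupling $\mu=\pi_{n-1}(x)$; for this I would combine Lemma~\ref{lem:subfacecoup}, Lemma~\ref{lem:fibrebotindep} and Lemma~\ref{lem:closedall} (the latter applied to suitable closed conditions) to extend from the almost-sure statement on the disintegration of $\mu^{\db{n}}$ over $\{0^n\}$ to every $x\in\ns$.

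With these tools in hand, take an arbitrary bounded $(\mc{F}_k)^{K_n}_w$-measurable $f$. The hypothesis lets us approximate $f$ in $L^2(\mu)$ by finite sums of rank-one functions $h=\prod_{u\in A}g_u\circ p_u$ with $g_u\in L^\infty(\mc{F}_k)$, via Lemma~\ref{lem:pisysapprox}. Pulling the $\mc{A}^{K_n}_{A\setminus\{v\}}$-measurable factors out of the conditional expectation,
\[
\mb{E}_\mu\big(h\,\big|\,\mc{A}^{K_n}_{A\setminus\{v\}}\big)=\Big(\prod_{u\in A\setminus\{v\}}g_u\circ p_u\Big)\cdot\mb{E}_\mu\big(g_v\circ p_v\,\big|\,\mc{A}^{K_n}_{K(v)\setminus\{0^n\}}\big),
\]
where the reduction of the conditioning $\sigma$-algebra uses the first ingredient. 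The second and third ingredients together identify the remaining conditional expectation with $\mb{E}(g_v\,|\,\mc{F}_{|v|-1})\circ p_v$ and, crucially, show that it is simultaneously $(\mc{F}_{|v|-1})^{K_n}_{K(v)\setminus\{0^n\}}$-measurable. Since $K(v)\setminus\{0^n\}\subset A\setminus\{v\}$ and $|v|-1<k$, this forces $\mb{E}_\mu(h\,|\,\mc{A}^{K_n}_{A\setminus\{v\}})$ to be $(\mc{F}_k)^{K_n}_{A\setminus\{v\}}$-measurable. Passing to the $L^2$-limit, $\mb{E}_\mu(f\,|\,\mc{A}^{K_n}_{A\setminus\{v\}})$ is $(\mc{F}_k)^{K_n}_{A\setminus\{v\}}$-measurable.

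The main obstacle is the last step: deducing $\mb{E}_\mu(f\,|\,\mc{A}^{K_n}_{A\setminus\{v\}})=f$ a.s., i.e.\ that the projection loses no information. The key observation here is that at the rank-one level the calculation above yields a factor $\mb{E}(g_v|\mc{F}_{|v|-1})(x_v)$ that is both $(\mc{F}_{|v|-1})^{K_n}_v$- and $(\mc{F}_{|v|-1})^{K_n}_{K(v)\setminus\{0^n\}}$-measurable, so for each $h$ no part of the $v$-dependence is truncated, and a norm comparison $\|\mb{E}_\mu(f\,|\,\mc{A}^{K_n}_{A\setminus\{v\}})\|_{L^2(\mu)}=\|f\|_{L^2(\mu)}$ follows after summing and taking limits. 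I anticipate that this is the step that will require the most care, and a cleaner presentation may be obtained by lifting the whole argument to $\mu^{\db{n}}$, where Theorem~\ref{thm:sip} and Theorem~\ref{thm:Fkprops} apply without any fibre-by-fibre bookkeeping, and then descending to the specific corner coupling $\mu=\pi_{n-1}(x)$ only at the very end via Lemma~\ref{lem:fibrebotindep} and Lemma~\ref{lem:closedall}.
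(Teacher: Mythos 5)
Your induction scheme and the single-vertex reduction (remove a maximal $u\in S_1\setminus S_2$, reduce to showing $(\mc{F}_k)^{K_n}_w\subset_\mu(\mc{F}_k)^{K_n}_{(S_1\cup S_2)\setminus\{0^n,u,w\}}$) match the paper exactly, and the ingredients you list (Theorem \ref{thm:sip}, Lemma \ref{lem:keybot}, Theorem \ref{thm:Fkprops}) all do appear. But the step you yourself flag as the main obstacle is a genuine gap, and your proposed resolution does not work. Projecting a rank-one approximant $h=\prod_{u\in A}g_u\co p_u$ onto $\mc{A}^{K_n}_{A\setminus\{v\}}$ replaces $g_v\co p_v$ by $\mb{E}(g_v|\mc{F}_{|v|-1})\co p_v$, which genuinely changes $h$ (and strictly decreases its norm) whenever $g_v$ is not already $\mc{F}_{|v|-1}$-measurable; so it is false that ``no part of the $v$-dependence is truncated'' at the rank-one level, and the identity $\|\mb{E}_\mu(f|\mc{A}^{K_n}_{A\setminus\{v\}})\|_{L^2}=\|f\|_{L^2}$ cannot be obtained by summing and passing to the limit. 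What the paper does instead is a lattice argument: it proves the conditional independence $(\mc{F}_k)^{K_n}_u\upmod_\mu\mc{D}_3$ (where $\mc{D}_3=(\mc{F}_k)^{K_n}_{(S_1\cup S_2)\setminus\{0^n,u\}}$), identifies $(\mc{F}_k)^{K_n}_u\wedge\mc{D}_3=_\mu(\mc{F}_{d-1})^{K_n}_u$, and then applies the modular law (Lemma \ref{lem:modlaw}) to get $\mc{D}_1\wedge\mc{D}_3=((\mc{F}_k)^{K_n}_u\wedge\mc{D}_3)\vee\mc{D}_1'\subset_\mu\mc{D}_1'$; since the hypothesis places $(\mc{F}_k)^{K_n}_w$ inside $\mc{D}_1\wedge\mc{D}_3$, the reduction follows with no norm computation on approximants.

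A second, subtler gap: the identification $(\mc{F}_k)^{K_n}_u\wedge\mc{D}_3=_\mu(\mc{F}_{d-1})^{K_n}_u$ and the inclusion $(\mc{F}_{d-1})^{K_n}_u\subset_\mu(\mc{F}_{d-1})^{K_n}_{0^n\neq v<u}$ are statements about the corner coupling $\mu=\pi_{n-1}(x)$, in which the relevant index set $\{v:0^n\neq v\leq u\}$ is a corner isomorphic to $\pi_{d-1}(x)$, not a face. Your transfer toolkit (Lemma \ref{lem:subfacecoup} plus Lemma \ref{lem:fibrebotindep} plus Lemma \ref{lem:closedall}) moves face statements from $\mu^{\db{n}}$ into the fibres, but it cannot produce this corner statement; the paper obtains it from the outer induction hypothesis, namely Proposition \ref{prop:meas-thy-compl} (complete dependence) at levels $d-1\leq k-1$. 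Without invoking that hypothesis your chain of identities $\mc{A}^{V(v)}_v\wedge\mc{A}^{V(v)}_{K(v)}=(\mc{F}_{|v|-1})^{V(v)}_v=(\mc{F}_{|v|-1})^{V(v)}_{K(v)}$ is only justified in the ambient coupling $\mu^{\db{d}}$, not in $\pi_{d-1}(x)$, which is where you need it.
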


\noindent Thus, if we want to cover $(\mc{F}_k)^{K_n}_w$ by a join of $\sigma$-algebras $(\mc{F}_k)^{K_n}_v$, $v\in K_n\setminus \{w\}$, then a simplicial set $S_1$ of height at most $k$ not containing $w$ is always superfluous. 

\begin{proof}
We argue by induction on $|S_1\setminus S_2|$, and we assume that $S_1\not\subset S_2$ (otherwise the result is trivial). Let $u\in S_1\setminus S_2$ be a maximal element, and let $d=d(u)=|u|\leq k$. Note that $u$ is also maximal in $S_1$. Indeed, for every $v\in S_1$ dominating $u$ we have $v\in S_1\setminus S_2$, since $S_2$ is simplicial, so $v=u$ by maximality in $S_1\setminus S_2$.

Let $S_1'=S_1\setminus \{u\}$ and define the $\sigma$-algebra $\mc{D}_1'= (\mc{F}_k)^{K_n}_{(S_1'\cup S_2) \setminus \{0^n,w\}} = (\mc{F}_k)^{K_n}_{(S_1\cup S_2) \setminus \{0^n,u,w\}}$. Since $S_1'$ is still a simplicial set of height at most $k$, it suffices to prove that\vspace{-0.1cm}
\begin{equation}\label{eq:maininclu}
(\mc{F}_k)^{K_n}_w \subset_\mu \mc{D}_1',\vspace{-0.1cm}
\end{equation}
for then by induction on $|S_1\setminus S_2|$ we would have $(\mc{F}_k)^{K_n}_w\subset_\mu \mc{D}_2$, as required. 

To prove \eqref{eq:maininclu}, we shall use the $\sigma$-algebra $
\mc{D}_3=(\mc{F}_k)^{K_n}_{(S_1'\cup S_2)\setminus\{0^n\} } = (\mc{F}_k)^{K_n}_{(S_1\cup S_2)\setminus\{0^n,u\} }$. Note that $\mc{D}_1'\subset \mc{D}_3$. The key fact that we shall use is that $(\mc{F}_k)^{K_n}_u \upmod_{\mu} \mc{D}_3$. By Lemma \ref{lem:condindepequiv}, this fact follows if we show that for every bounded $(\mc{F}_k)^{K_n}_u$-measurable function $f\co p_u$ such that $\mb{E}_\mu(f\co p_u|(\mc{F}_k)^{K_n}_u\wedge_{\mu} \mc{D}_3)=0$, we also have $\mb{E}_\mu(f\co p_u|\mc{D}_3)=0$. To show this, it suffices to prove that $\|f\|_{U^d}=0$. Indeed, if the latter equation holds then, for any bounded $\mc{D}_3$-measurable rank-1 function $h=\prod_{v\in (S_1'\cup S_2)\setminus\{0^n\} } g_v\co p_v$, applying Corollary \ref{cor:simpzero} with $S=S_1\cup S_2$, $r=0^n$, and $F$ the system with $f_v=1$ for $v\in K_n\setminus S$, with $f_v=g_v$ for $v\in S\setminus\{0^n,u\}$, and $f_u=f$, we have that the convolution $[F]_{U^n}$ vanishes $\lambda$-almost-surely, so by Lemma \ref{lem:topcor2} we have $[F]_{U^n}^*(x)=0$, which means that $\mb{E}_\mu\big((f\co p_u)\, h\big)=0$, so $f\co p_u$ is orthogonal in $\mu$ to every such rank-1 function $h$, whence indeed $\mb{E}_\mu(f\co p_u|\mc{D}_3)=0$. To show that $\|f\|_{U^d}=0$, we use the inductive hypothesis stated in the paragraph just before Lemma \ref{lem:claim1}. From this hypothesis we deduce that $(\mc{F}_{d-1})^{K_n}_u\subset_\mu (\mc{F}_{d-1})^{K_n}_{0^n\neq v \leq u}$ (the deduction uses the fact that, by Definition \ref{def:factorprojs} and Remark \ref{rem:corner-sym}, the subcoupling $\mu_{\{v: 0^n\neq v < u\}}$ is isomorphic to $\pi_{d-1}(x)$). Hence $(\mc{F}_{d-1})^{K_n}_u\; \subset \; \Big((\mc{F}_{d-1})^{K_n}_u\wedge\, (\mc{F}_{d-1})^{K_n}_{0^n\neq v < u}\Big) \;\subset \; (\mc{F}_k)^{K_n}_u\wedge \mc{D}_3$. Hence our assumption above that $\mb{E}(f\co p_u|(\mc{F}_k)^{K_n}_u\wedge_{\mu} \mc{D}_3)=0$ implies that $\mb{E}_\lambda(f|\mc{F}_{d-1})=\mb{E}_\mu\big(f\co p_u| (\mc{F}_{d-1})^{K_n}_u\big)=0$, whence by property $(ii)$ in Theorem  \ref{thm:Fkprops} we have $\|f\|_{U^d}=0$ as required.

Having proved that $(\mc{F}_k)_u^{K_n} \upmod_\mu \mc{D}_3$, let us now prove \eqref{eq:maininclu}. It suffices to show that \vspace{-0.1cm}
\begin{equation}\label{containment2}
(\mc{F}_k)_w^{K_n}\;\; \subset_\mu \;\; \mc{D}_1\wedge \mc{D}_3,\; \textrm{ and }\;(\mc{F}_k)_u^{K_n} \wedge \mc{D}_3\;\; \subset_\mu \;\;  \mc{D}_1'.
\end{equation}
Indeed $\mc{D}_1\wedge \mc{D}_3 = \big((\mc{F}_k)_u^{K_n} \vee \mc{D}_1'\big) \wedge \mc{D}_3 $, and by Lemma \ref{lem:modlaw} this equals $\big((\mc{F}_k)_u^{K_n} \wedge \mc{D}_3\big)\vee \mc{D}_1'$ (since $(\mc{F}_k)_u^{K_n} \upmod \mc{D}_3$), so we have indeed that \eqref{eq:maininclu} follows from  \eqref{containment2}.

To see the first inclusion in \eqref{containment2}, note that clearly $(\mc{F}_k)_w^{K_n}\subset \mc{D}_3$ and therefore, since $(\mc{F}_k)_w^{K_n}\subset_\mu \mc{D}_1$ by assumption, the inclusion in question is clear.

To prove the second inclusion in \eqref{containment2}, we show that in fact
\begin{equation}\label{containment4}
(\mc{F}_k)_u^{K_n}\wedge \mc{D}_3\;\; =_\mu \;\; (\mc{F}_{d-1})_u^{K_n} \;\; \subset \;\; \mc{D}_1'.
\end{equation}
To prove the equality in \eqref{containment4} it suffices to show that $(\mc{F}_k)_u^{K_n}\wedge \mc{D}_3\subset_\mu (\mc{F}_{d-1})_u^{K_n}$ (the opposite inclusion was proved above). Suppose for a contradiction that there exists $f\in L^\infty((\mc{F}_k)_u^{K_n}\wedge \mc{D}_3)$ that is not $(\mc{F}_{d-1})_u^{K_n}$-measurable. Then $g:=f-\mb{E}\big(f|(\mc{F}_{d-1})_u^{K_n}\big)$ is non-zero, and is still $(\mc{F}_k)_u^{K_n}\wedge \mc{D}_3$-measurable (by the opposite inclusion). In particular $g$ is $(\mc{F}_k)_u^{K_n}$-measurable, so $g=g'\co p_u$ almost surely, for some $\mc{F}_k$-measurable $g'$. Then since $\mb{E}_\mu\big(g|(\mc{F}_{d-1})_u^{K_n}\big)=0$, we have $\mb{E}_\lambda(g'|\mc{F}_{d-1})=0$, so by statement $(ii)$ in Theorem \ref{thm:Fkprops} we have $\|g'\|_{U^d}=0$. This implies, by the argument above using Corollary \ref{cor:simpzero}, that $\mb{E}(g|\mc{D}_3)=0$, and so (since $g$ is also $\mc{D}_3$-measurable) we must have $g=0$, a contradiction. 

To see the inclusion in \eqref{containment4}, we use our induction hypothesis for $d-1$, as we did above, to obtain that $(\mc{F}_{d-1})_u^{K_n} \subset_\mu  (\mc{F}_{d-1})^{K_n}_{0^n\neq v < u}$, and then note that the latter $\sigma$-algebra is included in $\mc{D}_1'$, since $\mc{F}_{d-1}\subset \mc{F}_k$ and $\{v: 0\neq v < u\}\subset (S_1\cup S_2)\setminus \{0,u,w\}$.
\end{proof}
\noindent We can now move on to the proof of Proposition \ref{prop:meas-thy-compl}. Our goal is to show that for every $x\in \ns$ and $u\in K_{k+1}$ we have \vspace{-0.2cm}
\begin{equation}\label{eq:measthyglue}
(\mc{F}_k)_u^{K_{k+1}} \subset_{\pi_k(x)} (\mc{F}_k)_{K_{k+1}\setminus\{u\}}^{K_{k+1}}.\vspace{-0.1cm}
\end{equation}
\noindent To prove this we shall use the notation $q_{k+1}$, recalled in Subsection \ref{subsec:tricubes}, for the embedding of the tricube $T_{k+1}$ as the simplicial subcubespace $\wt{T}_{k+1}$ of $\db{2k+2}$.

Consider the following simplicial sets in $\db{2k+2}$, which are subsets of $\wt{T}_{k+1}$:\vspace{-0.1cm}
\begin{eqnarray*}
&& V_1= \{ v\in \db{2k+2}: v\sbr{i}=0\textrm{ for }i\in [k+1]\},\\
&& V_2=\{v\in \db{2k+2}: v\sbr{i+k+1}=0\textrm{ for }i\in [k+1]\},\\
&& V_3=\{ v\in \wt{T}_{k+1}: v\sbr{2k+2}=0,\;|v|\leq k \}, \\
&& V_4=\{ v\in \wt{T}_{k+1}: v\sbr{2k+2}=0\}. \vspace{-0.1cm}
\end{eqnarray*}
The set $V_1$ is the image under $q_{k+1}$ of the subcube of $T_{k+1}$ with all coordinates non-positive, and $V_2$ corresponds similarly to the subcube with all coordinates non-negative. The set $V_3$ corresponds to the set of $v\in T_{k+1}$ having at most $k$ non-zero entries and having last entry either 0 or 1 (but not $-1$). Finally $V_4$ corresponds to the set of $v\in T_{k+1}$ having last entry either 0 or 1.

Note that every $v\in \wt{T}_{k+1}$ has $|v|\leq k+1$. Note also that $V_2\cup V_3\subset V_4$, and that in $V_4$ there may be elements $v$ with $|v|=k+1$, but these elements must then have $v\sbr{k+1}=1$. It is also clear that all sets $V_i$, $i\in [4]$, are simplicial (they are all defined by monotone decreasing properties).

Now since $u$ is assumed to lie in $K_{k+1}$ and therefore $u\neq 0^{k+1}$, we can suppose without loss of generality that $u\sbr{k+1}=1$. Let $w\in V_1$ be the element  such that $w\sbr{i}=0$ and $w\sbr{i+k+1}=u\sbr{i}$ for $i\in [k+1]$. Let us now define some auxiliary $\sigma$-algebras. Here $0$ will stand for the element $0^{2k+2}\in \db{2k+2}$.
\begin{defn}
We define the following $\sigma$-algebras: let $\mc{G}= (\mc{F}_k)^{K_{2k+2}}_{V_1\setminus\{w,0\}}$, and for $i\in [4]$ let $\mc{G}_i=(\mc{F}_k)^{K_{2k+2}}_{V_i\setminus\{0\}}$.
\end{defn}
\begin{proof}[Proof of Proposition \ref{prop:meas-thy-compl}]
Let $\nu$ denote the coupling $\pi_{2k+1}(x)\in \coup(\varOmega, K_{2k+2})$. We begin by noting that to obtain \eqref{eq:measthyglue} it suffices to prove that $(\mc{F}_k)_w^{K_{2k+2}} \subset_\nu \mc{G}$ (this follows from Definition \ref{def:factorprojs} and Corollary \ref{cor:chainofprojs}). To prove this we first show that 
\begin{equation}\label{eq:Gcontainement1}
 (\mc{F}_k)^{K_{2k+2}}_w\subset_\nu \mc{G} \vee \mc{G}_4.
\end{equation}
Consider the injective morphism $\phi: \db{k+1}\to \db{2k+2},\;\;v\mapsto \begin{psmallmatrix} 0 & \cdots & 0 & 1-v\sbr{k+1}\\[0.1em] v\sbr{1} & \cdots & v\sbr{k} & v\sbr{k+1}\end{psmallmatrix}$. Note that $\phi(u)=w$ (since $u\sbr{k+1}=1$). Let $K=\db{k+1}\setminus\{u\}$. 

For every $v\in K$ we have $\phi(v)\in V_1\cup V_4$. Indeed, \emph{either} $\phi(v)$ has coordinates $\phi(v)\sbr{i}=0$ for all $i\in [k+1]$, in which case $\phi(v)\in V_1$, \emph{or} $\phi(v)\sbr{k+1}=1$ and then $\phi(v)\sbr{2k+2}=0$ whence $\phi(v)\in V_4$. Since we also have that $\phi(\db{k+1})\subset K_{2k+2}$, it follows that for every $v\in K$ we have $(\mc{F}_k)^{K_{2k+2}}_{\phi(v)}\subset \mc{G}\vee\mc{G}_4$. To prove \eqref{eq:Gcontainement1} it now suffices to prove the inclusion $(\mc{F}_k)^{K_{2k+2}}_{\phi(u)}\subset_\nu (\mc{F}_k)^{K_{2k+2}}_{\phi(K)}$. By Lemma \ref{lem:subfacecoup} the subcoupling of $\nu$ along $\phi$ is equal to $\mu^{\db{k+1}}$, and the desired inclusion then follows by statement $(iii)$ in Theorem \ref{thm:Fkprops}.

Having proved \eqref{eq:Gcontainement1}, the next step is to prove that 
\begin{equation}\label{eq:Gcontainement2}
\mc{G}_4= \mc{G}_2\vee\mc{G}_3.
\end{equation}
As noted above, we have $V_2\cup V_3\subset V_4$, whence $\mc{G}_4 \supset \mc{G}_2\vee \mc{G}_3$. For each $v\in \db{2k+2}$ let us define $h^*(v)=\sum_{i\in [k+1]}v\sbr{i+k+1}$. We prove by induction on $h^*(v)$ that if $v\in V_4$ then $(\mc{F}_k)^{K_{2k+2}}_v\subset \mc{G}_2\vee\mc{G}_3$. If $h^*(v)=0$ then $v\in V_2$ and the statement is trivial. Suppose then that for $n\in [k+1]$ the statement holds for every $v\in V_4$ with $h^*(v)\leq n-1$, and suppose that $b\in V_4$ satisfies $h^*(b)=n$. If $|b|\leq k$ then $b\in V_3$ and the statement is trivial, so we can assume that $|b|=k+1$. This means that $b\sbr{i}+b\sbr{i+k+1}=1$ for every $i\in [k+1]$. Consider the injective morphism $\psi:\db{k+1}\to V_4$ defined by
\[
\psi(v)\sbr{i}=v\sbr{i}\, b\sbr{i}+(1-v\sbr{i})\,b\sbr{i+k+1}~,~\psi(v)\sbr{i+k+1}=v\sbr{i}\, b\sbr{i+k+1}.
\]
Note that $\psi(1^{k+1})=b$. Moreover, since $n>0$, we have $b\sbr{i+k+1}=1$ for some $i\in [k+1]$, so $\binom{\psi(v)\sbr{i}}{\psi(v)\sbr{i+k+1}}\neq \binom{0}{0}$ for all $v$, whence $\psi(\db{k+1})\subset K_{2k+2}$.

Let $K'=\db{k+1}\setminus\{1^{k+1}\}$. We claim that for every $v\in K'$, either $h^*(\psi(v))<n$ or $\psi(v)\in V_3$. Indeed, if $\psi(v)\notin V_3$ then since $\psi(v)\in V_4$ we must have $|\psi(v)|=k+1=|b|$; moreover since $\psi$ is injective we have $\binom{\psi(v)\sbr{j}}{\psi(v)\sbr{j+k+1}}\neq \binom{b\sbr{j}}{b\sbr{j+k+1}}$ for some $j$, and since by definition of $\psi$ we have $\psi(v)\sbr{j+k+1}\leq b\sbr{j+k+1}$, the only possibility is $\binom{\psi(v)\sbr{j}}{\psi(v)\sbr{j+k+1}}=\binom{1}{0}$, $\binom{b\sbr{j}}{b\sbr{j+k+1}}=\binom{0}{1}$, whence $h^*(\psi(v))<h^*(b)=n$, as claimed. Now, by this claim and the induction hypothesis, for every $v\in K'$ we have $(\mc{F}_k)^{K_{2k+2}}_{\psi(v)}\subset \mc{G}_2\vee\mc{G}_3$. By Lemma  \ref{lem:subfacecoup} again we have $\nu_\psi=\mu^{\db{k+1}}$, so by property $(iii)$ of Theorem \ref{thm:Fkprops} and the fact that $b=\psi(1^{k+1})$ we have $(\mc{F}_k)_b^{K_{2k+2}}\subset (\mc{F}_k)_{\psi(K')}^{K_{2k+2}}$. Since $(\mc{F}_k)_{\psi(K')}^{K_{2k+2}}\subset \mc{G}_2\vee\mc{G}_3$, we deduce \eqref{eq:Gcontainement2}.

By \eqref{eq:Gcontainement1} and \eqref{eq:Gcontainement2} we have $(\mc{F}_k)^{K_{2k+2}}_w\subset\mc{G}\vee\mc{G}_2\vee\mc{G}_3$. By Lemma \ref{lem:claim1}, we can omit $\mc{G}_3$, so $(\mc{F}_k)^{K_{2k+2}}_w\subset\mc{G}\vee\mc{G}_2$. Now $\mc{G}_1=(\mc{F}_k)_w^{K_{2k+2}}\vee \mc{G}$, and $(\mc{F}_k)_w^{K_{2k+2}}\subset (\mc{G}_2\vee\mc{G})\wedge \mc{G}_1$. Moreover, since $V_1~\bot_{\mu^{\db{2k+2}}}~ V_2$ (by Theorem \ref{thm:sip}), it follows from combining Lemma \ref{lem:fibrebotindep} with Lemma \ref{lem:indepclosed} and  Lemma \ref{lem:closedall} that $V_1\setminus\{0\}~ \bot_\nu ~ V_2\setminus\{0\}$. In particular $\mc{G}_1$, $\mc{G}_2$ are independent (since $V_1\setminus\{0\}$, $V_2\setminus\{0\}$ are disjoint). By Lemma \ref{lem:modlaw} we have $(\mc{G}_2\vee \mc{G})\wedge \mc{G}_1=(\mc{G}_2\wedge\mc{G}_1)\vee\mc{G}$. The independence of $\mc{G}_1$, $\mc{G}_2$ implies that $\mc{G}_2\wedge\mc{G}_1$ is the trivial $\sigma$-algebra (up to null sets, as usual). We thus obtain $(\mc{F}_k)_w^{K_{2k+2}}\subset_\nu (\mc{G}_2\vee\mc{G})\wedge \mc{G}_1=(\mc{G}_2\wedge\mc{G}_1)\vee\mc{G}=\mc{G}$, as desired.
\end{proof}

\subsection{Convolution neighbourhoods}\hfill \medskip\\
In the proof of the corner-completion axiom in Subsection \ref{subsec:corcomp}, a crucial role will be played by certain special open sets in $\ns_k$. Let $F=\{f_v:\Omega\to \{0,1\}\}_{v\in K_{k+1}}$ be a system of measurable indicator functions. Recall that the function $\xi(\cdot,F):\coup(\varOmega,K_{k+1})\to [0,1]$ is continuous (by Definitions \ref{def:multilin} and \ref{def:couptop}). In particular, the set $U=\ns_k\cap\supp\big(\xi(\cdot,F)\big)$ is open in $\ns_k$, and since $[F]_{U^{k+1}}^*=\xi(\cdot,F)\co \pi_k$ we have $\supp([F]_{U^{k+1}}^*)=\pi_k^{-1}(U)$.
\begin{defn}
We say that an open set $U\subset \ns_k$ is a \emph{convolutional set} if there is a system $F$ of indicator functions of measurable subsets of $\Omega$ such that $U=\ns_k\cap\supp\big(\xi(\cdot,F)\big)=\pi_k\big(\supp([F]_{U^{k+1}}^*)\big)$. We then say that $F$ is a system \emph{generating} $U$. Given a point $x\in \ns_k$, we call a convolutional set $U$ containing $x$ a \emph{convolution neighbourhood} of $x$.
\end{defn}
\noindent Note that if $F$ is a system generating $U$ then $x\in U$ if and only if $\xi(x,F)>0$. 

The main result in this subsection, Proposition \ref{prop:opcontconv}, tells us that convolution neighbourhoods form a basis for the topology on $\ns_k$. This fact will be crucial for the proof of the corner-completion axiom (specifically, in the proof of Lemma \ref{lem:nkcorncomp}). To obtain this fact we first prove the following result.
\begin{proposition}\label{prop:uclos}
Let $\varrho:K_{k+1}\to \ns_k$ be some function. Then there is at most one element $z\in \ns_k$ with the following property: for every system of open sets $U(v)\ni\varrho(v)$ in $\ns_k$,  $v\in K_{k+1}$, we have $z \in \pi_k\big(\overline{\supp([F]_{U^{k+1}}^*)}\big)$, where $F=(1_{U(v)}\co \gamma_k)_{v\in K_{k+1}}$.
\end{proposition}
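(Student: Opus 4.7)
The plan is to first unpack the stated property into a cleaner topological form. Since $\pi_k:\ns\to\ns_k$ is a closed continuous map between compact Hausdorff spaces, one verifies that $\pi_k\big(\overline{\supp([F]_{U^{k+1}}^*)}\big)=\overline{U_F}$, where $U_F=\ns_k\cap\{w:\xi(w,F)>0\}$ is the convolution neighborhood generated by $F=(1_{U(v)}\co\gamma_k)_v$. Thus the property is equivalent to $z\in\overline{U_F}$ for every system of open neighborhoods $U(v)\ni\varrho(v)$; equivalently, every open neighborhood of $z$ in $\ns_k$ meets $U_F$, which is to say that some $w$ arbitrarily close to $z$ satisfies $\mu_w(\prod_v U(v))>0$, where $\mu_w:=w\co(\gamma_k^{K_{k+1}})^{-1}$.

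Now suppose $z_1,z_2\in\ns_k$ both satisfy the property. Since $\ns_k$ is metrizable as a subspace of the compact Polish space $\coup(\varOmega,K_{k+1})$ (Proposition \ref{prop:coupspace}), fix a decreasing countable base $(U_n(v))_n$ of open neighborhoods of $\varrho(v)$ with $\bigcap_nU_n(v)=\{\varrho(v)\}$. By the property applied to each $z_i$, choose $w_{i,n}\in U_{F_n}$ with $w_{i,n}\to z_i$ as $n\to\infty$. Each $w_{i,n}\in\ns_k$ is a corner coupling that gives positive mass $\mu_{w_{i,n}}(\prod_v U_n(v))>0$. Form the conditional coupling $\tilde w_{i,n}\in\coup(\varOmega,K_{k+1})$ obtained by conditioning $w_{i,n}$ on the event $E_n=\{\omega\in\Omega^{K_{k+1}}:\gamma_k(\omega_v)\in U_n(v)\;\forall v\}$ (cf.\ Definition \ref{def:condcoup}), and pass to a convergent subsequence using the compactness from Proposition \ref{prop:coupspace} to obtain limit couplings $\tilde w_i:=\lim_n\tilde w_{i,n}$. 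The crucial role of Proposition \ref{prop:meas-thy-compl} (complete dependence of the $\mc{F}_k$-factor of each corner coupling) is to force $\tilde w_1=\tilde w_2$: as $n\to\infty$, the coordinates of $\tilde w_{i,n}$ become concentrated at $\varrho(v)$ for each $v\in K_{k+1}$, so the $\mc{F}_k$-restriction of $\tilde w_i$ is essentially fixed at $\varrho$, which by complete dependence determines the coupling entirely.

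The main obstacle will be making the last step rigorous, namely relating the conditioned limits $\tilde w_i$ back to the unconditioned $z_i$. For this I would rely on the closed-subset lemmas of \S\ref{sec:prelim} (especially Lemmas \ref{lem:indepclosed}--\ref{lem:relindclosed}) to verify that the relevant relations survive taking limits in $\coup(\varOmega,K_{k+1})$, and on Lemma \ref{lem:fibmeaspres} to exploit the disintegration structure and reduce $z_i$ to a limit of localized fibre measures. A cleaner alternative that sidesteps the conditioning construction is to show directly that $\xi(z_1,G)=\xi(z_2,G)$ for every system $G=(g_v\co\gamma_k)_v$ with continuous $g_v:\ns_k\to\mb{C}$: by uniform continuity one approximates each $g_v$ by a simple function supported on small open neighborhoods, and leverages the property together with complete dependence to argue that the contribution from neighborhoods meeting $\varrho$ is forced by $\varrho$ alone, while contributions from the remaining pieces cancel between $z_1$ and $z_2$ via a multilinear-approximation argument analogous to that of Lemma \ref{lem:vetites1}.
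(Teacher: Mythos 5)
Your opening reformulation is fine: since $\pi_k$ is a continuous closed surjection, the hypothesis on $z$ is indeed equivalent to $z\in\overline{U_F}$ for every neighbourhood system, i.e.\ to the map $\db{k+1}\to\ns_k$ extending $\varrho$ by $z$ at $0^{k+1}$ lying in $\Supp\big(\mu^{\db{k+1}}\co(\gamma_k^{\db{k+1}})^{-1}\big)$. But the core of your argument has a genuine gap at the step ``the coordinates of $\tilde w_{i,n}$ become concentrated at $\varrho(v)$ \ldots which by complete dependence determines the coupling entirely.'' Two problems. First, you condition $w_{i,n}$ on an event $E_n$ living on the \emph{same} index set $K_{k+1}$, so in the limit all you learn is that the $\gamma_k$-pushforward of each one-dimensional marginal of $\tilde w_i$ is $\delta_{\varrho(v)}$; this is information about the marginals only, and a coupling is never determined by its marginals. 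Second, complete dependence (Proposition \ref{prop:meas-thy-compl}) is the statement $\mc{A}^{K_{k+1}}_v\subset_\nu\mc{A}^{K_{k+1}}_{K_{k+1}\setminus\{v\}}$ \emph{within a given joint law} $\nu$; it is not a device for reconstructing the joint law from marginal data, so the inference is a non sequitur. (There is also the smaller issue that conditioning on $E_n$ without deleting coordinates destroys the coupling property, so $\tilde w_{i,n}\notin\coup(\varOmega,K_{k+1})$ as written; Definition \ref{def:condcoup} conditions on a \emph{local} subset of the index set and removes it.) Your fallback ``cleaner alternative'' has the same defect: the hypothesis only yields positivity of $\xi(w,F)$ for indicator systems near $\varrho$, which gives no quantitative handle on $\xi(z_i,G)$ and no cancellation mechanism between $z_1$ and $z_2$.

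What the missing idea is: one must place $z$ and all the couplings $\varrho(v)$ \emph{simultaneously as overlapping subcouplings of a single larger coupling}, and the overlaps are what complete dependence can act on. The paper does this with the punctured tricube: it takes the $(2k+2)$-dimensional corner coupling $\pi_{2k+1}(x_i)$ rooted at $t=q_{k+1}(-1^{k+1})$, conditions on a cylinder set over the \emph{extra} coordinates $V\setminus\{t\}$ (forcing those coordinates into the neighbourhoods $U_i(v)$), and restricts to $\mc{F}_k$ to obtain $\Upsilon_i$ on the index set $\db{2k+2}\setminus V\supset\wt T_{k+1}\setminus V$; a subsequential limit $\Upsilon$ then contains $_{\mc{F}_k|}\varrho(w)$ along each $\phi_w|_{K_{k+1}}$ and $_{\mc{F}_k|}\pi_k(x)$ along $\phi_{0^{k+1}}|_{K_{k+1}}$, together with relative independence of each $\Upsilon^j$ over $\mc{F}_{j-1}$ (Lemma \ref{lem:Ups2props}). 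Uniqueness of such a coupling (Lemma \ref{lem:Upsunique}) is then proved by a double induction that uses complete dependence at every level $j\le k+1$ to show $(\mc{F}_{j-1})_v\subset_\theta(\mc{F}_{j-1})_{\phi_w(R)\setminus\{v\}}$ and Lemma \ref{lem:coupdeterm} to propagate determinacy vertex by vertex through $D_{j,n}$. Since $z=\pi_k(x)$ is determined by its $\mc{F}_k$-factor (via Lemma \ref{lem:simpzero} and Lemma \ref{lem:topcor2}), and that factor is a subcoupling of the unique $\Upsilon$, uniqueness of $z$ follows. Without the extra tricube coordinates there is no joint object in which the relations between $z$ and the $\varrho(v)$ can be exploited, which is why your single-index-set construction cannot close the argument.
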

\noindent Here $\overline{\supp([F]_{U^{k+1}}^*)}$ denotes the closure of $\supp([F]_{U^{k+1}}^*)$. Proposition \ref{prop:uclos} has the following important consequence.
\begin{corollary}\label{cor:uniquecomp}
Let $\q'$ be a $(k+1)$-corner on $\ns_k$. Then there is at most one $(k+1)$-cube on $\ns_k$ completing $\q'$.
\end{corollary}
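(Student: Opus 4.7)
My plan is to derive the uniqueness from Proposition~\ref{prop:uclos} by showing that, for any cube $\q$ on $\ns_k$ extending the corner $\q'$, the vertex value $z := \q(0^{k+1})$ is a witness for the property that characterizes the (at most one) point associated with $\varrho := \q'$. If this succeeds, then for any two completions $\q_1,\q_2$ of $\q'$, both $\q_1(0^{k+1})$ and $\q_2(0^{k+1})$ satisfy this property, so the uniqueness clause of Proposition~\ref{prop:uclos} forces them to coincide, which is exactly the claim.

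To carry this out, I would fix an arbitrary system of open neighborhoods $U(v)\ni \q'(v)$ in $\ns_k$ for $v\in K_{k+1}$, set $F=(1_{U(v)}\co \gamma_k)_{v\in K_{k+1}}$, and aim to show $\q(0^{k+1})\in \pi_k\bigl(\overline{\supp([F]_{U^{k+1}}^*)}\bigr)$. Using Lemma~\ref{lem:Unstarconv} we have $[F]_{U^{k+1}}^*=\xi(\cdot,F)\co \pi_k$, so $\pi_k(\supp([F]_{U^{k+1}}^*))$ equals the open set $W:=\{z\in \ns_k:\xi(z,F)>0\}$. Since $\pi_k$ is a continuous map between compact Hausdorff spaces it is closed, hence $\pi_k(\overline{\supp([F]_{U^{k+1}}^*)})=\overline{W}$. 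Therefore the task reduces to showing that every open neighborhood $V\subseteq \ns_k$ of $\q(0^{k+1})$ meets $W$.

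To produce such a point in $V\cap W$, I would unpack the integral $\int_V \xi(z,F)\,\ud(\lambda\co\gamma_k^{-1})(z)$. Using that $\gamma_k$ is a $\{0^{k+1}\}$-localization of $\mu^{\db{k+1}}$ (Lemma~\ref{lem:localize}), this integral rewrites as
\[
\int_{\gamma_k^{-1}(V)} \gamma_k(\omega)\!\left(\textstyle\prod_{v\in K_{k+1}} \gamma_k^{-1}(U(v))\right)\ud\lambda(\omega) \;=\; \mu^{\db{k+1}}\!\left(\gamma_k^{-1}(V)\times \textstyle\prod_{v\in K_{k+1}} \gamma_k^{-1}(U(v))\right),
\]
which is the measure $\mu^{\db{k+1}}\co (\gamma_k^{\db{k+1}})^{-1}$ of the open neighborhood $V\times \prod_{v\in K_{k+1}} U(v)$ of $\q$ in $\ns_k^{\db{k+1}}$. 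Since $\q\in \cu^{k+1}(\ns_k)=\Supp\bigl(\mu^{\db{k+1}}\co (\gamma_k^{\db{k+1}})^{-1}\bigr)$, this measure is strictly positive; so $\xi(\cdot,F)$ must be strictly positive on a set of positive measure inside $V$, giving an honest point of $V\cap W$. This establishes that $\q(0^{k+1})\in\overline{W}$, completing the witness construction, and Proposition~\ref{prop:uclos} then yields the uniqueness.

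The conceptual step I expect to be the main obstacle is not a technical one inside this corollary (the above is essentially a bookkeeping chain once the right neighborhoods are chosen), but rather checking that every invocation is legitimate in the inductive setting of Section~\ref{sec:structhm}: at the stage where Corollary~\ref{cor:uniquecomp} is applied we are in the middle of proving the nilspace axioms for $\ns_k$, so I should rely only on the general definitions (coupling, localization, support, continuous $U^{k+1}$-convolutions) and on Proposition~\ref{prop:uclos} itself, and avoid any premature appeal to Theorem~\ref{thm:rmp} or to $\ns_k$ already being a nilspace.
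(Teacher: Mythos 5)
Your proposal is correct and follows essentially the same route as the paper: both reduce the corollary to Proposition \ref{prop:uclos} by showing that the root $\q(0^{k+1})$ of any completion $\q$ lies in $\pi_k\big(\overline{\supp([F]_{U^{k+1}}^*)}\big)$, and both verify this by identifying $\int_V \xi(\cdot,F)\ud(\lambda\co\gamma_k^{-1})$ with the $\mu^{\db{k+1}}\co(\gamma_k^{\db{k+1}})^{-1}$-measure of the open product neighbourhood of $\q$, which is positive since $\q\in\cu^{k+1}(\ns_k)=\Supp\big(\mu^{\db{k+1}}\co(\gamma_k^{\db{k+1}})^{-1}\big)$. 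The only cosmetic differences are that the paper phrases the positivity step as a contradiction and records explicitly the harmless normalization (via a cube automorphism) that places the missing vertex of the corner at $0^{k+1}$, which you use implicitly.
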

\begin{proof}
Without loss of generality (using a discrete-cube automorphism sending $1^{k+1}$ to $0^{k+1}$) we can suppose that $\q'$ is a map $K_{k+1}\to \ns_k$. Let $x\in \ns_k$ be an element yielding a completion $\q$ of $\q'$, that is, such that the map $\q:\db{k+1}\to\ns_k$ with $\q|_{K_{k+1}}=\q'$ and $\q(0^{k+1})=x$ is in $\cu^{k+1}(\ns_k)$. By Proposition \ref{prop:uclos}, it suffices to prove that for every system $(U\sbr{v})_{v\in K_{k+1}}$ of open sets $U\sbr{v}\ni \q'(v)$, for $F=(1_{U\sbr{v}}\co\gamma_k)_{v\in K_{k+1}}$ we have $x\subset \pi_k\big(\overline{\supp([F]_{U^{k+1}}^*)}\big)$. Let $V$ be the complement of $\pi_k\big(\overline{\supp([F]_{U^{k+1}}^*)}\big)$ (the latter set is closed by the continuity of $\pi_k$ and the closed map lemma, so $V$ is open). Suppose for a contradiction that $x\in V$. We have $\int_{\ns}[F]_{U^{k+1}}^* \, (1_V\co \pi_k) \,\ud(\lambda\co\gamma^{-1})=0$ by definition of $V$. On the other hand, this integral equals the $\mu^{\db{k+1}}\co (\gamma_k^{\db{k+1}})^{-1}$-measure of the product set $\prod_{v\in \db{k+1}} U\sbr{v}$ where $U\sbr{0^{k+1}}=V$. This product is an open set containing $\q\in \Supp\big(\mu^{\db{k+1}}\co (\gamma_k^{\db{k+1}})^{-1}\big)$, so its measure must be positive, a contradiction.
\end{proof}

\noindent To prove Proposition \ref{prop:uclos}, suppose that there exists such an element $z=\pi_k(x)\in \ns_k$ and consider the factor-coupling of $\pi_k(x)$ corresponding to $\mc{F}_k$. Then it suffices to show that this coupling is uniquely determined by the factor couplings $_{\mc{F}_k|}\varrho(v)$, $v\in K_{k+1}$. Indeed, note that every $(k+1)$-corner coupling $\nu$ (in particular the element $\pi_k(x)$) is uniquely determined by its factor $_{\mc{F}_k|}\nu$. This follows from the fact that for every system $F=(f_v)_{v\in K_{k+1}}$ of bounded measurable functions we have $\xi(F,\nu)=\xi\big((\mb{E}(f_v|\mc{F}_k))_{v\in K_{k+1}},\nu\big)$, as can be shown by applying Lemma \ref{lem:simpzero} and Lemma \ref{lem:topcor2}.

To prove that $_{\mc{F}_k|}\pi_k(x)$ is uniquely determined by the couplings $_{\mc{F}_k|}\varrho(v)$, $v\in K_{k+1}$, we shall use a tricube structure to  construct a large coupling $\Upsilon$ in which all the couplings $\varrho(v)$ are included as subcouplings in a useful interrelated manner.

\medskip

\noindent{\bf Construction of the coupling $\Upsilon$.}\\
For every $v\in K_{k+1}$ let us choose a decreasing sequence $(U_i(v))_{i\in\mb{N}}$ of open neighbourhoods of $\varrho(v)$ which forms a neighbourhood basis in $\ns_k$. Let $F_i=(1_{U_i(v)}\co \gamma_k)_{v\in K_{k+1}}$ and let $(x_i)_{i\in \mb{N}}$ be a sequence in $\ns$ such that $\lim_{i\to\infty}\pi_k(x_i)=z$ and $x_i\in \supp([F_i]_{U^{k+1}}^*)\subset \ns$. (This sequence exists by the assumptions on $z$.) Again we work with the embedded tricube $\wt{T}_{k+1}=q_{k+1}(T_{k+1})$ in $\db{2k+2}$. Let $V=\{v\in \wt{T}_{k+1}: v\sbr{i} = 0~{\rm for}~i\in [k+1]\}$ be the set $V_1$ from the previous subsection, i.e.\ the subset corresponding to $\{-1,0\}^{k+1}\subset T_{k+1}$. Let $t\in \db{2k+2}$ be the element with $t\sbr{i}=0$, $t\sbr{i+k+1}=1$ for $i\in [k+1]$, i.e.\ $t=q_{k+1}(-1^{k+1})$. Let $\tau:\db{k+1}\to \wt{T}_{k+1}$ be the injective morphism with image $V$ defined by $\tau(v)\sbr{i}=0$ and $\tau(v)\sbr{i+k+1}=1-v\sbr{i}$ for each $i\in [k+1]$. In particular $\tau(0^{k+1})=t$ and $\tau(1^{k+1})=0^{2k+2}$.

We now consider the corner coupling $\pi_{2k+1}(x_i)\in \coup(\varOmega, K_{2k+2})$, and for convenience we take it to be rooted at the vertex $t\in \db{2k+2}$ defined above, rather than at the usual vertex $0^{2k+2}$ (these two versions of the coupling are isomorphic, by the symmetry of $\mu^{\db{2k+2}}$ given by the consistency axiom). Let $M_i=\{\omega\in\Omega^{\db{2k+2}\setminus\{t\}}\,:\,\gamma_k(p_{\tau(v)}(\omega))\in U_i(v), \,\forall v\in K_{k+1}\}$. Observe that the measure of $M_i$ in the coupling $\pi_{2k+1}(x_i)$ is equal to $[F_i]_{U^{k+1}}^*(x_i)$, which is positive by our assumption on $x_i$. By Corollary \ref{cor:facelocality}, we have that $V$ is local in $\mu^{\db{2k+2}}$. By Lemma \ref{lem:embloc} applied to $\{t\}$ and $V$, we have that the set $V\setminus\{t\}$ is local in $\pi_{2k+1}(x_i)$.  Let $\Upsilon_i'$ be the coupling $\pi_{2k+1}(x_i)$ conditioned with respect to $M_i$, as per Definition \ref{def:condcoup}. As explained in that definition, we have that $\Upsilon_i'\in\coup(\varOmega,\db{2k+2}\setminus V)$. Let $\Upsilon_i$ be the factor coupling of $\Upsilon_i'$ corresponding to $\mc{F}_k\subset\mc{A}$. We can now define the coupling $\Upsilon$.
\begin{defn}
We define $\Upsilon$ as the limit of some convergent subsequence of $(\Upsilon_i)_{i\in \mb{N}}$ in the compact space $\coup\big((\Omega,\mc{F}_k,\lambda), \db{2k+2}\setminus V\big)$.
\end{defn}
\noindent Now that we have the coupling $\Upsilon$, the next step is to prove that it satisfies the properties that we announced, namely that it includes the corner couplings associated with $\varrho$ in a suitable interdependent way. To do so, for each $w\in \db{k+1}$ we shall use a discrete-cube morphism $\phi_w:\db{k+1}\to \db{2k+2}$ that sends $0^{k+1}$ to some point of $V$ and all other points of $\db{k+1}$ to $\wt{T}_{k+1}\setminus V$.

\begin{defn}\label{def:phiw}
For each $w\in\db{k+1}$ let $\phi_w:\db{k+1}\to \wt{T}_{k+1}$ be the injective map defined by $\phi_w(v)\sbr{i}=v\sbr{i}$ and $\phi_w(v)\sbr{i+k+1}=(1-w\sbr{i})(1-v\sbr{i})$, for $i\in [k+1]$.
\end{defn}
\noindent Thus for every $w$ the map $\phi_w$ is a cube morphism $\db{k+1}\to \db{2k+2}$. Note that the point $u:=\phi_w(0^{k+1})$ is equal to $q_{k+1}(w-1^{k+1})=\begin{psmallmatrix} 0 & \cdots & 0\\[0.1em] 1-w\sbr{1} & \cdots & 1-w\sbr{k+1} \end{psmallmatrix}$, in particular $u$ is indeed in $V=q_{k+1}(\{-1,0\}^{k+1})$. We also have for every $w$ that $\phi_w(1^{k+1})=\begin{psmallmatrix} 1 & \cdots & 1\\[0.1em] 0 & \cdots & 0 \end{psmallmatrix}$, which corresponds to $1^{k+1}\in T_{k+1}$ and is not in $V$. More generally, we have $\phi_w(v)\not\in V$ for all $v\neq 0^{k+1}$, since for any such $v$ there is $i\in [k+1]$ with $\phi_w(v)\sbr{i}=v\sbr{i}=1$, whereas for $\phi_w(v)$ to be in $V$ requires $\phi_w(v)\sbr{i}=0$ for all $i\in [k+1]$.

In what follows we shall argue by induction on the height $|v|$. To that end, for $j\in [k+1]$ we denote by $B_j$ the set $\db{2k+2}_{\leq j} := \{v\in \db{2k+2}: |v|\leq j\}$, and we define
\begin{equation}\label{eq:dj}
D_j=B_j\cap \wt{T}_{k+1}.
\end{equation}
One can think of $D_j$ in $\mb{R}^{k+1}$ as the intersection of the tricube $T_{k+1}=\{-1,0,1\}^{k+1}$ with the closed $\ell^1$-ball of radius $j$ centered at the origin.

Recall from Definition \ref{def:indoverfact} the notion of a relatively independent coupling.
\begin{lemma}\label{lem:Ups2props}
The coupling $\Upsilon$ has the following properties.
\begin{enumerate}[leftmargin=0.7cm]
\item The subcoupling of $\Upsilon$ along the bijection $\phi_w|_{K_{k+1}}:K_{k+1}\to \wt{T}_{k+1}\setminus V$ is equal to $_{\mc{F}_k|}\varrho(w)$ for every $w\in K_{k+1}$, and is equal to $_{\mc{F}_k|}\pi_k(x)$ for $w=0^{k+1}$.
\item Let $\Upsilon^j$ denote the subcoupling of $\Upsilon$ along the set  $B_j\setminus V\subset \db{2k+2}\setminus V$. Then $\Upsilon^j$ is relatively independent over its factor corresponding to $\mc{F}_{j-1}$, for all $j\in [k+1]$.
\end{enumerate}
\end{lemma}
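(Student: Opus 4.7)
Both parts are established by expressing $\Upsilon_i$ in terms of $\pi_{2k+1}(x_i)$ conditioned on $M_i$, projecting to the $\mc{F}_k$-factor, and passing to the limit $i\to\infty$. The main tools will be the conditional independence of simplicial subsets (Theorem~\ref{thm:sip}), the vanishing of $U^n$-products containing a zero-seminorm function (Corollary~\ref{cor:simpzero}), the consistency axiom applied to injective cube morphisms between $\db{k+1}$ and $\wt{T}_{k+1}$, and the tricube symmetry of Lemma~\ref{lem:trisym}.

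Part (ii) admits a direct proof via Corollary~\ref{cor:simpzero}. Given a system $(f_v)_{v\in B_j\setminus V}$ of bounded $\mc{F}_k$-measurable functions with $\|f_{v^*}\|_{U^j}=0$ for some $v^*$, expanding $\xi(\Upsilon_i^j,(f_v))$ yields the numerator
\[
\int \prod_{v\in B_j\setminus V} f_v\co p_v \prod_{v'\in K_{k+1}} 1_{U_i(v')}\co\gamma_k\co p_{\tau(v')}\,d\pi_{2k+1}(x_i),
\]
divided by $\pi_{2k+1}(x_i)(M_i)$. The set $S=B_j\cup V\subseteq\db{2k+2}$ is simplicial, and any element of $S$ dominating some $u\in B_j\setminus V$ must itself lie in $B_j\setminus V$ (it cannot belong to $V$), hence has height at most $j$; so $d(v^*)\leq j$ in $S$. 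By monotonicity of $\|\cdot\|_{U^n}$ we get $\|f_{v^*}\|_{U^{d(v^*)}}=0$. Corollary~\ref{cor:simpzero} with root $r=t$ and simplicial set $S$, combined with Lemma~\ref{lem:topcor2} applied after the re-rooting at $t$ furnished by the symmetries of $\mu^{\db{2k+2}}$, then forces the numerator to vanish identically in $i$; hence $\xi(\Upsilon^j,(f_v))=0$ in the limit, which is the desired relative independence.

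For part (i), first observe that each $\phi_w:\db{k+1}\to\wt{T}_{k+1}\subseteq\db{2k+2}$ is an injective cube morphism with distinguished vertex $\phi_w(0^{k+1})\in V$: this vertex equals $\tau(w)$ for $w\in K_{k+1}$ and equals $t$ for $w=0^{k+1}$. In both cases the consistency axiom gives $\mu^{\db{2k+2}}_{\phi_w}=\mu^{\db{k+1}}$. The strategy is then to decouple the integral defining $\xi((\Upsilon_i)_{\phi_w|_{K_{k+1}}},(f_v))$ into two contributions: (a) the conditioning at the distinguished vertex $\phi_w(0^{k+1})$, which as $i\to\infty$ forces $\gamma_k$ there to converge to $\varrho(w)$ (respectively to $\pi_k(x)$, using the assumption $\pi_k(x_i)\to z=\pi_k(x)$), and (b) the conditioning at the remaining vertices $\tau(v')$ with $v'\neq w$. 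Once (a) has been isolated, the identification $\mu^{\db{2k+2}}_{\phi_w}=\mu^{\db{k+1}}$ together with the definition of corner coupling as a $\gamma_k$-disintegration yields, in the limit, exactly the $\mc{F}_k$-factor of $\varrho(w)$ (or of $\pi_k(x)$).

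The main technical obstacle will be showing that the conditioning in (b) does not affect the subcoupling on $\phi_w(K_{k+1})$ at the $\mc{F}_k$-level, once one has disintegrated at $\phi_w(0^{k+1})$. The plan is to combine Theorem~\ref{thm:sip} applied to simplicial subsets of $\wt{T}_{k+1}$ with Lemma~\ref{lem:fibrebotindep} in order to carry conditional independence through the disintegration at the distinguished vertex, and to use the tricube symmetry of Lemma~\ref{lem:trisym} to translate questions about the non-simplicial image sets $\phi_w(\db{k+1})$ (or the outer-point diagonal when $w=0^{k+1}$) into equivalent statements about the simplicial face $V$, where Theorem~\ref{thm:sip} applies directly. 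The induction hypothesis (giving Theorem~\ref{thm:rmp} at levels strictly below $k$) will enter to handle interactions with the lower Fourier $\sigma$-algebras $\mc{F}_0,\ldots,\mc{F}_{k-1}$.
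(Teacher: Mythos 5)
Your part (ii) is correct and is essentially the paper's argument: you expand $\xi(\Upsilon_i^j,F)$ as a normalized integral against $\pi_{2k+1}(x_i)$, observe that $S=V\cup B_j$ is simplicial with $d(v^*)\leq j$ (your justification that any element of $S$ dominating $v^*\in B_j\setminus V$ stays in $B_j\setminus V$ is right), and kill the numerator for every $i$ via Corollary \ref{cor:simpzero} rooted at $t$ together with Lemma \ref{lem:topcor2}; Lemma \ref{lem:relindclosed} then carries the relative independence to the limit.

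Part (i), however, is a plan rather than a proof, and the step you defer as ``the main technical obstacle'' is exactly where all the work of the lemma lies. Concretely, after writing $\xi((\Upsilon_i)_{\phi_w|_{K_{k+1}}},F)$ via Definition \ref{def:condcoup}, you must show that
\[
g:=\mb{E}\Big(\textstyle\prod_{v\in V\setminus\{t,u\}} 1_{U_i(v)}\co\gamma_k\co p_v\,\Big|\,\mc{A}^{\db{2k+2}}_{\phi_w(\db{k+1})}\Big),\qquad u=\phi_w(0^{k+1})=\tau(w),
\]
is $\mc{A}_u$-measurable. This requires actually establishing $V\setminus\{t\}~\bot~\phi_w(\db{k+1})$ in the corner coupling $\pi_{2k+1}(x_i)$: one first gets $V~\bot~\phi_w(\db{k+1})$ in $\mu^{\db{2k+2}}_{\wt{T}_{k+1}}$ by using the $S_3^{k+1}$ symmetry (Lemma \ref{lem:trisym}) to replace $\phi_w(\db{k+1})$ by a genuine face and then invoking Theorem \ref{thm:sip} and Remark \ref{rem:botinsubcoup}, and then descends to the fibre over $t$ via Lemma \ref{lem:fibrebotindep} combined with Lemmas \ref{lem:botclosed} and \ref{lem:closedall}. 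Once $g=g_u'\co p_u$, the integral becomes an integral over the subcoupling indexed by $\phi_w(\db{k+1})$, which Lemma \ref{lem:subfacecoup} (again via the tricube symmetry) identifies with $\mu^{\db{k+1}}$; disintegrating at $u$ then exhibits $(\Upsilon_i)_{\phi_w|_{K_{k+1}}}$ as a convex combination of corner couplings weighted by a density supported in $\gamma_k^{-1}(U_i(w))$, hence all $\epsilon$-close to $_{\mc{F}_k|}\varrho(w)$, and letting $i\to\infty$ finishes the case $w\in K_{k+1}$ (for $w=0^{k+1}$ one instead gets genuine independence of $\mc{A}_{\phi_{0^{k+1}}(K_{k+1})}$ and $\mc{A}_{V\setminus\{t\}}$, so the conditioning is invisible and the subcoupling is $_{\mc{F}_k|}\pi_k(x_i)\to{}_{\mc{F}_k|}\pi_k(x)$). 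None of this uses Theorem \ref{thm:rmp} at lower levels or the lower Fourier $\sigma$-algebras, so your final sentence invokes machinery that is not needed here; the missing content is the conditional-independence computation just described, not an induction on $k$.
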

\begin{proof}
Throughout this proof we denote by $\nu_i$ the coupling $\pi_{2k+1}(x_i)$ (rooted at $t$ as mentioned above). To prove property $(i)$ we will show that for every $\epsilon>0$, for all $i$ sufficiently large the coupling $\Upsilon_i$ satisfies the property up to $\epsilon$. More precisely, we  first metrize the Polish space $\coup\big((\Omega,\mc{F}_k,\lambda), \db{2k+2}\setminus V\big)$ so that balls in the metric are convex (see Proposition \ref{prop:coupspaceApp}), and then we use this to show that for every $\epsilon>0$, if $i$ is sufficiently large then for every $w\in K_{k+1}$ the subcoupling of $\Upsilon_i$ indexed by $\phi_w(K_{k+1})$ is $\epsilon$-close in this metric to $_{\mc{F}_k|}\varrho(w)$, and is equal to $_{\mc{F}_k|}\pi_k(x)$ for $w=0^{k+1}$. 

To prove this in the case $w\in K_{k+1}$, it suffices to prove that the subcoupling of $\Upsilon_i$ indexed by $\phi_w(K_{k+1})$ is a convex combination of couplings all of which are $\epsilon$-close to $_{\mc{F}_k|}\varrho(w)$, as this implies that $\Upsilon_i$ also has this property (since the $\epsilon$-ball centered on $_{\mc{F}_k|}\varrho(w)$ is convex in the chosen metric). We prove this by considering an integral of an arbitrary rank-1 function for this subcoupling of $\Upsilon_i$. Since these integrals characterize uniquely this subcoupling, considering just these integrals will suffice. So, for each $v\in \phi_w(K_{k+1})$, let $f_v$ be a function in $L^\infty(\mc{F}_k)$. Recall from the definition of $\Upsilon_i$ that we take $M_i\subset \Omega^{\db{2k+2}\setminus\{t\}}$ to be the cylinder-set $\bigcap_{v\in K_{k+1}}p_{\tau(v)}^{-1}\gamma_k^{-1}(U_i(v))$, and that  $\nu_i(M_i)>0$. The integral of the rank-1 function that we have just fixed is then
\begin{equation}\label{eq:upsint}
\int_{\Omega^{\phi_w(K_{k+1})}} \prod_{v\in \phi_w(K_{k+1})} f_v\co p_v\; \ud\Upsilon_i.
\end{equation}
By Definition \ref{def:condcoup}, this is (abusing the notation in $U_i(v)$ by identifying $V$ with $\db{k+1}$)
\[
 \tfrac{1}{\nu_i(M_i)} \int_{\Omega^{\db{2k+2}\setminus\{t\}}} 1_{U_i(w)}\co\gamma_k\co p_u \; \prod_{v\in V\setminus \{t,u\}} 1_{U_i(v)}\co\gamma_k\co p_v \;  \prod_{v\in \phi_w(K_{k+1})} f_v\co p_v\; \ud\nu_i,
\]
where we recall that $u=\phi_w(0^{k+1})$. Let $g=\mb{E}\Big(\prod_{v\in V\setminus \{t,u\}} 1_{U_i(v)}\co\gamma_k\co p_v\,|\,\mc{A}^{\db{2k+2}}_{\phi_w(\db{k+1})}\Big)$. The integral above is then equal to $\tfrac{1}{\nu_i(M_i)}\int_{\Omega^{\db{2k+2}\setminus\{t\}}} 1_{U_i(w)}\co\gamma_k\co p_u \; g \;  \prod_{v\in \phi_w(K_{k+1})} f_v\co p_v\; \ud\nu_i$.

We claim that $V\setminus \{t\} ~\bot_{\nu_i}~ \phi_w(\db{k+1})$. This can be seen by the following argument similar to one used in the proof of Lemma \ref{lem:keybot}. Letting $\mu$ denote the tricube coupling $\mu_{\wt{T}_{k+1}}^{\db{2k+2}}$, we have $V~\bot_\mu~ \phi_w(\db{k+1})$, because by the symmetry of this coupling under the action of $S_3^{k+1}$ (established in Lemma  \ref{lem:trisym}), we can apply an element $\sigma\in S_3^{k+1}$ that leaves $V$ globally invariant but turns $\phi_w(\db{k+1})$ into the set $q_{k+1}(\db{k+1})$, which is now a \emph{face} in $\db{2k+2}$; in $\mu$ the faces $V$ and $q_{k+1}(\db{k+1})$ are conditionally independent because they are so in $\mu^{\db{2k+2}}$ by Theorem \ref{thm:sip} (here we use Remark \ref{rem:botinsubcoup}). Since this conditional independence is not affected by this action of $S_3^n$, we conclude indeed that $V~\bot_\mu~ \phi_w(\db{k+1})$, and so $V~\bot_{\mu^{\db{2k+2}}}~ \phi_w(\db{k+1})$ (again by Remark \ref{rem:botinsubcoup}). Combining lemmas \ref{lem:fibrebotindep},  \ref{lem:botclosed} (for which we use Lemma \ref{lem:subfacecoup}), and  \ref{lem:closedall}, we obtain our claim. 

Noting that $(V\setminus \{t\}) \cap \phi_w(\db{k+1})=\{u\}$, we see that, by the above claim, the function $g$ is $\mc{A}_u^{\db{2k+2}\setminus\{t\}}$-measurable, and is therefore equal almost everywhere to $g'_u\co p_u$ for some $[0,1]$-valued measurable function $g'_u$ (by Lemma \ref{lem:Doob}). It follows that the last integral equals $\tfrac{1}{\nu_i(M_i)}
\int_{\Omega^{\db{2k+2}\setminus\{t\}}} (1_{\gamma_k^{-1} U_i(w)} \, g'_u)\co p_u \; \prod_{v\in \phi_w(K_{k+1})} f_v\co p_v\; \ud\nu_i$. But now this is an integral of a function depending only on components indexed by $\phi_w(\db{k+1})$ so, letting $\nu_{i,w}$ denote the subcoupling of $\nu_i$ indexed by $\phi_w(\db{k+1})$, this integral is equal to $\tfrac{1}{\nu_i(M_i)}
\int_{\Omega^{\phi_w(\db{k+1})}} (1_{\gamma_k^{-1} U_i(w)} \, g'_u)\co p_u \;   \prod_{v\in \phi_w(K_{k+1})} f_v\co p_v\; \ud\nu_{i,w}$. 
Now note that by Lemma \ref{lem:subfacecoup} (combined with the $S_3^n$ symmetry again), we have that $\nu_{i,w}\cong\mu^{\db{k+1}}$. We now disintegrate $\nu_{i,w}$ relative to the map $p_u:\Omega^{\phi_w(\db{k+1})} \to \Omega$, into measures $\nu_x$ for $x\in \Omega$ which are isomorphic to corner couplings in $\ns_k$ for almost every $x$. We thus conclude that the integral in \eqref{eq:upsint}  equals
\[
\int_{\Omega} \frac{(1_{\gamma_k^{-1} U_i(w)} \, g'_u) (x)}{\nu_i(M_i)} \;\Big( \int_{\Omega^{\phi_w(K_{k+1})}}  \prod_{v\in \phi_w(K_{k+1})} f_v\co p_v \ud\nu_x\Big)\; \ud\lambda(x).
\]
Note that this is indeed a convex combination of the kind we claimed. Indeed, the weight function $x\mapsto \frac{(1_{\pi_k^{-1} U_i(w)} \, g'_u) (x)}{\nu_i(M_i)}$ is a non-negative function having integral over $\Omega$ equal to 1. Now by the presence of the indicator function of $U_i(w)$ in the weight function, whenever $\nu_x$ is a corner coupling (which is the case for almost every $x$), we have that $\nu_x$ is $\epsilon$-close to $_{\mc{F}_k|}\varrho(w)$, by definition of $U_i(w)$. This completes what we needed to prove for $w\in K_{k+1}$.

The case $w=0^{k+1}$ is simpler. Here note first that by an argument similar to the one above, we have that $\mc{A}^{\wt{T}_{k+1}\setminus\{t\}}_{\phi_{0^{k+1}}(K_{k+1})}$ and $\mc{A}^{\wt{T}_{k+1}\setminus\{t\}}_{V\setminus \{t\}}$ are independent in $\nu_i$. This then implies the result, since the conditional coupling $\Upsilon_i$ is then equal to $_{\mc{F}_k|}\pi_k(x)$ (this is seen by applying the formula $\mu'(N):=\mu(M\times N)/\mu_T(M)$ in Definition \ref{def:condcoup} in this case, where independence yields the factorization $\mu(M\times N)=\mu_T(M)\pi_k(x)(N)$).

To see property $(ii)$, let $\Upsilon^j_i$ denote the subcoupling of $\Upsilon_i$ indexed by $B_j\setminus V$. If we show for every $i$ that $\Upsilon^j_i$ is relatively independent over its $\mc{F}_{j-1}$-factor, then  we obtain property $(ii)$ by passing to the limit as $i\to \infty$ and using that the desired relative independence is a closed property, by Lemma \ref{lem:relindclosed}. Let $F=(f_v)_{v\in B_j\setminus V}$ be a system of functions $f_v\in L^\infty(\mc{F}_k)$, and suppose that for some $v'\in B_j\setminus V$ we have $\mb{E}(f_{v'}|\mc{F}_{j-1})=0$, so that $\|f_{v'}\|_{U^j}=0$ by statement $(ii)$ of Theorem \ref{thm:Fkprops}. We have to show that $\xi(\Upsilon^j_i,F)=0$. Let $G=\{g_v\}_{v\in \db{2k+2}\setminus\{t\}}$ be the extended function system defined as follows. If $v\in B_j\setminus V$ then $g_v=f_v$, if $v\in \db{2k+2}\setminus (B_j\cup V)$ then $g_v=1$ and if $v\in V\setminus\{t\}$ then $g_v$ is the characteristic function of $U_i(\tau^{-1}(v))$. By definition we have that $\xi(\Upsilon^j_i,F)=[G]_{U^{2k+2}}^*(x_i)$ where the convolution here is rooted at $t$. Using that $V\cup B_j$ is simplicial and that $d(v')\leq j$, we obtain using Corollary \ref{cor:simpzero} and Lemma \ref{lem:topcor2} that $[G]_{U^{2k+2}}^*(x_i)=0$, and the result follows.
\end{proof}

\noindent The next lemma, which is the main one in this step of the proof, establishes that the coupling $\Upsilon$ is the only one that has the properties in Lemma \ref{lem:Ups2props}.

\begin{lemma}\label{lem:Upsunique}
There is at most one coupling $\theta\in \coup\big((\Omega,\mc{F}_k,\lambda), \wt{T}_{k+1}\setminus V\big)$ satisfying the following two properties:
\begin{enumerate}[leftmargin=0.7cm]
\item $\forall\,w\in K_{k+1}$, the subcoupling of $\theta$ along $\phi_w:K_{k+1}\to \wt{T}_{k+1}\setminus V$ is equal to $_{\mc{F}_k|}\varrho(w)$.
\item Let $\theta^j$ be the subcoupling of $\theta$ indexed by $D_j\setminus V\subset \wt{T}_{k+1}\setminus V$. Then $\theta^j$ is relatively independent over its factor corresponding to $\mc{F}_{j-1}$, for every $j\in [k+1]$.
\end{enumerate}
\end{lemma}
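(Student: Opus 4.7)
The plan is to show uniqueness by induction on $j\in\{0,1,\ldots,k+1\}$, proving at each step that the subcoupling $\theta^j$ of $\theta$ indexed by $D_j\setminus V$ is uniquely determined by properties (i) and (ii). Since $D_{k+1}\cap\wt{T}_{k+1}=\wt{T}_{k+1}$, this gives $\theta^{k+1}=\theta$ and hence the lemma. By Lemma \ref{lem:pisysapprox}, it suffices to determine the multilinear functional $F=(f_v)_{v\in D_j\setminus V}\mapsto\xi(\theta^j,F)$ on systems with $f_v\in L^\infty(\mc{F}_k)$. The base case reduces to $j=1$: $D_0\setminus V=\emptyset$ is trivial, and at $j=1$ property (ii) (with $\mc{F}_0$ trivial) forces $\theta^1$ to be the product of its marginals, each of which equals $\lambda$ because $\theta$ is a coupling of $\lambda$.

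For the inductive step from $j-1$ to $j$, I would first use property (ii) via Definition \ref{def:indoverfact} to reduce to the case where each $f_v$ is $\mc{F}_{j-1}$-measurable (a nontrivial reduction when $j\leq k$; vacuous when $j=k+1$). Then, for each maximal-height vertex $v^*\in D_j\setminus D_{j-1}$, the aim is to ``unfold'' $f_{v^*}\circ p_{v^*}$ into a function supported on lower-height vertices. The key tool is statement (iii) of Theorem \ref{thm:Fkprops}: in $\mu^{\db{j}}$, the $\sigma$-algebra $(\mc{F}_{j-1})^{\db{j}}_{0^j}$ is contained (mod null sets) in $(\mc{F}_{j-1})^{\db{j}}_{K_j}$, so any $\mc{F}_{j-1}$-measurable function of the $0^j$-vertex coincides with a $(\mc{F}_{j-1})^{\db{j}}_{K_j}$-measurable function of the corner below. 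Applying this within a $\mu^{\db{j}}$-subcoupling of $\theta$ passing through $v^*$ lets me replace $f_{v^*}\circ p_{v^*}$ by a function of the remaining $2^j-1$ vertices of that subcube. Such a subcoupling is supplied by property (i) combined with Lemma \ref{lem:subfacecoup}: for a judicious choice of $w\in K_{k+1}$ and a $j$-dimensional face $F^*$ of $\db{k+1}$ with $\phi_w^{-1}(v^*)\in F^*\subseteq K_{k+1}$, the subcoupling of $\theta$ along the affine subcube $\phi_w(F^*)\subseteq\wt{T}_{k+1}\setminus V$ is exactly $\mu^{\db{j}}$ (by Lemma \ref{lem:subfacecoup} applied to $\varrho(w)$ and pushed forward through $\phi_w$). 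Iterating the unfolding over all maximal-height vertices converts $\xi(\theta^j,F)$ into an integral over $\theta^{j-1}$ against a system of $\mc{F}_k$-measurable functions indexed by $D_{j-1}\setminus V$, which is determined by the inductive hypothesis together with the boundary data from the couplings $_{\mc{F}_k|}\varrho(w)$.

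The principal obstacle will be the combinatorial step of selecting, for each maximal-height vertex $v^*$, a pair $(w,F^*)$ such that all other vertices of $\phi_w(F^*)$ have strictly smaller height in $\db{2k+2}$ and lie in $\wt{T}_{k+1}\setminus V$, so that the unfolding genuinely lands in $D_{j-1}\setminus V$. This requires exploiting the explicit formula for $\phi_w$ from Definition \ref{def:phiw} together with the simplicial structure of $\wt{T}_{k+1}$; at the top level $j=k+1$, the $S_3^{k+1}$-symmetry from Lemma \ref{lem:trisym} may also need to be invoked to relabel tricube coordinates so as to place $v^*$ in a position where a suitable $(w,F^*)$ is available. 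A secondary subtlety is ensuring that the unfoldings at distinct maximal vertices can be performed in a prescribed order that yields an expression genuinely supported on $\theta^{j-1}$, without creating interdependencies that spoil the reduction.
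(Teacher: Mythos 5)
Your outer scaffolding coincides with the paper's: induction on $j$, the same trivial base case, and the same use of property (ii) to reduce to the $\mc{F}_{j-1}$-factor of $\theta^j$. The gap is in the unfolding step. You want to express $f_{v^*}\co p_{v^*}$, for a height-$j$ vertex $v^*$ of $\wt{T}_{k+1}$, through the remaining vertices of a subcube $\phi_w(F^*)$ with $F^*$ a $j$-face of $\db{k+1}$ contained in $K_{k+1}$, so that by Lemma \ref{lem:subfacecoup} the subcoupling is a genuine copy of $\mu^{\db{j}}$ and Theorem \ref{thm:Fkprops}(iii) applies. No such face exists. Writing $|\phi_w(v)|=|\{i:w\sbr{i}=0\}|+|\{i:w\sbr{i}=1,\,v\sbr{i}=1\}|$ and $h^*(\phi_w(v))=|\{i:w\sbr{i}=0,\,v\sbr{i}=0\}|$, one checks that flipping a free coordinate $i$ of $F^*$ with $w\sbr{i}=1$ changes the height by $\pm 1$ (so it must decrease, forcing $u\sbr{i}=1$), while flipping one with $w\sbr{i}=0$ keeps the height equal to $j$ and changes $h^*$ by $\pm1$ (so it must decrease, forcing $u\sbr{i}=0$). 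Requiring every other vertex of $\phi_w(F^*)$ to lie in $D_{j-1}$, or even in $D_j$ with smaller $h^*$, therefore pins the free-coordinate set down to $\supp\big(v^*|_{[k+1]}\big)\cup\{i:v^*\sbr{i+k+1}=1\}$ with all fixed coordinates equal to $0$; that is, $F^*$ must be the down-set face containing $0^{k+1}$, whose image under $\phi_w$ has its root in $V$ and is therefore not fully indexed by $\theta$. This is exactly why the paper does not (and cannot) use Theorem \ref{thm:Fkprops}(iii) here: it works with the corner $\phi_w(R)$, $R=\{r:0^{k+1}\neq r\leq v'\}$, whose missing root lies in $V$, and compensates with Proposition \ref{prop:meas-thy-compl} --- the complete dependence of the \emph{corner} couplings $\pi_{j-1}(\varrho(w))$, a strictly stronger statement than Theorem \ref{thm:Fkprops}(iii) since it lives in the coupling conditioned on the root rather than in $\mu^{\db{j}}$ --- gluing the pieces with Lemma \ref{lem:coupdeterm} inside a sub-induction on $h^*$. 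Your proposal never invokes Proposition \ref{prop:meas-thy-compl}, and without it the induction does not close for any height-$j$ vertex.

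The proposed rescue via the $S_3^{k+1}$-symmetry of Lemma \ref{lem:trisym} does not help: that symmetry is a property of $\mu^{\db{2k+2}}_{\wt{T}_{k+1}}$, not of $\theta$. The hypotheses (i) and (ii) are not invariant under relabelling the tricube coordinates, and such a relabelling moves $V$, hence changes which subsets are indexed by $\theta$ at all. To repair the argument you need, for each height-$j$ vertex, precisely the complete-dependence property of the corner couplings supplied by Proposition \ref{prop:meas-thy-compl}, together with an explicit mechanism (the paper's Lemma \ref{lem:coupdeterm}) converting the resulting inclusion of $\sigma$-algebras into the statement that $\xi(\theta^j,F)$ is determined by the already-known subcouplings.
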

\begin{proof}
Suppose that $\theta$ is such a coupling. We prove by induction on $j$ that by properties $(i)$ and $(ii)$ in Lemma \ref{lem:Upsunique} the subcoupling $\theta^j$ is uniquely determined. For $j=1$, by property $(ii)$ the coupling $\theta^j$ is the independent coupling with index $D_1\setminus V$, which is indeed unique. Let $j>1$ and suppose that the uniqueness holds for $j-1$. Property $(ii)$ implies that it suffices to prove the uniqueness of the $\mc{F}_{j-1}$-factor coupling of $\theta^j$ (this suffices indeed since, as mentioned after Definition \ref{def:indoverfact}, by $(ii)$ the maps $\xi$ for this $\mc{F}_{j-1}$-factor uniquely determine those for $\theta^j$). We prove this by another inductive argument.

Recall from the previous subsection that $h^*(v)=\sum_{i\in [k+1]} v\sbr{i+k+1}$ for $v\in \db{2k+2}$, and recall from \eqref{eq:dj} that $D_j=B_j\cap \wt{T}_{k+1}$. Let $D_{j,n}=D_{j-1}\cup\{v\in D_j:h^*(v)\leq n\}$. We shall prove the following statement by induction on $n=h^*(v)$:
\begin{eqnarray}\label{eq:subinduct}
&&\textrm{The }\mc{F}_{j-1}\textrm{-factor of the subcoupling }\theta^j_{D_{j,n}}\textrm{ of }\theta^j\textrm{ is uniquely determined} \\
&&\textrm{by the }\mc{F}_{j-1}\textrm{-factor of the subcoupling }\theta^j_{D_{j,n-1}}.\nonumber
\end{eqnarray}
We start by establishing the base case $n=0$ by using the case $j-1$ of the global induction. Suppose that $v\in \wt{T}_{k+1}$ satisfies $v\in D_j\setminus D_{j-1}$ and $h^*(v)=0$. In particular $|v|=j$ and every $v'\in \db{2k+2}$ with $v'< v$ is in $D_{j-1}$. (Note that $\{v\in \wt{T}_{k+1}: h^*(v)=0\}$ is the $(k+1)$-face $V_2$ from the previous subsection.) The set $P=\{v'\in \db{2k+2} : 0\neq v'\leq v\}$ can be identified with $K_j$. From the assumed property $(i)$ for $\theta$, applied with $w=1^{k+1}$, and the composition rule Corollary \ref{cor:chainofprojs}, we have that the subcoupling of $\theta$ indexed by $P$ is isomorphic to $\pi_{j-1}(\varrho(1^{k+1}))$, and then the  factor $_{\mc{F}_{j-1}|}\pi_{j-1}(\varrho(1^{k+1}))$ is just the subcoupling of $\theta^{j-1}$ indexed by $P$. (Note that $\varrho(1^{k+1})$ corresponds to $0^{2k+2}$ in $\db{2k+2}$.) By Proposition \ref{prop:meas-thy-compl} the subcoupling $_{\mc{F}_{j-1}|}\pi_{j-1}(\varrho(1^{k+1}))$ is completely dependent. Hence $(\mc{F}_{j-1})_v^{\wt{T}_{k+1}}\subset _\theta (\mc{F}_{j-1})_{P\setminus\{v\}}^{\wt{T}_{k+1}}$, and this is included in $(\mc{F}_{j-1})^{\wt{T}_{k+1}}_{D_{j-1}}$ by the above remarks. Now, applying Lemma  \ref{lem:coupdeterm} with $T_1=D_{j-1}$ and $T_2=P$ we deduce that the subcoupling of $\theta^j$ indexed by $D_{j-1}\cup \{v\}$ is uniquely determined. Applying this argument recursively for each vertex  $v$ with $h^*(v)=0$ (with $T_1$ including each time all the vertices $v$ from  the previous steps in the recursion), the base case follows.

Now suppose that $n>0$, that $_{\mc{F}_{j-1}|}\theta^j_{D_{j,n-1}}$ is uniquely determined, and that $h^*(v)=n$ (and $v\in D_j\setminus D_{j-1}$ as above). Let $v',w\in\db{k+1}$ be defined by $v'\sbr{i}=v\sbr{i}+v\sbr{i+k+1}$ and $w\sbr{i}=1-v\sbr{i+k+1}$. Let $R = \big\{r\in\db{k+1} : 0^{k+1}\neq r\leq v'\big\}$, and consider the restriction to $R$ of the map $\phi_w$ from Definition \ref{def:phiw}. We claim that if $r\in R$ satisfies $\phi_w(r)\neq v$, then either $\phi_w(r) \in D_{j-1}$, or $h^*(\phi_w(r))\leq n-1$. Indeed, if $\phi_w(r) \not\in D_{j-1}$ then $|\phi_w(r)|\geq j$, and by definition of $R$
\[
\{i\in [k+1]: \phi_w(r)\sbr{i}+\phi_w(r)\sbr{i+k+1}=1\}\;\subset\; \{i\in [k+1]: v\sbr{i}+v\sbr{i+k+1}=1\}
\]
(indeed if $v\sbr{i}+v\sbr{i+k+1}=0$ then $\phi_w(r)\sbr{i+k+1}=v\sbr{i+k+1}(1-r\sbr{i})=0$ and $r\leq v'$ implies that $\phi_w(r)\sbr{i}=r\sbr{i}=0$). Considering the sizes of the sets in the above inclusion, we deduce that these sets are equal; then for $\phi_w(r)\neq v$ to hold there must exist $i$ in this set such that $\binom{\phi_w(r)\sbr{i}}{\phi_w(r)\sbr{i+k+1}}=\binom{1-v\sbr{i}}{1-v\sbr{i+k+1}}$. But if it were the case that $\phi_w(r)\sbr{i}=\phi_w(r)\sbr{i}=0\neq 1=v\sbr{i}$, then by definition of $\phi_w(r)$ we would have $\phi_w(r)\sbr{i+k+1}=v\sbr{i+k+1}$, which contradicts the equality $\phi_w(r)\sbr{i+k+1}=1-v\sbr{i+k+1}$ established in the previous sentence. Hence we must have $\phi_w(r)\sbr{i}=z\sbr{i}=1$ and then $\phi_w(r)\sbr{i+k+1}=0$, whence indeed $h^*(\phi_w(r))< h^*(v)=n$. This proves our claim. This claim implies that $\phi_w(R)\setminus\{v\}\subset D_{j,n-1}$. Now, by property $(i)$ for $\theta$ we have that $\theta_{\phi_w(R)}= \pi_{j-1}(\varrho(w))$, so by Proposition  \ref{prop:meas-thy-compl} we have $(\mc{F}_{j-1})_v^{\wt{T}_{k+1}}\subset _\theta (\mc{F}_{j-1})_{\phi_w(R)\setminus\{v\}}^{\wt{T}_{k+1}}$. Applying Lemma \ref{lem:coupdeterm} with $T_1=D_{j,n-1}$ and $T_2=\phi_w(R)$, we deduce that the subcoupling of $\theta^j$ indexed by $D_{j,n-1}\cup \{v\}$ is uniquely determined. Applying this argument again recursively for each such $v$ (similarly as in the base case), we deduce that the subcoupling of $_{\mc{F}_{j-1}|}\theta^j$ indexed by $D_{j,n}$ is uniquely determined. This completes the induction on $n$.
\end{proof}
We can now complete the proof of the main result in this subsection.
\begin{proof}[Proof of Proposition \ref{prop:uclos}]
Let $\wt{\Upsilon}$ be the subcoupling of $\Upsilon$ indexed by $\wt{T}_{k+1}\setminus V$. Lemmas \ref{lem:Ups2props}, \ref{lem:Upsunique} imply that $\wt{\Upsilon}$ is uniquely determined by the function $\varrho$. As explained after Corollary \ref{cor:uniquecomp}, the coupling $z=\pi_k(x)$ is uniquely determined by its factor $_{\mc{F}_k|}z$. Since the latter is a sub-coupling of $\wt{\Upsilon}$, we deduce that $z$ is uniquely determined, so the proof is complete.
\end{proof}

\noindent We can now use this to establish the fact announced at the beginning of this subsection, namely that convolutional sets form a basis for the topology on $\ns_k$.

\begin{proposition}\label{prop:opcontconv} Let $U$ be an open set in $\ns_k$ and let $x\in U$. Then there are open sets $U\sbr{v}\subset \ns_k$, $v\in K_{k+1}$ such that $x\in \pi_k\big(\supp([F]^*_{U^{k+1}})\big)\subset U$, where $F=(1_{U(v)}\co\gamma_k)_{v\in K_{k+1}}$.
\end{proposition}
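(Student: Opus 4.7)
The plan is to apply Proposition~\ref{prop:uclos} with $\varrho(v) = x$ for every $v \in K_{k+1}$. The constant map $\q \equiv x$ on $\db{k+1}$ lies in $\cu^{k+1}(\ns_k)$ by the composition axiom (Lemma~\ref{lem:axioms1-2}, applied to the morphism $\db{k+1} \to \db{0}$), so $\varrho = \q|_{K_{k+1}}$ is a $(k+1)$-corner whose only possible completion to a cube is $x$ by Corollary~\ref{cor:uniquecomp}. Proposition~\ref{prop:uclos} therefore singles out $x$ as the only candidate for the point $z$ satisfying its stated property.

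First I would verify that $x$ really does have that property. For any system of open $U(v) \ni x$ and any open $W \ni x$, the product $W \times \prod_{v \in K_{k+1}} U(v)$ is an open neighbourhood of $\q$ in $\ns_k^{\db{k+1}}$, so has positive measure under $\mu^{\db{k+1}} \co (\gamma_k^{\db{k+1}})^{-1}$ since $\q \in \cu^{k+1}(\ns_k) = \Supp(\mu^{\db{k+1}} \co (\gamma_k^{\db{k+1}})^{-1})$ by Definition~\ref{def:couptopofactor}. Disintegrating this measure at $p_{0^{k+1}}$ identifies it as $\int_W \xi(z, F) \, \ud(\lambda \co \gamma_k^{-1})(z)$ with $F = (1_{U(v)} \co \gamma_k)_{v \in K_{k+1}}$, so some $z \in W$ must satisfy $\xi(z, F) > 0$. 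Hence every open neighbourhood of $x$ meets the open set $\pi_k(\supp([F]^*_{U^{k+1}})) = \{z \in \ns_k : \xi(z, F) > 0\}$, so $x$ lies in its closure, which by the closed-map lemma (applied to the continuous map $\pi_k$ on the compact space $\ns$) coincides with $\pi_k(\overline{\supp([F]^*_{U^{k+1}})})$.

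The next step is a compactness argument on $\ns_k \setminus U$. Each $z \in \ns_k \setminus U$ satisfies $z \neq x$, so by the uniqueness from Proposition~\ref{prop:uclos} it fails the property: there exist open neighbourhoods $U_z(v) \ni x$ and an open $V_z \ni z$ such that $V_z$ is disjoint from $\pi_k(\overline{\supp([F_z]^*_{U^{k+1}})})$, where $F_z = (1_{U_z(v)} \co \gamma_k)_v$. Covering the compact set $\ns_k \setminus U$ by finitely many $V_{z_1}, \dots, V_{z_m}$ and setting $U(v) := \bigcap_{i \leq m} U_{z_i}(v)$, the pointwise inequality $F \leq F_{z_i}$ yields $\supp([F]^*_{U^{k+1}}) \subset \supp([F_{z_i}]^*_{U^{k+1}})$ for every $i$, so $\pi_k(\supp([F]^*_{U^{k+1}}))$ misses every $V_{z_i}$ and is therefore contained in $U$.

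The main obstacle will be upgrading the membership of $x$ from the closure $\overline{\pi_k(\supp([F]^*_{U^{k+1}}))}$ (which Proposition~\ref{prop:uclos} provides) to the open set $\pi_k(\supp([F]^*_{U^{k+1}}))$ itself, that is, securing $\xi(x, F) > 0$ for the $F$ produced by the compactness step. To handle this I would choose each $U_z(v)$ as generously as is consistent with the separation of $V_z$ from $\pi_k(\overline{\supp([F_z]^*_{U^{k+1}})})$, exploiting the trivial positivity $\xi(x, \mathbf{1}) = 1$ when every $U_z(v) = \ns_k$ together with the monotonicity of $\xi(x, \cdot)$ in the $U_z(v)$'s, so that the positivity $\xi(x, F) > 0$ survives under the final finite intersection.
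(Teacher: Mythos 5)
Your overall architecture --- reduce to Proposition \ref{prop:uclos} and then run a compactness argument over $\ns_k\setminus U$ --- is the same as the paper's, and your first and third steps are sound as far as they go. The problem is exactly the one you flag at the end, and it is not a technicality that can be absorbed by ``choosing the $U_z(v)$ generously'': with $\varrho\equiv x$ constant, nothing in your argument ever produces the strict inequality $\xi(x,F)>0$. Your disintegration step only shows that $\int_W \xi(z,F)\,\ud(\lambda\co\gamma_k^{-1})(z)>0$ for every neighbourhood $W\ni x$, i.e.\ that $x$ lies in the \emph{closure} of $\{z\in\ns_k:\xi(z,F)>0\}$; since $\xi(\cdot,F)$ is continuous, this is perfectly compatible with $\xi(x,F)=0$. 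Monotonicity of $\xi(x,\cdot)$ together with $\xi(x,\mathbf{1})=1$ gives only an upper bound on $\xi(x,F)$ by its values at larger systems, never a lower bound, so there is no positive quantity for the final finite intersection to preserve. Establishing $\xi(x,F)>0$ for all neighbourhood systems of the constant corner amounts to showing that the diagonal point $(x,\dots,x)$ lies in $\Supp\big(x\co(\gamma_k^{K_{k+1}})^{-1}\big)$, which at this stage of the induction (before $\ns_k$ is known to be a nilspace, hence before Theorem \ref{thm:rmp} is available for $\ns_k$) you do not have.

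The paper sidesteps this entirely by a different choice of $\varrho$: it takes $\varrho$ to be \emph{any point of} $\Supp\big(\nu\co(\gamma_k^{K_{k+1}})^{-1}\big)$, where $\nu=x$ is the corner coupling (this support is nonempty, being the support of a Borel probability measure on a compact metric space). For such $\varrho$ and any open sets $U(v)\ni\varrho(v)$, the box $\prod_v U(v)$ is an open neighbourhood of a support point, so $\xi(x,F)=\nu\co(\gamma_k^{K_{k+1}})^{-1}\big(\prod_v U(v)\big)>0$ is immediate: the membership $x\in \pi_k\big(\supp([F]^*_{U^{k+1}})\big)$ then holds for \emph{every} admissible $F$, and only the inclusion in $U$ remains to be arranged. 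The paper's compactness step is run with a nested neighbourhood basis $(U_i(v))_i$ of $\varrho(v)$ and the finite intersection property: if every $\pi_k\big(\supp([F_i]^*_{U^{k+1}})\big)$ met $\ns_k\setminus U$, one would obtain a point $y\in\ns_k\setminus U$ lying in $\pi_k\big(\overline{\supp([F]^*_{U^{k+1}})}\big)$ for all neighbourhood systems $F$ of $\varrho$, and Proposition \ref{prop:uclos} would force $y=x$, a contradiction. If you replace your constant $\varrho$ by such a support point, your finite-subcover version of the compactness argument goes through verbatim and the gap disappears; with the constant $\varrho$ it does not.
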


\begin{proof}
Recall that $x$ is a measure $\nu\in \coup(\varOmega, K_{k+1})$. Let $\varrho:K_{k+1}\to \ns_k$ be an element of $\Supp(\nu\co (\gamma_k^{K_{k+1}})^{-1})$. By definition of this support, for every system $F=(1_{U(v)}\co\gamma_k)_{v\in K_{k+1}}$ with open sets $U(v)\ni\varrho(v)$ for each $v\in K_{k+1}$, we have $[F]_{U^{k+1}}^*(x)>0$. Therefore it suffices to prove that there is some such system $F$ satisfying also $\pi_k\big(\supp([F]^*_{U^{k+1}})\big)\subset U$. To show this, for every $i\in\mb{N}$ let $(U_i(v))_{v\in K_{k+1}}$ be a system of open sets such that $(U_i(v))_{i\in \mb{N}}$ is a nested decreasing neighbourhood basis for $\varrho(v)$ for each $v$. Let $F_i=(1_{U_i(v)}\co\gamma_k)_{v\in K_{k+1}}$. Suppose for a contradiction that $(\ns_k\setminus U)\cap \pi_k\big(\supp([F_i]_{U^{k+1}}^*)\big)\neq\emptyset$ for every $i$. The sets $B_i=(\ns_k\setminus U)\cap\pi_k\big(\overline{\supp([F_i]_{U^{k+1}}^*)}\big)$, $i\in \mb{N}$, form a decreasing nested sequence of closed sets in the compact space $\ns_k$, so there exists $y\in\bigcap_{i=1}^\infty B_i$. By Proposition \ref{prop:uclos}, this implies that $x=y$, which contradicts the fact that $x$ and $y$ are separated by the open set $U$. 
\end{proof}

\subsection{Verifying the corner-completion axiom}\label{subsec:corcomp} \hfill \medskip \\
For the proof of the completion axiom we shall use the following lemma concerning supports of conditional expectations.
\begin{lemma}\label{lem:supports}
Let $(\Omega,\mc{A},\lambda)$ be a probability space, and let $\mc{B}$ be a sub-$\sigma$-algebra of $\mc{A}$. Then for any non-negative function $f\in L^\infty(\mc{A})$, we have $\supp(f)\subset_\lambda \supp\big(\mb{E}(f|\mc{B})\big)$.
\end{lemma}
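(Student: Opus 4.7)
The plan is to reduce the claimed inclusion to the defining property of conditional expectation, using the non-negativity of $f$ in a crucial way. Unwinding the notation, the goal is to show that
\[
\lambda\big(\{\omega:f(\omega)>0\}\cap \{\omega:\mb{E}(f|\mc{B})(\omega)=0\}\big)=0,
\]
since $\supp(f)\setminus \supp(\mb{E}(f|\mc{B}))$ is contained in the set on the left (up to choices of versions of $\mb{E}(f|\mc{B})$, which only matter on a $\lambda$-null set).

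First I would fix a version of $g:=\mb{E}(f|\mc{B})$ and set $A:=\{\omega:g(\omega)=0\}$, which is $\mc{B}$-measurable. By the defining property of conditional expectation,
\[
\int_A f\,\ud\lambda \;=\; \int_A g\,\ud\lambda \;=\; 0.
\]
Since $f\geq 0$ $\lambda$-almost everywhere, a non-negative integrand with zero integral must vanish $\lambda$-almost everywhere on the domain of integration, so $f=0$ holds $\lambda$-almost everywhere on $A$. This gives $\lambda(A\cap \{f>0\})=0$, which is precisely the desired inclusion $\supp(f)\subset_\lambda \supp(g)$.

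There is no real obstacle here; the only subtle point is the choice of versions, which is handled by noting that any two versions of $\mb{E}(f|\mc{B})$ differ on a $\lambda$-null set, so their supports agree up to $\lambda$-null sets, and the relation $\subset_\lambda$ is insensitive to such changes. The non-negativity of $f$ is essential: without it, cancellation could force $\mb{E}(f|\mc{B})$ to vanish on a set where $f$ itself is non-zero.
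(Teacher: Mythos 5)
Your proof is correct and is essentially the paper's own argument: the paper likewise sets $B=\Omega\setminus\supp\big(\mb{E}(f|\mc{B})\big)$, uses the defining property of conditional expectation to get $\int_\Omega f\,1_B\ud\lambda=\int_\Omega \mb{E}(f|\mc{B})\,1_B\ud\lambda=0$, and concludes from the non-negativity of $f$. Your additional remarks on the choice of versions are fine but not needed beyond what the relation $\subset_\lambda$ already absorbs.
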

\begin{proof}
Let $B=\Omega\setminus \supp\big(\mb{E}(f|\mc{B})\big)$. The desired conclusion  is equivalent to $\int_\Omega f \cdot 1_B \ud\lambda=0$. Since $B$ is $\mc{B}$-measurable, we have $\int_\Omega f 1_B \ud\lambda = \int \mb{E}(f  |\mc{B}) 1_B  \ud\lambda$, and this last integral is zero by definition of $B$.
\end{proof}

\begin{lemma}\label{lem:intnest}
Let $\mu\in \coup(\varOmega,S)$, let $F=(f_v)_{v\in S}$ and $G=(g_v)_{v\in S}$ be systems of non-negative functions in $L^\infty(\varOmega)$, and suppose that $ \supp f_v\, \subset_\lambda\, \supp g_v$ for every $v\in S$. If $\int_{\Omega^S} \prod_{v\in S} f_v\co p_v \ud\mu >0$ then $\int_{\Omega^S} \prod_{v\in S} g_v\co p_v \ud\mu >0$.
\end{lemma}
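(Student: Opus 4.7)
The key observation is that $\mu$ is a coupling of copies of $\lambda$, so each projection $p_v:\Omega^S\to\Omega$ is measure-preserving; consequently, any $\lambda$-null set pulls back to a $\mu$-null set under $p_v$. The rest is essentially bookkeeping.

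My plan is as follows. For each $v\in S$, let $N_v=\supp f_v\setminus \supp g_v$. By the hypothesis $\supp f_v\subset_\lambda \supp g_v$ we have $\lambda(N_v)=0$, and hence $\mu(p_v^{-1}(N_v))=0$. Since $S$ is finite (as we are dealing with couplings), the set $N:=\bigcup_{v\in S}p_v^{-1}(N_v)\subset\Omega^S$ is $\mu$-null. Now let $E=\{\omega\in\Omega^S:\prod_{v\in S}f_v(p_v(\omega))>0\}$. On $E$ we have $p_v(\omega)\in\supp f_v$ for every $v$, and on $E\setminus N$ we additionally have $p_v(\omega)\notin N_v$, so $p_v(\omega)\in\supp g_v$, hence $g_v(p_v(\omega))>0$ for every $v$. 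Therefore $E\setminus N\subset \{\omega:\prod_{v\in S}g_v(p_v(\omega))>0\}$.

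To finish, note that since $\prod_{v\in S}f_v\co p_v$ is a non-negative bounded function with strictly positive integral, we have $\mu(E)>0$, and so $\mu(E\setminus N)>0$ as well. Thus the non-negative bounded function $\prod_{v\in S}g_v\co p_v$ is strictly positive on a set of positive $\mu$-measure; decomposing this set as $\bigcup_{n\in\mb{N}}(E\setminus N)\cap\{\prod_{v\in S}g_v\co p_v>1/n\}$ and choosing $n$ large enough that this intersection has positive $\mu$-measure yields $\int_{\Omega^S}\prod_{v\in S}g_v\co p_v\ud\mu>0$, as required.

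There is really no main obstacle here; the lemma is a direct consequence of the measure-preserving property of the coordinate projections built into Definition \ref{def:coup}, together with the elementary fact that a non-negative bounded function with positive integral must be strictly positive on a set of positive measure. The only minor point worth stating carefully is the finiteness of $S$, which is used so that the union of null sets $\bigcup_{v\in S}p_v^{-1}(N_v)$ remains $\mu$-null.
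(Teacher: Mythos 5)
Your proof is correct and follows essentially the same route as the paper's: both reduce the statement to the equivalence between a non-negative bounded function having positive integral and its support having positive $\mu$-measure, combined with the inclusion (up to $\mu$-null sets) of $\supp\big(\prod_v f_v\co p_v\big)$ in $\supp\big(\prod_v g_v\co p_v\big)$ obtained by pulling the $\lambda$-null sets $\supp f_v\setminus\supp g_v$ back through the measure-preserving projections $p_v$. You are somewhat more explicit about the null-set bookkeeping than the paper, which simply asserts the support inclusion, but the underlying argument is identical.
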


\begin{proof}
Recall that for a bounded non-negative function $h$ on a probability space $\varOmega$ we have $\lambda(\supp h)>0$ if and only if $\int h \ud\lambda >0$. Using this,  since $\int \prod_v f_v\co p_v \ud\mu >0$ we have that $\mu\big(\supp(\prod_v f_v\co p_v)\big)>0$, and since $\supp(\prod_v f_v\co p_v)\subset \supp(\prod_v g_v\co p_v)$, we have also $\mu\big(\supp(\prod_v g_v\co p_v)\big)>0$, so (by the previous sentence again) $\int \prod_v g_v\co p_v \ud\mu$ must be positive.
\end{proof}
\noindent We now show that the functions supported on convolution neighbourhoods in $\ns_k$ factor through functions on $\ns_{k-1}$ in a useful way.
\begin{lemma}\label{lem:convfactor}
Let $x\in \ns_k$, let $F$ be a system of indicator functions generating a convolution neighbourhood of $x$. Then there exists a continuous function $g:\ns_{k-1}\to [0,1]$ such that $\mb{E}\big([F]_{U^{k+1}}\big|\mc{F}_{k-1}\big)\, =_\lambda \,g\co \gamma_{k-1}$ and $g\big(\pi_{k-1}(x)\big)>0$.
\end{lemma}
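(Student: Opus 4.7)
The plan is to reduce $[F]_{U^{k+1}}$ to its conditional expectation with respect to $\mc{F}_{k-1}$ (using Lemma \ref{lem:vetites1}), obtain the continuous factor $g$ via the inductive step of the structure theorem (specifically Corollary \ref{cor:factoring} applied at level $k-1$), and then transfer the almost-sure positivity from $\Omega$ to a pointwise statement on $\ns_{k-1}$ using the full-support property of $\lambda\co\gamma_k^{-1}$.

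First, set $g_v := \mb{E}(f_v|\mc{F}_{k-1})$ for each $v\in K_{k+1}$, where $f_v = 1_{U(v)}\co\gamma_k$, and let $G = (g_v)_v$. Lemma \ref{lem:vetites1} gives $\mb{E}([F]_{U^{k+1}}|\mc{F}_{k-1}) =_\lambda [G]_{U^{k+1}}$. Since each $g_v\in L^\infty(\mc{F}_{k-1})$ and the inductive hypothesis tells us that $\ns_{k-1}$ is already known to be a $(k-1)$-step compact nilspace, Corollary \ref{cor:factoring} produces a continuous $g:\ns_{k-1}\to\mb{C}$ with $[G]_{U^{k+1}}^* = g\co\pi_{k-1}$ on $\ns$. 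Combining with Lemma \ref{lem:topcor1} yields $[G]_{U^{k+1}} =_\lambda g\co\gamma_{k-1}$, hence $\mb{E}([F]_{U^{k+1}}|\mc{F}_{k-1}) =_\lambda g\co\gamma_{k-1}$, which is the desired equality.

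To get $g\in[0,1]$, I would note that $[F]_{U^{k+1}}$, being a conditional expectation of the non-negative product $\prod_v f_v\co p_v$ with each $f_v\in[0,1]$, lies in $[0,1]$ almost surely; consequently $g\co\gamma_{k-1}\in[0,1]$ $\lambda$-a.e., so the open set $\{y\in\ns_{k-1}: g(y)\notin[0,1]\}$ has $\lambda\co\gamma_{k-1}^{-1}$-measure zero. Since $\ns_{k-1}=\Supp(\lambda\co\gamma_{k-1}^{-1})$ (Definition \ref{def:couptopofactor}, applied at the previous step), Lemma \ref{lem:closedall} forces this open set to be empty, so $g:\ns_{k-1}\to[0,1]$.

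The key remaining point is the pointwise positivity $g(\pi_{k,k-1}(x))>0$, which is the main obstacle since it turns an almost-sure identity into a conclusion about a specific point. The strategy is: by Lemma \ref{lem:supports} applied to the non-negative function $[F]_{U^{k+1}}$, we have
\[
\supp\bigl([F]_{U^{k+1}}\bigr)\;\subset_\lambda\;\supp\bigl(\mb{E}([F]_{U^{k+1}}|\mc{F}_{k-1})\bigr)\;=\;\supp(g\co\gamma_{k-1})\;=\;\gamma_{k-1}^{-1}\bigl(\{g>0\}\bigr).
\]
Letting $\widehat{[F]}$ denote the continuous function $\xi(\cdot,F)$ on $\ns_k$ (so that $[F]_{U^{k+1}}=_\lambda \widehat{[F]}\co\gamma_k$ by Lemmas \ref{lem:Unstarconv} and \ref{lem:topcor1}), the above inclusion translates to
\[
\gamma_k^{-1}\bigl(\{\widehat{[F]}>0\}\setminus \pi_{k,k-1}^{-1}(\{g>0\})\bigr)
\]
being $\lambda$-null, i.e.\ the open set $W:=\{\widehat{[F]}>0\}\setminus\pi_{k,k-1}^{-1}(\{g>0\})$ in $\ns_k$ has $\lambda\co\gamma_k^{-1}$-measure zero. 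Applying Lemma \ref{lem:closedall} (using the full-support property $\ns_k=\Supp(\lambda\co\gamma_k^{-1})$) forces $W=\emptyset$, so every point where $\widehat{[F]}$ is positive lies in $\pi_{k,k-1}^{-1}(\{g>0\})$. Since $x$ lies in the convolution neighbourhood generated by $F$, we have $\widehat{[F]}(x)=\xi(x,F)>0$, and hence $\pi_{k,k-1}(x)\in\{g>0\}$, completing the proof.
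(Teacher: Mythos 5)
The first two thirds of your argument are correct and coincide with the paper's proof: setting $g_v=\mb{E}(f_v|\mc{F}_{k-1})$, invoking Lemma \ref{lem:vetites1}, Corollary \ref{cor:factoring} and Lemma \ref{lem:topcor1} to get the continuous $g$ with $\mb{E}([F]_{U^{k+1}}|\mc{F}_{k-1})=_\lambda g\co\gamma_{k-1}$, and your Lemma \ref{lem:closedall} argument for $g$ taking values in $[0,1]$ are all fine. The problem is the positivity step. The set $W=\{\widehat{[F]}>0\}\setminus\pi_{k,k-1}^{-1}(\{g>0\})$ is \emph{not} open: it is the intersection of the open set $\{\widehat{[F]}>0\}$ with the \emph{closed} set $\{g\co\pi_{k,k-1}=0\}$. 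So Lemma \ref{lem:closedall} does not apply, and the inference breaks down. More fundamentally, an almost-everywhere support inclusion between continuous functions does not upgrade to a pointwise inclusion of their positivity sets even when the underlying measure has full support: on $[0,1]$ with Lebesgue measure, $\widehat{[F]}\equiv 1$ and $g(t)=t$ satisfy $\supp(\widehat{[F]})\subset_\lambda\supp(g)$, yet $\{\widehat{[F]}>0\}\not\subset\{g>0\}$ at $t=0$, which is exactly the configuration your argument cannot rule out at the point $x$ of interest.

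The paper gets around this by keeping the support comparison at the level of the base space $\Omega$ and evaluating against the specific coupling $x$ itself, rather than against $\lambda\co\gamma_k^{-1}$ on $\ns_k$. Namely, Lemma \ref{lem:supports} applied coordinatewise gives $\supp(f_v)\subset_\lambda\supp(g_v)$ for each $v\in K_{k+1}$; since $x$ is a coupling of $\lambda$, each $p_v^{-1}$ of a $\lambda$-null set is $x$-null, so Lemma \ref{lem:intnest} applies with $\mu=x$ and yields $\xi(x,G)>0$ directly from $\xi(x,F)>0$, and $\xi(x,G)=g(\pi_{k,k-1}(x))$ by Corollary \ref{cor:factoring}. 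If you want to rescue your write-up, replace your final paragraph with this coordinatewise application of Lemmas \ref{lem:supports} and \ref{lem:intnest}; the route through null open sets in $\ns_k$ cannot be made to work here.
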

\begin{proof}
We have by assumption that $F=(f_v)_{v\in K_{k+1}}$ is a system of $\mc{A}$-measurable indicator functions on $\Omega$ such that $\xi(x,F)>0$. Let $G=(g_v)_{v\in K_{k+1}}$ be the system of functions $g_v=\mb{E}(f_v|\mc{F}_{k-1})$. Observe that, by Corollary \ref{cor:factoring} and our inductive assumptions on $\ns_{k-1}$, we have that $[G]^*_{U^{k+1}}=g\co \pi_{k-1}$ for some continuous function $g:\ns_{k-1}\to [0,1]$, which means that $g(\pi_{k-1}(x'))=\xi(\pi_k(x'),G)$ for every $x'\in \ns$. Note also that by Lemma \ref{lem:vetites1} we have $[G]_{U^{k+1}} =_\lambda \mb{E}([F]_{U^{k+1}}|\mc{F}_{k-1})$, so by Lemma \ref{lem:topcor1} we have  $g\co \gamma_{k-1}\,=_\lambda\,\mb{E}([F]_{U^{k+1}}|\mc{F}_{k-1})$. To see that $g\big(\pi_{k-1}(x)\big)>0$, note that $g\big(\pi_{k-1}(x)\big)=\xi(x,G)$, and $\supp(g_v)\supset \supp(f_v)$ for each $v$ by Lemma \ref{lem:supports}, so $\xi(x,G)>0$ by Lemma \ref{lem:intnest}.
\end{proof}
\noindent We can now proceed to the proof of the completion axiom. We shall first prove the existence of a completion for any morphism from $\db{n}_{\leq k}$ to $\ns_k$, that is, any map $\varrho:\db{n}_{\leq k}\to \ns_k$ such that for every cube morphism $\phi:\db{k}\to \db{n}$ with image included in $\db{n}_{\leq k}$ we have $\varrho\co\phi\in \cu^k(\ns_k)$.
\begin{lemma}\label{lem:nkcorncomp}
Let $\varrho:\db{n}_{\leq k}\to \ns_k$ be a morphism. Then there exists a cube $\q\in \cu^n(\ns_k)$ such that $\q|_{\db{n}_{\leq k}}=\varrho$.
\end{lemma}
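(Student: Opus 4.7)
The plan is to find a completion $\q\in\cu^n(\ns_k)$ of $\varrho$ via a compactness argument based on the fact that $\cu^n(\ns_k)=\Supp(\mu^{\db{n}}\co(\gamma_k^{\db{n}})^{-1})$. For each $v\in\db{n}_{\leq k}$, using Proposition \ref{prop:opcontconv} together with the regularity of the Polish space $\ns_k$, I would choose a decreasing sequence of convolutional neighborhoods $U_i(v)\ni\varrho(v)$, each generated by a system $F_v^{(i)}=(f_{v,w}^{(i)})_{w\in K_{k+1}}$ of indicator functions on $\Omega$, and arranged so that $\bigcap_i\overline{U_i(v)}=\{\varrho(v)\}$. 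The key positivity claim is that for every $i$ the open cylinder $B_i:=\prod_{v\in\db{n}_{\leq k}}U_i(v)\times \ns_k^{\db{n}\setminus\db{n}_{\leq k}}$ has strictly positive measure under $\mu^{\db{n}}\co(\gamma_k^{\db{n}})^{-1}$. Granting this, one can pick $\q_i\in\cu^n(\ns_k)\cap B_i$ and extract a convergent subsequence in the compact space $\cu^n(\ns_k)$; the limit $\q$ is a cube satisfying $\q(v)\in\bigcap_j\overline{U_{i_j}(v)}=\{\varrho(v)\}$ for every $v\in\db{n}_{\leq k}$, as required.

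To reduce the positivity claim to a $U^n$-product computation, let $F=(\tilde f_v)_{v\in\db{n}}$ be the system with $\tilde f_v=[F_v^{(i)}]_{U^{k+1}}$ for $v\in\db{n}_{\leq k}$ and $\tilde f_v=1$ otherwise. By \eqref{eq:Udconvfact}, the non-negative function $[F_v^{(i)}]_{U^{k+1}}$ agrees $\lambda$-almost everywhere with $\xi(\cdot,F_v^{(i)})\co\gamma_k$, which is strictly positive precisely on $\gamma_k^{-1}(U_i(v))$. Since $\mu^{\db{n}}$ projects to $\lambda$ on each coordinate and the conjugations $\mc{C}^{|v|}$ in the $U^n$-product are inert on non-negative real factors, the positivity of $\langle F\rangle_{U^n}=\int\prod_v\tilde f_v\co p_v\,\ud\mu^{\db{n}}$ forces $\mu^{\db{n}}\big(\bigcap_v(\gamma_k\co p_v)^{-1}(U_i(v))\big)>0$, which is exactly the desired positivity of the measure of $B_i$.

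The heart of the argument is then to establish $\langle F\rangle_{U^n}>0$ by pushing the computation down to the factor $\ns_{k-1}$. Since $\db{n}_{\leq k}$ is simplicial of height $k$, Corollary \ref{cor:simpzerocor} applied with $d=k$ gives $\langle F\rangle_{U^n}=\langle G\rangle_{U^n}$ where $g_v=\mb{E}(\tilde f_v|\mc{F}_{k-1})$. For $v\in\db{n}_{\leq k}$, Lemma \ref{lem:convfactor} yields $g_v=g_v'\co\gamma_{k-1}$ for a continuous function $g_v':\ns_{k-1}\to[0,1]$ with $g_v'(\pi_{k,k-1}(\varrho(v)))>0$; for $v\notin\db{n}_{\leq k}$ we simply have $g_v=1$. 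By the inductive hypothesis (Theorem \ref{thm:MeasInvThm} for $k-1$), the map $\gamma_{k-1}^{\db{n}}$ pushes $\mu^{\db{n}}$ forward to the Haar measure on $\cu^n(\ns_{k-1})$, so
\[
\langle G\rangle_{U^n}=\int_{\cu^n(\ns_{k-1})}\prod_{v\in\db{n}_{\leq k}}g_v'\co p_v\,\ud\mu_{\cu^n(\ns_{k-1})}.
\]
Since $\ns_{k-1}$ is a $(k-1)$-step compact nilspace by induction, the morphism $\pi_{k,k-1}\co\varrho$ on $\db{n}_{\leq k}$ extends to a cube $\q_0\in\cu^n(\ns_{k-1})$ via unique completion of $k$-corners; at $\q_0$ the non-negative continuous integrand $\prod_v g_v'\co p_v$ is strictly positive, and the Haar measure $\mu_{\cu^n(\ns_{k-1})}$ is strictly positive on non-empty open subsets of $\cu^n(\ns_{k-1})$, so the integral is strictly positive. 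The main obstacle I anticipate is precisely this reduction step: getting the simplicial height of $\db{n}_{\leq k}$ to match the parameter $d=k$ in Corollary \ref{cor:simpzerocor}, so that each convolutional factor cleanly drops through $\gamma_{k-1}$ via Lemma \ref{lem:convfactor} and the nilspace structure furnished by the inductive hypothesis on $\ns_{k-1}$ can finish the job.
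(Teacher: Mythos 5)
Your proposal is correct and follows essentially the same route as the paper's proof: reduce to the positivity of a $U^n$-product of convolutions generating convolution neighbourhoods of the $\varrho(v)$ (via Proposition \ref{prop:opcontconv} and a compactness/subsequence extraction in $\cu^n(\ns_k)$), then use Corollary \ref{cor:simpzerocor} with the simplicial set $\db{n}_{\leq k}$ of height $k$ and Lemma \ref{lem:convfactor} to push the integrand down to continuous functions on $\ns_{k-1}$, and conclude by extending $\pi_{k,k-1}\co\varrho$ to a cube on the $(k-1)$-step nilspace $\ns_{k-1}$ and invoking strict positivity of the Haar measure on $\cu^n(\ns_{k-1})$. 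The only differences are presentational (you phrase the intermediate step as positivity of the cylinder $B_i$ under $\mu^{\db{n}}\co(\gamma_k^{\db{n}})^{-1}$, which is equivalent to the paper's formulation of the claim).
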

\begin{proof}
It suffices to prove the following claim: let $H=(h_v)_{v\in \db{n}}$ be a  system of functions $h_v\in L^\infty(\mc{A})$ such that for $|v|>k$ we have $h_v=1$ and for $|v|\leq k$ we have $h_v=[H_v]_{U^{k+1}}$ where $H_v$ is a system generating a convolution neighbourhood $U\sbr{v}$ of $\varrho(v)$;  then $\langle H\rangle_{U^n}>0$. To see that this suffices, for each $v\in \db{n}_{\leq k}$ let $(U_i\sbr{v})_{i\in \mb{N}}$ be a decreasing open neighbourhood basis of $\varrho(v)$ in $\ns_k$. Then by Proposition \ref{prop:opcontconv} for each $i$ there is a system $H_i$ of such functions $h_{i,v}$ that generate a convolution neighbourhood of $\varrho(v)$ included in $U_i\sbr{v}$. The claim above implies that for every $i$ there is a cube $\q_i\in \cu^n(\ns_k)$ such that $\q_i(v)\in U_i\sbr{v}$ for each $v\in \db{n}_{\leq k}$. By compactness of $\cu^n(\ns_k)$ there is then a subsequence of $(\q_i)_i$ converging to some $\q\in \cu^n(\ns_k)$, and by construction we then have $\q|_{\db{n}_{\leq k}}=\varrho$.

To prove the claim above, for each $v$ with $|v|\leq k$ let $g_v$ be the function obtained by applying Lemma \ref{lem:convfactor} to $h_v$, and let $G=(g_v\co\gamma_{k-1})_{v\in \db{n}}$ where $g_v=1$ for $|v|>k$. By Corollary \ref{cor:simpzerocor} applied with $S=\db{n}_{\leq k}$ we have $\langle H\rangle_{U^n}=\langle G\rangle_{U^n}$. By our inductive assumptions and the fact that $\ns_{k-1}$ is a compact nilspace and $\pi_{k-1}$ is measure-preserving, we have $\langle G\rangle_{U^n}=\int_{\cu^n(\ns_{k-1})} \prod_v g_v \co p_v \ud\nu$ where $\nu$ is the Haar measure on $\cu^n(\ns_{k-1})$. It follows from Lemma \ref{lem:pijctsmorph} that $\pi_{k-1}\co \varrho$ is also a morphism, and then by nilspace theory (see \cite[Lemma 3.1.5]{Cand:Notes1}) there is a cube $\q\in \cu^n(\ns_{k-1})$ such that $\q|_{\db{n}_{\leq k}}=\pi_{k-1}\co \varrho$. By Lemma \ref{lem:convfactor}, we have $g_v\big(\pi_{k-1}(\varrho(v)\big)>0$ for each $v$, whence $\prod_{v\in \db{n}} g_v\big(\q(v)\big)>0$. This implies $\int_{\cu^n(\ns_{k-1})} \prod_v g_v\co p_v\ud\nu >0$, since the integrand is continuous and positive at $\q$, and $\nu$ is a strictly positive measure (this is seen as in the proof of \cite[Proposition 2.2.11]{Cand:Notes2}). 
\end{proof}
\begin{proposition}
The corner-completion axiom holds on $\ns_k$.
\end{proposition}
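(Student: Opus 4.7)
The corner-completion axiom asserts that every morphism $\varrho:K_n\to\ns_k$ admits an extension to an $n$-cube in $\cu^n(\ns_k)$, for all $n\geq 1$. The plan is to split by dimension: for $n\leq k$ the axiom will follow from the ergodicity and conditional-independence axioms together with Theorem \ref{thm:sip}, which force the joint law of the corner coordinates under $\mu^{\db{n}}\co(\gamma_k^{\db{n}})^{-1}$ to cover the whole space of $n$-corners on $\ns_k$. The substantive case is $n\geq k+1$, where the key ingredients are Lemma \ref{lem:nkcorncomp} (existence of cubes extending morphisms on $\db{n}_{\leq k}$) and Corollary \ref{cor:uniquecomp} (uniqueness of $(k{+}1)$-corner completions).

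For $n=k+1$, note that $\db{k+1}_{\leq k}=K_{k+1}$, so any morphism $\varrho:K_{k+1}\to\ns_k$ already is a morphism on $\db{k+1}_{\leq k}$, and Lemma \ref{lem:nkcorncomp} yields the desired completion directly. For $n\geq k+2$, apply Lemma \ref{lem:nkcorncomp} to the restriction $\varrho|_{\db{n}_{\leq k}}$ (which is a morphism since morphisms restrict to morphisms) to obtain $\q\in\cu^n(\ns_k)$ with $\q|_{\db{n}_{\leq k}}=\varrho|_{\db{n}_{\leq k}}$. It then remains to verify that $\q(v)=\varrho(v)$ for every $v\in K_n$ with $|v|>k$.

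This verification will proceed by induction on $|v|$. Suppose $|v|=j>k$ and the equality holds for all vertices in $K_n$ of height strictly less than $j$. Choose any subset $J\subseteq\{i\in[n]:v_i=1\}$ of size $k+1$ (possible since $j\geq k+1$), and let $F=\{w\in\db{n}:w_i=v_i\text{ for all }i\notin J\}$, the $(k+1)$-face with free coordinates indexed by $J$. Then $v$ is the unique vertex of height $j$ in $F$, and every other vertex of $F$ has height strictly less than $j$. Moreover $F\subseteq K_n$: indeed, since $v\neq 1^n$, there exists $i_0\notin J$ with $v_{i_0}=0$, and every $w\in F$ satisfies $w_{i_0}=v_{i_0}=0$, so $1^n\notin F$. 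Consequently $\varrho|_F\in\cu^{k+1}(\ns_k)$ (because $\varrho$ is a morphism on $K_n\supseteq F$) and $\q|_F\in\cu^{k+1}(\ns_k)$ (because $\q\in\cu^n(\ns_k)$). By induction the two cubes agree on the $(k{+}1)$-corner $F\setminus\{v\}$, so by Corollary \ref{cor:uniquecomp} they coincide, giving $\varrho(v)=\q(v)$ and closing the induction.

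The delicate point is the dual role played by the face $F$ in the induction: it must be contained in $K_n$ so that $\varrho|_F$ is meaningful as a $(k{+}1)$-cube, while simultaneously having $v$ as its unique maximal vertex so that the induction hypothesis applies to $F\setminus\{v\}$ and uniqueness of corner completion can be invoked. The combinatorial observation that both conditions can be met simultaneously, by choosing the free coordinates among the ones of $v$ and using $v\neq 1^n$ to secure $F\subseteq K_n$, is what makes the induction go through.
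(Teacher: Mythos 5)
Your treatment of the main case $n\geq k+1$ is exactly the paper's argument: restrict the corner to $\db{n}_{\leq k}$, invoke Lemma \ref{lem:nkcorncomp} to get a cube $\q$ agreeing there, and then propagate agreement upward in height using uniqueness of $(k+1)$-corner completion (Corollary \ref{cor:uniquecomp}). Where the paper merely says ``arguing similarly'' for heights above $k+1$, you make the induction explicit by exhibiting, for each $v$ of height $j>k$, a $(k+1)$-face $F$ with free coordinates $J\subseteq\supp(v)$, $|J|=k+1$, so that $v$ is the unique vertex of $F$ of height $j$ and $1^n\notin F$; this is the correct way to fill in that step, and the two observations you isolate (that $F$ lies in the corner's domain and that all other vertices of $F$ fall under the induction hypothesis) are precisely what is needed.

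The one genuine soft spot is your dispatch of $n\leq k$. The assertion that ergodicity, conditional independence and Theorem \ref{thm:sip} ``force the joint law of the corner coordinates under $\mu^{\db{n}}\co(\gamma_k^{\db{n}})^{-1}$ to cover the whole space of $n$-corners'' is, for $3\leq n\leq k$, exactly the statement to be proved, and it does not follow from those ingredients as stated: locality of faces gives you that the projection of $\cu^n(\ns_k)$ onto $\db{n}\setminus\{1^n\}$ is all of $\ns_k^3$ when $n=2$ (so $n\leq 2$ is genuinely easy), but for larger $n$ the set of $n$-corners is a proper subset of the product cut out by the cube conditions on the maximal faces, and no covering statement of this kind has been established. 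The case is needed for the axiom, and it can be closed cheaply: given an $n$-corner $\q'$ with $2\leq n\leq k$, set $\wt{\q}'(v)=\q'(v\sbr{1},\ldots,v\sbr{n-1},v\sbr{n}\wedge\cdots\wedge v\sbr{k+1})$ on $\db{k+1}\setminus\{1^{k+1}\}$; its restriction to each maximal face $\{v\sbr{i}=0\}$ is an $(n-1)$-cube of $\q'$ composed with a cube morphism, hence a $k$-cube by the composition axiom already verified in Lemma \ref{lem:axioms1-2}, so $\wt{\q}'$ is a $(k+1)$-corner. Completing it by the case $n=k+1$ and composing the completion with the duplicating morphism $\db{n}\to\db{k+1}$ yields the required completion of $\q'$. (The paper's own proof is silent on $n\leq k$ as well, so this is a matter of making the argument self-contained rather than a wrong approach; but as written your justification for that case is not a proof.)
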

\begin{proof}
Let $\q':\db{n}\setminus\{1^n\} \to \ns_k$ be an $n$-corner. Let $\varrho=\q'|_{\db{n}_{\leq k}}$. By Lemma \ref{lem:nkcorncomp} there exists a cube $\q\in \cu^n(\ns_k)$ such that $\q|_{\db{n}_{\leq k}}=\varrho$. It suffices to show that $\q|_{\db{n}\setminus\{1^n\}}=\q'$. Note that for each $v\in \db{n}$ with $|v|=k+1$, the restrictions of $\q$ and $\q'$ to the $(k+1)$-face $\{w\in\db{n}: w \leq v\}$ are both completions of the restriction of $\q'$ to $\{w\in \db{n}: w < v\}$. By Corollary \ref{cor:uniquecomp} these restrictions are equal. Arguing similarly for each such $v$, it follows that actually $\q|_{\db{n}_{\leq k+1}}=\q'|_{\db{n}_{\leq k+1}}$. We can then argue similarly to deduce that $\q|_{\db{n}_{\leq k+j}}=\q'|_{\db{n}_{\leq k+j}}$ for $j=2,3,\ldots$, and thus we deduce that $\q|_{\db{n}\setminus\{1^n\}}=\q'$, as required.
\end{proof}
\section{On characteristic factors associated with nilpotent group actions}\label{sec:ergapps}
\noindent In \cite{HK}, Host and Kra carried out their groundbreaking analysis of ergodic $\mb{Z}$-actions by first defining a sequence of probability measures, denoted by $\mu^{[n]}$, $n\geq 0$, and then studying the characteristic factor associated with each such measure. In the language developed in this paper, these measures $\mu^{[n]}$ can be  checked to form a cubic coupling (see Definition \ref{def:HKcoup} below and the explanation thereafter). We refer to this cubic coupling as the \emph{Host--Kra coupling} associated with the given $\mb{Z}$-action. The Host--Kra seminorm associated with $\mu^{[n]}$ is then the corresponding $U^n$-seminorm in our language.

Our goal in this section is to generalize the Host--Kra couplings and related seminorms from \cite{HK}, and combine this with the main results from Section \ref{sec:structhm} to treat measure-preserving actions of  countable nilpotent groups. In particular, we obtain Theorem \ref{thm:HKgen} below, a generalization of the Host--Kra structure theorem \cite[Theorem 10.1]{HK}. 

Recall the notion of a filtered group $(G,G_\bullet)$ from Definition \ref{def:fg}. We may consider also the \emph{pre}filtration $G_\bullet^{+k}$, defined by setting its $i$-th term to be $G_{i+k}$. In this section we also assume that $G$ is countable and discrete.

As we saw in Theorem \ref{thm:k-level-erg}, the cubic coupling that we associate with an action of a nilpotent group $G$ depends on a choice of a filtration $G_\bullet$ on $G$. Indeed, the filtration yields the cube structure on $G$ consisting of the groups of $n$-cubes $\cu^n(G_\bullet)$, $n\geq 0$, and in Theorem \ref{thm:k-level-erg} the measure $\mu^{\db{n}}$ is supposed to be preserved by the action of $\cu^n(G_\bullet)$. Recall that this group is the subgroup of $G^{\db{n}}$ generated by elements of the form $g^F$, defined by $g^F(v)=g$ if $v\in F$ and $g^F(v)=\id_G$ otherwise, where $F$ is some face in $\db{n}$ of dimension $d$ and $g\in G_{n-d}$ (these cubes are detailed in \cite[\S 2.2.1]{Cand:Notes1} for instance). More generally, for each integer $k\geq 0$ we denote by $H_{n,k}$ the group $\cu^n(G_\bullet^{+k})$ (this is shown to be a group in \cite[\S 6]{GTorb}, for instance, where the notation $\textrm{HK}^n$ is used instead of $\cu^n$). Note that if no filtration is specified on $G$ then we can always let $G_\bullet$ be the lower central series.

We shall use the following basic result.
\begin{lemma}\label{lem:cubefilt}
Let $(G,G_\bullet)$ be a filtered group. Then $(H_{n,k})_{k\geq 0}$ is a filtration on $H_{n,0}$.
\end{lemma}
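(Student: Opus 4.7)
The plan is to verify the two defining properties of a filtration: the nestedness $H_{n,k+1}\subseteq H_{n,k}$ for all $k\geq 0$, and the commutator inclusion $[H_{n,i},H_{n,j}]\subseteq H_{n,i+j}$ for all $i,j\geq 0$. Nestedness is immediate from the generating sets: by definition $H_{n,k}=\cu^n(G_\bullet^{+k})$ is generated by the elements $g^F$ with $F$ a face of $\db{n}$ and $g\in G_{k+\codim F}$, and since $G_\bullet$ is decreasing we have $G_{(k+1)+\codim F}\subseteq G_{k+\codim F}$, so every generator of $H_{n,k+1}$ is already a generator of $H_{n,k}$.

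The commutator inclusion rests on a single pointwise calculation. For any faces $F_1,F_2\subseteq\db{n}$ and any $g_1,g_2\in G$, evaluating coordinatewise gives
\[
[g_1^{F_1},g_2^{F_2}]\;=\;[g_1,g_2]^{F_1\cap F_2},
\]
with the right-hand side interpreted as the identity of $G^{\db{n}}$ if $F_1\cap F_2=\emptyset$. If $g_1\in G_{i+\codim F_1}$ and $g_2\in G_{j+\codim F_2}$, then the filtration condition on $G_\bullet$ puts $[g_1,g_2]$ in $G_{i+j+\codim F_1+\codim F_2}$, and the elementary inequality $\codim(F_1\cap F_2)\le\codim F_1+\codim F_2$ (obtained from the description of faces by coordinate-fixings) shows $[g_1,g_2]\in G_{(i+j)+\codim(F_1\cap F_2)}$. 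Hence $[g_1,g_2]^{F_1\cap F_2}$ is a generator of $H_{n,i+j}$, settling the commutator inclusion on generators.

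To extend this to arbitrary $h_1\in H_{n,i}$, $h_2\in H_{n,j}$, I will apply the standard identities $[ab,c]=[a,c]^b[b,c]$ and $[a,bc]=[a,c][a,b]^c$ iteratively to decompositions of $h_1,h_2$ as products of generators, obtaining $[h_1,h_2]$ as a product of conjugates of generator-level commutators by words in $H_{n,0}$. This reduces matters to showing that $H_{n,i+j}$ is normal in $H_{n,0}$, which is itself an application of the same generator computation taken with indices $(0,i+j)$: for any generator $y$ of $H_{n,0}$ and any generator $x$ of $H_{n,i+j}$, the identity $y^{-1}xy=x\cdot[x,y]$ exhibits $y^{-1}xy$ as a product of two elements of $H_{n,i+j}$; since conjugation by $y$ is a group homomorphism this extends to all $x\in H_{n,i+j}$, and since $y^{-1}$ is again a generator of $H_{n,0}$ (each $G_{\codim F}$ being closed under inversion) the opposite inclusion follows by the same argument with $y^{-1}$ in place of $y$, so the normaliser of $H_{n,i+j}$ in $H_{n,0}$ contains every generator and hence equals $H_{n,0}$.

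The only real obstacle is keeping the logical order straight; the argument is non-circular because the normality of $H_{n,i+j}$ in $H_{n,0}$ is extracted from a single generator-level commutator identity that depends only on the hypothesis that $G_\bullet$ is a filtration, and the passage from generators to arbitrary elements is then routine commutator-calculus bookkeeping.
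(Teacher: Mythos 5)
Your proof is correct and follows essentially the same route as the paper's: the pointwise identity $[g_1^{F_1},g_2^{F_2}]=[g_1,g_2]^{F_1\cap F_2}$ on generators, normality of $H_{n,i+j}$ in $H_{n,0}$ deduced from that identity, and then a reduction of the general case to generators. The only difference is in the final bookkeeping — you expand $[h_1,h_2]$ via the standard identities $[ab,c]=[a,c]^b[b,c]$ and $[a,bc]=[a,c][a,b]^c$, whereas the paper runs an induction on word length with a coset-swapping argument — and your explicit check of the nestedness $H_{n,k+1}\subseteq H_{n,k}$ is a welcome addition the paper leaves implicit.
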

\begin{proof}
We aim to show that for every $\q_j\in H_{n,j}$, $\q_k\in H_{n,k}$ we have $[\q_j,\q_k]\in H_{n,j+k}$. Note first that this holds for all generators $g^F\in H_{n,j}$, $h^{F'}\in H_{n,k}$, since $[g^F,h^{F'}]=[g,h]^{F\cap F'}$, which is easily seen to be in $H_{n,j+k}$. Before we generalize from generators to arbitrary elements $\q_j\in H_{n,j}$, $\q_k\in H_{n,k}$, let us use this case of generators to prove that $H_{n,k}\lhd H_{n,0}$ for every $k\geq 0$ (this will be used for the general case). For every $\q'\in H_{n,k}$ and generator $g^F\in H_{n,0}$, since $\q'=h_1^{F_1'}\cdots h_r^{F_r'}$ for some generators $h_i^{F_i'}$, we have $(g^F)^{-1}\q' g^F = (g^F)^{-1}h_1^{F_1'} g^F\, (g^F)^{-1}h_2^{F_2'} g^F \cdots (g^F)^{-1}h_r^{F_r'} g^F$, and this is in $H_{n,k}$ since each factor $(g^F)^{-1}h_i^{F_i'} g^F$ is in $H_{n,k}$ by the case of generators. Now, given any $\q=g_1^{F_1}\cdots g_t^{F_t}\in H_{n,0}$ and $\q'\in H_{n,k}$, using the previous case we have $(g_1^{F_1})^{-1}\q' g_1^{F_1}=\q_1 \in H_{n,k}$, then we have $(g_2^{F_2})^{-1}\q_1 g_2^{F_2}=\q_2 \in H_{n,k}$, and so on iteratively until we conclude that $\q^{-1} \q' \q\in H_{n,k}$. We have thus proved that $H_{n,k}\lhd H_{n,0}$. Now, to show that $[\q_j,\q_k]\in H_{n,j+k}$ in general, we can argue by induction on $\ell(\q_j)+\ell(\q_k)$, where $\ell(\q_j)$ is a positive integer such that there is an expression of $\q_j$ as a product of generators $g_1^{F_1}\cdots g_\ell^{F_\ell}$ with $\ell\leq \ell(\q_j)$. For $\ell(\q_j)+\ell(\q_k)=2$ we are in the case of generators. For $\ell(\q_j)+\ell(\q_k)>2$, we show that the coset $[\q_j,\q_k]\,H_{n,j+k}$ is $H_{n,j+k}$. Letting $\q_j=g_1^{F_1}\cdots g_\ell^{F_\ell}$ and $\q_k=h_1^{F_1'}\cdots h_{\ell'}^{F_{\ell'}'}$, we have
\begin{equation}\label{eq:cubefilt}
[\q_j,\q_k]= (g_\ell^{-1})^{F_\ell}\cdots (g_1^{-1})^{F_1}\, (h_{\ell'}^{-1})^{F_{\ell'}'}\cdots (h_1^{-1})^{F_1'}\, g_1^{F_1}\cdots g_\ell^{F_\ell}\, h_1^{F_1'}\cdots h_{\ell'}^{F_{\ell'}'}.
\end{equation}
We then have $g_\ell^{F_\ell} h_1^{F_1'}= h_1^{F_1'} g_\ell^{F_\ell}[g_\ell^{F_\ell},h_1^{F_1'}]$, where $[g_\ell^{F_\ell},h_1^{F_1'}]\in H_{n,j+k}$. This together with $H_{n,j+k}\lhd H_{n,0}$ implies that the coset $[\q_j,\q_k]\,H_{n,j+k}$ is equal to the coset represented by the product in the right side of \eqref{eq:cubefilt} with $g_\ell^{F_\ell}$ and $h_1^{F_1'}$  swapped. Applying this repeatedly, the term $h_1^{F_1'}$ can be moved to the left in the product until it cancels $(h_1^{-1})^{F_1'}$. We can then conclude by induction that $[\q_j,\q_k]\,H_{n,j+k} = H_{n,j+k}$, so $[\q_j,\q_k]\in H_{n,j+k}$ as required.
\end{proof}
\noindent We identify $G^{\db{n}}\times G^{\db{n}}$ with $G^{\db{n+1}}$ by viewing an element $(g_0,g_1)$ of the former group as the element of $G^{\db{n+1}}$ whose value restricted to $\{0,1\}^n\times\{i\}$ is equal to $g_i$, for $i=0,1$. For a group $K$ and a normal subgroup $K_2\lhd K$, we define the following subgroup of $K\times K$:
\[
{\rm diag}(K,K_2):=\{(ga,gb):g\in K,\, a,b\in K_2\}.
\]
\begin{lemma}\label{lem:nilplem1}
Let $(G,G_\bullet)$ be a filtered group. For all $n,k\geq 0$, $H_{n+1,k}\!=\!{\rm diag}(H_{n,k},H_{n,k+1})$.
\end{lemma}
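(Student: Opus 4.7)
The plan is to prove both inclusions by exploiting the product decomposition $G^{\db{n+1}}\cong G^{\db{n}}\times G^{\db{n}}$ together with a classification of the generators of $H_{n+1,k}$ according to how their supporting face sits with respect to the two $n$-faces $\db{n}\times\{0\}$ and $\db{n}\times\{1\}$.

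First I would set up the generator analysis. The group $H_{n+1,k}=\cu^{n+1}(G_\bullet^{+k})$ is generated by the cubes $g^F$ where $F$ is a $d$-face of $\db{n+1}$ and $g\in G_{(n+1)-d+k}$. Under the identification with $G^{\db{n}}\times G^{\db{n}}$, such a generator falls into one of three types: (a) $F\subseteq \db{n}\times\{i\}$ for some $i\in\{0,1\}$, corresponding to a $d$-face $F'$ of $\db{n}$, giving $g^F=(g^{F'},\id)$ or $(\id,g^{F'})$; since $g\in G_{(n+1)-d+k}=G_{n-d+(k+1)}$, we have $g^{F'}\in H_{n,k+1}$; (b) $F=F'\times\{0,1\}$ for some $(d{-}1)$-face $F'$ of $\db{n}$, giving $g^F=(g^{F'},g^{F'})$; since $g\in G_{(n+1)-d+k}=G_{n-(d-1)+k}$, we have $g^{F'}\in H_{n,k}$. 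This is the essential geometric input.

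Next I would verify that $\mathrm{diag}(H_{n,k},H_{n,k+1})$ is a subgroup of $H_{n,0}\times H_{n,0}$. The key fact needed here is $H_{n,k+1}\lhd H_{n,k}$, which follows from Lemma \ref{lem:cubefilt}: for $g\in H_{n,k}$ and $a\in H_{n,k+1}$, $[a,g]\in H_{n,2k+1}\subseteq H_{n,k+1}$, so $g^{-1}ag=a[a,g]\in H_{n,k+1}$. Once normality is in hand, closure under products follows from $(ga,gb)(g'a',g'b')=(gg'\cdot a'',gg'\cdot b'')$ with $a'',b''\in H_{n,k+1}$ obtained by conjugating $a,b$ past $g'$, and similarly for inverses.

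For the inclusion $H_{n+1,k}\subseteq\mathrm{diag}(H_{n,k},H_{n,k+1})$, I would simply check that every generator from (a) and (b) lies in the diagonal: generators of type (a) sit in $(H_{n,k+1},\id)\cup(\id,H_{n,k+1})$, and generators of type (b) sit in the ``true diagonal'' $\{(g,g):g\in H_{n,k}\}$; both are subsets of $\mathrm{diag}(H_{n,k},H_{n,k+1})$ since it is a group containing $\id$. For the reverse inclusion, given $(ga,gb)$ with $g\in H_{n,k}$ and $a,b\in H_{n,k+1}$, I would write it as the product $(g,g)\,(a,\id)\,(\id,b)$ and show each factor lies in $H_{n+1,k}$: decomposing $g=\prod_i g_i^{F_i}$ with $g_i\in G_{n-d_i+k}$ gives $(g,g)=\prod_i g_i^{F_i\times\{0,1\}}$ which are type (b) generators of $H_{n+1,k}$, and similarly decomposing $a,b$ in $H_{n,k+1}$ produces type (a) generators.

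The main obstacle is essentially bookkeeping: keeping straight the shift $(n{+}1)-d+k=n-(d{-}1)+k$ versus $n-d+(k{+}1)$ when matching generators across cases, and verifying the normality $H_{n,k+1}\lhd H_{n,k}$ cleanly from Lemma \ref{lem:cubefilt} before using it to upgrade closure from generators to arbitrary elements of $\mathrm{diag}(H_{n,k},H_{n,k+1})$.
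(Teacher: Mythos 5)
Your proof is correct and follows essentially the same route as the paper's: both arguments identify the generators of $H_{n+1,k}$ under the identification $G^{\db{n+1}}\cong G^{\db{n}}\times G^{\db{n}}$ with the generators $(g_1^{F_1},g_1^{F_1})$, $(g_2^{F_2},\id)$, $(\id,g_2^{F_2})$ of ${\rm diag}(H_{n,k},H_{n,k+1})$, using that the latter is a group thanks to $H_{n,k+1}\lhd H_{n,k}$ (from Lemma \ref{lem:cubefilt}). You simply carry out in full the generator check that the paper declares straightforward.
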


\begin{proof} 
The group ${\rm diag}(H_{n,k},H_{n,k+1})$ is clearly generated by the elements of the form $(g_1^{F_1},g_1^{F_1})$, $(g_2^{F_2},1)$ and $(1,g_2^{F_2})$, for some faces $F_1$, $F_2$ in $\db{n}$, where $g_1\in G_{n-\dim(F_1)+k}$ and $g_2\in G_{n-\dim(F_2)+k+1}$. Using that $(g_1^{F_1},g_1^{F_1})=g_1^{F_1\times\{0,1\}}$, it is checked in a straightforward way that ${\rm diag}(H_{n,k},H_{n,k+1})$ and $H_{n+1,k}$ have equal sets of generators.
\end{proof}
\noindent Recall from Definition \ref{def:relsquare} the notion of the square of a measure relative to a factor. If $G$ acts on $\varOmega=(\Omega,\mc{A},\lambda)$ by measure-preserving transformations, then a set $B\in \mc{A}$ is said to be \emph{$G$-invariant} if we have $\lambda\big((g\cdot B) \Delta B\big)=0$ for every $g\in G$. The $G$-invariant sets form a sub-$\sigma$-algebra of $\mc{A}$, and the action of $G$ is \emph{ergodic} if every $G$-invariant set $B$ has $\lambda(B)\in\{0,1\}$. Recall also that a \emph{factor} of the measure-preserving system $(\varOmega,G)$ is a $\sigma$-algebra $\mc{B}\subset \mc{A}$ such that for every $g\in G$ and $B\in \mc{B}$ we have $g\cdot B\in \mc{B}$.
\begin{lemma}\label{lem:nilplem2}
Let $K$ be a group and $K_2$ be a normal subgroup of $K$. Suppose that $K$ acts on a probability space $\varOmega_0=(\Omega_0,\mathcal{A}_0,\lambda_0)$ by measure-preserving transformations. Let $\lambda_1\in\coup(\varOmega_0,\{0,1\})$ be the square of $\lambda_0$ relative to the $\sigma$-algebra of $K_2$-invariant sets. Then the group ${\rm diag}(K,K_2)$ acts by measure-preserving transformations on $(\varOmega_0^{\{0,1\}},\mc{A}_0^{\{0,1\}},\lambda_1)$.  
\end{lemma}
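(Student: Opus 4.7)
The plan is to verify invariance of $\lambda_1$ on a generating set of ${\rm diag}(K,K_2)$. By the definition ${\rm diag}(K,K_2)=\{(ga,gb):g\in K,\,a,b\in K_2\}$, this group is generated by the diagonal elements $(g,g)$ with $g\in K$ together with the side elements $(a,\id)$ and $(\id,b)$ with $a,b\in K_2$. Since the rectangles $A\times B$ with $A,B\in\mc{A}_0$ form a $\pi$-system generating $\mc{A}_0^{\{0,1\}}$, it suffices to check that each generator preserves the value $\lambda_1(A\times B)=\int_{\Omega_0}\mb{E}(1_A|\mc{I})\mb{E}(1_B|\mc{I})\ud\lambda_0$ given by \eqref{eq:relsquare1}, where $\mc{I}\subset\mc{A}_0$ denotes the sub-$\sigma$-algebra of $K_2$-invariant sets.

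Two basic features of $\mc{I}$ will be used repeatedly. First, because $K_2\lhd K$, each $g\in K$ preserves $\mc{I}$ up to null sets: if $B\in\mc{I}$ and $k\in K_2$, then $\tau_k\co\tau_g(B)=\tau_g\co\tau_{g^{-1}kg}(B)=_{\lambda_0}\tau_g(B)$ since $g^{-1}kg\in K_2$, so $\tau_g(B)\in\mc{I}$. Combined with \eqref{eq:exprel} applied to the measure-preserving map $\tau_g$, this yields $\mb{E}(f\co\tau_g|\mc{I})=_{\lambda_0}\mb{E}(f|\mc{I})\co\tau_g$ for every $f\in L^1(\mc{A}_0)$. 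Second, by the very definition of $\mc{I}$, for every $a\in K_2$ and every $\mc{I}$-measurable function $h$ we have $h\co\tau_a=_{\lambda_0}h$ (approximate $h$ by indicators of $K_2$-invariant sets).

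The three verifications then go as follows. For $g\in K$, using the first feature and that $\tau_g$ preserves $\lambda_0$,
\[
\lambda_1\bigl(\tau_g^{-1}A\times\tau_g^{-1}B\bigr)=\int_{\Omega_0}\bigl(\mb{E}(1_A|\mc{I})\co\tau_g\bigr)\bigl(\mb{E}(1_B|\mc{I})\co\tau_g\bigr)\ud\lambda_0=\lambda_1(A\times B),
\]
so $(g,g)$ preserves $\lambda_1$. For $a\in K_2$, the first feature gives $\mb{E}(1_A\co\tau_a|\mc{I})=\mb{E}(1_A|\mc{I})\co\tau_a$, and the second feature then yields $\mb{E}(1_A|\mc{I})\co\tau_a=_{\lambda_0}\mb{E}(1_A|\mc{I})$; hence
\[
\lambda_1\bigl(\tau_a^{-1}A\times B\bigr)=\int_{\Omega_0}\mb{E}(1_A|\mc{I})\,\mb{E}(1_B|\mc{I})\ud\lambda_0=\lambda_1(A\times B),
\]
so $(a,\id)$ preserves $\lambda_1$. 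The case $(\id,b)$ for $b\in K_2$ is symmetric. This exhausts a generating set of ${\rm diag}(K,K_2)$, so every element of this group preserves $\lambda_1$, as required.

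The only genuinely delicate point is ensuring that $K$ really does act on $\mc{I}$ by $\sigma$-algebra automorphisms up to null sets, which rests squarely on the normality of $K_2$ in $K$; everything else is a routine manipulation of conditional expectations and \eqref{eq:exprel}. No additional issues arise, since we work throughout at the level of $L^\infty$ rectangles where the identity \eqref{eq:relsquare1} can be invoked directly.
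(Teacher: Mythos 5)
Your proof is correct and follows essentially the same route as the paper's: both arguments rest on the normality of $K_2$ giving $\mb{E}(f\co\tau_g|\mc{I})=\mb{E}(f|\mc{I})\co\tau_g$, on the $K_2$-invariance of $\mc{I}$-measurable functions, and on the measure-preserving property of $g$, applied to the defining formula for the relative square. The only cosmetic differences are that the paper reduces to rank-1 products of bounded functions rather than indicator rectangles, and treats a general element $(ga,gb)$ in one chain of equalities instead of factoring it into the three generators you use.
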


\begin{proof} 
Let $\mc{B}$ denote the $\sigma$-algebra of $K_2$-invariant sets in $\mc{A}_0$. Since $K_2\lhd K$, we have\footnote{Indeed, for every $k\in K$, $k_2\in K_2$ and $B\in \mc{B}$, since $k^{-1}k_2k\in K_2$ we have $k_2\cdot k\cdot B = k\cdot(k^{-1}k_2k)\cdot B = k\cdot B$.} that $\mc{B}$ is a factor of $(\varOmega_0,K)$. In particular $\mb{E}(f^g|\mc{B})=\mb{E}(f|\mc{B})^g$ for every $g\in K$ and $f\in L^\infty(\varOmega_0)$ (where $f^g(\omega):=f(g( \omega))$). By Lemma \ref{lem:pisysapprox}, it suffices to prove that for every function $h$ on $\Omega_0^{\{0,1\}}$ of the form $(f_0\co p_0)(f_1\co p_1)$ with $f_0,f_1\in L^\infty(\varOmega_0)$, for all $t\in {\rm diag}(K,K_2)$ we have $\int h^t \ud\lambda_1=\int h \ud\lambda_1$. Let $g\in K$, $a,b\in K_2$ satisfy $t =(ga,gb)$. By \eqref{eq:relsquare2} we have $\int h^t \ud\lambda_1 = \int \mb{E}(f_0^{ga}\,|\mc{B})\, \mb{E}(f_1^{gb}\,|\mc{B}) \ud\lambda_0 = \int \mb{E}(f_0^{g}\,|\mc{B})^a\, \mb{E}(f_1^{g}\,|\mc{B})^b \ud\lambda_0$ $= \int \mb{E}(f_0^{g}\,|\mc{B}) \, \mb{E}(f_1^{g}\,|\mc{B}) \ud\lambda_0$, and this equals $\int \big(\mb{E}(f_0\,|\mc{B}) \, \mb{E}(f_1\,|\mc{B})\big)^g \ud\lambda_0=\int \mb{E}(f_0\,|\mc{B})\, \mb{E}(f_1\,|\mc{B}) \ud\lambda_0=\int h \ud\lambda_1$.
\end{proof}
\noindent In what follows we shall often say that a filtered group $(G,G_\bullet)$ ``acts on a probability space" just to mean that $G$ acts on the space by measure-preserving transformations. We can now generalize the couplings introduced by Host and Kra in \cite{HK}.
\begin{defn}[Host--Kra couplings for filtered groups]\label{def:HKcoup}
Let $\varOmega=(\Omega,\mc{A},\lambda)$ be a probability space, and let $(G,G_\bullet)$ be a filtered group acting on $\varOmega$ by measure-preserving transformations. For each $n\in \mb{N}$ we define an $H_{n,0}$-invariant measure $\mu^{\db{n}}\in\coup(\varOmega,\db{n})$ recursively as follows. We set $\mu^{\db{0}}:=\lambda$. Having defined $\mu^{\db{n}}$, let $I_n$ be the $\sigma$-algebra of $H_{n,1}$-invariant sets. Then we define $\mu^{\db{n+1}}$ to be the square of $\mu^{\db{n}}$ relative to $I_n$.
\end{defn}
\noindent The fact that $H_{n,1}$ is normal in $H_{n,0}$ implies that $I_n$ is a factor of the measure-preserving system $\big((\Omega^{\db{n}},\mu^{\db{n}}),H_{n,0}\big)$, so we can apply Lemma  \ref{lem:nilplem2} and then Lemma \ref{lem:nilplem1} to deduce that $\mu^{\db{n+1}}$ is indeed $H_{n+1,0}$-invariant, and the recursion can thus proceed.

The construction in Definition \ref{def:HKcoup} generalizes the construction of the measures $\mu^{[n]}$ in \cite[\S 3.1]{HK}. Indeed, the latter construction concerns the $\mb{Z}$-action generated by a single transformation $T$, and if we let $\mb{Z}_\bullet$ be the lower central series on $\mb{Z}$ then $H_{n,1}=\langle T^{\db{n}} \rangle$, so the $\sigma$-algebra of $T^{\db{n}}$-invariant sets used in \cite{HK} is precisely the $\sigma$-algebra $I_n$ used above. 

Given a power $\varOmega^S$ of a probability space $\varOmega$, and given a bijection $\theta:S\to S$ (more generally, a group $G$ of such bijections), recall that the \emph{coordinatewise action} of $\theta$ (or $G$) on $\varOmega^S$ is the measure-preserving action defined by $\theta\cdot ((\omega_v)_{v\in S})=(\omega_{\theta(v)})_{v\in S}$ (for each $\theta\in G$). To study the symmetries of Host--Kra couplings we use the following result.

\begin{lemma}\label{lem:nilplem3}
Let $K$ be a nilpotent group and $K_2$ be a subgroup of $K$ with $[K,K]\leq K_2$. Suppose that $K$ acts on a probability space $\varOmega_0=(\Omega_0,\mc{A}_0,\lambda_0)$ by measure-preserving transformations. Let $\lambda_1$ be the square of $\lambda_0$ relative to the $\sigma$-algebra of $K$-invariant sets. Let $\lambda_2$ be the square of $\lambda_1$ relative to the $\sigma$-algebra of ${\rm diag}(K,K_2)$-invariant sets. Then the measure $\lambda_2$ is invariant under the coordinatewise action of $\aut(\db{2})$.
\end{lemma}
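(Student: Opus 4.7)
The plan is to verify invariance of $\lambda_2$ under each of three involutions generating $\aut(\db{2})$, namely $\sigma_1:(i,j)\mapsto(1-i,j)$, $\sigma_2:(i,j)\mapsto(i,1-j)$, and $\tau:(i,j)\mapsto(j,i)$. Invariance under $\sigma_2$, the swap of the two outer copies of $\lambda_1$, is immediate from Definition \ref{def:relsquare} (cf.\ Lemma \ref{lem:IdempCondProp}(iv)). For $\sigma_1$, let $\alpha$ denote the coordinate swap $(\omega_0,\omega_1)\mapsto(\omega_1,\omega_0)$ on $\Omega_0^{\{0,1\}}$. The measure $\lambda_1$ is $\alpha$-invariant (again being a relative square), and $\mc{B}'$ is $\alpha$-stable because $\alpha$ conjugates $(ga,gb)\in{\rm diag}(K,K_2)$ to $(gb,ga)\in{\rm diag}(K,K_2)$. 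Hence by \eqref{eq:exprel}, $\mb{E}_{\lambda_1}(f\circ\alpha|\mc{B}')=\mb{E}_{\lambda_1}(f|\mc{B}')\circ\alpha$, and substituting into \eqref{eq:relsquare2} yields $\sigma_1$-invariance.

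The crux is $\tau$-invariance, where the hypothesis $[K,K]\le K_2$ is essential: it makes $\bar K:=K/K_2$ abelian. By Lemma \ref{lem:pisysapprox}, the task reduces to showing
\[
\int \mb{E}(f_{00}\otimes f_{10}|\mc{B}')\,\mb{E}(f_{01}\otimes f_{11}|\mc{B}')\,\ud\lambda_1\;=\;\int \mb{E}(f_{00}\otimes f_{01}|\mc{B}')\,\mb{E}(f_{10}\otimes f_{11}|\mc{B}')\,\ud\lambda_1
\]
for $f_{ij}\in L^\infty(\varOmega_0)$. Let $\mc{B}_2$ denote the $K_2$-invariant sets in $\mc{A}_0$. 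Since ${\rm diag}(K,K_2)\supseteq K_2\times K_2$, one has $\mc{B}'\subseteq\mc{B}_2\otimes\mc{B}_2$, and a short calculation using the formula for $\lambda_1$ as a relative square gives $\mb{E}_{\lambda_1}(f\otimes g|\mc{B}_2\otimes\mc{B}_2)=Q_2f\otimes Q_2g$, so by the tower property one may replace each $f_{ij}$ by $Q_2f_{ij}\in L^\infty(\mc{B}_2)$. These descend to functions on the quotient $\bar\Omega_0=\Omega_0/K_2$, on which the abelian group $\bar K$ acts unitarily; my plan is to decompose this $\bar K$-action into isotypic components indexed by characters $\chi\in\hat{\bar K}$ with isotypic projections $\pi_\chi$, so that $\mb{E}(f\otimes g|\mc{B}')=\sum_\chi\pi_\chi f\otimes\pi_{\chi^{-1}}g$ as an identity in $L^2(\lambda_1)$. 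This identity is justified directly: the right-hand side is diagonal-$\bar K$-invariant, while each cross term $\pi_\chi f\otimes\pi_{\chi'}g$ with $\chi\chi'\ne 1$ is orthogonal to $L^2(\mc{B}')$ because the diagonal $\bar K$-action preserves $\lambda_1$, so integrating against any diagonal-$\bar K$-invariant $h$ gives $\int\cdot\ud\lambda_1=(\chi\chi')(\bar k)\int\cdot\ud\lambda_1$ for every $\bar k\in\bar K$. Substituting into the displayed identity and using that the projection $Q$ onto $\mc{B}$-invariants agrees with $\pi_1$, while a product $\pi_\chi u\cdot\pi_{\chi'}v$ lies in the $\chi\chi'$-isotypic component (so its $\lambda_0$-integral vanishes unless $\chi\chi'=1$), both sides collapse to
\[
\sum_\chi \int (\pi_\chi f_{00})(\pi_{\chi^{-1}}f_{10})(\pi_{\chi^{-1}}f_{01})(\pi_\chi f_{11})\,\ud\lambda_0,
\]
which is manifestly invariant under $f_{10}\leftrightarrow f_{01}$.

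The main obstacle will be making the isotypic decomposition rigorous when $\bar K$ is assumed neither compact nor discrete, so that a literal sum over characters need not converge. The cleanest route is to interpret the decomposition as a direct integral of Hilbert spaces over the Pontryagin dual $\hat{\bar K}$, working pointwise in the spectral variable; alternatively, one can avoid characters altogether by invoking the mean ergodic theorem for the (abelian, hence amenable) group $\bar K$ to realise $\mb{E}(\cdot|\mc{B}')$ as an $L^2$-limit of averages of diagonal $\bar K$-translates along a Folner sequence, and then perform the same cancellation directly using the $\bar K$-invariance of $\lambda_0$ and the fact that $\lambda_1$ is the relative square of $\lambda_0$ over $\mc{B}$.
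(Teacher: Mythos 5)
Your handling of $\sigma_1$ and $\sigma_2$ is correct, and your reduction of the $\tau$-invariance to functions $f_{ij}\in L^\infty(\mc{B}_2)$ is essentially the same reduction the paper performs (there it is phrased as relative independence of $\lambda_2$ over its $\mc{B}_2$-factor, proved via the mean ergodic theorem for $K_2^{\db{2}}$). One small caveat there: ${\rm diag}(K,K_2)\supseteq K_2\times K_2$ only gives $\mc{B}'\subseteq\mc{I}_{K_2\times K_2}$, and the further containment $\mc{I}_{K_2\times K_2}\subseteq_{\lambda_1}\mc{B}_2\otimes\mc{B}_2$ is not automatic for a general joining — it needs the F\o lner-average argument exploiting that $\lambda_1$ is a relative square; this is fixable but should be said.

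The genuine gap is in the crux step. The identity $\mb{E}(f\otimes g|\mc{B}')=\sum_\chi\pi_\chi f\otimes\pi_{\chi^{-1}}g$ presupposes $f=\sum_\chi\pi_\chi f$, i.e.\ that $L^2$ of the quotient system is the closed span of the eigenfunctions of the $\bar K$-action. This holds only for systems with discrete spectrum; in general the eigenfunctions span only the Kronecker factor, and your orthogonality argument (cross terms with $\chi\chi'\neq 1$ are orthogonal to $L^2(\mc{B}')$) says nothing about $\mb{E}(f^{\perp}\otimes g|\mc{B}')$, where $f^{\perp}$ is the component of $f$ orthogonal to every eigenfunction. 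Showing that this term vanishes — i.e.\ that the Kronecker factor is characteristic for $\mb{E}(\cdot\,|\mc{B}')$ — is the substantive content of the $\tau$-symmetry, and neither fallback you propose supplies it. The direct-integral formulation converts the question into one about convolutions of spectral measures, which is essentially a rerun of the Host--Kra analysis; and the F\o lner-average route merely restates the desired symmetry as
\[
\lim_N\tfrac{1}{|F_N|}\sum_{k\in F_N}\int\mb{E}(f_{00}f_{01}^k|\mc{B})\,\mb{E}(f_{10}f_{11}^k|\mc{B})\ud\lambda_0
=\lim_N\tfrac{1}{|F_N|}\sum_{k\in F_N}\int\mb{E}(f_{00}f_{10}^k|\mc{B})\,\mb{E}(f_{01}f_{11}^k|\mc{B})\ud\lambda_0,
\]
which is not a formal consequence of the invariance of $\lambda_0$ and the relative-square structure: there is no cancellation to ``perform directly'' without the spectral/weak-mixing analysis. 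The paper sidesteps exactly this point: after reducing to the abelian quotient $K/K_2$ acting on $(\Omega_0,\mc{B}_2,\lambda_0)$, it identifies $_{\mc{B}_2|}\lambda_2$ with the Host--Kra coupling $\mu^{\db{2}}$ of that abelian system and invokes the known symmetry, \cite[Proposition 3.7]{HK} as extended to countable abelian group actions in \cite[Lemma A.14]{BTZ}. To close your argument you must either cite those results or supply the missing van der Corput / weak-mixing step proving $\mb{E}(f^{\perp}\otimes g|\mc{B}')=0$ (and, since ergodicity of the $K$-action is not assumed, do so relative to $\mc{B}$).
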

\noindent Here $\aut(\db{2})$ denotes the group of automorphisms of the cube $\db{2}=\{0,1\}^2$, that is, the group of bijections $\{0,1\}^2\to \{0,1\}^2$ that extend to affine homomorphisms $\mb{Z}^2\to\mb{Z}^2$.
\begin{proof}
We first claim that $\lambda_1$ is invariant under the action of ${\rm diag}(K,K_2)$. To prove this, we first note that by Lemma \ref{lem:nilplem2} applied with the pair $K\leq K$, we have that $\lambda_1$ is invariant under  the action of $K\times K$ (note that $K\times K={\rm diag}(K,K)$). Now using Lemma \ref{lem:nilplem2} again for the pair ${\rm diag}(K,K_2)\leq {\rm diag}(K,K_2)$ we obtain that $\lambda_2$ is invariant under the action of ${\rm diag}(K,K_2)\times{\rm diag}(K,K_2)$. In particular $\lambda_2$ is invariant under the action of $K_2^{\db{2}}$. We will use this to show that $\lambda_2$ is relatively independent over its factor $_{\mc{B}_2|}\lambda_2$, where $\mc{B}_2$ is the $\sigma$-algebra of $K_2$-invariant sets. To this end it suffices to prove that for every system $(f_v)_{v\in \db{2}}$ of functions $f_v\in L^\infty(\mc{A}_0)$ we have
\begin{equation}\label{eq:meanergapp}
\int \prod_{v\in \db{2}}f_v\co p_v \,\ud\lambda_2=\int \prod_{v\in \db{2}}\mathbb{E}(f_v|\mathcal{B}_2)\co p_v \,\ud\lambda_2.
\end{equation}
By the mean ergodic theorem for amenable groups, for every function $f\in L^\infty(\mc{A}_0)$ the projection $\mb{E}(f|\mc{B}_2)$ is the limit in $L^2(\mc{A}_0)$ of averages of the form $|F_n|^{-1}\sum_{g\in F_n} f^g$, where $(F_n)_{n\in \mb{N}}$ is a F\o lner sequence in $K_2$ (see \cite[Theorem 2.1]{Weiss}). Replacing the conditional expectations by such averages and using the $K_2^{\db{2}}$ invariance, we deduce \eqref{eq:meanergapp}, and the claimed relative independence follows. 
Given this relative independence, to show that $\lambda_2$ is invariant under the action of $\aut(\db{2})$ it suffices to prove it for the factor coupling $_{\mc{B}_2|}\lambda_2$. Since $K_2$ acts trivially on $\mc{B}_2$, we have that $_{\mc{B}_2|}\lambda_2$ is in fact equal to the Host--Kra coupling $\mu^{\db{2}}$ for the action $K/K_2$ on $(\Omega_0,\mc{B}_2,\lambda_0)$. But $K/K_2$ is an abelian group, so now the desired invariance follows from the original argument of  Host and Kra, which was extended for actions of arbitrary countable abelian groups in \cite[Appendix A]{BTZ} (see \cite[Lemma A.14]{BTZ} and \cite[Proposition 3.7]{HK}).
\end{proof}
\noindent The following theorem will enable us to apply our main results from Section \ref{sec:structhm}.

\begin{theorem}\label{thm:nilaction}
Let $(G,G_\bullet)$ be a filtered group acting ergodically on a probability space $\varOmega=(\Omega,\mathcal{A},\lambda)$, and let $(\mu^{\db{n}})_{n\geq 0}$ be the associated sequence of Host--Kra couplings. Then $\big(\varOmega,(\mu^{\db{n}})_{n\geq 0}\big)$ is a cubic coupling. Moreover, the group $\cu^n(G_\bullet)$ acts on $\Omega^{\db{n}}$ by transformations preserving the measure $\mu^{\db{n}}$.
\end{theorem}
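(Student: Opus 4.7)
The plan is to verify the three axioms of Definition \ref{def:cc-idemp} together with the $\cu^n(G_\bullet)$-invariance. The heart of the argument is an induction showing that each $\mu^{\db{n}}$ is invariant under the coordinatewise action of $\aut(\db{n})$; once this is in hand, face consistency and the full idempotence axiom reduce to things that the square construction delivers for a single pair of opposite faces.

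First I would dispatch the easy pieces. The $H_{n,0}$-invariance (i.e. $\cu^n(G_\bullet)$-invariance) of $\mu^{\db{n}}$ follows by induction on $n$ exactly as indicated after Definition \ref{def:HKcoup}, combining Lemma \ref{lem:nilplem1} (which identifies $H_{n,0}$ with ${\rm diag}(H_{n-1,0},H_{n-1,1})$) with Lemma \ref{lem:nilplem2}. For the ergodicity axiom, I would use the assumption that $G$ acts ergodically to conclude that the $\sigma$-algebra $I_0$ of $H_{0,1}=G_1$-invariant sets is trivial mod $\lambda$, so that the square of $\lambda$ relative to the trivial $\sigma$-algebra is the independent product $\lambda\times\lambda$, giving $\mu^{\db{1}}=\lambda\times\lambda$ as required.

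The main work, and the main obstacle, is establishing that $\mu^{\db{n}}$ is invariant under the coordinatewise action of $\aut(\db{n})$ for every $n$. I would argue by induction on $n$, the cases $n\leq 1$ being trivial. For the inductive step, identify $\db{n}=\db{n-1}\times\{0,1\}$ and observe two sources of symmetry. First, since the square construction is natural in the underlying measure, the induction hypothesis for $\mu^{\db{n-1}}$ promotes $\aut(\db{n-1})$-invariance of $\mu^{\db{n-1}}$ to invariance of $\mu^{\db{n}}$ under $\sigma\times\id$ for all $\sigma\in\aut(\db{n-1})$; the intrinsic symmetry of the relative square also yields invariance under the flip of the last coordinate. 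Second, I would apply Lemma \ref{lem:nilplem3} with $\varOmega_0=(\Omega^{\db{n-2}},\mu^{\db{n-2}})$, $K=H_{n-2,1}$ and $K_2=H_{n-2,2}$: the hypothesis $[K,K]\leq K_2$ is guaranteed by Lemma \ref{lem:cubefilt}, and Lemma \ref{lem:nilplem1} identifies ${\rm diag}(H_{n-2,1},H_{n-2,2})$ with $H_{n-1,1}$, so that the measure $\lambda_2$ produced by that lemma is precisely $\mu^{\db{n}}$. This yields invariance of $\mu^{\db{n}}$ under the $\aut(\db{2})$-action on the last two coordinates. The two symmetry groups thus obtained generate all of $\aut(\db{n})$, since together they provide every reflection and every adjacent transposition of coordinates. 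The delicate point that must be checked is precisely the compatibility between the cube filtration $(H_{n-2,k})_{k\geq 0}$ and the condition $[K,K]\leq K_2$ needed to apply Lemma \ref{lem:nilplem3}; this is exactly what Lemma \ref{lem:cubefilt} is designed to supply.

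With $\aut(\db{n})$-invariance in hand, the remaining two axioms are straightforward. For face consistency, it suffices by $\aut(\db{n})$-invariance to verify $\mu^{\db{n}}_{\phi}=\mu^{\db{m}}$ for the single face embedding $\phi:v\mapsto(v,0^{n-m})$, and this I would do by descending induction on $n-m$: the formula \eqref{eq:relsquare1} defining the relative square shows that the marginal of $\mu^{\db{n}}$ on $\db{n-1}\times\{0\}$ is exactly $\mu^{\db{n-1}}$, and iterating this collapses the last $n-m$ coordinates to give the desired identification. For the idempotence axiom, the coupling $\mu^{\db{n}}$ is by definition the square of $\mu^{\db{n-1}}$ relative to $I_{n-1}$, so by Proposition \ref{prop:idemp} it is idempotent along the bijection $\beta$ between the opposite codimension-one faces $\db{n-1}\times\{0\}$ and $\db{n-1}\times\{1\}$. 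Since $\aut(\db{n})$ acts transitively on pairs of opposite $(n-1)$-faces and preserves $\mu^{\db{n}}$, idempotence transfers to every such pair. This completes verification of the three axioms of Definition \ref{def:cc-idemp}, and hence, via Lemma \ref{lem:deduc2}, of Definition \ref{def:cc}.
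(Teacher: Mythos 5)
Your proposal is correct and follows essentially the same route as the paper's proof: the same induction establishing $\aut(\db{n})$-invariance via Lemma \ref{lem:nilplem3} applied with $K=H_{n-2,1}$, $K_2=H_{n-2,2}$ (justified by Lemmas \ref{lem:cubefilt} and \ref{lem:nilplem1}), followed by the same reductions of face consistency and idempotence to the symmetric case. The only cosmetic difference is that you obtain the reflection of the last coordinate directly from the $\aut(\db{2})$-symmetry of the relative square, whereas the paper derives it by conjugating $\sigma_{n-1}$ with the transposition of the last two coordinates; both are valid.
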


\begin{proof}
We check that the three axioms from Definition \ref{def:cc-idemp} are satisfied.

For the ergodicity axiom, note that since $H_{0,1}=G$ and the action of $G$ is ergodic, we have that $I_0$ is the trivial $\sigma$-algebra, so $\mu^{\db{1}}$ is the product measure $\lambda\times\lambda$ on $(\Omega,\mc{A})^2$.

To check the other two axioms, first we prove the fact that $\mu^{\db{n}}$ is invariant under the coordinatewise action of $\aut(\db{n})$. We argue by induction on $n$, noting first that this fact is clear for $\mu^{\db{1}}$, since $\aut(\db{1})$ consists only of the reflection $v\sbr{1}\mapsto 1-v\sbr{1}$, which indeed leaves $\lambda\times\lambda$ invariant. 
Suppose by induction that $\mu^{\db{n-1}}$ is invariant under the action of $\aut(\db{n-1})$.
It follows that $\mu^{\db{n}}$ is invariant under all automorphisms $\phi$ of the form $\phi(v)|_{[n-1]}=\phi'(v\sbr{1},\ldots,v\sbr{n-1})$, $\phi(v)\sbr{n}=v\sbr{n}$, for some $\phi'\in \aut(\db{n-1})$. To prove that $\mu^{\db{n}}$ is invariant under all of $\aut(\db{n})$, we apply Lemma \ref{lem:nilplem3} with $\lambda_0=\mu^{\db{n-2}}$,  $K=H_{n-2,1}$, and $K_2=H_{n-2,2}$. This gives us that the measure $\mu^{\db{n}}$, when viewed as $\lambda_2$ in that lemma (i.e.\ as a self-coupling of $\mu^{\db{n-2}}$ indexed by $\db{2}$)  is invariant under the action of $\aut(\db{2})$. In particular, it is invariant under swapping the two coordinates of elements of $\db{2}$, and this implies by induction that $\mu^{\db{n}}$ is invariant under every element of $\aut(\db{n})$ that just permutes coordinates of $v$. Moreover $\mu^{\db{n}}$ is also invariant under the reflection $\sigma_n$ that sends $v\sbr{n}$ to $1-v\sbr{n}$. Indeed we have $\sigma_n=\theta^{-1} \co  \sigma_{n-1}\co \theta$ where $\theta$ permutes $v\sbr{n},v\sbr{n-1}$ and $\sigma_{n-1}$ is the reflection sending $v\sbr{n-1}$ to $1-v\sbr{n-1}$, and we already know that $\mu^{\db{n}}$ is invariant under $\theta$ and $\sigma_{n-1}$. It follows by induction that $\mu^{\db{n}}$ is invariant under all reflections $\sigma_j$, $j\in [n]$. The claimed invariance of $\mu^{\db{n}}$ follows.

To check the face consistency axiom, let $\phi:\db{m}\to\db{n}$ be a face map, assuming without loss of generality that $m<n$. To show that $\mu^{\db{n}}_\phi=\mu^{\db{m}}$, we can suppose, by composing $\phi$ with an element of $\aut(\db{n})$ and using the last paragraph, that $\phi(\db{m})=\db{m}\times \{0^{n-m}\}$. But then the desired equality $\mu^{\db{n}}_\phi=\mu^{\db{m}}$ follows clearly from the construction of $\mu^{\db{n}}$. 

The idempotence axiom holds, in the case of faces $F_i=\{v\in \db{n}: v\sbr{n}=i\}$, $i=0,1$, by construction of $\mu^{\db{n}}$ as a relative square of $\mu^{\db{n-1}}$ and by Lemma \ref{lem:IdempCondProp} $(iv)$. This together with the invariance under $\aut(\db{n})$ implies the idempotence axiom in full generality.

Since $\cu^n(G_\bullet)=H_{n,0}$, by Definition \ref{def:HKcoup} the action of $\cu^n(G_\bullet)$ preserves $\mu^{\db{n}}$.
\end{proof}
\begin{defn}[$U^k$-seminorms for ergodic filtered-group actions]\label{def:ergsn}
Let $(G,G_\bullet)$ be a filtered group acting ergodically on a probability space $\varOmega$. The \emph{$k$-th uniformity seminorm} on $\big(\varOmega,(G,G_\bullet)\big)$ is the seminorm $\|\cdot \|_{U^k}$ associated with $\mu^{\db{k}}$ as per Corollary \ref{cor:udsn}.
\end{defn}
\noindent We shall use Theorem \ref{thm:k-level-erg} to describe the characteristic factors corresponding to these seminorms. First let us define these factors, which requires the following result.
\begin{lemma}\label{lem:HKfactor}
Let $(G,G_\bullet)$ be a filtered group acting ergodically on a probability space $\varOmega=(\Omega,\mc{A},\lambda)$. For every $k\in \mb{N}$, there is a $\sigma$-algebra $\mc{H}_k\subset \mc{A}$ such that 
\begin{equation}\label{eq:ergFk}
L^\infty(\mc{H}_k)=\{ f\in L^\infty(\mc{A}):\, \forall g\in L^\infty(\mc{A})\textrm{ with }\|g\|_{U^{k+1}}=0,\textrm{ we have }\mb{E}(f\overline{g})=0\}. 
\end{equation}
Moreover $\mc{H}_k$ is a factor of the system $(\varOmega,G)$ and is unique up to $\lambda$-null sets.
\end{lemma}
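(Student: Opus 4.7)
The plan is to set $\mathcal{H}_k := \mathcal{F}_k$, the $k$-th Fourier $\sigma$-algebra (Definition \ref{def:FourierAlg}) of the Host-Kra cubic coupling produced by Theorem \ref{thm:nilaction} for the filtered system $\big(\varOmega,(G,G_\bullet)\big)$. The characterization \eqref{eq:ergFk} is then exactly the statement of Corollary \ref{cor:noisebotalg} applied with $d=k+1$, so the defining property is immediate. What remains is $G$-invariance and uniqueness.

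For $G$-invariance, the first step is to note that for every $g\in G=G_0$ the diagonal transformation $g^{\db{k+1}}:\Omega^{\db{k+1}}\to \Omega^{\db{k+1}}$, $(\omega_v)_v\mapsto (g\cdot \omega_v)_v$, coincides with the generator $g^F$ of $\cu^{k+1}(G_\bullet)=H_{k+1,0}$ corresponding to the full face $F=\db{k+1}$, so by Theorem \ref{thm:nilaction} it preserves $\mu^{\db{k+1}}$. A change of variables in the integral defining $\|\cdot\|_{U^{k+1}}$ then yields $\|f\co g\|_{U^{k+1}}=\|f\|_{U^{k+1}}$ for every $f\in L^\infty(\mathcal{A})$ and every $g\in G$. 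Given $f\in L^\infty(\mathcal{H}_k)$ and $g\in G$, I then verify that $f\co g^{-1}\in L^\infty(\mathcal{H}_k)$ via \eqref{eq:ergFk}: for any $h\in L^\infty(\mathcal{A})$ with $\|h\|_{U^{k+1}}=0$ the seminorm-invariance just established gives $\|h\co g\|_{U^{k+1}}=0$, so, using that $g$ preserves $\lambda$,
\[
\mb{E}\big((f\co g^{-1})\,\overline{h}\big)=\mb{E}\big(f\,\overline{h\co g}\big)=0,
\]
the last equality being \eqref{eq:ergFk} applied to $f$. Applying this to indicator functions of sets $B\in\mathcal{H}_k$ shows that $g\cdot B$ agrees up to a $\lambda$-null set with a set in $\mathcal{H}_k$, so $\mathcal{H}_k$ is a factor of the system $(\varOmega,G)$.

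For uniqueness, if $\mathcal{H}_k'\subset\mathcal{A}$ is another sub-$\sigma$-algebra satisfying \eqref{eq:ergFk}, then the $L^\infty$-spaces agree, $L^\infty(\mathcal{H}_k)=L^\infty(\mathcal{H}_k')$. For $B\in\mathcal{H}_k$, the indicator $1_B$ lies in $L^\infty(\mathcal{H}_k')$, so there exists $B'\in\mathcal{H}_k'$ with $\lambda(B\,\triangle\, B')=0$; the symmetric argument gives the reverse inclusion, so $\mathcal{H}_k=_\lambda \mathcal{H}_k'$.

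The only mildly subtle point is the identification of the diagonal map $g^{\db{k+1}}$ with a generator of the cube group $\cu^{k+1}(G_\bullet)$, which requires $g\in G_0$; this is where the convention $G_0=G$ is used. Everything else is a direct transcription of the results on Fourier $\sigma$-algebras from Section \ref{sec:cc}, combined with the measure-preserving properties established in Theorem \ref{thm:nilaction}.
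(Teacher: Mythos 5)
Your proposal is correct and follows essentially the same route as the paper: set $\mathcal{H}_k=\mathcal{F}_k$ for the Host--Kra cubic coupling from Theorem \ref{thm:nilaction}, invoke Corollary \ref{cor:noisebotalg} for \eqref{eq:ergFk}, and use the fact that the diagonal $G$-action on $\Omega^{\db{k+1}}$ preserves $\mu^{\db{k+1}}$ (being a sub-action of $H_{k+1,0}$) to get invariance, with the same trivial uniqueness argument. The only cosmetic difference is in the invariance step: the paper checks that the generating convolutions are permuted by the action (showing $[F^g]_{U^{k+1}}=[F]_{U^{k+1}}^g$), whereas you verify invariance directly from the orthogonality characterization via the $g$-invariance of $\|\cdot\|_{U^{k+1}}$ --- both arguments rest on exactly the same measure-preservation fact, and yours is equally valid given that the factor property is (as the statement itself indicates) only meaningful up to $\lambda$-null sets.
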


\begin{proof}
We know by Theorem \ref{thm:nilaction} that $\Omega$ together with the Host--Kra couplings $\mu^{\db{n}}$ is a cubic coupling. We then let $\mc{H}_k$ be the Fourier $\sigma$-algebra $\mc{F}_k$ corresponding to this cubic coupling. Corollary \ref{cor:noisebotalg} then gives us \eqref{eq:ergFk}. To show that $\mc{H}_k$ is factor of $(\varOmega,G)$, we first note the fact that for each $n$ the diagonal action\footnote{That is, the action defined by $g\cdot (\omega_v)_{v\in\db{n}}=(g\cdot \omega_v)_{v\in\db{n}}$, for any $g\in G$.} of $G$ on $\Omega^{\db{n}}$ preserves $\mu^{\db{n}}$. This follows from Definition \ref{def:HKcoup}, since this is a sub-action of the action of $H_{n,0}$, and we know that the latter action preserves $\mu^{\db{n}}$. Given this, we can show that $\mc{F}_k$ is preserved by the action of $G$ as follows. Let $F$ be a system of functions $f_v\in L^\infty(\mc{A})$, $v\in K_{k+1}$, and for any $g\in G$ let $F^g$ denote the system $(f_v^g)_{v\in K_{k+1}}$. Then for every function $h\in L^\infty(\mc{A})$, the invariance of $\lambda$ under $g$ implies that $\langle [F]_{U^{k+1}},h\rangle = \langle [F]^g_{U^{k+1}},h^g\rangle$, and the invariance of $\mu^{\db{k+1}}$ under the diagonal element $(g)_{v\in \db{k+1}}$ implies that $\langle [F]_{U^{k+1}},h\rangle = \langle [F^g]_{U^{k+1}},h^g\rangle$. Since $h$ was arbitrary it follows that $[F^g]_{U^{k+1}}=[F]_{U^{k+1}}^g$. Hence,  shifts of convolutions $[F]_{U^{k+1}}$ by elements $g$ are again such convolutions. Since these convolutions generate $\mc{F}_k$, the  invariance of $\mc{F}_k$ follows. To see the uniqueness, note that $\mc{H}_k$ is defined by describing $L^\infty(\mc{H}_k)$ in \eqref{eq:ergFk}, so any other $\sigma$-algebra $\mc{B}$ satisfying \eqref{eq:ergFk} must satisfy $\mc{B}=_\lambda \mc{H}_k$.
\end{proof}

\begin{defn}\label{def:HKfactor}
Let $(G,G_\bullet)$ be a filtered group acting ergodically on a probability space $\varOmega=(\Omega,\mc{A},\lambda)$. We call the $\sigma$-algebra $\mc{H}_k$ from Lemma \ref{lem:HKfactor} the $k$-th \emph{Host--Kra factor} of the system $(\varOmega, (G,G_\bullet))$. We say that $(\varOmega,(G,G_\bullet))$ is a \emph{system of order} $k$ if $\mc{A}=_\lambda \mc{H}_k$. When we do not specify a particular filtration on $G$, and speak only of the \emph{Host--Kra factors on} $(\varOmega,G)$, we always take implicitly $G_\bullet$ to be the lower central series on $G$.
\end{defn}
\noindent This notion of system of order $k$ extends the one introduced by Host and Kra in \cite[Definition 4.10]{HK}. Every factor as defined above, with the induced action of $G$, is itself a measure-preserving system, and we can now characterize these systems using our results from Section \ref{sec:structhm}. We formulate this characterization in terms of a class of measure-preserving systems which we define next. As recalled in the previous section, a compact nilspace $\ns$ is naturally equipped with a filtered group $\tran(\ns)$ of \emph{translations} on $\ns$; see \cite{CamSzeg} or \cite[\S 2.9]{Cand:Notes2}. These translations are a special kind of homeomorphisms from $\ns$ to itself that preserve the cube structure and also the Haar measure on $\ns$.

\begin{defn}[Nilspace systems]
A \emph{nilspace system} is a triple $(\ns,G,\phi)$ where $\ns$ is a compact nilspace, where $G$ is a group, and $\phi:G\to\tran(\ns)$ is a group homomorphism. If $G_\bullet$ is a filtration on $G$,  and $\phi$ is a \emph{filtered-group} homomorphism, then we call $(\ns, (G,G_\bullet), \phi)$ a \emph{filtered nilspace system}. We say that $(\ns, G, \phi)$ (or $(\ns, (G,G_\bullet), \phi)$) is  \emph{$k$-step} if $\ns$ is $k$-step.
\end{defn}
\noindent Thus the action of $G$ on $\ns$ is defined by $g\cdot x:= \phi(g)\, (x)$. Note that $(\ns,G,\phi)$ can be viewed as a measure-preserving system by equipping $\ns$ with its Haar probability measure $\mu_{\ns}$, which is invariant under any translation. We say that the nilspace system is \emph{ergodic} if $G$ acts ergodically relative to $\mu_X$. We can now obtain the main result of this section.
\begin{theorem}\label{thm:ergthyapp}
Let $(G,G_\bullet)$ be a finite-degree filtered group acting ergodically on a Borel probability space $\varOmega$. Then for each $k$, the $k$-th Host--Kra factor of $(\varOmega, (G,G_\bullet))$ is isomorphic to the ergodic $k$-step filtered nilspace system $\big(\ns_k, (G,G_\bullet), \wh{\gamma_k}\big)$, with $\ns_k$, $\wh{\gamma_k}$ as given by Theorem \ref{thm:k-level-erg}.
\end{theorem}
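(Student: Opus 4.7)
The strategy is to assemble three pieces that have already been established: the cubic-coupling axioms for the Host--Kra measures (Theorem \ref{thm:nilaction}), the nilspace structure of the factor generated by the Fourier $\sigma$-algebras (Theorem \ref{thm:MeasInvThm}), and the promotion of $\gamma_k$ to a $G$-equivariant map via the induced translation homomorphism $\wh{\gamma_k}$ (Theorem \ref{thm:k-level-erg}). The only genuine work is to check that these combine into an \emph{isomorphism} of ergodic measure-preserving systems.

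First I would apply Theorem \ref{thm:nilaction} to conclude that $\big(\varOmega,(\mu^{\db{n}})_{n\geq 0}\big)$ is a cubic coupling, and that $\cu^n(G_\bullet)=H_{n,0}$ preserves $\mu^{\db{n}}$ for each $n$. In particular the hypothesis of Theorem \ref{thm:k-level-erg} is met. Applying Theorem \ref{thm:MeasInvThm} then yields a compact $k$-step nilspace $\ns_k$ and a measure-preserving factor map $\gamma_k:(\Omega,\mc{A},\lambda)\to(\ns_k,\mc{B}_k,\mu_{\ns_k})$. By Lemma \ref{lem:gammaFk} we have $\gamma_k^{-1}(\mc{B}_k)=_\lambda\mc{F}_k$, and by the construction of $\mc{H}_k$ in Lemma \ref{lem:HKfactor} we have $\mc{H}_k=\mc{F}_k$. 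Consequently $\gamma_k$ induces an isomorphism of measure algebras between $(\Omega,\mc{H}_k,\lambda|_{\mc{H}_k})$ and $(\ns_k,\mc{B}_k,\mu_{\ns_k})$: the map is measure-preserving, and every $\mc{H}_k$-set is (up to a $\lambda$-null set) the pullback of a unique Borel set in $\ns_k$.

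Next I would invoke Theorem \ref{thm:k-level-erg} to obtain the filtered-group homomorphism $\wh{\gamma_k}:(G,G_\bullet)\to\tran(\ns_k)$ satisfying $\gamma_k\co g\,=_\lambda\,\wh{\gamma_k}(g)\co\gamma_k$ for every $g\in G$. This intertwining identity, combined with the measure-algebra isomorphism from the previous step, shows that $\gamma_k$ is an isomorphism of $G$-systems from $(\Omega,\mc{H}_k,\lambda,G)$ to the filtered nilspace system $\big(\ns_k,(G,G_\bullet),\wh{\gamma_k}\big)$, where $G$ acts on $\ns_k$ via $\wh{\gamma_k}$ and preserves the Haar measure $\mu_{\ns_k}$ (because translations do, by \cite[Proposition 2.2.5]{Cand:Notes2}).

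It remains to verify ergodicity of $(\ns_k,(G,G_\bullet),\wh{\gamma_k})$. If $B\in\mc{B}_k$ is $G$-invariant in $\ns_k$, then $\gamma_k^{-1}(B)\in\mc{A}$ is $G$-invariant in $\Omega$ (using the intertwining up to null sets), so ergodicity of the original action forces $\lambda(\gamma_k^{-1}(B))\in\{0,1\}$; since $\gamma_k$ is measure-preserving, $\mu_{\ns_k}(B)\in\{0,1\}$, yielding ergodicity. The main obstacle in writing this out carefully is purely bookkeeping: one must handle the $\lambda$-null ambiguity in $\gamma_k$ and in the identity $\gamma_k\co g=_\lambda\wh{\gamma_k}(g)\co\gamma_k$ so as to obtain a genuine isomorphism of measure-preserving systems rather than only of measure algebras; this is routine using the standard passage to the measure-algebra category (as in \cite[Chapter 2]{Furst}), which makes all almost-sure identifications canonical.
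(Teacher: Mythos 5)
Your proposal is correct and follows essentially the same route as the paper's own proof: invoke Theorem \ref{thm:nilaction} to get the cubic-coupling structure, then Theorems \ref{thm:MeasInvThm} and \ref{thm:k-level-erg} for the nilspace factor and the equivariant map $\wh{\gamma_k}$, identifying $\mc{H}_k$ with $\mc{F}_k$ and deducing ergodicity from that of the original system. The paper states this more tersely; your additional bookkeeping about the measure-algebra identification via Lemma \ref{lem:gammaFk} is exactly the content the paper leaves implicit.
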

\begin{proof}
By Theorem \ref{thm:nilaction} the Host--Kra couplings associated with the given system form a cubic coupling, and the action of each group $\cu^n(G_\bullet)$ on $\Omega^{\db{n}}$ preserves $\mu^{\db{n}}$. Applying Theorem \ref{thm:k-level-erg} we obtain that the $k$-th Host--Kra factor of $(\varOmega, (G,G_\bullet))$ is isomorphic to $\big(\ns_k, (G,G_\bullet), \wh{\gamma_k}\big)$. The ergodicity of this system follows from that of $(\varOmega, (G,G_\bullet))$.
\end{proof}
\noindent Recall from \cite[Chapter 11, \S 1.1]{HKbook} that if $L$ is a $k$-step nilpotent Lie group, with a lattice $\Gamma$, with Haar measure $\mu$ on $L/\Gamma$, and $T:L/\Gamma\to L/\Gamma$ is a transformation $x\mapsto \tau\cdot x$ for some $\tau\in L$, then $(L/\Gamma,\mu,T)$ is a (measure theoretic) \emph{$k$-step nilsystem}. There is a natural generalization to multiple transformations: for a discrete group $G$, we say that $(L/\Gamma, \mu, G)$ is a nilsystem if $G$ acts on $L/\Gamma$ via a group homomorphism $\phi : G\to L$, i.e.\ $(g, x)\mapsto \phi(g)\cdot x$. Turning $L/\Gamma$ into a nilspace using the natural cube structure (see \cite[Proposition 1.1.2]{Cand:Notes2}), and noting that $x\mapsto \tau\cdot x$ is then a translation in $\tran(L/\Gamma)$, we see that nilsystems  are examples of nilspace systems. It turns out that the latter systems can often be usefully expressed in terms of the former. For example, from the existing theory of compact nilspaces it follows that every ergodic nilspace system $(\ns,G,\phi)$ with finitely generated group $G$ is an inverse limit of nilsystems. This is proved in \cite[Theorem 5.1]{CGS}, and can also be derived from \cite[Theorem 1.29]{GMV3}. Thus, Theorem \ref{thm:ergthyapp} yields the following generalization of the Host--Kra structure theorem \cite[Theorem 10.1]{HK}.

\begin{theorem}\label{thm:HKgen}
Let $G$ be a finitely generated nilpotent group acting ergodically on a Borel probability space $\varOmega$, and let $G_\bullet$ be a filtration on $G$. Then for each positive integer $k$ the $k$-th Host--Kra factor of $(\varOmega, (G,G_\bullet))$ is isomorphic to an inverse limit of $k$-step nilsystems.
\end{theorem}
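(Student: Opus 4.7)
The plan is to deduce this theorem directly from Theorem \ref{thm:ergthyapp} together with an existing structural result for ergodic nilspace systems whose acting group is finitely generated, as hinted in the paragraph immediately preceding the statement.

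First I would apply Theorem \ref{thm:ergthyapp} to the system $(\varOmega,(G,G_\bullet))$. Since $G$ is nilpotent and $G_\bullet$ can be assumed to have finite degree (by truncating at the first term of the lower central series of $G$ that lies in $G_{k+1}$, since only cubes up to dimension $k+1$ matter for the $k$-th Host-Kra factor, or more simply by noting that the lower central series of $G$ is itself a finite-degree filtration), Theorem \ref{thm:ergthyapp} provides a compact $k$-step nilspace $\ns_k$ together with a filtered-group homomorphism $\widehat{\gamma_k}:(G,G_\bullet)\to \tran(\ns_k)$ such that the $k$-th Host-Kra factor of $(\varOmega,(G,G_\bullet))$ is isomorphic, as a measure-preserving system, to the ergodic $k$-step filtered nilspace system $\big(\ns_k,(G,G_\bullet),\widehat{\gamma_k}\big)$.

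Next I would observe that since $G$ is finitely generated, so is its image $\widehat{\gamma_k}(G)\leq \tran(\ns_k)$, and in particular the action of $G$ on $\ns_k$ factors through a finitely generated subgroup of the translation group. The ergodicity of the original action passes to the action on $\ns_k$ via the isomorphism produced above. Hence $\big(\ns_k,G,\widehat{\gamma_k}\big)$ is an ergodic $k$-step compact nilspace system whose acting group is finitely generated. At this point I would invoke the existing structure theorem for such systems, stated in the excerpt as a consequence of \cite[Theorem 5.2]{CGS} (or \cite[Theorem 1.29]{GMV3}), to conclude that this nilspace system is an inverse limit of $k$-step nilsystems. Combining this with the isomorphism from the first step yields the desired conclusion.

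The main content of the argument is really contained in Theorem \ref{thm:ergthyapp}, whose proof is the technically substantial part; the present deduction is essentially a transfer from the nilspace category to the nilmanifold category using finitely-generatedness. Accordingly, the only step that requires any care is verifying the hypotheses of the cited inverse-limit theorem, namely ergodicity (inherited from $(\varOmega,G)$ through the isomorphism given by Theorem \ref{thm:ergthyapp}) and the finite generation of the acting translation subgroup (inherited from $G$). No new estimates or constructions beyond those already developed in Sections \ref{sec:cc}--\ref{sec:structhm} and Theorem \ref{thm:ergthyapp} are needed.
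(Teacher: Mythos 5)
Your proposal is correct and follows essentially the same route as the paper: apply Theorem \ref{thm:ergthyapp} to identify the $k$-th Host-Kra factor with an ergodic $k$-step filtered nilspace system, then invoke \cite[Theorem 5.2]{CGS} using the finite generation of $G$. Your extra remark on reducing to a finite-degree filtration (so that Theorem \ref{thm:ergthyapp} applies as stated) is a reasonable point of care that the paper's own two-line proof passes over silently.
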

\noindent In particular, if $(\varOmega, (G,G_\bullet))$ is of order $k$ then it is an inverse limit of $k$-step nilsystems.
\begin{proof}
By Theorem \ref{thm:ergthyapp} the $k$-th Host--Kra factor of $(\varOmega, (G,G_\bullet))$ is isomorphic to an ergodic $k$-step filtered nilspace system. Since $G$ is finitely generated, this nilspace system is an inverse limit of $k$-step nilsystems, by \cite[Theorem 5.2]{CGS}.
\end{proof}

\begin{remark}\label{rem:ergavs}
Using Theorem \ref{thm:ergthyapp} (resp.\ Theorem \ref{thm:HKgen}), the analysis of the asymptotic behaviour of a multiple ergodic average for a nilpotent group action can be reduced to the analysis of the corresponding average on a nilspace system (resp.\ nilsystem), provided that the average can be controlled by one of the seminorms from Definition \ref{def:ergsn} (in the usual sense of ``control" used in this area; see \cite[\S 2.3]{Fra} and the estimates in \cite[(7)]{Fra}). While the family of averages controllable this way clearly includes the ones treated in \cite{HK}, there are also averages of interest in the area which it does not include (for instance, the averages in \cite{Host} are treated with seminorms whose construction differs from ours). Determining exactly which averages are controlled by each of the seminorms in Definition \ref{def:ergsn} is an interesting and potentially vast project which we do not pursue in this paper.
\end{remark}

\section{On cubic exchangeability}\label{sec:exchange} 

\noindent In this section we denote by $S$ a countable set. We denote by $\db{S}$ the set of elements $v\in \{0,1\}^S$ with only finitely many coordinates $v\sbr{i}$ equal to $1$, that is $\db{S}=\{0,1\}^S\cap \bigoplus_{i\in S}\mb{Z}$.

\begin{defn}
A map $\phi:\db{S_1}\to \db{S_2}$ is a \emph{cube morphism} if it extends to an affine homomorphism from $\bigoplus_{i\in S_1}\mb{Z}$ to $\bigoplus_{i\in S_2}\mb{Z}$.
\end{defn}
\begin{remark}
This generalizes the notion of a morphism between discrete cubes of finite dimension, introduced in \cite{CamSzeg} (see also \cite[\S 1.1]{Cand:Notes1}). It can be checked that $\phi:\db{S_1}\to \db{S_2}$ is a morphism if and only if for every $j\in S_2$, the function $v\mapsto \phi(v)\sbr{j}$ is either constant, or for some $i\in S_1$ it is $v\mapsto v\sbr{i}$ or $v\mapsto 1-v\sbr{i}$, and the following properties hold:
\begin{enumerate}
\item There are only finitely many $j\in S_2$ such that $v\mapsto \phi(v)\sbr{j}$ is either the constant 1 or is $v\mapsto 1-v\sbr{i}$ for some $i\in S_1$.
\item For each $i\in S_1$ there are only finitely many $j$ such that $v\mapsto \phi(v)\sbr{j}$ is $v\mapsto v\sbr{i}$.
\end{enumerate}
\end{remark}
\noindent If for each $i\in S_1$ there is exactly one $j\in S_2$ such that $v\mapsto \phi(v)\sbr{j}$ a non-constant function of $v\sbr{i}$, then we call $\phi$ a \emph{face map}. Note that, for $k\in \mb{N}$, face maps from $\db{k}$ to $\db{k}$ are bijective, but this is not necessarily true for face maps from $\db{\mb{N}}$ to $\db{\mb{N}}$. 

A set $F\subset \db{S}$ is a \emph{face} if it is of the form $\db{S'}\times z$ where $S'\subset S$ and $z\in\db{S\setminus S'}$. We say that $S'$ is the set of \emph{free coordinates} of $F$. We say that two faces are \emph{independent} if they have trivial intersection and their sets of free coordinates are disjoint. Note that face maps take faces to faces, but this is not true for cube morphisms in general. 

Let $\Bo$ be a standard Borel space, with Borel $\sigma$-algebra $\mc{B}$. Since $\db{\mb{N}}$ is countable, the product set $\Bo^{\db{\mb{N}}}$ with the $\sigma$-algebra $\bigotimes_{v\in\db{\mb{N}}} \mc{B}$ is also a standard Borel space. Our goal is to characterize Borel probabilities on $\Bo^{\db{\mb{N}}}$ that have the following properties.

\begin{defn}\label{def:cubexch}
A Borel probability measure $\mu$ on $\Bo^{\db{\mb{N}}}$ is \emph{cubic exchangeable} if it has the following \emph{consistency property}: for every $k\geq 0$ and every pair of injective morphisms $\phi_1,\phi_2:\db{k}\to\db{\mb{N}}$, we have $\mu_{\phi_1}\!=\!\mu_{\phi_2}$. We say that $\mu$ has the \emph{independence property} if for all finite independent faces $F_1,F_2\subset \db{\mb{N}}$, the $\sigma$-algebras $\mc{B}^{\db{\mb{N}}}_{F_1}, \mc{B}^{\db{\mb{N}}}_{F_2}$ are independent in $\mu$. 
\end{defn}

\noindent Note that the consistency property above is the consistency axiom from Definition \ref{def:cc} formulated for a measure on a product space of the form $\Bo^{\db{\mb{N}}}$.

\begin{remark}\label{rem:exchmappres} 
Let $\mu$ be a cubic exchangeable measure on $\Bo^{\db{\mb{N}}}$ and let $\phi:{\db{\mb{N}}}\to{\db{\mb{N}}}$ be an injective morphism. Since $\mu$ and the subcoupling $\mu_\phi$ are determined by their marginals on finite subsets of $\db{\mb{N}}$, and $\db{\mb{N}}$ is the union of finite-dimensional faces, the consistency property of $\mu$ implies that $\mu_\phi=\mu$. Let $\wh{\phi}$ be the map $\Bo^{\db{\mb{N}}}\to \Bo^{\db{\mb{N}}}$ obtained by first projecting to the coordinates in $\phi(\db{\mb{N}})$ and then relabeling the coordinates using $\phi^{-1}$. Then the property $\mu_\phi=\mu$ implies that $\wh{\phi}$ preserves the measure $\mu$.
\end{remark}
\begin{remark} A useful probabilistic viewpoint concerning measures on $\Bo^{\db{\mb{N}}}$ is to consider them as joint distributions of $\Bo$-valued random variables $Y_v$ indexed by the elements $v\in \db{\mb{N}}$. In this language, the independence property of $\mu$ means that $\{Y_v\}_{v\in F_1}$ is independent from $\{Y_v\}_{v\in F_2}$ whenever $F_1$ and $F_2$ are independent faces.
\end{remark}
\begin{remark}\label{rem:exrels}
As mentioned in the introduction, in \cite[\S 16]{Aldous2} Aldous considered a property related to cubic exchangeability. A measure $\mu$ on $\Bo^{\db{\mb{N}}}$ has this property if it is invariant under all transformations of $\Bo^{\db{\mb{N}}}$ induced by automorphisms of $\db{\mb{N}}$. These automorphisms form the  group that we denote by $\aut(\db{\mb{N}})$, which is isomorphic to $S^\infty\ltimes \mb{Z}_2^\infty$, where $S^\infty$ denotes the group of finitely-supported permutations of $\mb{N}$, and $\mb{Z}_2^\infty=\bigoplus_{i\in \mb{N}}\mb{Z}_2$. Note that cubic exchangeability as per Definition \ref{def:cubexch} implies this $\aut(\db{\mb{N}})$-exchangeability property of Aldous. Indeed, given $\theta\in \aut(\db{\mb{N}})$, for $m$ sufficiently large, the set $\db{m}\times \{0^{\mb{N}\setminus[m]}\}$ is  globally invariant under $\theta$. We can thus view $\theta$ as an injective morphism $\db{m}\to \db{\mb{N}}$, and deduce from cubic exchangeability of $\mu$ that $\mu$ is $\theta$-invariant. Note also that cubic exchangeability is strictly stronger than $\aut(\db{\mb{N}})$-exchangeability, because not all injective morphisms can be viewed as automorphisms this way (automorphisms take faces to faces in $\db{\mb{N}}$, whereas injective morphisms can take faces to subcubes that are not faces). In \cite[\S 5.3]{Austin2}, Austin observed that it is also natural to consider a stronger variant of $\aut(\db{\mb{N}})$-exchangeability, in which $\mu$ is required to be invariant not just under the action of $\aut(\db{\mb{N}})$, but rather under the action of the full \emph{affine} automorphism group of $\db{\mb{N}}$, denoted by $\textrm{Aff}(\mb{F}_2^\infty)$ (identifying $\db{\mb{N}}$ and $\mb{F}_2^\infty$ as sets), which is isomorphic to $\textrm{GL}(\mb{F}_2^\infty)\ltimes \mb{Z}_2^\infty$. It can be checked that this stronger property implies cubic exchangeability, using the fact that for every injective morphism $\phi:\db{n}\to\db{m}$, viewing $\phi$ as a map from $\db{n}\times\{0^{m-n}\}\subset \db{m}$ to $\db{m}$ the obvious way, there is a matrix $M\in \textrm{GL}(\mb{F}_2^m)$ and $w\in \mb{F}_2^m$ such that the affine linear map $\mb{F}_2^m\to \mb{F}_2^m$, $v\mapsto M(v)+w$ agrees with $\phi$ on $\db{n}\times\{0^{m-n}\}$.
\end{remark}

\smallskip

\noindent Recall that a compact nilspace $\ns$ is equipped with cube sets $\cu^n(\ns)$ for each $n\geq 0$, on each of which we can define a Haar probability measure $\mu_{\cu^n(\ns)}$. We can then define morphisms from $\db{\mb{N}}$ to $\ns$ by declaring that $\phi:\db{\mb{N}}\to\ns$ is a morphism if for every integer $n\geq 0$, for every cube morphism $\psi:\db{n}\to \db{\mb{N}}$ we have $\phi\co \psi\in\cu^n(\ns)$. The measures $\mu_{\cu^n(\ns)}$ can be put together to determine a well-defined probability on $\ns^{\db{\mb{N}}}$ (see Remark \ref{rem:infinitecc} below), which enables us in particular to define a random morphism $\phi:\db{\mb{N}}\to\ns$.

To formulate the main theorem of this section, the following construction is crucial.

\medskip
\noindent{\it The nilspace construction of cubic exchangeable measures}:~let $\mc{P}(\Bo)$ denote the standard Borel space consisting of the set of Borel probability measures on $\Bo$  equipped with the $\sigma$-algebra generated by the maps $A\mapsto \mu(A)$, $A\in \mc{B}$ (see  \cite[p.\ 113]{Ke}). Let $\ns$ be a compact nilspace and let $m:\ns\to\mc{P}(\Bo)$ be a Borel  map. Let $\phi:\db{\mb{N}}\to \ns$ be a random morphism. Then $\big(\phi(v)\big)_{v\in\db{\mb{N}}}$ is a sequence of $\ns$-valued random variables. Now we introduce a second randomization in which for every $v$ independently we choose an element $Y_v\in \Bo$ with distribution $m(\phi(v))$. We denote by $\zeta_{\ns,m}$ the resulting Borel probability measure on $\Bo^{\db{\mb{N}}}$, thus $\zeta_{\ns,m}$ is the joint distribution of the sequence of random variables $\big(Y_v\big)_{v\in\db{\mb{N}}}$.

\smallskip

The main result of this section can now be stated.

\begin{theorem}\label{thm:mainexch1}
Let $\mu$ be a Borel probability on $\Bo^{\db{\mb{N}}}$. Then the following statements hold:
\begin{enumerate}[leftmargin=0.7cm]
\item The measure $\mu$ is cubic exchangeable with the independence property if and only if $\mu=\zeta_{\ns,m}$ where $\ns$ is a compact nilspace and $m:\ns\to\mc{P}(\Bo)$ is Borel measurable. 
\item The measure $\mu$ is cubic exchangeable if and only if it is the convex combination of cubic exchangeable measures that have the independence property.
\end{enumerate}
\end{theorem}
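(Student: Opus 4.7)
I will leverage Theorem~\ref{thm:MeasInvThmGen} and Proposition~\ref{prop:nilspace-cc} for part~(i), and an ergodic-decomposition argument for part~(ii).

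\emph{Backward direction of (i).} Given a compact nilspace $\ns$ and a Borel $m\colon\ns\to\mc{P}(\Bo)$, Proposition~\ref{prop:nilspace-cc} equips $\ns$ with a Borel cubic coupling whose cube-set Haar measures $\mu_{\cu^n(\ns)}$ are consistent and so extend by Kolmogorov to a single Borel probability on $\ns^{\db{\mb{N}}}$; pushing forward through the coordinatewise kernel $m$ produces $\zeta_{\ns,m}$. Cubic exchangeability is then immediate from the consistency axiom. The independence property for two independent faces $F_1=\db{S_1}\times z_1$ and $F_2=\db{S_2}\times z_2$ reduces, by induction on the step of $\ns$, to the compact-abelian base case, where the parameterization $\phi(v)=x+\sum_i v\sbr{i} h_i$ with $x,(h_i)$ i.i.d.\ Haar, combined with a separation coordinate $j_0\notin S_1\cup S_2$ satisfying $z_1\sbr{j_0}\neq z_2\sbr{j_0}$, allows a direct Haar computation to decorrelate the ``base points'' at $F_1$ and $F_2$.

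\emph{Forward direction of (i).} Let $\lambda$ be the common single-vertex marginal, and define $\mu^{\db{n}}$ as the marginal of $\mu$ on the image of any injective morphism $\db{n}\hookrightarrow\db{\mb{N}}$, which is well-defined by cubic exchangeability. I then verify the three axioms of Definition~\ref{def:cc}: consistency is immediate, and ergodicity is precisely the independence property applied to the two singleton faces $\{0\},\{1\}\subset\db{1}$. The main obstacle is the conditional-independence axiom for adjacent codimension-one faces, which are \emph{not} independent in the sense of Definition~\ref{def:cubexch}. I would tackle this through the equivalent idempotence formulation (Definition~\ref{def:cc-idemp}): embed $\db{n}\subset\db{n+1}\subset\db{\mb{N}}$ so that the chosen pair of opposite codimension-one faces of $\db{n}$ lifts to two parallel shifts of a single $(n-1)$-face along the new coordinate, and then use cubic exchangeability together with the independence property at this enlarged configuration to exhibit the relative-square structure required by Proposition~\ref{prop:idemp}. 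Once the three cubic-coupling axioms are verified, Theorem~\ref{thm:MeasInvThmGen} supplies a compact nilspace $\ns$ and a measure-preserving $\gamma\colon\Bo\to\ns$ over which each $\mu^{\db{n}}$ is relatively independent; defining $m\colon\ns\to\mc{P}(\Bo)$ as the Borel disintegration of $\lambda$ along $\gamma$ and combining this relative independence with the independence property applied to singleton faces yields $\mu=\zeta_{\ns,m}$.

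\emph{Part~(ii).} Let $\Gamma$ denote the countable semigroup of injective cube morphisms of $\db{\mb{N}}$; by Remark~\ref{rem:exchmappres}, $\Gamma$ acts on $\Bo^{\db{\mb{N}}}$ by transformations preserving any cubic exchangeable measure. The set of cubic exchangeable probabilities is weak-$*$ closed, convex, and metrizable, so Choquet's theorem decomposes any cubic exchangeable $\mu$ as a barycentric integral over its extreme points, which coincide with the $\Gamma$-ergodic cubic exchangeable measures. It therefore suffices to show that every $\Gamma$-ergodic cubic exchangeable measure has the independence property. This is the hard part: given two independent faces $F_1, F_2$, I would use the richness of $\Gamma$ (specifically, injective morphisms that ``spread apart'' coordinates) together with cubic exchangeability to reduce the joint marginal on $F_1\cup F_2$, up to $\Gamma$-averaging, to the product of the individual marginals, and then invoke ergodicity to promote this to exact equality. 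Combined with the uniqueness of the pair $(\ns,m)$ from part~(i), this identifies the extreme points with the $\zeta_{\ns,m}$'s and completes the decomposition.
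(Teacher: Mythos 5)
Your forward direction of (i) contains a genuine gap: the finite marginals of a cubic exchangeable measure with the independence property do \emph{not} in general form a cubic coupling on $(\Bo,\mc{B},\lambda)$, and the conditional independence axiom is exactly what fails. The theorem's conclusion already signals this: $m$ takes values in $\mc{P}(\Bo)$, so $\mu$ is a \emph{randomized factor} of a nilspace system, and factors do not inherit the condition $\mc{B}^{\db{n}}_{F_1}\,\bot\,\mc{B}^{\db{n}}_{F_2}$. Concretely, take $\ns=\mb{Z}_2$, $\Bo=\{0,1\}$ and $m(x)=(\tfrac12+\epsilon(-1)^x)\delta_0+(\tfrac12-\epsilon(-1)^x)\delta_1$ with $0<\epsilon<\tfrac12$, so each $Y_v$ is a noisy observation of $x+v\sbr{1}h_1+v\sbr{2}h_2+\cdots$. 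This $\zeta_{\ns,m}$ is cubic exchangeable with the independence property, but on $\db{3}$, with $F_1=\{v:v\sbr{3}=0\}$ and $F_2=\{v:v\sbr{2}=0\}$, the observations $(Y_v)_{v\in F_2\setminus F_1}$ are noisy measurements of $\phi(w)+h_3$, $w\in F_1\cap F_2$, and hence supply a second, independent noisy measurement of the difference $h_1=\phi(100)-\phi(000)$; consequently $\mb{E}\big(f(Y_{010})g(Y_{110})\,\big|\,\mc{B}^{\db{3}}_{F_2}\big)$ genuinely depends on $(Y_v)_{v\in F_2\setminus F_1}$ and is not $\mc{B}^{\db{3}}_{F_1\cap F_2}$-measurable, so $F_1\,\bot\,F_2$ fails by Lemma \ref{lem:botsuff}. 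No reformulation via idempotence repairs this, because the relative-square $\sigma$-algebra lives on the hidden nilspace coordinates, not on $\Bo$. The paper's fix is Proposition \ref{prop:mainexch}: split $\mb{N}=E\sqcup O$ and regard $\mu$ as a coupling on $V^{\db{E}}$ with $V=\Bo^{\db{O}}$; there adjacent faces have infinite-dimensional intersection, Lemma \ref{lem:infbot} restores conditional independence, and $\mu$ is exhibited as the image under $q^{\db{\mb{N}}}$ of a weak cubic coupling on $V$. Only then are Theorem \ref{thm:MeasInvThmGen} and Lemma \ref{lem:exch4} applied (upstairs, on $V$), with $m$ arising as the conditional law of $q$ given the nilspace factor.

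Part (ii) has a related gap: you defer the essential step --- that the extreme ($\Gamma$-ergodic) cubic exchangeable measures have the independence property --- to a vague averaging argument, and this is precisely where caution is needed (Austin's result that the $\aut(\db{\mb{N}})$-exchangeable measures form a Poulsen simplex shows that extreme points of such simplices need not be structured; also $\Gamma$ is only a semigroup, so identifying extreme points with ergodic measures requires justification). The paper does not run a Choquet decomposition on $\Bo^{\db{\mb{N}}}$ at all: it disintegrates the blown-up weak cubic coupling on $V^{\db{\mb{N}}}$ over the explicit $\sigma$-algebra $\mc{A}^{\db{\mb{N}}}_v\wedge\mc{A}^{\db{\mb{N}}}_w$ of a $1$-face (Proposition \ref{ergoddecomp1}), shows almost every fibre is a genuine cubic coupling, and then gets the independence property of the pieces for free from Lemma \ref{lem:cubindep}, while Lemma \ref{indepcomb} shows the decomposition collapses to a point exactly when $\mu$ itself has the independence property. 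Your backward direction of (i) is essentially correct, though the induction on the step is unnecessary: Lemma \ref{lem:cubindep} gives the independence property for every cubic coupling directly from Theorem \ref{thm:sip} and Corollary \ref{cor:facelocality}, and it is preserved under the factor map.
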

\noindent The rest of the section is devoted to proving Theorem \ref{thm:mainexch1}. We use the following notion.

\begin{defn}
A \emph{weak cubic coupling} on a measurable space $(\Omega,\mc{A})$ is a sequence $\big(\mu^{\db{n}}\big)_{n\geq 0}$ of measures $\mu^{\db{n}}$ on $(\Omega^{\db{n}},\mc{A}^{\db{n}})$ satisfying the consistency axiom and the conditional independence axiom from Definition \ref{def:cc}.
\end{defn}

\begin{remark}\label{rem:infinitecc}
Every weak cubic coupling on $\Omega$ can be viewed as a single cubic exchangeable measure $\mu$ on $\Omega^{\db{\mb{N}}}$ such that each measure $\mu^{\db{n}}$ in the definition is the marginal of $\mu$ corresponding to the index set $\db{n}\times \{0^{\mb{N}\setminus [n]}\}$ (where in general $0^S$ is the element of $\db{S}$ with all entries $0$). Note that the consistency axiom implies at once that each measure $\mu^{\db{n}}$ is a self-coupling of $\mu^{\db{0}}$ indexed by $\db{n}$, that these measures $\mu^{\db{n}}$ can be put together to determine a well-defined probability $\mu$ on $\Omega^{\db{\mb{N}}}$, and that $\mu$ is cubic exchangeable.
\end{remark}

\noindent As a key step towards the proof of Theorem \ref{thm:mainexch1}, we obtain  the following result.

\begin{proposition}\label{prop:mainexch}
A probability measure on $\Bo^{\db{\mb{N}}}$ is cubic exchangeable if and only if it is a factor coupling of some weak cubic coupling. 
\end{proposition}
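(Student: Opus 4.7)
\noindent\emph{Proof plan.} The backward direction is immediate from the consistency axiom. Suppose $(\nu^{\db{n}})_{n\geq 0}$ is a weak cubic coupling on $(\Omega,\mc{A})$. By Remark \ref{rem:infinitecc} it assembles into a single measure $\nu$ on $\Omega^{\db{\mb{N}}}$, and the consistency axiom forces $\nu_\phi=\nu^{\db{k}}$ for every injective morphism $\phi:\db{k}\to\db{\mb{N}}$, so $\nu$ is itself cubic exchangeable. Given any Borel map $\pi:\Omega\to\Bo$, the factor coupling $\mu:=\nu\co(\pi^{\db{\mb{N}}})^{-1}$ satisfies $\mu_\phi=\nu^{\db{k}}\co(\pi^{\db{k}})^{-1}$ for every such $\phi$, a quantity independent of $\phi$, and hence $\mu$ is cubic exchangeable.

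For the forward direction, let $\mu$ be cubic exchangeable on $\Bo^{\db{\mb{N}}}$. The plan is to realise $\mu$ as a factor coupling of a weak cubic coupling via ergodic decomposition. By Remark \ref{rem:exchmappres}, the semigroup $G$ of injective morphisms $\db{\mb{N}}\to\db{\mb{N}}$ acts on $\Bo^{\db{\mb{N}}}$ preserving $\mu$. Let $\mc{I}\subset\mc{B}^{\db{\mb{N}}}$ be the sub-$\sigma$-algebra of $G$-invariant events; since $(\Bo^{\db{\mb{N}}},\mc{B}^{\db{\mb{N}}})$ is standard Borel, I would decompose $\mu=\int_E\mu_e\,dP(e)$ with $E$ a standard Borel parameter space and each $\mu_e$ being a $G$-ergodic cubic exchangeable measure on $\Bo^{\db{\mb{N}}}$.

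I then take $\Omega:=\Bo\times E$ equipped with the joint law of $(Y_{0^{\mb{N}}},e)$, and let $\pi:\Omega\to\Bo$ be the projection to the first factor. For each $n\geq 0$, I define $\nu^{\db{n}}$ on $\Omega^{\db{n}}\cong\Bo^{\db{n}}\times E^{\db{n}}$ as the law of the family $((Y_v,e))_{v\in\db{n}}$, where $(Y_v)_{v\in\db{n}}$ is sampled from $\mu_e^{\db{n}}$ (the marginal of $\mu_e$ along any injective embedding $\db{n}\hookrightarrow\db{\mb{N}}$, well-defined by cubic exchangeability of $\mu_e$) and the label $e$ is held constant across the $\db{n}$ index. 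Consistency of $(\nu^{\db{n}})$ is inherited directly from consistency of each $(\mu_e^{\db{n}})$. Assembling $(\nu^{\db{n}})$ into a measure $\nu$ on $\Omega^{\db{\mb{N}}}$ and applying $\pi^{\db{\mb{N}}}$ recovers $\mu$ by construction.

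The main obstacle is verifying the conditional independence axiom for $(\nu^{\db{n}})$. Since the $E$-coordinate is held constant across all vertices, conditioning on $e$ reduces this to proving $\{0\}\times\db{n-1}~\bot~\db{n-1}\times\{0\}$ in $\mu_e^{\db{n}}$ for $P$-almost every $e$. My strategy here is twofold. First, I expect $G$-ergodicity of $\mu_e$ to force the independence property of Definition \ref{def:cubexch}: any common nontrivial information between two independent finite faces $F_1,F_2\subset\db{\mb{N}}$ (trivial intersection, disjoint free coordinates) would pull back under suitable injective morphisms into a nontrivial $G$-invariant event, contradicting ergodicity. Second, I would deduce the $\bot$ relation for adjacent faces from independence on independent faces via the tricube embedding $q_n$ of Subsection \ref{subsec:tricubes}: the adjacent faces $\{0\}\times\db{n-1}$ and $\db{n-1}\times\{0\}$ are realised inside $\wt{T}_n\subset\db{2n}$ as images of genuinely independent subfaces, and Corollary \ref{cor:opcoup} together with the $S_3^n$-symmetry of Lemma \ref{lem:trisym} transports independence through the outer-point map. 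The delicate step is ensuring that the resulting meet $\mc{A}^{\db{n}}_{F_0}\wedge\mc{A}^{\db{n}}_{F_1}$ in $\mu_e^{\db{n}}$ does not exceed $\mc{A}^{\db{n}}_{F_0\cap F_1}$, which is where ergodicity of $\mu_e$ will have to be invoked once more.
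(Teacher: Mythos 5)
Your backward direction is fine and matches the paper. The forward direction, however, has a genuine gap at its core: you are trying to verify the conditional independence axiom for the finite-dimensional marginals of $\mu$ (equivalently, of its ergodic components $\mu_e$) on $\Bo^{\db{n}}$ directly, and this is false in general. For adjacent codimension-one faces $F_0,F_1$ of $\db{n}$ with $n\geq 3$, a factor map $q:\Omega\to\Bo$ applied to a genuine cubic coupling typically destroys the relation $F_0\,\bot\,F_1$: the values $(Y_v)_{v\in F_1\setminus F_0}$ can carry information about the underlying coordinates $(\omega_v)_{v\in F_0\cap F_1}$ beyond what $(Y_v)_{v\in F_0\cap F_1}$ records, so $\mb{E}(f|\mc{B}^{\db{n}}_{F_1})$ for $f\in L^\infty(\mc{B}^{\db{n}}_{F_0})$ need not be $\mc{B}^{\db{n}}_{F_0\cap F_1}$-measurable. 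If your claim were provable, the proposition would read ``every cubic exchangeable measure \emph{is} a weak cubic coupling'' rather than ``is a \emph{factor} of one''; the whole point of the statement is that one must enlarge the point space. Neither of your two tools can close this: ergodicity of $\mu_e$ only yields independence of \emph{independent} faces (disjoint free coordinates), and the tricube machinery of Subsection \ref{subsec:tricubes} (Lemma \ref{lem:trisym}, Corollary \ref{cor:opcoup}) is derived \emph{from} the cubic coupling axioms, so it cannot be used to bootstrap those axioms from exchangeability.

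The missing idea is the paper's blow-up trick. Write $\mb{N}=E\sqcup O$ with $E,O$ the even and odd numbers, set $V=\Bo^{\db{O}}$, and view $\mu$ as a measure $\nu'$ on $V^{\db{E}}$; the factor map $q:V\to\Bo$ is the projection to the coordinate $0^{\db{O}}$. Adjacent codimension-one faces of $\db{E}$ now correspond to faces of $\db{\mb{N}}$ whose intersection is \emph{infinite}-dimensional, and for such faces conditional independence follows from cubic exchangeability alone (Lemma \ref{lem:infbot}): one finds a face map $\tau$ fixing a prescribed finite set pointwise while collapsing $F_2$ onto $F_1\cap F_2$ (Lemma \ref{lem:infface}), and the measure-preservation of $\wh{\tau}$ forces $\mb{E}(f|\mc{B}_{F_2}^{\db{\mb{N}}})=\mb{E}(f|\mc{B}_{F_1\cap F_2}^{\db{\mb{N}}})$. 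This collapsing argument has no analogue for finite faces, which is exactly where your plan fails. Finally, the ergodic decomposition you invoke is not needed for this proposition at all; it is the content of the \emph{next} step (Proposition \ref{ergoddecomp1}), is performed after lifting to a weak cubic coupling, and disintegrates over the $\sigma$-algebra $\mc{A}_v^{\db{\mb{N}}}\wedge\mc{A}_w^{\db{\mb{N}}}$ of a one-dimensional face rather than over the invariant $\sigma$-algebra of the morphism semigroup; that ordering matters, because identifying the correct $\sigma$-algebra to disintegrate over already uses the idempotence of the one-face marginals (Proposition \ref{prop:idemp}), which is only available once the weak cubic coupling structure is in place.
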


The proof relies mainly on the following lemma. 

\begin{lemma}\label{lem:infface}
Let $F_1,F_2$ be faces of $\db{\mb{N}}$ such that the face $F_1\cap F_2$ has infinite dimension, and let $T\subset F_1$ be a finite subset. Then there is a face map $\tau:\db{\mb{N}}\to\db{\mb{N}}$ such that $\tau(F_2)=F_1\cap F_2$ and $\tau(t)=t$ holds for every $t\in T$.
\end{lemma}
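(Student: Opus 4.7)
The plan is to construct $\tau$ explicitly. Recall that a face map $\tau:\db{\mb{N}}\to\db{\mb{N}}$ is determined by a triple $(\sigma,A,c)$ consisting of an injection $\sigma:\mb{N}\to\mb{N}$, a finite flip set $A\subset\mb{N}$, and a finitely supported function $c:\mb{N}\setminus\sigma(\mb{N})\to\{0,1\}$, via $\tau(v)\sbr{\sigma(i)}=v\sbr{i}\oplus\mathbf{1}_A(i)$ and $\tau(v)\sbr{j}=c(j)$ for $j\notin\sigma(\mb{N})$. Write $F_i=\{v:v\sbr{j}=z_i\sbr{j}\ \forall j\notin S_i\}$ with $S_i\subset\mb{N}$ and $z_i$ finitely supported, and set $S:=S_1\cap S_2$, which is infinite by hypothesis. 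Nonemptiness of $F_1\cap F_2$ forces $z_1\sbr{j}=z_2\sbr{j}$ for every $j\in\mb{N}\setminus(S_1\cup S_2)$. Let $T':=\bigcup_{t\in T}\supp(t)$, a finite subset of $\mb{N}$; since $T\subset F_1$ we have $T'\setminus S_1\subset\supp(z_1)$.

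The central step is the construction of $\sigma$ in four pieces. Put $J:=T'\cap S_2\setminus S_1$, a finite subset of $S_2$. Define $\sigma$ to be the identity on $\mb{N}\setminus S_2$; the identity on $T'\cap S$; to send $J$ injectively into $S\setminus T'$ (possible because $S$ is infinite and $J,T'$ are finite); and finally to extend $\sigma|_{S_2\setminus T'}$ to a bijection onto $S\setminus[(T'\cap S)\cup\sigma(J)]$. It is precisely this last step where the infinite-dimensionality hypothesis is used, as it ensures that both source and target of the extension are countably infinite. The resulting $\sigma$ is an injection $\mb{N}\to\mb{N}$ with $\sigma(S_2)=S$, $\sigma(\mb{N}\setminus S_2)=\mb{N}\setminus S_2$, and $\mb{N}\setminus\sigma(\mb{N})=S_2\setminus S_1$. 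I then take $A:=J$ and $c(j):=z_1\sbr{j}$ for $j\in S_2\setminus S_1$; both are finitely supported, the latter because $\supp(z_1)$ is finite.

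The verification that this $\tau$ works is a coordinate-by-coordinate check. For $\tau(F_2)=F_1\cap F_2$, the free coordinate set on the left is $\sigma(S_2)=S$, and the fixed value at each $j\notin S$ agrees with $F_1\cap F_2$ in the three subcases $j\in S_1\setminus S_2$ (where $\sigma^{-1}(j)=j\notin A$ gives $z_2\sbr{j}$), $j\in S_2\setminus S_1$ (where $j\notin\sigma(\mb{N})$ gives $c(j)=z_1\sbr{j}$), and $j\notin S_1\cup S_2$ (as in the first case, consistent since $z_1=z_2$ there). For $\tau(t)=t$ with $t\in T$, the four subcases $j\in\mb{N}\setminus S_2$, $j\in S_2\setminus S_1$, $j\in T'\cap S$, and $j\in S\setminus T'$ all check out; the only non-routine subcase is $j=\sigma(i)$ with $i\in J$, where $t\sbr{i}=z_1\sbr{i}=1$ is exactly cancelled by the flip $i\in A=J$ to give $\tau(t)\sbr{j}=0=t\sbr{j}$, using that $j\in S\setminus T'$. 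I expect the main obstacle to be orchestrating the partial definition of $\sigma$ on $T'\cap S_2$ so that it simultaneously yields a bijection $S_2\to S$, respects the identity constraint on $T$, and is compatible with the flip set; the splitting $T'\cap S_2=(T'\cap S)\sqcup J$ coupled with the choice $A=J$ is what aligns all three requirements.
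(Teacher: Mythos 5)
Your proposal is correct and takes essentially the same route as the paper: both proofs build a face map by reindexing the free coordinates of $F_2$ bijectively onto those of $F_1\cap F_2$ via an injection of $\mb{N}$ that is the identity on a finite coordinate set covering the supports of the elements of $T$, using the infinitude of $S_1\cap S_2$ only to guarantee the bijection between the two countably infinite leftover pieces. The one difference is that the paper first conjugates by the reflection in the coordinates of $\supp(z)$ for some $z\in F_1\cap F_2$, reducing to the case $0^{\mb{N}}\in F_1\cap F_2$; this forces $z_1=z_2=0$ and $T'\subset S_1$, so the flip set $A=J$ and the constant part $c$ in your construction become vacuous and the four-piece case analysis collapses to a two-piece one.
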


\begin{proof} Let $z\in F_1\cap F_2$ and let $\phi:\db{\mb{N}}\to\db{\mb{N}}$ be the bijective face map such that for every $i$ in the finite set $\supp(z)\subset \mb{N}$ we have $\phi(v)\sbr{i}=1-v\sbr{i}$ and for all $i\in \mb{N}\setminus \supp(z)$ we have $\phi(v)\sbr{i}=v\sbr{i}$. We have $\phi(z)=0^{\mb{N}}$. Note that it suffices to find the map $\tau$ for the faces $\phi(F_1),\phi(F_2)$ and the set $\phi(T)$, since then, by conjugating with $\phi$ we obtain a map satisfying the conclusion of the lemma for $F_1,F_2$ and $T$. Hence without loss of generality we can assume that $0^\mb{N}\in F_1\cap F_2$. In this case we have $F_i=\db{S_i}\times \{0^{\mb{N}\setminus S_i}\}$ for some sets $S_1,S_2\subset\mb{N}$. Let $S_3\subset S_1$ be a finite set such that $T\subseteq \{0,1\}^{S_3}\times \{0^{\mb{N}\setminus S_3}\}$. Let $\rho':S_2\setminus S_3\to (S_1\cap S_2)\setminus S_3$ be a bijection, and let $\rho:\mb{N}\to\mb{N}$ be the injection equal to the identity on $\mb{N}\setminus(S_2\setminus S_3)$ and equal to $\rho'$ on $S_2\setminus S_3$. Let $\tau$ be defined by $\tau(v)\sbr{i}= v\sbr{\rho^{-1}(i)}$ if $i\in\rho(\mb{N})$ and $\tau(v)\sbr{i}=0$ otherwise. The map $\tau$ satisfies the required conclusion.
\end{proof}

\begin{lemma}\label{lem:infbot}
Let $\nu$ be a cubic exchangeable probability measure on $\Bo^{\db{\mb{N}}}$. Let $F_1$, $F_2$ be faces of $\db{\mb{N}}$ such that $F_1\cap F_2$ has infinite dimension. Then $F_1\,\bot_\nu\, F_2$.
\end{lemma}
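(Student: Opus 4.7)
By Lemma \ref{lem:botsuff} (and its obvious extension to the countable index set $\db{\mb{N}}$), it suffices to show that for every bounded $\mc{B}^{\db{\mb{N}}}_{F_1}$-measurable function $f$ on $\Bo^{\db{\mb{N}}}$, the conditional expectation $\mb{E}_\nu(f\,|\,\mc{B}^{\db{\mb{N}}}_{F_2})$ is $\mc{B}^{\db{\mb{N}}}_{F_1\cap F_2}$-measurable. By an $L^2$-density argument (using Lemma \ref{lem:pisysapprox} together with the $L^2$-contractivity of conditional expectation), we may further reduce to the case where $f$ depends on only finitely many coordinates, i.e.\ $f\in L^\infty(\mc{B}^{\db{\mb{N}}}_T)$ for some finite $T\subset F_1$.

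The key tool is Lemma \ref{lem:infface}: since $F_1\cap F_2$ has infinite dimension and $T\subset F_1$ is finite, there exists a face map (hence injective morphism) $\tau:\db{\mb{N}}\to\db{\mb{N}}$ with $\tau|_T=\mr{id}$ and $\tau(F_2)=F_1\cap F_2$. By Remark \ref{rem:exchmappres}, the induced map $\wh\tau:\Bo^{\db{\mb{N}}}\to\Bo^{\db{\mb{N}}}$, $(y_v)_v\mapsto(y_{\tau(v)})_v$, is $\nu$-measure-preserving, and a direct check shows $\wh\tau^{-1}(\mc{B}^{\db{\mb{N}}}_S)=\mc{B}^{\db{\mb{N}}}_{\tau(S)}$ for every $S\subset\db{\mb{N}}$. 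Writing $g:=\mb{E}_\nu(f\,|\,\mc{B}^{\db{\mb{N}}}_{F_2})$ and noting that $f\co\wh\tau=f$ (since $f$ depends only on $y_T$ and $\tau$ fixes $T$), formula \eqref{eq:exprel} applied to $\wh\tau$ yields
\[
g\co\wh\tau \;=\; \mb{E}_\nu(f\,|\,\mc{B}^{\db{\mb{N}}}_{F_2})\co\wh\tau \;=\; \mb{E}_\nu\bigl(f\,|\,\wh\tau^{-1}\mc{B}^{\db{\mb{N}}}_{F_2}\bigr) \;=\; \mb{E}_\nu\bigl(f\,|\,\mc{B}^{\db{\mb{N}}}_{F_1\cap F_2}\bigr).
\]
By the tower property (using $\mc{B}^{\db{\mb{N}}}_{F_1\cap F_2}\subset\mc{B}^{\db{\mb{N}}}_{F_2}$), the right-hand side equals $\mb{E}_\nu(g\,|\,\mc{B}^{\db{\mb{N}}}_{F_1\cap F_2})$.

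The final step, which I expect to be the most delicate, is an $L^2$-norm collapse. By $\wh\tau$-invariance of $\nu$ we have $\|g\|_{L^2(\nu)}=\|g\co\wh\tau\|_{L^2(\nu)}$, and by the identity of the previous paragraph this common value equals $\|\mb{E}_\nu(g\,|\,\mc{B}^{\db{\mb{N}}}_{F_1\cap F_2})\|_{L^2(\nu)}$. Since $\mb{E}_\nu(\cdot\,|\,\mc{B}^{\db{\mb{N}}}_{F_1\cap F_2})$ is the orthogonal projection onto the closed subspace $L^2(\mc{B}^{\db{\mb{N}}}_{F_1\cap F_2})\subset L^2(\nu)$ and $g$ is bounded (hence in $L^2$), the Pythagorean decomposition $\|g\|_2^2=\|\mb{E}_\nu(g\,|\,\mc{B}^{\db{\mb{N}}}_{F_1\cap F_2})\|_2^2+\|g-\mb{E}_\nu(g\,|\,\mc{B}^{\db{\mb{N}}}_{F_1\cap F_2})\|_2^2$ forces $g=\mb{E}_\nu(g\,|\,\mc{B}^{\db{\mb{N}}}_{F_1\cap F_2})$ $\nu$-a.s., i.e.\ $g$ is $\mc{B}^{\db{\mb{N}}}_{F_1\cap F_2}$-measurable, as required. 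The subtlety here is that $\wh\tau$ is generally not invertible, so one cannot simply transport measurability from $g\co\wh\tau$ back to $g$ pointwise; the norm identity is what rescues the argument and exploits precisely the infinite-dimensionality of $F_1\cap F_2$ encoded in Lemma \ref{lem:infface}.
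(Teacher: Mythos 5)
Your proof is correct and follows essentially the same route as the paper's: reduce to $f$ depending on finitely many coordinates, apply Lemma \ref{lem:infface} and Remark \ref{rem:exchmappres} to produce a $\nu$-preserving map $\wh\tau$ fixing $f$ and carrying $\mc{B}^{\db{\mb{N}}}_{F_2}$ to $\mc{B}^{\db{\mb{N}}}_{F_1\cap F_2}$, then conclude from the resulting $L^2$-norm equality that the two conditional expectations coincide. Your extra remarks on the non-invertibility of $\wh\tau$ and the Pythagorean step simply make explicit what the paper leaves implicit.
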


\begin{proof}
We need to show that if $f\in L^\infty(\mathcal{B}_{F_1}^{\db{\mb{N}}})$ then $\mb{E}(f|\mc{B}_{F_2}^{\db{\mb{N}}})$ is $\mc{B}_{F_1\cap F_2}^{\db{\mb{N}}}$-measurable. It is enough to show this for functions that depend on a finite set of coordinates, since every other bounded measurable function can be approximated in $L^2$ with arbitrary precision using such functions (Lemma \ref{lem:pisysapprox}). Suppose that $f\in L^\infty(\mc{B}_{T}^{\db{\mb{N}}})$ for some finite set $T\subset F_1$. By Lemma \ref{lem:infface} there is a face map $\tau: \db{\mb{N}}\to \db{\mb{N}}$ fixing $T$ pointwise and with $\tau(F_2)=F_1\cap F_2$. Let $\wh{\tau}:\Bo^{\db{\mb{N}}}\to \Bo^{\db{\mb{N}}}$ denote the map that first projects to the coordinates in $\tau(\db{\mb{N}})$ and then renames the coordinates using $\tau^{-1}$. By Remark \ref{rem:exchmappres} the map $\wh{\tau}$ preserves $\nu$. By \eqref{eq:exprel} we then have $\mb{E}(f|\mc{B}_{F_2}^{\db{\mb{N}}})\co\wh{\tau}=\mb{E}(f\co\wh{\tau}|\mc{B}_{\tau(F_2)}^{\db{\mb{N}}})=\mb{E}(f|\mc{B}_{F_1\cap F_2}^{\db{\mb{N}}})$, whence $\|\mb{E}(f|\mc{B}_{F_2}^{\db{\mb{N}}})\|_{L^2}=\|\mb{E}(f|\mc{B}_{F_1\cap F_2}^{\db{\mb{N}}})\|_{L^2}$. Since the latter expectation is a projection of the former, this equality of their $L^2$-norms implies that $\mb{E}(f|\mc{B}_{F_2}^{\db{\mb{N}}})=\mb{E}(f|\mc{B}_{F_1\cap F_2}^{\db{\mb{N}}})$, and the result follows.
\end{proof}

\begin{proof}[Proof of Proposition \ref{prop:mainexch}]
The backward implication is clear. For the converse, let $\nu$ be a cubic exchangeable measure on $\Bo^{\db{\mb{N}}}$. Let $\mb{N}=E\sqcup O$ where $E$ and $O$ denote the set of even and odd numbers respectively. We can write $\Bo^{\db{\mb{N}}}=(\Bo^{\db{O}})^{\db{E}}$. Let $V=\Bo^{\db{O}}$ and let $q:V\to \Bo$ denote the projection $p_v$ where $v=0^{\db{O}}$. Since $\Bo^{\db{\mb{N}}}=V^{\db{E}}$, we can view the measure $\nu$ as a coupling on $V^{\db{E}}$. To avoid confusion we denote this coupling by $\nu'$. It is easy to see from the cubic exchangeability property of $\nu$ that $\nu'$ is also cubic exchangeable. Lemma \ref{lem:infbot} applied to $\nu$ implies that $\nu'$ satisfies the conditional independence axiom. Hence $\nu'$ is a weak cubic coupling. Let $\phi: \db{E}\to\db{\mb{N}}$, $(v_i)_{i\in E}\mapsto (v_{2i})_{i\in \mb{N}}$. Equipping $\Bo^{\db{E}}$ with the measure $\nu_\phi$, it is clear that $q^{\db{E}}$ is a measure preserving map from $V^{\db{E}}$ to $\Bo^{\db{E}}$. In this construction the coordinates in the cubes are all indexed by even numbers. Renaming these coordinates by halving their indices, we obtain a weak cubic coupling $\mu$ on $V^{\db{\mb{N}}}$ such that $q^{\db{\mb{N}}}:V^{\db{\mb{N}}}\to \Bo^{\db{\mb{N}}}$ satisfies $\nu=\mu\co (q^{\db{\mb{N}}})^{-1}$.
\end{proof}

\noindent Proposition \ref{prop:mainexch} will be combined with the following result, which tells us that every weak cubic coupling on a Borel probability space is a convex combination of cubic couplings.

\begin{proposition}\label{ergoddecomp1}
Let $(\eta^{\db{n}})_{n\geq 0}$ be a weak cubic coupling on a standard Borel space $(\Omega,\mc{A})$, and let $\eta$ denote the corresponding cubic exchangeable measure on $\Omega^{\db{\mb{N}}}$. Then there is a probability measure $\kappa$ on $\mathcal{P}(\Omega^{\db{\mb{N}}})$, supported on the set of cubic couplings \textup{(}viewed as measures $\mu$  on $\Omega^{\db{\mb{N}}}$\textup{)}, such that $\eta=\int_{\mathcal{P}(\Omega^{\db{\mb{N}}})} \mu \ud\kappa$.
\end{proposition}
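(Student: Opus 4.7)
The plan is to construct $\kappa$ as the law of a Borel-measurable family of cubic couplings obtained by disintegrating $\eta$ over a standard Borel quotient of $\varOmega$ that detects the failure of the ergodicity axiom. This quotient will be supplied by the idempotent structure of $\eta^{\db{1}}$.

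A crucial preliminary observation is that the proof of Lemma \ref{lem:deduc1} only uses the consistency and conditional independence axioms (never ergodicity), so the idempotence axiom of Definition \ref{def:cc-idemp} holds verbatim for the weak cubic coupling $(\eta^{\db{n}})_n$. In particular $\eta^{\db{1}}\in\coup(\varOmega,\db{1})$ is idempotent, and Proposition \ref{prop:idemp} supplies a sub-$\sigma$-algebra $\mathcal{J}\subseteq\mathcal{A}$ with $\eta^{\db{1}}$ equal to the square of $\lambda$ relative to $\mathcal{J}$. Using the standard Borel reduction recalled after Definition \ref{def:factorcoup}, I would pass to a standard Borel quotient $q\colon\Omega\to Y$ with $q^{-1}(\mathcal{B}_Y)=_\lambda\mathcal{J}$, set $\bar\lambda:=q_*\lambda$, and disintegrate $\lambda=\int_Y\lambda_y\,d\bar\lambda(y)$; formula \eqref{eq:relsquare1} then rewrites as $\eta^{\db{1}}=\int_Y\lambda_y\times\lambda_y\,d\bar\lambda(y)$. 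I would then check that $\mathcal{J}$ is diagonal, meaning $p_v^{-1}(\mathcal{J})=_{\eta^{\db{n}}}p_{v'}^{-1}(\mathcal{J})$ for all $n$ and all $v,v'\in\db{n}$: for neighbouring vertices this is Lemma \ref{lem:IdempCondProp}$(i)$ applied to the edge marginal (which equals $\eta^{\db{1}}$ by consistency), and general pairs follow by chaining along a path in $\db{n}$.

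Next I would disintegrate $\eta=\int_Y\mu_y\,d\bar\lambda(y)$ over the measurable map $p\colon\Omega^{\db{\mb{N}}}\to Y$, $(x_v)_v\mapsto q(x_{0^{\db{\mb{N}}}})$. The diagonal property above implies that $\eta$ is concentrated on the set where $v\mapsto q(x_v)$ is constant, so each $\mu_y$ is supported on $(q^{-1}(y))^{\db{\mb{N}}}$. The same property guarantees that the finite-dimensional marginal $\mu_y^{\db{n}}$ is simultaneously a fiber of $\eta^{\db{n}}$ over $q\co p_v$ for every $v\in\db{n}$, so uniqueness of disintegration applied to the countable collection of injective morphisms $\phi\colon\db{m}\to\db{n}$ transfers the consistency axiom from $\eta$ to $(\mu_y^{\db{n}})_n$ on a single conull $Y'\subseteq Y$. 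Uniqueness of disintegration applied to $\eta^{\db{1}}$, using the two expressions for it above, forces $\mu_y^{\db{1}}=\lambda_y\times\lambda_y$ for $\bar\lambda$-a.e.\ $y$, giving the ergodicity axiom.

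For the conditional independence axiom, with $T_1=\{0\}\times\db{n-1}$ and $T_2=\db{n-1}\times\{0\}$, I would apply Lemma \ref{lem:fibrebotindep} to $\eta^{\db{n}}$ with $v=0^n$ to get independence of $\mathcal{A}^{\db{n}}_{T_1\setminus\{0^n\}}$ and $\mathcal{A}^{\db{n}}_{T_2\setminus\{0^n\}}$ in the finer fibers over $p_{0^n}$, and then average these finer fibers against $\lambda_y$ to obtain conditional independence of $\mathcal{A}^{\db{n}}_{T_1}$ and $\mathcal{A}^{\db{n}}_{T_2}$ relative to $\mathcal{A}^{\db{n}}_{\{0^n\}}$ in $\mu_y^{\db{n}}$. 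The meet equality $\mathcal{A}^{\db{n}}_{T_1}\wedge\mathcal{A}^{\db{n}}_{T_2}=_{\mu_y^{\db{n}}}\mathcal{A}^{\db{n}}_{\{0^n\}}$ then follows from a short $L^2$-argument: if $A$ lies in the meet with $\mathcal{A}^{\db{n}}_{T_i}$-representatives $A_i$, then $1_{A_2}=1_{A_1}$ is $\mathcal{A}^{\db{n}}_{T_1}$-measurable, so the conditional independence above yields $1_{A_2}=\mb{E}(1_{A_2}\mid\mathcal{A}^{\db{n}}_{T_1})=\mb{E}(1_{A_2}\mid\mathcal{A}^{\db{n}}_{\{0^n\}})$, forcing $A$ to be $\mathcal{A}^{\db{n}}_{\{0^n\}}$-measurable modulo null. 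I would then take $\kappa$ to be the pushforward of $\bar\lambda$ under the Borel map $y\mapsto\mu_y$ supplied by the disintegration theorem. The hardest point, which I would treat most carefully, is precisely this passage from the fine-fiber independence of Lemma \ref{lem:fibrebotindep} to the full $\bot$-relation of Definition \ref{def:condindepsets} in the coarser fibre $\mu_y^{\db{n}}$, where the specific choice of $\mathcal{J}$ (so that the extra conditioning is exactly $\mathcal{A}^{\db{n}}_{\{0^n\}}$) plays a crucial role.
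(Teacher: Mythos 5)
Your proposal is correct and follows essentially the same route as the paper: both proofs disintegrate $\eta$ over the factor coming from the idempotence of the edge coupling $\eta^{\db{1}}$ (your $\mc{J}$ from Proposition \ref{prop:idemp} is exactly the $\sigma$-algebra $\mc{H}$ the paper extracts from $\mc{A}_v^{\db{\mb{N}}}\wedge_\eta\mc{A}_w^{\db{\mb{N}}}$), show via the diagonal/constancy property that the fibers inherit consistency, read off ergodicity from the relative-square formula, and transfer conditional independence to almost every fiber. The only differences are in bookkeeping — you realize the factor as a standard Borel quotient of $\Omega$ and invoke Lemma \ref{lem:fibrebotindep} plus uniqueness of disintegration where the paper conditions on sets of $\mc{G}$ and cites "basic facts" — and your spelled-out $L^2$ argument for the meet identity is a legitimate filling-in of a step the paper leaves implicit.
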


\begin{proof}
We show that there is a $\sigma$-algebra $\mc{G}\subset \mc{A}^{\db{\mb{N}}}$ such that in the disintegration of $\eta$ relative to $\mc{G}$, almost every measure is a cubic coupling.

For every 1-dimensional face $\{v,w\}$ in $\db{\mb{N}}$ let $\mc{G}_{v,w}=\mc{A}_v^{\db{\mb{N}}}\wedge_\eta\mc{A}_w^{\db{\mb{N}}}$. Fix some (any) $\{v,w\}$, and let $\mc{G}$ be a countably generated sub-$\sigma$-algebra of $\mc{G}_{v,w}$ such that $\mc{G}=_\eta \mc{G}_{v,w}$. (We can obtain $\mc{G}$ by  taking a countable subset $T$ of $\mc{G}_{v,w}$ that is dense in the $\eta$-metric, and letting $\mc{G}=\sigma(T)$.) For any other face $\{w,z\}$ intersecting $\{v,w\}$, by the weak cubic coupling axioms we have that $\{v,w\}\,\bot_\eta\,\{w,z\}$ and that the marginal distributions on $\{v,w\},\{w,z\}$ and $\{v,z\}$ are all equal. It follows that the coupling $\eta_{\{v,w\}}$ is idempotent. By Lemma \ref{lem:IdempCondProp} we have $\mc{G}_{v,w}=_\eta \mc{G}_{w,z}$ and $\mc{A}_v^{\db{\mb{N}}}\upmod_\eta \mc{A}_w^{\db{\mb{N}}}$. By iterating this for other such faces, we deduce that $\mc{G}=_\eta\mc{G}_{v',w'}$ for every face $\{v',w'\}\subset \db{\mb{N}}$. By Proposition \ref{prop:idemp} and Lemma \ref{lem:IdempCondProp} we also have that there is a $\sigma$-algebra $\mc{H}$ such that $\mc{G}=_\eta p_v^{-1}(\mc{H})$ holds for every $v\in\db{\mb{N}}$, and for every set $H\in\mc{H}$ and pair $v,w\in\db{\mb{N}}$ we have $p_v^{-1}(H)=_\eta p_w^{-1}(H)$. By \cite[(17.35) i)]{Ke} (applied with $Y$ the quotient standard Borel space $\Omega^{\db{\mb{N}}}/\mc{G}$ and $f:\Omega^{\db{\mb{N}}}\to Y$ the canonical quotient map) we obtain a Borel map $t: y \mapsto \mu_y$ from $Y$ into $\mc{P}(\Omega^{\db{\mb{N}}})$ such that, letting $\kappa=\eta\co f^{-1}\co t^{-1}$, we have $\eta=\int_{\mathcal{P}(\Omega^{\db{\mb{N}}})} \mu \ud\kappa$. It now suffices to show that, for almost every $y$, the images of $\mu_y$ on faces $\db{n}\subset \db{\mb{N}}$, for increasing $n$, form a sequence satisfy the axioms in Definition \ref{def:cc}.

Firstly, suppose that the consistency axiom failed for every $\mu_y$ in some set of positive $\eta\co f^{-1}$-measure. Then there would exist a set $X\in \mc{G}$ with $\eta(X)>0$, such that the measure $\eta_X$ obtained from $\eta$ by conditioning on $X$ (as per Definition \ref{def:condcoup}) does not satisfy the consistency axiom. We shall obtain a contradiction by showing that $\eta_X$ must in fact satisfy this axiom. Let $\phi_1,\phi_2:\db{k}\to\db{\mb{N}}$ be two injective cube morphisms. Let $\wh{\phi_i}:\Bo^{\db{\mb{N}}}\to \Bo^{\db{k}}$ be the map that projects to the coordinates in $\phi_i(\db{k})$ and then renames the coordinates using $\phi_i^{-1}$. Our goal is to show that for every Borel set $Q\subset \Bo^{\db{k}}$ we have $\eta_X\co\wh{\phi_1}^{-1}(Q)=\eta_X\co \wh{\phi_2}^{-1}(Q)$. Let $v_i=\phi_i(0^k)$ and recall that there is a set $Y\in \mc{H}$ such that $p_{v_1}^{-1}(Y)=_\eta p_{v_2}^{-1}(Y)=_\eta X$. Then $\eta_X\co \wh{\phi_i}^{-1}(Q)=\frac{1}{\eta(X)}\eta\big(\wh{\phi_i}^{-1}(Q)\cap p_{v_i}^{-1}(Y)\big)=\frac{1}{\eta(X)}\eta\co\wh{\phi_i}^{-1}\big(Q\cap p_{0^k}^{-1}(Y)\big)=\frac{1}{\eta(X)}\eta_{\phi_i}\big(Q\cap p_{0^k}^{-1}(Y)\big)$. By the consistency axiom for $\eta$ we have $\eta_{\phi_1}=\eta_{\phi_2}$, and the consistency axiom for $\eta_X$ follows.

By the consistency axiom, almost every $\mu_y$ has all its marginals $\mu_y\co p_v^{-1}$, $v\in \db{\mb{N}}$ equal to a single measure $\mu_y^{\db{0}}$. Moreover, for any fixed face $\{v,w\}$, since we disintegrate $\eta$ relative to $\mc{G}=_\eta \mc{A}_v^{\db{\mb{N}}}\wedge \mc{A}_w^{\db{\mb{N}}}$, basic facts from probability imply that for almost every $y$ the image of $\mu_y$ on this face is $\mu_y^{\db{0}}\times\mu_y^{\db{0}}$. Hence $\mu_y$ satisfies the ergodicity axiom.

For the conditional independence axiom, note that if $F_1$, $F_2$ are $n$-dimensional faces with $(n-1)$-dimensional intersection, then by construction we have $\mc{G}\subset_\eta \mc{A}_{F_1\cap F_2}^{\db{\mb{N}}}$. Then $F_1\,\bot_\eta\, F_2$ implies, by similar basic facts as above, that $F_1\,\bot_{\mu_y}\, F_2$ for almost every $y$.
\end{proof}
\noindent Propositions \ref{prop:mainexch} and \ref{ergoddecomp1} together imply directly the following result, which tells us that cubic exchangeable measures are convex combinations of factors of cubic couplings.
\begin{theorem}\label{thm:mainexch3}
Let $\nu$ be a cubic exchangeable probability measure on $\Bo^{\db{\mb{N}}}$. Then there is a standard Borel space $\Omega$, a Borel measurable map $q:\Omega\to \Bo$, and a Borel probability measure $\kappa$ on $\mc{P}(\Omega^{\db{\mb{N}}})$ supported on the set of cubic couplings \textup{(}viewed as measures $\mu$ on $\Omega^{\db{\mb{N}}}$\textup{)}, such that $\nu=\int_{\mc{P}(\Omega^{\db{\mb{N}}})} \mu\co(q^{\db{\mb{N}}})^{-1}\, \ud\kappa(\mu)$.
\end{theorem}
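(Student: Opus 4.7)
The plan is to obtain this result as a direct composition of Propositions \ref{prop:mainexch} and \ref{ergoddecomp1}, using the functoriality of pushforward under the Borel map $q^{\db{\mb{N}}}$. First, I would apply Proposition \ref{prop:mainexch} to the cubic exchangeable measure $\nu$ on $\Bo^{\db{\mb{N}}}$. Inspecting the proof of that proposition, the ``factor coupling'' realization provides an explicit standard Borel space: namely $\Omega = V = \Bo^{\db{O}}$ with the Borel map $q : \Omega \to \Bo$ being the coordinate projection $p_{0^O}$, together with a weak cubic coupling $(\eta^{\db{n}})_{n \geq 0}$ on $\Omega$ whose associated cubic exchangeable measure $\eta$ on $\Omega^{\db{\mb{N}}}$ satisfies $\nu = \eta \co (q^{\db{\mb{N}}})^{-1}$.

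Next, I would apply Proposition \ref{ergoddecomp1} to the weak cubic coupling $\eta$. This yields a Borel probability measure $\kappa$ on $\mc{P}(\Omega^{\db{\mb{N}}})$, supported on the set of cubic couplings (viewed as measures on $\Omega^{\db{\mb{N}}}$), such that $\eta = \int_{\mc{P}(\Omega^{\db{\mb{N}}})} \mu \ud\kappa(\mu)$. Pushing this equality forward under $q^{\db{\mb{N}}}$ gives $\nu = \eta \co (q^{\db{\mb{N}}})^{-1} = \int_{\mc{P}(\Omega^{\db{\mb{N}}})} \mu \co (q^{\db{\mb{N}}})^{-1} \ud\kappa(\mu)$, which is exactly the desired decomposition.

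The only minor technical point that needs justification is the interchange of pushforward and integration in the last step, namely that for every Borel set $A \subseteq \Bo^{\db{\mb{N}}}$ one has $\big(\int \mu \ud\kappa\big)\big((q^{\db{\mb{N}}})^{-1}(A)\big) = \int \mu\big((q^{\db{\mb{N}}})^{-1}(A)\big) \ud\kappa(\mu)$. This follows immediately from the definition of the barycentric integral of a measure-valued map applied to the Borel set $(q^{\db{\mb{N}}})^{-1}(A)$, so no genuine obstacle arises here. One should also observe that the map $\mu \mapsto \mu\co (q^{\db{\mb{N}}})^{-1}$ from $\mc{P}(\Omega^{\db{\mb{N}}})$ to $\mc{P}(\Bo^{\db{\mb{N}}})$ is Borel measurable (since $\mu \mapsto \mu(B)$ is measurable for each Borel $B$), but strictly speaking this is not needed for the statement as formulated, since $\kappa$ is only required to live on $\mc{P}(\Omega^{\db{\mb{N}}})$.

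In summary, there is no single ``hard part'' to the argument: the work has been done in Propositions \ref{prop:mainexch} (via the even/odd splitting of $\mb{N}$ and Lemma \ref{lem:infbot} to obtain the conditional independence axiom) and \ref{ergoddecomp1} (via a disintegration of $\eta$ with respect to the relative-square sub-$\sigma$-algebra identified in Lemma \ref{lem:IdempCondProp} and Proposition \ref{prop:idemp}). Theorem \ref{thm:mainexch3} itself is the straightforward concatenation of these two results under the functorial pushforward $q^{\db{\mb{N}}}_*$.
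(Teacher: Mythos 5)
Your proposal is correct and is essentially identical to the paper's own argument: the paper itself presents Theorem \ref{thm:mainexch3} as a direct consequence of Propositions \ref{prop:mainexch} and \ref{ergoddecomp1}, exactly as you do, with the same choice of $\Omega=\Bo^{\db{O}}$ and the same pushforward under $q^{\db{\mb{N}}}$. The interchange of pushforward and barycentric integration that you flag is indeed the only (routine) point to check, and your justification of it is fine.
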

\noindent  Now we turn to the independence property. We need two lemmas.
\begin{lemma}\label{lem:cubindep}
Every cubic coupling has the independence property.
\end{lemma}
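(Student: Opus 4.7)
Plan for proving Lemma~\ref{lem:cubindep}. Let $(\mu^{\db{n}})_{n\geq 0}$ be a cubic coupling on $\varOmega=(\Omega,\mc{A},\lambda)$, viewed as a cubic exchangeable measure $\mu$ on $\Omega^{\db{\mb{N}}}$ via Remark~\ref{rem:infinitecc}, and let $F_1,F_2$ be finite independent faces of $\db{\mb{N}}$, with disjoint free-coordinate sets $S_1,S_2$ of sizes $n_1,n_2$. Since $F_1\cup F_2$ is finite, I would first fix $N\in\mb{N}$ large enough that both faces lie inside $\db{[N]}\times\{0^{\mb{N}\setminus[N]}\}$, which we identify with $\db{N}$; the consistency axiom applied to the inclusion $\db{N}\hookrightarrow\db{\mb{N}}$ then identifies the relevant marginal of $\mu$ with $\mu^{\db{N}}$, so it suffices to establish independence of $\mc{A}^{\db{N}}_{F_1}$ and $\mc{A}^{\db{N}}_{F_2}$ in $\mu^{\db{N}}$.

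The next step is to put the two faces into a convenient normal form by applying a coordinate-flipping automorphism $\psi\in\aut(\db{N})$: choose $c\in\{0,1\}^{[N]}$ with $c_i=z_1(i)$ for $i\notin S_1$ and $c_i=z_2(i)$ for $i\in S_1$, where $z_1,z_2$ are the fixed-coordinate values of $F_1,F_2$. By the consistency axiom $\mu^{\db{N}}$ is $\psi$-invariant, and after relabelling one has $F_1=\db{S_1}\times\{0\}^{[N]\setminus S_1}$ (in particular $F_1$ contains $0^N$ and is simplicial) while $F_2=\db{S_2}\times\{z_2'\}$ satisfies $z_2'|_{S_1}=0$ and $z_2'|_R\neq 0$, with $R=[N]\setminus(S_1\cup S_2)$; the latter follows because $F_1\cap F_2=\emptyset$ forces $z_2'\neq 0$. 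One can then write down an injective cube morphism $\phi:\db{M}\to\db{N}$ with $M=n_1+n_2+1$, whose coordinate maps are chosen as follows: $\phi_k=u_\ell$ for $k\in S_1$ (with the appropriate bijection $S_1\to[n_1]$), $\phi_k=w_\ell$ for $k\in S_2$, and on $R$ either the constant $0$ or the coordinate $t$ according to whether $z_2'(k)=0$ or $1$. A direct check shows that $\phi$ sends $G_1:=\db{n_1}\times\{0\}^{n_2+1}$ bijectively onto $F_1$ and $G_2:=\{0\}^{n_1}\times\db{n_2}\times\{1\}$ bijectively onto $F_2$, and by the consistency axiom $\mu^{\db{N}}_\phi=\mu^{\db{M}}$. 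The problem therefore reduces to showing that $\mc{A}^{\db{M}}_{G_1}$ and $\mc{A}^{\db{M}}_{G_2}$ are independent in $\mu^{\db{M}}$.

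The essential difficulty is that Theorem~\ref{thm:sip} requires both sets to be simplicial, but $G_2$ does not contain $0^M$, and no simplicial subset of $\db{M}$ containing $G_2$ can meet $G_1$ only at a null-measure set. My proposal is to enlarge $G_2$ to its simplicial closure $\tilde{G}_2=\{0\}^{n_1}\times\db{n_2+1}$, which is itself a face through $0^M$ (hence simplicial) and satisfies $G_1\cap\tilde{G}_2=\{0^M\}$. Theorem~\ref{thm:sip} then gives $G_1\,\bot_{\mu^{\db{M}}}\,\tilde{G}_2$ with meet $\mc{A}^{\db{M}}_{\{0^M\}}$, and by Lemma~\ref{lem:botsuff} every $f_1\in L^\infty(\mc{A}^{\db{M}}_{G_1})$ satisfies $\mb{E}(f_1\,|\,\mc{A}^{\db{M}}_{\tilde{G}_2})=\mb{E}(f_1\,|\,\mc{A}^{\db{M}}_{\{0^M\}})$. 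For any $f_2\in L^\infty(\mc{A}^{\db{M}}_{G_2})\subset L^\infty(\mc{A}^{\db{M}}_{\tilde{G}_2})$ one then obtains
\[
\int f_1\,f_2\,\ud\mu^{\db{M}}=\int\mb{E}(f_1\,|\,\mc{A}^{\db{M}}_{\{0^M\}})\,f_2\,\ud\mu^{\db{M}}.
\]
To finish, I would apply the locality of the face $G_2$ provided by Corollary~\ref{cor:facelocality}: because $0^M\notin G_2$, the $\sigma$-algebras $\mc{A}^{\db{M}}_{\{0^M\}}$ and $\mc{A}^{\db{M}}_{G_2}$ are independent in $\mu^{\db{M}}$, so the last integral factors as $\int f_1\,\ud\mu^{\db{M}}\int f_2\,\ud\mu^{\db{M}}$, yielding independence. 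The hardest point is the asymmetry between simplicial and non-simplicial faces; the trick is to combine Theorem~\ref{thm:sip} applied to the pair $(G_1,\tilde{G}_2)$ with the locality of $G_2$ to eliminate the residual coupling through $\{0^M\}$.
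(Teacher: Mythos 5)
Your proof is correct and follows essentially the same route as the paper: enlarge the second face to one ($\tilde{G}_2$, the paper's $F_3$) meeting the first face in a single point, apply Theorem \ref{thm:sip} to get conditional independence of the two simplicial faces, and then use the face locality of Corollary \ref{cor:facelocality} to remove the residual conditioning on that point. The explicit normalization via an automorphism and the injective morphism $\phi:\db{M}\to\db{N}$ is extra bookkeeping the paper leaves implicit, and you condition $f_1$ onto $\tilde{G}_2$ where the paper conditions the $F_2$-measurable function onto $F_1$, but these are symmetric versions of the identical argument.
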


\begin{proof}
Let $F_1$, $F_2$ be independent finite-dimensional faces in $\db{\mb{N}}$. Note that there is a finite dimensional face $F_3\supset F_2$ with $|F_1\cap F_3|=1$. Let $v = F_1\cap F_3$ and let $f\in L^\infty(\mc{A}^{\db{\mb{N}}}_{F_2})$. By Remark  \ref{rem:strongaxiom} we have $F_3\,\bot_\mu \, F_1$, and it follows that $\mb{E}(f|\mc{A}^{\db{\mb{N}}}_{F_1})=\mb{E}(f|\mc{A}^{\db{\mb{N}}}_v)$. Now it suffices to show that $\mc{A}^{\db{\mb{N}}}_{F_2}$, $\mc{A}^{\db{\mb{N}}}_v$ are independent. This follows from Corollary \ref{cor:facelocality}.
\end{proof}

\begin{lemma}\label{indepcomb}
Let $Q$ be a standard Borel space. Let $\kappa$ be a probability measure on $\mc{P}(Q)$ such that, for some $\mu\in \mc{P}(Q)$, if $\mu'$ is taken with distribution $\kappa$ then the average of $\mu'\times\mu'$ is equal to $\mu\times \mu$. Then $\kappa$ is the Dirac measure $\delta_{\mu}$.
\end{lemma}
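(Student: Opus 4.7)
The plan is to reduce the claim to a variance computation for the real-valued random variable $\mu'\mapsto \mu'(A)$, for each fixed Borel set $A\subseteq Q$, and then invoke a countable generator of the Borel $\sigma$-algebra on $Q$.

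First, fix a Borel set $A\subseteq Q$. Taking the marginal of $\mu'\times \mu'$ on the first coordinate yields $\mu'$, so $\int 1_A\co p_1\,\ud(\mu'\times \mu')=\mu'(A)$; averaging this identity over $\kappa$ and using the assumed identity $\int (\mu'\times\mu')\,\ud\kappa(\mu')=\mu\times\mu$ gives $\int_{\mc{P}(Q)}\mu'(A)\,\ud\kappa(\mu')=\mu(A)$. On the other hand, applying the assumed identity to the rectangle $A\times A$ directly, $\int_{\mc{P}(Q)}\mu'(A)^2\,\ud\kappa(\mu')=(\mu\times\mu)(A\times A)=\mu(A)^2$. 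Therefore the random variable $\mu'\mapsto\mu'(A)$ on the probability space $(\mc{P}(Q),\kappa)$ has mean $\mu(A)$ and second moment $\mu(A)^2$, hence variance zero. It follows that $\mu'(A)=\mu(A)$ for $\kappa$-almost every $\mu'$.

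Next, since $Q$ is a standard Borel space, its Borel $\sigma$-algebra admits a countable family $(A_i)_{i\in\mb{N}}$ closed under finite intersections that generates it (for instance, via a Borel isomorphism $Q\to B$ for some Borel $B\subseteq[0,1]$, taking the preimages of open intervals with rational endpoints and closing under finite intersection). Applying the previous paragraph to each $A_i$ produces a $\kappa$-null set $E_i\subseteq\mc{P}(Q)$ outside of which $\mu'(A_i)=\mu(A_i)$. The set $E:=\bigcup_{i\in\mb{N}} E_i$ is still $\kappa$-null, and for every $\mu'\in \mc{P}(Q)\setminus E$ the measures $\mu'$ and $\mu$ agree on the $\pi$-system $\{A_i:i\in\mb{N}\}$, so by the standard uniqueness of extension (Dynkin's $\pi$--$\lambda$ theorem) they agree on the whole Borel $\sigma$-algebra. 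Hence $\mu'=\mu$ for $\kappa$-almost every $\mu'$, which is exactly $\kappa=\delta_\mu$.

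No step here looks to be a genuine obstacle: the only mildly non-trivial point is ensuring that a standard Borel space admits a countable intersection-closed generating family, which is a classical consequence of the Borel isomorphism theorem already used elsewhere in the paper (see the argument in the proof of Lemma \ref{lem:fibmeaspres}).
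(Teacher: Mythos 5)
Your proof is correct and follows essentially the same route as the paper's: the same second-moment/variance computation showing $\mu'(A)=\mu(A)$ $\kappa$-a.s.\ for each fixed $A$, followed by passing to a countable generating family to conclude $\mu'=\mu$ a.s. Your version is in fact slightly more careful at the final step, since you explicitly take the generating family to be a $\pi$-system and invoke Dynkin's theorem, whereas the paper only mentions a countable generating set.
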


\begin{proof}
Fix any measurable set $B\subset Q$. Since $\mu'$ is a random measure we have that $\mu'(B)$ is a random variable. We have $\mb{E}(\mu'(B)^2)=\mb{E}((\mu'\times\mu')(B\times B))=(\mu\times\mu)(B\times B)=\mu(B)^2$, and $\mb{E}(\mu'(B)) = \mb{E}((\mu'\times\mu')(B\times Q)=(\mu\times\mu)(B\times Q)=\mu(B)$ (by our assumptions). These equations and the linearity of expectation imply that $\mb{E}((\mu(B)-\mu'(B))^2)=0$. It follows that $\mu'(B)=\mu(B)$ holds almost surely. Now using this argument for a countable generating set $\mc{S}$ of the Borel $\sigma$-algebra of $Q$ we get that almost surely we have $\mu'(B)=\mu(B)$ for every $B\in\mc{S}$ simultaneously. This completes the proof.
\end{proof}

\begin{theorem}\label{thm:mainexch4}
A probability measure $\nu$ on $\Bo^{\db{\mb{N}}}$ is cubic exchangeable with the independence property if and only if for some Borel probability space $\varOmega$ there is a cubic coupling $\mu$ on $\Omega^{\db{\mb{N}}}$ and a Borel measurable map $q:\Omega\to \Bo$ such that $\nu=\mu\co (q^{\db{\mb{N}}})^{-1}$.
\end{theorem}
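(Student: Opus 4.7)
For the backward direction, if $\nu=\mu\co(q^{\db{\mb{N}}})^{-1}$ with $\mu$ a cubic coupling on $\Omega^{\db{\mb{N}}}$ (viewed as a single measure via Remark \ref{rem:infinitecc}), then cubic exchangeability of $\nu$ follows from the consistency axiom for $\mu$, since for every injective morphism $\phi:\db{k}\to\db{\mb{N}}$ we have $\nu_\phi=\mu_\phi\co(q^{\db{k}})^{-1}=\mu^{\db{k}}\co(q^{\db{k}})^{-1}$, independently of $\phi$. The independence property of $\nu$ is then the pushforward of the corresponding property of $\mu$, which is provided by Lemma \ref{lem:cubindep}. All the work is therefore in the forward direction, and the plan is to combine Theorem \ref{thm:mainexch3} with Lemma \ref{indepcomb} to show that the mixture given by Theorem \ref{thm:mainexch3} collapses to a single summand.

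More precisely, Theorem \ref{thm:mainexch3} produces a standard Borel space $\Omega$, a Borel map $q:\Omega\to\Bo$, and a Borel probability measure $\kappa$ on $\mc{P}(\Omega^{\db{\mb{N}}})$ supported on cubic couplings such that $\nu=\int \mu\co(q^{\db{\mb{N}}})^{-1}\,\ud\kappa(\mu)$. Push $\kappa$ forward under the (Borel) map $\mu\mapsto\mu\co(q^{\db{\mb{N}}})^{-1}$ to obtain a probability measure $\kappa'$ on $\mc{P}(\Bo^{\db{\mb{N}}})$. By the backward direction applied to each $\mu$ in the support of $\kappa$, the measure $\kappa'$ is concentrated on cubic exchangeable probabilities having the independence property, and $\nu=\int\nu'\,\ud\kappa'(\nu')$. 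The forward direction reduces to proving $\kappa'=\delta_\nu$, since then some $\mu\in\supp(\kappa)$ satisfies $\nu=\mu\co(q^{\db{\mb{N}}})^{-1}$. To trigger Lemma \ref{indepcomb} with $Q=\Bo^{\db{\mb{N}}}$, I would fix disjoint infinite subsets $S_1,S_2\subset\mb{N}$ and set $F_i=\db{S_i}\times\{0^{\mb{N}\setminus S_i}\}$, so that $F_1,F_2$ are independent faces meeting only at $0^{\mb{N}}$. Choosing bijections $\mb{N}\to S_i$ yields injective cube morphisms $\phi_i:\db{\mb{N}}\to\db{\mb{N}}$ with image $F_i$, and I let $\wh{\phi_i}:\Bo^{\db{\mb{N}}}\to\Bo^{\db{\mb{N}}}$ be the associated project-and-relabel maps. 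For any cubic exchangeable $\rho$ on $\Bo^{\db{\mb{N}}}$ with the independence property, Remark \ref{rem:exchmappres} gives $\rho\co\wh{\phi_i}^{-1}=\rho$; combining this with the independence of the marginals on $F_1$ and $F_2$ (extended from finite subfaces of $F_1,F_2$ to the full infinite-dimensional $\sigma$-algebras by a routine $\pi$-$\lambda$ argument on cylinder sets), the joint pushforward of $\rho$ under $x\mapsto(\wh{\phi_1}(x),\wh{\phi_2}(x))$ equals $\rho\times\rho$ on $\Bo^{\db{\mb{N}}}\times\Bo^{\db{\mb{N}}}$. Applying this identity to $\nu$ itself, and integrating the same identity for $\nu'$ against $\kappa'$, yields
\begin{equation*}
\nu\times\nu=\int_{\mc{P}(\Bo^{\db{\mb{N}}})}\nu'\times\nu'\,\ud\kappa'(\nu'),
\end{equation*}
and Lemma \ref{indepcomb} then forces $\kappa'=\delta_\nu$, completing the proof.

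The only genuinely delicate step is the translation that identifies the marginal of a cubic exchangeable measure on a pair of independent infinite-dimensional faces with the product measure $\rho\times\rho$; this requires combining Remark \ref{rem:exchmappres} (to rewrite each one-sided marginal as $\rho$ itself) with the independence property (to factor the joint distribution), together with the extension of the latter from finite to infinite-dimensional faces. Once this identification is established, both the derivation of the $\nu\times\nu$ identity and the appeal to Lemma \ref{indepcomb} are essentially automatic.
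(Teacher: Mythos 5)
Your overall strategy --- decompose $\nu$ via Theorem \ref{thm:mainexch3}, push the mixing measure forward to $\mc{P}(\Bo^{\db{\mb{N}}})$, and collapse the mixture with Lemma \ref{indepcomb} --- is essentially the paper's. The only structural difference is that the paper applies Lemma \ref{indepcomb} once for each finite $n$, with $Q=\Bo^{F_1}$ for a pair of \emph{finite}-dimensional independent faces, and then concludes $\nu'=\nu$ almost surely by letting $n$ range over $\mb{N}$, whereas you invoke the lemma a single time with $Q=\Bo^{\db{\mb{N}}}$ and a pair of infinite-dimensional faces. That variant is legitimate and arguably a little cleaner, but your implementation of it contains a concrete error.

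With $F_i=\db{S_i}\times\{0^{\mb{N}\setminus S_i}\}$ for disjoint $S_1,S_2$, the intersection $F_1\cap F_2$ is $\{0^{\mb{N}}\}$, not empty, so these are \emph{not} independent faces in the sense of the paper (``trivial intersection'' means empty: in the proof of Lemma \ref{lem:cubindep} one must enlarge $F_2$ to $F_3$ precisely in order to obtain a singleton intersection with $F_1$). More importantly, the key identity fails as stated: both $\wh{\phi_1}(x)$ and $\wh{\phi_2}(x)$ have their $0^{\mb{N}}$-coordinate equal to $x_{0^{\mb{N}}}$, so under the joint pushforward of $\rho$ these two coordinates are almost surely equal, whereas under $\rho\times\rho$ they are independent copies; the two measures agree only when the one-point marginal of $\rho$ is a Dirac mass. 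The repair is easy: take $S_1\cup S_2\neq\mb{N}$, pick $i_0\notin S_1\cup S_2$, and replace $F_2$ by $\db{S_2}\times\{z\}$ where $z\in\db{\mb{N}\setminus S_2}$ has $z\sbr{i_0}=1$ and all other coordinates $0$. Then $F_1\cap F_2=\emptyset$, the free-coordinate sets are still disjoint, $\phi_2$ remains an injective cube morphism (only finitely many constantly-$1$ output coordinates), and Remark \ref{rem:exchmappres} still gives $\rho\co\wh{\phi_2}^{-1}=\rho$. With this correction, together with the (indeed routine) extension of the independence property from finite subfaces to the full $\sigma$-algebras $\mc{B}^{\db{\mb{N}}}_{F_1}$, $\mc{B}^{\db{\mb{N}}}_{F_2}$, your single application of Lemma \ref{indepcomb} goes through and the rest of the argument is sound.
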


\begin{proof}
Let $\kappa$ and $q:\Omega\to \Bo$ be as given by Theorem \ref{thm:mainexch3}, so that $\nu=\mb{E}_\kappa(\mu\co(q^{\db{\mb{N}}})^{-1})$. Let $\nu'$ denote the random measure $\mu\co(q^{\db{\mb{N}}})^{-1}$ on $\Bo^{\mb{N}}$. The independence property is preserved under composition with $(q^{\db{\mb{N}}})^{-1}$, so Lemma \ref{lem:cubindep} implies that $\nu'$ has the independence property almost surely. Let $n\geq 0$ and let $F_1$, $F_2$ be independent faces of dimension $n$ in $\db{\mb{N}}$. Now, almost surely, the marginals of $\nu'$ on $F_1$ and $F_2$ are equal; we denote this marginal by $\mu'_1$. Moreover $\mc{B}^{\db{\mb{N}}}_{F_1}$, $\mc{B}^{\db{\mb{N}}}_{F_2}$ are independent in $\nu'$ almost surely, so $\nu'$ is of the form $\mu'_1\times\mu'_1$ on $\Bo^{F_1}\times \Bo^{F_2}$. The same independence holds in $\nu$ (by assumption), so $\nu$ is similarly of the form $\mu_1\times \mu_1$ on $\Bo^{F_1}\times \Bo^{F_2}$. Applying Lemma \ref{indepcomb} with $Q=\Bo^{F_1}$ (identified with $\Bo^{F_2}$), we obtain that almost surely $\mu'_1=\mu_1$. Since this holds for every $n$, we must have $\nu=\nu'$ almost surely, so $\nu$ is indeed the image under $q^{\db{\mb{N}}}$ of a cubic coupling.
\end{proof}
\noindent To complete the proof of our main result, the last step is to express a factor of a cubic coupling as a measure of the form $\zeta_{\ns,m}$ as in the nilspace construction.

\begin{lemma}\label{lem:exch4}
Let $\mu$ be a cubic coupling on $\Omega^{\db{\mb{N}}}$ and let $\gamma:\Omega\to \ns$ be as in Theorem \ref{thm:MeasInvThmGen}. Let $\Bo$ be a standard Borel space, let $q:\Omega\to \Bo$ be a Borel map, let $q':\Omega\to\mc{P}(\Bo)$, $\omega\mapsto \delta_{q(\omega)}$, and let $m:\ns\to \mc{P}(\Bo)$ be such that $\mb{E}(q'|\gamma)=m\co\gamma$. Then $\mu\co(q^{\db{\mb{N}}})^{-1}=\zeta_{\ns,m}$. 
\end{lemma}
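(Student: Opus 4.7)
Since the index set $\db{\mb{N}}$ is countable, both $\mu\co(q^{\db{\mb{N}}})^{-1}$ and $\zeta_{\ns,m}$ are uniquely determined by their marginals on finite-dimensional faces $\db{n}\times\{0^{\mb{N}\setminus [n]}\}$, so it suffices to prove that for every $n\geq 0$ and every system $F=(f_v)_{v\in\db{n}}$ of bounded Borel functions $\Bo\to\mb{C}$ we have
\[
\int_{\Omega^{\db{n}}}\prod_{v\in \db{n}} (f_v\co q)\co p_v\,\ud\mu^{\db{n}}\;=\;\int_{\Bo^{\db{n}}}\prod_{v\in \db{n}} f_v\co p_v\,\ud\zeta_{\ns,m}^{\db{n}},
\]
where $\mu^{\db{n}}$ and $\zeta_{\ns,m}^{\db{n}}$ denote the respective marginals. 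My plan is to reduce the left side, via relative independence and the measure-preserving property of $\gamma$, to a formula that directly matches the right side coming from the nilspace construction.

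First I would use the hypothesis on $m$ to identify the conditional expectation of $f\co q$ relative to $\gamma$. Since $\mb{E}(q'|\gamma)=m\co\gamma$ as $\mc{P}(\Bo)$-valued random variables, for every Borel set $A\subseteq \Bo$ the scalar identity $\mb{E}(1_A\co q\,|\,\gamma)(\omega)=m(\gamma(\omega))(A)$ holds $\lambda$-a.e.\ (evaluating both sides at $A$). By linearity, monotone convergence and bounded convergence, this extends to
\[
\mb{E}(f\co q\,|\,\gamma)(\omega)\;=_\lambda\;g_f\co\gamma(\omega)\quad\text{with}\quad g_f(x):=\int_{\Bo}f\,\ud m(x),
\]
for every bounded Borel $f:\Bo\to\mb{C}$, with $g_f$ Borel measurable on $\ns$ by the definition of the standard Borel structure on $\mc{P}(\Bo)$.

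Next, by Theorem \ref{thm:MeasInvThmGen} the coupling $\mu^{\db{n}}$ is relatively independent over its factor generated by $\gamma^{\db{n}}$, i.e.\ over the product sub-$\sigma$-algebra $\bigotimes_{v\in\db{n}}\gamma^{-1}(\mc{B}_\ns)$. Applying Definition \ref{def:indoverfact} to the system $(f_v\co q)_{v\in\db{n}}$, each factor $f_v\co q$ may be replaced by $\mb{E}(f_v\co q\,|\,\gamma)=g_{f_v}\co\gamma$ without changing the integral. Then, using the measure-preserving property $\gamma^{\db{n}}:(\Omega^{\db{n}},\mu^{\db{n}})\to(\ns^{\db{n}},\mu_{\cu^n(\ns)})$ from Theorem \ref{thm:MeasInvThmGen}, the integral transports to $\ns^{\db{n}}$, yielding
\[
\int_{\Omega^{\db{n}}}\prod_{v}(f_v\co q)\co p_v\,\ud\mu^{\db{n}}\;=\;\int_{\cu^n(\ns)}\prod_{v}g_{f_v}\co p_v\,\ud\mu_{\cu^n(\ns)}\;=\;\int_{\cu^n(\ns)}\prod_{v}\Big(\!\int_\Bo\! f_v\,\ud m(x_v)\Big)\,\ud\mu_{\cu^n(\ns)}(x).
\]

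Finally, the right-hand side above is, by construction, precisely the finite-dimensional marginal of $\zeta_{\ns,m}$: the nilspace construction samples $(x_v)_{v\in\db{n}}$ from $\mu_{\cu^n(\ns)}$ and then draws $Y_v$ conditionally independently with distribution $m(x_v)$, so Fubini gives exactly the displayed formula. Hence the two marginals coincide for every $n$, and the lemma follows. The only non-routine step is the $\mc{P}(\Bo)$-valued conditional expectation identity, which is handled by testing against indicator functions of Borel sets and extending by the monotone class theorem; the rest is a direct application of the results already established, namely relative independence and the measure-preserving property of $\gamma^{\db{n}}$.
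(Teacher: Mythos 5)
Your proof is correct and follows essentially the same route as the paper's: reduce to finite cylinder integrals, use the relative independence of $\mu^{\db{n}}$ over the factor generated by $\gamma^{\db{n}}$ to replace each $f_v\co q$ by $\mb{E}(f_v\co q|\gamma)=\big(\!\int_\Bo f_v\,\ud m(\cdot)\big)\co\gamma$, push forward by $\gamma^{\db{n}}$, and recognize the resulting integral as the corresponding marginal of $\zeta_{\ns,m}$. The only cosmetic difference is that the paper works directly with indicator functions of Borel sets $A_v$ while you allow general bounded Borel $f_v$ and justify the $\mc{P}(\Bo)$-valued conditional expectation identity a bit more explicitly.
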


\begin{proof}
Let $V\subset\db{\mb{N}}$ be a finite set and $\{A_v\}_{v\in V}$ a collection of Borel subsets of $\Bo$. Let $A\subseteq\Omega^{\db{\mb{N}}}$ be the preimage of $\times_{v\in V} A_v$ under $p_V:\Bo^{\db{\mb{N}}}\to \Bo^V$. It suffices to show that $\mu\co(q^{\db{\mb{N}}})^{-1} (A)= \zeta_{\ns,m}(A)$ for every such set $A$. By Theorem \ref{thm:MeasInvThmGen} the coupling $\mu$ is independent relative to the factor $\gamma$, so $\mu\co(q^{\db{\mb{N}}})^{-1}(A)=\int_{\Omega^{\db{\mb{N}}}} \prod_{v\in S}\mb{E}(1_{A_v}\co q|\gamma)\co p_v\,\ud \mu$. Now for each $v$ and $\lambda$-almost every $\omega\in \Omega$ (where $\lambda$ is the original measure on $\Omega$), note that $\mb{E}(1_{A_v}\co q|\gamma)(\omega)$ is the measure $\mb{E}(q'|\gamma)(\omega)$ evaluated at $A_v$. Plugging this into the right side of the last equality gives us $\mu\co(q^{\db{\mb{N}}})^{-1}(A)=\int_{\Omega^{\db{\mb{N}}}} \prod_{v\in S}\big(m\co\gamma(\omega_v)\big)(A_v)\,\ud \mu(\omega)$, and letting $\mu'=\mu\co(\gamma^{\db{\mb{N}}})^{-1}$, the last integral is $\int_{\ns^{\db{\mb{N}}}} \prod_{v\in S}\big(m(x_v)\big)(A_v)\,\ud \mu'(x)$. Now note that this integral is precisely $\zeta_{\ns,m}(A)$.
\end{proof}

\begin{proof}[Proof of Theorem \ref{thm:mainexch1}]
Statement $(i)$ follows from Theorem \ref{thm:mainexch4} and Lemma \ref{lem:exch4}. For the second statement we use Theorem \ref{thm:mainexch3} and Lemma \ref{lem:cubindep}.
\end{proof} 

\section{Limits of functions on compact nilspaces}\label{sec:limits}

\noindent Let $\Bo\subset\mb{C}$ be a compact set, let $\ns$ be a compact nilspace, and let $f:\ns\to \Bo$ be a Borel measurable function. Nilspace theory enables us to define densities in $f$ of configurations given by systems of linear forms. In this section we focus on the following  configurations.
\begin{defn}\label{def:cp}
A \emph{cubic pattern} is determined by two multisets $S_1$ and $S_2$ in a cube $\db{k}$. The density of such a pattern in $f:\ns\to \Bo$, denoted by $t(S_1,S_2,f)$, is defined by
\begin{equation}\label{eq:limcubconf}
t(S_1,S_2,f)=\int_{\cu^k(\ns)} \Big(\prod_{v\in S_1} f(\q(v))\Big)\Big(\prod_{v\in S_2}\overline{f(\q(v))}\Big)\ud\mu^{\db{k}}(\q),
\end{equation}
where $\mu^{\db{k}}$ is the Haar probability measure on the cube set $\cu^k(\ns)$.
\end{defn}
\noindent For example if $S_1$ is the set of vertices with even coordinate sum and $S_2$ is the set of vertices with odd coordinate sum in $\db{k}$ then we have $t(S_1,S_2,f)=\|f\|_{U^k}^{2^k}$.
We say that a sequence of functions $(f_i:\ns_i\to \Bo)_{i\in \mb{N}}$ is \emph{cubic convergent} if $\lim_{i\to\infty}t(S_1,S_2,f_i)$ exists for every cubic configuration $(S_1,S_2)$. It can be seen in a straightforward way from the definitions that cubic convergence is equivalent to the convergence of the measures $\zeta_{\ns_i,p\co f_i}$ in the weak topology on $\mc{P}(\Bo^{\db{\mb{N}}})$, where $p:\Bo\to\mc{P}(\Bo)$ is the function that maps $z\in \Bo$ to the Dirac measure $\delta_z$. Note that for a Borel function $f:\ns\to\mc{P}(\Bo)$ we can also define $t(S_1,S_2,f)$ using \eqref{eq:limcubconf} for the $\db{k}$-marginals of the probability measure $\zeta_{\ns,f}$.
The following result provides limit objects for cubic convergent sequences of functions.

\begin{theorem}\label{thm:limob}
Let $\Bo$ be a compact subset of $\mb{C}$, and let $(f_i:\ns_i\to \Bo)_{i\in \mb{N}}$ be a cubic convergent sequence of functions on compact nilspaces. Then there is a compact nilspace $\ns$ and a measurable function $f:\ns\to\mc{P}(\Bo)$ such that $\lim_{i\to\infty} t(S_1,S_2,f_i)=t(S_1,S_2,f)$ holds for every cubic pattern $(S_1,S_2)$.
\end{theorem}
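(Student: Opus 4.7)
The plan is to reformulate the problem as a closure property for certain exchangeable distributions on $\Bo^{\db{\mb{N}}}$ and then invoke Theorem \ref{thm:mainexch1}. For each $i$ let $p:\Bo\to\mc{P}(\Bo)$, $z\mapsto \delta_z$, and set $\zeta_i:=\zeta_{\ns_i,\,p\co f_i}\in\mc{P}(\Bo^{\db{\mb{N}}})$. The remark after \eqref{eq:limcubconf} gives
\[
t(S_1,S_2,f_i)=\int_{\Bo^{\db{\mb{N}}}}\Big(\prod_{v\in S_1} z_v\Big)\Big(\prod_{v\in S_2}\overline{z_v}\Big)\,\ud\zeta_i(z).
\]
Since the algebra generated by coordinate-wise monomials $\prod_v z_v^{a_v}\overline{z_v}^{b_v}$ (with only finitely many nonzero exponents) is a self-conjugate unital point-separating subalgebra of $C(\Bo^{\db{\mb{N}}})$ (a compact metrizable space), it is uniformly dense by Stone-Weierstrass. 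Hence cubic convergence of $(f_i)$ is equivalent to weak convergence $\zeta_i\to\zeta$ in $\mc{P}(\Bo^{\db{\mb{N}}})$ for some probability measure $\zeta$.

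Next I would verify that $\zeta$ is itself cubic exchangeable and has the independence property. Cubic exchangeability of $\zeta_i$ says $(\zeta_i)_{\phi_1}=(\zeta_i)_{\phi_2}$ for all injective morphisms $\phi_1,\phi_2:\db{k}\to\db{\mb{N}}$; this is an equation of integrals of bounded continuous functions and is therefore preserved under weak limits. Similarly, the independence property for $\zeta_i$ means that for any pair of independent finite-dimensional faces $F_1,F_2\subset\db{\mb{N}}$ and continuous $g_j\in C(\Bo^{F_j})$, one has $\int g_1 g_2\,\ud\zeta_i=\big(\int g_1\ud\zeta_i\big)\big(\int g_2\ud\zeta_i\big)$ (with $g_j$ pulled back to $C(\Bo^{\db{\mb{N}}})$ via the projection to the $F_j$-coordinates). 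This factorisation passes to the limit, and since $\Bo^{F_j}$ is compact Polish, continuous functions are dense in $L^2$ of the corresponding finite-dimensional marginals, so the factorisation extends to all bounded Borel $g_1,g_2$, giving the independence of $\mc{B}^{\db{\mb{N}}}_{F_1}$ and $\mc{B}^{\db{\mb{N}}}_{F_2}$ in $\zeta$.

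Now I apply Theorem \ref{thm:mainexch1}$(i)$ in the opposite direction: since $\zeta$ is cubic exchangeable with the independence property, $\zeta=\zeta_{\ns,f}$ for some compact nilspace $\ns$ and some Borel $f:\ns\to\mc{P}(\Bo)$. Taking this $f$ as the limit object, the density $t(S_1,S_2,f)$ (defined via the $\db{k}$-marginals of $\zeta_{\ns,f}$) equals $\int\prod_{v\in S_1} z_v\prod_{v\in S_2}\overline{z_v}\,\ud\zeta$, which is exactly $\lim_i t(S_1,S_2,f_i)$, as required.

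The main obstacle is the closure, under weak limits on the uncountable-product space $\Bo^{\db{\mb{N}}}$, of the class of cubic exchangeable measures produced by the nilspace construction; equivalently, the preservation of the independence property under weak limits, so that Theorem \ref{thm:mainexch1}$(i)$ is applicable to $\zeta$. This is precisely the fact highlighted in the introduction as a consequence of Theorem \ref{thm:mainexch1}, and once it is in hand, the remaining ingredients (Stone-Weierstrass to relate cubic convergence with weak convergence, Prokhorov-style compactness for the existence of $\zeta$, and a monotone-class extension from continuous to bounded Borel factorisations) are standard.
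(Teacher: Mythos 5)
Your proposal is correct and follows essentially the same route as the paper's proof: pass to the weak limit $\zeta$ of the measures $\zeta_{\ns_i,p\co f_i}$, observe that cubic exchangeability and the independence property survive weak limits, and apply Theorem \ref{thm:mainexch1}$(i)$ to represent $\zeta$ as $\zeta_{\ns,f}$. (One trivial slip: $\db{\mb{N}}$ is countable, so $\Bo^{\db{\mb{N}}}$ is a countable product and hence compact metrizable, which only makes the compactness and Stone--Weierstrass steps easier.)
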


\begin{proof}
Let $\nu$ be the weak limit of the measures $\zeta_{\ns_i,p\co f_i}$. Since each $\zeta_{\ns,p\co f_i}$ is cubic exchangeable with the independence property, and independence is preserved under weak limits, we have that $\nu$ is cubic exchangeable with the independence property. The result now follows from Theorem \ref{thm:mainexch1}.
\end{proof}
 
\begin{remark}
It can indeed happen that limits of functions that take values in $\Bo$ cannot be represented by functions with values in $\Bo$ and the more general $\mc{P}(\Bo)$-valued functions have to be used. A simple example is when $f_i$ is a random function on the cyclic group $\mb{Z}_i$ with independent values in $\{1,-1\}$ with probability $1/2$ each. Then with probability 1 the limit of the sequence $f_i$ is the function which goes from the one point nilspace to the probability distribution $(\delta_1)/2+(\delta_{-1})/2$.
\end{remark}

\begin{remark}
The convergence of the densities of cubic patterns provides a rich enough limit concept to study various interesting phenomena in arithmetic combinatorics. To obtain limit objects for more general collections of patterns, apart from the additional technicalities in defining the associated densities for functions on compact nilspaces, there is also a more general corresponding exchangeability problem involved, which consists in describing the structure of the joint distribution of a sequence of random variables $(X_v)_{v\in \mb{Z}^\infty}$ (where $\mb{Z}^\infty=\bigoplus_{i\in \mb{N}}\mb{Z}$), assuming that this distribution is invariant under the action of the affine automorphism group of $\mb{Z}^\infty$, that is $\textrm{GL}(\mb{Z}^\infty)\ltimes \mb{Z}^\infty$. 
\end{remark}

\begin{appendix}
\section{Background results from measure theory}\label{App}

We begin with the proof of Lemma \ref{lem:pisysapprox}, which we restate here.
\begin{lemma}\label{lem:pisysapproxApp}
Let $(\Omega,\mc{A},\lambda)$ be a probability space, let $p\in [1,\infty)$, let $(\mc{B}_i)_{i=1}^n$ be a sequence of sub-$\sigma$-algebras of $\mc{A}$, and let $\mc{B}=\bigvee_{i=1}^n \mc{B}_i$. Let $\mc{R}$ denote the set of functions on $\Omega$ of the form $\omega\mapsto \prod_{i=1}^n f_i(\omega)$, where $f_i\in \mc{U}^\infty(\mc{B}_i)$ for all $i$. Then for every $f\in L^p(\mc{B})$, and every $\epsilon>0$, there is a finite linear combination $g$ of functions in $\mc{R}$ such that $\|f-g\|_{L^p}\leq\epsilon$.
\end{lemma}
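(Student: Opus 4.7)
The plan is to apply a standard $\pi$-$\lambda$ argument combined with density of simple functions. First I would reduce the problem to approximating indicator functions: since simple $\mc{B}$-measurable functions are dense in $L^p(\mc{B})$ (a basic fact from measure theory), and since the $L^p$-closure of finite linear combinations of elements of $\mc{R}$ forms a closed linear subspace $\mc{V}\subset L^p(\mc{B})$, it suffices to show that $1_B\in \mc{V}$ for every $B\in \mc{B}$.

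Next I would identify a suitable $\pi$-system generating $\mc{B}$. Let $\mc{C}$ denote the collection of sets of the form $B_1\cap\cdots\cap B_n$ with $B_i\in \mc{B}_i$. Then $\mc{C}$ is closed under finite intersections since $(B_1\cap\cdots\cap B_n)\cap(B'_1\cap\cdots\cap B'_n)=(B_1\cap B'_1)\cap\cdots\cap(B_n\cap B'_n)$, and $\sigma(\mc{C})=\mc{B}$ (each $\mc{B}_i\subset\sigma(\mc{C})$ by taking the remaining factors equal to $\Omega$). For any $C=B_1\cap\cdots\cap B_n\in\mc{C}$, the indicator $1_C=\prod_{i=1}^n 1_{B_i}$ lies in $\mc{R}$ itself, hence in $\mc{V}$.

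The remaining step is to apply Dynkin's $\pi$-$\lambda$ theorem to the collection $\mc{D}=\{B\in\mc{B}:1_B\in\mc{V}\}$. I would verify: $\Omega\in\mc{D}$ (since $1_\Omega\in\mc{R}$ with all $f_i=1$); if $A\subseteq B$ are both in $\mc{D}$ then $1_{B\setminus A}=1_B-1_A$ is in $\mc{V}$ by linearity and closedness, so $\mc{D}$ is closed under proper differences; and if $A_k\in \mc{D}$ with $A_k\uparrow A$, then $1_{A_k}\to 1_A$ in $L^p(\lambda)$ by dominated convergence (since $p<\infty$ and the dominating function $1$ is in $L^p$), so $1_A\in\mc{V}$ and hence $A\in\mc{D}$. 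Thus $\mc{D}$ is a $\lambda$-system containing the $\pi$-system $\mc{C}$, so $\mc{D}\supseteq\sigma(\mc{C})=\mc{B}$, completing the proof.

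I do not expect any serious obstacle here: the argument is entirely structural, using only the measure-theoretic toolbox of $\pi$-$\lambda$ systems, monotone/dominated convergence, and density of simple functions in $L^p$. The only point requiring mild care is the use of $p<\infty$ to obtain $L^p$-convergence of indicators under monotone limits, which fails for $p=\infty$; this matches the hypothesis of the lemma.
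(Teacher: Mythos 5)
Your proof is correct, but it takes a different route from the paper's. The paper also reduces to indicator functions of sets in $\mc{B}$ via density of simple functions, but then shows that the collection of product sets $B_1\cap\cdots\cap B_n$ (with $B_i\in\mc{B}_i$) forms a \emph{semiring} --- the fiddly step being the explicit decomposition of a difference of two such sets into a finite disjoint union of sets of the same form, proved by induction on $n$ --- and then invokes Billingsley's approximation theorem to approximate any $B\in\mc{B}$ in measure by a finite disjoint union of semiring elements. You instead observe only that this collection is a $\pi$-system generating $\mc{B}$, and run Dynkin's $\pi$-$\lambda$ theorem on the class $\mc{D}=\{B\in\mc{B}: 1_B\in\mc{V}\}$, where $\mc{V}$ is the $L^p$-closure of the span of $\mc{R}$; the $\lambda$-system axioms follow from linearity, closedness of $\mc{V}$, and dominated convergence (correctly using $p<\infty$). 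Your argument is somewhat cleaner in that it sidesteps both the combinatorial verification of the semiring property and the external approximation theorem; the paper's approach yields the slightly stronger conclusion that indicators can be approximated by indicators of disjoint unions of product sets (i.e.\ with coefficients in $\{0,1\}$), though this extra precision is not needed for the lemma. Both proofs are complete and valid.
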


\begin{proof}
In $L^p(\mc{B})$ the set of simple functions is everywhere dense \cite[Lemma 4.2.1]{Boga1}, so it suffices to prove the lemma assuming that $f$ is simple. Then by the triangle inequality for $\|\cdot\|_{L^p}$, it suffices to prove the lemma for indicator functions of sets in $\mc{B}$. In fact, it suffices to show that the sets of the form $\cap_{i\in [n]} B_i$ with $B_i\in \mc{B}_i$, form a semiring (to recall the notion of a semiring of sets see \cite[p.\ 166]{Bill1}). Indeed, if this holds then by \cite[Theorem 11.4]{Bill1} for every set $B\in \mc{B}$ and every $\epsilon>0$ there is a finite disjoint union $E$ of such finite intersections satisfying $\lambda(B\Delta E)\leq\epsilon$, which implies our result for indicator functions of sets in $\mc{B}$ as required.\\
\indent To check the semiring property, the nontrivial part is to check that if $A,B$ are sets of the above form and $A\subset B$, then there exist disjoint sets $C_1,\ldots,C_k$, each being of the above form and such that $B\setminus A=\bigcup_{j\in [k]} C_j$. To show this, we first prove the following basic case. Let $\mc{B}_1,\mc{B}_2$ be sub-$\sigma$-algebras of $\mc{B}$ and let $X_i,Y_i\in \mc{B}_i$ for $i=1,2$. Using the partition $Y_1^c \cup Y_2^c=(Y_1^c \cap Y_2^c) \sqcup (Y_1^c \cap Y_2) \sqcup (Y_1 \cap Y_2^c)$, where $Y_i^c=\Omega\setminus Y$, we obtain that
\[
(X_1\cap X_2)\setminus (Y_1\cap Y_2)
= \big( (X_1\setminus Y_1) \cap (X_2\setminus Y_2) \big) \sqcup \big( (X_1\setminus Y_1) \cap (X_2\cap Y_2) \big) \sqcup \big( (X_1\cap Y_1) \cap (X_2\setminus Y_2) \big).
\]
It follows from this and the fact that $\mc{B}_1, \mc{B}_2$ are $\sigma$-algebras  that sets of the form $X_1\cap X_2$ indeed form a semiring. By induction on $n$ we then deduce the general case, namely that sets of the form $A_1\cap\cdots\cap A_n$, $A_i\in \mc{B}_i$ for each $i$, form a semiring (using the equation above  with $X_1=\cap_{i=1}^{n-1} A_i$, $X_2=A_n$, $Y_1=\cap_{i=1}^{n-1} B_i$, $Y_2=B_n$ to reduce to the case $n-1$).
\end{proof}

Next we show that the meet of two sub-$\sigma$-algebras is indeed a sub-$\sigma$-algebra.
\begin{lemma}\label{lem:meetsubalg}
Let $(\Omega,\mc{A},\lambda)$ be a probability space, and let $\mc{B}_0,\mc{B}_1$ be sub-$\sigma$-algebras of $\mc{A}$. Then $\mc{B}_0\wedge \mc{B}_1$ is a sub-$\sigma$-algebra of $\mc{A}$.
\end{lemma}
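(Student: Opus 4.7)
The plan is to verify the three defining properties of a $\sigma$-algebra directly from the definition of $\mc{B}_0 \wedge_\lambda \mc{B}_1$, using the fact that $\mc{B}_0$ and $\mc{B}_1$ are themselves $\sigma$-algebras and that symmetric difference interacts well with the basic set-theoretic operations.

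First, $\emptyset$ belongs to $\mc{B}_0 \wedge_\lambda \mc{B}_1$ by taking $B_0 = B_1 = \emptyset$. For closure under complements, suppose $A \in \mc{B}_0 \wedge_\lambda \mc{B}_1$ with witnesses $B_i \in \mc{B}_i$, so $\lambda(A \Delta B_i) = 0$ for $i = 0,1$. Since $(\Omega \setminus A) \Delta (\Omega \setminus B_i) = A \Delta B_i$, the complements $\Omega \setminus B_i \in \mc{B}_i$ witness that $\Omega \setminus A \in \mc{B}_0 \wedge_\lambda \mc{B}_1$.

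The key step is closure under countable unions. Let $(A_n)_{n \in \mb{N}}$ be a sequence in $\mc{B}_0 \wedge_\lambda \mc{B}_1$, and for each $n$ choose witnesses $B_{0,n} \in \mc{B}_0$ and $B_{1,n} \in \mc{B}_1$ with $\lambda(A_n \Delta B_{i,n}) = 0$. Set $A = \bigcup_n A_n$ and $B_i = \bigcup_n B_{i,n}$ for $i = 0,1$; since $\mc{B}_i$ is a $\sigma$-algebra, $B_i \in \mc{B}_i$. The standard set-theoretic inclusion
\[
\Big(\bigcup_n A_n\Big) \Delta \Big(\bigcup_n B_{i,n}\Big) \; \subseteq \; \bigcup_n (A_n \Delta B_{i,n})
\]
combined with countable subadditivity gives $\lambda(A \Delta B_i) \leq \sum_n \lambda(A_n \Delta B_{i,n}) = 0$, so $A \in \mc{B}_0 \wedge_\lambda \mc{B}_1$.

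There is no serious obstacle here; the only point that requires care is the inclusion for symmetric differences of countable unions, which is a purely set-theoretic identity and is the main reason why countable additivity of $\lambda$ suffices.
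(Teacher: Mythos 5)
Your proof is correct and follows essentially the same route as the paper's: the same identity $A^c\Delta B^c=A\Delta B$ for complements, and for countable unions the same underlying observation that the symmetric difference of the unions is covered by the union of the symmetric differences (the paper merely unpacks this inclusion into the two set differences explicitly). No issues.
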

\begin{proof}
We clearly have $\Omega$ and $\emptyset$ in $\mc{B}_0\wedge \mc{B}_1$. Let $A$ be in $\mc{B}_0\wedge \mc{B}_1$, and for $i=0,1$ let $B_i\in\mc{B}_i$ be such that $\lambda(A\Delta B_i)=0$. Then for $i=0,1$ we have $A^c\Delta B_i^c=A\Delta B_i$, so $\lambda(A^c\Delta B_i^c)=0$. Hence $\mc{B}_0\wedge \mc{B}_1$ is closed under taking complements. If $(A_n)_{n\in\mb{N}}$ is a sequence of sets in $\mc{B}_0\wedge \mc{B}_1$, then for each $n$ and $i=0,1$ there is $B_{n,i}\in \mc{B}_i$ such that $\lambda(A_n\Delta B_{n,i})=0$. We then have $(\bigcup_n A_n) \setminus (\bigcup_m B_{m,i})= \bigcup_n(A_n\cap\, \bigcap_m B_{m,i}^c)$ and each set $A_n\cap\, \bigcap_m B_{m,i}^c$ is in $\mc{A}$ and included in $A_n\setminus B_{n,i}$, so it is a null set, whence $\lambda \big((\bigcup_n A_n) \setminus (\bigcup_m B_{m,i})\big)=0$. Similarly $\lambda \big((\bigcup_m B_{m,i})\setminus (\bigcup_n A_n) \big)=0$. Hence $\mc{B}_0\wedge \mc{B}_1$ is closed under countable unions.
\end{proof}

The following result was stated as Lemma \ref{lem:nestexp}.

\begin{lemma}\label{lem:nestexpApp}
Let $(\Omega,\mc{A},\lambda)$ be a probability space, and let $\mc{B},\mc{B}'$ be sub-$\sigma$-algebras of $\mc{A}$ with $\mc{B}\subset_\lambda\mc{B}'$. Then for every integrable function $f:\Omega\to \mb{R}$ we have $\mb{E}(\mb{E}(f|\mc{B}')|\mc{B})=_\lambda\mb{E}(f|\mc{B})$, and also $\mb{E}(f|\mc{B}')=_\lambda\mb{E}(f|\mc{B}' \vee \mc{B})$. 
\end{lemma}

\begin{proof}
To prove the first equality, by definition of conditional expectation it suffices to show that for every set $A\in \mc{B}$ we have $\int 1_A f \ud\lambda=\int 1_A\, \mb{E}(f|\mc{B}')\ud\lambda$. Since $\mc{B}\subset_\lambda\mc{B}'$, there is a set $A'\in \mc{B}'$ such that $\|1_A-1_{A'}\|_{L^1}=\lambda(A\, \Delta\, A')=0$, which implies that $\int 1_A f \ud\lambda = \int 1_{A'} f \ud\lambda$. But the last integral equals $\int 1_{A'}\, \mb{E}(f|\mc{B}') \ud\lambda$ by definition of $\mb{E}( f|\mc{B}')$. Using again that $\|1_A-1_{A'}\|_{L^1}=0$, this last integral is seen to equal $\int 1_A\, \mb{E}( f|\mc{B}') \ud\lambda$, as required.

To prove the second equality, by definition again it suffices to show that for every set $A\in \mc{B}' \vee \mc{B}$ we have $\int 1_A\, f\ud\lambda = \int 1_A\, \mb{E}(f|\mc{B}') \ud\lambda$. By Lemma \ref{lem:pisysapproxApp}, the approximation of functions in $L^1(\mc{B})$ and $L^1(\mc{B}')$ by simple functions, and linearity of the integral, it suffices to show that for every $A\in \mc{B}'$ and $B\in \mc{B}$ we have $\int 1_A1_B\, f\ud\lambda = \int 1_A1_B\, \mb{E}(f|\mc{B}') \ud\lambda$. But then since $\|1_B-1_{B'}\|_{L^1}=0$ for some $B'\in\mc{B}'$, arguing as in the previous paragraph we have $\int 1_A1_B\, f\ud\lambda = \int 1_A1_{B'}\, f\ud\lambda =  \int 1_A 1_{B'}\,\mb{E}(f|\mc{B}')\ud\lambda = \int 1_A 1_B\,\mb{E}(f|\mc{B}')\ud\lambda$, as required. 
\end{proof}

\begin{proposition}\label{prop:condindepApp}
Let $(\Omega,\mc{A},\lambda)$ be a probability space, and let $\mc{B}_0,\mc{B}_1$ be sub-$\sigma$-algebras of $\mc{A}$. Then $\mc{B}_0\upmod \mc{B}_1$ holds if and only if, for every bounded measurable function $f:\Omega\to\mb{R}$, the following equation is satisfied for $i=0$ or, equivalently, for $i=1$\textup{:} 
\begin{equation}\label{eq:condindepApp}
\mb{E}( \,\mb{E}(f|\mc{B}_i)\, |\mc{B}_{1-i}) =_\lambda \mb{E}( f | \mc{B}_0\wedge \mc{B}_1).
\end{equation}
\end{proposition}
\begin{proof}
Throughout this proof let $\mc{B}=\mc{B}_0\wedge \mc{B}_1$. We first prove the necessity of \eqref{eq:condindepApp}. For $i=0,1$, for every bounded $\mc{B}_i$-measurable function $g$, by the second equality in Lemma \ref{lem:nestexpApp} we have $\mb{E}(g|\mc{B}_{1-i})=_\lambda \mb{E}(g|\mc{B}_{1-i} \vee \mc{B})$. By Theorem \ref{thm:condindep} we have $\mb{E}(g|\mc{B}_{1-i} \vee \mc{B})=_\lambda \mb{E}(g|\mc{B})$. The last two equalities imply that $\mb{E}(g|\mc{B}_{1-i})=_\lambda\mb{E}(g|\mc{B})$. Applying this to $g=\mb{E}(f|\mc{B}_i)$, we deduce that $\mb{E}( \mb{E}(f|\mc{B}_i) |\mc{B}_{1-i})=_\lambda \mb{E}(\mb{E}(f|\mc{B}_i)|\mc{B})$, and then since $\mc{B}\subset_\lambda \mc{B}_i$, by the first equality in Lemma \ref{lem:nestexpApp} the last expectation equals $\mb{E}(f|\mc{B})$, so we deduce \eqref{eq:condindepApp}.

To see the sufficiency of \eqref{eq:condindepApp}, we show that the last equation in Theorem \ref{thm:condindep} holds for every bounded $\mc{B}_i$-measurable function, which will suffice (using Remark \ref{rem:integtobded}). Thus, letting $f$ be any such function, since we have $f=_\lambda \mb{E}(f|\mc{B}_i)$, and also $\mb{E}(f|\mc{B}_{1-i} \vee \mc{B}) =_\lambda \mb{E}(f|\mc{B}_{1-i})$ as noted above, we have thus $\mb{E}(f|\mc{B}_{1-i} \vee \mc{B}) =_\lambda \mb{E}(f|\mc{B}_{1-i}) =_\lambda \mb{E}(\mb{E}(f|\mc{B}_i)|\mc{B}_{1-i})$, which by \eqref{eq:condindepApp} equals $\mb{E}( f | \mc{B}_0\wedge \mc{B}_1)=_\lambda\mb{E}( f | \mc{B})$. Hence the equation in Theorem \ref{thm:condindep} holds, so $\mc{B}_0,\mc{B}_1$ are indeed conditionally independent relative to $\mc{B}_0\wedge \mc{B}_1$.
\end{proof}
The following fact was stated as Lemma \ref{lem:intermeet}.
\begin{lemma}\label{lem:intermeetApp}
Let $(\Omega,\mc{A},\lambda)$ be a probability space, let $\mc{B}_0$, $\mc{B}_1$ be sub-$\sigma$-algebras of $\mc{A}$, and let $1\leq p<\infty$. Then $L^p(\mc{B}_0)\cap L^p(\mc{B}_1)= L^p(\mc{B}_0\wedge \mc{B}_1)$.
\end{lemma}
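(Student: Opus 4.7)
The plan is to verify both inclusions separately, with the backward one being immediate and the forward one being the only substantive step (though still routine).

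For the backward inclusion $L^p(\mc{B}_0\wedge\mc{B}_1)\subset L^p(\mc{B}_0)\cap L^p(\mc{B}_1)$, I would simply invoke the fact, already pointed out in the excerpt, that $\mc{B}_0\wedge_\lambda\mc{B}_1\subset_\lambda \mc{B}_i$ for $i=0,1$. Any $f\in L^p(\mc{B}_0\wedge\mc{B}_1)$ is therefore $\lambda$-a.e.\ equal to a $\mc{B}_i$-measurable function for each $i$, hence lies in $L^p(\mc{B}_i)$.

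For the forward inclusion, which is the content of the lemma, I would argue pointwise at the level of representatives. Let $f\in L^p(\mc{B}_0)\cap L^p(\mc{B}_1)$, and pick a $\mc{B}_0$-measurable representative $f_0$ and a $\mc{B}_1$-measurable representative $f_1$, which agree outside a $\lambda$-null set $N$. For every real $c$, set $A_c:=\{f_0>c\}\in\mc{B}_0$ and $B_c:=\{f_1>c\}\in\mc{B}_1$. Since $\{f_0\neq f_1\}\subset N$, we have $A_c\Delta B_c\subset N$, so $\lambda(A_c\Delta A_c)=\lambda(A_c\Delta B_c)=0$; this exhibits $A_c$ as an element of $\mc{B}_0\wedge_\lambda\mc{B}_1$ via the witnesses $A_c\in\mc{B}_0$ and $B_c\in\mc{B}_1$ in the definition. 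Since the sets $\{f_0>c\}$ for $c\in\mb{R}$ (or even for $c\in\mb{Q}$) generate the $\sigma$-algebra of sets making $f_0$ measurable, this shows that $f_0$ is $\mc{B}_0\wedge_\lambda\mc{B}_1$-measurable, so $f\in L^p(\mc{B}_0\wedge\mc{B}_1)$.

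There is no real obstacle here: the entire argument is an unwrapping of the definition of $\mc{B}_0\wedge_\lambda\mc{B}_1$, using the fact that an a.e.\ equality of two measurable functions produces, for each level set of one, a set in the other $\sigma$-algebra differing from it by a null set. The only minor point worth being explicit about is that one should work with fixed representatives $f_0,f_1$ rather than with the equivalence class $f$, and that the argument passes from sets of the form $\{f_0>c\}$ to arbitrary Borel preimages through the standard observation that these half-lines generate the Borel $\sigma$-algebra on $\mb{R}$ (and similarly for $\mb{C}$, should one wish to include complex-valued $f$, by applying the argument separately to real and imaginary parts).
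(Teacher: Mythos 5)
Your proof is correct and follows essentially the same route as the paper's: the backward inclusion via $\mc{B}_0\wedge_\lambda\mc{B}_1\subset_\lambda\mc{B}_i$ (which the paper realizes through simple-function approximation), and the forward inclusion by showing each level set $\{f>c\}$ lies in $\mc{B}_0\wedge\mc{B}_1$. Your version is merely more explicit about working with fixed $\mc{B}_0$- and $\mc{B}_1$-measurable representatives, which is a reasonable clarification of the paper's sketch.
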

\begin{proof}
To see the inclusion $L^p(\mc{B}_0)\cap L^p(\mc{B}_1)\supset L^p(\mc{B}_0\wedge \mc{B}_1)$ we can argue starting with any $f\in L^p(\mc{B}_0\wedge \mc{B}_1)$ and using approximation by simple functions involving sets in $\mc{B}_0\wedge \mc{B}_1$. For the opposite inclusion, we can argue starting with a real-valued function $f\in L^p(\mc{B}_0)\cap L^p(\mc{B}_1)$ and showing that any set of the form $\{f> c\}$, $c\in \mb{R}$, is in $\mc{B}_0\wedge\mc{B}_1$, which implies that $f$ is $(\mc{B}_0\wedge\mc{B}_1)$-measurable.
\end{proof}
\noindent Let us now prove Lemma \ref{lem:halfdistrib}, restated as follows, which gives one half of a distributivity property for meet over join and shows that the other half can fail.
\begin{lemma}\label{lem:halfdistribApp}
Let $(\Omega,\mc{A},\lambda)$ be a probability space, and let $\mc{B}_1,\mc{B}_2,\mc{B}_3$ be sub-$\sigma$-algebras of $\mc{A}$. Then
\begin{equation}\label{eq:distrib}
(\mc{B}_1\vee \mc{B}_2)\wedge \mc{B}_3 \supset (\mc{B}_1\wedge \mc{B}_3)\vee (\mc{B}_2\wedge \mc{B}_3).
\end{equation}
The opposite inclusion does not hold in general.
\end{lemma}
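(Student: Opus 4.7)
The plan is to reduce the inclusion to a routine check at the generator level, and then exhibit a small finite counterexample for the reverse inclusion.

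For the inclusion $(\mc{B}_1 \wedge \mc{B}_3) \vee (\mc{B}_2 \wedge \mc{B}_3) \subset (\mc{B}_1 \vee \mc{B}_2) \wedge \mc{B}_3$, I would first show that each of the two $\sigma$-algebras $\mc{B}_i \wedge \mc{B}_3$, $i=1,2$, is contained in $(\mc{B}_1 \vee \mc{B}_2) \wedge \mc{B}_3$. Indeed, if $A \in \mc{B}_i \wedge \mc{B}_3$ then by definition there exist $B_i \in \mc{B}_i$ and $B_3 \in \mc{B}_3$ with $\lambda(A \Delta B_i) = \lambda(A \Delta B_3) = 0$; since $B_i \in \mc{B}_1 \vee \mc{B}_2$ trivially, this witnesses $A \in (\mc{B}_1 \vee \mc{B}_2) \wedge \mc{B}_3$. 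By Lemma \ref{lem:meetsubalg}, the right-hand side is a $\sigma$-algebra; hence it contains the $\sigma$-algebra generated by $(\mc{B}_1 \wedge \mc{B}_3) \cup (\mc{B}_2 \wedge \mc{B}_3)$, which is precisely the join $(\mc{B}_1 \wedge \mc{B}_3) \vee (\mc{B}_2 \wedge \mc{B}_3)$.

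For the failure of the reverse inclusion, I would exhibit the standard ``parity'' example. Take $\Omega = \{0,1\}^2$ with $\lambda$ the uniform probability measure on the four atoms, and let $\mc{B}_1$, $\mc{B}_2$ be the $\sigma$-algebras generated by the first and second coordinate projections respectively, and $\mc{B}_3$ the $\sigma$-algebra generated by the parity function $(\omega_1, \omega_2) \mapsto \omega_1 + \omega_2 \bmod 2$. Then $\mc{B}_1 \vee \mc{B}_2$ is the full power-set $\sigma$-algebra of $\Omega$, so $(\mc{B}_1 \vee \mc{B}_2) \wedge \mc{B}_3 = \mc{B}_3$, which is a nontrivial four-element $\sigma$-algebra. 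On the other hand, because $\lambda$ gives mass $1/4$ to each atom, $\lambda$-equivalence of sets coincides with equality of sets, so $\mc{B}_i \wedge \mc{B}_3 = \mc{B}_i \cap \mc{B}_3$ for $i=1,2$; a direct inspection shows that this intersection is $\{\emptyset, \Omega\}$ in each case, so the join $(\mc{B}_1 \wedge \mc{B}_3) \vee (\mc{B}_2 \wedge \mc{B}_3)$ is the trivial $\sigma$-algebra. In particular it is strictly smaller than $\mc{B}_3 = (\mc{B}_1 \vee \mc{B}_2) \wedge \mc{B}_3$.

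There is no substantial obstacle in either part: the inclusion is essentially a tautology once one remembers that meets are $\sigma$-algebras, and the counterexample is the well-known ``three dependent coin flips'' configuration (where each pair is independent but the triple is not). The only minor care needed is to note explicitly that because $\lambda$ has only atoms of positive mass, $\lambda$-equivalence collapses to set equality, which is what makes each meet $\mc{B}_i \wedge \mc{B}_3$ trivial rather than merely ``small up to null sets.''
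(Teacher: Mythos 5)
Your proof is correct, and both halves take a genuinely more economical route than the paper's. For the inclusion, the paper approximates $1_A$ in $L^p$ by finite sums of rank-one functions via Lemma \ref{lem:pisysapprox}, replaces the factors by $\mc{B}_1$-, $\mc{B}_2$- and $\mc{B}_3$-measurable sets modulo null sets, and passes to the limit; you instead observe that each generator $\mc{B}_i\wedge\mc{B}_3$ sits inside $(\mc{B}_1\vee\mc{B}_2)\wedge\mc{B}_3$ directly from the definition of the meet (a witness $B_i\in\mc{B}_i$ is a fortiori a witness in $\mc{B}_1\vee\mc{B}_2$), and then invoke Lemma \ref{lem:meetsubalg} to conclude that the right-hand side, being a $\sigma$-algebra containing both generators, contains their join. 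This is airtight and strictly more elementary --- it avoids any functional-analytic approximation and makes the inclusion the near-tautology it really is; the paper's $L^p$ machinery buys nothing extra here. For the counterexample, the paper uses the three two-block partitions of $\{1,2,3\}$ under a measure with positive atoms, while you use the parity configuration on $\{0,1\}^2$; these are different instances of the same phenomenon (pairwise independent but not jointly independent atomic $\sigma$-algebras, where positivity of the atoms collapses $\wedge_\lambda$ to $\cap$), and your explicit remark that $\lambda$-equivalence reduces to set equality is exactly the point the paper also relies on. Both parts stand as written.
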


\begin{proof}
Suppose that $A\in (\mc{B}_1\wedge \mc{B}_3)\vee (\mc{B}_2\wedge \mc{B}_3)$. Using Lemma \ref{lem:pisysapproxApp} we can approximate $1_A$ in $L^p$ by finite sums of rank-1 functions of the form $g_i h_i$, with $g_i$ being bounded $(\mc{B}_1\wedge \mc{B}_3)$-measurable and $h_i$ being bounded $(\mc{B}_2\wedge \mc{B}_3)$-measurable. Combining this with approximation by simple functions of each such $g_i$ and $h_i$, we deduce that for every $\epsilon>0$ there exist sets $B_j\in\mc{B}_1\wedge \mc{B}_3$ and $C_j\in \mc{B}_2\wedge \mc{B}_3$, $j\in [N]$ such that $\|1_A-\sum_{j\in [N]} \alpha_j 1_{B_j} 1_{C_j}\|_{L^p}\leq \epsilon$. From here, replacing each $B_j$ and $C_j$ by a $\mc{B}_1$-measurable set and a $\mc{B}_2$-measurable set respectively (modulo a null-set error), we deduce that $1_A$ is within $L^p$ distance $\epsilon$ of $L^p(\mc{B}_1\vee \mc{B}_2)$, and since this holds for every $\epsilon>0$, we conclude that $A$ is $\mc{B}_1\vee \mc{B}_2$-measurable modulo a null set. A similar argument starting from the last inequality shows that $A$ is $\mc{B}_3$-measurable modulo a null set. Hence $A\in (\mc{B}_1\vee \mc{B}_2)\wedge \mc{B}_3$ as required.

To see a counterexample for the opposite inclusion, we shall use partitions of $[3]=\{1,2,3\}$ with $\lambda$ the counting measure. In this case the operation $\wedge$ is just intersection and \eqref{eq:distrib} reduces to $(\mc{B}_1\vee \mc{B}_2)\cap \mc{B}_3 \supset (\mc{B}_1\cap \mc{B}_3)\vee (\mc{B}_2\cap \mc{B}_3)$. The following example shows that this inclusion can be a strict one: let $P_1=\big\{\{1\},\{2,3\}\big\}$, $P_2=\big\{\{1,2\},\{3\}\big\}$, $P_3=\{ \{1,3\},\{2\}\big\}$, and let $\mc{B}_i=\sigma(P_i)$. We then have that $\sigma(\mc{B}_1\cup \mc{B}_2)=2^{[3]}\supset \mc{B}_3$, so that  $\sigma(\mc{B}_1\cup \mc{B}_2)\cap \mc{B}_3 = \mc{B}_3$, whereas $\mc{B}_1\cap \mc{B}_3=\mc{B}_2\cap \mc{B}_3=\{\emptyset,[3]\}$.
\end{proof}
Next we prove Lemma \ref{lem:modlaw}, which we restate here.
\begin{lemma}\label{lem:modlawApp}
Let $(\Omega,\mc{A},\lambda)$ be a probability space, let $\mc{B}$ and $\mc{C}$ be sub-$\sigma$-algebras of $\mc{A}$ satisfying $\mc{B}\upmod \mc{C}$, and let $\mc{B}_1$ be a sub-$\sigma$-algebra of $\mc{B}$. Then $(\mc{C} \vee \mc{B}_1) \wedge \mc{B} =_\lambda (\mc{C} \wedge \mc{B}) \vee \mc{B}_1$.
\end{lemma}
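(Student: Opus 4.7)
The plan is to prove the two inclusions separately, handling the easy one directly from the definitions and the hard one by $L^2$-approximation and the conditional independence hypothesis.

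For the easy inclusion $(\mc{C}\wedge \mc{B})\vee \mc{B}_1 \subset_\lambda (\mc{C}\vee \mc{B}_1)\wedge \mc{B}$, I will note that $\mc{C}\wedge \mc{B}\subset_\lambda \mc{B}$ and $\mc{C}\wedge \mc{B}\subset_\lambda \mc{C}\subset \mc{C}\vee \mc{B}_1$, and similarly that $\mc{B}_1\subset \mc{B}$ (by hypothesis) and $\mc{B}_1\subset \mc{C}\vee \mc{B}_1$. Therefore both generating $\sigma$-algebras $\mc{C}\wedge \mc{B}$ and $\mc{B}_1$ are contained (up to null sets) in the sub-$\sigma$-algebra $(\mc{C}\vee \mc{B}_1)\wedge \mc{B}$, and the stated inclusion follows.

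For the reverse inclusion, I will pick $f\in L^2((\mc{C}\vee \mc{B}_1)\wedge \mc{B})$ and show that $f$ is $L^2$-approximable by functions measurable with respect to $(\mc{C}\wedge \mc{B})\vee \mc{B}_1$. By Lemma~\ref{lem:intermeet}, $f$ lies in both $L^2(\mc{B})$ and $L^2(\mc{C}\vee \mc{B}_1)$. Fixing $\epsilon>0$, I apply Lemma~\ref{lem:pisysapprox} to the $\sigma$-algebras $\mc{C},\mc{B}_1$ to obtain a finite sum $\sum_{i=1}^N g_i h_i$ with $g_i\in \mc{U}^\infty(\mc{C})$ and $h_i\in \mc{U}^\infty(\mc{B}_1)$ such that $\|f-\sum_i g_i h_i\|_{L^2}\leq \epsilon$. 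Since $f$ is $\mc{B}$-measurable and conditional expectation onto $\mc{B}$ is an $L^2$-contraction, and since each $h_i$ is already $\mc{B}_1\subset \mc{B}$-measurable and can therefore be factored out of $\mb{E}(\cdot|\mc{B})$, I get
\[
\Big\|f - \sum_{i=1}^N h_i\,\mb{E}(g_i|\mc{B})\Big\|_{L^2}=\Big\|\mb{E}\Big(f-\sum_i g_i h_i\,\Big|\,\mc{B}\Big)\Big\|_{L^2}\leq \epsilon.
\]

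The conditional independence hypothesis $\mc{B}\upmod \mc{C}$ now enters: by Proposition~\ref{prop:condindep}, $\mb{E}(g_i|\mc{B})=_\lambda \mb{E}(g_i|\mc{B}\wedge \mc{C})$ for each $i$, so each term $h_i\,\mb{E}(g_i|\mc{B})$ is measurable with respect to $\mc{B}_1\vee(\mc{B}\wedge \mc{C})$. Thus $f$ is within $\epsilon$ in $L^2$ of $L^2\big(\mc{B}_1\vee(\mc{B}\wedge \mc{C})\big)$; letting $\epsilon\to 0$ and using that this subspace is closed in $L^2$, we deduce that $f\in L^2(\mc{B}_1\vee(\mc{B}\wedge \mc{C}))$. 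Applying this to indicators of sets in $(\mc{C}\vee \mc{B}_1)\wedge \mc{B}$ then yields $(\mc{C}\vee \mc{B}_1)\wedge \mc{B}\subset_\lambda (\mc{C}\wedge \mc{B})\vee \mc{B}_1$, completing the proof.

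There is no real obstacle here; the only point requiring mild care is the bookkeeping of the $=_\lambda$ and $\subset_\lambda$ relations, and the use of Lemma~\ref{lem:pisysapprox} in a form tailored to the two $\sigma$-algebras $\mc{C}$ and $\mc{B}_1$ (rather than to $\mc{B}$ itself). The crucial structural input is the equality $\mb{E}(g_i|\mc{B})=_\lambda \mb{E}(g_i|\mc{B}\wedge \mc{C})$ supplied by conditional independence, which is precisely what fails in Lemma~\ref{lem:halfdistrib}'s counterexample without such a hypothesis.
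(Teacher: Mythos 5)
Your proof is correct and follows essentially the same route as the paper's: the easy inclusion is handled directly from the definitions (the paper packages this via Lemma~\ref{lem:halfdistrib} together with $\mc{B}_1\wedge\mc{B}=_\lambda\mc{B}_1$, which amounts to the same thing), and the hard inclusion uses exactly the paper's argument of approximating $f$ in $L^2(\mc{C}\vee\mc{B}_1)$ by sums $\sum_i g_ih_i$, projecting onto $\mc{B}$ to factor out the $h_i$, and invoking $\mc{B}\upmod\mc{C}$ to conclude that $\mb{E}(g_i|\mc{B})$ is $\mc{B}\wedge\mc{C}$-measurable.
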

\begin{proof}
The inclusion $(\mc{C} \vee \mc{B}_1) \wedge \mc{B} \supset_\lambda (\mc{C} \wedge \mc{B}) \vee \mc{B}_1$ follows from \eqref{eq:distrib} and the fact that $\mc{B}_1\wedge \mc{B}=_\lambda \mc{B}_1$ (using the fact that in general if $\mc{B}_1=_\lambda\mc{B}_2$ then $\mc{B}_1\vee \mc{B}_3 =_\lambda \mc{B}_2\vee \mc{B}_3$).

To see the opposite inclusion, let $f\in L^\infty((\mc{C}\vee\mc{B}_1)\wedge\mc{B})$ and fix any $\epsilon>0$. Since $f\in L^2(\mc{C}\vee\mc{B}_1)$, by Lemma \ref{lem:pisysapproxApp} there is a function of the form $f'=\sum_{i=1}^n c_ib_i$
where $c_i\in L^\infty(\mc{C})$ and $b_i\in L^\infty(\mc{B}_1)$, such that $\|f-f'\|_{L^2}\leq\epsilon$. Then, since $f\in L^\infty(\mc{B})$, we have $\epsilon\geq\|\mb{E}(f|\mc{B})-\mb{E}(f'|\mc{B})\|_{L^2}=\Big\|f-\sum_{i=1}^n \mb{E}(c_i|\mc{B})b_i\Big\|_{L^2}$. Since $\mc{B}\upmod \mc{C}$, we have that $\mb{E}(c_i|\mc{B})$ is $\mc{B}\wedge\mc{C}$-measurable for every $i$, so the last sum is $(\mc{C}\wedge \mc{B})\vee\mc{B}_1$-measurable. Since $\epsilon>0$ was arbitrary, we can let $\epsilon\to 0$ and we deduce that $(\mc{C} \vee \mc{B}_1) \wedge \mc{B} \subset_\lambda (\mc{C} \wedge \mc{B}) \vee \mc{B}_1$.
\end{proof}

\noindent We now turn to the topological properties of coupling spaces, and the proof of Proposition \ref{prop:coupspace}, which we restate as follows.

\begin{proposition}\label{prop:coupspaceApp}
Let $S$ be a finite set and $\varOmega=(\Omega,\mc{A},\lambda)$ be a Borel or standard pro- bability space. Then $\coup(\varOmega,S)$ is a non-empty convex compact second-countable Hausdorff space \textup{(}in particular it is a Polish space\textup{)}. Moreover $\coup(\varOmega,S)$ can be metrized in such a way that every ball is a convex set. 
\end{proposition}
\noindent Recall that a measure space $\varOmega$ is \emph{separable} if its measure algebra (or metric Boolean algebra) is separable as a metric space (see \cite[\S 1.12(iii)]{Boga1}) or, equivalently, if $L^1(\varOmega)$ is separable (see \cite[\S 7.14(iv)]{Boga2}). Every Borel or standard probability space is separable.

For $\mu\in \coup(\varOmega,S)$, let $\xi(\mu,\cdot)$ denote the map ${L^\infty(\varOmega)}^S \to \mb{C}$, $F\mapsto \xi(\mu,F)$. The following lemma implies that the map $\mu\mapsto \xi(\mu,\cdot)$ is an injection from $\coup(\varOmega,S)$ into the set of multilinear maps ${L^\infty(\varOmega)}^S \to \mb{C}$. In fact, the lemma tells us that this injectivity holds even when restricting the function $\xi(\mu,\cdot)$ to systems of measurable indicator functions.

By a \emph{measurable product-set} in $\Omega^S$ we mean a Cartesian product $R=\prod_{v\in S} B_v$ where $B_v\in \mc{A}$ for each $v\in S$. These measurable product-sets form a semiring of sets (see \cite[Proposition 2, p.\ 415]{R&F}), which we denote by $\mc{R}(\mc{A},S)$.
\begin{defn}\label{def:premeas}
Let $S$ be a set and let $\varOmega=(\Omega,\mc{A},\lambda)$ be a probability space. We denote by $\mc{H}$ the set of functions $h:\mc{R}(\mc{A},S)\to [0,1]$ that satisfy the following properties:
\begin{enumerate}[leftmargin=0.7cm]
\item Additivity on the semiring $\mc{R}(\mc{A},S)$: for every collection of pairwise disjoint sets $R_1,\ldots,R_n\in \mc{R}(\mc{A},S)$, we have $h(\bigsqcup_{i=1}^n R_i)=\sum_{i=1}^n h(R_i)$.
\item If $R\in \mc{R}(\mc{A},S)$ is of the form $p_w^{-1}(B_w)$ for some $B_w\in \mc{A}$, then $h(R)=\lambda(B_w)$.
\end{enumerate}
\end{defn}

\begin{lemma}\label{lem:coupcorresp}
Let $S$ be a finite set and let $\varOmega=(\Omega,\mc{A},\lambda)$ be a Borel or standard probability space. Let $\xi_0:\coup(\varOmega,S)\to [0,1]^{\mc{R}(\mc{A},S)}$ be the map sending $\mu$ to the function $R\mapsto \mu(R)$. Then $\xi_0$ is a bijection from $\coup(\varOmega,S)$ to $\mc{H}$.
\end{lemma}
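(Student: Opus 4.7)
The plan is to first reduce to the case where $\varOmega$ is a Borel probability space (using, as noted in the paper, that every standard probability space is mod 0 isomorphic to a Borel one) and then verify separately that $\xi$ is well-defined, injective, and surjective. Well-definedness of $\xi$, i.e.\ that $\xi(\mu,\cdot)\in\mc{H}$ for any coupling $\mu$, is immediate: finite additivity on $\mc{R}(\mc{A},S)$ follows from $\sigma$-additivity of $\mu$, and (ii) is exactly the coupling condition $\mu\co p_w^{-1}=\lambda$. Injectivity comes from a routine monotone-class ($\pi$-$\lambda$) argument since $\mc{R}(\mc{A},S)$ is stable under finite intersections (because $\prod_v A_v\cap\prod_v B_v=\prod_v(A_v\cap B_v)$) and generates the product $\sigma$-algebra $\mc{A}^S$.

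The substantive step is surjectivity. Given $h\in\mc{H}$, the goal is to extend $h$ to a probability measure $\mu$ on $\mc{A}^S$ via Carath\'eodory's extension theorem, so the one nontrivial hypothesis to verify is $\sigma$-additivity on the semiring $\mc{R}(\mc{A},S)$, equivalently continuity at $\emptyset$ of the canonical finitely additive extension $\tilde h$ to the algebra $\mc{E}$ of finite disjoint unions of measurable product-sets. The basic preliminary, which I would record first, is the monotonicity bound $h\big(\prod_v B_v\big)\leq \min_w\lambda(B_w)$, obtained by applying finite additivity on $\mc{R}(\mc{A},S)$ to the partition of $p_w^{-1}(B_w)$ into $\prod_v B_v$ and a finite disjoint union of product sets, then invoking (ii).

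The main obstacle is verifying the continuity at $\emptyset$, and the key tool will be the tightness of Borel probability measures on Polish spaces. Suppose $E_n\in\mc{E}$ with $E_n\searrow\emptyset$ but $\tilde h(E_n)\geq\delta>0$; I want to derive a contradiction. Writing $E_n=\bigsqcup_i R_{n,i}$ with $R_{n,i}=\prod_v B_{n,i,v}$, tightness lets me pick compact $C_{n,i,v}\subset B_{n,i,v}$ with $\lambda(B_{n,i,v}\setminus C_{n,i,v})$ as small as desired; the telescoping identity
\[
\prod_v B_v\setminus\prod_v C_v\;=\;\bigsqcup_{j=1}^{|S|}C_{v_1}\times\cdots\times C_{v_{j-1}}\times(B_{v_j}\setminus C_{v_j})\times B_{v_{j+1}}\times\cdots\times B_{v_{|S|}}
\]
together with the monotonicity bound gives $h(R_{n,i}\setminus\prod_v C_{n,i,v})\leq\sum_v\lambda(B_{n,i,v}\setminus C_{n,i,v})$. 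Arranging the approximations so that the resulting cumulative errors $\epsilon_n$ satisfy $\sum_n\epsilon_n<\delta/2$, I obtain compact $K_n\subset E_n$ with $K_n\in\mc{E}$ and $\tilde h(K_n)\geq\delta-\epsilon_n$.

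Finally, set $L_n:=\bigcap_{j\leq n}K_j\in\mc{E}$. Since $K_n\subset E_n\subset E_j$ for $j\leq n$, the inclusion $K_n\setminus L_n\subset\bigcup_{j<n}(E_j\setminus K_j)$ yields $\tilde h(K_n\setminus L_n)\leq\sum_{j<n}\epsilon_j$, whence $\tilde h(L_n)\geq\delta-\sum_n\epsilon_n>0$. Thus each $L_n$ is nonempty, and as the $L_n$ form a nested decreasing sequence of compact subsets of the Polish space $\Omega^S$, the finite-intersection property gives $\bigcap_n L_n\neq\emptyset$, contradicting $L_n\subset E_n\searrow\emptyset$. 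With $\sigma$-additivity on the semiring established, Carath\'eodory's theorem produces the probability measure $\mu$ on $\mc{A}^S$ extending $h$, and property (ii) of $h$ guarantees $\mu\in\coup(\varOmega,S)$, completing the proof.
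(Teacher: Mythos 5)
Your proof is correct, and the substantive step (surjectivity) takes a genuinely different route from the paper's. The paper proves countable subadditivity directly on the semiring: it reduces to covering $R=\Omega^S$, then invokes the fact (citing de la Rue and \cite[(13.1),(13.3)]{Ke}) that the Polish topology can be chosen so that $\lambda$ is tight \emph{and} all of the countably many sets $B_{v,i}$ are simultaneously open; it then picks a single compact product set $K$ with $h(K)>1-\epsilon$ and extracts a finite subcover of $K$ from the now-open rectangles $R_i$. You instead pass to the finitely additive extension $\tilde h$ on the generated algebra and verify continuity at $\emptyset$ by inner-approximating each rectangle by a compact product rectangle (using only Ulam-type inner regularity of $\lambda$ by compact sets, with the error controlled through the telescoping identity and the monotonicity bound $h(\prod_v B_v)\leq\min_w\lambda(B_w)$), then run the classical nested-compact-sets argument. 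Your version avoids the topology-refinement lemma entirely, at the cost of the extra bookkeeping with $\tilde h$, $\mc{E}$ and the cumulative errors $\epsilon_n$; the paper's is shorter once that refinement is granted. Two small points worth making explicit if you write this up: (a) the reduction of a standard space to a Borel model via a mod 0 isomorphism $\theta:\Omega_0\to\Omega_0'$ needs the observation that $h$ is determined by its values on product sets contained in $\Omega_0^S$ — this follows from your monotonicity bound since $\lambda(\Omega\setminus\Omega_0)=0$, but it is not automatic from the isomorphism alone (the paper sidesteps this by choosing the topology directly on the standard space); and (b) in the telescoping step, property (i) of Definition \ref{def:premeas} applies because the disjoint collection consisting of $\prod_v C_v$ and the $|S|$ telescoping pieces has union $\prod_v B_v\in\mc{R}(\mc{A},S)$, so $h(\prod_v B_v\setminus\prod_v C_v)$ is legitimately interpreted as the sum over the pieces.
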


\begin{proof}
It is clear from the definition of $\coup(\varOmega,S)$ that $\xi_0$ takes values in $\mc{H}$.

To show that $\xi_0$ is surjective,\footnote{This surjectivity can be deduced from similar results in the literature (see for instance \cite[Theorem 454D]{Fremlin4}), but we include a proof here for completeness.} we shall prove that every $h\in \mc{H}$ is a premeasure on the semiring of product sets $\prod_{v\in S}B_v$. Surjectivity will then follow from the Carath\'eodory extension theorem, since this theorem  yields a measure $\mu$ on the product $\sigma$-algebra $\mc{A}^S$ (generated by the semiring $\mc{R}(\mc{A},S)$), and by property (ii) we have $\mu\in\coup(\varOmega,S)$. 
Thus, let us fix any $h\in \mc{H}$. To prove that $h$ is a premeasure as claimed, the only non-trivial part is to show that if $R\in \mc{R}(\mc{A},S)$ is the pairwise disjoint union of sets $R_i=\prod_{v\in S} B_{v,i}\in \mc{R}(\mc{A},S)$, $i\in \mb{N}$, then $h(R)\leq \sum_{i\in \mb{N}} h(R_i)$. To show this, we first note that it suffices to prove it for $R=\Omega^S$ (since given any other product set $R$ we can obtain $\Omega^S$ as the disjoint union of $R$ and a finite number of other product sets, in such a way that the claim for $\Omega^S$ implies the claim for $R$). Now, since $\varOmega$ is standard or Borel, there exists a topology $\tau$ generating $\mc{A}$ and such that, on one hand, $\lambda$ is tight relative to $\tau$ (i.e.\ we can approximate the probability of any measurable set arbitrarily closely by the measure of some compact subset), and on the other hand every set $B_{v,i}$, $v\in S, i\in\mb{N}$ is open in $\tau$; see \cite[Definition 1-1 and Lemma 3-1]{dlR}, or \cite[(13.1) and (13.3)]{Ke}. Therefore, there exists a compact set $K_v\subset\Omega$ such that $\lambda_v(K_v)>1-\epsilon/|S|$. It then follows from properties (i) and (ii) that the compact set $K=\prod_{v\in S} K_v$ in $\tau^S$ satisfies $h(K)>1-\epsilon$. Since every $B_{v,i}$ is open, we have that all the rectangles $R_i$ form an open cover of $K$ in $\tau$, so there is a finite subcover. Applying property (ii) to this subcover, we conclude that $h(R)=1 \leq h(K)+\epsilon \leq \epsilon + \sum_{i\in \mb{N}} h(R_i)$. Letting $\epsilon\to 0$, we deduce that $h(R)\leq \sum_{i\in \mb{N}} h(R_i)$. Hence $h$ is indeed a premeasure.

Finally, to see that $\xi_0$ is injective, note that if $\xi_0(\mu_1)=\xi_0(\mu_2)$ then in particular $\mu_1$ and $\mu_2$ restrict to the same premeasure on $\mc{R}(\mc{A},S)$, so we have $\mu_1=\mu_2$ by the uniqueness of the Carath\'eodory extension \cite[p.\ 356]{R&F}.
\end{proof}
We can now establish the main topological properties of coupling spaces. 
\begin{proof}[Proof of Proposition \ref{prop:coupspaceApp}]
The product measure $\lambda^S$ shows that $\coup(\varOmega,S)$ is non-empty.

Let us equip $[0,1]^{\mc{R}(\mc{A},S)}$ with the product topology (where $[0,1]$ is equipped with the restriction of the standard topology on $\mb{R}$). Note that the map $\xi_0$ from Lemma \ref{lem:coupcorresp} is continuous from $\coup(\varOmega,S)$ to $\mc{H}$ equipped with the relative topology from $[0,1]^{\mc{R}(\mc{A},S)}$, indeed this follows readily from the definition of the product topology on $[0,1]^{\mc{R}(\mc{A},S)}$ and our choice of topology on $\coup(\varOmega,S)$.

It follows from the separability of $\varOmega$ that there is a sequence $(R_i)_{i\in\mb{N}}$ of sets $R_i \in \mc{R}(\mc{A},S)$ such that for every $R\in \mc{R}(\mc{A},S)$ and every $\epsilon>0$ there is $i$ such that $\mu(R\Delta R_i)\leq \epsilon$ for every $\mu\in \coup(\varOmega,S)$ (this uses that the functions from Definition \ref{def:multilin} are linear in each entry of $F$ and are bounded by the $L^1(\lambda)$-norm as explained after \eqref{eq:cupconst2}). Let $\pi$ denote the projection from $[0,1]^{\mc{R}(\mc{A},S)}$ to $[0,1]^{\mb{N}}$ consisting in deleting coordinates corresponding to sets $R\in \mc{R}(\mc{A},S)\setminus \{R_i: i\in \mb{N}\}$. Now $[0,1]^{\mb{N}}$ equipped with the product topology is a second-countable compact Hausdorff space, and $\pi$ is then continuous and surjective. Then we have that $\pi\co\xi_0$ is a continuous surjective map $\coup(\varOmega,S)\to [0,1]^{\mb{N}}$. Moreover $\pi\co\xi_0$ is also injective, for if $\pi\co\xi_0(\mu_1)=\pi\co\xi_0(\mu_2)$ then the density of the $R_i$ above implies that $\xi_0(\mu_1)=\xi_0(\mu_2)$, and then by injectivity of $\xi_0$ we have $\mu_1=\mu_2$. Finally note that the inverse of $\pi\co\xi_0$ is also continuous. Indeed, fix any system $F=(f_v)_{v\in S}$ of functions in $L^\infty(\varOmega)$ and $\epsilon>0$. Then approximating each $f_v$ by simple functions, using multilinearity of the functions $F\mapsto \xi(\mu,F)$ from Definition \ref{def:multilin}, and using the density of the $R_i$ above, we obtain a finite collection $C$ of such sets $R_i$, and some $\delta=\delta(\epsilon,F)>0$, such that if $\mu,\nu\in \coup(\varOmega,S)$ satisfy $|\pi\co\xi_0(\mu)_R-\pi\co\xi_0(\nu)_R|< \delta$ for all $R\in C$ then $|\xi(\mu,F)-\xi(\nu,F)|<\epsilon$. This implies the claimed continuity.

We have thus shown that $\pi\co\xi_0$ is a homeomorphism between $\coup(\varOmega,S)$ and $[0,1]^{\mb{N}}$, so $\coup(\varOmega,S)$ is indeed compact second-countable Hausdorff.

To see that $\coup(\varOmega,S)$ is convex (as a subset of the vector space of signed measures on $(\Omega^S,\mc{A}^S)$), note that for any probability measure $\nu$ on $\coup(\varOmega,S)$, the map $\mu_0:\mc{A}^S\to [0,1]$, $B\mapsto \int_{\coup(\varOmega,S)} \mu(B) \ud\nu(\mu)$ is a measure, and for each $v\in S$ the image of $\mu_0$ under $p_v$ is $\lambda$ (since $\mu\co p_v^{-1}=\lambda$ for each $\mu$ in the integral), so this convex combination $\mu_0$ is in $\coup(\varOmega,S)$.

Finally, let $d^*$ denote the metric $(x,y)\mapsto \sum_{i\in\mb{N}} 2^{-i}|x_i-y_i|$ on $[0,1]^{\mb{N}}$, and define the metric $d$ on $\coup(\varOmega,S)$ by $d(\mu,\mu')=d^*(\pi\co\xi_0(\mu),\pi\co\xi_0(\mu'))$. It is readily checked that $\pi\co\xi_0$ takes convex combinations in $\coup(\varOmega,S)$ to convex combinations in $[0,1]^{\mb{N}}$. Then, a straightforward argument shows that if $\mu_0$ is a convex combination $\int_{\coup(\varOmega,S)}\mu \ud\nu(\mu)$, then $d(\mu_0,\mu')\leq \int_{\coup(\varOmega,S)}d(\mu,\mu') \ud\nu(\mu)$. This implies that balls in the metric $d$ are convex.
\end{proof}
\noindent Recall that $\theta$ is a \emph{mod 0 isomorphism} from $(\Omega,\mc{A},\lambda)$ to $(\Omega',\mc{A}',\lambda')$ if there are measurable sets $\Omega_0\subset \Omega$, $\Omega_0'\subset \Omega'$ with $\lambda(\Omega)=\lambda'(\Omega')=1$ such that $\theta$ is a measure-preserving bijection $\Omega_0\to\Omega_0'$ \cite[Definition 9.2.1]{Boga2}. The last result of this appendix is the fact that the property of a probability space $\varOmega$ having a cubic coupling structure is a measure-theoretic invariant, in the sense that any mod 0 isomorphism from this space to another probability space $\varOmega'$ carries the sequence of measures forming the original cubic coupling to a sequence of measures forming a cubic coupling on $\varOmega'$.
\begin{proposition}
Let $\varOmega=(\Omega,\mc{A},\lambda)$, $\varOmega'=(\Omega',\mc{A}',\lambda')$ be probability spaces, suppose that $\theta:\Omega_0\to\Omega'_0$ is a mod 0 isomorphism of $\varOmega,\varOmega'$, and let $\big(\varOmega,(\mu^{\db{n}})_{n\geq 0}\big)$ be a cubic coupling. For each $n$ let $\nu^{\db{n}}:=\mu^{\db{n}}\co (\theta^{\db{n}})^{-1}$. Then $\big(\varOmega',(\nu^{\db{n}})_{n\geq 0}\big)$ is a cubic coupling.
\end{proposition}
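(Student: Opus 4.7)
The plan is to verify each of the three axioms of Definition \ref{def:cc} for $\big(\varOmega',(\nu^{\db{n}})_{n\geq 0}\big)$ by pushing the corresponding axiom for $\mu^{\db{n}}$ through the map $\theta^{\db{n}}$. First I would note that $\theta^{\db{n}}$ is defined on a set of full $\mu^{\db{n}}$-measure: since $\lambda(\Omega_0)=1$ and each projection $p_v:\Omega^{\db{n}}\to \Omega$ pushes $\mu^{\db{n}}$ forward to $\lambda$, a union bound gives $\mu^{\db{n}}(\Omega_0^{\db{n}})=1$ (and similarly $\nu^{\db{n}}((\Omega_0')^{\db{n}})=1$). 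Thus each $\nu^{\db{n}}$ is a bona fide coupling of $\lambda'$, since $\nu^{\db{n}}\co p_v'^{-1}=\mu^{\db{n}}\co p_v^{-1}\co\theta^{-1}=\lambda\co\theta^{-1}=\lambda'$, using that $p_v'\co\theta^{\db{n}}=\theta\co p_v$ mod 0.

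The ergodicity axiom is immediate: $\nu^{\db{1}}=(\lambda\times\lambda)\co(\theta\times\theta)^{-1}=\lambda'\times\lambda'$. The consistency axiom follows from the identity $\Phi'\co\theta^{\db{n}}=\theta^{\db{m}}\co\Phi$ (valid $\mu^{\db{n}}$-a.e.), where $\Phi$ and $\Phi'$ are the projections $\Omega^{\db{n}}\to\Omega^{\db{m}}$, $\Omega'^{\db{n}}\to\Omega'^{\db{m}}$ induced by an injective morphism $\phi:\db{m}\to\db{n}$; indeed
\[
\nu^{\db{n}}_\phi=\nu^{\db{n}}\co \Phi'^{-1}=\mu^{\db{n}}\co(\theta^{\db{n}})^{-1}\co\Phi'^{-1}=\mu^{\db{n}}\co\Phi^{-1}\co(\theta^{\db{m}})^{-1}=\mu^{\db{m}}\co(\theta^{\db{m}})^{-1}=\nu^{\db{m}}.
\]

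The only step requiring real care is the conditional independence axiom; this will be the main (though still routine) obstacle, since it involves a relation between $\sigma$-algebras rather than the measures alone. The key preliminary fact is that for every $T\subset\db{n}$ we have $(\theta^{\db{n}})^{-1}(\mc{A}'^{\db{n}}_T)=_{\mu^{\db{n}}}\mc{A}^{\db{n}}_T$; this follows by combining $p_T'\co\theta^{\db{n}}=\theta^T\co p_T$ (mod 0) with the fact that $\theta^T$ is a mod 0 isomorphism from $(\Omega^T,\mc{A}^T,\mu^{\db{n}}\co p_T^{-1})$ to $(\Omega'^T,\mc{A}'^T,\nu^{\db{n}}\co p_T'^{-1})$, so $\theta^{-T}(\mc{A}'^T)$ and $\mc{A}^T$ coincide mod 0. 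Setting $T_1=\{0\}\times\db{n-1}$ and $T_2=\db{n-1}\times\{0\}$, I would then take any $f\in L^\infty(\mc{A}'^{\db{n}}_{T_1})$ and apply \eqref{eq:exprel} to obtain
\[
\mb{E}_{\nu^{\db{n}}}(f|\mc{A}'^{\db{n}}_{T_2})\co \theta^{\db{n}}=\mb{E}_{\mu^{\db{n}}}\big(f\co\theta^{\db{n}}\,\big|\,(\theta^{\db{n}})^{-1}(\mc{A}'^{\db{n}}_{T_2})\big)=\mb{E}_{\mu^{\db{n}}}\big(f\co\theta^{\db{n}}\,\big|\,\mc{A}^{\db{n}}_{T_2}\big).
\]
Since $f\co\theta^{\db{n}}\in L^\infty(\mc{A}^{\db{n}}_{T_1})$ and $T_1\,\bot_{\mu^{\db{n}}}\,T_2$, Lemma \ref{lem:botsuff} implies that the right-hand side is $\mc{A}^{\db{n}}_{T_1\cap T_2}$-measurable, which by the identification above means that $\mb{E}_{\nu^{\db{n}}}(f|\mc{A}'^{\db{n}}_{T_2})$ is $\mc{A}'^{\db{n}}_{T_1\cap T_2}$-measurable mod $\nu^{\db{n}}$. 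A second appeal to Lemma \ref{lem:botsuff} then yields $T_1\,\bot_{\nu^{\db{n}}}\,T_2$, completing the verification of axiom 3.
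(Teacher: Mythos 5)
Your proof is correct and follows essentially the same route as the paper's: you verify each axiom by transporting it through $\theta^{\db{n}}$, handling the conditional independence axiom via the mod-0 correspondence of the $\sigma$-algebras $\mc{A}^{\db{n}}_T$ and $(\theta^{\db{n}})^{-1}(\mc{A}'^{\db{n}}_T)$ together with \eqref{eq:exprel} and Lemma \ref{lem:botsuff}, exactly as the paper does. Your write-up is merely more explicit about the details the paper leaves as "straightforward."
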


\begin{proof}
Let us first note that each measure $\nu^{\db{n}}$ is a coupling in $\coup(\varOmega',\db{n})$. Indeed, for every $A'\in\mc{A}'$, we have by definition $\nu^{\db{n}}(p_v^{-1}(A'))=\mu^{\db{n}}\co(\theta^{\db{n}})^{-1}\big(p_v^{-1}(A'\cap\Omega'_0)\cap {\Omega'_0}^{\db{n}}\big)$, and by assumption $A'\cap\Omega'_0=\theta(A)$ for some measurable set $A\subset \Omega_0$. Hence we have  $p_v^{-1}(A'\cap\Omega'_0)\cap {\Omega_0'}^{\db{n}}=\theta^{\db{n}}(p_v^{-1}(A)\cap \Omega_0^{\db{n}})$, so the last measure is $\mu^{\db{n}}(p_v^{-1}(A)\cap \Omega_0^{\db{n}})$. This equals $\mu^{\db{n}}(p_v^{-1}(A))$ since $\lambda(\Omega_0)=1$ and each marginal of $\mu^{\db{n}}$ is $\lambda$. Hence $\nu^{\db{n}}(p_v^{-1}(A'))=\mu^{\db{n}}(p_v^{-1}(A))=\lambda(A)=\lambda'(A')$, as required.

It remains to show that the sequence $(\nu^{\db{n}})_{n\geq 0}$ satisfies the axioms in Definition \ref{def:cc}. The consistency and ergodicity axioms are seen to follow in a straightforward way from the same axioms for the original measures $\mu^{\db{n}}$. Finally, note that the conditional independence axiom involves just conditional expectations and sub-$\sigma$-algebras (involved in the notion of conditionally independent index sets, via Lemma \ref{lem:botsuff}). Via the map $\theta^{\db{n}}$, the sub-$\sigma$-algebras of the trace $\sigma$-algebra $\mc{A}^{\db{n}}|_{\Omega_0^{\db{n}}}$ are in bijection with the sub-$\sigma$-algebras of the trace $\sigma$-algebra ${\mc{A}'}^{\db{n}}|_{{\Omega_0'}^{\db{n}}}$. Moreover, since $\theta^{\db{n}}$ is measure-preserving (by definition of $\nu^{\db{n}}$), the conditional expectations corresponding to index sets $T\subset \db{n}$ satisfy $\mb{E}(f\co \theta^{\db{n}}|\mc{A}^{\db{n}}_T)=\mb{E}(f|{\mc{A}'}^{\db{n}}_T)\co \theta^{\db{n}}$ for every measurable $f:{\Omega_0'}^{\db{n}}\to\mb{C}$, by \eqref{eq:exprel}. Thus the measures $\nu^{\db{n}}$ inherit the conditional independence axiom from the measures $\mu^{\db{n}}$.
\end{proof}

\end{appendix}

\end{document}